\documentclass[a4paper]{amsart}

\usepackage{amsmath,amssymb,mathtools}
\usepackage{amsthm,amscd}
\usepackage{mathrsfs}
\usepackage[foot]{amsaddr}
\usepackage{aliascnt}
\usepackage{graphicx}
\usepackage[all]{xy}
\usepackage{pgf,tikz,pgfplots}
\usepackage{tikz-cd}
\usepackage{blkarray}
\usepackage{enumitem}
\usepackage{environ,etoolbox}
\usepackage{caption}
\usepackage{wrapfig}
\usepackage{tcolorbox}
\usepackage{microtype}
\usepackage{bbm}
\usepackage{hyperref}
\pgfplotsset{compat=1.18}

\allowdisplaybreaks[4] 
\setlist[enumerate,1]{leftmargin=2.3em}
\setlist[enumerate,2]{leftmargin=1.8em}

\setlist[itemize,1]{leftmargin=2.3em}
\setlist[itemize,2]{leftmargin=1.8em}

\usetikzlibrary{graphs,arrows.meta,calc,decorations.pathmorphing}
\tikzcdset{
	scale cd/.style={every label/.append style={scale=#1},cells={nodes={scale=#1}}},
	cells={font=\everymath\expandafter{\the\everymath\displaystyle}}
}

\newcommand*{\newaliastheorem}[3]{%
	\newaliascnt{#1}{#2}%
	\newtheorem{#1}[#1]{#3}%
	\aliascntresetthe{#1}%
	\expandafter\newcommand\csname #1autorefname\endcsname{#3}%
}

\theoremstyle{plain}

\newaliastheorem{thm}{theorem}{Theorem}
\newaliastheorem{lem}{theorem}{Lemma}
\newaliastheorem{prop}{theorem}{Proposition}
\newaliastheorem{cor}{theorem}{Corollary}

\newaliastheorem{mainthm}{maintheorem}{Theorem}
\newaliastheorem{maincor}{maintheorem}{Corollary}
\newaliastheorem{mainprop}{maintheorem}{Proposition}

\theoremstyle{definition}
\newaliastheorem{defn}{theorem}{Definition}
\newaliastheorem{fact}{theorem}{Fact}
\newtheorem*{acknowledge}{Acknowledgement}
\newtheorem*{ai}{AI statement}

\theoremstyle{remark}
\newaliastheorem{rem}{theorem}{Remark}
\newaliastheorem{eg}{theorem}{Example}
\newaliastheorem{nota}{theorem}{Notation}
\newaliastheorem{conv}{theorem}{Convention}

\newcommand{\bfj}{\mathbf{j}}

\newcommand{\sfb}{\mathsf{b}}

\newcommand{\calF}{\mathcal{F}}

\newcommand{\calL}{\mathcal{L}}

\newcommand{\calN}{\mathcal{N}}
\newcommand{\calO}{\mathcal{O}}

\newcommand{\frakA}{\mathfrak{A}}
\newcommand{\frakB}{\mathfrak{B}}

\newcommand{\frakD}{\mathfrak{D}}

\newcommand{\rmc}{\mathrm{c}}

\newcommand{\Z}{\mathbb{Z}}

\newcommand{\R}{\mathbb{R}}
\newcommand{\C}{\mathbb{C}}

\newcommand{\M}{\mathbb{M}}

\newcommand{\open}{\mathsf{o}}
\newcommand{\closed}{\mathsf{c}}

\newcommand{\Open}{\mathcal{O}}
\newcommand{\Closed}{\mathcal{C}}
\newcommand{\LC}{\mathcal{LC}}
\newcommand{\LCc}{\mathcal{LC}_{\rmc}}

\DeclareMathOperator{\ev}{ev}
\DeclareMathOperator{\id}{id}

\DeclareMathOperator{\Prim}{Prim}

\DeclareMathOperator{\Add}{Add}
\DeclareMathOperator{\Mod}{\mathfrak{Mod}}

\newcommand{\conti}{\mathrm{C}}
\newcommand{\loc}{\textup{loc}}
\newcommand{\ex}{\textup{ex}}
\newcommand{\cont}{\textup{cont}}

\newcommand{\LCcat}{\mathfrak{LC}}
\newcommand{\Cstar}{\mathfrak{C^{*}}}
\newcommand{\SCstar}{\mathfrak{C^{*\!}sep}}
\newcommand{\E}{\mathrm{E}}
\newcommand{\KK}{\mathrm{KK}}
\newcommand{\Ecat}{\mathfrak{E}}
\newcommand{\KKcat}{\mathfrak{KK}}
\newcommand{\Boot}{\mathcal{B}}
\newcommand{\Ab}{\mathfrak{Ab}}
\newcommand{\Abc}{\mathfrak{Ab}_{\leq \aleph_0}}
\newcommand{\NT}{\mathcal{NT}}

\newcommand{\FK}{\mathrm{FK}}

\newcommand{\Ktheory}{\mathrm{K}}

\newcommand{\blank}{\,\text{\textvisiblespace}\,}

\newcommand{\Msquare}[8]{%
	\tikz[
	baseline=(current bounding box.center),
	x=1.2cm, y=1cm,
	auto,
	every node/.style={font=\scriptsize, inner sep=1pt},
	line width=0.35pt
	]{
		\node (00) at (0,1) {#1};
		\node (01) at (1,1) {#2};
		\node (10) at (0,0) {#3};
		\node (11) at (1,0) {#4};
		\draw[-{Implies}, double] (00) to node {#5} (01);
		\draw[-{Implies}, double] (00) to node[swap] {#6} (10);
		\draw[-{Implies}, double] (01) to node {#7} (11);
		\draw[-{Implies}, double] (10) to node[swap] {#8} (11);
	}
}

\usepackage[noabbrev,compress]{cleveref}

\crefname{defn}{Definition}{Definitions}
\crefname{thm}{Theorem}{Theorems}
\crefname{prop}{Proposition}{Propositions}
\crefname{lem}{Lemma}{Lemmas}
\crefname{cor}{Corollary}{Corollaries}
\crefname{rem}{Remark}{Remarks}
\crefname{nota}{Notation}{Notations}
\crefname{conv}{Convention}{Conventions}
\crefname{eg}{Example}{Examples}
\crefname{mainthm}{Theorem}{Theorems}
\crefname{maincor}{Corollary}{Corollaries}
\crefname{mainprop}{Proposition}{Propositions}
\crefname{section}{Section}{Sections}
\crefname{subsection}{Subsection}{Subsections}
\crefname{appendix}{Appendix}{Appendices}
\crefname{figure}{Figure}{Figures}

\title[Equivalence of categories of bivariant K-theory]{Equivalence of categories of bivariant K-theory for C*-algebras over topological spaces via reflection functors}
\author[Nanami Hashimoto]{Nanami Hashimoto}
\address{Department of Mathematics, Faculty of Science and Technology, Keio University, 3-14-1 Hiyoshi, Kohoku-ku, Yokohama, 223-8522, Japan}
\email{nanami\_hashimoto@keio.jp}
\subjclass[2020]{Primary 19K35; Secondary 46L35, 46L80, 16G20, 18G80}

\begin{document}
	
	\begin{abstract}
		
		In this paper, for various pairs of topological spaces $X$ and $Y$, we introduce reflection functors between the categories $\mathfrak{KK}(X)$ and $\mathfrak{KK}(Y)$ of Kirchberg's ideal-related KK-theory for separable C*-algebras over $X$ and $Y$. We prove that these functors induce an equivalence $\mathfrak{KK}(X)_{\mathrm{loc}} \simeq \mathfrak{KK}(Y)_{\mathrm{loc}}$ between the localizing subcategories introduced by Meyer and Nest, and that this equivalence restricts to an equivalence $\mathcal{B}(X) \simeq \mathcal{B}(Y)$ between the bootstrap categories. Combining these equivalences with a combinatorial argument due to Bernstein, Gelfand, and Ponomarev from the representation theory of quivers, we show that, for a finite $T_0$-space $X$ whose Hasse diagram is an orientation of a tree, the categories $\mathfrak{KK}(X)_{\mathrm{loc}}$ and $\mathcal{B}(X)$ depend only on the underlying tree and not on its orientation. We also prove the analogous results for Dadarlat and Meyer's ideal-related E-theory. Moreover, we prove a rearrangement property of reflection functors, which yields an analogue of Coxeter functors. Finally, we apply reflection functors to prove that filtrated K-theory satisfies the universal coefficient theorem for C*-algebras over finite $T_0$-spaces whose Hasse diagrams are orientations of Dynkin diagrams of type A.
		
	\end{abstract}
	
	\maketitle
	
	\tableofcontents
	
	\section{Introduction}
	
	One of the central achievements in the classification theory of C*-algebras follows from combining the universal coefficient theorem (UCT) of Rosenberg and Schochet \cite{RS_1987} with the classification theorem proved independently by Kirchberg \cite{Kirchberg_1994} and Phillips \cite{Phillips_2000}. The UCT implies that, for separable C*-algebras $A$ and $B$ in the bootstrap class $\Boot$, isomorphisms $\Ktheory_0(A) \simeq \Ktheory_0(B)$ and $\Ktheory_1(A) \simeq \Ktheory_1(B)$ in K-theory lift to a KK-equivalence between $A$ and $B$. The latter theorem, in turn, shows that a KK-equivalence between separable, stable, nuclear, purely infinite, simple C*-algebras lifts to a $\ast$-isomorphism. Consequently, such algebras in the bootstrap class $\Boot$ are classified by their K-theory. A natural problem is to extend this classification result to non-simple C*-algebras.
	
	A convenient way to encode the ideal structure is to consider C*-algebras over a topological space. A C*-algebra over a topological space $X$ is a C*-algebra $A$ equipped with a suitable family of ideals $A(U) \subset A$ indexed by the open subsets $U \subset X$. Kirchberg \cite{Kirchberg_2000} developed an $X$-equivariant version of Kasparov's KK-theory for C*-algebras over $X$. A deep classification theorem due to Kirchberg asserts that a $\KK(X)$-equivalence between separable, stable, nuclear, $\calO_{\infty}$-stable, tight C*-algebras $A$ and $B$ over $X$ lifts to an $X$-equivariant $\ast$-isomorphism. A complete proof of this theorem was later given by Gabe \cite{Gabe_2024}. 
	
	Our long-term goal is to derive a computable K-theoretic invariant satisfying an $X$-equivariant version of the UCT that detects a $\KK(X)$-equivalence for C*-algebras over $X$ in the $X$-equivariant bootstrap class $\Boot(X)$. In the non-equivariant case, that is, when $X$ is a one-point space, the UCT of Rosenberg and Schochet shows that the ordinary K-theory provides such an invariant. Building on several earlier works such as \cite{Rordam_1997,Bonkat_2002,Restorff_2006,Restorff_2008}, Meyer and Nest constructed filtrated K-theory and proved that it satisfies the UCT for C*-algebras over totally ordered spaces in \cite{MN_2012}. Bentmann and Köhler \cite{BK_2011} then proved that filtrated K-theory also satisfies the UCT for C*-algebras over accordion spaces by reducing this case to the totally ordered case treated by Meyer and Nest. Recall here that every finite $T_0$-space can be identified with a partially ordered set and visualized in a Hasse diagram; see \cref{section:topologies_orders_quivers}. An accordion space is a finite $T_0$-space whose Hasse diagram is an orientation of a Dynkin diagram of type A. 
	
	Meyer and Nest \cite{MN_2012} further proved that, for a certain four-point space $X$ that is not an accordion space, filtrated K-theory does not satisfy the UCT for C*-algebras over $X$, whereas a refined invariant obtained by enlarging filtrated K-theory does. For the remaining four-point spaces, Bentmann and Köhler investigated whether filtrated K-theory admits a refinement satisfying the UCT; see \cite{Bentmann_2010,BK_2011}. Arklint, Restorff, and Ruiz \cite{ARR_2012} also investigated the UCT in this setting for real rank zero C*-algebras. These works suggest a close connection between the classification theory of C*-algebras over topological spaces and the representation theory of quivers.  
	
	The aim of this paper is to use techniques from the representation theory of quivers to show that, for various pairs of topological spaces $X$ and $Y$, the associated bootstrap categories $\Boot(X)$ and $\Boot(Y)$ are equivalent, and hence that the classification problems for C*-algebras over $X$ and $Y$ are essentially the same. 
	
	To make this statement precise, we briefly recall the relevant categorical framework, restricting for simplicity to finite $T_0$-spaces. Let $X$ be such a space. Let $\Cstar(X)$ denote the category of C*-algebras over $X$, and let $\SCstar(X)$ denote the full subcategory of $\Cstar(X)$ consisting of separable C*-algebras over $X$. Let $\KKcat(X)$ denote the category of Kirchberg's ideal-related KK-theory for separable C*-algebras over $X$ as in \cite[Definition~3.3]{MN_2009}. Then $\KKcat(X)$ is a triangulated category with countable coproducts, and $\Boot(X)$ is the localizing subcategory of $\KKcat(X)$ generated by all the C*-algebras over $X$ with underlying C*-algebra $\C$; see \cite[Definition~4.11]{MN_2009}. 
	
	In \cref{section:reflection_functors}, we first construct finite $T_0$-spaces $W^{\closed}$ and $W^{\open}$, inspired by Ladkani's work \cite{Ladkani_2007} on universal derived equivalences of partially ordered sets. Starting from a finite set $J$, finite partially ordered sets $K$ and $L_j$ for $j \in J$, and order-preserving maps $\theta_j \colon K \to L_j$ for $j \in J$, we define two finite $T_0$-spaces $W^{\closed}$ and $W^{\open}$ with underlying set
	\[
	W:=K \amalg \coprod_{j \in J} L_j. 
	\]
	The superscripts $\closed$ and $\open$ indicate that the subset $K \subset W$ is closed in $W^{\closed}$ and open in $W^{\open}$. For a concrete example of this construction and the Hasse diagrams of the resulting spaces, see \cref{eg:Wc_Wo}. 
	
	We then introduce \textit{reflection functors}
	\[
	\begin{tikzcd}
		\Cstar(W^{\closed}) \ar[bend left=10]{r}{S_{\closed}^{\open}} &
		\ar[bend left=10]{l}{S_{\open}^{\closed}} \Cstar(W^{\open}), 
	\end{tikzcd}
	\]
	which restrict to functors
	\[
	\begin{tikzcd}
		\SCstar(W^{\closed}) \ar[bend left=10]{r}{S_{\closed}^{\open}} &
		\ar[bend left=10]{l}{S_{\open}^{\closed}} \SCstar(W^{\open}). 
	\end{tikzcd}
	\]
	The reflection functor $S_{\open}^{\closed}$ is defined as the mapping cone of a morphism from one object to a direct sum, whereas $S_{\closed}^{\open}$ is defined as the mapping cone of a morphism from a direct sum to one object; see \cref{def:reflection_functor}. 
	This construction is reminiscent of the classical BGP-reflection functors in the representation theory of quivers; see \cite{BGP_1973}. 
	
	By the universal property of KK-theory, the reflection functors between $\SCstar(W^{\closed})$ and $\SCstar(W^{\open})$ induce functors
	\[
	\begin{tikzcd}
		\KKcat(W^{\closed}) \ar[bend left=10]{r}{S_{\closed}^{\open}} & \KKcat(W^{\open}). \ar[bend left=10]{l}{S_{\open}^{\closed}}
	\end{tikzcd}
	\]
	In general, these functors do not seem to yield an equivalence of $\KKcat(W^{\closed})$ and $\KKcat(W^{\open})$, because KK-theory induces six-term exact sequences only for semi-split short exact sequences of C*-algebras. Nevertheless, they do induce equivalences of suitable full subcategories. Our first main result is the following theorem. 
	
	\begin{mainthm}[\cref{thm:equivalence_KKloc_Wc_Wo}]\label{mainthm:equivalence_KKloc_Wc_Wo}
		The reflection functors 
		\[
		\begin{tikzcd}
			\KKcat(W^{\closed})_{\loc} \ar[bend left=10]{r}{S_{\closed}^{\open}} &
			\ar[bend left=10]{l}{S_{\open}^{\closed}} \KKcat(W^{\open})_{\loc}
		\end{tikzcd}
		\] 
		together with the natural transformations 
		\[\eta \colon \id_{W^{\closed}} \Rightarrow S_{\open}^{\closed}S_{\closed}^{\open}, \quad \varepsilon \colon S_{\closed}^{\open}S_{\open}^{\closed} \Rightarrow \id_{W^{\open}}\]
		defined in \textup{\cref{section:reflection_functors}} form an adjoint equivalence of triangulated categories with countable coproducts, which restricts to an adjoint equivalence between $\Boot(W^{\closed})$ and $\Boot(W^{\open})$. 
	\end{mainthm}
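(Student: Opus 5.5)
We will prove the theorem in three stages: first the functors and transformations are exact and compatible with coproducts, then the units are invertible on the localizing subcategories, and finally the adjunction identities and the bootstrap statement follow formally. The basic tool is the localizing subcategory $\KKcat(X)_{\loc}$ of Meyer and Nest. It is the localizing subcategory generated by the objects $i_x(\C)$, $x \in X$, where $i_x$ denotes extension by zero from the point $x$.

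Stage one: triangulated functors and natural transformations. The reflection functors $S_{\open}^{\closed}$ and $S_{\closed}^{\open}$ are built as mapping cones of natural $\ast$-homomorphisms between functors that are exact on semi-split extensions. So by the universal property of $\KKcat$ they descend to $\KKcat(W^{\closed})$ and $\KKcat(W^{\open})$. Mapping cones commute with suspension and send mapping cone triangles to mapping cone triangles. Hence both functors are triangulated, and they commute with countable direct sums because every ingredient does. We then check that the natural $\ast$-homomorphisms defining $\eta$ and $\varepsilon$ in \cref{section:reflection_functors} give natural transformations of triangulated functors in $\KKcat$.

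Stage two: generators and invertibility of the units. We show that each functor maps generators into the target localizing subcategory. Concretely, we compute $S_{\closed}^{\open}(i_x\C)$ for every $x \in W$, separating the cases $x \in K$ and $x \in L_j$. In each case the answer should be a finite extension, or a cone, of objects $i_y(\C)$ and their suspensions. Together with exactness and compatibility with coproducts, this gives $S_{\closed}^{\open}(\KKcat(W^{\closed})_{\loc}) \subset \KKcat(W^{\open})_{\loc}$, and likewise for $S_{\open}^{\closed}$.

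The subcategory of objects on which $\eta$ is invertible is localizing, by the five lemma together with compatibility with coproducts. So it suffices to check that $\eta_{i_x\C}$ is a $\KK(W^{\closed})$-equivalence for each $x$, and the same for $\varepsilon$. This local verification is the main obstacle. The composite $S_{\open}^{\closed}S_{\closed}^{\open}$ is an iterated mapping cone, and we must show that the comparison map is an equivalence. Our first attempt is to evaluate at each open subset $U$. In the locally closed pieces, the extra summands coming from $\bigoplus_j$ then form cones of identity maps, which are contractible. A $\KK(X)$-morphism between objects of $\KKcat(X)_{\loc}$ is invertible as soon as it induces isomorphisms on $\Ktheory_*(A(U))$ for all $U$, by the Meyer–Nest detection result for the localizing subcategory. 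So it is enough to verify that $\eta_{i_x\C}$ induces isomorphisms on these K-groups, which is a finite computation with long exact sequences. If contractibility cannot be made explicit, this K-theoretic check is the fallback.

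Stage three: adjunction and bootstrap categories. Once $\eta$ and $\varepsilon$ are isomorphisms on the localizing subcategories, the functors form an equivalence. For the adjoint equivalence, we check one triangle identity, $\varepsilon S_{\closed}^{\open} \circ S_{\closed}^{\open}\eta = \id$, directly from the $\ast$-homomorphism level definitions. Alternatively, we invoke the standard fact that any equivalence with a given invertible $\eta$ admits a modified counit making the pair adjoint, but the paper's specific $\varepsilon$ should satisfy the identity on the nose. For the bootstrap statement, $\Boot(X)$ is the localizing subcategory generated by the objects with underlying C*-algebra $\C$. Such objects are direct sums $i_S\C$ over locally closed points. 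The computation in stage two shows that $S_{\closed}^{\open}$ sends each $i_x\C$ into $\Boot(W^{\open})$, since the objects occurring there are built from $\C$, and similarly for the inverse. Since the equivalence preserves localizing subcategories, it therefore restricts to $\Boot(W^{\closed}) \simeq \Boot(W^{\open})$.
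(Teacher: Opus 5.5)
Your proposal has two genuine gaps: the wrong generating set for $\KKcat(X)_{\loc}$, and the choice of triangle identity in Stage three.

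\textbf{Stage two.} The objects $i_x(\C)$ generate the bootstrap class $\Boot(X)$, not $\KKcat(X)_{\loc}$. The localizing subcategory $\KKcat(X)_{\loc}$ of Meyer and Nest is generated by $i_x(B)$ for \emph{all} separable $B$; for $X$ a point it is all of $\KKcat$.
\begin{itemize}
\item Checking $\eta$ and $\varepsilon$ on the $i_x(\C)$ therefore yields at most the equivalence on $\Boot$.
\item Your fallback principle is false outside $\Boot(X)$. You claim that a morphism in $\KKcat(X)_{\loc}$ inducing isomorphisms on all $\Ktheory_*(A(U))$ is invertible. Already for a point, the cellular (bootstrap) approximation $\tilde A\to A$ of a non-bootstrap algebra is a K-theory isomorphism but not a $\KK$-equivalence.
\item One could patch this by using compatibility of $\dot\eta,\ddot\eta,\dot\varepsilon,\ddot\varepsilon,\sfb$ with the tensor action, so that $\eta_{i_x(B)}=B\otimes\eta_{i_x(\C)}$. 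As written, though, the argument never reaches $\KKcat_{\loc}$, and the generator computation itself is only asserted (``should be'').
\item $\eta=\ddot\eta\circ\dot\eta^{-1}\circ\sfb^{-1}$ and $\varepsilon=\sfb\circ\dot\varepsilon^{-1}\circ\ddot\varepsilon$ are not induced by $\ast$-homomorphisms. Merely defining them already requires invertibility of $\dot\eta$ and $\dot\varepsilon$ in $\KKcat$.
\end{itemize}
The paper avoids generators altogether. In \cref{thm:key_diagram}, $\dot\eta$ and $\ddot\eta$ are quotient maps of extensions with contractible ideal, and dually $\dot\varepsilon$ and $\ddot\varepsilon$ are inclusions of ideals with contractible quotient. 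The $\dot\eta$- and $\dot\varepsilon$-extensions are mapping-cone extensions, hence semi-split. The $\ddot\eta$- and $\ddot\varepsilon$-extensions are shown admissible for every object of $\KKcat(W^{\closed})_{\loc}$, respectively $\KKcat(W^{\open})_{\loc}$, in \cref{lem:key_diagram_admissible}. This is done by writing them as $2$-dimensional mapping cones of admissible extensions, using the admissibility characterization of $\KKcat_{\loc}$ together with \cref{lem:admissible_mapping_cones}.

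\textbf{Stage three.} You plan to verify $\varepsilon S_{\closed}^{\open}\circ S_{\closed}^{\open}\eta=\id$ directly at the $\ast$-homomorphism level. This is exactly the identity whose underlying diagram of C*-algebra maps the paper says it does not expect to commute up to homotopy. In the $3$-dimensional mapping cone description the $\rho_j$ appear in both horizontal and vertical arrows, so \cref{lem:antihomotopy_2-dimensional_mapping_cone} cannot be applied.

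The paper instead proves the other identity, $S_{\open}^{\closed}\varepsilon\circ\eta S_{\open}^{\closed}=\id$, in \cref{thm:zigzag_identity}. After taking mapping cones in the $\rho_j$-direction of the relevant cube, the $\rho_j$ disappear and one lands in the situation of \cref{lem:homotopy_2-dimensional_mapping_cone}, whose explicit homotopy gives the identity. Since $\eta$ and $\varepsilon$ are invertible on the localizing subcategories, \cref{lem:zigzag_identities_equivalence} then gives your identity formally.

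Your alternative of modifying the counit does produce an adjoint equivalence. It is not the one with the specified $\varepsilon$, however, and that is what the statement asserts.
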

	
	For a finite $T_0$-space $X$, Meyer and Nest introduced the localizing subcategory $\KKcat(X)_{\loc}$ of $\KKcat(X)$; see \cite[Definition~4.8]{MN_2009}. It is worth noting that every separable nuclear C*-algebra over $X$ belongs to $\KKcat(X)_{\loc}$; see \cite[Proposition~4.10]{MN_2009}. Moreover, $\Boot(X)$ is the full subcategory of $\KKcat(X)_{\loc}$ consisting of separable C*-algebras $A$ over $X$ such that $A(U)$ belongs to the bootstrap category $\Boot$ for every open subset $U \subset X$; see \cite[Proposition 4.12]{MN_2009}. In view of these observations, \cref{mainthm:equivalence_KKloc_Wc_Wo} is already sufficient for applications to the classification theory of C*-algebras over finite $T_0$-spaces.
	
	In \cref{subsection:higher-dimensional_mapping_cones}, we introduce a notion of \textit{higher-dimensional mapping cones} for cubical diagrams of C*-algebras; the usual mapping cone of a $\ast$-homomorphism is precisely the $1$-dimensional case. 
	In \cref{subsection:composition_reflection_functor}, we describe the composites
	$S_{\open}^{\closed}S_{\closed}^{\open}$ and
	$S_{\closed}^{\open}S_{\open}^{\closed}$ as $2$-dimensional mapping cones; see \cref{prop:composition_reflection_functors}. 
	Only after restricting these composites to the localizing subcategories $\KKcat(W^{\closed})_{\loc}$ and $\KKcat(W^{\open})_{\loc}$ can we combine \cref{thm:key_diagram} with Bott periodicity to obtain the natural isomorphisms $\eta$ and $\varepsilon$ above; see \cref{subsection:application_to_bivariant_K-theory_1}.
	
	Note that asserting that the two reflection functors form an adjoint equivalence is no stronger than asserting that they give an equivalence of categories, since any equivalence can be promoted to an adjoint equivalence by choosing suitable natural isomorphisms. The point here is that the particular natural isomorphisms $\eta$ and $\varepsilon$ constructed above need not be replaced or adjusted. They already satisfy the zigzag identities and hence serve as the unit and counit. This shows that their construction is particularly natural. A notable feature of the proof is that one of these zigzag identities is established by describing the relevant functors and natural transformations as suitable $3$-dimensional mapping cones and then applying a homotopy between morphisms of $2$-dimensional mapping cones, as carried out in \cref{thm:zigzag_identity,lem:homotopy_2-dimensional_mapping_cone}. An elementary categorical fact (see \cref{lem:zigzag_identities_equivalence}) then shows that this zigzag identity implies the other.
	
	The corresponding statement becomes cleaner in ideal-related E-theory. Let $\Ecat(X)$ denote the category of Dadarlat and Meyer's ideal-related E-theory for separable C*-algebras over a finite $T_0$-space $X$; see \cite{DM_2012}. Note that Kirchberg's classification theorem also applies to ideal-related E-theory; see \cite[Theorem~7.1]{Gabe_2022}. The $X$-equivariant E-theoretic bootstrap category $\Boot_{\E}(X)$ is the full subcategory of $\Ecat(X)$ consisting of separable C*-algebras $A$ over $X$ such that $A(U)$ belongs to the non-equivariant E-theoretic bootstrap category $\Boot_{\E}$ for all open subsets $U \subset X$; see \cite[Definition~4.1]{DM_2012}. Unlike KK-theory, E-theory induces six-term exact sequences for all short exact sequences of C*-algebras. This stronger excision property allows the reflection functors to yield an equivalence between the entire categories $\Ecat(W^{\closed})$ and $\Ecat(W^{\open})$, rather than only between suitable localizing subcategories as in KK-theory. We thus obtain the following result.
	
	\begin{mainthm}[\cref{thm:equivalence_E_Wc_Wo}]\label{mainthm:equivalence_E_Wc_Wo}
		The statement of \textup{\cref{mainthm:equivalence_KKloc_Wc_Wo}} remains valid after replacing $\KKcat(W^{\closed})_{\loc}$, $\KKcat(W^{\open})_{\loc}$, $\Boot(W^{\closed})$, and $\Boot(W^{\open})$ with $\Ecat(W^{\closed})$, $\Ecat(W^{\open})$, $\Boot_{\E}(W^{\closed})$, and $\Boot_{\E}(W^{\open})$, respectively.
	\end{mainthm}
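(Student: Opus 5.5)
The plan is to run the proof of \cref{mainthm:equivalence_KKloc_Wc_Wo} verbatim in $\Ecat$, using the stronger excision of E-theory to remove the restriction to localizing subcategories. The only place the KK-argument needs $\KKcat(W^{\closed})_{\loc}$ and $\KKcat(W^{\open})_{\loc}$ is in producing the isomorphisms $\eta$ and $\varepsilon$. We then check that the remaining steps hold in $\Ecat$.

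First, the reflection functors $S_{\closed}^{\open}$ and $S_{\open}^{\closed}$ on $\SCstar(W^{\closed})$ and $\SCstar(W^{\open})$ are built from direct sums and mapping cones, which are functorial for $\ast$-homomorphisms. They preserve $C_0$-stable homotopy equivalences, stability, and split exactness, and they also preserve arbitrary extensions, since mapping cones and direct sums do. By the universal property of $\Ecat(X)$ of Dadarlat and Meyer (homotopy invariant, $C^*$-stable, half-exact functors on $\SCstar(X)$ factor through $\Ecat(X)$), they descend to triangulated functors between $\Ecat(W^{\closed})$ and $\Ecat(W^{\open})$ that commute with countable direct sums. The natural transformations $\eta$ and $\varepsilon$ are defined at the level of $\ast$-homomorphisms, via the $2$-dimensional mapping cone description of the composites in \cref{prop:composition_reflection_functors}. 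Hence they descend as well.

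Second, I would show that $\eta_A$ and $\varepsilon_B$ are invertible for every separable $A$ over $W^{\closed}$ and $B$ over $W^{\open}$. In KK-theory the key diagram \cref{thm:key_diagram} gives a comparison between $S_{\open}^{\closed}S_{\closed}^{\open}A$ and a suspension-twisted copy of $A$. There, one only gets isomorphisms on $\KKcat_{\loc}$, because the relevant extensions (ideal inclusions $A(U)\hookrightarrow A$ restricted along $\theta_j$) need not be semi-split. In E-theory every extension yields an exact triangle, so the comparison maps from the kernels of the extensions to the corresponding mapping cones are invertible in $\Ecat$ for all objects. Combining this with Bott periodicity as before then shows that $\eta$ and $\varepsilon$ are isomorphisms on all of $\Ecat$. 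This is the step I expect to require care: one must verify, extension by extension, that every non-invertibility in the KK-proof came only from the failure of excision. If some step instead used the localizing hypothesis in another way (e.g.\ via generation by the objects $i_x\C$), I would replace it with a direct check on the functors $\Ecat_*(i_x\C,\blank)$. Those functors detect isomorphisms in $\Ecat(X)$ because $\E(X)$-theory satisfies the relevant excision for the open-closed decompositions.

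Third, the zigzag identities. The proof of \cref{thm:zigzag_identity} via $3$-dimensional mapping cones and \cref{lem:homotopy_2-dimensional_mapping_cone} takes place at the level of $\ast$-homomorphisms and homotopies, so it holds in $\Ecat$ after applying the canonical functor. \cref{lem:zigzag_identities_equivalence} then gives the other identity, so we have an adjoint equivalence of triangulated categories with countable coproducts.

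Finally, the functors preserve the bootstrap categories. Each $(S_{\closed}^{\open}A)(U)$ is a mapping cone of a map between direct sums of ideals $A(V)$, and $\Boot_{\E}$ is closed under mapping cones, suspensions and countable sums. Hence $A\in\Boot_{\E}(W^{\closed})$ implies $S_{\closed}^{\open}A\in\Boot_{\E}(W^{\open})$, and symmetrically for $S_{\open}^{\closed}$. The equivalence therefore restricts to an equivalence between $\Boot_{\E}(W^{\closed})$ and $\Boot_{\E}(W^{\open})$.
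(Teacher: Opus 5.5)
Your proposal is correct and follows the paper's own route. The paper obtains \cref{thm:equivalence_E_Wc_Wo} by rerunning the KK-argument, observing that $\KKcat(\blank)_{\loc}$ was used only for the admissibility in \cref{lem:key_diagram_admissible} needed to invert $\ddot{\eta}$ and $\ddot{\varepsilon}$. In $\Ecat$ this is supplied for all objects by \cref{lem:contractible_isomorphism_KK_E}, since $\bigoplus_{j} i_{L_j}^{W^{\closed}} r_{W^{\closed}}^{L_j}CSDA$ and $\bigoplus_{j} i_{L_j}^{W^{\open}} r_{W^{\open}}^{L_j}C_{\rho_j}B$ are contractible for the choice in \cref{eg:reflection_suspension}.

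Two small corrections. First, $\eta$ and $\varepsilon$ are not themselves given by $\ast$-homomorphisms: what descends is $\dot{\eta}$, $\ddot{\eta}$, $\dot{\varepsilon}$, $\ddot{\varepsilon}$, and $\eta$, $\varepsilon$ are then assembled using $\dot{\eta}^{-1}$, $\dot{\varepsilon}^{-1}$ and the Bott isomorphism. Second, your fallback via $\Ecat_*(i_x\C,\blank)$ is unnecessary and would not work as stated, because these functors are only known to detect isomorphisms on $\Boot_{\E}(X)$ (a UCT-type statement), not on all of $\Ecat(X)$.
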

	
	In the special case where $K$ consists of a single point $x$, the construction of $W^{\closed}$ and $W^{\open}$ recovers the classical reflection of a quiver at a sink or source considered in \cite{BGP_1973}. Indeed, under the convention fixed in \cref{section:topologies_orders_quivers}, the point $x$ is a sink in the Hasse diagram of $W^{\closed}$ and a source in that of $W^{\open}$, and the latter is obtained from the former by reversing all arrows entering $x$. Any two orientations of the same tree are related by an admissible sequence of such reflections; see \cite[Theorem~1.2(1)]{BGP_1973}. Therefore, applying \cref{mainthm:equivalence_KKloc_Wc_Wo,mainthm:equivalence_E_Wc_Wo} successively yields the following corollary.
	
	\begin{maincor}[\cref{cor:KKloc_tree,cor:E_tree}]
		Let $X$ and $Y$ be finite $T_0$-spaces whose Hasse diagrams are orientations of the same tree. Then there exist equivalences
		\[
		\begin{aligned}
			\KKcat(X)_{\loc} &\simeq \KKcat(Y)_{\loc},
			&\quad \Boot(X) &\simeq \Boot(Y),\\
			\Ecat(X) &\simeq \Ecat(Y),
			&\quad \Boot_{\E}(X) &\simeq \Boot_{\E}(Y)
		\end{aligned}
		\]
		as triangulated categories with countable coproducts.
	\end{maincor}
	
	In \cref{section:higher-dimensional_reflection_functors}, we develop a more general framework for reflection functors. Starting from finite sets $I$ and $J$, suitable subsets $J_i \subset J$ for $i \in I$, finite partially ordered sets $K_i$ for $i \in I$ and $L_j$ for $j \in J$, and order-preserving maps $\theta_{i, j} \colon K_i \to L_j$ for $i \in I$ and $j \in J_i$, we construct, for each $\alpha \in \{\closed, \open\}^I$, a finite $T_0$-space $W^{\alpha}$ with underlying set
	\[W:=\coprod_{i \in I} K_i \amalg \coprod_{j \in J} L_j. \] 
	For each $i \in I$, the value $\alpha(i) \in \{\closed, \open\}$ determines whether $K_i$ is closed or open in $W^\alpha$: the subset $K_i \subset W$ is closed in $W^\alpha$ if $\alpha(i)=\closed$ and open in $W^\alpha$ if $\alpha(i)=\open$. This construction recovers the earlier construction of $W^{\closed}$ and $W^{\open}$ when $|I|=1$. See \cref{eg:Walpha_I_two_point_set} for the case where $|I|=2$.
	
	For $\alpha, \beta \in \{\closed, \open\}^I$, we then define a \textit{higher-dimensional reflection functor}
	\[S_{\alpha}^{\beta} \colon \Cstar(W^{\alpha}) \to \Cstar(W^{\beta}) \]
	as an $I^{\alpha, \beta}$-dimensional mapping cone, where 
	\[I^{\alpha, \beta}:=\{i \in I \mid \alpha(i) \neq \beta(i)\}. \]
	Note that if $\alpha=\beta$, equivalently, $I^{\alpha, \beta}=\emptyset$, then $S_{\alpha}^{\beta}$ is the identity functor on $\Cstar(W^{\alpha})$. 
	When $|I^{\alpha, \beta}|=1$, the construction of $S_{\alpha}^{\beta}$ agrees with the earlier construction of the corresponding reflection functor; see \cref{prop:higher-dimensional_reflection_functor_singleton}. 
	More generally, $S_{\alpha}^{\beta}$ is obtained by combining the constructions of the reflection functors associated with the indices in $I^{\alpha, \beta}$ into a single $I^{\alpha, \beta}$-mapping cone construction; see \cref{eg:3-dimensional_reflection_functor} for a concrete example in the case $|I^{\alpha, \beta}|=3$.
	
	Our next main result shows that higher-dimensional reflection functors satisfy the following rearrangement property. 
	
	\begin{mainthm}[\cref{thm:rearrangement_reflection_functors}]\label{mainthm:rearrangement_reflection_functors}
		For $\alpha, \beta, \gamma \in \{\closed, \open\}^I$ satisfying $I^{\alpha, \beta} \cap I^{\beta, \gamma}=\emptyset$, equivalently, $I^{\alpha, \beta} \cup I^{\beta, \gamma}=I^{\alpha, \gamma}$, we have
		\[S_{\beta}^{\gamma}S_{\alpha}^{\beta}=S_{\alpha}^{\gamma}\]
		as functors $\Cstar(W^{\alpha}) \to \Cstar(W^{\gamma})$. 
	\end{mainthm}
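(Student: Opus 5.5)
The plan is to reduce the statement to an "associativity" property of higher-dimensional mapping cones and then check that the cubical diagrams match. Write $I_1:=I^{\alpha,\beta}$ and $I_2:=I^{\beta,\gamma}$, so that $I^{\alpha,\gamma}=I_1\amalg I_2$.

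First I would unwind the definition of $S_\alpha^\beta$. For $A\in\Cstar(W^\alpha)$, the underlying C*-algebra of $S_\alpha^\beta(A)$ and each ideal $S_\alpha^\beta(A)(U)$, for $U$ open in $W^\beta$, is the $I_1$-dimensional mapping cone of an $I_1$-cube of C*-algebras. The vertices of this cube are ideals and subquotients of $A$ of the form $A(V)$, or sums $\bigoplus_{j}A(\cdots)$ over the relevant $j\in J_i$. These depend only on the restriction data at the indices $i\in I_1$. Concretely, I expect the $I_1$-cone to be a C*-subalgebra of $\prod_{T\subset I_1}\conti_0(\text{a face of }[0,1]^{I_1})\otimes(\text{vertex }T)$, cut out by compatibility conditions along faces. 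In particular the $I_1$-cone should be a set of functions on $[0,1]^{I_1}$ with boundary conditions, and not merely something defined up to isomorphism.

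The key lemma I would prove is the following. Let $D$ be an $(I_1\amalg I_2)$-cube of C*-algebras. Regard $D$ as an $I_2$-cube of $I_1$-cubes and apply the $I_1$-cone vertexwise; this produces an $I_2$-cube. Its $I_2$-cone coincides literally with the $(I_1\amalg I_2)$-cone of $D$, under the identification $\conti_0(X_1)\otimes\conti_0(X_2)(B)=\conti_0(X_1\times X_2,B)$ and $[0,1]^{I_1}\times[0,1]^{I_2}=[0,1]^{I_1\amalg I_2}$. I would prove this by induction on $|I_2|$, reducing to the case $|I_2|=1$. In that case it says that the ordinary mapping cone of a map of $I_1$-cones is the $(I_1\amalg\{i\})$-cone, which is the inductive definition in \cref{subsection:higher-dimensional_mapping_cones}, or follows directly from it.

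Next I would apply the lemma to $S_\beta^\gamma S_\alpha^\beta(A)$. Since $I_1\cap I_2=\emptyset$, forming $S_\beta^\gamma$ only involves the indices in $I_2$. For $i\in I_2$, the subsets $K_i$ and $L_j$ with $j\in J_i$ carry the same relative data in $W^\alpha$ and in $W^\beta$, because $\alpha(i)=\beta(i)$. Hence each vertex of the $I_2$-cube defining $S_\beta^\gamma(S_\alpha^\beta(A))(U)$ is $S_\alpha^\beta(A)(V)$ for an open set $V$ determined by $U$ and a subset of $I_2$. By definition, this vertex is itself the $I_1$-cone of the corresponding face of the $(I_1\amalg I_2)$-cube that defines $S_\alpha^\gamma(A)(U)$. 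Checking this vertex-by-vertex identification, together with the matching of the structure maps (inclusions, quotient maps and diagonal maps into direct sums), is a bookkeeping step using the explicit description of open sets of $W^\alpha$, $W^\beta$ and $W^\gamma$. The lemma then gives equality on objects, including all ideals $U$. Equality on morphisms is automatic, since both sides act on $\ast$-homomorphisms by applying $\conti_0(\text{cube})\otimes\phi$ pointwise.

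I expect the main obstacle to be this vertex identification. One has to verify that the open sets produced by first reflecting at $I_1$ and then at $I_2$ are exactly those indexed by $T_1\amalg T_2\subset I_1\amalg I_2$ in the direct definition. One also has to check that the sign and orientation conventions for the interval coordinates agree, since $\closed\to\open$ uses a cone of a map into a sum while $\open\to\closed$ uses a cone of a map out of a sum. Otherwise the two sides would agree only up to a flip $t\mapsto 1-t$ and not strictly. I would handle this by treating each $i\in I^{\alpha,\gamma}$ separately according to the value of $\alpha(i)$, and noting that the coordinate conventions are attached to individual indices, so they are compatible with the product decomposition in the lemma.
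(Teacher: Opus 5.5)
Your overall route is the paper's. \cref{thm:rearrangement_reflection_functors} is deduced from associativity of higher-dimensional mapping cones (\cref{lem:mapping_cone_of_mapping_cone}, used in the form \cref{lem:rearrangement_mapping_cones}) together with an identification of the composed cube with the cube defining $S_{\alpha}^{\gamma}$ (\cref{prop:Falphagamma_varphialphagamma}). The gap is that you treat the second ingredient as bookkeeping with open sets. You assert that, after unfolding $S_{\alpha}^{\beta}$, each outer vertex is ``by definition'' a face of the cube for $S_{\alpha}^{\gamma}$. It is not, and this identification is where essentially all of the proof lies.

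Your model of the cube is also inaccurate in ways that matter. The diagram $(F_{\alpha}^{\beta};\varphi_{\alpha}^{\beta})$ is nonzero only at $\chi^{\alpha,\beta}$ and its neighbours $\chi_i^{\alpha,\beta}$. Its vertices evaluate at locally closed, generally not open, subsets such as $K_i\cap U$ and $\widetilde{\theta}_{\bfj}^{-1}(U\cap L_{\bfj})$. They also carry tensor factors $D_i^{\alpha,\beta}$ and $D_{\bfj}^{\alpha,\beta}$, which you never mention but whose precise choice is what makes the identification work. Finally, unfolding $S_{\alpha}^{\beta}(A)$ inside the outer vertices requires that tensoring, finite direct sums, restriction and push-forward commute with $M^{I^{\alpha,\beta}}$. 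That is exactly the exactness and module property of $(F_{\beta}^{\gamma};\varphi_{\beta}^{\gamma})$ (\cref{lem:Falphabeta_exact_continuous_module}), not a matter of definition.

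Concretely, you must compute each composite $(F_{\beta}^{\gamma})_{\lambda'}(F_{\alpha}^{\beta})_{\lambda}$.
\begin{itemize}
\item At $(\chi^{\alpha,\beta},\chi^{\beta,\gamma})$ you get a double direct sum over $J^{\alpha,\beta}\times J^{\beta,\gamma}$. You must show three things. Pairs with $\bfj\cap\bfj'=\emptyset$ contribute $0$, because $L_{\bfj}\cap W_{\bfj'}=\emptyset$; this uses $J_i\subset\bfj$ for $i\in\bfj\cap I$ (\cref{lem:W_intersection}). The surviving pairs correspond bijectively to $J^{\alpha,\gamma}$ via $(\bfj,\bfj')\mapsto\bfj\cap\bfj'$, a tree argument (\cref{lem:Jalphagamma}). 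And $\widetilde{\theta}_{\bfj'}\circ\widetilde{\theta}_{\bfj}=\widetilde{\theta}_{\bfj\cap\bfj'}$ on $W_{\bfj\cap\bfj'}$ (\cref{lem:composition_thetaj,lem:ipripr}).
\item At $(\chi_i^{\alpha,\beta},\chi^{\beta,\gamma})$ only the unique $\bfj'\ni i$ survives, and symmetrically at $(\chi^{\alpha,\beta},\chi_i^{\beta,\gamma})$ (\cref{lem:ipri,lem:ripr}).
\item At $(\chi_i^{\alpha,\beta},\chi_{i'}^{\beta,\gamma})$ everything vanishes, since $K_i\cap K_{i'}=\emptyset$.
\end{itemize}
The tensor factors must satisfy $D_{\bfj}^{\alpha,\beta}\otimes D_{\bfj'}^{\beta,\gamma}=D_{\bfj\cap\bfj'}^{\alpha,\gamma}$ and $D_i^{\alpha,\beta}\otimes D_{\bfj'}^{\beta,\gamma}=D_i^{\alpha,\gamma}$ (\cref{lem:tensor_Di_Dj}). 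The choice of $j_{k,\{i\}}$ and $j_{k,\bfj}$ in \cref{def:tensor_Di_Dij} is designed precisely for this. The edge maps, including the $\rho_{i,\bfj}^{\alpha,\beta}$, must then match, and by \cref{lem:equality_diagram} this only needs checking between nonzero vertices differing in one coordinate.

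By contrast, the orientation issue you single out as the main obstacle does not arise. Every coordinate of $M^{I}$ uses $[0,1)$ glued at $0$. Moreover $\chi^{\alpha,\gamma}=(\chi^{\alpha,\beta},\chi^{\beta,\gamma})$, and $\chi_i^{\alpha,\gamma}=(\chi_i^{\alpha,\beta},\chi^{\beta,\gamma})$ for $i\in I^{\alpha,\beta}$. So no reparametrization $t\mapsto 1-t$ is ever needed.
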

	
	We prove this theorem in
	\cref{subsection:rearrangements_of_higher-dimensional_reflection_functors}. The theorem implies that, for $\alpha \neq \beta$, the higher-dimensional reflection functor $S_{\alpha}^{\beta}$ can be written as a composition of the corresponding reflection functors in any order; see \cref{cor:Salphabeta_composition_reflection_functors}. Since each reflection functor is an equivalence between the relevant categories, we obtain the following result.

	\begin{maincor}[\cref{cor:equivalence_KKloc_Walpha_Wbeta,cor:equivalence_E_Walpha_Wbeta}]
		For $\alpha, \beta \in \{\closed, \open\}^I$, the higher-dimensional reflection functors induce equivalences
		\[
		\begin{aligned}
			\KKcat(W^{\alpha})_{\loc} &\simeq \KKcat(W^{\beta})_{\loc},
			&\quad \Boot(W^{\alpha}) &\simeq \Boot(W^{\beta}),\\
			\Ecat(W^{\alpha}) &\simeq \Ecat(W^{\beta}),
			&\quad \Boot_{\E}(W^{\alpha}) &\simeq \Boot_{\E}(W^{\beta})
		\end{aligned}
		\]
		as triangulated categories with countable coproducts. 
	\end{maincor}
	
	In the representation theory of quivers, the Coxeter functors associated with an oriented tree are obtained by composing the BGP-reflection functors along an admissible numbering of its vertices; see \cite{BGP_1973}.  As a consequence of \cref{thm:rearrangement_reflection_functors}, the analogous construction for the reflection functors introduced in \cref{section:reflection_functors} is unchanged when the vertices are reordered in another admissible way, paralleling the corresponding property of Coxeter functors; see \cref{cor:Coxeter_functor_independence}. 
	
	In \cref{section:application_of_reflection_functors_to_filtrated_K-theory}, we apply reflection functors to filtrated K-theory. We give a more concise and conceptual alternative proof of Bentmann and Köhler's result on the UCT for C*-algebras over accordion spaces; see \cref{thm:UCT_accordion}. The application of the reflection functors introduced in this paper to classification problems for C*-algebras over finite $T_0$-spaces whose Hasse diagrams are orientations of Dynkin diagrams of type ADE is left for future work. In analogy with Bernstein, Gelfand, and Ponomarev's proof \cite{BGP_1973} of Gabriel's theorem \cite{Gabriel_1972}, we hope that these reflection functors will make it possible to construct a K-theoretic invariant satisfying the UCT for such C*-algebras.

	\begin{acknowledge}
		The author would like to thank his supervisor Takeshi Katsura for his support, encouragement, and many helpful suggestions throughout this work, especially on homotopies involving $2$-dimensional mapping cones in \cref{lem:homotopy_2-dimensional_mapping_cone,lem:antihomotopy_2-dimensional_mapping_cone} and on the topological setting in which higher-dimensional reflection functors are applied. The author is also grateful to Yosuke Kubota, Kan Kitamura, and Taro Sogabe for fruitful discussions. This work was supported by JST SPRING Grant Number JPMJSP2123, The Keio University Doctorate Student Grant-in-Aid Program from Ushioda Memorial Fund, and the KLL Research Grant from the Keio Leading-edge Laboratory of Science and Technology.
	\end{acknowledge}

	\begin{ai}
		The author used ChatGPT (OpenAI) for language and LaTeX editing and as an additional check on mathematical arguments. All mathematical ideas and results were developed and independently verified by the author, who takes full responsibility for the manuscript.
	\end{ai}

	\section{Alexandrov topologies and orientations of trees}\label{section:topologies_orders_quivers}
	
	In this section, we fix the notation and conventions concerning Alexandrov spaces, preordered sets, Hasse diagrams, and orientations of trees that will be used throughout the paper.
	
	\subsection{Alexandrov topologies and preorders}
	
	We first recall the correspondence between Alexandrov spaces and preordered sets. 
	For the Alexandrov topology and the specialization preorder, we follow the conventions used in \cite{MN_2009}. 
	
	A \textit{preordered set} is a pair $(V,R)$ consisting of a set $V$ and a reflexive and transitive relation $R \subset V \times V$.
	For $x,y \in V$, we often write $x \leq y$ if $(x,y) \in R$.
	A preordered set $(V,R)$ is \textit{partially ordered} if $R$ is antisymmetric.
	
	Let $(V,R)$ be a preordered set.
	A subset $U \subset V$ is said to be \textit{Alexandrov open} if
	\[
	R \cap \bigl(U \times (V \setminus U)\bigr)=\emptyset.
	\]
	The collection $\Open_R$ of all Alexandrov open subsets of $V$ forms a topology on $V$, called the \textit{Alexandrov topology}. 
	
	For a topological space $(V,\Open)$, the \textit{specialization preorder} $R_{\Open}$ on $V$ is defined as the set of all $(x, y) \in V \times V$ such that the closure of $\{x\}$ is contained in the closure of $\{y\}$. 
	A topological space $(V,\Open)$ is called an \textit{Alexandrov space} if
	\[
	\Open_{R_{\Open}}=\Open.
	\]
	For every preordered set $(V,R)$, we have
	\[
	R_{\Open_R}=R.
	\]
	Therefore, we may identify Alexandrov spaces with preordered sets. 
	Moreover, Alexandrov $T_0$-spaces correspond to partially ordered sets. Every Alexandrov $T_1$-space is discrete. Every finite topological space is an Alexandrov space.
	
	Let $(V, R)$ be a preordered set. 
	A subset $Y \subset V$ is closed in the Alexandrov space $(V, \Open_R)$ if and only if
	\[
	R \cap \bigl((V \setminus Y) \times Y\bigr)=\emptyset,
	\]
	and $Y$ is locally closed in $(V, \Open_R)$ if and only if
	\[
	(R \times Y) \cap (Y \times R)
	\subset
	Y \times Y \times Y.
	\]
	
	The following lemma follows directly from the definitions and will be used to prove
	\cref{prop:connectedness_Wc_Wo,prop:connectedness_Walpha,lem:connected_locally_closed_subsets_Wc_Wo}. 
	Throughout this paper, we adopt the convention that the empty set is not connected.
	
	\begin{lem}\label{lem:connected_Alexandrov_space}
		Let $(V,R)$ be a preordered set.
		Then the Alexandrov space $(V,\Open_R)$ is connected if and only if $V \neq \emptyset$ and the equivalence relation on $V$ generated by $R$ is $V \times V$.
	\end{lem}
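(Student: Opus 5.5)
Both directions come from the correspondence between clopen subsets of $(V,\Open_R)$ and subsets of $V$ closed under the equivalence relation $\sim$ generated by $R$. The main step is the following claim: a subset $U \subset V$ is clopen in the Alexandrov topology if and only if $U$ is a union of $\sim$-classes.

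To prove the claim, recall the characterizations stated above. $U$ is open if and only if $R \cap \bigl(U \times (V \setminus U)\bigr)=\emptyset$, that is, $x \in U$ and $x \leq y$ imply $y \in U$. $U$ is closed if and only if $R \cap \bigl((V \setminus U) \times U\bigr)=\emptyset$, that is, $y \in U$ and $x \leq y$ imply $x \in U$. So $U$ is clopen exactly when $U$ is closed under both $R$ and its converse $R^{-1}$. The relation $\sim$ is the reflexive transitive closure of $R \cup R^{-1}$. Hence being closed under both $R$ and $R^{-1}$ is equivalent to being closed under $\sim$, i.e.\ to being a union of $\sim$-classes; the forward implication follows by induction along a zigzag chain.

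For the direction ($\Leftarrow$), assume $V \neq \emptyset$ and $\sim$ equals $V \times V$. Then there is only one $\sim$-class, namely $V$. By the claim, the only clopen subsets are $\emptyset$ and $V$, and these are distinct because $V \neq \emptyset$. So the space is connected.

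For the direction ($\Rightarrow$), assume the space is connected. By the convention that the empty set is not connected, $V \neq \emptyset$. Take any $x \in V$ and let $U$ be its $\sim$-class. By the claim $U$ is clopen, and $U$ is nonempty since it contains $x$. Connectedness then forces $U = V$, so every $y \in V$ satisfies $x \sim y$, and therefore $\sim$ equals $V \times V$.

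No step is a real obstacle. The only points needing care are the induction along zigzag chains in the claim and invoking the empty-set convention explicitly in the direction ($\Rightarrow$).
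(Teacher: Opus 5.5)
Your proof is correct. The paper gives no proof and only remarks that the lemma follows directly from the definitions; your identification of the clopen subsets of $(V,\Open_R)$ with the unions of classes of the equivalence relation generated by $R$ is exactly that direct verification, and you handle the empty-set convention properly in both directions.
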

	
	Let $(V_1,R_1)$ and $(V_2,R_2)$ be preordered sets.
	A map $\tau \colon V_1 \to V_2$ is a continuous map between the Alexandrov spaces $(V_1,\Open_{R_1})$ and $(V_2,\Open_{R_2})$ if and only if it is an \textit{order-preserving map} between the preordered sets $(V_1,R_1)$ and $(V_2,R_2)$, that is,
	\[
	(\tau(x),\tau(x')) \in R_2,
	\]
	for every $(x,x') \in R_1$.
	
	The following elementary lemma can be reformulated in terms of Alexandrov spaces and continuous maps. This reformulation is useful for verifying that an Alexandrov space is $T_0$ and will be used to prove \cref{prop:Walpha_T0}.
	
	\begin{lem}\label{lem:T0condition}
		Let $(V_1,R_1)$ be a preordered set, let $(V_2,R_2)$ be a partially ordered set, and let $\tau \colon (V_1,R_1) \to (V_2,R_2)$ be an order-preserving map. Then $(V_1,R_1)$ is partially ordered if and only if, for every $y \in V_2$, the preorder on $\tau^{-1}(\{y\})$ induced by $R_1$ is a partial order.
	\end{lem}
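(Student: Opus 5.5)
The forward implication is immediate. If $R_1$ is antisymmetric, then its restriction to any subset, and in particular to each fibre $\tau^{-1}(\{y\})$, is antisymmetric. Since reflexivity and transitivity are inherited by restriction, each induced preorder is a partial order.

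For the converse, I will assume that the preorder induced by $R_1$ on every fibre is a partial order, and check antisymmetry of $R_1$ directly. Take $x,x' \in V_1$ with $(x,x') \in R_1$ and $(x',x) \in R_1$.

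Because $\tau$ is order-preserving, we get $(\tau(x),\tau(x')) \in R_2$ and $(\tau(x'),\tau(x)) \in R_2$. Since $R_2$ is a partial order, this gives $\tau(x)=\tau(x')=:y$. So $x$ and $x'$ both lie in $\tau^{-1}(\{y\})$.

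The induced preorder on this fibre is $R_1 \cap (\tau^{-1}(\{y\}) \times \tau^{-1}(\{y\}))$. It contains both $(x,x')$ and $(x',x)$. By hypothesis it is antisymmetric, so $x=x'$. Hence $R_1$ is antisymmetric, and $(V_1,R_1)$ is partially ordered.

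There is no real obstacle here. The only point needing care is that antisymmetry of $R_2$ forces $x$ and $x'$ into the same fibre, and it is exactly this that lets the fibrewise hypothesis apply. In topological terms, via the identifications in this section, the statement says that an Alexandrov space admitting a continuous map to a $T_0$ Alexandrov space is $T_0$ if and only if each fibre, with the subspace topology, is $T_0$. The subspace topology on a fibre corresponds to the induced preorder, since the specialization preorder of a subspace of an Alexandrov space is the restricted preorder.
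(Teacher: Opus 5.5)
Your proof is correct and follows the paper's argument essentially verbatim: the forward direction by restriction, and the converse by using order-preservation of $\tau$ and antisymmetry of $R_2$ to place $x$ and $x'$ in a common fibre, where the fibrewise antisymmetry gives $x=x'$.
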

	
	\begin{proof}
		The ``only if'' part is clear. Let $x, x' \in V_1$ with $(x, x'), (x', x) \in R_1$. Since $\tau$ is an order-preserving map, we have $(\tau(x), \tau(x')), (\tau(x'), \tau(x)) \in R_2$. Since $R_2$ is a partial order on $V_2$, we have $\tau(x)=\tau(x')$. Put $y:=\tau(x)=\tau(x') \in V_2$. Then $x, x' \in \tau^{-1}(\{y\})$. Since $\tau^{-1}(\{y\})$ is partially ordered, we obtain $x=x'$. This shows the ``if'' part.  
	\end{proof}
	
	\subsection{Partial orders and Hasse diagrams}\label{subsection:partial_orders_and_Hasse_diagrams}
	
	We next recall the correspondence between finite partially ordered sets and Hasse diagrams. 
	
	A \textit{quiver} (or \textit{directed graph}) is a pair $(V,E)$ consisting of a set $V$ and a subset $E \subset \{(x,y) \in V \times V \mid x \neq y\}$. 
	For $(x, y) \in E$, we draw an arrow from $y$ to $x$; see \cref{eg:Hasse_diagram} below.
	A \textit{sink} (respectively, \textit{source}) is an element $x \in V$ such that
	$(y, x) \notin E$ (respectively, $(x, y) \notin E$) for every $y \in V$.
	
	A \textit{path} in a quiver $(V,E)$ is a finite sequence $(x_i)_{i=0}^n$ in $V$ such that $(x_{i-1},x_i) \in E$ for every $i=1,2,\dots,n$. 
	It is called a \textit{path from $x_n$ to $x_0$}
	and its \textit{length} is $n$. A path of length $0$ consists of a single vertex. 
	A \textit{cycle} is a path $(x_i)_{i=0}^n$ of length $n \geq 2$ such that $x_0=x_n$.
	
	A \textit{Hasse diagram} is a quiver $(V, E)$ such that $V$ is finite and, whenever there exists a path from $y$ to $x$ of length at least $2$, we have $(x, y) \notin E$. Note that a Hasse diagram does not contain a cycle. 
	
	For a finite partially ordered set $(V,R)$, define $E_R$ to be the set of all elements $(x,y) \in R$ such that $x \neq y$ and there is no element $z \in V \setminus \{x,y\}$ such that $(x,z),(z,y) \in R$. 
	Then $(V,E_R)$ is a Hasse diagram.
	Conversely, for a Hasse diagram $(V,E)$, define $R_E$ to be the set of all $(x,y) \in V \times V$ such that there exists a path from $y$ to $x$ in $(V,E)$.
	Then $(V,R_E)$ is a partially ordered set.
	These constructions are mutually inverse:
	\[
	R_{E_R}=R, \quad 
	E_{R_E}=E.
	\]
	
	Consequently, finite $T_0$-spaces, finite partially ordered sets, and Hasse diagrams may be identified with one another.
	
	\begin{eg}\label{eg:Hasse_diagram}
		Let $V:=\{1,2,3,4\}$ and $E:=\{(1,2),(2,3),(2,4)\}$. 
		Then $(V,E)$ is the Hasse diagram
		\[
		\begin{tikzcd}
			& 1 & \\
			& 2 \ar[u] & \\
			3 \ar[ur] & & 4 \ar[ul]
		\end{tikzcd}
		\]
		and
		\begin{align*}
			R_E
			&=
			\{(1,2),(2,3),(2,4),(1,3),(1,4)\}
			\cup
			\{(x,x) \mid x \in V\},
			\\
			\Open_{R_E}
			&=
			\{\emptyset,\{3\},\{4\},\{3,4\},\{2,3,4\},V\}.
		\end{align*}
		The closed subsets of the finite $T_0$-space $(V, \Open_{R_E})$ are $\emptyset$, $\{1\}$, $\{1, 2\}$, $\{1, 2, 3\}$, $\{1, 2, 4\}$, and $V$.
		The locally closed subsets of $(V, \Open_{R_E})$ are $\emptyset$, $\{1\}$, $\{2\}$, $\{3\}$, $\{4\}$, $\{1, 2\}$, $\{2, 3\}$, $\{2, 4\}$, $\{3, 4\}$, $\{1, 2, 3\}$, $\{1, 2, 4\}$, $\{2,3,4\}$, and $V$. 
	\end{eg}
	
	Note that our convention for the direction of arrows in Hasse diagrams agrees with
	that used in \cite{MN_2009} and is opposite to that used in \cite{Ladkani_2007}.

	\subsection{Connected subsets of undirected graphs}
	We recall the notions of undirected graphs needed later. 
	
	An \textit{undirected graph} is a pair $G=(V, E)$ of a set $V$ and a subset $E \subset \{e \subset V \mid |e|=2\}$. For a subset $Y \subset V$, the \textit{subgraph of $G$ induced by $Y$} is the undirected graph $(Y, \{e \in E \mid e \subset Y\})$. 
	
	Let $G=(V, E)$ be an undirected graph. 
	A \textit{path} in $G$ is a finite sequence $(x_i)_{i=0}^n$ in $V$ such that $\{x_{i-1}, x_i\} \in E$ for $i=1, 2, \dots, n$. It is called a \textit{path between $x_0$ and $x_n$}. A path $(x_i)_{i=0}^n$ in $G$ is said to be \textit{simple} if $x_i \neq x_j$ for $i, j=0, 1, \dots, n$ with $i \neq j$. 
	
	An undirected graph $G=(V, E)$ is said to be \textit{connected} if $V \neq \emptyset$ and for $x, y \in V$, there exists a path between $x$ and $y$ in $G$. 
	A subset $Y \subset V$ is said to be \textit{connected} in an undirected graph $G=(V, E)$ if the subgraph of $G$ induced by $Y$ is connected. A \textit{connected component} of an undirected graph $G=(V, E)$ is a maximal connected subset of $G$. 
	
	A \textit{tree} is an undirected graph $G=(V, E)$ such that $V$ is finite and, for every $x, y \in V$, there exists a unique simple path between $x$ and $y$ in $G$. For connected subsets $Y_1, Y_2 \subset V$ in a tree $G=(V, E)$, the intersection $Y_1 \cap Y_2$ is also connected in $G$ if $Y_1 \cap Y_2 \neq \emptyset$. 
	
	An \textit{orientation} of an undirected graph $G=(V, E)$ is a quiver $X=(V, E_X)$ such that, for every $\{x,y\} \in E$, exactly one of $(x,y)$ and $(y,x)$ belongs to $E_X$. Any orientation of a tree is a Hasse diagram and hence may be identified with a finite $T_0$-space. 
	
	The following notion will be used in \cref{section:application_of_reflection_functors_to_filtrated_K-theory}. 
	
	\begin{defn}		
		An \textit{accordion space} is a finite $T_0$-space whose Hasse diagram is an orientation of a Dynkin diagram of type A. 
	\end{defn}
	
	The following lemma will be used to show \cref{prop:Lj_locally_closed}. 
	
	\begin{lem}\label{lem:connected_locally_closed_tree}
		Let $G$ be a tree with vertex set $V$. Let $X$ be a finite $T_0$-space whose Hasse diagram is an orientation of $G$. If a subset $Y \subset V$ is connected in $G$, then $Y$ is locally closed in $X$. 
	\end{lem}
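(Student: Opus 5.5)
We verify the criterion for local closedness recalled above: $Y$ is locally closed in $X$ if and only if, whenever $x \leq z \leq y$ in the specialization order with $x, y \in Y$, we also have $z \in Y$. (This is the condition $(R \times Y) \cap (Y \times R) \subset Y \times Y \times Y$, read as a convexity condition.) So fix $x, y \in Y$ and $z \in V$ with $(x,z),(z,y) \in R$, where $R=R_{E_X}$ is the order determined by the Hasse diagram. The goal is to show $z \in Y$.

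Since $R = R_{E_X}$, the relation $(z,y) \in R$ gives a directed path in $X$ from $y$ to $z$. Similarly, $(x,z) \in R$ gives a directed path from $z$ to $x$. Concatenating them yields a directed path from $y$ to $x$ passing through $z$. Forgetting orientations, each arrow of $X$ is an edge of $G$, so this is a walk in $G$ between $y$ and $x$ that visits $z$.

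The key step is to show that this walk is a simple path. Suppose a vertex were repeated. Then the directed path from $y$ to $x$ would contain a directed sub-walk returning to its starting vertex. If that sub-walk has length at least $2$, it is a cycle in the Hasse diagram, which is impossible. Length $1$ cannot occur, since $E_X$ contains no loops. A shortcut argument is also available: $R$ is antisymmetric because $X$ is $T_0$, so a repeated vertex $w$ would force a path from $w$ to $w$ of positive length, which is a cycle.

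Hence the walk is a simple path in $G$ between $y$ and $x$ through $z$. By uniqueness of simple paths in a tree, it is \emph{the} simple path between $x$ and $y$ in $G$. On the other hand, $Y$ is connected in $G$, so there is a path between $x$ and $y$ inside the induced subgraph on $Y$. That path can be shortened to a simple one, which is also a simple path in $G$. By uniqueness it coincides with the previous path, so every vertex of the path lies in $Y$. In particular $z \in Y$, as required.

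The only delicate points are matching the convention for the direction of arrows and the orientation of $R$, and handling the degenerate cases $x=z$ or $z=y$. The degenerate cases are immediate, and the direction convention does not matter because we pass to the undirected tree. No earlier lemma beyond the recalled characterization of locally closed subsets is needed.
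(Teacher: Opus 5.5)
Your proof is correct and follows the same route as the paper: connectedness of $Y$ puts the unique simple path in $G$ between the two endpoints inside $Y$, and the two order relations force the middle vertex onto that path. You also spell out the step the paper leaves implicit, namely that the concatenated directed paths form a simple path because a Hasse diagram has no cycles; your parenthetical antisymmetry ``shortcut'' is phrased loosely, but the main argument does not depend on it.
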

	
	\begin{proof}
		Let $R$ be the specialization order on $X$. 
		Let $(x, y, z) \in (R \times Y) \cap (Y \times R)$. 
		Since $Y$ is connected in the tree $G$ and $x, z \in Y$, the subgraph induced by $Y$ contains the unique simple path between $x$ and $z$. 
		Since $(x, y), (y, z) \in R$, this path passes through $y$. 
		Hence, $y \in Y$. 
		This shows that $Y$ is locally closed in $X$. 
	\end{proof}

	\subsection{Reflections of orientations of a tree}\label{subsection:reflection_of_orientations_in_quivers}
	
	We recall the reflection of an orientation at a sink or source and the argument showing that any two orientations of the same tree are related by an admissible sequence of such reflections; see \cite{BGP_1973} and \cite[VII.5]{ASS_2006}. 
	
	Let $G$ be a tree with vertex set $V$, and let $X=(V, E_X)$ be an orientation of $G$. For each $x \in V$, define an orientation $\sigma_x X=(V, E_{\sigma_x X})$ of $G$ by reversing all arrows incident to $x$, that is,
	\[
	E_{\sigma_x X}:=
	\{(y, x) \mid (x, y) \in E_X\}
	\cup
	\{(x, y) \mid (y, x) \in E_X\}
	\cup
	\{(y, z) \in E_X \mid y \neq x,\ z \neq x\}.
	\]
	Observe that $\sigma_x\sigma_x X=X$ for $x \in V$, and that $\sigma_y\sigma_x X=\sigma_x\sigma_y X$ for $x, y \in V$. 
	
	If $x$ is a sink in $X$, then 
	\[
	E_{\sigma_x X}=\{(y,x)\mid (x,y)\in E_X\} \cup \{(y,z)\in E_X \mid y\neq x,\ z\neq x\},
	\]
	and $x$ is a source in $\sigma_x X$. 
	If $x$ is a source in $X$, then
	\[
	E_{\sigma_x X}=\{(x,y)\mid (y,x)\in E_X\} \cup \{(y,z)\in E_X \mid y\neq x,\ z\neq x\},
	\]
	and $x$ is a sink in $\sigma_x X$. 
	
	An \textit{admissible sequence of sinks} (respectively, \textit{admissible sequence of sources}) in $X$ is a finite sequence $(x_1,x_2,\dots,x_n)$ in $V$ such that $x_i$ is a sink (respectively, source) in $\sigma_{x_{i-1}}\sigma_{x_{i-2}}\cdots\sigma_{x_1}X$ for every $i=1,2,\dots,n$ with the convention $\sigma_{x_{i-1}}\sigma_{x_{i-2}}\cdots\sigma_{x_1}X:=X$ if $i=1$.
	Note that, for a finite sequence $(x_i)_{i=1}^n$ in $V$, the sequence $(x_1,x_2,\dots,x_n)$ is an admissible sequence of sinks in $X$ if and only if $(x_n,x_{n-1},\dots,x_1)$ is an admissible sequence of sources in $\sigma_{x_n}\sigma_{x_{n-1}}\cdots\sigma_{x_1}X$. 
	Since $G$ is a tree, $X$ admits an admissible sequence $(x_1,x_2,\dots,x_n)$ of sinks such that $x_1,x_2,\dots,x_n$ are all the elements of $V$.
	In this case,
	\[
	\sigma_{x_n}\sigma_{x_{n-1}}\cdots\sigma_{x_1}X=X.
	\]
	
	Every orientation of a tree is a Hasse diagram and hence may be identified with a finite $T_0$-space.
	If $X$ is a finite $T_0$-space whose Hasse diagram is an orientation of a tree and $x \in X$, we denote by $\sigma_xX$ the finite $T_0$-space whose Hasse diagram is obtained by applying $\sigma_x$ to the Hasse diagram of $X$. 
	A sink (respectively, source) in a Hasse diagram is a closed point (respectively, an open point) in the corresponding finite $T_0$-space.
	We call an admissible sequence of sinks (respectively, sources) in the Hasse diagram of $X$ an \textit{admissible sequence of closed points} (respectively, \textit{admissible sequence of open points}) in $X$.
	
	The following lemma is a reformulation of \cite[Theorem~1.2(1)]{BGP_1973} (see also \cite[Lemma~5.2]{ASS_2006}) in terms of finite $T_0$-spaces.
	It will be used to prove \cref{cor:KKloc_tree,cor:E_tree,thm:UCT_accordion}. 
	
	\begin{lem}\label{lem:tree_orientation}
		Let $X$ and $Y$ be finite $T_0$-spaces whose Hasse diagrams are orientations of the same tree.
		Then there exists an admissible sequence $(x_1,x_2,\dots,x_n)$ of open points in $X$ such that
		\[
		\sigma_{x_n}\sigma_{x_{n-1}}\cdots\sigma_{x_1}X=Y.
		\]
	\end{lem}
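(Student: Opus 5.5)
We argue by induction on the number $m(X,Y)$ of edges $\{x,y\}$ of the tree $G$ on which the orientations $X$ and $Y$ disagree. If $m(X,Y)=0$, the empty sequence works. The basic tool is the observation that for every vertex $v$, $\sigma_v$ changes exactly the orientation of the edges incident to $v$. Together with the identities $\sigma_x\sigma_x=\id$ and $\sigma_x\sigma_y=\sigma_y\sigma_x$, this gives $\sigma_{x_n}\cdots\sigma_{x_1}X=\prod_{v\in S}\sigma_vX$, where $S$ is the set of vertices occurring an odd number of times. Thus the problem splits into two parts: first find a set $S\subset V$ with $\prod_{v\in S}\sigma_v X=Y$, and then realize this product by an admissible sequence of open points.

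For the first part, let $D\subset E$ be the set of edges on which $X$ and $Y$ disagree. We need $S$ such that an edge $\{x,y\}$ lies in $D$ exactly when $|S\cap\{x,y\}|=1$. Since $G$ is a tree, such an $S$ exists. Root $G$ at a vertex $r$, decide $r\notin S$, and propagate outward along the unique paths: a child belongs to $S$ iff its membership differs from its parent's precisely when the connecting edge lies in $D$. No consistency issue arises because there are no cycles.

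For the second part, fix an admissible sequence of sources $(v_1,\dots,v_N)$ in $X$ that enumerates all vertices of $V$. This exists by reversing the admissible sequence of sinks in $\sigma_{x_n}\cdots\sigma_{x_1}X=X$ recalled above, using the noted duality between sinks and sources. Then $\sigma_{v_N}\cdots\sigma_{v_1}X=X$. Now take the subsequence $(v_{i})_{v_i\in S}$ in the same order. We must check it is still admissible. The key claim is that if $v$ is a source of $X$ and $(v_1,\dots,v_N)$ orders $V$ so that every arrow of $X$ points from an earlier vertex to a later one, which is the defining property of such enumerations, then the subsequence also works. This fails in general: after flipping only some vertices, a later vertex $v_j\in S$ might have a neighbour $u\notin S$ with $u$ earlier, and the arrow between them is not flipped, so $v_j$ need not be a source.

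To repair this, use the freedom of global flips. Applying all $\sigma_v$, $v\in V$, is the identity, so $S$ and $V\setminus S$ give the same orientation; in fact $S$ is only determined modulo this complement. Rather than fighting this, I would use a cleaner inductive step. If $D\neq\emptyset$, pick $S$ as above with $S\ne\emptyset$ and $S\neq V$. Choose $v\in S$ that is a source of $X$ with respect to the edges from $v$ to vertices outside $S$, and apply $\sigma_v$. This strictly decreases $|S|$: replace $S$ by $S\setminus\{v\}$ for $\sigma_vX$, which still reaches $Y$. Then induct on $|S|$.

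The main obstacle is guaranteeing that an open point of the current space lies in $S$. An arrow inside $S$ could block every $v\in S$. For that case I would instead use an admissible source enumeration of the induced subtree on each connected component of $S$, observing that arrows between $S$ and its complement must all be outgoing from $S$ for the chosen component, after possibly replacing $S$ by $V\setminus S$. I expect checking this orientation condition, via a leaf argument on the tree, to be the crux.
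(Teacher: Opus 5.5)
\textbf{Verdict: there is a genuine gap.} The step you call the crux is where the whole content of the lemma lies, and it is left unproved.

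Your first reduction is sound. The $\sigma_v$ are commuting involutions on orientations of $G$, so any sequence acts as $\prod_{v\in S}\sigma_v$. On a tree the parity set $S$ exists and is unique up to replacing it by $V\setminus S$.

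The gap is in realizing $\prod_{v\in S}\sigma_v$ admissibly.

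\begin{itemize}
\item The step ``choose $v\in S$ that is a source with respect to the edges leaving $S$'' is not an admissible move unless $v$ is a source of the whole current space.
\item Since $S$ need not contain a source, you must allow swapping $S$ with $V\setminus S$. After that, $|S|$ is no longer monotone, so the induction on $|S|$ has no valid measure.
\item The proposed ``orientation condition'' is false when read for $S$ as a whole. On the path $1-2-3$ take $X\colon 1\to 2\to 3$ and $Y\colon 3\to 2\to 1$. Then $S=\{2\}$ or $\{1,3\}$, and the common boundary contains both $1\to 2$ and $2\to 3$.
\end{itemize}

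In this example the only source of $X$ is $1$ and the only source of $\sigma_1X$ is $2$. The shortest admissible sequence reaching $Y$ is $(1,2,1)$, which flips the vertex $1\notin S=\{2\}$ twice. So no scheme that flips each vertex of $S$ (or of $V\setminus S$) exactly once can work in general.

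The missing idea is to flip whole blocks and induct on your original quantity $m(X,Y)$.

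\begin{enumerate}
\item Take an edge $e=\{a,b\}$ on which $X$ and $Y$ disagree, with arrow $a\to b$ in $X$. Let $C$ be the vertex set of the component of $G-e$ containing $a$. Since $G$ is a tree, $b\notin C$, and $e$ is the only edge leaving $C$; it points outward.
\item Enumerate $C=\{c_1,\dots,c_k\}$ so that every arrow of $X$ between two vertices of $C$ goes from an earlier to a later one. This is possible because $X|_C$ is acyclic.
\item Each $c_j$ is a source of $\sigma_{c_{j-1}}\cdots\sigma_{c_1}X$:
  \begin{itemize}
  \item arrows to earlier $c_i$ were $c_i\to c_j$ and have been reversed;
  \item arrows to later $c_l$ are $c_j\to c_l$ and untouched;
  \item the only other possible arrow is $a\to b$, which is outgoing.
  \end{itemize}
\item Edges inside $C$ are reversed twice and $e$ once. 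So $\sigma_{c_k}\cdots\sigma_{c_1}X$ differs from $X$ exactly on $e$, and $m$ drops by one.
\item Concatenate with the sequence given by the induction hypothesis.
\end{enumerate}

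Your ``component'' idea can be made to work the same way. Remove all disagreeing edges from $G$ and flip a single resulting component all of whose boundary arrows point outward. Such a component exists because these components, contracted, form a tree with an induced orientation, and that orientation has a source. A leaf argument would not suffice, since a leaf component may have its boundary arrow pointing inward.

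The paper gives no proof of this lemma and simply quotes \cite[Theorem~1.2(1)]{BGP_1973}. The single-edge argument above is the classical one.
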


	\section{C*-algebras over topological spaces}\label{section:Cstar-algebras_over_topological_spaces}
	
	In this section, we develop the basic theory of C*-algebras over topological spaces using functor categories, rather than the formulation of Meyer and Nest based on primitive ideal spaces in \cite{MN_2009}. After fixing notation, we give a functorial description of C*-algebras over a topological space and develop the permanence properties of inductive limits, tensor actions, direct sums, functors associated with continuous maps and locally closed subsets, and higher-dimensional mapping cones. We finally recall Bott periodicity for homotopy-invariant, stable, half-exact functors on a category of C*-algebras over a topological space. 
	
	\subsection{Notation for C*-algebras and functor categories}\label{subsection:notation_for_Cstar-algebras_and_functor_categories}
	
	We begin by fixing the notation and conventions for C*-algebras and functor categories used throughout the paper. 
	
	For a locally compact Hausdorff space $\Omega$, let $\conti_0(\Omega)$ denote the C*-algebra of all continuous functions $\Omega \to \C$ vanishing at infinity. 
	If $\Omega$ is compact, then write $\conti(\Omega):=\conti_0(\Omega)$. 
	
	\begin{defn}
		We define C*-algebras
		\[
		S:=\conti_0((0, 1)), \quad C:=\conti_0([0, 1))
		\]
		called the \textit{suspension} and the \textit{cone}, respectively.
	\end{defn}
	
	In this paper, $\otimes$ denotes the \textit{maximal tensor product} of C*-algebras. 
	Let $\Omega$ be a locally compact Hausdorff space. For a C*-algebra $A$, the C*-algebra 
	\[\conti_0(\Omega, A):=\conti_0(\Omega) \otimes A\] 
	consists of all continuous functions $\Omega \to A$ vanishing at infinity. For a $\ast$-homomorphism $\varphi \colon A \to B$, the $\ast$-homomorphism 
	\[\varphi_*:=\id_{\conti_0(\Omega)} \otimes \varphi \colon \conti_0(\Omega, A) \to \conti_0(\Omega, B)\] 
	is given by $\varphi_*(f):=\varphi \circ f$ for $f \in \conti_0(\Omega, A)$. Recall that $\conti_0(\Omega', \conti_0(\Omega))=\conti_0(\Omega' \times \Omega)$ for locally compact Hausdorff spaces $\Omega$ and $\Omega'$.
	
	For a C*-algebra $D$ and $i=0, 1, \ldots$, define a C*-algebra $D^i$ recursively by
	\[
	D^i:=
	\begin{cases}
		\C & \text{if $i=0$},\\
		D \otimes D^{i-1} & \text{if $i \geq 1$}.
	\end{cases}
	\]
	
	\begin{defn}
		Let $\Cstar$ denote the category whose objects are C*-algebras and whose morphisms are $\ast$-homomorphisms. Let $\SCstar$ denote the full subcategory of $\Cstar$ consisting of separable C*-algebras. 
	\end{defn}

	For categories $\frakA$ and $\frakB$, let $[\frakA, \frakB]$ denote the functor category from $\frakA$ to $\frakB$. The objects are functors $F \colon \frakA \to \frakB$. The morphisms from a functor $F \colon \frakA \to \frakB$ to a functor $G \colon \frakA \to \frakB$ are natural transformations $\xi \colon F \Rightarrow G$. The composition of morphisms $\xi \colon F \Rightarrow G$ and $\zeta \colon G \Rightarrow H$ is the vertical composition 
	\[\zeta \circ \xi \colon F \Rightarrow H. \]
	
	Let $\frakA$, $\frakB$, and $\frakD$ be categories. For functors
	$F_1,F_2 \colon \frakA \to \frakB$ and
	$G_1,G_2 \colon \frakB \to \frakD$, and natural transformations
	$\xi \colon F_1 \Rightarrow F_2$ and
	$\zeta \colon G_1 \Rightarrow G_2$, let
	\[
	\zeta\xi \colon G_1F_1 \Rightarrow G_2F_2
	\]
	denote their horizontal composite.
	For a functor $G \colon \frakB \to \frakD$, we write $G\xi \colon GF_1 \Rightarrow GF_2$ for the horizontal composite of the identity natural transformation on $G$ with $\xi$.
	Similarly, for a functor $F \colon \frakA \to \frakB$, we write $\zeta F \colon G_1F \Rightarrow G_2F$ for the horizontal composite of $\zeta$ with the identity natural transformation on $F$.
	
	Let $\frakA$, $\frakB$, and $\frakD$ be categories. For a functor $F \colon \frakB \to \frakD$, composition with $F$ on the left and on the right defines functors
	\[
	F_* \colon [\frakA, \frakB] \to [\frakA, \frakD], \quad 
	F^* \colon [\frakD, \frakA] \to [\frakB, \frakA],
	\]
	respectively.
	
	If $A$ is an object of a category $\frakA$, then we write $A \in \frakA$. 
	
	Let $\frakA$ be a category. Let $0$ denote the functor $\frakA \to \Cstar$ that sends every object $Y \in \frakA$ to the zero C*-algebra $0$. The functor $0 \colon \frakA \to \Cstar$ is a zero object in $[\frakA, \Cstar]$. For $A, B \in [\frakA, \Cstar]$, let $0 \colon A \Rightarrow B$ denote the vertical composite $A \Rightarrow 0 \Rightarrow B$ in $[\frakA, \Cstar]$. A sequence
	\[
	\begin{tikzcd}
		0 \ar[Rightarrow]{r} & A \ar[Rightarrow]{r}{\iota} & E \ar[Rightarrow]{r}{\pi} & B \ar[Rightarrow]{r} & 0
	\end{tikzcd}
	\]
	of objects and morphisms in $[\frakA, \Cstar]$ is called a \textit{short exact sequence} if 
	\[
	\begin{tikzcd}
		0 \ar{r} & A(Y) \ar{r}{\iota_Y} & E(Y) \ar{r}{\pi_Y} & B(Y) \ar{r} & 0
	\end{tikzcd}
	\]
	is a short exact sequence in $\Cstar$ for each $Y \in \frakA$. 
	
	We regard a preordered set $\Lambda=(\Lambda, R_\Lambda)$ as a category whose object set is $\Lambda$ and in which, for $\lambda, \mu \in \Lambda$, there exists a unique morphism from $\lambda$ to $\mu$ if $(\lambda, \mu) \in R_{\Lambda}$ and no morphism otherwise. Note that, under the convention for Hasse diagrams fixed in \cref{subsection:partial_orders_and_Hasse_diagrams}, the direction of morphisms in this category is opposite to that of the arrows in the Hasse diagram of $\Lambda$.
	
	Let $\Lambda$ be an upward-directed set. Since $\Cstar$ admits inductive limits, the diagonal functor $\Cstar \to [\Lambda, \Cstar]$ has the left adjoint $\varinjlim_{\lambda \in \Lambda} \colon [\Lambda, \Cstar] \to \Cstar$. 
	Composition with this left adjoint on the left defines a functor
	\[[\Lambda, [\frakA, \Cstar]]=[\frakA, [\Lambda, \Cstar]] \to [\frakA, \Cstar], \] 
	which we also denote by $\varinjlim_{\lambda \in \Lambda}$.
	The resulting functor $\varinjlim_{\lambda \in \Lambda}$ is left adjoint to the diagonal functor $[\frakA, \Cstar] \to [\Lambda, [\frakA, \Cstar]]$. Thus, $[\frakA, \Cstar]$ admits inductive limits, and they are computed pointwise: for $A \in [\Lambda, [\frakA, \Cstar]]$ and $Y \in \frakA$,
	\[
	\Bigl(\varinjlim_{\lambda \in \Lambda} A_\lambda\Bigr)(Y)=\varinjlim_{\lambda \in \Lambda} (A_\lambda(Y)).
	\]
	
	We use $\blank$ as a placeholder for the argument of a functor. 
	The category $\Cstar$ is a symmetric monoidal category in which tensor product is the maximal tensor product functor $\blank \otimes \blank \colon \Cstar \times \Cstar \to \Cstar$ and tensor unit is $\C$; see \cite[Lemma~11]{Meyer_2008}. For a category $\frakA$, the functor category $[\frakA, \Cstar]$ is a $\Cstar$-module category in the sense of \cite[Chapter 7]{EGNO_2015}. Its tensor action is the composite
	\[\Cstar \times [\frakA, \Cstar] \to [\frakA, \Cstar] \times [\frakA, \Cstar]=[\frakA, \Cstar \times \Cstar] \to [\frakA, \Cstar], \]
	which we also denote by $\blank \otimes \blank$. 
	Here, the first functor is the product of the diagonal functor $\Cstar \to [\frakA, \Cstar]$ and the identity functor on $[\frakA, \Cstar]$, while the last functor $[\frakA, \Cstar \times \Cstar] \to [\frakA, \Cstar]$ is given by composing $\blank \otimes \blank \colon \Cstar \times \Cstar \to \Cstar$ on the left. 
	This tensor action is computed pointwise: for $(D, A) \in \Cstar \times [\frakA, \Cstar]$ and $Y \in \frakA$,
	\[(D \otimes A)(Y)=D \otimes (A(Y)). \]
	The associativity and unit constraints in $[\frakA, \Cstar]$ are inherited pointwise from those of $\Cstar$.
	
	Let $\frakA$ and $\frakB$ be categories. A functor $F \colon [\frakA, \Cstar] \to [\frakB, \Cstar]$ is said to be
	\begin{itemize}
		\item \textit{exact} if $F$ preserves short exact sequences; 
		\item \textit{continuous} if $F$ commutes with inductive limits. 
	\end{itemize}
	A functor $F \colon [\frakA, \Cstar] \to [\frakB, \Cstar]$ is said to be a \textit{$\Cstar$-module functor} if it commutes with tensor actions. More explicitly, a $\Cstar$-module functor from $[\frakA, \Cstar]$ to $[\frakB, \Cstar]$ is a pair $(F, \varphi)$ consisting of a functor $F \colon [\frakA, \Cstar] \to [\frakB, \Cstar]$ and a natural isomorphism $\varphi \colon F(\blank \otimes \blank) \Rightarrow \blank \otimes F(\blank)$ between the functors $\Cstar \times [\frakA, \Cstar] \to [\frakB, \Cstar]$ satisfying the associativity and unit constraints; see \cite[Definition~7.2.1]{EGNO_2015}. For example, for a category $\frakA$ and $D \in \Cstar$, the functor $D \otimes \blank \colon [\frakA, \Cstar] \to [\frakA, \Cstar]$ together with the tensor flip isomorphism $D \otimes (\blank \otimes \blank) \Rightarrow \blank \otimes (D \otimes \blank)$ between the functors $\Cstar \times [\frakA, \Cstar] \to [\frakA, \Cstar]$ is a $\Cstar$-module functor. 
	
	Let $\frakA$ and $\frakB$ be categories. For $\Cstar$-module functors $(F, \varphi)$ and $(G, \psi)$ from $[\frakA, \Cstar]$ to $[\frakB, \Cstar]$, a \textit{morphism} of $\Cstar$-module functors from $(F, \varphi)$ to $(G, \psi)$ is a natural transformation $\xi \colon F \Rightarrow G$ making the diagram
	\[
	\begin{tikzcd}
		F(D \otimes A) \ar[Rightarrow]{r}{\varphi_{D, A}} \ar[Rightarrow, swap]{d}{\xi_{D \otimes A}} & D \otimes F(A) \ar[Rightarrow]{d}{\id_D \otimes \xi_A} \\
		G(D \otimes A) \ar[Rightarrow, swap]{r}{\psi_{D, A}} & D \otimes G(A)
	\end{tikzcd}
	\]
	commute in $[\frakB, \Cstar]$ for $D \in \Cstar$ and $A \in [\frakA, \Cstar]$; see \cite[Definition~7.2.2]{EGNO_2015}. 
	In this paper, we refer to such a natural transformation as a \textit{$\Cstar$-module natural transformation}.

	All module functors $(F,\varphi)$ considered in this paper arise from constructions such as tensor products and mapping cones, where the natural isomorphism $\varphi$ is canonically induced, often by a universal property. We shall therefore, by abuse of notation, suppress $\varphi$ and simply refer to the functor $F$ as a module functor. Moreover, throughout the paper, we write canonical isomorphisms, such as those induced by universal properties, as equalities unless the isomorphisms themselves are under discussion.
	
	For each $t \in [0, 1]$, let $\ev_t \colon \conti([0, 1]) \twoheadrightarrow \C$ denote the $\ast$-homomorphism defined by $\ev_t(f):=f(t)$ for $f \in \conti([0, 1])$. 
	For a category $\frakA$ and $B \in [\frakA, \Cstar]$, the natural transformation $\ev_t \otimes \id_B \colon \conti([0, 1]) \otimes B \Rightarrow \C \otimes B=B$ is simply denoted by $\ev_t$. 
	
	For a category $\frakA$ and $A, B \in [\frakA, \Cstar]$, two morphisms $\varphi_0, \varphi_1 \colon A \Rightarrow B$ in $[\frakA, \Cstar]$ are said to be \textit{homotopic} if there exists a morphism $\varphi \colon A \Rightarrow \conti([0, 1]) \otimes B$ in $[\frakA, \Cstar]$ satisfying $\varphi_0=\ev_0 \circ \varphi$ and $\varphi_1=\ev_1 \circ \varphi$. An object $A \in [\frakA, \Cstar]$ is said to be \textit{contractible} if the identity morphism $\id \colon A \Rightarrow A$ and the zero morphism $0 \colon A \Rightarrow A$ are homotopic. Recall that the cone $C$ is contractible in $\Cstar$.
	
	All the notation and conventions introduced in this subsection also apply with $\Cstar$ replaced by $\SCstar$, except that inductive limits are restricted to those indexed by countable upward-directed sets.

	\subsection{Categories of C*-algebras over a topological space}
	
	In this subsection, we recall the category $\Cstar(X)$ of C*-algebras over a topological space $X$ and describe it in terms of suitable functors. For this purpose, we define a category $\LCcat(X)$ whose objects are the locally closed subsets of $X$ and show that C*-algebras over $X$ may be identified with suitable functors from $\LCcat(X)$ to $\Cstar$. This viewpoint allows us to regard the C*-algebras associated with locally closed subsets of $X$, together with the canonical maps between them, as parts of a single functor.
	
	Throughout this subsection, we fix a topological space $X$. 
	
	\begin{defn}
		Let $\Open(X)$, $\Closed(X)$, and $\LC(X)$ denote the sets of all open, closed, and locally closed subsets of $X$, respectively. 
	\end{defn}
	
	Recall that $\Open(X)$ and $\Closed(X)$ are subsets of $\LC(X)$. For a subset $Y \subset X$, we endow $Y$ with the relative topology inherited from $X$. For $Z \in \LC(X)$ and $Y \in \LC(Z)$, we have $Y \in \LC(X)$. 
	
	\begin{defn}\label{def:C-algebras_over_X}
		A \textit{C*-algebra over $X$} is a C*-algebra $A$ together with
		ideals $A(U) \subset A$ for $U \in \Open(X)$ satisfying the following properties:
		\begin{enumerate}[label=\textnormal{(\roman*)}]
			\item $A(\emptyset)=0$, $A(X)=A$;
			\item for every $U_1, U_2 \in \Open(X)$, 
			\[A(U_1 \cap U_2)=A(U_1) \cap A(U_2), \quad A(U_1 \cup U_2)=A(U_1)+A(U_2); \]
			\item 
			for every $U \in \Open(X)$, every upward-directed set $\Lambda$, and every increasing net $(U_{\lambda})_{\lambda \in \Lambda} \subset \Open(U)$ with $U=\bigcup_{\lambda \in \Lambda} U_{\lambda}$, 
			\[A(U)=\overline{\bigcup_{\lambda \in \Lambda}A(U_{\lambda})}.\]
		\end{enumerate}
	\end{defn}

	\begin{rem}
		In \cite[Definition~2.3]{MN_2009}, a C*-algebra over $X$ is defined to be a C*-algebra $A$ together with a continuous map $\Prim A \to X$, where $\Prim A$ denotes the primitive ideal space of $A$ equipped with the Jacobson topology. If $X$ is a sober space, then the two definitions of a C*-algebra over $X$ are essentially the same by \cite[Lemma 2.25]{MN_2009}. Even if $X$ is not sober, we may assume that $X$ is sober without loss of generality by replacing $X$ with the soberification of $X$. We may also assume that $X$ is $T_0$ without loss of generality by replacing $X$ with the $T_0$-quotient of $X$. Recall that every finite $T_0$-space is sober. 
	\end{rem}
	
	\begin{defn}
		A linear map $\varphi \colon A \to B$ between C*-algebras over $X$ is said to be \textit{$X$-equivariant} if $\varphi(A(U)) \subset B(U)$ for all $U \in \Open(X)$. 
	\end{defn}

	\begin{defn}
		Let $\Cstar(X)$ denote the category whose objects are C*-algebras over $X$ and whose morphisms are $X$-equivariant $\ast$-homomorphisms. Let $\SCstar(X)$ denote the full subcategory of $\Cstar(X)$ consisting of all objects $A \in \Cstar(X)$ with $A(X) \in \SCstar$. 
	\end{defn}

	\begin{defn}
		We define a category $\LCcat(X)$ as follows. Its object set is $\LC(X)$. For $Y_1, Y_2 \in \LC(X)$, the morphism set from $Y_1$ to $Y_2$ is the set of all $Z \in \Closed(Y_1) \cap \Open(Y_2)$. For $Y_1, Y_2, Y_3 \in \LC(X)$, $Z_1 \in \Closed(Y_1) \cap \Open(Y_2)$, and $Z_2 \in \Closed(Y_2) \cap \Open(Y_3)$, the composition of the morphisms $Z_1$ and $Z_2$ is defined as the intersection $Z_1 \cap Z_2 \in \Closed(Y_1) \cap \Open(Y_3)$. This composition is associative, and $\id_Y:=Y$ is the identity morphism on $Y$ for $Y \in \LC(X)$. 
	\end{defn}
	
	For $Z \in \LC(X)$ and $Y \in \Open(Z)$, we write $\iota_Y^Z:=Y$ for the morphism from $Y$ to $Z$. 
	For $Z \in \LC(X)$ and $Y \in \Closed(Z)$, define $\pi_Z^Y:=Y$ for the morphism from $Z$ to $Y$. 
	Then, for $Y_1, Y_2 \in \LC(X)$, every morphism $Z$ from $Y_1$ to $Y_2$ can be written as the composite $\iota_Z^{Y_2} \pi_{Y_1}^Z$. 
	In the category $\LCcat(X)$, we have the following relations:
	\begin{enumerate}[label=\textnormal{(\roman*)}]
		\item $\iota_Y^Y=\pi_Y^Y=\id_Y$ for $Y \in \LC(X)$; 
		\item $\iota_{Y_2}^{Y_3} \iota_{Y_1}^{Y_2}=\iota_{Y_1}^{Y_3}$ for $Y_3 \in \LC(X)$, $Y_2 \in \Open(Y_3)$, and $Y_1 \in \Open(Y_2)$; 
		\item $\pi_{Y_2}^{Y_3} \pi_{Y_1}^{Y_2}=\pi_{Y_1}^{Y_3}$ for $Y_1 \in \LC(X)$, $Y_2 \in \Closed(Y_1)$, and $Y_3 \in \Closed(Y_2)$; 
		\item $\pi_{Y_2}^{Y_3} \iota_{Y_1}^{Y_2}=\iota_{Y_1 \cap Y_3}^{Y_3} \pi_{Y_1}^{Y_1 \cap Y_3}$ for $Y_2 \in \LC(X)$, $Y_1 \in \Open(Y_2)$, and $Y_3 \in \Closed(Y_2)$. 
	\end{enumerate}
	
	\begin{defn}
		Let $[\LCcat(X), \Cstar]_{\substack{\ex \\ \cont}}$ denote the full subcategory of $[\LCcat(X), \Cstar]$ consisting of all objects $A \in [\LCcat(X), \Cstar]$ satisfying the following properties:
		\begin{enumerate}[label=\textnormal{(\roman*)}]
			\item for every $Y_2 \in \LC(X)$ and $Y_1 \in \Open(Y_2)$ with $Y_3:=Y_2 \setminus Y_1$, the sequence 
			\[
			\begin{tikzcd}
				0 \ar{r} & A(Y_1) \ar{r}{A(\iota_{Y_1}^{Y_2})} & A(Y_2) \ar{r}{A(\pi_{Y_2}^{Y_3})} & A(Y_3) \ar{r} & 0
			\end{tikzcd}
			\]
			is exact; 
			\item for every $Y \in \LC(X)$, every upward-directed set $\Lambda$, and every increasing net $(Y_{\lambda})_{\lambda \in \Lambda} \subset \Open(Y)$ with $Y=\bigcup_{\lambda \in \Lambda} Y_{\lambda}$, 
			\[A(Y)=\overline{\bigcup_{\lambda \in \Lambda} A(\iota_{Y_{\lambda}}^Y)(A(Y_{\lambda}))}. \]
		\end{enumerate}
	\end{defn}
	
	The set $\Open(X)$ is partially ordered by inclusion and hence may be regarded as a category. The category $\Open(X)$ may be regarded as a subcategory of $\LCcat(X)$. We may and do identify $\Cstar(X)$ with a full subcategory of $[\Open(X), \Cstar]$. 
	
	\begin{prop}\label{prop:CstarX_functor}
		The functor $[\LCcat(X), \Cstar] \to [\Open(X), \Cstar]$ given by composition with the inclusion functor $\Open(X) \hookrightarrow \LCcat(X)$ on the right restricts to an equivalence
		\[[\LCcat(X), \Cstar]_{\substack{\ex \\ \cont}} \to \Cstar(X). \]
	\end{prop}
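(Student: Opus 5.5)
First I will check that the restriction functor $\mathrm{res}\colon A \mapsto A|_{\Open(X)}$ actually lands in $\Cstar(X)$. Given $A \in [\LCcat(X), \Cstar]_{\substack{\ex \\ \cont}}$, condition (i) applied to $Y_1 \in \Open(Y_2)$ shows that each $A(\iota_{Y_1}^{Y_2})$ is injective. So for $U \in \Open(X)$ we identify $A(U)$ with the ideal $A(\iota_U^X)(A(U)) \subset A(X)$, which is an ideal as the kernel of $A(\pi_X^{X\setminus U})$.

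Next I verify the axioms of \cref{def:C-algebras_over_X}.
\begin{itemize}
\item $A(\emptyset)=0$ follows from exactness with $Y_1=Y_2=\emptyset$.
\item The continuity axiom (iii) is exactly condition (ii) restricted to open $Y$.
\item For $A(U_1\cap U_2)=A(U_1)\cap A(U_2)$, use exactness of $0\to A(U_1\cap U_2)\to A(U_1)\to A(U_1\setminus U_2)\to 0$. Compose with the map $A(U_1\setminus U_2) \to A(X\setminus U_2)$ induced by $\iota$, which is injective since $U_1\setminus U_2$ is open in $X\setminus U_2$. Relation (iv) in $\LCcat(X)$ then identifies the composite $A(U_1)\to A(X\setminus U_2)$ with $A(\pi_X^{X\setminus U_2})\circ A(\iota_{U_1}^X)$. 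Hence the kernel is $A(U_1)\cap A(U_2)$.
\item For the sum, the image of $A(U_1\cup U_2)\to A((U_1\cup U_2)\setminus U_1)=A(U_2\setminus U_1)$ is surjective. $A(U_2)\to A(U_2\setminus U_1)$ is also surjective and compatible via (iv), so $A(U_1\cup U_2)=A(U_1)+A(U_2)$.
\end{itemize}
On morphisms, restriction of a natural transformation gives $X$-equivariant maps $A(X)\to B(X)$. Hence $\mathrm{res}$ is a well-defined functor.

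For full faithfulness: every locally closed $Y$ can be written as $Y=U\setminus V$ with $V\subset U$ open, for instance $U$ the smallest open set containing $Y$ and $V=U\setminus Y$. By exactness, $A(Y)\cong A(U)/A(V)$ canonically via $A(\pi_U^Y)$. A natural transformation is therefore determined by its open components, since $A(\pi_U^Y)$ is surjective. Conversely, an equivariant map induces maps on these quotients. I will check that these are independent of the presentation $(U,V)$ and natural for all morphisms $\iota_Z^{Y_2}\pi_{Y_1}^Z$. Both follow because for two presentations $U\setminus V=U'\setminus V'$ the quotients are compared through $U\cap U'$ using exactness and (iv).

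Essential surjectivity is the main step. Given $A\in\Cstar(X)$, define $\tilde A(Y):=A(U)/A(V)$ for a presentation $Y=U\setminus V$. Independence of the choice comes from the isomorphisms $A(U\cap U')/A(V\cap U')\to A(U)/A(V)$, which use $A(U_1\cap U_2)=A(U_1)\cap A(U_2)$ and $A(U_1\cup U_2)=A(U_1)+A(U_2)$. The cleanest way is to fix the canonical presentation with $U$ minimal. The morphism $\iota_{Y_1}^{Y_2}$ is then the inclusion of ideals in quotients, and $\pi_{Y_1}^{Z}$ is the further quotient. I must check the relations (i)–(iv), functoriality, exactness (i), and continuity (ii).
\begin{itemize}
\item Exactness is a third isomorphism theorem computation combined with the lattice identities.
\item Continuity for $Y=U\setminus V$ with $Y_\lambda$ open in $Y$: write $Y_\lambda=U_\lambda\setminus V$ with $U_\lambda=Y_\lambda\cup V$ open, increasing, and with union $U$. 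Then (iii) gives density of $\bigcup A(U_\lambda)$ in $A(U)$, and this passes to quotients.
\end{itemize}
The main obstacle I expect is the bookkeeping in verifying relation (iv) and the well-definedness of $\tilde A$ on composites in $\LCcat(X)$. I will handle it by consistently reducing every morphism to the normal form $\iota\pi$ and working inside the quotients of $A(X)$ by ideals.
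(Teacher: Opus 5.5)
Your proposal is correct and follows essentially the same route as the paper's proof in the appendix. The shared steps are: presenting $A(Y)$ as $A(U)/A(V)$ for $Y=U\setminus V$; faithfulness from surjectivity of $A(\pi_U^Y)$; independence of the presentation by passing to $U\cap U'$ (the paper's isomorphisms $\sigma_{U_1,Y}^{U_2}$); the generators-and-relations check of (i)--(iv) for essential surjectivity; and continuity via the opens $Y_\lambda\cup V$.

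One small correction: $X$ is an arbitrary topological space here, so a smallest open set containing $Y$ exists in general only when $X$ is Alexandrov. Instead, fix an arbitrary open $U_Y$ in which $Y$ is closed, e.g.\ $X\setminus(\overline{Y}\setminus Y)$; the paper likewise fixes an arbitrary $U_Y$, and your independence-of-presentation argument already allows this.
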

	
	The proof of \cref{prop:CstarX_functor} is rather tedious and is given in \cref{appendix:proof_CstarX_functor}. 
	Using this equivalence, we shall identify $\Cstar(X)$ with the category $[\LCcat(X), \Cstar]_{\substack{\ex \\ \cont}}$. 
	Then a morphism $\varphi \colon A \to B$ in $\Cstar(X)$ is regarded as a natural transformation. We use the same symbol $\varphi$ for its component $\varphi_X \colon A(X) \to B(X)$ at $X$.
	
	\begin{rem}
		The category $\SCstar(X)$ is the full subcategory of $\Cstar(X)$ consisting of all objects belonging to $[\LCcat(X), \SCstar]$. 
	\end{rem}
	
	\begin{defn}
		For each $Y \in \LC(X)$, let
		\[\ev_X^Y \colon \Cstar(X) \to \Cstar\]
		denote the functor obtained by restricting the evaluation functor $[\LCcat(X), \Cstar] \to \Cstar$ at $Y$. 
		This functor restricts to a functor $\ev_X^Y \colon \SCstar(X) \to \SCstar$. We simply write $\ev_X:=\ev_X^X$. 
	\end{defn}
	
	\begin{rem}
		The functors $\ev_X \colon \Cstar(X) \to \Cstar$ and $\ev_X \colon \SCstar(X) \to \SCstar$ may be regarded as the forgetful functors. 		
		If $X$ is a one-point space, then they are isomorphisms of categories. In this case, we may and do identify $\Cstar(X)$ and $\SCstar(X)$ with $\Cstar$ and $\SCstar$ via $\ev_X$, respectively. 
	\end{rem}
	
	\subsection{Inductive limits and tensor actions}\label{subsection:tensor_actions_inductive_limits_and_direct_sums}
	
	By the description of $\Cstar(X)$ in \cref{prop:CstarX_functor}, exact and continuous functors on diagrams in $\Cstar$ can be applied pointwise to diagrams in $\Cstar(X)$, as recorded in the following lemma. The proof follows directly from the definitions.
	
	\begin{lem}\label{lem:restriction_exact_continuous}
		Let $X$ be a topological space. Let $\frakA$ be a category. If a functor $F \colon [\frakA, \Cstar] \to \Cstar$ is exact and continuous, then the functor 
		\[[\frakA, [\LCcat(X), \Cstar]]=[\LCcat(X), [\frakA, \Cstar]] \xrightarrow{F_*} [\LCcat(X), \Cstar]\]
		given by composition with $F$ on the left restricts to a functor
		\[[\frakA, \Cstar(X)] \to \Cstar(X). \]
	\end{lem}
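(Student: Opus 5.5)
Under the identification $[\frakA, [\LCcat(X), \Cstar]]=[\LCcat(X), [\frakA, \Cstar]]$, an object $A$ of $[\frakA, \Cstar(X)]$ corresponds to a functor $\LCcat(X) \to [\frakA, \Cstar]$, $Y \mapsto A(Y):=(a \mapsto A_a(Y))$. The plan is to show that the composite $F_*A \colon Y \mapsto F(A(Y))$ satisfies both defining conditions (i) and (ii) of $[\LCcat(X), \Cstar]_{\substack{\ex \\ \cont}}$. Functoriality in $A$ is then automatic, since $F_*$ is already a functor between the ambient functor categories and $\Cstar(X)$ is a full subcategory. By \cref{prop:CstarX_functor}, this is all that needs to be checked.

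For condition (i), fix $Y_2 \in \LC(X)$, $Y_1 \in \Open(Y_2)$ and $Y_3:=Y_2\setminus Y_1$. Since each $A_a$ lies in $\Cstar(X)$, the sequence
\[0 \to A_a(Y_1) \to A_a(Y_2) \to A_a(Y_3) \to 0\]
is exact for every $a \in \frakA$. Naturality in $a$ holds because the maps $A(\iota_{Y_1}^{Y_2})$ and $A(\pi_{Y_2}^{Y_3})$ are components of natural transformations in $[\frakA,\Cstar]$. Hence
\[0 \to A(Y_1) \to A(Y_2) \to A(Y_3) \to 0\]
is a short exact sequence in $[\frakA,\Cstar]$ in the pointwise sense of \cref{subsection:notation_for_Cstar-algebras_and_functor_categories}. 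Exactness of $F$ then gives exactness of the image sequence, whose maps are exactly $(F_*A)(\iota_{Y_1}^{Y_2})$ and $(F_*A)(\pi_{Y_2}^{Y_3})$.

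For condition (ii), fix $Y \in \LC(X)$ and an increasing net $(Y_\lambda)_{\lambda \in \Lambda}$ of open subsets of $Y$ with union $Y$. The key reformulation is that, given injectivity of the connecting maps (which follows from (i)), condition (ii) for an object $B$ says exactly that the canonical map $\varinjlim_\lambda B(Y_\lambda) \to B(Y)$ is an isomorphism. Indeed, an inductive limit of injective $\ast$-homomorphisms is the closure of the union of the images.
\begin{itemize}
\item Applying this reformulation to each $A_a$ shows that $\varinjlim_\lambda A(Y_\lambda) \to A(Y)$ is an isomorphism in $[\frakA,\Cstar]$, since inductive limits there are computed pointwise.
\item Continuity of $F$ then gives $\varinjlim_\lambda F(A(Y_\lambda)) \cong F(A(Y))$ via the canonical map.
\item Exactness of $F$ preserves injectivity of the maps $A(\iota_{Y_\lambda}^{Y})$: the map $A(Y_\lambda) \to A(Y)$ is the kernel inclusion of $A(Y) \to A(Y\setminus Y_\lambda)$, and $F$ sends this short exact sequence to one. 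So the maps $F(A(\iota))$ are injective, and the limit isomorphism translates back into the closed-union condition for $F_*A$.
\end{itemize}

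The separable variant works the same way, with countable directed sets. The only mildly delicate point is the dictionary between closed unions and inductive limits in condition (ii), so that is where I would be careful; the rest is formal unwinding of definitions.
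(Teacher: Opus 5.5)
Your proposal is correct and is the same argument the paper has in mind; the paper only says that the proof follows directly from the definitions. Exactness of $F$ gives condition (i) pointwise, and continuity of $F$, together with the dictionary between closed unions of images and inductive limits, gives condition (ii). The injectivity argument in your last bullet is harmless but not needed: the range of the canonical map $\varinjlim_\lambda F(A(Y_\lambda)) \to F(A(Y))$ is always the closure of the union of the images, so its surjectivity alone already yields the closed-union condition.
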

	
	We first apply \cref{lem:restriction_exact_continuous} to inductive limits.
	
	\begin{prop}\label{prop:Cstar(X)_inductive_limit}
		Let $X$ be a topological space. Then $\Cstar(X)$ is closed under inductive limits of $[\LCcat(X), \Cstar]$, and $\SCstar(X)$ is closed under countable inductive limits of $[\LCcat(X), \SCstar]$. 
	\end{prop}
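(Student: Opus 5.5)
We reduce to \cref{lem:restriction_exact_continuous}, taking $\frakA:=\Lambda$ to be an upward-directed set regarded as a category. Recall from \cref{subsection:notation_for_Cstar-algebras_and_functor_categories} that inductive limits in $[\LCcat(X), \Cstar]$ are computed pointwise. Hence the inductive limit of a diagram $(A_\lambda)_{\lambda \in \Lambda}$ in $[\Lambda, [\LCcat(X), \Cstar]]$ is exactly the image of $(A_\lambda)$ under the functor
\[
(\varinjlim_{\lambda \in \Lambda})_* \colon [\Lambda, [\LCcat(X), \Cstar]]=[\LCcat(X), [\Lambda, \Cstar]] \to [\LCcat(X), \Cstar],
\]
where $\varinjlim_{\lambda \in \Lambda} \colon [\Lambda, \Cstar] \to \Cstar$ is applied pointwise. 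It therefore suffices to check that $\varinjlim_{\lambda \in \Lambda} \colon [\Lambda, \Cstar] \to \Cstar$ is exact and continuous. Then \cref{lem:restriction_exact_continuous} says this functor sends $[\Lambda, \Cstar(X)]$ into $\Cstar(X)$, which is the closedness claim.

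\textbf{Exactness.} This is the classical fact that inductive limits of C*-algebras preserve short exact sequences. Given compatible short exact sequences $0 \to I_\lambda \to E_\lambda \to B_\lambda \to 0$, I would argue in three parts.
\begin{itemize}
\item \emph{Injectivity of $\varinjlim I_\lambda \to \varinjlim E_\lambda$.} The norm of the image of $x \in I_\lambda$ in the limit is $\lim_{\mu}\|\varphi_{\mu\lambda}(x)\|$. This quantity is the same whether computed in the $I$-system or the $E$-system, because each $I_\mu \to E_\mu$ is isometric.
\item \emph{Surjectivity on the right.} The map $\varinjlim E_\lambda \to \varinjlim B_\lambda$ has dense image, and images of $\ast$-homomorphisms are closed.
\item \emph{Exactness in the middle.} Let $x \in E_\lambda$ map to zero in $\varinjlim B$. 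Then $\operatorname{dist}(\varphi_{\mu\lambda}(x), I_\mu)=\|\pi_\mu\varphi_{\mu\lambda}(x)\| \to 0$, so the image of $x$ lies in the closure of the image of $\varinjlim I$. Density of $\bigcup_\lambda E_\lambda$ then handles a general kernel element, using the quotient norm estimate.
\end{itemize}

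\textbf{Continuity.} Continuity of $\varinjlim$ means it commutes with inductive limits. This is the interchange of colimits: for $[\Lambda',[\Lambda,\Cstar]]$, both iterated limits are colimits over $\Lambda' \times \Lambda$, being left adjoint to the same diagonal functor.

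\textbf{Separable case.} Here $\Lambda$ is countable. The same argument works with $\SCstar$ in place of $\Cstar$, since a countable inductive limit of separable C*-algebras is separable: the union of countable dense subsets is dense.

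\textbf{Expected obstacle.} The main point needing care is exactness in the middle. The rest is formal: interchange of colimits, and the pointwise computation of limits recorded earlier. I do not expect to need to verify the continuity axiom (ii) of $[\LCcat(X), \Cstar]_{\substack{\ex \\ \cont}}$ directly, because \cref{lem:restriction_exact_continuous} packages that verification.
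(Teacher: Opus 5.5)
Your proposal is correct and follows the paper's proof: the paper also notes that $\varinjlim_{\lambda \in \Lambda} \colon [\Lambda, \Cstar] \to \Cstar$ is exact and continuous, applies \cref{lem:restriction_exact_continuous} with $\frakA=\Lambda$, and repeats the argument for countable $\Lambda$ in the separable case. The only difference is that you spell out the standard verifications (exactness of inductive limits, interchange of colimits, and separability of countable inductive limits of separable C*-algebras), which the paper states without proof.
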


	\begin{proof}
		For an upward-directed set $\Lambda$, the inductive limit functor $\varinjlim_{\lambda \in \Lambda} \colon [\Lambda, \Cstar] \to \Cstar$ is exact and continuous. Hence, the inductive limit functor $\varinjlim_{\lambda \in \Lambda} \colon [\Lambda, [\LCcat(X), \Cstar]] \to [\LCcat(X), \Cstar]$ restricts to a functor $\varinjlim_{\lambda \in \Lambda} \colon [\Lambda, \Cstar(X)] \to \Cstar(X)$ by \cref{lem:restriction_exact_continuous}. This shows the assertion for $\Cstar(X)$. The same argument for a countable upward-directed set $\Lambda$ proves the assertion for $\SCstar(X)$.
	\end{proof}
	
	We next apply \cref{lem:restriction_exact_continuous} to tensor actions.
	
	\begin{prop}\label{prop:Cstar(X)_module_subcategory}
		Let $X$ be a topological space. Then $\Cstar(X)$ is a $\Cstar$-module subcategory of $[\LCcat(X), \Cstar]$, and $\SCstar(X)$ is a $\SCstar$-module subcategory of $[\LCcat(X), \SCstar]$. 
	\end{prop}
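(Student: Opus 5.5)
The plan is to mirror the proof of \cref{prop:Cstar(X)_inductive_limit}: apply \cref{lem:restriction_exact_continuous} to the functor $D \otimes \blank$ for each fixed $D \in \Cstar$. Take $\frakA$ to be the one-object category with only the identity morphism, so that $[\frakA, \Cstar] = \Cstar$. Then the functor $F := D \otimes \blank \colon \Cstar \to \Cstar$ must be exact and continuous.

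\textbf{Step 1: $F$ is exact and continuous.}
\begin{itemize}
\item Exactness: the maximal tensor product is exact, so $0 \to D \otimes A \to D \otimes E \to D \otimes B \to 0$ is exact whenever $0 \to A \to E \to B \to 0$ is.
\item Continuity: the maximal tensor product commutes with inductive limits. This follows from its universal property, since $D \otimes \blank$ is characterised by maximal commuting pairs of representations, which pass to inductive limits. Alternatively, one can argue that the canonical map $\varinjlim (D \otimes A_\lambda) \to D \otimes \varinjlim A_\lambda$ has dense image and is injective. Injectivity uses exactness: it identifies the kernel of $A_\lambda \to \varinjlim A_\lambda$ compatibly.
\end{itemize}
These are standard facts about $\otimes_{\max}$, and I would cite them rather than reprove them.

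\textbf{Step 2: the tensor action preserves $\Cstar(X)$.} By Step 1 and \cref{lem:restriction_exact_continuous}, the pointwise functor $D \otimes \blank$ on $[\LCcat(X), \Cstar]$ maps $\Cstar(X)$ into itself. Since the tensor action on $[\LCcat(X), \Cstar]$ is computed pointwise, $\Cstar(X)$ is closed under the tensor action. On morphisms, $\id_D \otimes \varphi$ is again a natural transformation, and $\Cstar(X)$ is a full subcategory, so nothing more is needed there.

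\textbf{Step 3: module structure is inherited.} The associativity and unit constraints of $[\LCcat(X), \Cstar]$ are pointwise and are natural isomorphisms between objects of $\Cstar(X)$. They therefore restrict, again using fullness, so $\Cstar(X)$ is a $\Cstar$-module subcategory.

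\textbf{Step 4: the separable case.} For $D, A$ separable, $D \otimes A$ is separable, so $D \otimes \blank$ restricts to $\SCstar$. It remains exact there, and commutes with countable inductive limits. Repeating the argument with $\SCstar$ in place of $\Cstar$ gives the $\SCstar$-module subcategory statement.

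The only point I would treat as a potential obstacle is the continuity of the maximal tensor product in Step 1. I would handle it by citing the standard result (for example, from \cite{Meyer_2008} or a textbook), not by proving it.
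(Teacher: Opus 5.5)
Your proposal is correct and matches the paper's proof. The paper likewise notes that $D \otimes \blank \colon \Cstar \to \Cstar$ is exact and continuous (citing \cite{BO_2008}), applies \cref{lem:restriction_exact_continuous} to conclude that the pointwise tensor action preserves $\Cstar(X)$, and treats $\SCstar(X)$ by the same argument with $D \in \SCstar$; your remarks on the inherited constraints and on the separability of $D \otimes A$ make explicit what the paper leaves implicit.
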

	
	\begin{proof}
		For $D \in \Cstar$, the functor $D \otimes \blank \colon \Cstar \to \Cstar$ is exact and continuous (see \cite{BO_2008}). Hence, the functor $D \otimes \blank \colon [\LCcat(X), \Cstar] \to [\LCcat(X), \Cstar]$ restricts to a functor $D \otimes \blank \colon \Cstar(X) \to \Cstar(X)$ by \cref{lem:restriction_exact_continuous}. 
		This proves the assertion for $\Cstar(X)$.
		The same argument, with $D \in \SCstar$, proves the assertion for $\SCstar(X)$.
	\end{proof}

	We will also apply \cref{lem:restriction_exact_continuous} to C*-direct sums and mapping cones in \cref{subsection:natural_transformations_associated_with_direct_sums,subsection:higher-dimensional_mapping_cones}. 
	
	\begin{rem}\label{rem:full_subcategory_functor_category}
		Let $X$ be a topological space. 
		Since $\Cstar(X)$ is a full subcategory of $[\LCcat(X), \Cstar]$, for any category $\frakA$, we may regard $[\frakA, \Cstar(X)]$ as a full subcategory of
		\[
		[\frakA, [\LCcat(X), \Cstar]]=[\frakA \times \LCcat(X), \Cstar].
		\]
		Hence, exactness of sequences in $[\frakA, \Cstar(X)]$ is defined by exactness in $[\frakA \times \LCcat(X), \Cstar]$.
		By \cref{prop:Cstar(X)_inductive_limit,prop:Cstar(X)_module_subcategory}, $[\frakA, \Cstar(X)]$ is closed under inductive limits and tensor actions in $[\frakA \times \LCcat(X), \Cstar]$. Moreover, a homotopy between two morphisms in $[\frakA, \Cstar(X)]$ is defined to be a homotopy in $[\frakA \times \LCcat(X), \Cstar]$.
		
		Accordingly, for topological spaces $X$ and $Y$, categories $\frakA$ and $\frakB$, and a functor
		\[
		F \colon [\frakA, \Cstar(X)] \to [\frakB, \Cstar(Y)],
		\]
		we use the notions of exactness, continuity, and a $\Cstar$-module functor in the same sense as in \cref{subsection:notation_for_Cstar-algebras_and_functor_categories}.
		
		The same convention applies with $\Cstar$ replaced by $\SCstar$. Thus, for a functor
		\[
		F \colon [\frakA, \SCstar(X)] \to [\frakB, \SCstar(Y)],
		\]
		we use the notions of exactness and of a $\SCstar$-module functor in the same way. We explicitly say that $F$ commutes with countable inductive limits rather than call it continuous.
	\end{rem}
	
	\begin{rem}\label{rem:induced_functor_exact_continuous_module}
		In the setting of \cref{lem:restriction_exact_continuous}, the functor $[\frakA, \Cstar(X)] \to \Cstar(X)$ induced by $F$ is exact and continuous. If $F$ is a $\Cstar$-module functor, then so is the induced functor $[\frakA, \Cstar(X)] \to \Cstar(X)$. 
	\end{rem}

	\begin{nota}\label{nota:tensor_product}
		Let $X$ be a topological space. For a C*-algebra $D$, we also use $D$ to denote the functor
		\[
		D \otimes \blank \colon \Cstar(X) \to \Cstar(X).
		\]
		For a $\ast$-homomorphism $\rho \colon D_1 \to D_2$, we also use $\rho$ to denote the natural transformation
		\[
		\rho \otimes \blank \colon D_1 \otimes \blank \Rightarrow D_2 \otimes \blank.
		\]
		By the convention above, $S$ and $C$ also denote the functors
		\[
		S,C \colon \Cstar(X) \to \Cstar(X).
		\]
		Since $S$ and $C$ are separable, these functors restrict to functors
		\[
		S,C \colon \SCstar(X) \to \SCstar(X).
		\]
	\end{nota}
	
	The construction in \cref{lem:restriction_exact_continuous} can also be applied pointwise to functors from $\Cstar(X)$ to $\Cstar(Y)$ as in the following lemma. 
	
	\begin{lem}\label{lem:preservation_exact_continuous_module}
		Let $X$ and $Y$ be topological spaces. Let $\frakA$ be a category, and let $G \colon [\frakA, \Cstar] \to \Cstar$ be a functor. Assume that $G$ is an exact, continuous, $\Cstar$-module functor. Let $G_Y \colon [\frakA, \Cstar(Y)] \to \Cstar(Y)$ be the functor induced by $G$ as in \textup{\cref{lem:restriction_exact_continuous}}. Let 
		\[G_{X, Y} \colon [\frakA, [\Cstar(X), \Cstar(Y)]] = [\Cstar(X), [\frakA, \Cstar(Y)]] \xrightarrow{G_{Y*}} [\Cstar(X), \Cstar(Y)]\] 
		be the functor given by composition with $G_Y$ on the left. 
		\begin{enumerate}[label=\textnormal{(\arabic*)}]
			\item \label{lem:preservation_exact_continuous_module_1}
			If a functor $F \colon \Cstar(X) \to [\frakA, \Cstar(Y)]$ 
			is exact, continuous, or a $\Cstar$-module functor, then 
			$G_{X, Y}(F) \colon \Cstar(X) \to \Cstar(Y)$ has the same property.
			\item \label{lem:preservation_exact_continuous_module_2}
			Let $F_1, F_2 \colon \Cstar(X) \to [\frakA, \Cstar(Y)]$ be $\Cstar$-module functors. If $\xi \colon F_1 \Rightarrow F_2$ is a $\Cstar$-module natural transformation, then so is $G_{X, Y}(\xi) \colon G_{X, Y}(F_1) \Rightarrow G_{X, Y}(F_2)$. 
		\end{enumerate}
	\end{lem}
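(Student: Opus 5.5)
The argument is a pointwise verification. Throughout we use that $G_{X,Y}(F)$ is the composite $G_Y \circ F$, where $G_Y$ is obtained from $G$ as in \cref{lem:restriction_exact_continuous}. By \cref{rem:induced_functor_exact_continuous_module}, $G_Y \colon [\frakA, \Cstar(Y)] \to \Cstar(Y)$ is exact, continuous, and a $\Cstar$-module functor. Here all notions on $[\frakA, \Cstar(Y)]$ are read inside $[\frakA \times \LCcat(Y), \Cstar]$ as in \cref{rem:full_subcategory_functor_category}. So the lemma reduces to the statement that each of the three properties is stable under composition of functors, plus the corresponding statement for whiskering of natural transformations.

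For \ref{lem:preservation_exact_continuous_module_1}, take the properties in turn.
\begin{itemize}
\item Exactness: if $0 \Rightarrow A \Rightarrow E \Rightarrow B \Rightarrow 0$ is exact in $\Cstar(X)$, then its image under $F$ is exact in $[\frakA, \Cstar(Y)]$. Applying $G_Y$ keeps it exact, because $G_Y$ is computed pointwise on $\LCcat(Y)$ from $G$ and $G$ is exact.
\item Continuity: given an inductive system $(A_\lambda)$ in $\Cstar(X)$, we have $F(\varinjlim A_\lambda) \cong \varinjlim F(A_\lambda)$ via the canonical map. Since $G_Y$ commutes with inductive limits, which are computed pointwise by \cref{prop:Cstar(X)_inductive_limit}, the canonical map $\varinjlim G_Y F(A_\lambda) \to G_Y F(\varinjlim A_\lambda)$ is an isomorphism.
\item Module structure: we take as structure isomorphism the composite
\[
G_Y F(D \otimes A) \xrightarrow{G_Y(\varphi_{D,A})} G_Y(D \otimes F(A)) \xrightarrow{\psi_{D,F(A)}} D \otimes G_Y F(A),
\]
where $\varphi$ is the module structure of $F$ and $\psi$ that of $G_Y$. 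Its associativity and unit constraints follow from those of $\varphi$ and $\psi$ by the standard argument that module functors compose (cf.\ \cite[Chapter~7]{EGNO_2015}). One pastes the constraint diagram for $\varphi$, with $G_Y$ applied, next to the constraint diagram for $\psi$. The two are joined by a naturality square of $\psi$ in its first variable with respect to $\varphi$.
\end{itemize}

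For \ref{lem:preservation_exact_continuous_module_2}, note that $G_{X,Y}(\xi) = G_Y\xi$ is the whiskering. We must check that the square relating $G_Y\xi_{D\otimes A}$, $\id_D \otimes G_Y\xi_A$, and the composite structure isomorphisms above commutes. This square splits vertically into two pieces.
\begin{itemize}
\item The first piece is $G_Y$ applied to the module square for $\xi$.
\item The second piece is the naturality square of $\psi_{D,\blank}$ with respect to the morphism $\xi_A \colon F_1(A) \Rightarrow F_2(A)$.
\end{itemize}
Both pieces commute, so the whole square commutes.

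The only step needing care is the module constraint in (1), specifically the associativity pentagon-type diagram. It is purely formal, so I would cite the general fact that module functors compose rather than draw it. A secondary point is that the canonical identifications (such as $[\frakA, [\Cstar(X), \Cstar(Y)]] = [\Cstar(X), [\frakA, \Cstar(Y)]]$) are compatible with the pointwise definitions. Since we treat these as equalities, this only requires remarking that everything is computed objectwise in $\frakA \times \LCcat(Y)$.
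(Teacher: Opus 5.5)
Your proposal is correct and is essentially the paper's proof. The paper argues the same way: $G_Y$ is an exact, continuous $\Cstar$-module functor by \cref{rem:induced_functor_exact_continuous_module}, these properties are closed under composition, and $\Cstar$-module natural transformations are closed under whiskering; you simply spell these closure facts out in more detail.

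One small slip: in the associativity check, the square joining the two constraint diagrams is naturality of $\psi_{D_1,\blank}$ in its second (object) variable with respect to $\varphi_{D_2,A}$, not naturality in its first variable.
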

	
	\begin{proof}
		We know that $G_Y$ is an exact, continuous, $\Cstar$-module functor; see \cref{rem:induced_functor_exact_continuous_module}. 
		\begin{enumerate}[label=\textnormal{(\arabic*)}]
			\item 
			Since the classes of exact functors, continuous functors, and $\Cstar$-module functors are each closed under composition, $G_{X, Y}(F)=G_YF$ has the same property as $F$. 
			\item 
			Since the horizontal composition of $\Cstar$-module natural transformations is again a $\Cstar$-module natural transformation, $G_{X, Y}(\xi)=G_Y \xi$ is a $\Cstar$-module natural transformation. 
			\qedhere
		\end{enumerate}
	\end{proof}
	
	The preceding lemma will be used repeatedly in \cref{section:reflection_functors,section:higher-dimensional_reflection_functors}. The principal functors constructed there, including reflection functors, are defined using C*-direct sums and mapping cones. By applying this lemma to these constructions, we will deduce exactness, continuity, and the module property of these functors from the corresponding properties of their constituent functors and natural transformations; see \cref{prop:reflection_functor_exact_continuous_module,prop:Tc_To_exact_continuous_module,prop:eta_epsilon_module,prop:Salphabeta_exact_continuous_module}.
	
	\begin{rem}
		Let $X$ and $Y$ be topological spaces, and let $\frakA$ be a category. Under the identification
		\[
		[\frakA, [\Cstar(X), \Cstar(Y)]]=[\frakA \times \Cstar(X), \Cstar(Y)],
		\]
		exact sequences, inductive limits, and tensor actions are defined in $[\frakA, [\Cstar(X), \Cstar(Y)]]$ by \cref{rem:full_subcategory_functor_category}.                     
		In the setting of \cref{lem:preservation_exact_continuous_module}, the functor
		\[
		G_{X, Y} \colon [\frakA, [\Cstar(X), \Cstar(Y)]] \to [\Cstar(X), \Cstar(Y)]
		\]
		is an exact, continuous, $\Cstar$-module functor. \
	\end{rem}

	\begin{rem}\label{rem:exact_continuous_module_iff}
		Let $X$ and $Y$ be topological spaces. Let $\frakA$ be a category. Let $F \colon \Cstar(X) \to [\frakA, \Cstar(Y)]$ be a functor. Under the identification $[\Cstar(X), [\frakA, \Cstar(Y)]]=[\frakA, [\Cstar(X), \Cstar(Y)]]$, we may regard $F$ as a functor $\frakA \to [\Cstar(X), \Cstar(Y)]$. 
		\begin{enumerate}[label=\textnormal{(\arabic*)}]
			\item The functor $F \colon \Cstar(X) \to [\frakA, \Cstar(Y)]$ is exact (respectively, continuous) if and only if $F(A) \colon \Cstar(X) \to \Cstar(Y)$ is exact (respectively, continuous) for every $A \in \frakA$. 
			\item The functor $F \colon \Cstar(X) \to [\frakA, \Cstar(Y)]$ is a $\Cstar$-module functor if and only if $F(A) \colon \Cstar(X) \to \Cstar(Y)$ is a $\Cstar$-module functor for every $A \in \frakA$ and $F(\varphi) \colon F(A) \Rightarrow F(B)$ is a $\Cstar$-module natural transformation for every $A, B \in \frakA$ and $\varphi \in \frakA(A, B)$. 
		\end{enumerate}
	\end{rem}

	\subsection{Natural transformations associated with direct sums}\label{subsection:natural_transformations_associated_with_direct_sums}
	
	The construction of reflection functors will require natural transformations whose source or target is a direct sum of functors. In this subsection, we record the pointwise direct sum construction in functor categories and introduce notation for assembling natural transformations to and from specified summands.
	
	Let $J$ be a set, regarded as a discrete category. 
	The C*-direct sum functor 
	\[\bigoplus_{j \in J} \colon [J, \Cstar] \to \Cstar\] 
	is an exact, continuous, $\Cstar$-module functor. Hence, for a topological space $X$, it induces a functor
	\[\bigoplus_{j \in J} \colon [J, \Cstar(X)] \to \Cstar(X) \]
	as in \cref{lem:restriction_exact_continuous}. 
	Moreover, for topological spaces $X$ and $Y$, we obtain the functor
	\[\bigoplus_{j \in J} \colon [J, [\Cstar(X), \Cstar(Y)]] \to [\Cstar(X), \Cstar(Y)] \] as in \cref{lem:preservation_exact_continuous_module}. 
	If $J$ is countable, replacing $\Cstar(\blank)$ with $\SCstar(\blank)$ in the functors above yields well-defined functors as well.
	
	For each $j \in J$, let $F_j \colon \Cstar(X) \to \Cstar(Y)$ be a functor.
	If the functors $F_j$ for $j \in J$ are all exact, all continuous, or all $\Cstar$-module functors, then the functor
	\[
	\bigoplus_{j \in J} F_j \colon \Cstar(X) \to \Cstar(Y)
	\]
	has the corresponding property by \cref{lem:preservation_exact_continuous_module} \labelcref{lem:preservation_exact_continuous_module_1}. 
	
	\begin{defn}\label{def:natural_transformation_direct_sum}
		Let $I$ and $J$ be sets, and let $i \mapsto j_i$ be an injection from $I$ to $J$.
		Let $F$ and $G_j$ for $j \in J$ be functors $\Cstar(X) \to \Cstar(Y)$. 
		\begin{enumerate}[label=\textnormal{(\arabic*)}]
			\item \label{def:natural_transformation_direct_sum_1}
			For each $i \in I$, let $\varphi^{(j_i)} \colon F \Rightarrow G_{j_i}$ be a natural transformation. Note that, for each $i \in I$ and $A \in \Cstar(X)$, the component $\varphi^{(j_i)}_A \colon FA \to G_{j_i}A$ at $A$ is a $Y$-equivariant $\ast$-homomorphism. If $I$ is \textit{finite}, then, for each $A \in \Cstar(X)$, we can define a $Y$-equivariant $\ast$-homomorphism
			\[(\varphi^{(j_i)}_A)_{i \in I} \colon FA \to \bigoplus_{j \in J} G_jA, \quad a \mapsto (a_j)_{j \in J}, \]
			by
			\[a_j:=
			\begin{dcases}
				\varphi^{(j_i)}_A(a) & \text{if $j=j_i$ for some $i \in I$}, \\
				0 & \text{otherwise}
			\end{dcases}
			\]
			for $j \in J$. 
			These $Y$-equivariant $\ast$-homomorphisms $(\varphi^{(j_i)}_A)_{i \in I}$ for $A \in \Cstar(X)$ define a natural transformation
			\[(\varphi^{(j_i)})_{i \in I} \colon F \Rightarrow \bigoplus_{j \in J} G_j. \]
			
			\item \label{def:natural_transformation_direct_sum_2}
			For each $i \in I$, let $\psi^{(j_i)} \colon G_{j_i} \Rightarrow F$ be a natural transformation. Note that, for each $i \in I$ and $A \in \Cstar(X)$, the component $\psi^{(j_i)}_A \colon G_{j_i}A \to FA$ at $A$ is a $Y$-equivariant $\ast$-homomorphism. 
			The natural transformations $\psi^{(j_i)} \colon G_{j_i} \Rightarrow F$ for $i \in I$ are said to be \textit{mutually orthogonal} if, for each $A \in \Cstar(X)$, the $\ast$-homomorphisms
			\[
			\psi^{(j_i)}_A \colon G_{j_i}A(Y) \to FA(Y), \quad i \in I,
			\]
			are mutually orthogonal. In this case, we can define a $Y$-equivariant $\ast$-homomorphism
			\[\sum_{i \in I} \psi^{(j_i)}_A \colon \bigoplus_{j \in J} G_j A \to FA\]
			by
			\[\Bigl(\sum_{i \in I} \psi^{(j_i)}_A\Bigr)((a_j)_{j \in J}):=\sum_{i \in I} \psi^{(j_i)}_A(a_{j_i})\] 
			for $(a_j)_{j \in J} \in \bigoplus_{j \in J} G_j A$. 
			These $Y$-equivariant $\ast$-homomorphisms $\sum_{i \in I} \psi^{(j_i)}_A$ for $A \in \Cstar(X)$ define a natural transformation
			\[\sum_{i \in I} \psi^{(j_i)} \colon \bigoplus_{j \in J} G_j \Rightarrow F. \]
		\end{enumerate}
	\end{defn}
	
	In the setting of \cref{def:natural_transformation_direct_sum}, suppose that $F$ and $G_j$ for $j \in J$ are $\Cstar$-module functors. Then $\bigoplus_{j \in J} G_j \colon \Cstar(X) \to \Cstar(Y)$ is also a $\Cstar$-module functor. If $\varphi^{(j_i)} \colon F \Rightarrow G_{j_i}$ is a $\Cstar$-module natural transformation for each $i \in I$ and if $I$ is finite, then 
	\[(\varphi^{(j_i)})_{i \in I} \colon F \Rightarrow \bigoplus_{j \in J} G_j \] 
	is also a $\Cstar$-module natural transformation. 
	If the mutually orthogonal natural transformations $\psi^{(j_i)}$ for $i \in I$ are all $\Cstar$-module natural transformations, then 
	\[\sum_{i \in I} \psi^{(j_i)} \colon \bigoplus_{j \in J} G_j \Rightarrow F \]
	is also a $\Cstar$-module natural transformation. 
	
	We shall use the operations in \cref{def:natural_transformation_direct_sum} when defining higher-dimensional reflection functors; see \cref{def:phialphabeta}.
	When defining reflection functors in \cref{def:reflection_functor}, we shall use these operations only in the special case where $I=J$ and $I \ni i \mapsto j_i \in J$ is the identity map. 
	The finiteness of $I$ required in \cref{def:natural_transformation_direct_sum} \labelcref{def:natural_transformation_direct_sum_1} will affect the setup; see \cref{rem:J_finite}.
	The mutual orthogonality required in \cref{def:natural_transformation_direct_sum} \labelcref{def:natural_transformation_direct_sum_2} will also affect the setup in \cref{section:reflection_functors,section:higher-dimensional_reflection_functors}.

	\subsection{Functors associated with continuous maps and locally closed subsets}
	
	We introduce the functors associated with continuous maps and locally closed subsets, and record the relations and natural transformations among them that will be used later.
	
	For a topological space $X$, the identity functor $\Cstar(X) \to \Cstar(X)$ is denoted by $\id_X$. 
	
	\begin{defn}\label{def:functor_induced_by_continuous_map}
		Let $X$ and $Y$ be topological spaces, and let $\theta \colon X \to Y$ be a continuous map. The map $\LC(Y) \ni Z \mapsto \theta^{-1}(Z) \in \LC(X)$ yields a functor $\LCcat(Y) \to \LCcat(X)$. 
		Composition with this functor on the right defines a functor $[\LCcat(X), \Cstar] \to [\LCcat(Y), \Cstar]$, which we denote by $\theta_*$. This functor restricts to functors
		\[\theta_* \colon \Cstar(X) \to \Cstar(Y), \quad \theta_* \colon \SCstar(X) \to \SCstar(Y). \]
		If $X$ is a subspace of $Y$ and $\theta \colon X \to Y$ is the inclusion map, we write $i_X^Y:=\theta_*$ for both functors.
	\end{defn}
	
	By definition, for $A \in \Cstar(X)$ and $Z \in \LC(Y)$, we have $\theta_* A(Z)=A(\theta^{-1}(Z))$. In particular, $\theta_* A(Y)=A(X)$. 
	For a morphism $\varphi \colon A \to B$ in $\Cstar(X)$ and $Z \in \LC(Y)$, we have $(\theta_*\varphi)_Z=\varphi_{\theta^{-1}(Z)} \colon A(\theta^{-1}(Z)) \to B(\theta^{-1}(Z))$. This shows the following lemma.
	
	\begin{lem}\label{lem:evaluation_extension}
		For a continuous map $\theta \colon X \to Y$ between topological spaces and for $Z \in \LC(Y)$, we have
		\[\ev_Y^Z \theta_*=\ev_X^{\theta^{-1}(Z)}. \]
	\end{lem}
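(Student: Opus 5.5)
The claim $\ev_Y^Z \theta_*=\ev_X^{\theta^{-1}(Z)}$ is an equality of functors $\Cstar(X) \to \Cstar$, so I will check it separately on objects and on morphisms, using only \cref{def:functor_induced_by_continuous_map} and the definition of the evaluation functors. Let $\Theta \colon \LCcat(Y) \to \LCcat(X)$ be the functor $Z' \mapsto \theta^{-1}(Z')$. First I confirm that $\Theta$ makes sense. Preimages under a continuous map preserve open and closed subsets of the relevant subspaces, hence preserve locally closed subsets. A morphism $W \in \Closed(Z_1) \cap \Open(Z_2)$ is sent to $\theta^{-1}(W) \in \Closed(\theta^{-1}(Z_1)) \cap \Open(\theta^{-1}(Z_2))$, and composition (intersection) is clearly preserved. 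By definition, $\theta_*$ is precomposition with $\Theta$, that is, $\theta_* A = A \circ \Theta$.

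On objects, for $A \in \Cstar(X)$ I compute
\[
\ev_Y^Z(\theta_* A)=(A\circ\Theta)(Z)=A(\theta^{-1}(Z))=\ev_X^{\theta^{-1}(Z)}(A).
\]
On morphisms, a morphism $\varphi \colon A \to B$ is a natural transformation. Whiskering with $\Theta$ gives $(\theta_*\varphi)_Z=\varphi_{\Theta(Z)}=\varphi_{\theta^{-1}(Z)}$, and this is exactly $\ev_X^{\theta^{-1}(Z)}(\varphi)$.

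The only point needing care is that $\theta_*$ indeed lands in $\Cstar(Y)$, so that $\ev_Y^Z$ may be applied. The paper already asserts this restriction in \cref{def:functor_induced_by_continuous_map}, so I will cite it rather than reprove it. I expect no real obstacle. The same argument applies verbatim to the separable versions.
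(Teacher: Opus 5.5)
Your proof is correct and is the paper's argument: straight from the definition of $\theta_*$ as precomposition, the paper notes that $\theta_*A(Z)=A(\theta^{-1}(Z))$ and $(\theta_*\varphi)_Z=\varphi_{\theta^{-1}(Z)}$, which is the equality on objects and on morphisms. Your check that $Z\mapsto\theta^{-1}(Z)$ gives a well-defined functor $\LCcat(Y)\to\LCcat(X)$ is a harmless addition; the paper takes this for granted in its definition of $\theta_*$.
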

	
	The following proposition follows immediately from the definitions. 
	
	\begin{prop}\label{prop:induced_functor_exact_continuous_module}
		For a continuous map $\theta \colon X \to Y$ between topological spaces, the functor $\theta_* \colon \Cstar(X) \to \Cstar(Y)$ is an exact, continuous, $\Cstar$-module functor. 
	\end{prop}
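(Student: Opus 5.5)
The plan is to verify the three properties separately, using that $\theta_*$ is precomposition with the functor $\theta^{-1} \colon \LCcat(Y) \to \LCcat(X)$, $Z \mapsto \theta^{-1}(Z)$. Under the identification of \cref{prop:CstarX_functor}, everything is then computed pointwise: for $A \in \Cstar(X)$ and $Z \in \LC(Y)$ we have $(\theta_*A)(Z)=A(\theta^{-1}(Z))$, as recorded in \cref{lem:evaluation_extension}. Before anything else I would check that $\theta^{-1}$ really is a functor. Continuity of $\theta$ gives that preimages of open (respectively, closed, locally closed) subsets of $Y$ are open (respectively, closed, locally closed) in $X$, and also that $Z \in \Closed(Y_1) \cap \Open(Y_2)$ implies $\theta^{-1}(Z) \in \Closed(\theta^{-1}(Y_1)) \cap \Open(\theta^{-1}(Y_2))$. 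Preimages commute with intersections, so composition and identities are preserved. Since the definition already asserts that $\theta_*$ lands in $\Cstar(X)\to\Cstar(Y)$, I take this as given.

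\emph{Exactness.} Take a short exact sequence $0 \Rightarrow A \Rightarrow E \Rightarrow B \Rightarrow 0$ in $\Cstar(X)$. By \cref{rem:full_subcategory_functor_category}, exactness means exactness at every $W \in \LC(X)$. Applying $\theta_*$ and evaluating at $Z \in \LC(Y)$ gives the sequence at $W=\theta^{-1}(Z)$, which is exact. So $\theta_*$ is exact.

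\emph{Continuity.} Inductive limits in $\Cstar(X)$ and $\Cstar(Y)$ are computed pointwise in $[\LCcat(\blank),\Cstar]$ (\cref{prop:Cstar(X)_inductive_limit}). Hence
\[
\Bigl(\theta_*\varinjlim_\lambda A_\lambda\Bigr)(Z)=\varinjlim_\lambda A_\lambda(\theta^{-1}(Z))=\Bigl(\varinjlim_\lambda \theta_*A_\lambda\Bigr)(Z).
\]
The canonical comparison map is the identity at each $Z$, and it is natural in $Z$ because the structure maps in both limits are the same maps $A_\lambda(\theta^{-1}(\cdot))$.

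\emph{Module structure.} The tensor action is pointwise. This gives
\[
(\theta_*(D\otimes A))(Z)=D\otimes A(\theta^{-1}(Z))=(D\otimes\theta_*A)(Z),
\]
and I would take the constraint $\varphi$ to be this identity. The associativity and unit axioms then reduce to the corresponding pointwise constraints of $\Cstar$, which $\theta_*$ transports unchanged.

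There is no real obstacle in any of these steps. The only point needing care is the bookkeeping: the module constraint must be natural in both $D$ and $A$, and the limit comparison must respect the functor structure. Both hold because $\theta_*$ is strictly precomposition, which commutes on the nose with any pointwise-defined construction.
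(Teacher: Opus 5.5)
Your proposal is correct and is exactly the argument the paper intends. The paper only says the proposition ``follows immediately from the definitions,'' because $\theta_*$ is precomposition with $\theta^{-1}\colon \LCcat(Y)\to\LCcat(X)$, and exactness, inductive limits, and tensor actions in $\Cstar(X)$ are all computed pointwise on $\LCcat(X)$; your write-up just spells out these pointwise checks.
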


	\begin{defn}\label{def:restriction_functor}
		Let $X$ be a topological space and $Y \in \LC(X)$.  
		Composition with the inclusion functor $\LCcat(Y) \hookrightarrow \LCcat(X)$ on the right defines a functor $[\LCcat(X), \Cstar] \to [\LCcat(Y), \Cstar]$, which we denote by $r_X^Y$. 
		This functor restricts to functors
		\[r_X^Y \colon \Cstar(X) \to \Cstar(Y), \quad r_X^Y \colon \SCstar(X) \to \SCstar(Y).\]
	\end{defn}
	
	By definition, for $A \in \Cstar(X)$ and $Z \in \LC(Y)$, we have $r_X^Y A(Z)=A(Z)$. For a morphism $\varphi \colon A \to B$ in $\Cstar(X)$ and $Z \in \LC(Y)$, we have $(r_X^Y \varphi)_Z=\varphi_Z \colon A(Z) \to B(Z)$. This shows the following lemma. 
	
	\begin{lem}\label{lem:evaluation_restriction}
		Let $X$ be a topological space, $Y \in \LC(X)$, and $Z \in \LC(Y)$. Then
		\[\ev_Y^Z r_X^Y=\ev_X^Z. \]
	\end{lem}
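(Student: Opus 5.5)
The identity $\ev_Y^Z r_X^Y=\ev_X^Z$ is a direct unwinding of definitions, so I would check it on objects and on morphisms. Recall that $r_X^Y$ is precomposition with the inclusion functor $\LCcat(Y) \hookrightarrow \LCcat(X)$. This inclusion makes sense because $Y \in \LC(X)$ forces $\LC(Y) \subset \LC(X)$, as noted after the definition of $\LC(X)$. Also, a morphism $W \in \Closed(Y_1) \cap \Open(Y_2)$ in $\LCcat(Y)$ is again a morphism in $\LCcat(X)$, since the relative topologies on $Y_1$ and $Y_2$ do not depend on whether we view them inside $Y$ or inside $X$.

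Both sides of the identity are functors $\Cstar(X) \to \Cstar$, and $\ev_X^Z$ is the restriction of evaluation at $Z$. The first step is objects. For $A \in \Cstar(X)$, we have $\ev_Y^Z(r_X^Y A)=(r_X^Y A)(Z)=A(Z)$, because $r_X^Y A$ is $A$ composed with the inclusion and that inclusion sends the object $Z \in \LC(Y)$ to $Z \in \LC(X)$. The right side gives $\ev_X^Z A=A(Z)$, so the two agree.

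The second step is morphisms. For a morphism $\varphi \colon A \to B$ in $\Cstar(X)$, viewed as a natural transformation, $r_X^Y\varphi$ is the whiskered transformation. Its component at $Z$ is $\varphi_Z$, as stated just before the lemma. Hence $\ev_Y^Z r_X^Y\varphi=\varphi_Z=\ev_X^Z\varphi$.

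This is not an equality only up to canonical isomorphism: the two functors agree strictly on objects and on morphisms. The only point needing any care, and it is minor, is confirming that the inclusion $\LCcat(Y) \hookrightarrow \LCcat(X)$ is a genuine functor preserving objects. Composition on both sides is intersection and identities are $\id_Y=Y$, so this holds. I expect no real obstacle. The argument is the same as for \cref{lem:evaluation_extension}, with the inclusion functor playing the role of $Z \mapsto \theta^{-1}(Z)$.
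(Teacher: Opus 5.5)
Your proposal is correct and follows the paper's own argument: the paper also just observes that $r_X^Y A(Z)=A(Z)$ and $(r_X^Y\varphi)_Z=\varphi_Z$ for $Z \in \LC(Y)$. Hence the two functors agree strictly on objects and on morphisms.
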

	
	The following proposition follows immediately from the definitions. 
	
	\begin{prop}\label{prop:restriction_functor_exact_continuous_module}
		Let $X$ be a topological space and $Y \in \LC(X)$. The functor $r_X^Y \colon \Cstar(X) \to \Cstar(Y)$ is an exact, continuous, $\Cstar$-module functor. 
	\end{prop}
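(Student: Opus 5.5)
Since $r_X^Y$ is defined by precomposition with the inclusion functor $\LCcat(Y) \hookrightarrow \LCcat(X)$, every construction relevant to the three properties (short exact sequences, inductive limits, tensor actions) is computed pointwise in $[\LCcat(X), \Cstar]$. The plan is therefore to reduce each property to its pointwise description via \cref{prop:Cstar(X)_inductive_limit,prop:Cstar(X)_module_subcategory}, together with the identification of $\Cstar(X)$ with $[\LCcat(X), \Cstar]_{\substack{\ex \\ \cont}}$ from \cref{prop:CstarX_functor}. Before that, one should observe that $r_X^Y$ really lands in $\Cstar(Y)$. The morphisms of $\LCcat(Y)$ are among those of $\LCcat(X)$, because $\LC(Y) \subset \LC(X)$ and relatively closed or open subsets are unaffected. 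Hence conditions (i) and (ii) for $r_X^Y A$ are special cases of the same conditions for $A$, with the same $Y_1, Y_2, Y_3$ and nets.

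For exactness, I will take a short exact sequence $0 \Rightarrow A \Rightarrow E \Rightarrow B \Rightarrow 0$ in $\Cstar(X)$, which by \cref{rem:full_subcategory_functor_category} means exactness at every $Z \in \LC(X)$. Since $(r_X^Y A)(Z) = A(Z)$ and $(r_X^Y\varphi)_Z = \varphi_Z$ for $Z \in \LC(Y)$, the restricted sequence is exact at every $Z \in \LC(Y)$. This is exactness in $\Cstar(Y)$.

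For continuity, inductive limits in $\Cstar(X)$ and $\Cstar(Y)$ are computed pointwise by \cref{prop:Cstar(X)_inductive_limit}. Evaluating at $Z \in \LC(Y)$ gives
\[
\Bigl(r_X^Y \varinjlim_{\lambda} A_\lambda\Bigr)(Z) = \varinjlim_{\lambda} A_\lambda(Z) = \Bigl(\varinjlim_{\lambda} r_X^Y A_\lambda\Bigr)(Z).
\]
These identifications are compatible with the structure maps, so the canonical comparison morphism is an isomorphism.

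For the module structure, tensor actions are also pointwise: $(D \otimes A)(Z) = D \otimes A(Z)$. Hence $r_X^Y(D \otimes A) = D \otimes r_X^Y A$ as functors on $\LCcat(Y)$, and I take $\varphi$ to be this identity. Under the paper's convention of writing canonical isomorphisms as equalities, the associativity and unit constraints hold because they are inherited pointwise on both sides.

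I do not expect a genuine obstacle. The only point needing care is the observation at the start that the restriction lands in $\Cstar(Y)$, that is, that the morphisms of $\LCcat(Y)$ coincide with the corresponding morphisms of $\LCcat(X)$. The same argument applies verbatim to $\SCstar$.
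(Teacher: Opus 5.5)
Your proposal is correct and matches the paper, which says only that the proposition follows immediately from the definitions. Your argument is that unpacking: $r_X^Y$ is precomposition with the full inclusion $\LCcat(Y) \hookrightarrow \LCcat(X)$, and exactness, inductive limits, and tensor actions are all pointwise, so each property is inherited directly (your preliminary check that $r_X^Y$ lands in $\Cstar(Y)$ is what the paper asserts in \cref{def:restriction_functor}).
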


	The following lemma is straightforward to verify and will be used repeatedly later.
	
	\begin{lem}\label{lem:relations_extension_restriction_functors}
		\
		\begin{enumerate}[label=\textnormal{(\arabic*)}]
			\item\label{lem:relations_extension_restriction_functors_1} For a topological space $X$, we have 
			\[i_{\emptyset}^X=0, \quad r_X^{\emptyset}=0.\]
			
			\item\label{lem:relations_extension_restriction_functors_2} For a topological space $X$, we have 
			\[i_X^X=r_X^X=\id_X.\]
			
			\item\label{lem:relations_extension_restriction_functors_3} For continuous maps $\theta \colon X \to Y$ and $\tau \colon Y \to Z$ between topological spaces, we have 
			\[\tau_* \theta_*=(\tau \circ \theta)_*. \] 
			In particular, for a topological space $Z$ and for subspaces $X \subset Y \subset Z$, we have 
			\[i_Y^Z i_X^Y=i_X^Z. \]
			
			\item\label{lem:relations_extension_restriction_functors_4} Let $X$ be a topological space, $Y \in \LC(X)$, and $Z \in \LC(Y)$. Then
			\[r_Y^Z r_X^Y=r_X^Z. \] 
			
			\item\label{lem:relations_extension_restriction_functors_5} For a continuous map $\theta \colon X \to Y$ between topological spaces and for $Z \in \LC(Y)$, we have 
			\[r_Y^Z \theta_*=\theta^Z_* r_X^{\theta^{-1}(Z)}, \] 
			where $\theta^Z \colon \theta^{-1}(Z) \to Z$ is the restriction of $\theta$. 
			In particular, for a topological space $X$ and for $Y, Z \in \LC(X)$, we have
			\[r_X^Z i_Y^X=i_{Y \cap Z}^Z r_Y^{Y \cap Z}. \]
		\end{enumerate}
	\end{lem}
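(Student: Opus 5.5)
The plan is to verify each identity directly on the level of the functor categories $[\LCcat(\blank), \Cstar]$. Every functor in \cref{lem:relations_extension_restriction_functors} is defined as precomposition with a functor between categories of locally closed subsets. Precomposition is strictly functorial: $(G \circ H)^* = H^* G^*$. So it suffices to prove the corresponding identities between the underlying functors of the $\LCcat$-categories. Those functors are determined by their action on objects (locally closed subsets) and on morphisms (subsets $Z \in \Closed(Y_1)\cap\Open(Y_2)$), and both actions are given by explicit set-theoretic formulas. Once the identities hold on $[\LCcat(\blank),\Cstar]$, they restrict to $\Cstar(\blank)$ and $\SCstar(\blank)$ by \cref{def:functor_induced_by_continuous_map,def:restriction_functor}.

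Concretely, I would check the items in order.

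For \labelcref{lem:relations_extension_restriction_functors_1}: $\LCcat(\emptyset)$ has the single object $\emptyset$. For $A \in \Cstar(X)$ we get $i_\emptyset^X A(Z) = A(\emptyset) = 0$ for every $Z$, since the exactness axiom with $Y_1 = Y_2 = \emptyset$ forces $A(\emptyset) = 0$. Likewise $r_X^\emptyset A(\emptyset) = 0$. Hence both functors agree with the zero functor, under our convention of writing canonical identifications as equalities.

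For \labelcref{lem:relations_extension_restriction_functors_2}: the preimage under $\id_X$ and the inclusion $\LCcat(X)\hookrightarrow\LCcat(X)$ are both identity functors.

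For \labelcref{lem:relations_extension_restriction_functors_3}: the preimage functors satisfy $\theta^{-1}(\tau^{-1}(Z)) = (\tau\circ\theta)^{-1}(Z)$ on objects, and the same formula holds on morphisms. Here one notes that a morphism $W$ is sent to $\theta^{-1}(W)$, and that preimages preserve relative closedness and openness, so the functor is well defined.

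For \labelcref{lem:relations_extension_restriction_functors_4}: this follows because composites of inclusions $\LCcat(Z)\hookrightarrow\LCcat(Y)\hookrightarrow\LCcat(X)$ are inclusions. One should note that a locally closed subset of $Z$ is locally closed in $Y$ and in $X$, and that the morphism sets coincide. The latter holds because relative topologies are transitive, so $\Closed(Y_1)\cap\Open(Y_2)$ does not depend on the ambient space.

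The only item needing care is \labelcref{lem:relations_extension_restriction_functors_5}. Here I would compare the two composites $\LCcat(Z) \to \LCcat(X)$. One is $W \mapsto \theta^{-1}(W)$, obtained by first including $W$ into $\LCcat(Y)$ and then taking the preimage. The other is $W \mapsto (\theta^{Z})^{-1}(W) \in \LC(\theta^{-1}(Z))$, followed by inclusion into $\LCcat(X)$. Since $W \subset Z$, we have $(\theta^Z)^{-1}(W) = \theta^{-1}(W)$, and the same computation applies to morphisms. The special case follows by taking $\theta$ to be the inclusion $Y \hookrightarrow X$, so that $\theta^{-1}(Z) = Y\cap Z$.

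The main (mild) obstacle is bookkeeping. One must confirm that all the functors between $\LCcat$-categories are well defined, that is, that the preimages and inclusions of locally closed subsets stay locally closed with the right relative topologies. Beyond that, the proof is a direct unwinding of the definitions.
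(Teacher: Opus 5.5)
Your proposal is correct and follows the approach the paper intends; the paper gives no written proof and only calls the lemma straightforward to verify. Items (2)--(5) reduce, as you say, to identities between the preimage and inclusion functors on the $\LCcat$-categories, while item (1) is the one place where you need the exactness axiom with $Y_1=Y_2=\emptyset$ to get $A(\emptyset)=0$. One small slip there: $i_\emptyset^X$ acts on $A\in\Cstar(\emptyset)$, not $A\in\Cstar(X)$, though the same exactness argument gives $A(\emptyset)=0$ in both categories.
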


	Let $X$ and $Y$ be topological spaces. Let $Z_1, Z_2 \in \LC(X)$ be such that $Z_1 \subset Z_2$. Let $\theta_1 \colon Z_1 \to Y$ and $\theta_2 \colon Z_2 \to Y$ be continuous maps. Consider the following two assumptions: 
	\begin{itemize} 
		\item[(O)]
		$Z_1 \in \Open(Z_2)$, and $\theta_1^{-1}(U) \subset \theta_2^{-1}(U)$ for every $U \in \Open(Y)$;
		\item[(C)]
		$Z_1 \in \Closed(Z_2)$, and $\theta_1^{-1}(F) \subset \theta_2^{-1}(F)$ for every $F \in \Closed(Y)$. 
	\end{itemize}
	
	\begin{rem}\label{rem:assumptions_O_C}
		\
		\begin{enumerate}[label=\textnormal{(\arabic*)}]
			\item \label{rem:assumptions_O_C_1}
			The second condition in the assumption \textup{(O)} is equivalent to $\theta_2^{-1}(F) \cap Z_1 \subset \theta_1^{-1}(F)$ for every $F \in \Closed(Y)$. 
			\item \label{rem:assumptions_O_C_2}
			The second condition in the assumption \textup{(C)} is equivalent to $\theta_2^{-1}(U) \cap Z_1 \subset \theta_1^{-1}(U)$ for every $U \in \Open(Y)$. 
		\end{enumerate}
	\end{rem}
	
	\begin{lem}\label{lem:iota_pi} \
		\begin{enumerate}[label=\textnormal{(\arabic*)}]
			\item \label{lem:iota_pi_1}
			Under the assumption \textup{(O)}, the inclusion maps $A(\iota_{Z_1}^{Z_2}) \colon A(Z_1) \to A(Z_2)$ for $A \in \Cstar(X)$ define a $\Cstar$-module natural transformation
			\[\theta_{1*} r_X^{Z_1} \Rightarrow \theta_{2*} r_X^{Z_2}. \]
			
			\item \label{lem:iota_pi_2}
			Under the assumption \textup{(C)}, the quotient maps $A(\pi_{Z_2}^{Z_1}) \colon A(Z_2) \to A(Z_1)$ for $A \in \Cstar(X)$ define a $\Cstar$-module natural transformation
			\[\theta_{2*} r_X^{Z_2} \Rightarrow \theta_{1*} r_X^{Z_1}. \]
		\end{enumerate}
	\end{lem}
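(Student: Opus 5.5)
The plan is to check directly that the stated maps are well defined, are morphisms in $\Cstar(Y)$, and are natural, and then to verify the module property.

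For \labelcref{lem:iota_pi_1}, fix $A \in \Cstar(X)$. Evaluating at $V \in \LC(Y)$ and using \cref{lem:evaluation_extension,lem:evaluation_restriction}, we have
\[
\theta_{1*} r_X^{Z_1} A(V)=A(\theta_1^{-1}(V)), \qquad \theta_{2*} r_X^{Z_2} A(V)=A(\theta_2^{-1}(V)).
\]
So the component at $V$ should be $A(\iota_{\theta_1^{-1}(V)}^{\theta_2^{-1}(V)})$, and the first task is to show that $\theta_1^{-1}(V)$ is open in $\theta_2^{-1}(V)$. Write $V=U \cap F$ with $U \in \Open(Y)$ and $F \in \Closed(Y)$. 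By (O) we have $\theta_1^{-1}(U) \subset \theta_2^{-1}(U)$. By \cref{rem:assumptions_O_C}\labelcref{rem:assumptions_O_C_1} we have $\theta_2^{-1}(F) \cap Z_1 \subset \theta_1^{-1}(F)$, and conversely $\theta_1^{-1}(F)$ is closed in $Z_1$. These should combine to give
\[
\theta_1^{-1}(V)=\theta_2^{-1}(V) \cap \theta_1^{-1}(U) \cap Z_1.
\]
I expect this identity to need some care, perhaps by applying (O) to $U$ and to $Y \setminus F$ separately. The right-hand side is open in $\theta_2^{-1}(V)$, because $Z_1$ is open in $Z_2$ and $\theta_1^{-1}(U)$ is open in $Z_1$. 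The case $V=Y$ recovers the given map $A(\iota_{Z_1}^{Z_2})$.

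Next comes naturality in $V$, meaning compatibility with the morphisms $\iota$ and $\pi$ of $\LCcat(Y)$. Since every morphism in $\LCcat(Y)$ factors as $\iota \pi$, it suffices to check the two generating types. Both reduce to the composition rule in $\LCcat(X)$, namely intersection of the corresponding subsets, together with relations (ii)--(iv). Concretely, one checks that the two composites in $\LCcat(X)$ are the same locally closed subset. Naturality in $A$ is immediate, since a morphism $\varphi$ is a natural transformation on $\LCcat(X)$. The module property is also immediate: tensor actions are computed pointwise, so $(D \otimes A)(\iota)=\id_D \otimes A(\iota)$.

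Part \labelcref{lem:iota_pi_2} is dual. Using \cref{rem:assumptions_O_C}\labelcref{rem:assumptions_O_C_2}, the analogous argument shows that $\theta_1^{-1}(V)$ is closed in $\theta_2^{-1}(V)$, and the components are $A(\pi_{\theta_2^{-1}(V)}^{\theta_1^{-1}(V)})$.

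The main obstacle is the set-theoretic verification that $\theta_1^{-1}(V) \subset \theta_2^{-1}(V)$ and that it is relatively open (respectively closed) there for every locally closed $V$, rather than only for open or closed $V$. If the identity above is awkward, the fallback is to handle open and closed $V$ separately and then use the composition law $Z_1 \cap Z_2$ in $\LCcat$ to pass to general $V=U \cap F$.
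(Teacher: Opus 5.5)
There is a genuine gap in how you identify the components at non-open sets. Under (O) you only know $\theta_1^{-1}(U)\subset\theta_2^{-1}(U)$ for open $U$. For closed $F$ the inclusion goes the other way, $\theta_2^{-1}(F)\cap Z_1\subset\theta_1^{-1}(F)$ (\cref{rem:assumptions_O_C}). So for a locally closed $V$ that is not open, $\theta_1^{-1}(V)$ need not even be contained in $\theta_2^{-1}(V)$.

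Here is a concrete failure. Take $X=Z_1=Z_2=\{p\}$, let $Y=\{a,b\}$ be the Sierpi\'nski space with open sets $\emptyset,\{a\},Y$, and set $\theta_1(p):=b$, $\theta_2(p):=a$. Then (O) holds. For $V=\{b\}=Y\cap\{b\}$ you get $\theta_1^{-1}(V)=\{p\}$ and $\theta_2^{-1}(V)=\emptyset$, so your identity $\theta_1^{-1}(V)=\theta_2^{-1}(V)\cap\theta_1^{-1}(U)\cap Z_1$ reads $\{p\}=\emptyset$. The component at $V$ of the morphism $\theta_{1*}A\to\theta_{2*}A$ given by $\id_A$ is the zero map $A\to 0$, not an inclusion.

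Dually, in (2) take $\theta_1(p):=a$, $\theta_2(p):=b$. Then (C) holds, but the component at the open set $\{a\}$ is $0\to A$, and $\theta_1^{-1}(\{a\})=\{p\}$ is not a subset of $\theta_2^{-1}(\{a\})=\emptyset$. So components of the form $A(\iota_{\theta_1^{-1}(V)}^{\theta_2^{-1}(V)})$, respectively $A(\pi_{\theta_2^{-1}(V)}^{\theta_1^{-1}(V)})$, do not exist in general. Your fallback of treating open and closed $V$ separately breaks for the same reason: at closed $V$ in (1), and at open $V$ in (2).

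The missing idea is that you do not need components at every locally closed set. By \cref{prop:CstarX_functor}, a morphism in $\Cstar(Y)$ is the same as a $\ast$-homomorphism of total algebras carrying the ideal at $U$ into the ideal at $U$ for every $U\in\Open(Y)$. So, as in the paper, it suffices to observe two things. For (1), $A(\iota_{Z_1}^{Z_2})$ maps $A(\theta_1^{-1}(U))$ into $A(\theta_2^{-1}(U))$, which is exactly (O). For (2), $A(\pi_{Z_2}^{Z_1})(A(\theta_2^{-1}(U)))=A(\theta_2^{-1}(U)\cap Z_1)\subset A(\theta_1^{-1}(U))$ by \cref{rem:assumptions_O_C}. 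Naturality in $A$ and the module property are then immediate, as you say.

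If you want explicit components anyway, the correct one at $V$ in (1) is $A$ applied to the $\LCcat(X)$-morphism $\theta_1^{-1}(V)\cap\theta_2^{-1}(V)$. This set is closed in $\theta_1^{-1}(V)$ and open in $\theta_2^{-1}(V)$, and it equals your right-hand side $\theta_2^{-1}(V)\cap\theta_1^{-1}(U)$. It is in general a proper subset of $\theta_1^{-1}(V)$, so naturality in $V$ then has to be checked with these general morphisms rather than with inclusions.
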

	
	\begin{proof}
		\
		\begin{enumerate}[label=\textnormal{(\arabic*)}]
			\item 
			Let $A \in \Cstar(X)$.
			For $U \in \Open(Y)$, the assumption \textup{(O)} gives
			\[A(\iota_{Z_1}^{Z_2})(\theta_{1*} r_X^{Z_1}A(U))=A(\theta_1^{-1}(U))\subset A(\theta_2^{-1}(U))=\theta_{2*} r_X^{Z_2}A(U). \]
			This shows that $A(\iota_{Z_1}^{Z_2}) \colon A(Z_1) \to A(Z_2)$ is a $Y$-equivariant $\ast$-homomorphism $\theta_{1*} r_X^{Z_1}A \to \theta_{2*} r_X^{Z_2}A$. 
			The remaining verification is straightforward.

			\item 
			Let $A \in \Cstar(X)$. For $U \in \Open(Y)$, the assumption \textup{(C)} (see also \cref{rem:assumptions_O_C} \labelcref{rem:assumptions_O_C_2}) gives
			\[A(\pi_{Z_2}^{Z_1})(\theta_{2*} r_X^{Z_2}A(U))=A(\theta_2^{-1}(U) \cap Z_1) \subset A(\theta_1^{-1}(U))=\theta_{1*} r_X^{Z_1}A(U). \]
			This shows that $A(\pi_{Z_2}^{Z_1}) \colon A(Z_2) \to A(Z_1)$ is a $Y$-equivariant $\ast$-homomorphism $\theta_{2*} r_X^{Z_2}A \to \theta_{1*} r_X^{Z_1}A$. 
			The remaining verification is straightforward.
			\qedhere
		\end{enumerate}
	\end{proof}
	
	\begin{defn}\label{def:iota_pi} \
		\begin{enumerate}[label=\textnormal{(\arabic*)}]
			\item \label{def:iota_pi_1}
			Under the assumption \textup{(O)}, we also denote by $\iota_{Z_1}^{Z_2}$ the natural transformation $\theta_{1*} r_X^{Z_1} \Rightarrow \theta_{2*} r_X^{Z_2}$ specified in \cref{lem:iota_pi} \labelcref{lem:iota_pi_1}.
			\item \label{def:iota_pi_2}
			Under the assumption \textup{(C)}, we also denote by $\pi_{Z_2}^{Z_1}$ the  natural transformation $\theta_{2*} r_X^{Z_2} \Rightarrow \theta_{1*} r_X^{Z_1}$ specified in \cref{lem:iota_pi} \labelcref{lem:iota_pi_2}. 
		\end{enumerate}
	\end{defn}
	
	\begin{eg}\label{eg:iota_pi_evaluation}
		Consider the case where $Y$ is a one-point space. Under the identification $\Cstar(Y)=\Cstar$, we have $\theta_{j*} r_X^{Z_j}=\ev_X^{Z_j}$ in $[\Cstar(X), \Cstar]$ for $j=1, 2$. 
		\begin{enumerate}[label=\textnormal{(\arabic*)}]
			\item \label{eg:iota_pi_evaluation_1}
			If $Z_1 \in \Open(Z_2)$, then the assumption \textup{(O)} holds and hence we obtain the $\Cstar$-module natural transformation $\iota_{Z_1}^{Z_2} \colon \ev_X^{Z_1} \Rightarrow \ev_X^{Z_2}$. 
			\item \label{eg:iota_pi_evaluation_2}
			If $Z_1 \in \Closed(Z_2)$, then the assumption \textup{(C)} holds and hence we obtain the $\Cstar$-module natural transformation $\pi_{Z_2}^{Z_1} \colon \ev_X^{Z_2} \Rightarrow \ev_X^{Z_1}$. 
		\end{enumerate}
	\end{eg}
	
	\begin{eg}\label{eg:iota_pi_ir}
		Consider the case where $X=Y$ and where $\theta_1$ and $\theta_2$ are the inclusion maps. Then $\theta_{j*}=i_{Z_j}^X$ in $[\Cstar(Z_j), \Cstar(X)]$ for $j=1, 2$. 
		\begin{enumerate}[label=\textnormal{(\arabic*)}]
			\item 
			If $Z_1 \in \Open(Z_2)$, then the assumption \textup{(O)} holds and hence we obtain the $\Cstar$-module natural transformation $\iota_{Z_1}^{Z_2} \colon i_{Z_1}^X r_X^{Z_1} \Rightarrow i_{Z_2}^X r_X^{Z_2}$. 
			\item 
			If $Z_1 \in \Closed(Z_2)$, then the assumption \textup{(C)} holds and hence we obtain the $\Cstar$-module natural transformation $\pi_{Z_2}^{Z_1} \colon i_{Z_2}^X r_X^{Z_2} \Rightarrow i_{Z_1}^X r_X^{Z_1}$. 
		\end{enumerate}
	\end{eg}

	The following lemma shows that the relevant horizontal compositions of the natural transformations introduced above are determined by the corresponding compositions of functors. The inverse images and restricted maps appearing in the statement arise from \cref{lem:relations_extension_restriction_functors} \labelcref{lem:relations_extension_restriction_functors_5}, and the remaining verification is straightforward. It will be used repeatedly later.
	
	\begin{lem}\label{lem:composition_theta_r_iota_pi}
		\
		\begin{enumerate}[label=\textnormal{(\arabic*)}]
			\item \label{lem:composition_theta_r_iota_pi_1}
			Let $X'$ be a topological space. Let $Z \in \LC(X')$ and $\tau \colon Z \to X$ be a continuous map. For $j=1, 2$, let $\tau^{Z_j} \colon \tau^{-1}(Z_j) \to Z_j$ denote the restriction of $\tau$. 
			
			\begin{enumerate}[label=\textnormal{(\roman*)}]
				\item \label{lem:composition_theta_r_iota_pi_1_1}
				Under the assumption \textup{(O)}, we have $\tau^{-1}(Z_1) \in \Open(\tau^{-1}(Z_2))$ and 
				\[(\theta_1 \circ \tau^{Z_1})^{-1}(U) \subset (\theta_2 \circ \tau^{Z_2})^{-1}(U)\] 
				for every $U \in \Open(Y)$. Moreover, the horizontal composite 
				\[\iota_{Z_1}^{Z_2} \tau_* r_{X'}^Z \colon \theta_{1*} r_X^{Z_1} \tau_* r_{X'}^Z \Rightarrow \theta_{2*} r_X^{Z_2} \tau_* r_{X'}^Z\] 
				is equal to
				\[\iota_{\tau^{-1}(Z_1)}^{\tau^{-1}(Z_2)} \colon (\theta_1 \circ \tau^{Z_1})_* r_{X'}^{\tau^{-1}(Z_1)} \Rightarrow (\theta_2 \circ \tau^{Z_2})_* r_{X'}^{\tau^{-1}(Z_2)}\]
				in $[\Cstar(X'), \Cstar(Y)]$. 
				
				\item \label{lem:composition_theta_r_iota_pi_1_2}
				Under the assumption \textup{(C)}, we have $\tau^{-1}(Z_1) \in \Closed(\tau^{-1}(Z_2))$ and 
				\[(\theta_1 \circ \tau^{Z_1})^{-1}(F) \subset (\theta_2 \circ \tau^{Z_2})^{-1}(F)\] 
				for every $F \in \Closed(Y)$. Moreover, the horizontal composite 
				\[\pi_{Z_2}^{Z_1} \tau_* r_{X'}^Z \colon \theta_{2*} r_X^{Z_2} \tau_* r_{X'}^Z \Rightarrow \theta_{1*} r_X^{Z_1} \tau_* r_{X'}^Z\]
				is equal to
				\[\pi_{\tau^{-1}(Z_2)}^{\tau^{-1}(Z_1)} \colon (\theta_2 \circ \tau^{Z_2})_* r_{X'}^{\tau^{-1}(Z_2)} \Rightarrow (\theta_1 \circ \tau^{Z_1})_* r_{X'}^{\tau^{-1}(Z_1)}\]
				in $[\Cstar(X'), \Cstar(Y)]$. 
			\end{enumerate}
			
			\item \label{lem:composition_theta_r_iota_pi_2}
			Let $Y'$ be a topological space. Let $Z' \in \LC(Y)$ be such that $\theta_1^{-1}(Z') \subset \theta_2^{-1}(Z')$, and let $\tau' \colon Z' \to Y'$ be a continuous map. For $j=1, 2$, let $\theta_j^{Z'} \colon \theta_j^{-1}(Z') \to Z'$ denote the restriction of $\theta_j$. 
			
			\begin{enumerate}[label=\textnormal{(\roman*)}]
				\item \label{lem:composition_theta_r_iota_pi_2_1}
				Suppose that $\theta_1^{-1}(Z') \in \Open(\theta_2^{-1}(Z'))$. Under the assumption \textup{(O)}, we have 
				\[(\tau' \circ \theta_1^{Z'})^{-1}(U) \subset (\tau' \circ \theta_2^{Z'})^{-1}(U)\] 
				for every $U \in \Open(Y')$.
				Moreover, the horizontal composite 
				\[\tau'_* r_Y^{Z'} \iota_{Z_1}^{Z_2} \colon \tau'_* r_Y^{Z'} \theta_{1*} r_X^{Z_1} \Rightarrow \tau'_* r_Y^{Z'} \theta_{2*} r_X^{Z_2}\]
				is equal to 
				\[\iota_{\theta_1^{-1}(Z')}^{\theta_2^{-1}(Z')} \colon (\tau' \circ \theta_1^{Z'})_* r_X^{\theta_1^{-1}(Z')} \Rightarrow (\tau' \circ \theta_2^{Z'})_* r_X^{\theta_2^{-1}(Z')}\]
				in $[\Cstar(X), \Cstar(Y')]$. 
				
				\item \label{lem:composition_theta_r_iota_pi_2_2}
				Suppose that $\theta_1^{-1}(Z') \in \Closed(\theta_2^{-1}(Z'))$. 
				Under the assumption \textup{(C)}, we have 
				\[(\tau' \circ \theta_1^{Z'})^{-1}(F) \subset (\tau' \circ \theta_2^{Z'})^{-1}(F)\] 
				for every $F \in \Closed(Y')$.
				Moreover, the horizontal composite 
				\[\tau'_* r_Y^{Z'} \pi_{Z_2}^{Z_1} \colon \tau'_* r_Y^{Z'} \theta_{2*} r_X^{Z_2} \Rightarrow \tau'_* r_Y^{Z'} \theta_{1*} r_X^{Z_1}\]
				is equal to 
				\[\pi_{\theta_2^{-1}(Z')}^{\theta_1^{-1}(Z')} \colon (\tau' \circ \theta_2^{Z'})_* r_X^{\theta_2^{-1}(Z')} \Rightarrow (\tau' \circ \theta_1^{Z'})_* r_X^{\theta_1^{-1}(Z')} \]
				in $[\Cstar(X), \Cstar(Y')]$. 
			\end{enumerate}
		\end{enumerate}
	\end{lem}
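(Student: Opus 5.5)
We prove \cref{lem:composition_theta_r_iota_pi} by unwinding definitions componentwise. All natural transformations involved have components that are the structure maps $A(\iota)$ or $A(\pi)$ of the underlying functor $A \in [\LCcat(X), \Cstar]$. So each equality of horizontal composites reduces to checking two things: the set-theoretic hypotheses (O) or (C) hold for the new data, and the components at each object agree as $\ast$-homomorphisms. The identification of the functors themselves comes from \cref{lem:relations_extension_restriction_functors} \labelcref{lem:relations_extension_restriction_functors_5} together with \cref{lem:relations_extension_restriction_functors} \labelcref{lem:relations_extension_restriction_functors_3,lem:relations_extension_restriction_functors_4}. For instance, in part \labelcref{lem:composition_theta_r_iota_pi_1},
\[
\theta_{j*} r_X^{Z_j} \tau_* r_{X'}^Z=\theta_{j*}\tau^{Z_j}_* r_Z^{\tau^{-1}(Z_j)} r_{X'}^Z=(\theta_j\circ\tau^{Z_j})_* r_{X'}^{\tau^{-1}(Z_j)}.
\]
Part \labelcref{lem:composition_theta_r_iota_pi_2} is analogous, using $r_Y^{Z'}\theta_{j*}=\theta^{Z'}_{j*} r_{Z_j}^{\theta_j^{-1}(Z')}$.

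For the topological hypotheses in \labelcref{lem:composition_theta_r_iota_pi_1}, continuity of $\tau$ gives $\tau^{-1}(Z_1)\in\Open(\tau^{-1}(Z_2))$ in case (O), and $\tau^{-1}(Z_1)\in\Closed(\tau^{-1}(Z_2))$ in case (C). The inclusions of preimages follow from applying $(\tau^{Z_j})^{-1}$ to $\theta_1^{-1}(U)\subset\theta_2^{-1}(U)$, respectively to $\theta_1^{-1}(F)\subset\theta_2^{-1}(F)$. In \labelcref{lem:composition_theta_r_iota_pi_2}, the openness or closedness of $\theta_1^{-1}(Z')$ in $\theta_2^{-1}(Z')$ is assumed. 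For $U\in\Open(Y')$, we have $(\tau'\circ\theta_j^{Z'})^{-1}(U)=\theta_j^{-1}(\tau'^{-1}(U))$. Here $\tau'^{-1}(U)$ is open in $Z'$, so $\tau'^{-1}(U)=V\cap Z'$ for some $V\in\Open(Y)$, and therefore
\[
\theta_1^{-1}(V)\cap\theta_1^{-1}(Z')\subset\theta_2^{-1}(V)\cap\theta_2^{-1}(Z').
\]
The closed case is identical, using $F=G\cap Z'$ with $G$ closed in $Y$.

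Next we compare components. Fix $A\in\Cstar(X')$, resp.\ $A\in\Cstar(X)$. In \labelcref{lem:composition_theta_r_iota_pi_1}, the component of $\iota_{Z_1}^{Z_2}\tau_*r_{X'}^Z$ at $A$ is $B(\iota_{Z_1}^{Z_2})$ with $B=\tau_*r_{X'}^ZA$. By \cref{def:functor_induced_by_continuous_map}, $B$ is $A$ composed with the functor $\LCcat(X)\to\LCcat(X')$, $W\mapsto\tau^{-1}(W)$. This functor sends the morphism $\iota_{Z_1}^{Z_2}=Z_1$ to $\tau^{-1}(Z_1)=\iota_{\tau^{-1}(Z_1)}^{\tau^{-1}(Z_2)}$. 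Hence the component is $A(\iota_{\tau^{-1}(Z_1)}^{\tau^{-1}(Z_2)})$, as claimed, and the $\pi$ case is the same.

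In \labelcref{lem:composition_theta_r_iota_pi_2}, the component of $\tau'_*r_Y^{Z'}\iota_{Z_1}^{Z_2}$ at $A$ is the component at the top object of $\iota_{Z_1}^{Z_2}$ evaluated at $Z'\in\LC(Y)$. This is $A$ applied to the morphism from $\theta_1^{-1}(Z')$ to $\theta_2^{-1}(Z')$ in $\LCcat(X)$ obtained from naturality of $A(\iota_{Z_1}^{Z_2})$. Concretely, it is the restriction of the inclusion $A(Z_1)\to A(Z_2)$ to the subquotients indexed by $Z'$.

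The main obstacle is confirming that this restricted map is exactly $A(\iota_{\theta_1^{-1}(Z')}^{\theta_2^{-1}(Z')})$, i.e.\ that the $\LCcat$-morphism $\theta_1^{-1}(Z')\to\theta_2^{-1}(Z')$ induced by $Z_1\subset Z_2$ is the open inclusion. We would check this using the composition relations (ii)--(iv) in $\LCcat(X)$. Write $Z'=V\cap G$ with $V$ open and $G$ closed, so that $\theta_j^{-1}(Z')$ is reached from $Z_j$ by an open inclusion followed by a closed quotient. Relation (iv), $\pi\iota=\iota\pi$, then commutes the inclusion $Z_1\subset Z_2$ past these maps, producing the intersection $Z_1\cap\theta_2^{-1}(Z')=\theta_1^{-1}(Z')$. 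The last equality uses $\theta_2^{-1}(W)\cap Z_1\subset\theta_1^{-1}(W)$ for the relevant open and closed $W$, from \cref{rem:assumptions_O_C}, combined with $\theta_1^{-1}(Z')\subset\theta_2^{-1}(Z')$.
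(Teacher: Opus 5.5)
Your reduction to components, the identification of the composite functors via \cref{lem:relations_extension_restriction_functors}, all of part \labelcref{lem:composition_theta_r_iota_pi_1}, and your check of the preimage inclusions in part \labelcref{lem:composition_theta_r_iota_pi_2} are correct. The gap is at the step you flag as the main obstacle. The identity $Z_1\cap\theta_2^{-1}(Z')=\theta_1^{-1}(Z')$ is false in general, and so is its justification. Under (O), \cref{rem:assumptions_O_C}~\labelcref{rem:assumptions_O_C_1} gives $\theta_2^{-1}(W)\cap Z_1\subset\theta_1^{-1}(W)$ only for \emph{closed} $W$. For open $W$, (O) gives the opposite inclusion $\theta_1^{-1}(W)\subset\theta_2^{-1}(W)$; under (C) the roles of open and closed are swapped.

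The identity already fails in the paper's simplest instance. Let $K=\{x\}$, $J=\{j\}$, $L_j=\{y\}$, and take $\iota_K^{W_j^{\open}}$ from \cref{def:iota_pi_Wj_K}. Thus $X=W^{\open}$, $Y=W^{\closed}$, $Z_1=K$, $Z_2=W_j$, $\theta_1$ is the inclusion, and $\theta_2$ is $\widetilde{\theta}_j$ followed by the inclusion. For $Z'=L_j\in\Open(W^{\closed})$ we get $\theta_1^{-1}(Z')=\emptyset$, but $Z_1\cap\theta_2^{-1}(Z')=K\cap W_j=\{x\}$. The $Z'$-component has source $B(\theta_1^{-1}(Z'))=0$ by definition, so your identity yields a morphism with the wrong source. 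The slip is treating the open part of $Z'$ on the $Z_1$-side as $Z_1\cap\theta_2^{-1}(V)$. That is correct only when $\theta_1=\theta_2|_{Z_1}$, which fails in the applications.

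To repair the argument, let $\xi$ be the component at $A$ of $\iota_{Z_1}^{Z_2}$. Choose $V\in\Open(Y)$ with $Z'\in\Closed(V)$, and write $Z'=V\cap G$ with $G\in\Closed(Y)$.
\begin{itemize}
\item By (O), $\theta_1^{-1}(V)$ is open in $Z_2$ and contained in $\theta_2^{-1}(V)$. Hence $\xi_V=A(\iota_{\theta_1^{-1}(V)}^{\theta_2^{-1}(V)})$, with source $A(\theta_1^{-1}(V))$, not $A(Z_1\cap\theta_2^{-1}(V))$.
\item Apply naturality of $\xi$ with respect to $\pi_V^{Z'}$, and relation (iv) with $Y_1=\theta_1^{-1}(V)$, $Y_2=\theta_2^{-1}(V)$, $Y_3=\theta_2^{-1}(Z')$. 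This produces the intersection $\theta_1^{-1}(V)\cap\theta_2^{-1}(Z')$.
\item This set does equal $\theta_1^{-1}(Z')$. The inclusion $\supset$ follows from $\theta_1^{-1}(Z')\subset\theta_2^{-1}(Z')$. The inclusion $\subset$ follows from \cref{rem:assumptions_O_C}~\labelcref{rem:assumptions_O_C_1} applied to the closed set $G$.
\item Surjectivity of $A(\pi_{\theta_1^{-1}(V)}^{\theta_1^{-1}(Z')})$ then gives $\xi_{Z'}=A(\iota_{\theta_1^{-1}(Z')}^{\theta_2^{-1}(Z')})$.
\end{itemize}

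Case (C) is not a literal mirror image and should be written out separately; now $\xi$ is the component of $\pi_{Z_2}^{Z_1}$. Here $\xi_V$ is already a composite, namely $A(\iota_{Z_1\cap\theta_2^{-1}(V)}^{\theta_1^{-1}(V)}\pi_{\theta_2^{-1}(V)}^{Z_1\cap\theta_2^{-1}(V)})$, using \cref{rem:assumptions_O_C}~\labelcref{rem:assumptions_O_C_2} for the open set $V$. You then need a second application of (iv), using $\theta_1^{-1}(Z')\subset\theta_2^{-1}(Z')\subset\theta_2^{-1}(V)$. After that, (iii) and surjectivity of $A(\pi_{\theta_2^{-1}(V)}^{\theta_2^{-1}(Z')})$ give $A(\pi_{\theta_2^{-1}(Z')}^{\theta_1^{-1}(Z')})$.
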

	
	\begin{nota}
		Let $X$ and $Y$ be topological spaces, let $Z \in \LC(X)$, and let $\theta_1, \theta_2 \colon Z \to Y$ be continuous maps. 
		Suppose that $\theta_1^{-1}(U) \subset \theta_2^{-1}(U)$ for every $U \in \Open(Y)$. 
		Then both natural transformations
		\[
		\iota_Z^Z, \pi_Z^Z \colon \theta_{1*} r_X^Z \Rightarrow \theta_{2*} r_X^Z
		\]
		are defined and we have $\iota_Z^Z=\pi_Z^Z$. 
		We denote this common natural transformation by $\id_Z$.
	\end{nota}
	
	Using the natural transformations in \cref{eg:iota_pi_ir}, we obtain the following two lemmas. 
	
	\begin{lem}\label{lem:ses_ir}
		Let $X$ be a topological space. For $Y_2 \in \LC(X)$ and $Y_1 \in \Open(Y_2)$ with $Y_3:=Y_2 \setminus Y_1$, the sequence
		\[
		\begin{tikzcd}
			0 \ar[Rightarrow]{r} & i_{Y_1}^X r_X^{Y_1} \ar[Rightarrow]{r}{\iota_{Y_1}^{Y_2}} & i_{Y_2}^X r_X^{Y_2} \ar[Rightarrow]{r}{\pi_{Y_2}^{Y_3}} & i_{Y_3}^X r_X^{Y_3} \ar[Rightarrow]{r} & 0
		\end{tikzcd}
		\] 
		is a short exact sequence in $[\Cstar(X), \Cstar(X)]$. 
	\end{lem}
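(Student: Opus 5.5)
The plan is to check the claimed exactness objectwise and pointwise. By the conventions of \cref{subsection:notation_for_Cstar-algebras_and_functor_categories} and \cref{rem:full_subcategory_functor_category}, a sequence in $[\Cstar(X), \Cstar(X)]$ is short exact exactly when, for every $A \in \Cstar(X)$ and every $Z \in \LC(X)$, the evaluated sequence in $\Cstar$ is short exact. So we fix $A$ and $Z$ and compute the three terms and the two maps. Since $i_{Y_k}^X$ is $\theta_*$ for the inclusion $Y_k \hookrightarrow X$, and by \cref{lem:evaluation_extension,lem:evaluation_restriction}, we get
\[
(i_{Y_k}^X r_X^{Y_k} A)(Z)=A(Y_k \cap Z), \qquad k=1,2,3 .
\]
By \cref{lem:iota_pi} and \cref{def:iota_pi}, together with the identification in \cref{lem:composition_theta_r_iota_pi}, the components of $\iota_{Y_1}^{Y_2}$ and $\pi_{Y_2}^{Y_3}$ at $Z$ are $A(\iota_{Y_1 \cap Z}^{Y_2 \cap Z})$ and $A(\pi_{Y_2 \cap Z}^{Y_3 \cap Z})$, respectively.

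Next we check the topology. The set $Y_2 \cap Z$ is locally closed in $X$, being the intersection of two locally closed sets. The set $Y_1 \cap Z$ is open in $Y_2 \cap Z$, because $Y_1$ is open in $Y_2$. Its complement is
\[
(Y_2 \cap Z) \setminus (Y_1 \cap Z)=Y_3 \cap Z .
\]
Hence the evaluated sequence is
\[
0 \to A(Y_1\cap Z) \to A(Y_2 \cap Z) \to A(Y_3 \cap Z) \to 0,
\]
and this is exact by condition (i) in the definition of $[\LCcat(X), \Cstar]_{\substack{\ex \\ \cont}}$, which $A$ satisfies under the identification of \cref{prop:CstarX_functor}.

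The only step needing care is that the components at $Z$ really are the structure maps $A(\iota)$ and $A(\pi)$ attached to the intersected sets. To verify this, unwind $\theta_*$: the morphism $\iota_{Y_1}^{Y_2}$ in $\LCcat(X)$, namely $Y_1 \in \Closed(Y_1) \cap \Open(Y_2)$, becomes under pullback along the inclusion into $Z$ the morphism $Y_1 \cap Z$ from $Y_1 \cap Z$ to $Y_2 \cap Z$. The quotient map is handled the same way, using relation (iv) of $\LCcat(X)$. Apart from this bookkeeping the argument is immediate. Naturality in $A$ is automatic, since the maps are components of natural transformations already provided by \cref{eg:iota_pi_ir}.
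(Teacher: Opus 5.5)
Your proposal is correct and is the same argument as the paper's one-line proof. The paper also evaluates at each $A \in \Cstar(X)$ and $Z \in \LC(X)$, obtains $0 \to A(Y_1 \cap Z) \to A(Y_2 \cap Z) \to A(Y_3 \cap Z) \to 0$, and uses exactness condition (i); you just spell out the bookkeeping it leaves implicit, namely that the intersections are locally closed with the right open/closed relations and that the components at $Z$ are the structure maps $A(\iota_{Y_1\cap Z}^{Y_2\cap Z})$ and $A(\pi_{Y_2\cap Z}^{Y_3\cap Z})$.
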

	
	\begin{proof}
		This follows from the short exact sequence
		\[
		\begin{tikzcd}
			0 \ar{r} & A(Y_1 \cap Z) \ar{r}{A(\iota_{Y_1 \cap Z}^{Y_2 \cap Z})} & A(Y_2 \cap Z) \ar{r}{A(\pi_{Y_2 \cap Z}^{Y_3 \cap Z})} & A(Y_3 \cap Z) \ar{r} & 0
		\end{tikzcd}
		\]
		in $\Cstar$ for $A \in \Cstar(X)$ and $Z \in \LC(X)$. 
	\end{proof}
	
	The following lemma will be used in \cref{thm:key_diagram}. 
	
	\begin{lem}\label{lem:disjoint_ir}
		Let $X$ be a topological space. Let $Y \in \LC(X)$ and $(Y_j)_{j \in J}$ be a finite family of pairwise disjoint clopen subsets of $Y$ such that $\bigcup_{j \in J} Y_j=Y$. Then we have
		\[i_Y^X r_X^Y=\bigoplus_{j \in J} i_{Y_j}^X r_X^{Y_j} \]
		in $[\Cstar(X), \Cstar(X)]$ via the morphisms $\iota_{Y_j}^Y  \colon i_{Y_j}^X r_X^{Y_j} \Rightarrow i_Y^X r_X^Y$ and $\pi_Y^{Y_j}  \colon i_Y^X r_X^Y \Rightarrow i_{Y_j}^X r_X^{Y_j}$ for $j \in J$. 
	\end{lem}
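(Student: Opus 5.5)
The plan is to check the claimed direct sum decomposition pointwise: evaluate at each $A \in \Cstar(X)$ and each $Z \in \LC(X)$, and reduce everything to a statement about a single C*-algebra $A(Y \cap Z)$ and its quotients $A(Y_j \cap Z)$. By \cref{lem:relations_extension_restriction_functors} \labelcref{lem:relations_extension_restriction_functors_5} and the definitions, $i_Y^X r_X^Y A(Z)=A(Y \cap Z)$ and $i_{Y_j}^X r_X^{Y_j} A(Z)=A(Y_j \cap Z)$. The components of $\iota_{Y_j}^Y$ and $\pi_Y^{Y_j}$ at $(A,Z)$ are $A(\iota_{Y_j\cap Z}^{Y\cap Z})$ and $A(\pi_{Y\cap Z}^{Y_j\cap Z})$. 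Since each $Y_j$ is clopen in $Y$, each $Y_j \cap Z$ is clopen in $Y \cap Z$, so both morphisms make sense. It therefore suffices to show that, for a locally closed $W:=Y \cap Z$ partitioned into finitely many pairwise disjoint clopen pieces $W_j:=Y_j \cap Z$, the map
\[
\sum_{j \in J} A(\iota_{W_j}^W) \colon \bigoplus_{j \in J} A(W_j) \to A(W)
\]
is a $\ast$-isomorphism with inverse $(A(\pi_W^{W_j}))_{j \in J}$.

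First I will record the relations $\pi_W^{W_k}\iota_{W_j}^W=\id_{W_j}$ if $j=k$ and $=\iota_{\emptyset}^{W_k}\pi_{W_j}^{\emptyset}$ if $j\neq k$, using relation (iv) in $\LCcat(X)$ and $W_j\cap W_k=\emptyset$. Since $A(\emptyset)=0$, applying $A$ gives $A(\pi_W^{W_k})A(\iota_{W_j}^W)=\delta_{jk}\id$. Next I will check mutual orthogonality of the $A(\iota_{W_j}^W)$, so that $\sum_j A(\iota_{W_j}^W)$ is a $\ast$-homomorphism in the sense of \cref{def:natural_transformation_direct_sum}. Their images are the ideals $A(W_j)\subset A(W)$. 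Because $A(W_j)\cap A(W_k)=A(W_j \cap W_k)=0$ (condition (ii) transported to the open subsets $W_j$ of $W$, e.g.\ via $r_X^W A \in \Cstar(W)$), these ideals have zero product. The relations above then show that $(A(\pi_W^{W_j}))_j \circ \sum_j A(\iota_{W_j}^W)$ is the identity.

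For the other composite, I will use additivity $A(\bigcup_j W_j)=\sum_j A(W_j)$ inside $A(W)$, which follows from condition (ii) applied to open subsets of $W$ together with induction on $|J|$. This shows that $\sum_j A(\iota_{W_j}^W)$ is surjective. Being injective as well (it has a left inverse), it is an isomorphism, and its inverse is $(A(\pi_W^{W_j}))_j$.

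Naturality in $A$ and compatibility with the morphisms of $\LCcat(X)$ are automatic, since all maps are components of natural transformations already defined in \cref{eg:iota_pi_ir}. Hence the pointwise isomorphisms assemble into an isomorphism in $[\Cstar(X),\Cstar(X)]$, i.e.\ into a biproduct decomposition given by the stated morphisms. The main point needing care is the transport of the lattice conditions (ii) from open subsets of $X$ to open subsets of the locally closed set $W$. I would handle this via \cref{prop:CstarX_functor} applied to $r_X^W A \in \Cstar(W)$.
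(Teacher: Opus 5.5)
Your proposal is correct and is essentially the paper's argument. The paper records that $\pi_Y^{Y_j}\circ\iota_{Y_j}^Y=\id_{Y_j}$, that the idempotents $\iota_{Y_j}^Y\circ\pi_Y^{Y_j}$ are mutually orthogonal, and that they sum to $\id_Y$. This is equivalent to your pointwise check that $\sum_j A(\iota_{W_j}^W)$ is a $\ast$-homomorphism with left inverse $(A(\pi_W^{W_j}))_j$ and is surjective. Your version also makes explicit the lattice facts $A(W_j)\cap A(W_k)=0$ and $A(W)=\sum_j A(W_j)$, obtained via $r_X^W A\in\Cstar(W)$, which the paper's two-line proof leaves implicit.
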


	\begin{proof}
		For each $j \in J$, we have $\pi_Y^{Y_j} \circ \iota_{Y_j}^Y=\id_{Y_j}$. 
		The natural transformations $\iota_{Y_j}^Y \circ \pi_Y^{Y_j}  \colon i_Y^X r_X^Y \Rightarrow i_Y^X r_X^Y$ for $j \in J$ are mutually orthogonal, and 
		\[\sum_{j \in J} \iota_{Y_j}^Y \circ \pi_Y^{Y_j}=\id_Y. \]
		This shows the assertion.  
	\end{proof}

	\subsection{Higher-dimensional mapping cones}\label{subsection:higher-dimensional_mapping_cones}

	We shall introduce higher-dimensional mapping cones, which play a crucial role in the study of reflection functors. 
	
	Let $\{0, 1\}$ denote the partially ordered set consisting of the two numbers $0$ and $1$ with $0<1$. Then $\{0, 1\}$ is regarded as the category whose objects are $0$ and $1$, and whose morphisms are $0 \to 0$, $1 \to 1$, and $0 \to 1$. 
	
	\begin{defn}
		Let $I$ be a finite set. Define a partial order $R_{\{0, 1\}^I}$ on $\{0, 1\}^I$ by
		\[R_{\{0, 1\}^I}:=\{(\lambda, \mu) \in \{0, 1\}^I \times \{0, 1\}^I \mid \text{$\lambda(i) \leq \mu(i)$ for all $i \in I$}\}. \] 
		We regard the partially ordered set $\bigl(\{0, 1\}^I, R_{\{0, 1\}^I}\bigr)$ as a category and denote it again by $\{0, 1\}^I$.
	\end{defn}
	
	For a category $\frakA$, an object $(A; \varphi) \in [\{0, 1\}^I, \frakA]$ consists of objects $A_{\lambda} \in \frakA$ for $\lambda \in \{0, 1\}^I$ and morphisms $\varphi_{\lambda}^{\mu} \colon A_{\lambda} \to A_{\mu}$ in $\frakA$ for $(\lambda, \mu) \in R_{\{0, 1\}^I}$ such that $\varphi_{\lambda}^{\lambda}=\id_{A_{\lambda}}$ for all $\lambda \in \{0, 1\}^I$ and $\varphi_{\mu}^{\nu} \varphi_{\lambda}^{\mu}=\varphi_{\lambda}^{\nu}$ for all $\lambda, \mu, \nu \in \{0, 1\}^I$ with $(\lambda, \mu), (\mu, \nu) \in R_{\{0, 1\}^I}$. Roughly speaking, $(A; \varphi)$ is a commutative diagram in $\frakA$ shaped like an $I$-dimensional hypercube. 
	
	\begin{rem}
		If $I=\emptyset$, then $\{0, 1\}^I$ is the category with one object and one morphism, and hence we may regard $[\{0, 1\}^I, \frakA]=\frakA$. 
	\end{rem}
	
	\begin{rem}
		If $|I|=1$, then we may regard $\{0, 1\}^I=\{0, 1\}$ and the category $[\{0, 1\}, \frakA]$ is the category of morphisms in $\frakA$. 
	\end{rem}

	\begin{defn}
		Let $I$ be a finite set. 
		For $\lambda \in \{0, 1\}^I$, define a locally compact Hausdorff space
		\[\Omega_{\lambda}:=\prod_{i \in I; \lambda(i)=1} [0, 1). \]
		For $(\lambda, \mu) \in R_{\{0, 1\}^I}$, define a continuous map $\omega_{\lambda}^{\mu} \colon \Omega_{\lambda} \to \Omega_{\mu}$ by
		\[
		\omega_{\lambda}^{\mu}(t)_i:=
		\begin{dcases}
			t_i & \text{if $\lambda(i)=1$},\\
			0 & \text{if $\lambda(i)=0$},
		\end{dcases}
		\]
		for $t \in \Omega_{\lambda}$ and $i \in I$ with $\mu(i)=1$. 
	\end{defn}

	Note that $\Omega_{\lambda}$ is a one-point space for $\lambda \in \{0, 1\}^I$ such that $\lambda(i)=0$ for all $i \in I$.

	\begin{eg}\label{eg:1-dimensional_Omega}
		If $|I|=1$, then $\{0, 1\}^I=\{0, 1\}$. Observe that $\Omega_0$ is a one-point space and $\Omega_1=[0, 1)$. The continuous map $\omega_0^1 \colon \Omega_0 \hookrightarrow \Omega_1$ sends the unique point of $\Omega_0$ to $0$ in $\Omega_1$. 
	\end{eg}
	
	\begin{eg}\label{eg:2-dimensional_Omega}
		If $|I|=2$, then we may regard
		\[\{0, 1\}^I=\{0, 1\}^2=\{00, 01, 10, 11\}. \]
		Observe that $\Omega_{00}$ is a one-point space, $\Omega_{01}=\Omega_{10}=[0, 1)$, and $\Omega_{11}=[0, 1) \times [0, 1)$. The continuous maps $\omega_{01}^{11} \colon \Omega_{01} \hookrightarrow \Omega_{11}$ and $\omega_{10}^{11} \colon \Omega_{10} \hookrightarrow \Omega_{11}$ are given by
		\[\omega_{01}^{11}(t):=(0, t), \quad \omega_{10}^{11}(t):=(t, 0)\]
		for $t \in [0, 1)$. 
	\end{eg}
	
	\begin{defn}
		Let $I$ be a finite set.
		For $(A; \varphi) \in [\{0, 1\}^I, \Cstar]$, we define a C*-algebra
		\[
		\begin{aligned}
			M^I(A; \varphi):=\Bigg\{ {}&(f_{\lambda})_{\lambda} \in
			\bigoplus_{\lambda \in \{0, 1\}^I} \conti_0(\Omega_{\lambda}, A_{\lambda}) \ \Biggm| \\
			&\varphi_{\lambda}^{\mu} \circ f_{\lambda}=f_{\mu} \circ \omega_{\lambda}^{\mu}
			\text{ for all }(\lambda, \mu) \in R_{\{0, 1\}^I}\Bigg\}.
		\end{aligned}
		\]
		We call $M^I(A;\varphi)$ the \textit{$I$-dimensional mapping cone} of $(A;\varphi)$.
		For a morphism $\xi \colon (A; \varphi) \Rightarrow (B; \psi)$ in $[\{0, 1\}^I, \Cstar]$, the $\ast$-homomorphism
		\[
		\bigoplus_{\lambda \in \{0, 1\}^I}\xi_{\lambda*}
		\colon
		\bigoplus_{\lambda \in \{0, 1\}^I}\conti_0(\Omega_{\lambda}, A_{\lambda})
		\to
		\bigoplus_{\lambda \in \{0, 1\}^I}\conti_0(\Omega_{\lambda}, B_{\lambda})
		\]
		restricts to a $\ast$-homomorphism
		\[
		M^I\xi \colon M^I(A;\varphi) \to M^I(B;\psi)
		\]
		by the naturality of $\xi$.
		This defines a functor
		\[
		M^I \colon [\{0, 1\}^I, \Cstar] \to \Cstar,
		\]
		which we call the \textit{$I$-dimensional mapping cone functor}.
		Moreover, this functor restricts to a functor
		\[
		M^I \colon [\{0, 1\}^I, \SCstar] \to \SCstar.
		\]
	\end{defn}

	\begin{eg}\label{eg:0-dimensional_mapping_cone}
		If $I=\emptyset$, then $M^I$ is the identity functor on $\Cstar$ under the identification $[\{0, 1\}^I, \Cstar]=\Cstar$. 
	\end{eg}
	
	\begin{eg}\label{eg:1-dimensional_mapping_cone}
		Consider the case where $|I|=1$; see \cref{eg:1-dimensional_Omega}. 
		In this case, we simply write $M:=M^I$. 
		An object $(A; \varphi) \in [\{0, 1\}, \Cstar]$ is a $\ast$-homomorphism $\varphi \colon A_0 \to A_1$. 
		The $1$-dimensional mapping cone $M(A; \varphi)$ is usually denoted by $C_{\varphi}$ and is called the \textit{mapping cone of $\varphi$}. 
		We have
		\[C_{\varphi}=\{(a, f) \in A_0 \oplus \conti_0([0, 1), A_1) \mid \varphi(a)=f(0)\}. \]
		Observe that
		\begin{itemize}
			\item $C_{\varphi}=A_0$ if $A_1=0$;
			\item $C_{\varphi}=SA_1$ if $A_0=0$;
			\item $C_{\varphi}=CA_1$ if $\varphi \colon A_0 \to A_1$ is a $\ast$-isomorphism.
		\end{itemize} 
	\end{eg}

	\begin{eg}
		Consider the case where $|I|=2$; see \cref{eg:2-dimensional_Omega}.
		For an object $(A; \varphi) \in [\{0, 1\}^2, \Cstar]$, the $2$-dimensional mapping cone $M^2(A; \varphi)$ is the C*-algebra consisting of all elements
		\[(a, f, g, h) \in A_{00} \oplus \conti_0([0, 1), A_{01}) \oplus \conti_0([0, 1), A_{10}) \oplus \conti_0([0, 1) \times [0, 1), A_{11}) \]
		satisfying $\varphi_{00}^{01}(a)=f(0)$, $\varphi_{00}^{10}(a)=g(0)$, $\varphi_{01}^{11} \circ f=h \circ \omega_{01}^{11}$, and $\varphi_{10}^{11} \circ g=h \circ \omega_{10}^{11}$.
		Observe that
		\begin{itemize}
			\item $M^2(A; \varphi)=C_{\varphi_{00}^{01}}$ if $A_{10}=A_{11}=0$;
			\item $M^2(A; \varphi)=C_{\varphi_{00}^{10}}$ if $A_{01}=A_{11}=0$;
			\item $M^2(A; \varphi)=SC_{\varphi_{01}^{11}}$ if $A_{00}=A_{10}=0$; 
			\item $M^2(A; \varphi)=SC_{\varphi_{10}^{11}}$ if $A_{00}=A_{01}=0$.
		\end{itemize} 
	\end{eg}

	Let $\frakA$ be a category. 
	For a finite set $I$, we use the same symbol $M^I$ for the functor
	\[[\{0, 1\}^I, [\frakA, \Cstar]] = [\frakA, [\{0, 1\}^I, \Cstar]] \xrightarrow{M^I_*} [\frakA, \Cstar]. \]
	If $|I|=1$, then we simply write $M:=M^I$. 
	For $(A; \varphi) \in [\{0, 1\}, [\frakA, \Cstar]]$, we write $C_{\varphi}:=M(A; \varphi) \in [\frakA, \Cstar]$. Observe that $C_{\varphi}$ is the pullback along the morphisms $\varphi \colon A_0 \Rightarrow A_1$ and $\ev_0 \colon CA_1 \Rightarrow A_1$ in $[\frakA, \Cstar]$; see \cref{eg:mapping_cone_pullback}. 
	
	The following notation for $2$-dimensional mapping cones will be used frequently throughout the paper. 
	
	\begin{nota}\label{nota:2-dimensional_mapping_cone}
		Suppose that $|I|=2$. 
		For an object $(A; \varphi) \in [\{0, 1\}^2, [\frakA, \Cstar]]$, we write the $2$-dimensional mapping cone $M^2(A; \varphi) \in [\frakA, \Cstar]$ as
		\[M^2 \mathopen{} \left(
		\begin{tikzcd}
			A_{00} \ar{r}{\varphi_{00}^{01}} \ar[swap]{d}{\varphi_{00}^{10}} & A_{01} \ar{d}{\varphi_{01}^{11}} \\
			A_{10} \ar[swap]{r}{\varphi_{10}^{11}} & A_{11}
		\end{tikzcd}\right). 
		\]
		For a morphism $\xi \colon (A; \varphi) \Rightarrow (B; \psi)$ in $[\{0, 1\}^2, [\frakA, \Cstar]]$, we write the morphism $M^2\xi \colon M^2(A; \varphi) \to M^2(B; \psi)$ in $[\frakA, \Cstar]$ as 
		\[M^2
		\!\begin{pmatrix}
			\xi_{00} & \xi_{01} \\
			\xi_{10} & \xi_{11}
		\end{pmatrix} \colon M^2\mathopen{} \left(
		\begin{tikzcd}
			A_{00} \ar{r}{\varphi_{00}^{01}} \ar[swap]{d}{\varphi_{00}^{10}} & A_{01} \ar{d}{\varphi_{01}^{11}} \\
			A_{10} \ar[swap]{r}{\varphi_{10}^{11}} & A_{11}
		\end{tikzcd}\right) \to M^2\mathopen{} \left(
		\begin{tikzcd}
			B_{00} \ar{r}{\psi_{00}^{01}} \ar[swap]{d}{\psi_{00}^{10}} & B_{01} \ar{d}{\psi_{01}^{11}} \\
			B_{10} \ar[swap]{r}{\psi_{10}^{11}} & B_{11}
		\end{tikzcd}\right).\] 
	\end{nota}
	
	The following notation for $3$-dimensional mapping cones will be used in the proof of \cref{thm:zigzag_identity}.
	
	\begin{nota}\label{nota:3-dimensional_mapping_cone}
		If $|I|=3$, then we may regard
		\[\{0, 1\}^I=\{0, 1\}^3=\{000, 001, 010, 100, 011, 101, 110, 111\}. \]
		An object $(A; \varphi) \in [\{0, 1\}^3, [\frakA, \Cstar]]$ may be regarded as a commutative diagram 
		\[
		\begin{tikzcd}[ampersand replacement=\&, every label/.append style={font=\small}, cells={nodes={font=\small}}]
			\& {} \&
			A_{000}
			\ar[swap]{dl}{\varphi_{000}^{010}}
			\ar[near start]{dd}{\varphi_{000}^{100}}
			\ar{rr}{\varphi_{000}^{001}}
			\& \& 
			A_{001}
			\ar{dl}{\varphi_{001}^{011}}
			\ar{dd}{\varphi_{001}^{101}}
			\\
			\& 
			A_{010}
			\ar[swap]{dd}{\varphi_{010}^{110}}
			\& \& 
			A_{011}
			\ar[from=ll, crossing over, swap, near start, "\varphi_{010}^{011}"]
			\\
			\& \&
			A_{100}
			\ar{dl}{\varphi_{100}^{110}}
			\ar[near start, swap]{rr}{\varphi_{100}^{101}}
			\& \& 
			A_{101}
			\ar{dl}{\varphi_{101}^{111}}
			\\
			\&
			A_{110}
			\ar[swap]{rr}{\varphi_{110}^{111}}
			\& \& 
			A_{111}
			\ar[from=uu, crossing over, near start, "\varphi_{011}^{111}"]
		\end{tikzcd}
		\]
		in $[\frakA, \Cstar]$. 
		For a morphism $\xi \colon (A; \varphi) \Rightarrow (B; \psi)$ in $[\{0, 1\}^3, [\frakA, \Cstar]]$, we write the morphism $M^3\xi \colon M^3(A; \varphi) \to M^3(B; \psi)$ in $[\frakA, \Cstar]$ as 
		\[
		M^3\!\begin{pmatrix}
			& \xi_{000} & & \xi_{001} \\
			\xi_{010} & &  \xi_{011} & \\
			& \xi_{100} & & \xi_{101} \\
			\xi_{110} & &  \xi_{111} & \\
		\end{pmatrix}.
		\]
	\end{nota}

	The following lemma is straightforward to verify.
	
	\begin{lem}\label{lem:product_index_sets}
		Let $I_1$ and $I_2$ be finite sets and $I:=I_1 \amalg I_2$. 
		Then
		\[
		\{0, 1\}^{I_1} \times \{0, 1\}^{I_2}=\{0, 1\}^I, 
		\quad
		R_{\{0, 1\}^{I_1}} \times R_{\{0, 1\}^{I_2}}=R_{\{0, 1\}^I}
		\]
		as sets. 
		In other words, $\{0, 1\}^{I_1} \times \{0, 1\}^{I_2}=\{0, 1\}^I$ as categories. 
	\end{lem}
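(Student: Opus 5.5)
The plan is to verify the two equalities directly from the definitions and then conclude the categorical statement. For the first equality, I will use the identification of sets $\{0,1\}^{I_1} \times \{0,1\}^{I_2} \to \{0,1\}^I$, $(\lambda_1,\lambda_2) \mapsto \lambda$, where $\lambda|_{I_1}=\lambda_1$ and $\lambda|_{I_2}=\lambda_2$. This is a bijection because $I=I_1 \amalg I_2$, so a function on $I$ is the same as a pair of functions on $I_1$ and $I_2$; this is the universal property of the coproduct of sets. The inverse sends $\lambda$ to $(\lambda|_{I_1},\lambda|_{I_2})$.

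For the second equality, I will take $\lambda,\mu \in \{0,1\}^I$. By definition, $(\lambda,\mu) \in R_{\{0,1\}^I}$ means that $\lambda(i) \leq \mu(i)$ for all $i \in I$. Since every $i$ lies in exactly one of $I_1$ and $I_2$, this splits into two conditions: $\lambda(i)\le\mu(i)$ for all $i\in I_1$, and $\lambda(i)\le\mu(i)$ for all $i\in I_2$. In other words, $(\lambda|_{I_1},\mu|_{I_1}) \in R_{\{0,1\}^{I_1}}$ and $(\lambda|_{I_2},\mu|_{I_2}) \in R_{\{0,1\}^{I_2}}$. Under the bijection above, and with the standard identification $R_1 \times R_2 \subset (P_1\times P_2)^2$, this is exactly membership in the product relation. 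Hence the two relations coincide.

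Finally, I will deduce the statement about categories. A preordered set regarded as a category has objects equal to its underlying set, and it has exactly one morphism $\lambda\to\mu$ precisely when $(\lambda,\mu)$ lies in the relation. The product category of two such categories is the category of the product preorder. Therefore the two set-level equalities give an isomorphism of categories $\{0,1\}^{I_1}\times\{0,1\}^{I_2}=\{0,1\}^I$. Composition and identities match automatically, because all hom-sets have at most one element.

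There is no real obstacle. The only care needed is to state the identification $R_1\times R_2\subset (P_1\times P_2)^2$ explicitly, since literally $R_1\times R_2\subset P_1^2\times P_2^2$. This requires the coordinate shuffle $((\lambda_1,\mu_1),(\lambda_2,\mu_2))\mapsto((\lambda_1,\lambda_2),(\mu_1,\mu_2))$.
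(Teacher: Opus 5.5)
Your proposal is correct and matches the paper's approach: the paper gives no proof beyond calling the lemma straightforward to verify, and your check is exactly that verification. You identify $\{0,1\}^I$ with pairs of restrictions and split the coordinatewise condition $\lambda(i)\le\mu(i)$ over $I=I_1\amalg I_2$; your remark that a coordinate shuffle is needed to regard $R_{\{0,1\}^{I_1}}\times R_{\{0,1\}^{I_2}}$ as a subset of $(\{0,1\}^{I_1}\times\{0,1\}^{I_2})^2$ is the right bit of care.
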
	
	
	In \cref{appendix:proof_of_mapping_cones_of_mapping_cones}, we establish the universal property of $M^I$ as a limit in \cref{prop:mapping_cone_limit} and apply it to prove the following lemma. 
	
	\begin{lem}\label{lem:mapping_cone_of_mapping_cone}
		Let $I_1$ and $I_2$ be finite sets and $I:=I_1 \amalg I_2$. Under the identification
		\[[\{0, 1\}^{I_2}, [\{0, 1\}^{I_1}, \Cstar]]=[\{0, 1\}^{I_1} \times \{0, 1\}^{I_2}, \Cstar]=[\{0, 1\}^I, \Cstar], \]
		the composite
		\[[\{0, 1\}^{I_2}, [\{0, 1\}^{I_1}, \Cstar]] \xrightarrow{M^{I_1}_*} [\{0, 1\}^{I_2}, \Cstar] \xrightarrow{M^{I_2}} \Cstar \]
		is equal to the functor 
		\[M^I \colon [\{0, 1\}^I, \Cstar] \to \Cstar. \]
	\end{lem}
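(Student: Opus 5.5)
The plan is to compare both sides elementwise. By definition, $M^I(A;\varphi)$ sits inside $\bigoplus_{\lambda\in\{0,1\}^I}\conti_0(\Omega_\lambda,A_\lambda)$. The composite $M^{I_2}(M^{I_1}_*(A;\varphi))$ sits inside
\[
\bigoplus_{\lambda_2}\conti_0\Bigl(\Omega_{\lambda_2},\,M^{I_1}(A_{(\blank,\lambda_2)})\Bigr)\subset\bigoplus_{\lambda_2}\bigoplus_{\lambda_1}\conti_0\bigl(\Omega_{\lambda_2},\conti_0(\Omega_{\lambda_1},A_{(\lambda_1,\lambda_2)})\bigr).
\]
Here we use that $\conti_0(\Omega_{\lambda_2},\blank)$ is exact, so it carries the subalgebra $M^{I_1}(\cdots)$ of the direct sum to the corresponding subalgebra. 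Under \cref{lem:product_index_sets}, write $\lambda=(\lambda_1,\lambda_2)$. Then $\Omega_\lambda=\Omega_{\lambda_1}\times\Omega_{\lambda_2}$ and $\omega_\lambda^\mu=\omega_{\lambda_1}^{\mu_1}\times\omega_{\lambda_2}^{\mu_2}$, and $\conti_0(\Omega_{\lambda_2},\conti_0(\Omega_{\lambda_1},A))=\conti_0(\Omega_\lambda,A)$. So both algebras are canonically subalgebras of the same ambient algebra, and it suffices to check that the defining compatibility conditions cut out the same subset.

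An element of the composite is a family $(F_{\lambda_2})$ of functions. Each $F_{\lambda_2}(s)$ satisfies the $I_1$-conditions $\varphi_{(\lambda_1,\lambda_2)}^{(\mu_1,\lambda_2)}\circ f_{\lambda_1,\lambda_2}(\blank,s)=f_{\mu_1,\lambda_2}(\omega_{\lambda_1}^{\mu_1}(\blank),s)$ for every $s$. In addition, the whole family satisfies the $I_2$-conditions $M^{I_1}(\varphi_{\lambda_2}^{\mu_2})\circ F_{\lambda_2}=F_{\mu_2}\circ\omega_{\lambda_2}^{\mu_2}$, which unwind componentwise to $\varphi_{(\lambda_1,\lambda_2)}^{(\lambda_1,\mu_2)}\circ f_{\lambda_1,\lambda_2}=f_{\lambda_1,\mu_2}\circ(\id\times\omega_{\lambda_2}^{\mu_2})$. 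These are exactly the conditions of $M^I$ for pairs $(\lambda,\mu)$ that differ only in one block.

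For a general pair $(\lambda,\mu)\in R_{\{0,1\}^I}$, factor it as $(\lambda_1,\lambda_2)\le(\mu_1,\lambda_2)\le(\mu_1,\mu_2)$. Then use functoriality $\varphi_\lambda^\mu=\varphi_{(\mu_1,\lambda_2)}^{\mu}\varphi_\lambda^{(\mu_1,\lambda_2)}$ together with the composition rule for the $\omega$'s. This shows the block-wise conditions imply all conditions of $M^I$; the converse is immediate. Equality on morphisms is clear, since both sides act by applying $\xi_\lambda$ pointwise.

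The only mildly delicate point is the identification of iterated $\conti_0$ with $\conti_0$ of the product, which is compatible with restriction along the maps $\omega$. Equivalently, one can argue via the limit description promised in \cref{prop:mapping_cone_limit}: limits commute with limits, and $\conti_0(\Omega_{\lambda_2},\blank)$ preserves the relevant pullbacks. Since the paper treats canonical isomorphisms as equalities, this finishes the argument.
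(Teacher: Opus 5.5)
Your argument is correct, but it takes a different route from the paper. You work elementwise. You place $M^{I_2}(M^{I_1}_*(A;\varphi))$ and $M^I(A;\varphi)$ inside the same ambient algebra $\bigoplus_{\lambda}\conti_0(\Omega_\lambda,A_\lambda)$. You observe that the former is cut out precisely by the compatibility conditions for comparable pairs differing in only one block, using $\omega_{(\lambda_1,\lambda_2)}^{(\mu_1,\lambda_2)}=\omega_{\lambda_1}^{\mu_1}\times\id$ and $\omega_{(\lambda_1,\lambda_2)}^{(\lambda_1,\mu_2)}=\id\times\omega_{\lambda_2}^{\mu_2}$. You then recover the condition for an arbitrary pair by factoring through $(\mu_1,\lambda_2)$ and using functoriality of $\varphi$ together with $\omega_\mu^\nu\circ\omega_\lambda^\mu=\omega_\lambda^\nu$.

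The paper instead first proves that $M^I$ is the limit of the diagram $D^I$ indexed by the poset $R_{\{0,1\}^I}$ of all comparable pairs (\cref{prop:mapping_cone_limit}). It then argues formally in three steps:
\begin{enumerate}
\item each $D^{I_2}_{\lambda',\mu'}$ commutes with finite limits (\cref{lem:diagram_finite_limit});
\item $D^{I_2}_{\lambda',\mu'}(D^{I_1}_{\lambda,\mu})_*=D^I_{(\lambda,\lambda'),(\mu,\mu')}$ (\cref{lem:composition_diagrams});
\item the iterated limit is a limit over $R_{\{0,1\}^{I_1}}\times R_{\{0,1\}^{I_2}}=R_{\{0,1\}^I}$ (\cref{lem:product_relations}).
\end{enumerate}
This is essentially the alternative you sketch at the end.

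Your version is self-contained and exposes the element-level content that the paper leaves inside the ``straightforward'' verification of \cref{prop:mapping_cone_limit}. In the limit formulation, your reduction to one-block pairs is instead absorbed into the product decomposition of the index category. The paper also gains a reusable universal property: the pullback description of $C_\varphi$ is what drives \cref{lem:exact_module_functor_commute_mapping_cone}.

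A minor point: from $\conti_0(\Omega_{\lambda_2},\blank)$ you only need that, for a C*-subalgebra $B\subset E$, $\conti_0(\Omega_{\lambda_2},B)$ is the set of $B$-valued elements of $\conti_0(\Omega_{\lambda_2},E)$. This is immediate from the function description, so exactness is more than is required.
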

	
	The preceding lemma shows that an $I$-dimensional mapping cone can be obtained by iterating ordinary mapping cone constructions in any order. Consequently, properties of the ordinary mapping cone functor that are preserved under iteration can be extended to higher-dimensional mapping cones by induction on $|I|$. 
	
	The following folklore fact will be used repeatedly. Since the author has been unable to find a reference in the literature, we include a proof.
	
	\begin{lem}\label{lem:exact_module_functor_commute_mapping_cone}
		Let $\frakA$ and $\frakB$ be categories and let $F \colon [\frakA, \Cstar] \to [\frakB, \Cstar]$ be an exact $\Cstar$-module functor. Then $F$ commutes with mapping cones. 
	\end{lem}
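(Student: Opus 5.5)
Write $C_\varphi$ as a pullback (as noted after \cref{eg:1-dimensional_mapping_cone}) and use that $F$ preserves the short exact sequences defining this pullback. Precisely, for a morphism $\varphi \colon A_0 \Rightarrow A_1$ in $[\frakA, \Cstar]$ we must produce a natural isomorphism $F(C_\varphi) \cong C_{F\varphi}$, compatible with morphisms of arrows, and check it is the canonical comparison map.

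First we build the comparison map. The projections $C_\varphi \Rightarrow A_0$ and $C_\varphi \Rightarrow CA_1$ form a commutative square with $\varphi$ and $\ev_0 \colon CA_1 \Rightarrow A_1$. Applying $F$ gives maps $F(C_\varphi) \Rightarrow FA_0$ and $F(C_\varphi) \Rightarrow F(CA_1)$. The module structure identifies $F(CA_1) = F(C \otimes A_1) \cong C \otimes FA_1 = C(FA_1)$. Compatibility of the module constraint with the morphism $\ev_0 \colon C \to \C$ (a $\Cstar$-module natural transformation, together with the unit constraint) shows that under this identification $F(\ev_0)$ corresponds to $\ev_0 \colon C(FA_1) \Rightarrow FA_1$. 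The pullback property of $C_{F\varphi}$ then yields a natural morphism $\kappa \colon F(C_\varphi) \Rightarrow C_{F\varphi}$. Naturality in $\varphi$ follows from naturality of the module constraint.

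Next we show $\kappa$ is an isomorphism using exactness. There is a short exact sequence
\[0 \Rightarrow SA_1 \Rightarrow C_\varphi \Rightarrow A_0 \Rightarrow 0\]
in $[\frakA, \Cstar]$. Here $C_\varphi \to A_0$ is the projection, which is surjective because any $a$ lifts to $(a, t\mapsto (1-t)\varphi(a))$ suitably cut off. The kernel consists of the $f$ with $f(0)=0$, i.e.\ $\conti_0((0,1),A_1) = S\otimes A_1$. The same sequence exists for $F\varphi$. Since $F$ is exact, $F$ carries the first sequence to a short exact sequence $0 \Rightarrow F(SA_1) \Rightarrow F(C_\varphi) \Rightarrow FA_0 \Rightarrow 0$. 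The map $\kappa$ together with the module isomorphism $F(SA_1)\cong S(FA_1)$ and the identity on $FA_0$ gives a morphism of short exact sequences. Commutativity of the left square again uses that the inclusion $S \hookrightarrow C$ is compatible with the module constraint. By the five lemma for C*-algebras (pointwise at each object of $\frakB$; injectivity and surjectivity of $\ast$-homomorphisms suffice), $\kappa$ is bijective, hence a $\ast$-isomorphism.

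The main obstacle is bookkeeping rather than analysis: verifying that under the module isomorphism $\varphi_{D,A}$ the maps $F(\ev_0)$ and $F(S\hookrightarrow C)$ become $\ev_0$ and the inclusion. I would handle this by applying the $\Cstar$-module naturality square with $\xi = \id_F$ to the $\ast$-homomorphisms $\ev_0 \colon C \to \C$ and $S \to C$ in the first variable, i.e.\ naturality of $\varphi_{D,A}$ in $D$, combined with the unit constraint $F(\C\otimes A)\cong \C\otimes FA$. If the paper's notion of commuting with mapping cones also requires compatibility with the module structure of $M$ itself, this follows because all maps involved are built from module constraints.
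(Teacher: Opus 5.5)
Your proposal is correct and follows the paper's proof essentially verbatim. The paper also obtains the comparison map $FC_{\varphi} \Rightarrow C_{F\varphi}$ from the pullback description of $C_{F\varphi}$ together with the module isomorphisms $FCA_1 \simeq CFA_1$ and $FSA_1 \simeq SFA_1$, and then shows it is an isomorphism by applying $F$ to the mapping cone short exact sequence and invoking the five lemma. Your explicit check that these module isomorphisms intertwine $F(\ev_0)$ and $F(S \hookrightarrow C)$ with $\ev_0$ and the inclusion (via naturality of the module constraint in the $\Cstar$-variable plus the unit constraint) fills in a detail the paper leaves implicit.
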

	
	\begin{proof}
		Let $\varphi \colon A_0 \Rightarrow A_1$ be a morphism in $[\frakA, \Cstar]$. 
		Since $F$ is an exact $\Cstar$-module functor, we have the commutative diagram 
		\[
		\begin{tikzcd}[sep=small]
			{} & 0
			\ar[Rightarrow]{rr}
			& {} &
			[-1em] FSA_1
			\ar[Rightarrow, swap]{dl}{\simeq}
			\ar[equal]{dd}
			\ar[Rightarrow]{rr}
			& & 
			[-2em] FC_{\varphi}
			\ar[Rightarrow, dashed]{dl}
			\ar[Rightarrow]{dd}
			\ar[Rightarrow]{rr}
			& & 
			FA_0
			\ar[equal]{dl}
			\ar[Rightarrow, near start]{dd}{F\varphi}
			\ar[Rightarrow]{rr}
			& &
			0
			\\
			0
			\ar[Rightarrow]{rr}
			& &
			SFA_1
			\ar[Rightarrow]{rr}
			& & 
			C_{F\varphi}
			\ar[Rightarrow, from=ll, crossing over]
			\ar[Rightarrow]{rr}
			& & 
			FA_0
			\ar[Rightarrow, from=ll, crossing over]
			& &
			0
			\ar[Rightarrow, from=ll, crossing over]
			\\
			& 0
			\ar[Rightarrow]{rr}
			& &
			FSA_1
			\ar[Rightarrow]{dl}{\simeq}
			\ar[Rightarrow]{rr}
			& & 
			FCA_1
			\ar[Rightarrow]{dl}{\simeq}
			\ar[Rightarrow]{rr}
			& & 
			FA_1
			\ar[equal]{dl}
			\ar[Rightarrow]{rr}
			& &
			0
			\\
			0
			\ar[Rightarrow]{rr}
			& &
			SFA_1
			\ar[Rightarrow, from=uu, equal, crossing over]
			\ar[Rightarrow]{rr}
			& & 
			CFA_1
			\ar[Rightarrow, from=uu, crossing over]
			\ar[Rightarrow]{rr}
			& & 
			FA_1
			\ar[Rightarrow, from=uu, crossing over, near start, "F\varphi"]
			\ar[Rightarrow]{rr}
			& &
			0
		\end{tikzcd}
		\]
		with exact rows in $[\frakB, \Cstar]$. Since $C_{F\varphi}$ is a pullback, there exists a unique morphism $FC_{\varphi} \Rightarrow C_{F\varphi}$ making the above diagram commute. By the five lemma, this canonical morphism is an isomorphism. 
		Moreover, this isomorphism is natural in $(A; \varphi) \in [\frakA, \Cstar]$. This shows the assertion. 
	\end{proof}

	The case $|I|=1$ of the following proposition is also a folklore fact. We include a brief proof. 
	
	\begin{prop}\label{prop:mapping_cone_exact_continuous_module}
		For a finite set $I$, the functor $M^I \colon [\{0, 1\}^I, \Cstar] \to \Cstar$ is an exact, continuous, $\Cstar$-module functor. 
	\end{prop}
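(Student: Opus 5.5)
The plan is to reduce to the case $|I|=1$ via \cref{lem:mapping_cone_of_mapping_cone} and induction on $|I|$.

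For $|I|=0$, $M^I$ is the identity functor on $\Cstar$ by \cref{eg:0-dimensional_mapping_cone}, and there is nothing to prove.

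For $|I|\geq 2$, choose $i_0\in I$ and put $I_1:=I\setminus\{i_0\}$ and $I_2:=\{i_0\}$. By \cref{lem:mapping_cone_of_mapping_cone},
\[M^I=M\circ M^{I_1}_*,\]
where $M^{I_1}_*\colon[\{0,1\},[\{0,1\}^{I_1},\Cstar]]\to[\{0,1\},\Cstar]$ is composition with $M^{I_1}$. Exactness, continuity and the module structure are all computed pointwise in functor categories. So if $M^{I_1}$ has a property, then $M^{I_1}_*$ inherits it. Likewise, the $|I|=1$ case applied in $[\frakA,\Cstar]$ (that is, to $M_*$) holds pointwise. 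The three classes are closed under composition, as already used in \cref{lem:preservation_exact_continuous_module}. Hence the general case follows from the case $|I|=1$, together with the observation that the module isomorphisms compose and satisfy the coherence constraints.

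It remains to treat $M\colon[\{0,1\},\Cstar]\to\Cstar$, $(A;\varphi)\mapsto C_\varphi$. Write
\[C_\varphi=\{(a,f)\in A_0\oplus\conti_0([0,1),A_1)\mid \varphi(a)=f(0)\},\]
the pullback of $A_0\xrightarrow{\varphi}A_1\xleftarrow{\ev_0}CA_1$.

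\textbf{Module structure.} For $D\in\Cstar$, we have $D\otimes C_\varphi\subset D\otimes A_0\oplus C(D\otimes A_1)$, using $D\otimes CA_1=C(D\otimes A_1)$. Its image lies in $C_{\id_D\otimes\varphi}$. To see that this canonical map is an isomorphism, use the extension $0\to SA_1\to C_\varphi\to A_0\to 0$, which is natural. Tensoring with $D$ preserves short exact sequences for the maximal tensor product, and the same extension exists for $C_{\id_D\otimes\varphi}$. The five lemma then shows the map is an isomorphism. The coherence axioms are inherited from the associativity and unit constraints of $\otimes$ on $\Cstar$.

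\textbf{Exactness.} Let $0\to(A;\varphi)\to(E;\epsilon)\to(B;\psi)\to0$ be exact. Apply the natural extensions $0\to SA_1\to C_\varphi\to A_0\to0$ (and their analogues for $E$ and $B$). The resulting $3\times3$ diagram has exact rows, and exact outer columns since $S$ is exact. The middle column is a complex: injectivity of $C_\varphi\to C_\epsilon$ is clear. By the nine lemma, exactness of the middle column reduces to surjectivity of $C_\epsilon\to C_\psi$.

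For surjectivity, take $(b,g)\in C_\psi$. Lift $b$ to $e\in E_0$. Then $\epsilon(e)$ and $g(0)$ both lie over $\psi(b)$, so choose $h\in CE_1$ lifting $g$ (since $CE_1\to CB_1$ is surjective). The element $\epsilon(e)-h(0)$ lies in $A_1$. Correct $h$ by $h'(t):=h(t)+(1-t)\chi(t)$, where $\chi\in CA_1$ is chosen with $\chi(0)=\epsilon(e)-h(0)$, for instance $\chi(t)=(1-t)(\epsilon(e)-h(0))$. Then $(e,h')$ is a preimage of $(b,g)$.

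I expect this surjectivity step to be the main (though mild) obstacle.

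\textbf{Continuity.} Inductive limits commute with finite direct sums and with $\conti_0([0,1),\blank)$. Given a net $(A_\lambda;\varphi_\lambda)$, compare $\varinjlim C_{\varphi_\lambda}$ with $C_{\varinjlim\varphi_\lambda}$ using the natural extensions $0\to SA_1\to C_\varphi\to A_0\to0$. Inductive limits in $\Cstar$ preserve short exact sequences, and $S$ commutes with inductive limits. The five lemma then shows the canonical map $\varinjlim C_{\varphi_\lambda}\to C_{\varinjlim\varphi_\lambda}$ is an isomorphism.

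The separable case follows verbatim, restricting to countable inductive limits.
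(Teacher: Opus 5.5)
Your proposal is correct and follows the paper's proof. The case $|I|=1$ is handled through the natural extension $0\to SA_1\to C_\varphi\to A_0\to 0$: the nine lemma gives exactness, and the five lemma handles the comparison maps coming from $D\otimes\blank$ and $\varinjlim$, which is exactly the content of \cref{lem:exact_module_functor_commute_mapping_cone}; the general case then follows from \cref{lem:mapping_cone_of_mapping_cone}. Two small remarks: your explicit lift for surjectivity is correct but unnecessary, because the nine lemma with exact rows, exact outer columns and a middle column that is a complex already gives it; and the ``inclusion'' $D\otimes C_\varphi\subset D\otimes A_0\oplus C(D\otimes A_1)$ is only justified a posteriori, since maximal tensor products need not preserve injections, but this is harmless because you only use the canonical map together with the five lemma.
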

	
	\begin{proof}
		We prove the case $|I|=1$. Exactness of $M$ follows from exactness of the suspension functor $S \colon \Cstar \to \Cstar$ and the nine lemma (see also \cite[Theorem~9.1]{Pedersen_1999}). 
		For an upward-directed set $\Lambda$, the inductive limit functor $\varinjlim_{\lambda \in \Lambda} \colon [\Lambda, \Cstar] \to \Cstar$ is an exact $\Cstar$-module functor, and hence commutes with mapping cones by \cref{lem:exact_module_functor_commute_mapping_cone}. This shows that $M$ is continuous. 
		For a C*-algebra $D$, the functor $D \colon \Cstar \to \Cstar$ (see \cref{nota:tensor_product} for the notation) is an exact $\Cstar$-module functor, and hence commutes with mapping cones by \cref{lem:exact_module_functor_commute_mapping_cone}. Since $S \colon \Cstar \to \Cstar$ satisfies the associativity and unit constraints, so does $M$. This shows that $M$ is a $\Cstar$-module functor. 
		This proves the case $|I|=1$. 
		The general case follows from \cref{lem:mapping_cone_of_mapping_cone}. 
	\end{proof}
	
	Let $I$ be a finite set. For a topological space $X$, we obtain the functor
	\[M^I \colon [\{0, 1\}^I, \Cstar(X)] \to \Cstar(X)\]
	as in \cref{lem:restriction_exact_continuous}. 
	This functor restricts to a functor
	\[M^I \colon [\{0, 1\}^I, \SCstar(X)] \to \SCstar(X). \]
	
	For a morphism $\varphi \colon A_0 \to A_1$ in $\SCstar(X)$, the mapping cone short exact sequence of $\varphi$ is the short exact sequence
	\[
	\begin{tikzcd}
		0 \ar{r} & SA_1 \ar{r} & C_{\varphi} \ar{r} & A_0 \ar{r} & 0
	\end{tikzcd}
	\]
	in $\SCstar(X)$ obtained by taking the mapping cones of the vertical morphisms in the commutative diagram
	\[
	\begin{tikzcd}
		0 \ar{r} & 0 \ar{r} \ar{d} & A_0 \ar{r} \ar{d}{\varphi} & A_0 \ar{r} \ar{d}& 0 \\
		0 \ar{r} & A_1 \ar{r}  & A_1 \ar{r} & 0 \ar{r} & 0
	\end{tikzcd}
	\] 
	with exact rows in $\SCstar(X)$. 
	Every mapping cone short exact sequence is semi-split in the sense of \cite[Definition~3.5]{MN_2009}. 
	
	The following lemma will be used to show \cref{lem:key_diagram_semi-split}. 
	
	\begin{lem}\label{lem:semi-split_2-dimensional_mapping_cone}
		For $(A; \varphi) \in [\{0, 1\}^2, \SCstar(X)]$, the sequence 
		\[
		\begin{tikzcd}[ampersand replacement=\&, every label/.append style={font=\tiny}, cells={nodes={font=\tiny}}]
			0 \ar{r}
			\& [-1.5em]
			{M^2 \! \left(
				\begin{tikzpicture}[auto]
					\node (00) at (-0.6, 0.5) {$0$}; \node (01) at (0.6, 0.5) {$0$};
					\node (10) at (-0.6, -0.5) {$A_{10}$};  \node (11) at (0.6, -0.5) {$A_{11}$};
					\draw [->] (00) to node {} (01);
					\draw [->] (00) to node[swap] {} (10);
					\draw [->] (01) to node {} (11);
					\draw [->] (10) to node[swap] {$\varphi_{10}^{11}$} (11);
				\end{tikzpicture} \right)} 
			\ar{r}[xshift=0.5em, yshift=0.2em]{
				M^2 \! \begin{pmatrix}
					0 & 0 \\ \id & \id
				\end{pmatrix}
			}
			\&
			{M^2 \! \left(
				\begin{tikzpicture}[auto]
					\node (00) at (-0.6, 0.5) {$A_{00}$}; \node (01) at (0.6, 0.5) {$A_{01}$};
					\node (10) at (-0.6, -0.5) {$A_{10}$};  \node (11) at (0.6, -0.5) {$A_{11}$};
					\draw [->] (00) to node {$\varphi_{00}^{01}$} (01);
					\draw [->] (00) to node[swap] {$\varphi_{00}^{10}$} (10);
					\draw [->] (01) to node {$\varphi_{01}^{11}$} (11);
					\draw [->] (10) to node[swap] {$\varphi_{10}^{11}$} (11);
				\end{tikzpicture} \right)}
			\ar{r}[xshift=0.5em, yshift=0.2em]{
				M^2 \! \begin{pmatrix}
					\id & \id \\ 0 & 0
				\end{pmatrix}
			}
			\&
			{M^2 \! \left(
				\begin{tikzpicture}[auto]
					\node (00) at (-0.6, 0.5) {$A_{00}$}; \node (01) at (0.6, 0.5) {$A_{01}$};
					\node (10) at (-0.6, -0.5) {$0$};  \node (11) at (0.6, -0.5) {$0$};
					\draw [->] (00) to node {$\varphi_{00}^{01}$} (01);
					\draw [->] (00) to node[swap] {} (10);
					\draw [->] (01) to node {} (11);
					\draw [->] (10) to node[swap] {} (11);
				\end{tikzpicture} \right)}
			\ar{r}
			\& [-1.5em] 0
		\end{tikzcd}
		\]
		is a semi-split short exact sequence in $\SCstar(X)$. 
	\end{lem}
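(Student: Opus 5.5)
We argue via \cref{lem:mapping_cone_of_mapping_cone}, with the $2$-dimensional cone written as an iterated $1$-dimensional one. Write $\{0,1\}^2=\{0,1\}^{I_1}\times\{0,1\}^{I_2}$, where the first coordinate indexes the rows of the square (top/bottom) and the second the columns. Take the mapping cone in the column direction first. This gives a morphism
\[
\Phi\colon C_{\varphi_{00}^{01}} \to C_{\varphi_{10}^{11}}
\]
in $\SCstar(X)$, induced by the vertical maps $\varphi_{00}^{10}$ and $\varphi_{01}^{11}$. Then $M^2(A;\varphi)=C_\Phi$. Applied to the left and right squares in the statement, the same identification yields:
\begin{itemize}
\item the left term is $C_{0\to C_{\varphi_{10}^{11}}}=SC_{\varphi_{10}^{11}}$;
\item the right term is $C_{C_{\varphi_{00}^{01}}\to 0}=C_{\varphi_{00}^{01}}$.
\end{itemize}
The morphisms $M^2\begin{pmatrix}0&0\\ \id&\id\end{pmatrix}$ and $M^2\begin{pmatrix}\id&\id\\0&0\end{pmatrix}$ are $M$ applied to the evident morphisms of arrows. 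Under these identifications, the sequence in the statement is exactly the mapping cone short exact sequence
\[
0\to SC_{\varphi_{10}^{11}}\to C_\Phi\to C_{\varphi_{00}^{01}}\to 0
\]
of the morphism $\Phi$. This is because the mapping cone sequence was defined precisely as $M$ applied to the morphisms $(0\to B)\Rightarrow(A\to B)\Rightarrow(A\to 0)$ of arrows, where here $A=C_{\varphi_{00}^{01}}$ and $B=C_{\varphi_{10}^{11}}$.

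The remaining checks are routine.
\begin{itemize}
\item The iterated cone is computed inside $\SCstar(X)$ pointwise on $\LCcat(X)$, so the identification in \cref{lem:mapping_cone_of_mapping_cone} applies objectwise, compatibly with the restricted functor $M^I$ on $[\{0,1\}^I,\SCstar(X)]$.
\item Naturality of that identification with respect to morphisms of squares ensures the two maps agree with the mapping cone sequence maps.
\end{itemize}

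Since every mapping cone short exact sequence is semi-split in the sense of \cite[Definition~3.5]{MN_2009}, as recalled just before the lemma, the sequence is semi-split. Exactness is already contained in that fact, and also follows from \cref{prop:mapping_cone_exact_continuous_module}.

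The main obstacle is bookkeeping: making sure the naturality in \cref{lem:mapping_cone_of_mapping_cone} really carries the given $M^2$-morphisms to the canonical maps $SB\to C_\Phi\to A$, including the identifications $C_{0\to B}=SB$ and $C_{A\to 0}=A$ from \cref{eg:1-dimensional_mapping_cone}.

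If this is troublesome, there is a direct fallback. We can write an explicit completely positive contractive $X$-equivariant splitting $C_{\varphi_{00}^{01}}\to M^2(A;\varphi)$. Such a splitting sends $(a,f)$ to $(a,f,g,h)$, where $g$ and $h$ are obtained from $\varphi_{00}^{10}(a)$ and $\varphi_{01}^{11}\circ f$ by extending constantly in the new variable and multiplying by the cutoff $1-t$. This is the standard splitting of mapping cone sequences.
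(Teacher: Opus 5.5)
Your proposal is correct and is essentially the paper's proof. The paper likewise identifies the sequence as the mapping cone short exact sequence of the induced $X$-equivariant $\ast$-homomorphism $C_{\varphi_{00}^{01}} \to C_{\varphi_{10}^{11}}$ and concludes that it is semi-split. Your explicit completely positive contractive splitting is a valid fallback but is not needed.
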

	
	\begin{proof}
		This sequence is the mapping cone short exact sequence of the $X$-equivariant $\ast$-homomorphism $C_{\varphi_{00}^{01}} \to C_{\varphi_{10}^{11}}$ induced by the commutative square defining $(A;\varphi)$.
		Hence, it is semi-split. 
	\end{proof}

	Let $I$ be a finite set. For topological spaces $X$ and $Y$, we obtain the functor
	\[M^I \colon [\{0, 1\}^I, [\Cstar(X), \Cstar(Y)]] \to [\Cstar(X), \Cstar(Y)]\]
	as in \cref{lem:preservation_exact_continuous_module}. This functor restricts to a functor 
	\[M^I \colon [\{0, 1\}^I, [\SCstar(X), \SCstar(Y)]] \to [\SCstar(X), \SCstar(Y)]. \]

	To compute compositions of functors constructed from higher-dimensional mapping cones in \cref{section:reflection_functors,section:higher-dimensional_reflection_functors}, we combine two diagrams indexed by $\{0, 1\}^{I_1}$ and $\{0, 1\}^{I_2}$ into a single diagram indexed by $\{0, 1\}^{I_1 \amalg I_2}$. We introduce the following notation for this construction.
	
	\begin{defn}
		Let $I_1$ and $I_2$ be finite sets and $I:=I_1 \amalg I_2$. Let $X$, $Y$, and $Z$ be topological spaces. 
		For 
		\[
		\begin{aligned}
			(F; \varphi)&\in [\{0, 1\}^{I_1}, [\Cstar(X), \Cstar(Y)]],\\
			(G; \psi)&\in [\{0, 1\}^{I_2}, [\Cstar(Y), \Cstar(Z)]], 
		\end{aligned}
		\]
		we define 
		\[(GF; \psi\varphi) \in [\{0, 1\}^I,  [\Cstar(X), \Cstar(Z)]] \] 
		as follows. For $(\lambda, \lambda') \in \{0, 1\}^{I_1} \times \{0, 1\}^{I_2}=\{0, 1\}^I$, we define 
		\[(GF)_{(\lambda, \lambda')} \in [\Cstar(X), \Cstar(Z)]\] 
		by
		\[(GF)_{(\lambda, \lambda')}:=G_{\lambda'}F_{\lambda}. \]
		For $((\lambda, \mu), (\lambda', \mu')) \in R_{\{0, 1\}^{I_1}} \times R_{\{0, 1\}^{I_2}}=R_{\{0, 1\}^I}$, we define a morphism 
		\[(\psi\varphi)_{(\lambda, \lambda')}^{(\mu, \mu')} \colon (GF)_{(\lambda, \lambda')} \Rightarrow (GF)_{(\mu, \mu')}\]
		in $[\Cstar(X), \Cstar(Z)]$ to be the horizontal composite
		\[(\psi\varphi)_{(\lambda, \lambda')}^{(\mu, \mu')}:= \psi_{\lambda'}^{\mu'} \varphi_{\lambda}^{\mu} \colon G_{\lambda'}F_{\lambda} \Rightarrow G_{\mu'}F_{\mu}. \]
	\end{defn}
	
	\begin{eg}\label{eg:composition_1+1=2}
		Consider the case where $|I_1|=|I_2|=1$. 
		For 
		\[
		\begin{aligned}
			(F; \varphi) &\in [\{0, 1\}, [\Cstar(X), \Cstar(Y)]],\\
			(G; \psi) &\in [\{0, 1\}, [\Cstar(Y), \Cstar(Z)]], 
		\end{aligned}
		\]
		the object
		\[(GF; \psi\varphi) \in [\{0, 1\}^2, [\Cstar(X), \Cstar(Z)]] \]
		is a commutative diagram 
		\[
		\begin{tikzcd}
			G_0F_0 \ar[Rightarrow]{r}{G_0\varphi} \ar[Rightarrow, swap]{d}{\psi F_0} & G_0F_1 \ar[Rightarrow]{d}{\psi F_1} \\
			G_1F_0 \ar[Rightarrow, swap]{r}{G_1 \varphi} & G_1F_1
		\end{tikzcd}
		\]
		in $[\Cstar(X), \Cstar(Z)]$. 
	\end{eg}
	
	\begin{eg}\label{eg:composition_1+2=3}
		Consider the case where $|I_1|=1$ and $|I_2|=2$. 
		For 
		\[
		\begin{aligned}
			(F; \varphi)&\in [\{0, 1\}, [\Cstar(X), \Cstar(Y)]],\\
			(G; \psi)&\in [\{0, 1\}^2, [\Cstar(Y), \Cstar(Z)]],
		\end{aligned}
		\]
		the object
		\[(GF; \psi\varphi) \in [\{0, 1\}^3,  [\Cstar(X), \Cstar(Z)]] \] 
		is a commutative diagram 
		\[
		\begin{tikzcd}[ampersand replacement=\&, every label/.append style={font=\small}, cells={nodes={font=\small}}]
			\& {} \&
			G_{00}F_0
			\ar[Rightarrow, swap]{dl}{\psi_{00}^{01}F_0}
			\ar[Rightarrow, near start]{dd}{\psi_{00}^{10}F_0}
			\ar[Rightarrow]{rr}{G_{00} \varphi}
			\& \& 
			G_{00}F_1
			\ar[Rightarrow]{dl}{\psi_{00}^{01} F_1}
			\ar[Rightarrow]{dd}{\psi_{00}^{10} F_1}
			\\
			\& 
			G_{01}F_0
			\ar[Rightarrow, swap]{dd}{\psi_{01}^{11}F_0}
			\& \& 
			G_{01}F_1
			\ar[Rightarrow, from=ll, crossing over, swap, near start, "G_{01} \varphi"]
			\\
			\& \&
			G_{10}F_0
			\ar[Rightarrow]{dl}{\psi_{10}^{11}F_0}
			\ar[Rightarrow, near start, swap]{rr}{G_{10} \varphi}
			\& \& 
			G_{10}F_1
			\ar[Rightarrow]{dl}{\psi_{10}^{11}F_1}
			\\
			\&
			G_{11}F_0
			\ar[Rightarrow, swap]{rr}{G_{11}\varphi}
			\& \& 
			G_{11}F_1
			\ar[Rightarrow, from=uu, crossing over, near start, "\psi_{01}^{11}F_1"]
		\end{tikzcd}
		\]
		in $[\Cstar(X), \Cstar(Z)]$. 
	\end{eg}
	
	The following lemma shows that the above composition of diagrams is compatible with higher-dimensional mapping cones. It will be used repeatedly later.
	
	\begin{lem}\label{lem:rearrangement_mapping_cones}
		Let $I_1$ and $I_2$ be finite sets and $I:=I_1 \amalg I_2$. 
		Let $X$, $Y$, and $Z$ be topological spaces. 
		Let
		\[
		\begin{aligned}
			(F; \varphi)&\in [\{0, 1\}^{I_1}, [\Cstar(X), \Cstar(Y)]],\\
			(G; \psi)&\in [\{0, 1\}^{I_2}, [\Cstar(Y), \Cstar(Z)]].
		\end{aligned}
		\]
		Under the identification 
		\[[\{0, 1\}^{I_2}, [\Cstar(Y), \Cstar(Z)]]=[\Cstar(Y), [\{0, 1\}^{I_2}, \Cstar(Z)]], \]
		suppose that $(G; \psi) \colon \Cstar(Y) \to [\{0, 1\}^{I_2}, \Cstar(Z)]$ is an exact $\Cstar$-module functor. 
		Then
		\[M^{I_2}(G; \psi) M^{I_1}(F; \varphi)=M^I(GF; \psi\varphi)\]
		in $[\Cstar(X), \Cstar(Z)]$. 
	\end{lem}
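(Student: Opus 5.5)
We fix $A \in \Cstar(X)$ and compare both sides objectwise, then check naturality in $A$. By definition, $M^{I_1}(F;\varphi)A = M^{I_1}\bigl((F_\lambda A)_\lambda; (\varphi_\lambda^\mu)_A\bigr)$ is the $I_1$-dimensional mapping cone in $\Cstar(Y)$ of the cube $\lambda \mapsto F_\lambda A$. We then apply $G_{\lambda'}$ for each $\lambda' \in \{0,1\}^{I_2}$. The hypothesis that $(G;\psi)\colon \Cstar(Y) \to [\{0,1\}^{I_2},\Cstar(Z)]$ is an exact $\Cstar$-module functor lets us invoke \cref{lem:exact_module_functor_commute_mapping_cone}, extended from $1$-dimensional to $I_1$-dimensional mapping cones. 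This gives a canonical isomorphism
\[
(G;\psi)\bigl(M^{I_1}(F;\varphi)A\bigr) \cong M^{I_1}\bigl((G;\psi)(F_\lambda A)_\lambda\bigr)
\]
in $[\{0,1\}^{I_2},\Cstar(Z)]$, natural in the input cube.

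The extension of \cref{lem:exact_module_functor_commute_mapping_cone} to higher dimensions goes by induction on $|I_1|$ using \cref{lem:mapping_cone_of_mapping_cone}. We write $M^{I_1} = M^{\{i\}} \circ M^{I_1\setminus\{i\}}_*$, and note that an exact $\Cstar$-module functor, applied pointwise to cubes, is again an exact $\Cstar$-module functor. Here we need that $[\Cstar(Y),\Cstar(Z)]$-valued objects are handled as in \cref{rem:full_subcategory_functor_category}: exactness and tensor actions there are computed pointwise in $[\LCcat(Z),\Cstar]$, and that lemma is stated for functor categories $[\frakA,\Cstar]$. So we apply it with $\frakA = \{0,1\}^{I_2}\times\LCcat(Z)$, after identifying $\Cstar(Y)$ with its image in $[\LCcat(Y),\Cstar]$. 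Exactness on short exact sequences within $\Cstar(Y)$ is all that the five-lemma argument in that proof uses, because the mapping cone sequences involved stay in $\Cstar(Y)$.

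Next we apply $M^{I_2}$ to both sides. The left side becomes $M^{I_2}(G;\psi)\,M^{I_1}(F;\varphi)A$. The right side becomes $M^{I_2} M^{I_1}_*$ applied to the $\{0,1\}^{I_1}\times\{0,1\}^{I_2}$-cube with entries $G_{\lambda'}F_\lambda A$ and edge maps $(\psi_{\lambda'}^{\mu'})_{F_\mu A}\circ G_{\lambda'}((\varphi_\lambda^\mu)_A)$, which are exactly the components of $(\psi\varphi)_{(\lambda,\lambda')}^{(\mu,\mu')}$. By \cref{lem:mapping_cone_of_mapping_cone}, this equals $M^I(GF;\psi\varphi)A$. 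Finally, all the isomorphisms used are canonical and natural in the cube, and the cube $(F_\lambda A)$ is natural in $A$. Hence the identification is natural in $A$, which gives equality in $[\Cstar(X),\Cstar(Z)]$ under the paper's convention of writing canonical isomorphisms as equalities.

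The main obstacle is the first step. The key identification $G(M^{I_1}(\cdots)) \cong M^{I_1}(G\cdots)$ must hold simultaneously for all $\lambda'$ and be compatible with the $\psi$-maps. The cleanest way to get this is to treat $(G;\psi)$ as a single exact module functor into the cube category rather than handling each $G_{\lambda'}$ separately. This is exactly why the hypothesis is phrased that way.
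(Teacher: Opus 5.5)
Your proposal is correct and takes essentially the same route as the paper. You commute the exact $\Cstar$-module functor $(G;\psi)$ past $M^{I_1}$: the $|I_1|=1$ case follows from the argument of \cref{lem:exact_module_functor_commute_mapping_cone}, and the general case by induction via \cref{lem:mapping_cone_of_mapping_cone}. You then identify the resulting cube with $(GF;\psi\varphi)$ and conclude with \cref{lem:mapping_cone_of_mapping_cone}; the only difference is that you argue objectwise in $A$ and check naturality, whereas the paper does the same three-step calculation directly in $[\Cstar(X),\Cstar(Z)]$.
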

	
	\begin{proof}
		The assertion follows from the following calculation:
		\begin{align*}
			M^{I_2}(G; \psi) M^{I_1}(F; \varphi)&=M^{I_2}(GM^{I_1}(F; \varphi); \psi M^{I_1}(F; \varphi)) \\
			&=M^{I_2}(M^{I_1}_*(GF; \psi\varphi))=M^I(GF; \psi\varphi). 
		\end{align*}
		The first equality follows by definition, and the third follows from \cref{lem:mapping_cone_of_mapping_cone}. When $|I_1|=1$, the second equality follows by the same argument as \cref{lem:exact_module_functor_commute_mapping_cone} applied to the exact $\Cstar$-module functor $(G; \psi)$; see also \cref{rem:exact_continuous_module_iff}. The general case of the second equality then follows from \cref{lem:mapping_cone_of_mapping_cone}. This completes the proof. 
	\end{proof}

	\subsection{Homotopy-invariance, stability, and half-exactness}
	
	Let $X$ be a topological space and let $\frakA$ be an additive category.
	The notions of homotopy-invariance, stability, half-exactness, and split-exactness of functors from $\Cstar(X)$ to $\frakA$ are defined in the same way as the corresponding notions for functors from $\Cstar$ to $\frakA$; see \cite[Sections~21 and~22]{Blackadar_1998}.
	The following lemma is proved by the same argument as its non-equivariant counterpart for homotopy-invariant, stable, half-exact functors from $\Cstar$ to $\frakA$; see also \cite[Section~4]{Cuntz_1984}. 
	
	\begin{lem}\label{lem:Bott}
		Let $F \colon \Cstar(X) \to \frakA$ be a homotopy-invariant, stable, half-exact functor. For a short exact sequence
		\[
		\begin{tikzcd}
			0 \ar{r} & A \ar{r}{\iota} & E \ar{r}{\pi} & B \ar{r} & 0
		\end{tikzcd}
		\] 
		in $\Cstar(X)$, the following statements hold. 
		\begin{enumerate}[label=\textnormal{(\arabic*)}]
			\item \label{prop:Bott_1}
			If $FA=FSA=0$, then $F\pi \colon FE \to FB$ is an isomorphism in $\frakA$. 
			In particular, if $A$ is contractible, then $F\pi \colon FE \to FB$ is an isomorphism in $\frakA$. 
			\item \label{prop:Bott_2}
			If $FB=FSB=0$, then $F\iota \colon FA \to FE$ is an isomorphism in $\frakA$.
			In particular, if $B$ is contractible, then $F\iota \colon FA \to FE$ is an isomorphism in $\frakA$.
		\end{enumerate}
		Furthermore, $F$ satisfies Bott periodicity, that is, $FS^2 \simeq F$ in $[\Cstar(X), \frakA]$. 
	\end{lem}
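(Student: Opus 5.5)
The plan is to follow the classical non-equivariant argument (Cuntz, Blackadar \S 21--22). Every construction involved (cones, suspensions, mapping cones, the Toeplitz extension) is built from tensor actions and pointwise exact constructions, so by \cref{prop:Cstar(X)_module_subcategory} and \cref{prop:mapping_cone_exact_continuous_module} it stays inside $\Cstar(X)$.

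First, I derive the long exact sequence. Homotopy invariance gives $F(CB)=0$, since $CB$ is contractible. For a short exact sequence $0\to A\to E\to B\to 0$, consider the mapping cone $C_\pi$ and the canonical map $A\to C_\pi$, $a\mapsto (\iota(a),0)$. Its cokernel-type comparison is the short exact sequence $0\to A\to C_\pi\to C_{\id_B}=CB\cdot$-piece, more precisely $0\to CB'\to C_\pi\to E$ and $0\to A\to C_\pi \to CB\to 0$ (with $CB$ contractible). Half-exactness applied to these sequences and to the mapping cone sequence $0\to SB\to C_\pi\to E\to 0$ gives $F(A)\cong F(C_\pi)$, via the standard argument that the inclusion $A\to C_\pi$ is an $F$-equivalence. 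Here one uses half-exactness on $0\to A\to C_\pi\to CB\to0$ together with the fact that $F(CB)$ and $F(SCB)$ vanish. Careful: this step uses only half-exactness at the middle term, so one has to run the usual Puppe-sequence argument. Iterating then yields the long exact sequence
\[\cdots\to F(SE)\to F(SB)\to F(A)\to F(E)\to F(B)\]
natural in the extension.

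Statements (1) and (2) follow from this sequence. If $FA=FSA=0$, exactness at $F(E)\to F(B)$ and at $F(SB)\to F(A)$, one step further along the sequence, gives that $F\pi$ is injective and surjective. Statement (2) is symmetric, using $FB=FSB=0$. The ``in particular'' clauses follow because, for contractible $A$, both $A$ and $SA=S\otimes A$ are contractible, so $F$ vanishes on them.

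For Bott periodicity, I use the Toeplitz extension $0\to\mathbb{K}\to\mathcal{T}_0\to S\to 0$, tensored with $A\in\Cstar(X)$; exactness of the tensor action keeps this in $\Cstar(X)$. By Cuntz's argument, $F(\mathcal{T}_0\otimes A)=0$ for stable, homotopy-invariant, split-exact $F$. Split-exactness follows from half-exactness plus homotopy invariance via the long exact sequence above. The long exact sequence then gives $F(S^2A)\cong F(S\mathcal{T}_0\otimes A)$-boundary$\cong F(\mathbb{K}\otimes A)\cong F(A)$, naturally in $A$.

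The main obstacle is Cuntz's vanishing $F(\mathcal{T}_0\otimes A)=0$. To handle it, I will verify that his quasihomomorphism/homotopy argument is built entirely from operations $D\otimes\blank$ with $D$ a fixed C*-algebra and homotopies through such maps. Such operations are automatically $X$-equivariant, so the non-equivariant proof transfers verbatim.
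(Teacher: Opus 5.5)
Your strategy (one-sided Puppe sequence, then Cuntz's Toeplitz argument for Bott periodicity) is the classical one the paper simply refers to, but your deduction of (1) has a genuine gap. The sequence you derive,
\[
\cdots\to F(SA)\to F(SE)\to F(SB)\xrightarrow{\partial}F(A)\to F(E)\xrightarrow{F\pi}F(B),
\]
terminates at $F(B)$, so nothing in it can make $F\pi$ surjective. With $FA=FSA=0$ it gives only two conclusions: $F\pi$ is injective, and $F(S\pi)\colon F(SE)\to F(SB)$ is an isomorphism. The exactness ``at $F(SB)\to F(A)$'' that you invoke is a statement about $F(S\pi)$, not about $F\pi$.

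So (1) is not symmetric to (2). Statement (2) does follow from this sequence as you say: $FSB=0$ gives injectivity at $F(A)$, and $FB=0$ gives surjectivity at $F(E)$. Statement (1), however, needs the sequence to continue to the right of $F(B)$, and this is exactly where stability must enter. The one-sided sequence really is insufficient. Indeed, $H^0_c$ is a homotopy-invariant, half-exact functor on commutative C*-algebras. For the extension $0\to\conti_0((0,1))\to\conti([0,1])\to\C^2\to0$ one has $F(A)=F(SA)=0$, while $F\pi\colon\Z\to\Z^2$ is not onto.

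The repair is to reverse the order and prove Bott periodicity first. Your Toeplitz argument applies the sequence to $0\to\mathbb{K}\otimes A\to\mathcal{T}_0\otimes A\to SA\to0$ and uses only exactness at $F(S^2A)$ and at $F(\mathbb{K}\otimes A)$. Both positions lie in the one-sided sequence. The resulting isomorphism $\beta\colon FS^2\Rightarrow F$ is natural, because boundary maps are natural in the extension and the corner embedding is natural. Now if $FSA=0$, exactness of $F(S^2E)\to F(S^2B)\to F(SA)$ makes $F(S^2\pi)$ surjective. The identity $F\pi\circ\beta_E=\beta_B\circ F(S^2\pi)$ then makes $F\pi$ surjective.

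Three smaller points.
\begin{enumerate}
\item As written, your step $F(A)\cong F(C_\pi)$ is circular. Getting injectivity from $0\to A\to C_\pi\to CB\to0$ with $F(CB)=F(SCB)=0$ is exactly (2) for that extension. The standard fix is to note that the mapping cone $C_e$ of $e\colon A\to C_\pi$ sits in an extension $0\to CA\to C_e\to SCB\to0$, so $F(C_e)=0$. Exactness of $F(C_e)\to F(A)\to F(C_\pi)$ in the Puppe sequence of $e$ then gives injectivity.
\item The kernel of $C_\pi\to E$ is $SB$, not a cone.
\item Rather than re-verifying Cuntz's proof step by step, you can apply the non-equivariant theorem directly to $D\mapsto F(D\otimes A)$ on $\Cstar$. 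This functor is homotopy-invariant, stable and split-exact because the tensor action is exact.
\end{enumerate}
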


	The following lemma is easily checked with the help of the unit condition of a $\Cstar$-module functor. 
	
	\begin{lem}\label{lem:homotopy-invariant_stable_module}
		Let $\frakA$ be a category. 
		If $F \colon \Cstar(X) \to \Cstar(Y)$ is a $\Cstar$-module functor and if $G \colon \Cstar(Y) \to \frakA$ is a homotopy-invariant and stable functor, then $GF \colon \Cstar(X) \to \frakA$ is homotopy-invariant and stable. 
	\end{lem}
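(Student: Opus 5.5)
The plan is to reduce both properties to the unit condition of the $\Cstar$-module structure on $F$. Let $(F,\varphi)$ be the module functor, with $\varphi_{D,A} \colon F(D \otimes A) \to D \otimes F(A)$ natural in $D \in \Cstar$ and $A \in \Cstar(X)$. The unit constraint says that $\varphi_{\C,A}$ agrees with $F$ applied to the canonical identification $\C \otimes A = A$. Both homotopy-invariance and stability are statements about maps of the form $\rho \otimes \id_A \colon D_1 \otimes A \to D_2 \otimes A$, so the task is to move such maps through $F$ using $\varphi$ and then invoke the corresponding property of $G$.

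For homotopy-invariance, I use the standard reformulation: it suffices that for every $B \in \Cstar(X)$ the map $G F(\ev_t) \colon GF(\conti([0,1]) \otimes B) \to GF(B)$ does not depend on $t \in [0,1]$. Indeed, a homotopy $h \colon A \Rightarrow \conti([0,1]) \otimes B$ between $\varphi_0,\varphi_1$ satisfies $\varphi_t = \ev_t \circ h$, so $GF(\varphi_t) = GF(\ev_t) \circ GF(h)$. Naturality of $\varphi$ in the $D$-variable, applied to $\ev_t \colon \conti([0,1]) \to \C$, gives
\[
\varphi_{\C,B} \circ F(\ev_t \otimes \id_B) = (\ev_t \otimes \id_{FB}) \circ \varphi_{\conti([0,1]),B}.
\]
By the unit constraint, $\varphi_{\C,B}$ is the identity under $\C \otimes B = B$. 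Hence $F(\ev_t) = \ev_t \circ \varphi_{\conti([0,1]),B}$, and $\varphi_{\conti([0,1]),B}$ is an isomorphism independent of $t$. Since $G$ is homotopy-invariant, $G(\ev_t)$ on $\conti([0,1]) \otimes FB$ is independent of $t$, so $GF(\ev_t)$ is independent of $t$.

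Stability goes the same way. Let $e \colon \C \to \mathbb{K}$ be a rank-one corner embedding. Then $GF(e \otimes \id_B)$ equals, up to the isomorphisms $\varphi_{\mathbb{K},B}$ and $\varphi_{\C,B} = \id$, the map $G(e \otimes \id_{FB})$. The latter is an isomorphism because $G$ is stable, so $GF(e \otimes \id_B)$ is an isomorphism as well.

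The main obstacle is bookkeeping rather than substance. One has to check that the "definitions in the same way as for $\Cstar$" use the tensor actions $\conti([0,1]) \otimes \blank$ and $\mathbb{K} \otimes \blank$ on $\Cstar(X)$, so that the maps above are the relevant ones. One also has to check that the unit constraint identifies $\varphi_{\C,\blank}$ with the identity under the convention of writing canonical isomorphisms as equalities. Both should follow directly from the conventions fixed in \cref{subsection:notation_for_Cstar-algebras_and_functor_categories}.
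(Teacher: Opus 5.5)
Your proposal is correct and follows the paper's approach; the paper only says the lemma is easily checked from the unit condition of a $\Cstar$-module functor, and your argument carries out exactly that check. You combine the unit condition with naturality of the module constraint in the $\Cstar$-variable, applied to $\ev_t \colon \conti([0,1]) \to \C$ and to a corner embedding $\C \to \mathbb{K}$ (for homotopy-invariance you only need $\varphi_{\conti([0,1]),B}$ to be independent of $t$, not invertible).
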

	
	All the notions and statements introduced in this subsection also apply with $\Cstar(\blank)$ replaced by $\SCstar(\blank)$, or by the corresponding categories of nuclear C*-algebras or separable nuclear C*-algebras.

	\section{Bivariant K-theory for C*-algebras over topological spaces}\label{section:ideal-related_KK-theory_and_E-theory}
	In this section, we recall basic facts about ideal-related KK-theory and E-theory for C*-algebras over topological spaces. 
	
	\subsection{Axiomatic properties}\label{subsection:Axiomatic_properties}
	In this paper, we will not need any of the concrete constructions of KK-theory and E-theory. It suffices to recall their axiomatic properties. 
	
	Let $\KKcat$ denote the additive category of Kasparov's KK-theory \cite{Kasparov_1980}, and let $\Ecat$ denote the additive category of Connes and Higson's E-theory \cite{CH_1990}. We refer the reader to \cite{Blackadar_1998} for further details. The objects of both $\KKcat$ and $\Ecat$ are separable C*-algebras.
	
	Let $X$ be a finite topological space.  
	Let $\KKcat(X)$ denote the additive category of Kirchberg's ideal-related KK-theory \cite{Kirchberg_2000}; see \cite[Definition~3.3]{MN_2009}. Let $\Ecat(X)$ denote the additive category of Dadarlat and Meyer's ideal-related E-theory; see \cite[Theorem~2.25]{DM_2012}.
	The objects of both $\KKcat(X)$ and $\Ecat(X)$ are separable C*-algebras over $X$.
	
	Recall that there exists an identity-on-objects, homotopy-invariant, stable, split-exact functor $\KK_X \colon \SCstar(X) \to \KKcat(X)$ such that, for an additive category $\frakA$ and a homotopy-invariant, stable, split-exact functor $F \colon \SCstar(X) \to \frakA$, there exists a unique additive functor $F' \colon \KKcat(X) \to \frakA$ such that $F'\KK_X=F$; see \cite[Theorem~3.7]{MN_2009}. There also exists an identity-on-objects, homotopy-invariant, stable, half-exact functor $\E_X \colon \SCstar(X) \to \Ecat(X)$ satisfying the analogous universal property, with half-exactness in place of split-exactness; see \cite[Theorem 2.25]{DM_2012}.

	We recall the following folklore fact and include a brief proof for the reader's convenience. It will reduce the argument in the proof of \cref{lem:induced_natural_transformation_KK_E}.

	\begin{lem}\label{lem:morphisms_in_KK_E}
		For every morphism $f \colon A \to B$ in $\KKcat(X)$, there exists a finite sequence  
		\[
		\begin{tikzcd}
			A=A_0 \ar{r}{f_0} & A_1 \ar{r}{f_1} & \cdots \ar{r}{f_{n-1}} & A_n=B
		\end{tikzcd}
		\]
		of objects and morphisms in $\KKcat(X)$ such that $f=f_{n-1} f_{n-2} \cdots f_0$ and, for each $j=0, 1, \ldots, n-1$, either $f_j=\KK_X(\varphi_j)$ for some morphism $\varphi_j \colon A_j \to A_{j+1}$ in $\SCstar(X)$ or $f_j=\KK_X(\psi_j)^{-1}$ for some morphism $\psi_j \colon A_{j+1} \to A_j$ in $\SCstar(X)$.
		The analogous statement holds for morphisms in $\Ecat(X)$.
	\end{lem}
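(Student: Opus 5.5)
The plan is to show that the class of morphisms admitting such a factorization contains every morphism of $\KKcat(X)$, by comparing it with a suitable subcategory. Let $\frakD$ be the subcategory of $\KKcat(X)$ with the same objects, whose morphisms are all finite composites of morphisms of the form $\KK_X(\varphi)$ and $\KK_X(\psi)^{-1}$ (for $\KK_X(\psi)$ invertible). This class contains identities and is closed under composition, so $\frakD$ is a subcategory. The aim is to prove $\frakD=\KKcat(X)$.

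The most concrete route uses an explicit description of Kasparov morphisms. By the Cuntz picture, as adapted to the $X$-equivariant setting in \cite{MN_2009}, every element of $\KK_0(X;A,B)$ is represented by an $X$-equivariant quasihomomorphism. Such a quasihomomorphism gives a split extension of $A$ by $\mathbb{K}\otimes B$, and its class can be written as
\[
\KK_X(\iota)^{-1}\circ(\KK_X(\varphi_+)-\KK_X(\varphi_-)),
\]
where $\iota\colon B\to\mathbb{K}\otimes B$ is a corner embedding, whose class is invertible by stability. Differences of classes of $\ast$-homomorphisms $\varphi_\pm\colon A\to E$ can be turned into a single $\ast$-homomorphism in two steps. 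First, use the split-exact sequence defining the quasihomomorphism algebra, which expresses $\KK_X(\varphi_+)-\KK_X(\varphi_-)$ as $\KK_X(\varphi_+)$ composed with the inverse of a split-exact comparison. Alternatively, use a Cuntz-type trick: subtraction equals addition of the composite with an inverse given by a rotation (via $2\times 2$ matrix embeddings $A\to M_2(E)$ together with the inverse of the corner embedding $E\to M_2(E)$).

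A cleaner alternative, which I would try first, is to use the universal property directly. One would show that the functor $\SCstar(X)\to\frakD$ is homotopy-invariant, stable and split-exact, and then use uniqueness. The obstacle is that $\frakD$ need not obviously be additive, so the universal property does not apply as stated. This is why I would fall back on the explicit Cuntz description above.

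The main obstacle is expressing the \emph{difference} $\KK_X(\varphi_+)-\KK_X(\varphi_-)$ as a zigzag. I would handle it via the split extension $0\to J\to D\rightleftarrows A\to 0$ built from the quasihomomorphism. Here $D$ is the pullback algebra, and the identity decomposes as $\KK_X(j)\circ p + \KK_X(s)\circ \KK_X(q)$. The projection $p$ itself is not a $\ast$-homomorphism. However, the quasihomomorphism class is $\KK_X(\iota)^{-1}\KK_X(\mu)\circ(\text{class of }(s_+,s_-))$, and the pair of splittings can be replaced, after Kasparov stabilization, by the single $\ast$-homomorphism obtained from the Cuntz algebra $qA$. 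Concretely, the class equals $\KK_X(\iota)^{-1}\KK_X(\hat\varphi)\KK_X(\pi_A)^{-1}$, where $\pi_A\colon qA\to A$ is the canonical map, which is a $\KK(X)$-equivalence in the equivariant setting, and $\hat\varphi\colon qA\to \mathbb{K}\otimes B$ is the induced $\ast$-homomorphism. This gives a zigzag of length three.

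For $\Ecat(X)$, I would use the asymptotic-morphism description from \cite{DM_2012}. A morphism $A\to B$ is represented by a $\ast$-homomorphism $SA\otimes\mathbb{K}\to \mathfrak{A}(B\otimes\mathbb{K})$, and the extension defining the asymptotic algebra yields an $\E$-equivalence $\conti_0([0,\infty),B\otimes\mathbb{K})$-type zigzag. Composing with the Bott-periodicity isomorphisms, themselves given by zigzags, gives the same conclusion.
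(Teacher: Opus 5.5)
\textbf{The KK half has a genuine gap.} Your argument for $\KKcat(X)$ rests on an $X$-equivariant Cuntz picture. You need that $qA$ is an object of $\SCstar(X)$, that $\pi_A$ and $\hat\varphi$ are $X$-equivariant, and that $\KK_X(\pi_A)$ is invertible. You attribute this to \cite{MN_2009}, but it is exactly the nontrivial point, and the naive construction does not even stay inside $\SCstar(X)$.

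Take $X=\{1,2\}$ discrete and $A=\C\oplus\C$, with $A(\{1\})=\C\oplus 0$ and $A(\{2\})=0\oplus\C$. Any C*-algebra over this $X$ is the internal direct sum of its ideals over $\{1\}$ and $\{2\}$, since their intersection is the ideal over $\emptyset$, which is $0$. So if both inclusions $\iota_0,\iota_1\colon A\to A*A$ were $X$-equivariant, then $\iota_0(e_1)\iota_1(e_2)$ would lie in $(A*A)(\{1\})\cap(A*A)(\{2\})=0$. But this product is nonzero in the free product. To see this, send $e_1,e_2$ via $\iota_0$ to the coordinate projections on $\C^2$, and via $\iota_1$ to the projections onto the lines spanned by $(1,1)$ and $(1,-1)$.

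Hence the universal quasihomomorphism $A\rightrightarrows A*A\rhd qA$ is not a diagram in $\SCstar(X)$, and your length-three zigzag passes through an object you do not have. Repairing this would require an $X$-adapted replacement for $qA$, built from coproducts in $\Cstar(X)$ rather than free products, together with an equivariant version of Cuntz's theorem that it is $\KKcat(X)$-equivalent to $A$. That is a substantial project. The paper sidesteps it entirely: it uses the description of morphism sets in $\KKcat(X)$ from \cite[Theorem~5.2]{DM_2012} and reruns the E-theory argument there, so no equivariant Cuntz algebra is ever needed.

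\textbf{The E-theory half matches the paper.} An approximately $X$-equivariant asymptotic morphism is realized by two $X$-equivariant $\ast$-homomorphisms via the pullback over the asymptotic algebra (\cite[Lemma~2.26]{DM_2012}). Two details need care. The pullback must be cut down to a separable C*-algebra over $X$. The target in the Connes--Higson picture is $SB\otimes\mathbb{K}$, not $B\otimes\mathbb{K}$. You also need that the Bott periodicity isomorphisms are realized by $X$-equivariant $\ast$-homomorphisms. This comes from inspecting Cuntz's proof via the Toeplitz extension; you assert it but do not justify it.

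\textbf{On your abandoned universal-property route.} The real obstruction is not additivity. $\frakD$ is closed under sums via $2\times 2$ matrix embeddings, and under negation once the Bott isomorphism is known to be a zigzag. The obstruction is split-exactness. The projection onto the ideal summand of a split extension is itself a quasihomomorphism class, so showing that it lies in $\frakD$ is the original problem again.
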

	
	\begin{proof}
		Every approximately $X$-equivariant asymptotic morphism is represented by two $X$-equivariant $\ast$-homomorphisms as in \cite[Lemma~2.26]{DM_2012} (see also \cite[Proposition~25.6.2]{Blackadar_1998}). Reviewing Cuntz's proof of Bott periodicity in \cite[Section~4]{Cuntz_1984} (see also \cite[Exercise~9.4.2]{Blackadar_1998}), we see that the Bott periodicity isomorphisms $S^2A \simeq A$ and $S^2B \simeq B$ in $\Ecat(X)$ are also represented by $X$-equivariant $\ast$-homomorphisms. This shows the assertion for $\Ecat(X)$. By using the description of morphism sets in $\KKcat(X)$ given by \cite[Theorem 5.2]{DM_2012}, we can apply the same argument to show the assertion for $\KKcat(X)$. 
	\end{proof}
	
	\begin{rem}
		The forgetful functor $\ev_X \colon \SCstar(X) \to \SCstar$ induces functors
		\[\ev_X \colon \KKcat(X) \to \KKcat, \quad
		\ev_X \colon \Ecat(X) \to \Ecat.\]
		If $X$ is a one-point space, then they are isomorphisms of categories. In this case, we may identify $\KKcat(X)$ and $\Ecat(X)$ with $\KKcat$ and $\Ecat$ via $\ev_X$, respectively. 
	\end{rem}

	\begin{rem}
		The category $\KKcat(X)$ can be defined for any topological space $X$; see \cite{Gabe_2024}. However, the definition of $\KKcat(X)_{\loc}$ and $\Boot(X)$ for an infinite topological space $X$ is unclear; see \cite[Sections~4.3 and 4.4]{MN_2009}. Therefore, for simplicity and clarity, we restrict our attention in this section and \cref{subsection:application_to_bivariant_K-theory_1,subsection:application_to_bivariant_K-theory_2} to the case of finite topological spaces. 
	\end{rem}
	
	\begin{rem}
		The categories $\Ecat(X)$ and $\Boot_{\E}(X)$ can be defined for any second countable space $X$; see \cite{DM_2012}. 
		The results for E-theory also remain valid for suitable infinite topological spaces; see \cref{rem:equivalence_E_Wc_Wo_infinite,rem:equivalence_E_Walpha_Wbeta_infinite}. 
	\end{rem}
	
	The category $\KKcat(X)$ is triangulated with respect to the following structure.
	We refer the reader to \cite[Appendix~A]{MN_2004} and \cite[Section~3.3]{MN_2009} for further details. 
	The translation functor is the suspension functor 
	\[S=\conti_0((0, 1)) \otimes \blank \colon \KKcat(X) \to \KKcat(X). \]
	A triangle is distinguished if it is isomorphic to a mapping cone triangle 
	\[
	\begin{tikzcd}
		SA_1 \ar{r} & C_{\varphi} \ar{r} & A_0 \ar{r}{\KK_X(\varphi)} & A_1
	\end{tikzcd}
	\]
	of a morphism $\varphi \colon A_0 \to A_1$ in $\SCstar(X)$. 
	Moreover, for a countable family $\calF$ of separable C*-algebras over $X$, the C*-direct sum $\bigoplus_{A \in \calF} A$ is a coproduct in $\KKcat(X)$. 
	Analogously, $\Ecat(X)$ is a triangulated category with countable coproducts; see \cite[Theorem~2.27]{DM_2012}. 
	
	We recall excision properties of KK-theory and E-theory, which will affect the main results in \cref{subsection:application_to_bivariant_K-theory_1,subsection:application_to_bivariant_K-theory_2}. 
	For a short exact sequence
	\[
	\begin{tikzcd}
		0 \ar{r} & A \ar{r}{\iota} & E \ar{r}{\pi} & B \ar{r} & 0
	\end{tikzcd}
	\] 
	in $\SCstar(X)$, we have a commutative diagram
	\[
	\begin{tikzcd}
		0 \ar{r} & A \ar{r}{\iota} \ar{d} & E \ar{r}{\pi} \ar[swap]{d}{\pi} & B \ar{r} \ar{d}{\id} & 0 \\
		0 \ar{r} & 0 \ar{r} & B \ar[swap]{r}{\id} & B \ar{r} & 0
	\end{tikzcd}
	\] 
	with exact rows in $\SCstar(X)$. By taking the mapping cones of the vertical morphisms, we have a short exact sequence 
	\[
	\begin{tikzcd}
		0 \ar{r} & A \ar{r} & C_{\pi} \ar{r} & CB \ar{r} & 0
	\end{tikzcd}
	\] 
	in $\SCstar(X)$. 
	We write $\iota'$ for the resulting morphism $A \hookrightarrow C_{\pi}$ in $\SCstar(X)$. 
	Since $CB$ is contractible in $\SCstar(X)$, the morphism $\E_X(\iota') \colon A \to C_{\pi}$ is an isomorphism in $\Ecat(X)$ by \cref{lem:Bott}. This implies that any short exact sequence in $\SCstar(X)$ gives rise to a distinguished triangle in $\Ecat(X)$. However, $\KK_X(\iota') \colon A \to C_{\pi}$ is not an isomorphism in $\KKcat(X)$ in general. This distinction leads to the following notion.
	
	\begin{defn}[{\cite[Definition~3.10]{MN_2009}}]
		A short exact sequence 
		\[
		\begin{tikzcd}
			0 \ar{r} & A \ar{r}{\iota} & E \ar{r}{\pi} & B \ar{r} & 0
		\end{tikzcd}
		\] 
		in $\SCstar(X)$ is said to be \textit{admissible} if $\KK_X(\iota') \colon A \to C_{\pi}$ is an isomorphism in $\KKcat(X)$. 
	\end{defn}
	
	It follows that an admissible short exact sequence in $\SCstar(X)$ gives rise to a distinguished triangle in $\KKcat(X)$. By \cite[Theorem~3.6]{MN_2009}, every semi-split short exact sequence is admissible. Since every mapping cone short exact sequence is semi-split, it is admissible. 
	
	By \cite[Corollary~1.2.6]{Neeman_2001} (see also \cref{lem:Bott}), we have the following lemma, which will play a crucial role in \cref{subsection:application_to_bivariant_K-theory_1,subsection:reflection_of_connected_locally_closed_subsets}.
	
	\begin{lem}\label{lem:contractible_isomorphism_KK_E}
		Let 
		\[
		\begin{tikzcd}
			0 \ar{r} & A \ar{r}{\iota} & E \ar{r}{\pi} & B \ar{r} & 0
		\end{tikzcd}
		\] 
		be a short exact sequence in $\SCstar(X)$. 
		If $A$ is contractible, then $\E_X(\pi)$ is an isomorphism in $\Ecat(X)$, and if $B$ is contractible, then $\E_X(\iota)$ is an isomorphism in $\Ecat(X)$.
		The analogous statement holds in KK-theory provided that the given short exact sequence is admissible.
	\end{lem}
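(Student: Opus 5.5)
We will deduce \cref{lem:contractible_isomorphism_KK_E} from the long exact sequences associated with distinguished triangles, combined with the fact that contractible objects are zero in $\KKcat(X)$ and $\Ecat(X)$. Let $F$ denote either $\E_X$ or $\KK_X$. Both are homotopy-invariant and stable functors on $\SCstar(X)$. If $A$ is contractible, then $F(\id_A)=F(0)$ by homotopy invariance, so $\id_{F A}=0$ and $A\simeq 0$. The same holds for $SA$, since $S$ preserves homotopies, being a $\SCstar$-module functor.

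For E-theory, we use the observation made just before the definition of admissibility. The morphism $\iota' \colon A \to C_{\pi}$ is an $\Ecat(X)$-isomorphism. Consequently the triangle $SB \to A \xrightarrow{\E_X(\iota)} E \xrightarrow{\E_X(\pi)} B$ is isomorphic to the mapping cone triangle of $\pi$, and is therefore distinguished. In the KK case, admissibility of the sequence gives exactly the same conclusion. This is the only place where admissibility is used.

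Now suppose $A$ is contractible, so $A \simeq 0$ in the triangulated category. In a distinguished triangle whose third vertex (rotated suitably) is zero, the remaining morphism is an isomorphism; this is \cite[Corollary~1.2.6]{Neeman_2001}. Concretely, for every object $D$ the long exact sequence of $\mathrm{Hom}(D,\blank)$ applied to $A\to E\to B\to \Sigma A$ gives
\[
0=\mathrm{Hom}(D,A)\to \mathrm{Hom}(D,E)\to \mathrm{Hom}(D,B)\to \mathrm{Hom}(D,\Sigma A)=0 .
\]
Here we note that $\Sigma A$ is $SA$, or $S^{-1}A$ up to Bott periodicity, and hence is zero. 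By Yoneda, $F(\pi)$ is an isomorphism. The case where $B$ is contractible is symmetric: the zero terms now flank $\mathrm{Hom}(D,A)\to\mathrm{Hom}(D,E)$, so $F(\iota)$ is an isomorphism.

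For $\E_X$ one could alternatively appeal directly to \cref{lem:Bott}. That lemma applies because $\E_X$ is half-exact with values in an additive category, and $FA=FSA=0$.

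The only step requiring care is the identification of the triangle built from $\iota,\pi$ with the mapping cone triangle. One must check that the composite $A\xrightarrow{\iota'}C_\pi\to E$ equals $\iota$, which holds by construction of $\iota'$. One must also check that the connecting map is compatible with the rotation; this is routine.
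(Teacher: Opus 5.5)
Your proof is correct and takes the same route as the paper. The short exact sequence yields a distinguished triangle: always in $\Ecat(X)$, because $\E_X(\iota')$ is invertible, and in $\KKcat(X)$ by admissibility. Homotopy invariance makes contractible objects zero, and the remaining map is then an isomorphism by \cite[Corollary~1.2.6]{Neeman_2001}, or directly by \cref{lem:Bott} for E-theory; these are exactly the two references the paper invokes.
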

	
	\begin{rem}
		The maximal tensor product functor $\otimes \colon \KKcat \times \KKcat \to \KKcat$
		equips $\KKcat$ with the structure of a tensor triangulated category; see \cite{Dell'Ambrogio_2010}. Similarly, the maximal tensor product functor $	\otimes \colon \KKcat \times \KKcat(X) \to \KKcat(X)$ defines a tensor action of $\KKcat$ on $\KKcat(X)$ that is compatible with the triangulated category structure and countable coproducts in the sense of \cite[Definition~3.2]{Stevenson_2013}.
		This tensor action also equips $\KKcat(X)$ with the structure of a $\KKcat$-module category in the sense of \cite[Definition~7.1.1]{EGNO_2015}, and $\KKcat(X)_{\loc}$ is a $\KKcat$-module subcategory of $\KKcat(X)$.
		Similarly, $\Ecat$ is a tensor triangulated category (see \cite{Thom_2003}), and $\Ecat(X)$ admits a tensor action of $\Ecat$. The equivalences established in this paper for $\KKcat(\blank)_{\loc}$ and $\Ecat(\blank)$ are in fact equivalences of triangulated categories compatible with the respective tensor actions of $\KKcat$ and $\Ecat$. For simplicity, we restrict our attention in this paper to the triangulated category structure.
	\end{rem}
	
	\subsection{Functors between categories of bivariant K-theory}
	
	The functors and natural transformations introduced in \cref{section:reflection_functors,section:higher-dimensional_reflection_functors} are first defined at the level of C*-algebras over topological spaces. This subsection records the properties needed to pass them to the corresponding categories of bivariant K-theory. 
	
	Throughout this subsection, let $X$ and $Y$ be finite topological spaces. The following consequence of the universal properties recalled above is briefly indicated in \cite{MN_2004,MN_2009}.
	
	\begin{lem}\label{lem:induced_functor_KK_E}
		For an exact $\SCstar$-module functor $F \colon \SCstar(X) \to \SCstar(Y)$, there exists a unique functor $\widetilde{F} \colon \KKcat(X) \to \KKcat(Y)$ making the following diagram commute:
		\[
		\begin{tikzcd}
			\SCstar(X) \ar[swap]{d}{\KK_X} \ar{r}{F} & \SCstar(Y) \ar{d}{\KK_Y} \\
			\KKcat(X) \ar[dashed, swap]{r}{\widetilde{F}} & \KKcat(Y).
		\end{tikzcd}
		\]
		The induced functor $\widetilde{F} \colon \KKcat(X) \to \KKcat(Y)$ is a triangulated functor. 
		Similarly, $F$ induces a triangulated functor $\widetilde{F} \colon \Ecat(X) \to \Ecat(Y)$. Moreover, if $F$ commutes with countable direct sums, then the induced functors $\widetilde{F} \colon \KKcat(X) \to \KKcat(Y)$ and $\widetilde{F} \colon \Ecat(X) \to \Ecat(Y)$ commute with countable coproducts. 
	\end{lem}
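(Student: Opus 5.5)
The plan is to apply the universal property of $\KK_X$ (respectively $\E_X$) to the composite $\KK_Y F \colon \SCstar(X) \to \KKcat(Y)$. This yields $\widetilde{F}$ together with its uniqueness, and the remaining properties are then checked on the generating data.

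\textbf{Existence and uniqueness.} I must show that $\KK_Y F$ is homotopy-invariant, stable, and split-exact.
\begin{itemize}
\item Homotopy-invariance and stability follow from \cref{lem:homotopy-invariant_stable_module}: $F$ is an $\SCstar$-module functor and $\KK_Y$ is homotopy-invariant and stable.
\item Split-exactness: $F$ is exact, so it sends a split short exact sequence to a short exact sequence. Functoriality carries the splitting $s$ to a splitting $Fs$, so the image is again split exact, and $\KK_Y$ is split-exact.
\end{itemize}
The universal property \cite[Theorem~3.7]{MN_2009} then gives a unique additive $\widetilde{F}$ with $\widetilde{F}\KK_X=\KK_Y F$. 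Uniqueness among all functors, not only additive ones, also holds: by \cref{lem:morphisms_in_KK_E}, every morphism in $\KKcat(X)$ is a composite of images of $\ast$-homomorphisms and inverses of such images, so any functor making the square commute is determined. For E-theory the argument is identical, with half-exactness in place of split-exactness; exactness of $F$ preserves short exact sequences, and $\E_Y$ is half-exact.

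\textbf{Triangulated structure.} Since $F$ is a $\SCstar$-module functor, there is a natural isomorphism $FS \cong SF$. Its image under $\KK_Y$ gives an isomorphism $\widetilde{F}S\cong S\widetilde{F}$, and it is natural on all of $\KKcat(X)$ by the same generation argument (\cref{lem:morphisms_in_KK_E}). For triangles, it suffices to treat a mapping cone triangle of $\varphi\colon A_0\to A_1$. By \cref{lem:exact_module_functor_commute_mapping_cone}, $F C_\varphi\cong C_{F\varphi}$ compatibly with the maps $SFA_1\to C_{F\varphi}\to FA_0$. Hence $\widetilde{F}$ sends the mapping cone triangle of $\varphi$ to one isomorphic to the mapping cone triangle of $F\varphi$, which is distinguished. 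The compatibility of the cone isomorphism with the connecting maps, via $FS\cong SF$, is visible from the commutative diagram in the proof of \cref{lem:exact_module_functor_commute_mapping_cone}. The same reasoning applies in $\Ecat$.

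\textbf{Coproducts.} Countable coproducts in $\KKcat(X)$ are C*-direct sums, and their structure maps are images of the canonical inclusions. If $F(\bigoplus A_n)\cong\bigoplus FA_n$ via the inclusions, then $\widetilde F$ sends the coproduct cocone to a coproduct cocone.

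The main delicate point is the sign and compatibility bookkeeping in the triangulated step, namely checking that the isomorphism $\widetilde F S\cong S\widetilde F$ makes $\widetilde F$ an exact functor with the correct connecting morphism. This is handled by the explicit naturality in \cref{lem:exact_module_functor_commute_mapping_cone}.
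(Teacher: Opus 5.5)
Your proposal is correct and follows the same route as the paper. You apply the universal property of $\KK_X$ (resp.\ $\E_X$) to $\KK_Y F$, using \cref{lem:homotopy-invariant_stable_module} and the exactness of $F$, then get the triangulated structure from \cref{lem:exact_module_functor_commute_mapping_cone} and the coproduct statement from the definitions. Your extra remarks, namely uniqueness among all functors via \cref{lem:morphisms_in_KK_E} and the compatibility of $FS\cong SF$ with the cone isomorphism, fill in details the paper leaves implicit.
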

	
	\begin{proof}
		Since $F$ is a $\SCstar$-module functor and $\KK_Y$ is homotopy-invariant and stable, \cref{lem:homotopy-invariant_stable_module} shows that $\KK_Y F$ is homotopy-invariant and stable. Since $F$ is exact and $\KK_Y$ is split-exact, $\KK_Y F$ is split-exact. By the universal property of $\KKcat(X)$, there exists a unique functor $\widetilde{F} \colon \KKcat(X) \to \KKcat(Y)$ making the above diagram commute. Since $F$ is an exact $\SCstar$-module functor, $F$ commutes with mapping cones; see \cref{lem:exact_module_functor_commute_mapping_cone}. Hence, $\widetilde{F}$ is a triangulated functor. This shows the assertion for KK-theory. Since $\E_Y F \colon \SCstar(X) \to \Ecat(Y)$ is half-exact, the same argument applies to show the assertion for E-theory. The final assertion follows directly from the definitions. 
	\end{proof}

	\begin{eg}
		Let $\theta \colon X \to Y$ be a continuous map. 
		The functor $\theta_* \colon \SCstar(X) \to \SCstar(Y)$ is an exact $\SCstar$-module functor that commutes with countable direct sums.
		Hence, it induces a triangulated functor 
		\[\theta_* \colon \KKcat(X) \to \KKcat(Y)\] 
		that commutes with countable coproducts. Similarly, it induces a triangulated functor 
		\[\theta_* \colon \Ecat(X) \to \Ecat(Y)\]
		that commutes with countable coproducts. 
	\end{eg}
	
	\begin{eg}
		Suppose that $Y \in \LC(X)$. The functor $r_X^Y \colon \SCstar(X) \to \SCstar(Y)$ is an exact $\SCstar$-module functor that commutes with countable direct sums. Hence, it induces a triangulated functor 
		\[r_X^Y \colon \KKcat(X) \to \KKcat(Y)\]
		that commutes with countable coproducts. 
		Similarly, it induces a triangulated functor 
		\[r_X^Y \colon \Ecat(X) \to \Ecat(Y)\] 
		that commutes with countable coproducts. 
	\end{eg}
	
	By a \textit{morphism of triangulated functors}, we mean a natural transformation compatible with the translation isomorphisms; see \cite[Definition~10.1.9(ii)]{KP_2006}. 
	
	\begin{lem}\label{lem:induced_natural_transformation_KK_E}
		Let $F$ and $G$ be exact $\SCstar$-module functors $\SCstar(X) \to \SCstar(Y)$. 
		Let $\widetilde{F}$ and $\widetilde{G}$ denote the induced functors $\KKcat(X) \to \KKcat(Y)$ as in \textup{\cref{lem:induced_functor_KK_E}}. For a natural transformation $\eta \colon F \Rightarrow G$, the morphisms $\KK_Y(\eta_A) \colon \widetilde{F}(A) \to \widetilde{G}(A)$ in $\KKcat(Y)$ for $A \in \KKcat(X)$ define a natural transformation $\widetilde{\eta} \colon \widetilde{F} \Rightarrow \widetilde{G}$. 
		Similarly, $\eta$ induces a natural transformation $\widetilde{\eta}$ between the induced functors $\Ecat(X) \to \Ecat(Y)$. 
		If $\eta$ is a $\SCstar$-module natural transformation, then the induced natural transformations $\widetilde{\eta}$ in KK-theory and E-theory are morphisms of triangulated functors.
	\end{lem}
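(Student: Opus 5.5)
The plan is to check naturality of $\widetilde{\eta}$ directly on the two kinds of generating morphisms supplied by \cref{lem:morphisms_in_KK_E}, and then to treat compatibility with suspension using the module structure. By \cref{lem:morphisms_in_KK_E}, every morphism $f \colon A \to B$ in $\KKcat(X)$ factors as $f=f_{n-1}\cdots f_0$, where each $f_j$ is either $\KK_X(\varphi_j)$ or $\KK_X(\psi_j)^{-1}$ for some $X$-equivariant $\ast$-homomorphisms $\varphi_j$, $\psi_j$. Naturality squares compose, so it suffices to verify
\[
\widetilde{G}(f_j)\circ\KK_Y(\eta_{A_j})=\KK_Y(\eta_{A_{j+1}})\circ\widetilde{F}(f_j)
\]
for each single factor.

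If $f_j=\KK_X(\varphi_j)$, then $\widetilde{F}(f_j)=\KK_Y(F\varphi_j)$ and $\widetilde{G}(f_j)=\KK_Y(G\varphi_j)$ by the defining square of \cref{lem:induced_functor_KK_E}. The required identity is then $\KK_Y$ applied to the naturality square $G\varphi_j\circ\eta_{A_j}=\eta_{A_{j+1}}\circ F\varphi_j$ in $\SCstar(Y)$.

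If $f_j=\KK_X(\psi_j)^{-1}$, then $\widetilde{F}(f_j)=\KK_Y(F\psi_j)^{-1}$, and likewise for $G$, since functors preserve inverses. Apply $\KK_Y$ to the naturality square for $\psi_j \colon A_{j+1}\to A_j$. Pre- and post-composing with the inverses $\KK_Y(F\psi_j)^{-1}$ and $\KK_Y(G\psi_j)^{-1}$ then gives the required square. This proves naturality, and the E-theory case is identical, using the E-theory half of \cref{lem:morphisms_in_KK_E}.

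For the triangulated statement, note that the translation isomorphisms of $\widetilde{F}$ and $\widetilde{G}$ come from the module structure isomorphisms $F(S\otimes A)\cong S\otimes F(A)$ and $G(S\otimes A)\cong S\otimes G(A)$. These are the canonical identifications used in \cref{lem:induced_functor_KK_E} to see that $\widetilde{F}$ and $\widetilde{G}$ preserve mapping cone triangles. If $\eta$ is a $\SCstar$-module natural transformation, its defining diagram with $D=S$ says exactly that $(\id_S\otimes\eta_A)\circ\varphi_{S,A}=\psi_{S,A}\circ\eta_{SA}$ in $\SCstar(Y)$. Applying $\KK_Y$ gives compatibility of $\widetilde{\eta}$ with the translation isomorphisms, since $\KK_Y(\id_S\otimes\eta_A)=S\widetilde{\eta}_A$.

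The main point needing care is this last identification: one must confirm that the translation isomorphism of $\widetilde{F}$ really is $\KK_Y$ of the module constraint $\varphi_{S,A}$, rather than some other isomorphism obtained through the universal property. I would settle it by recalling from the proof of \cref{lem:induced_functor_KK_E} that the triangulated structure on $\widetilde{F}$ was built from the canonical isomorphism $FC_\varphi\cong C_{F\varphi}$ of \cref{lem:exact_module_functor_commute_mapping_cone}. That isomorphism restricts on $FSA_1$ to the module constraint, so the two choices agree.
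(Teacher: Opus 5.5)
Your proposal is correct and follows essentially the same route as the paper: you reduce naturality via \cref{lem:morphisms_in_KK_E} to images of $X$-equivariant $\ast$-homomorphisms and apply $\KK_Y$ to the naturality squares of $\eta$ (you also spell out the inverse factors, which the paper leaves implicit). You then obtain compatibility with the translation isomorphisms from the module condition with $D=S$, which the paper says follows immediately from the definitions.
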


	\begin{proof}
		We first verify the naturality of $\widetilde{\eta}$ with respect to all morphisms in $\KKcat(X)$. By \cref{lem:morphisms_in_KK_E}, it suffices to verify the naturality with respect to morphisms of the form $f=\KK_X(\varphi)$, where $\varphi \colon A \to B$ is a morphism in $\SCstar(X)$. The naturality of $\eta$ gives
		\begin{align*}
			\widetilde{\eta}_B \circ \widetilde{F}(f)&=\KK_Y(\eta_B) \circ \KK_Y(F(\varphi))=\KK_Y(\eta_B \circ F(\varphi)) \\
			&=\KK_Y(G(\varphi) \circ \eta_A)=\KK_Y(G(\varphi)) \circ \KK_Y(\eta_A)=\widetilde{G}(f) \circ \widetilde{\eta}_A. 
		\end{align*}
		This shows the assertion for KK-theory. The same argument applies to E-theory. The final assertion follows immediately from the definitions. 
	\end{proof}
	
	The following lemma is an immediate consequence of \cref{lem:induced_functor_KK_E} and the compatibility of exact $\SCstar$-module functors with mapping cones established in \cref{lem:exact_module_functor_commute_mapping_cone}.
	
	\begin{lem}\label{lem:admissible_exact_module}
		Let $F \colon \SCstar(X) \to \SCstar(Y)$ be an exact $\SCstar$-module functor. For an admissible short exact sequence
		\[
		\begin{tikzcd}
			0 \ar{r} & A \ar{r}{\iota} & E \ar{r}{\pi} & B \ar{r} & 0
		\end{tikzcd}
		\] 
		in $\SCstar(X)$, the short exact sequence
		\[
		\begin{tikzcd}
			0 \ar{r} & FA \ar{r}{F\iota} & FE \ar{r}{F\pi} & FB \ar{r} & 0
		\end{tikzcd}
		\] 
		in $\SCstar(Y)$ is admissible. 
	\end{lem}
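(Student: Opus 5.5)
The definition of admissibility says that the morphism $\KK_X(\iota')\colon A \to C_{\pi}$ is an isomorphism. So the plan is to show that $F$ carries the morphism $\iota'$ attached to the original sequence to the morphism $(F\iota)'$ attached to the image sequence, up to the canonical identification $FC_{\pi} \cong C_{F\pi}$. Once this is in place, the conclusion follows by applying the induced functor $\widetilde{F}$ of \cref{lem:induced_functor_KK_E}.

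First, $F$ is exact, so $0 \to FA \to FE \to FB \to 0$ is a short exact sequence in $\SCstar(Y)$, and its $\iota$-primed morphism $(F\iota)' \colon FA \to C_{F\pi}$ is defined. Next, recall how $\iota'$ arises. It is the map on kernels obtained by taking mapping cones of the vertical morphisms in the commutative diagram with rows $0\to A\to E\to B\to 0$ and $0\to 0\to B\to B\to 0$. Applying $F$ to this whole diagram gives the corresponding diagram for the image sequence. Here we use that $F$ is additive enough to send $0$ to $0$, since exactness applied to $0\to 0\to 0\to 0\to 0$ forces $F0=0$.

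By \cref{lem:exact_module_functor_commute_mapping_cone}, $F$ commutes with mapping cones naturally in the arrow. Hence the image under $F$ of the short exact sequence $0\to A\to C_\pi\to CB\to 0$ is identified with $0\to FA\to C_{F\pi}\to CFB\to 0$. This uses the natural isomorphisms $FC_\pi\cong C_{F\pi}$, $F(CB)\cong C(FB)$ and $F(C_{0\to A})=FA$. Naturality of these isomorphisms with respect to the morphisms of arrows gives $F(\iota')=(F\iota')$, composed with the canonical isomorphism $FC_{\pi}\cong C_{F\pi}$.

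Finally, since $\KK_Y F=\widetilde{F}\KK_X$, we have $\KK_Y(F\iota')=\widetilde{F}(\KK_X(\iota'))$, which is an isomorphism because functors preserve isomorphisms. Therefore $\KK_Y((F\iota)')$ is an isomorphism as well, which is exactly admissibility of the image sequence. The only point needing care is the naturality check in the middle step: one must verify that the isomorphism built in the proof of \cref{lem:exact_module_functor_commute_mapping_cone} is natural in morphisms of $[\{0,1\},\SCstar(X)]$, so that it is compatible with the maps induced on kernels. This follows from the uniqueness in the pullback description of $C_{F\varphi}$.
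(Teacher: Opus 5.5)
Your proof is correct and takes the same route the paper indicates: the paper calls the lemma an immediate consequence of \cref{lem:induced_functor_KK_E} and \cref{lem:exact_module_functor_commute_mapping_cone}, which together identify $F(\iota')$ with $(F\iota)'$ up to the canonical isomorphism $FC_{\pi}\cong C_{F\pi}$ and carry the isomorphism $\KK_X(\iota')$ across via $\widetilde{F}$. One small slip: the left-hand cone in that diagram is the mapping cone of $A\to 0$ (which is $A$), not of $0\to A$ (which would be $SA$).
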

	
	The case $|I|=1$ of the following lemma is briefly indicated in the proof of \cite[Proposition~4.7]{MN_2009}.
	Since the proof essentially requires 2-dimensional mapping cones in \cref{subsection:higher-dimensional_mapping_cones}, we give a complete proof.
	
	\begin{lem}\label{lem:admissible_mapping_cones}
		Let $I$ be a finite set. Let
		\[
		\begin{tikzcd}
			0 \ar[Rightarrow]{r} & (A; \varphi) \ar[Rightarrow]{r}{\iota} & (E; \omega) \ar[Rightarrow]{r}{\pi} & (B; \psi) \ar[Rightarrow]{r} & 0
		\end{tikzcd}
		\] 
		be a short exact sequence in $[\{0, 1\}^I, \SCstar(X)]$. Suppose that the short exact sequence
		\[\begin{tikzcd}
			0 \ar{r} & A_{\lambda} \ar{r}{\iota_{\lambda}} & E_{\lambda} \ar{r}{\pi_{\lambda}} & B_{\lambda} \ar{r} & 0
		\end{tikzcd}\] 
		in $\SCstar(X)$ is admissible for every $\lambda \in \{0, 1\}^I$. Then the short exact sequence
		\[\begin{tikzcd}
			0 \ar{r} & M^I(A; \varphi) \ar{r}{M^I\iota} & M^I(E; \omega) \ar{r}{M^I\pi} & M^I(B; \psi) \ar{r} & 0
		\end{tikzcd}\] 
		in $\SCstar(X)$ is admissible.
	\end{lem}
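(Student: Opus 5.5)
We argue by induction on $|I|$, using \cref{lem:mapping_cone_of_mapping_cone} to write $M^I$ as an iterated ordinary mapping cone. For $I=\emptyset$ there is nothing to prove, since $M^\emptyset$ is the identity by \cref{eg:0-dimensional_mapping_cone}. For the inductive step, split $I=I_1\amalg\{i_0\}$. Apply $M^{I_1}$ to the two faces $\lambda(i_0)=0$ and $\lambda(i_0)=1$; by the induction hypothesis this gives two admissible short exact sequences, and morphisms between them, in $\SCstar(X)$. Applying $M$ to the resulting diagram then gives $M^I$. So everything reduces to the case $|I|=1$: for a morphism of admissible short exact sequences $(\iota_0,\pi_0)\to(\iota_1,\pi_1)$ given by vertical maps $\varphi,\omega,\psi$, the induced sequence $0\to C_\varphi\to C_\omega\to C_\psi\to 0$ is admissible.

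For $|I|=1$ we must show that $\KK_X$ of the canonical map $\iota'\colon C_\varphi\to C_{M\pi}$ is invertible, where $M\pi\colon C_\omega\to C_\psi$. The key observation is that $C_{M\pi}$ is itself a $2$-dimensional mapping cone:
\[
C_{M\pi}=M^2\!\left(\begin{tikzcd}[ampersand replacement=\&] E_0\ar{r}{\omega}\ar[swap]{d}{\pi_0} \& E_1\ar{d}{\pi_1}\\ B_0\ar[swap]{r}{\psi} \& B_1\end{tikzcd}\right)
\]
by \cref{lem:mapping_cone_of_mapping_cone}. Iterating in the other order, the same algebra is the mapping cone of the map $C_{\pi_0}\to C_{\pi_1}$ induced by $(\omega,\psi)$. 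Similarly, $\iota'$ is $M$ applied to the morphism of arrows given by the pair $(\iota_0',\iota_1')\colon(A_0\to A_1)\Rightarrow(C_{\pi_0}\to C_{\pi_1})$. This identification is checked by naturality of the construction of $\iota'$, and we expect it to be the fiddly step.

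Now write $a_j:=\KK_X(\iota_j')$, which are isomorphisms by hypothesis. We obtain a morphism between the mapping cone triangles of $\varphi$ and of $C_{\pi_0}\to C_{\pi_1}$. Both rows are distinguished triangles in $\KKcat(X)$, because mapping cone sequences are semi-split. The components on $SA_1$ and $A_0$ are $Sa_1$ and $a_0$, which are isomorphisms since $S$ is triangulated. Naturality of the mapping cone short exact sequence makes the diagram commute. The five lemma for triangulated categories, applied via long exact $\mathrm{Hom}$ sequences, then shows that $\KK_X(\iota')$ is an isomorphism.

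The main obstacle is the identification in the previous paragraph. We must check that the map $A_j\hookrightarrow C_{\pi_j}$ is compatible with the iterated-cone identifications, so that $\iota'$ really is $M(\iota_0',\iota_1')$ and not merely isomorphic to it up to some flip. We would verify this pointwise, using the explicit description of $M^2$ and the mapping cone short exact sequence of \cref{lem:semi-split_2-dimensional_mapping_cone}. The general $I$ then follows formally from the induction.
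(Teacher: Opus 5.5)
Your proposal is correct and follows essentially the same route as the paper: reduce to $|I|=1$ via \cref{lem:mapping_cone_of_mapping_cone}, identify $C_{M\pi}$ with a $2$-dimensional mapping cone and hence with the mapping cone of $C_{\pi_0}\to C_{\pi_1}$, then apply the triangulated five lemma to the morphism of mapping cone triangles with components $S\iota_1'$, $(M\iota)'$, $\iota_0'$. The identification you flag as the fiddly step is handled in the paper by writing both rows as $M^2$ of the corresponding squares and invoking functoriality of $M^2$, so no flip issue arises; and even a $\ast$-isomorphism identification would suffice for concluding that $\KK_X((M\iota)')$ is invertible.
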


	\begin{proof}
		We prove the case $|I|=1$. 
		Using the functoriality and exactness of 
		\[M^2 \colon [\{0, 1\}^2, \SCstar(X)] \to \SCstar(X),\] 
		we obtain the commutative diagram
		\[
		\begin{tikzcd}[ampersand replacement=\&, every label/.append style={font=\tiny}, cells={nodes={font=\tiny}}]
			0 \ar{r}  
			\& [-1.5em]
			{M^2 \! \left(
				\begin{tikzpicture}[auto]
					\node (00) at (-0.6, 0.5) {$0$}; \node (01) at (0.6, 0.5) {$0$};
					\node (10) at (-0.6, -0.5) {$A_1$};  \node (11) at (0.6, -0.5) {$0$};
					\draw [->] (00) to node {} (01);
					\draw [->] (00) to node[swap] {} (10);
					\draw [->] (01) to node {} (11);
					\draw [->] (10) to node[swap] {} (11);
				\end{tikzpicture} \right)} 
			\ar{r}[xshift=0.2em]{
				M^2 \! \begin{pmatrix}
					0 & 0 \\ \id & 0
				\end{pmatrix}
			}
			\ar[hookrightarrow]{d}[swap]{
				M^2 \! \begin{pmatrix}
					0 & 0 \\ \iota_1 & 0
				\end{pmatrix}
			}
			\&
			{M^2 \! \left(
				\begin{tikzpicture}[auto]
					\node (00) at (-0.6, 0.5) {$A_0$}; \node (01) at (0.6, 0.5) {$0$};
					\node (10) at (-0.6, -0.5) {$A_1$};  \node (11) at (0.6, -0.5) {$0$};
					\draw [->] (00) to node {} (01);
					\draw [->] (00) to node[swap] {$\varphi_0^1$} (10);
					\draw [->] (01) to node {} (11);
					\draw [->] (10) to node[swap] {} (11);
				\end{tikzpicture} \right)}
			\ar{r}[xshift=0.2em]{
				M^2 \! \begin{pmatrix}
					\id & 0 \\ 0 & 0
				\end{pmatrix}
			}
			\ar[hookrightarrow]{d}[swap]{
				M^2 \! \begin{pmatrix}
					\iota_0 & 0 \\ \iota_1 & 0
				\end{pmatrix}
			}
			\&
			{M^2 \! \left(
				\begin{tikzpicture}[auto]
					\node (00) at (-0.6, 0.5) {$A_0$}; \node (01) at (0.6, 0.5) {$0$};
					\node (10) at (-0.6, -0.5) {$0$};  \node (11) at (0.6, -0.5) {$0$};
					\draw [->] (00) to node {} (01);
					\draw [->] (00) to node[swap] {} (10);
					\draw [->] (01) to node {} (11);
					\draw [->] (10) to node[swap] {} (11);
				\end{tikzpicture} \right)} 
			\ar{r} 
			\ar[hookrightarrow]{d}{
				M^2 \! \begin{pmatrix}
					\iota_0 & 0 \\ 0 & 0
				\end{pmatrix}
			}
			\& 
			[-1.5em] 0 
			\\
			0 \ar{r}  
			\& [-1.5em]
			{M^2 \! \left(
				\begin{tikzpicture}[auto]
					\node (00) at (-0.6, 0.5) {$0$}; \node (01) at (0.6, 0.5) {$0$};
					\node (10) at (-0.6, -0.5) {$E_1$};  \node (11) at (0.6, -0.5) {$B_1$};
					\draw [->] (00) to node {} (01);
					\draw [->] (00) to node[swap] {} (10);
					\draw [->] (01) to node {} (11);
					\draw [->] (10) to node[swap] {$\pi_1$} (11);
				\end{tikzpicture} \right)} 
			\ar{r}[swap, xshift=0.5em, yshift=-0.2em]{
				M^2 \! \begin{pmatrix}
					0 & \id \\ 0 & \id
				\end{pmatrix}
			}
			\&
			{M^2 \! \left(
				\begin{tikzpicture}[auto]
					\node (00) at (-0.6, 0.5) {$E_0$}; \node (01) at (0.6, 0.5) {$B_0$};
					\node (10) at (-0.6, -0.5) {$E_1$};  \node (11) at (0.6, -0.5) {$B_1$};
					\draw [->] (00) to node {$\pi_0$} (01);
					\draw [->] (00) to node[swap] {$\omega_0^1$} (10);
					\draw [->] (01) to node {$\psi_0^1$} (11);
					\draw [->] (10) to node[swap] {$\pi_1$} (11);
				\end{tikzpicture} \right)}
			\ar{r}[swap, xshift=0.5em, yshift=-0.2em]{
				M^2 \! \begin{pmatrix}
					\id & \id \\ 0 & 0
				\end{pmatrix}
			}
			\&
			{M^2 \! \left(
				\begin{tikzpicture}[auto]
					\node (00) at (-0.6, 0.5) {$E_0$}; \node (01) at (0.6, 0.5) {$B_0$};
					\node (10) at (-0.6, -0.5) {$0$};  \node (11) at (0.6, -0.5) {$0$};
					\draw [->] (00) to node {$\pi_0$} (01);
					\draw [->] (00) to node[swap] {} (10);
					\draw [->] (01) to node {} (11);
					\draw [->] (10) to node[swap] {} (11);
				\end{tikzpicture} \right)} \ar{r} 
			\& 
			[-1.5em] 0 
		\end{tikzcd}
		\]
		in $\SCstar(X)$. 
		This diagram is identical to the following one:
		\[
		\begin{tikzcd}
			0 \ar{r} & SA_1 \ar{r} \ar[swap, hookrightarrow]{d}{S\iota_1'} & M(A; \varphi) \ar{r} \ar[swap, hookrightarrow]{d}{(M\iota)'} & A_0 \ar{r} \ar[hookrightarrow]{d}{\iota_0'} & 0 \\
			0 \ar{r} & SC_{\pi_1} \ar{r} & C_{M\pi} \ar{r} & C_{\pi_0} \ar{r} & 0.
		\end{tikzcd}
		\] 
		The resulting two short exact sequences are mapping cone short exact sequences, and hence give distinguished triangles in $\KKcat(X)$. Since $\KK_X(S\iota_1') \colon SA_1 \to SC_{\pi_1}$ and $\KK_X(\iota_0') \colon A_0 \to C_{\pi_0}$ are isomorphisms in $\KKcat(X)$, the five lemma for morphisms of distinguished triangles (see \cite[Proposition~1.1.20]{Neeman_2001}) shows that $\KK_X((M\iota)') \colon M(A; \varphi) \to C_{M\pi}$ is also an isomorphism in $\KKcat(X)$. This proves the case $|I|=1$. The general case follows from \cref{lem:mapping_cone_of_mapping_cone}.
		\qedhere
	\end{proof}
	
	\subsection{Bootstrap categories}
	
	Let $X$ be a finite topological space. 
	Let $\KKcat(X)_{\loc}$ denote the localizing subcategory of $\KKcat(X)$ defined in \cite[Definition~4.8]{MN_2009}. One of the equivalent definitions of $\KKcat(X)_{\loc}$ is the full subcategory of $\KKcat(X)$ consisting of all separable C*-algebras $A$ over $X$ such that the short exact sequence 
	\[
	\begin{tikzcd}
		0 \ar{r} & i_{Y_1}^X r_X^{Y_1}A \ar{r} & i_{Y_2}^X r_X^{Y_2}A \ar{r} & i_{Y_3}^X r_X^{Y_3}A \ar{r} & 0
	\end{tikzcd}
	\] 
	in $\SCstar(X)$ given by \cref{lem:ses_ir} is admissible for $Y_2 \in \LC(X)$ and $Y_1 \in \Open(Y_2)$ with $Y_3:=Y_2 \setminus Y_1$; see \cite[Proposition~4.7]{MN_2009}. 
	
	Let $\Boot(X)$ denote the localizing subcategory of $\KKcat(X)_{\loc}$ defined in \cite[Definition~4.11]{MN_2009}, and let $\Boot_{\E}(X)$ denote the localizing subcategory of $\Ecat(X)$ defined in \cite[Definition~4.1]{DM_2012}.
	
	With the help of \cref{lem:relations_extension_restriction_functors,lem:admissible_exact_module}, we can easily verify the following two propositions. 
	
	\begin{prop}\label{prop:extension_functor_KKloc_bootstrap}
		For a continuous map $\theta \colon X \to Y$ between finite topological spaces, the functor $\theta_* \colon \KKcat(X) \to \KKcat(Y)$ restricts to functors 
		\[\theta_* \colon \KKcat(X)_{\loc} \to \KKcat(Y)_{\loc}, \quad \theta_* \colon \Boot(X) \to \Boot(Y). \]
		Likewise, the functor $\theta_* \colon \Ecat(X) \to \Ecat(Y)$ restricts to a functor 
		\[\theta_* \colon \Boot_{\E}(X) \to \Boot_{\E}(Y).\]
	\end{prop}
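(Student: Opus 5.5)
We check the three restriction statements separately, in each case using the characterisation of the subcategory in terms of the functors $i$ and $r$, and then commuting $\theta_*$ past these functors with \cref{lem:relations_extension_restriction_functors}.

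\emph{Localizing subcategory.} Let $A \in \KKcat(X)_{\loc}$. By the characterisation from \cite[Proposition~4.7]{MN_2009}, we must show that for every $Y_2 \in \LC(Y)$ and $Y_1 \in \Open(Y_2)$, with $Y_3 := Y_2 \setminus Y_1$, the sequence
\[
0 \to i_{Y_1}^Y r_Y^{Y_1}\theta_*A \to i_{Y_2}^Y r_Y^{Y_2}\theta_*A \to i_{Y_3}^Y r_Y^{Y_3}\theta_*A \to 0
\]
is admissible. Put $Z_k := \theta^{-1}(Y_k)$. Then $Z_2 \in \LC(X)$, $Z_1 \in \Open(Z_2)$, and $Z_3 = Z_2 \setminus Z_1$. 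By \cref{lem:relations_extension_restriction_functors} \labelcref{lem:relations_extension_restriction_functors_5} and \labelcref{lem:relations_extension_restriction_functors_3},
\[
i_{Y_k}^Y r_Y^{Y_k}\theta_* = i_{Y_k}^Y \theta^{Y_k}_* r_X^{Z_k} = \theta_* i_{Z_k}^X r_X^{Z_k},
\]
since $i_{Y_k}^Y \circ \theta^{Y_k} = \theta \circ (\text{inclusion } Z_k \hookrightarrow X)$ as maps $Z_k \to Y$. Using \cref{lem:composition_theta_r_iota_pi}, one checks that under these identifications the maps $\iota_{Y_1}^{Y_2}$ and $\pi_{Y_2}^{Y_3}$ correspond to $\theta_*\iota_{Z_1}^{Z_2}$ and $\theta_*\pi_{Z_2}^{Z_3}$; componentwise these are literally the same inclusion and quotient maps of $A$. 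Hence the sequence above is $\theta_*$ applied to the sequence of \cref{lem:ses_ir} for $A$ and $Z_1 \subset Z_2$. That sequence is admissible because $A \in \KKcat(X)_{\loc}$, and $\theta_*$ is an exact $\SCstar$-module functor, so \cref{lem:admissible_exact_module} shows that the image sequence is admissible. Therefore $\theta_*A \in \KKcat(Y)_{\loc}$.

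\emph{Bootstrap categories.} We use \cite[Proposition~4.12]{MN_2009}: $\Boot(X)$ consists of the objects of $\KKcat(X)_{\loc}$ with $A(U) \in \Boot$ for all $U \in \Open(X)$. For $V \in \Open(Y)$, \cref{lem:evaluation_extension} gives $(\theta_*A)(V) = A(\theta^{-1}(V))$, and $\theta^{-1}(V)$ is open because $\theta$ is continuous. So $\theta_*A$ satisfies the bootstrap condition, and together with the first part it lies in $\Boot(Y)$. The E-theoretic statement follows in the same way from \cite[Definition~4.1]{DM_2012}, which also describes $\Boot_{\E}(X)$ by the condition $A(U) \in \Boot_{\E}$ for open $U$.

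\emph{Main obstacle.} The only step needing care is matching the natural transformations in the pushed-forward sequence with those on $Y$, i.e.\ applying \cref{lem:composition_theta_r_iota_pi} with the correct hypotheses (O) and (C). Since both sides are built from the same component maps of $A$, this is bookkeeping rather than a real difficulty. If one prefers the original definition of $\Boot(X)$ as a localizing subcategory generated by objects, there is an alternative: $\theta_*$ is triangulated and preserves countable coproducts, so it suffices to check that generators map into $\Boot(Y)$. Via the characterisation above, however, this is unnecessary.
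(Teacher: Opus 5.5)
Your proof is correct and follows essentially the same approach as the paper, which only sketches the argument. The paper verifies the statement by commuting $\theta_*$ past $i$ and $r$ via \cref{lem:relations_extension_restriction_functors}, and transports the admissible sequences characterizing $\KKcat(X)_{\loc}$ with \cref{lem:admissible_exact_module}; the bootstrap parts follow, as in your proof, from $(\theta_*A)(V)=A(\theta^{-1}(V))$ together with the open-set descriptions of $\Boot(X)$ and $\Boot_{\E}(X)$ that the paper recalls.
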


	\begin{prop}\label{prop:restriction_functor_KKloc_bootstrap}
		For a finite topological space $X$ and $Y \in \LC(X)$, the functor $r_X^Y \colon \KKcat(X) \to \KKcat(Y)$ restricts to functors 
		\[r_X^Y \colon \KKcat(X)_{\loc} \to \KKcat(Y)_{\loc}, \quad r_X^Y \colon \Boot(X) \to \Boot(Y). \]
		Likewise, the functor $r_X^Y \colon \Ecat(X) \to \Ecat(Y)$ restricts to a functor \[r_X^Y \colon \Boot_{\E}(X) \to \Boot_{\E}(Y). \]
	\end{prop}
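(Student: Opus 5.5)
We must prove \cref{prop:restriction_functor_KKloc_bootstrap}. The plan is to work directly with the characterization of $\KKcat(\blank)_{\loc}$ recalled above: $A\in\KKcat(X)$ lies in $\KKcat(X)_{\loc}$ if and only if, for all $Y_2\in\LC(X)$ and $Y_1\in\Open(Y_2)$ with $Y_3:=Y_2\setminus Y_1$, the sequence
\[
0\to i_{Y_1}^X r_X^{Y_1}A\to i_{Y_2}^X r_X^{Y_2}A\to i_{Y_3}^X r_X^{Y_3}A\to 0
\]
of \cref{lem:ses_ir} is admissible.

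Let $A\in\KKcat(X)_{\loc}$, and take $Z_2\in\LC(Y)$, $Z_1\in\Open(Z_2)$ and $Z_3:=Z_2\setminus Z_1$. Since $Y\in\LC(X)$, the sets $Z_j$ lie in $\LC(X)$, and $Z_1$ is still open in $Z_2$. By \cref{lem:relations_extension_restriction_functors} \labelcref{lem:relations_extension_restriction_functors_4}, $r_Y^{Z_j}r_X^Y=r_X^{Z_j}$. Factoring the inclusion $Z_j\subset X$ through $Y$ gives $i_{Z_j}^X=i_Y^X i_{Z_j}^Y$, and \cref{lem:relations_extension_restriction_functors} \labelcref{lem:relations_extension_restriction_functors_5} with $Z_j\subset Y$ gives $r_X^Y i_{Z_j}^X=i_{Z_j}^Y r_X^{Z_j}$. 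The $\iota$ and $\pi$ maps are compatible by \cref{lem:composition_theta_r_iota_pi}. Hence applying $r_X^Y$ to the admissible sequence of $A$ for the triple $(Z_2,Z_1,Z_3)$ in $X$ yields exactly the sequence of \cref{lem:ses_ir} for $r_X^YA$ and the triple $(Z_2,Z_1,Z_3)$ in $Y$. Since $r_X^Y$ is an exact $\SCstar$-module functor, \cref{lem:admissible_exact_module} shows this sequence is admissible. Therefore $r_X^YA\in\KKcat(Y)_{\loc}$.

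For the bootstrap categories I use \cite[Proposition~4.12]{MN_2009}: $\Boot(X)$ consists of the objects $A\in\KKcat(X)_{\loc}$ with $A(U)\in\Boot$ for all $U\in\Open(X)$. For $V\in\Open(Y)$ we have $r_X^YA(V)=A(V)$ with $V\in\LC(X)$. So it suffices to know that $A(Z)\in\Boot$ for every locally closed $Z$. Write $Z=U\setminus U'$ with $U'\subset U$ open in $X$. The extension $0\to A(U')\to A(U)\to A(Z)\to0$ comes from the admissible sequence above, evaluated in $\KKcat$. Its two terms $A(U'),A(U)$ are in $\Boot$, and $\Boot$ is triangulated, so $A(Z)\in\Boot$.

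The E-theory case is identical and simpler: $\Boot_{\E}(X)$ is defined by the condition $A(U)\in\Boot_{\E}$, and every extension gives a triangle in $\Ecat$, so no admissibility argument is needed.

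The only delicate point is the bookkeeping in the second paragraph: checking that the identifications of functors also carry the natural transformations $\iota$ and $\pi$ to one another. I expect this to be routine via \cref{lem:composition_theta_r_iota_pi}.
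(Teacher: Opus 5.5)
Your proof is correct. The $\KKcat(\blank)_{\loc}$ part is exactly the verification the paper intends; the paper only remarks that the statement follows easily from \cref{lem:relations_extension_restriction_functors,lem:admissible_exact_module}. Applying $r_X^Y$ to the sequence of \cref{lem:ses_ir} for the triple $(Z_2,Z_1,Z_3)$, viewed in $X$, yields the corresponding sequence for $r_X^YA$ in $Y$, and admissibility is preserved.

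There is one notational slip. The identity $r_X^Y i_{Z_j}^X=i_{Z_j}^Y r_X^{Z_j}$ does not typecheck: the left side is defined on $\Cstar(Z_j)$ and the right side on $\Cstar(X)$. What \cref{lem:relations_extension_restriction_functors}~\labelcref{lem:relations_extension_restriction_functors_5} gives is $r_X^Y i_{Z_j}^X=i_{Z_j}^Y$. Hence $r_X^Y i_{Z_j}^X r_X^{Z_j}=i_{Z_j}^Y r_X^{Z_j}=i_{Z_j}^Y r_Y^{Z_j} r_X^Y$, which is what your argument actually uses.

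For the bootstrap categories, you argue through the description by values on open subsets plus a two-out-of-three argument for $Z=U\setminus U'$. This is sound. The extension $0\to A(U')\to A(U)\to A(Z)\to 0$ is obtained by applying $\ev_X$ to an admissible sequence (using $A\in\KKcat(X)_{\loc}$), so it gives a distinguished triangle in $\KKcat$.

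A shorter route for $\Boot$ uses generators instead. The functor $r_X^Y\colon\KKcat(X)\to\KKcat(Y)$ is triangulated and commutes with countable coproducts. By \cref{lem:relations_extension_restriction_functors}~\labelcref{lem:relations_extension_restriction_functors_5}, it sends $i_{\{x\}}^X\C$ to $i_{\{x\}}^Y\C$ if $x\in Y$ and to $0$ otherwise. So the objects it maps into $\Boot(Y)$ form a localizing subcategory containing the generators of $\Boot(X)$.

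Your value-based argument has the advantage of handling $\Boot_{\E}$ verbatim, since the paper takes $\Boot_{\E}(X)$ to be defined by the condition $A(U)\in\Boot_{\E}$ for open $U$. The generator argument would additionally need the fact that $\Boot_{\E}(X)$ is generated by the objects $i_{\{x\}}^X\C$.
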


	\section{Reflection functors}\label{section:reflection_functors}
	
	In this section, we introduce reflection functors. Throughout this section, we fix
	\begin{itemize}
		\item a non-empty finite set $J$;
		\item a preordered set $K=(K, R_K)$; 
		\item preordered sets $L_j=(L_j, R_{L_j})$ for $j \in J$; 
		\item order-preserving maps $\theta_j \colon K \to L_j$ for $j \in J$; 
		\item a C*-algebra $D$; 
		\item C*-algebras $D_j$ for $j \in J$; 
		\item mutually orthogonal $\ast$-homomorphisms $\rho_j \colon D_j \to D$ for $j \in J$.
	\end{itemize}	
	We define a set 
	\[W:=K \amalg \coprod_{j \in J} L_j. \]
	
	In \cref{subsection:setting_of_topological_spaces}, we define two preorders $R_{W^{\closed}}$ and $R_{W^{\open}}$ on the set $W$ using $R_K$, $(R_{L_j})_{j \in J}$, and $(\theta_j)_{j \in J}$; see \cref{def:RWc_RWo}. The resulting preordered sets are regarded as Alexandrov spaces and are denoted by $W^{\closed}$ and $W^{\open}$. Here, the labels $\closed$ and $\open$ indicate that the subset $K \subset W$ is closed in $W^{\closed}$ and open in $W^{\open}$; see \cref{prop:K_closed_open}. 
	
	In \cref{subsection:definition_of_reflection_functors}, we define reflection functors
	\[
	\begin{tikzcd}[row sep=large]
		\Cstar(W^{\closed}) \ar[bend left=10]{r}{S_{\closed}^{\open}} &
		\ar[bend left=10]{l}{S_{\open}^{\closed}} \Cstar(W^{\open}). 
	\end{tikzcd}
	\]
	The reflection functor $S_{\open}^{\closed}$ will be defined independently of $D$, $(D_j)_{j \in J}$, and $(\rho_j)_{j \in J}$, while $S_{\closed}^{\open}$ will depend on them. Roughly speaking, $S_{\open}^{\closed}$ is defined as the mapping cone of a morphism from one object to a direct sum, whereas $S_{\closed}^{\open}$ is defined as the mapping cone of a morphism from a direct sum to one object. A key point is that a sum of $\ast$-homomorphisms is not, in general, again a $\ast$-homomorphism, and that only mutually orthogonal $\ast$-homomorphisms can be added; see \cref{subsection:natural_transformations_associated_with_direct_sums}. Hence, to define $S_{\closed}^{\open}$, we need to use $D$, $(D_j)_{j \in J}$, and $(\rho_j)_{j \in J}$. 
	
	In \cref{subsection:composition_reflection_functor}, we study the composites 
	\[S_{\open}^{\closed}S_{\closed}^{\open} \colon \Cstar(W^{\closed}) \to \Cstar(W^{\closed}), \quad S_{\closed}^{\open}S_{\open}^{\closed} \colon \Cstar(W^{\open}) \to \Cstar(W^{\open}). \]
	In \cref{subsection:zigzag_identity}, we examine the zigzag identities.
	The main results in \cref{subsection:composition_reflection_functor,subsection:zigzag_identity} are \cref{thm:key_diagram,thm:zigzag_identity}. 
	In \cref{subsection:application_to_Bott_functors_1}, we apply \cref{thm:key_diagram} to homotopy-invariant, stable, half-exact functors. 
	In \cref{subsection:application_to_bivariant_K-theory_1}, we apply \cref{thm:key_diagram,thm:zigzag_identity} to KK-theory and E-theory. The main results in \cref{subsection:application_to_Bott_functors_1,subsection:application_to_bivariant_K-theory_1} are \cref{thm:reflection_Bott,thm:equivalence_KKloc_Wc_Wo,thm:equivalence_E_Wc_Wo,cor:KKloc_tree,cor:E_tree}.
	
	It is worth emphasizing that \cref{thm:key_diagram,thm:zigzag_identity} hold for arbitrary $D$, $(D_j)_{j \in J}$, and $(\rho_j)_{j \in J}$. When we apply reflection functors to KK-theory and E-theory in \cref{subsection:application_to_bivariant_K-theory_1}, however, we use the specific choice of $D$, $(D_j)_{j \in J}$, and $(\rho_j)_{j \in J}$ specified in \cref{eg:reflection_suspension} (see also \cref{eg:reflection_matrix}). 
	
	\subsection{Setting of topological spaces}\label{subsection:setting_of_topological_spaces}
	
	\begin{defn}\label{def:RWc_RWo}
		We define two relations $R_{W^{\closed}}$ and $R_{W^{\open}}$ on $W$ by
		\begin{align*}
			R_{W^{\closed}} &:=R_K \amalg \coprod_{j \in J} R_{L_j} \amalg \coprod_{j \in J} \{(x, y) \in K \times L_j \mid (\theta_j(x), y) \in R_{L_j}\}, \\
			R_{W^{\open}} &:=R_K \amalg \coprod_{j \in J} R_{L_j} \amalg \coprod_{j \in J} \{(y, x) \in L_j \times K \mid (y, \theta_j(x)) \in R_{L_j}\}.
		\end{align*}
	\end{defn}
	
	In what follows, we use the index set $\{\closed, \open\}$.
	
	\begin{lem}\label{lem:RWc_RWo_preorder}
		For $\alpha \in \{\closed, \open\}$, the relation $R_{W^{\alpha}}$ is a preorder on $W$.  
	\end{lem}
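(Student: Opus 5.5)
We must check that $R_{W^{\alpha}}$ is reflexive and transitive. We treat $\alpha=\closed$; the case $\alpha=\open$ is symmetric, since $R_{W^{\open}}$ arises from the same construction after passing to the opposite preorders on $K$ and each $L_j$ and taking the inverse relation (the order-preserving maps $\theta_j$ stay order-preserving for the opposite orders).

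Reflexivity is immediate: every $w\in W$ lies in $K$ or in exactly one $L_j$, and $R_K$ and $R_{L_j}$ are reflexive, so $(w,w)\in R_{W^{\closed}}$.

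For transitivity, let $(u,v),(v,w)\in R_{W^{\closed}}$. The pairs in $R_{W^{\closed}}$ are of three kinds: both entries in $K$, both in a single $L_j$, or first entry in $K$ and second in $L_j$. In particular there is no pair from some $L_j$ into $K$, and none between $L_j$ and $L_{j'}$ for $j\neq j'$. So the only possible configurations for $(u,v,w)$ are the following.
\begin{itemize}
\item If $u,v,w\in K$, transitivity of $R_K$ gives $(u,w)\in R_K$.
\item If $u,v,w\in L_j$, transitivity of $R_{L_j}$ gives $(u,w)\in R_{L_j}$.
\item If $u,v\in K$ and $w\in L_j$, then $(u,v)\in R_K$ and $(\theta_j(v),w)\in R_{L_j}$. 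Since $\theta_j$ is order-preserving, $(\theta_j(u),\theta_j(v))\in R_{L_j}$, and transitivity in $L_j$ gives $(\theta_j(u),w)\in R_{L_j}$, i.e.\ $(u,w)\in R_{W^{\closed}}$.
\item If $u\in K$ and $v,w\in L_j$, then $(\theta_j(u),v),(v,w)\in R_{L_j}$, so $(\theta_j(u),w)\in R_{L_j}$ and again $(u,w)\in R_{W^{\closed}}$.
\end{itemize}
Any other configuration, such as $v\in L_j$ with $w\in K$ or $w\in L_{j'}$ for $j'\neq j$, does not occur, so these cases exhaust transitivity.

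There is no real obstacle; the only point requiring care is this bookkeeping of configurations, in particular noting that cross pairs go only from $K$ into a single $L_j$. The mixed cases use exactly that $\theta_j$ is order-preserving.
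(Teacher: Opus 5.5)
Your proof is correct and follows essentially the same route as the paper: reflexivity from that of $R_K$ and the $R_{L_j}$, and transitivity by a case split in which your four configurations are exactly Cases 1, 2, 3, and 5 in the proof of \cref{lem:RWalpha_preorder}. The only difference is that you handle $\alpha=\open$ by duality rather than by checking the paper's Cases 1, 2, 4, and 6 directly. This is valid, since $R_{W^{\open}}$ is precisely the inverse of the relation obtained from the closed construction applied to the opposite preorders.
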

	
	\begin{proof}
		The reflexivity of $R_{W^{\alpha}}$ follows from the reflexivity of $R_K$ and $R_{L_j}$ for $j \in J$. The transitivity of $R_{W^{\alpha}}$ is a routine verification using the transitivity of $R_K$ and $R_{L_j}$ for $j \in J$ together with the assumption that each $\theta_j \colon K \to L_j$ is order-preserving. More explicitly, it follows from the same case-by-case argument as in the proof of \cref{lem:RWalpha_preorder}: for $\alpha=\closed$, one uses Cases 1, 2, 3, and 5, while for $\alpha=\open$, one uses Cases 1, 2, 4, and 6.
	\end{proof}
	
	\begin{defn}\label{def:Wc_Wo}
		In view of \cref{lem:RWc_RWo_preorder}, for $\alpha \in \{\closed, \open\}$, we define $W^{\alpha}$ to be the Alexandrov space associated with the preordered set $(W,R_{W^{\alpha}})$.
	\end{defn}

	\begin{prop}
		The two relative topologies on $K$ induced from $W^{\closed}$ and $W^{\open}$ coincide with the Alexandrov topology on $K$ given by the preorder $R_K$. 
	\end{prop}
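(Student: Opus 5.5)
The statement concerns the preorders restricted to $K$, so I will first identify the restrictions of $R_{W^{\closed}}$ and $R_{W^{\open}}$ to $K \times K$. The plan is then to compare the resulting Alexandrov topology with the relative topologies.

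Restricting to $K\times K$ is immediate from \cref{def:RWc_RWo}. In the disjoint union defining $R_{W^{\alpha}}$, only the summand $R_K$ meets $K \times K$. Each $R_{L_j}$ lies in $L_j \times L_j$, and the mixed summands lie in $K \times L_j$ or $L_j \times K$. Since $K$ and the $L_j$ are disjoint in $W$, we get
\[
R_{W^{\alpha}} \cap (K \times K) = R_K
\]
for both $\alpha \in \{\closed, \open\}$.

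It remains to show that, for any preordered set $(V,R)$ and any subset $Y \subset V$, the relative topology on $Y$ induced from $\Open_R$ equals $\Open_{R \cap (Y\times Y)}$.

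For one inclusion, let $U$ be Alexandrov open in $V$. Then $U \cap Y$ is open for the restricted preorder. Indeed, if $(x,y) \in R\cap(Y\times Y)$ with $x \in U\cap Y$, then $y \in U$ because $U$ is open, and $y \in Y$ by assumption.

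Conversely, let $U' \subset Y$ be open for the restricted preorder. Set $U := \{z \in V \mid (x,z)\in R \text{ for some } x \in U'\}$, the up-closure of $U'$ in the convention where open sets are closed under passing from $x$ to $y$ when $(x,y)\in R$. By transitivity $U$ is Alexandrov open. I claim $U \cap Y = U'$. The inclusion $U' \subset U \cap Y$ follows from reflexivity. For the reverse, if $z \in Y$ and $(x,z) \in R$ with $x \in U'$, then $(x,z)\in R\cap(Y\times Y)$, so $z \in U'$ because $U'$ is open in $Y$.

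Applying this with $V=W$, $Y=K$ and the first step gives the claim for both $W^{\closed}$ and $W^{\open}$. The only point needing care is the direction convention: Alexandrov open means $R\cap(U\times(V\setminus U))=\emptyset$, i.e.\ open sets are upward closed, so $U$ must be the up-closure. Otherwise the argument is routine and has no real obstacle.
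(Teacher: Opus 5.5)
Your proposal is correct and takes the same approach as the paper, whose entire proof is the observation $R_{W^{\alpha}} \cap (K \times K)=R_K$ for $\alpha \in \{\closed, \open\}$. The paper leaves implicit the standard fact that a subspace of an Alexandrov space carries the Alexandrov topology of the restricted preorder; you prove this, and your up-closure argument for it is sound.
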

	
	\begin{proof}
		This follows from $R_{W^{\alpha}} \cap (K \times K)=R_K$ for $\alpha \in \{\closed, \open\}$. 
	\end{proof}

	\begin{prop}
		The two relative topologies on $\coprod_{j \in J} L_j$ induced from $W^{\closed}$ and $W^{\open}$ coincide with the Alexandrov topology on $\coprod_{j \in J} L_j$ given by the preorder $\coprod_{j \in J} R_{L_j}$. In particular, for each $j \in J$, the two relative topologies on $L_j$ induced from $W^{\closed}$ and $W^{\open}$ coincide with the Alexandrov topology on $L_j$ given by the preorder $R_{L_j}$. 
	\end{prop}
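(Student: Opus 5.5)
The statement reduces to an identity between preorders. The Alexandrov topology on a subset $Y \subset W$, with the relative topology from $W^{\alpha}$, is the Alexandrov topology of the restricted preorder $R_{W^{\alpha}} \cap (Y \times Y)$. To see this, note that for Alexandrov spaces the specialization preorder of a subspace is the restriction of the ambient specialization preorder, since closures of points in $Y$ are the traces on $Y$ of the ambient closures. Moreover, the relative topology on $Y$ is again Alexandrov: an arbitrary intersection of relatively open sets $U_k \cap Y$ equals $\bigl(\bigcap_k U_k\bigr) \cap Y$, which is relatively open. Hence the relative topology is $\Open_{R_{W^{\alpha}} \cap (Y \times Y)}$, exactly as in the preceding proposition for $K$.

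With this reduction, take $Y := \coprod_{j \in J} L_j$ and inspect \cref{def:RWc_RWo}. The pieces of $R_{W^{\closed}}$ are $R_K \subset K \times K$, the $R_{L_j} \subset L_j \times L_j$, and the mixed parts contained in $K \times L_j$. Similarly, the pieces of $R_{W^{\open}}$ are $R_K$, the $R_{L_j}$, and mixed parts contained in $L_j \times K$. Since $K$ is disjoint from $Y$, the intersection with $Y \times Y$ removes $R_K$ and all mixed parts. Therefore
\[
R_{W^{\alpha}} \cap (Y \times Y) = \coprod_{j \in J} R_{L_j}
\]
for both $\alpha \in \{\closed, \open\}$. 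This is the preorder on the disjoint union, so the first assertion follows.

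For the second assertion, apply the same reasoning with $Y := L_j$. Alternatively, restrict further inside $\coprod_{j} L_j$, using that relative topologies are transitive. This gives $\coprod_{j'} R_{L_{j'}} \cap (L_j \times L_j) = R_{L_j}$, because the sets $L_{j'}$ are pairwise disjoint.

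The only point needing care is the subspace–preorder compatibility. Restricting to a subset commutes with the Alexandrov construction, and this holds for arbitrary subsets without any local-closedness hypothesis. This is presumably the same fact used implicitly for $K$ in the preceding proposition, so there is no real obstacle here.
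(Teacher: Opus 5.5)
Your proof is correct and matches the paper's: the paper also reduces both assertions to the identity $R_{W^{\alpha}} \cap \bigl(\coprod_{j \in J} L_j \times \coprod_{j \in J} L_j\bigr) = \coprod_{j \in J} R_{L_j}$ for $\alpha \in \{\closed, \open\}$, and deduces the statement for each $L_j$ from the first assertion. Your explicit check that the relative topology on a subset of an Alexandrov space is the Alexandrov topology of the restricted preorder is a fact the paper uses without comment.
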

	
	\begin{proof}
		The first assertion follows from 
		\[R_{W^{\alpha}} \cap \Bigl(\coprod_{j \in J} L_j \times \coprod_{j \in J} L_j\Bigr)=\coprod_{j \in J} R_{L_j}\]
		for $\alpha \in \{\closed, \open\}$. 
		The second assertion follows from the first. 
	\end{proof}

	\begin{prop}\label{prop:K_closed_open}
		We have $K \in \Closed(W^{\closed})$ and $K \in \Open(W^{\open})$.
	\end{prop}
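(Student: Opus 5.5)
The plan is to use the closed-set criterion for Alexandrov spaces recalled in \cref{section:topologies_orders_quivers}: a subset $Y$ of a preordered set $(V,R)$ is closed in $(V,\Open_R)$ if and only if $R \cap \bigl((V \setminus Y) \times Y\bigr)=\emptyset$. Dually, $Y$ is open if and only if $R \cap \bigl(Y \times (V \setminus Y)\bigr)=\emptyset$. Both claims then reduce to reading off which pairs occur in the relations of \cref{def:RWc_RWo}, with $Y=K$ and $V \setminus K=\coprod_{j \in J} L_j$.

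For $K \in \Closed(W^{\closed})$, we must show that no pair $(y,x)$ with $y \in L_j$ and $x \in K$ lies in $R_{W^{\closed}}$. The relation $R_{W^{\closed}}$ is the disjoint union of three pieces. The first, $R_K$, is contained in $K \times K$. The second, each $R_{L_j}$, is contained in $L_j \times L_j$. The third consists of the sets $\{(x,y) \in K \times L_j \mid (\theta_j(x),y) \in R_{L_j}\}$, which are contained in $K \times L_j$. None of these pieces meets $\bigl(\coprod_j L_j\bigr) \times K$, so $K$ is closed.

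For $K \in \Open(W^{\open})$, we must show that $R_{W^{\open}} \cap \bigl(K \times \coprod_j L_j\bigr)=\emptyset$. Again $R_K \subset K \times K$ and $R_{L_j} \subset L_j \times L_j$. The cross terms $\{(y,x) \in L_j \times K \mid (y,\theta_j(x)) \in R_{L_j}\}$ lie in $L_j \times K$, so they do not meet $K \times L_j$ either, and $K$ is open.

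The only point needing care is the bookkeeping. The underlying set $W$ is a disjoint union, so the factors $K \times K$, $L_j \times L_{j'}$, $K \times L_j$ and $L_j \times K$ are pairwise disjoint. The direction of the criterion also matters: closedness forbids pairs leading from the complement into $Y$. There is no substantive obstacle; the argument is purely a matter of unwinding definitions.
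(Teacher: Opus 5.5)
Your proposal is correct and follows the paper's proof: the paper simply observes that $R_{W^{\closed}} \cap \bigl((W \setminus K) \times K\bigr)=\emptyset$ and $R_{W^{\open}} \cap \bigl(K \times (W \setminus K)\bigr)=\emptyset$, which is exactly the piece-by-piece check you carry out via the Alexandrov closed/open criteria.
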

	
	\begin{proof}
		This follows from $R_{W^{\closed}} \cap ((W \setminus K) \times K)=\emptyset$ and $R_{W^{\open}} \cap (K \times (W \setminus K))=\emptyset$. 
	\end{proof}

	\begin{prop}\label{prop:Lj_open_closed}
		For each $j \in J$, we have $L_j \in \Open(W^{\closed})$ and $L_j \in \Closed(W^{\open})$.
	\end{prop}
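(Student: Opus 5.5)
We argue directly from the descriptions of open and closed subsets of an Alexandrov space recalled in \cref{section:topologies_orders_quivers}. Fix $j \in J$. A subset $U \subset W$ is open in $W^{\closed}$ if and only if $R_{W^{\closed}} \cap \bigl(U \times (W \setminus U)\bigr)=\emptyset$. It is closed in $W^{\open}$ if and only if $R_{W^{\open}} \cap \bigl((W \setminus U) \times U\bigr)=\emptyset$. So we need to verify both emptiness conditions for $U=L_j$, using the explicit decomposition of $R_{W^{\closed}}$ and $R_{W^{\open}}$ in \cref{def:RWc_RWo}.

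For $R_{W^{\closed}}$ we go through the pieces of the disjoint union. The piece $R_K$ lies in $K \times K$, and $L_j \cap K=\emptyset$, so it contributes no pair with first coordinate in $L_j$. Each $R_{L_{j'}}$ lies in $L_{j'} \times L_{j'}$. For $j' \neq j$ its first coordinates avoid $L_j$, and for $j'=j$ both coordinates lie in $L_j$. Finally, the cross pieces $\{(x,y) \in K \times L_{j'} \mid (\theta_{j'}(x),y) \in R_{L_{j'}}\}$ all have first coordinate in $K$, hence not in $L_j$. Therefore no pair in $R_{W^{\closed}}$ has first coordinate in $L_j$ and second coordinate outside $L_j$. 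This shows $L_j \in \Open(W^{\closed})$.

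The argument for $R_{W^{\open}}$ is symmetric. We must show that no pair has second coordinate in $L_j$ and first coordinate outside $L_j$. The pieces $R_K$ and $R_{L_{j'}}$ behave exactly as before. The cross pieces $\{(y,x) \in L_{j'} \times K \mid (y,\theta_{j'}(x)) \in R_{L_{j'}}\}$ have second coordinate in $K$, which is disjoint from $L_j$. Hence $L_j \in \Closed(W^{\open})$.

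No step is a real obstacle. The only point needing care is that the disjoint union $W=K \amalg \coprod_{j} L_j$ makes the $L_{j'}$ for $j' \neq j$ disjoint from $L_j$. This bookkeeping is entirely parallel to the proof of \cref{prop:K_closed_open}.
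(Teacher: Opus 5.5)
Your proposal is correct and follows the paper's own argument: the paper proves this by noting exactly the two emptiness conditions $R_{W^{\closed}} \cap (L_j \times (W \setminus L_j))=\emptyset$ and $R_{W^{\open}} \cap ((W \setminus L_j) \times L_j)=\emptyset$. Your piece-by-piece check of the disjoint-union decomposition in \cref{def:RWc_RWo} is just the explicit verification of these (in the second case the roles of the two coordinates swap, but the check goes through in the same way).
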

	
	\begin{proof}
		This follows from $R_{W^{\closed}} \cap (L_j \times (W \setminus L_j))=\emptyset$ and $R_{W^{\open}} \cap ((W \setminus L_j) \times L_j)=\emptyset$. 
	\end{proof}
	
	\begin{prop}
		The following conditions are equivalent: 
		\begin{enumerate}[label=\textnormal{(\roman*)}]
			\item $W^{\closed}$ is $T_0$; 
			\item $W^{\open}$ is $T_0$;
			\item $K$ is $T_0$, and $L_j$ is $T_0$ for every $j \in J$.
		\end{enumerate}
	\end{prop}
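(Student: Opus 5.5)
We want: $W^{\closed}$ is $T_0$ iff $W^{\open}$ is $T_0$ iff $K$ and all $L_j$ are $T_0$. Since all spaces here are Alexandrov spaces, being $T_0$ means that the specialization preorder is a partial order. So the plan is to prove (i)$\Leftrightarrow$(iii) and (ii)$\Leftrightarrow$(iii) by the same argument, applied to each $\alpha \in \{\closed, \open\}$.

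\emph{(i)/(ii) $\Rightarrow$ (iii).} The earlier propositions show that
\[
R_{W^{\alpha}} \cap (K \times K) = R_K, \qquad R_{W^{\alpha}} \cap (L_j \times L_j) = R_{L_j}
\]
for each $j \in J$ and each $\alpha \in \{\closed, \open\}$. A subspace of a $T_0$-space is $T_0$, or equivalently, the restriction of a partial order is a partial order. Hence $R_K$ and every $R_{L_j}$ are antisymmetric.

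\emph{(iii) $\Rightarrow$ (i)/(ii).} I will apply \cref{lem:T0condition}. Take the two-point partially ordered set $V_2 := \{k, \ell\}$. For $\alpha = \closed$, order it by $k \leq \ell$; for $\alpha = \open$, order it by $\ell \leq k$. Define $\tau \colon W \to V_2$ by sending $K$ to $k$ and each $L_j$ to $\ell$.

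Next I check that $\tau$ is order-preserving, using \cref{def:RWc_RWo}:
\begin{itemize}
\item pairs in $R_K$ or in some $R_{L_j}$ map to the diagonal of $V_2$;
\item for $\alpha = \closed$, the cross pairs all have the form $(x, y) \in K \times L_j$, so they map to $(k, \ell)$, which lies in the order;
\item for $\alpha = \open$, the cross pairs all have the form $(y, x) \in L_j \times K$, so they map to $(\ell, k)$, which lies in the order.
\end{itemize}

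The fibers of $\tau$ are $\tau^{-1}(\{k\}) = K$, with induced preorder $R_K$, and $\tau^{-1}(\{\ell\}) = \coprod_{j \in J} L_j$, with induced preorder $\coprod_{j \in J} R_{L_j}$ by the earlier proposition. The first is a partial order by assumption. The second is a disjoint union of partial orders with no relations between different pieces, so it is again a partial order. \cref{lem:T0condition} then shows that $R_{W^{\alpha}}$ is a partial order, which is what we need.

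The only point needing care is that the cross relations go in a single direction for each fixed $\alpha$, so that $\tau$ is indeed monotone. This is immediate from the definition, so I expect no real obstacle here.
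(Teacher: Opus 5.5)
Your proof is correct and uses the same idea as the paper: for fixed $\alpha$, the cross relations between $K$ and the $L_j$ go in only one direction, so any pair with $(x,y),(y,x) \in R_{W^{\alpha}}$ already lies in $R_K$ or in a single $R_{L_j}$. The paper verifies this directly rather than through \cref{lem:T0condition}, but your two-point map is a coarsening of the map $\tau \colon W^{\alpha} \to V^{\alpha}$ that the paper itself uses with \cref{lem:T0condition} to prove \cref{prop:Walpha_T0}.
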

	
	\begin{proof}
		The implications $\text{(i)} \Rightarrow \text{(iii)}$ and $\text{(ii)} \Rightarrow \text{(iii)}$ are evident. Let $\alpha \in \{\closed, \open\}$. For $x, y \in W$ with $(x, y), (y, x) \in R_{W^{\alpha}}$, either $(x, y), (y, x) \in R_K$ or $(x, y), (y, x) \in R_{L_j}$ for some $j \in J$. If $(K, R_K)$ and $(L_j, R_{L_j})$ are partially ordered sets, we get $x=y$. Hence, $R_{W^{\alpha}}$ is a partial order on $W$. This shows $\text{(iii)} \Rightarrow \text{(i)}$ and $\text{(iii)} \Rightarrow \text{(ii)}$. 
	\end{proof}
	
	\begin{eg}\label{eg:Wc_Wo}
		Consider the case where $J=\{j_1,j_2,j_3\}$.
		Let $K:=\{1,2,3,4\}$, $L_{j_1}:=\{5,6\}$, $L_{j_2}:=\{7\}$, and $L_{j_3}:=\{8,9,10,11\}$ be the partially ordered sets whose Hasse diagrams are depicted below. 
		\[
		\begin{array}{cccc}
			\begin{tikzcd}
				& 1 &\\
				& 2 \ar{u} & \\
				3 \ar{ur} & & \ar{ul} 4
			\end{tikzcd}
			&
			\begin{tikzcd}
				5 \\
				6 \ar{u}
			\end{tikzcd}
			&
			7
			&
			\begin{tikzcd}[column sep=small]
				& 8 &\\
				9 \ar{ur} &  & \ar{ul} 10 \\
				& \ar{ul} 11 \ar{ur} &
			\end{tikzcd}
			\\[2em]
			K & L_{j_1} & L_{j_2} & L_{j_3}
		\end{array}
		\]
		Let $\theta_{j_1} \colon K \to L_{j_1}$, $\theta_{j_2} \colon K \to L_{j_2}$, and $\theta_{j_3} \colon K \to L_{j_3}$ be the order-preserving maps defined by
		\begin{gather*}
			\theta_{j_1}(1):=5, \quad
			\theta_{j_1}(2)=\theta_{j_1}(3)=\theta_{j_1}(4):=6, \\
			\theta_{j_2}(1)=\theta_{j_2}(2)=\theta_{j_2}(3)=\theta_{j_2}(4):=7, \\
			\theta_{j_3}(1)=\theta_{j_3}(2):=8, \quad
			\theta_{j_3}(3):=9, \quad
			\theta_{j_3}(4):=10.
		\end{gather*}
		Let $W^{\closed}$ and $W^{\open}$ be the finite $T_0$-spaces given by $K$, $(L_j)_{j \in J}$, and $(\theta_j)_{j \in J}$.
		Then the Hasse diagrams of $W^{\closed}$ and $W^{\open}$ are depicted below.
		\[
		\begin{array}{cc}
			\begin{tikzcd}[column sep=small]
				& & 1 & & &  & \\
				& & 2 \ar{u} & & &  & \\
				5 \ar{uurr} & 3 \ar{ur} & & \ar{ul} 4 & & \ar{ulll} 8 & \\
				6 \ar{u} \ar{ur} \ar{urrr} & & 7 \ar{ul} \ar{ur} & & \ar{ulll} 9 \ar{ur} & & \ar{ulll} \ar{ul} 10 \\
				& & & & & \ar{ul} 11 \ar{ur} &
			\end{tikzcd}
			&
			\begin{tikzcd}[column sep=small]
				5 & & 7 & & & 8 & \\
				6 \ar{u} & & \ar{ull} 1 \ar{u} \ar{urrr} & & 9 \ar{ur} & & \ar{ul} 10 \\
				& & \ar{ull} 2 \ar{u} &  & & \ar{ul} 11 \ar{ur} & \\
				& 3 \ar{ur} \ar[bend right=20]{uurrr} & & \ar{ul} 4 \ar[bend right=40]{uurrr} & & &
			\end{tikzcd}
			\\[2em]
			W^{\closed} & W^{\open}
		\end{array}
		\]
	\end{eg}
	
	We record a criterion for the connectedness of $W^{\alpha}$ for $\alpha \in \{\closed, \open\}$ in the following proposition.
	
	\begin{prop}\label{prop:connectedness_Wc_Wo}
		The following conditions are equivalent:
		\begin{enumerate}[label=\textnormal{(\roman*)}]
			\item $W^{\closed}$ is connected;
			\item $W^{\open}$ is connected;
			\item $W \neq \emptyset$ and the equivalence relation on $W$ generated by
			\[
			R_K \amalg \coprod_{j \in J}R_{L_j}
			\amalg \coprod_{j \in J}\{(x,\theta_j(x)) \mid x \in K\}
			\]
			is equal to $W \times W$.
		\end{enumerate}
		Furthermore, if $K \neq \emptyset$ and $L_j$ is connected for every $j \in J$, then these equivalent conditions hold.
	\end{prop}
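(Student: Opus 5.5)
The plan is to reduce everything to \cref{lem:connected_Alexandrov_space}. For $\alpha \in \{\closed, \open\}$, $W^{\alpha}$ is the Alexandrov space of $(W, R_{W^{\alpha}})$. So $W^{\alpha}$ is connected if and only if $W \neq \emptyset$ and the equivalence relation $\sim_\alpha$ generated by $R_{W^{\alpha}}$ is $W \times W$. Write $\sim$ for the equivalence relation generated by the relation $R$ in (iii). It then suffices to show $\sim_{\closed} = \sim = \sim_{\open}$. This gives (i)$\Leftrightarrow$(iii)$\Leftrightarrow$(ii).

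To prove $\sim_{\closed} = \sim$, we show each generating set lies in the other equivalence relation.

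First, $R \subset R_{W^{\closed}}$. The summands $R_K$ and $R_{L_j}$ appear directly in $R_{W^{\closed}}$. A pair $(x, \theta_j(x))$ with $x \in K$ lies in the third summand of $R_{W^{\closed}}$, because $(\theta_j(x), \theta_j(x)) \in R_{L_j}$ by reflexivity.

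Conversely, take $(x, y) \in K \times L_j$ with $(\theta_j(x), y) \in R_{L_j}$. Then $x \sim \theta_j(x)$ and $\theta_j(x) \sim y$, so $x \sim y$. Hence $R_{W^{\closed}} \subset \sim$.

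The case of $\open$ is symmetric. Now $(x, \theta_j(x))$ enters $R_{W^{\open}}$ reversed, as $(\theta_j(x), x)$. This is harmless, since equivalence relations are symmetric. A pair $(y, x) \in L_j \times K$ with $(y, \theta_j(x)) \in R_{L_j}$ satisfies $y \sim \theta_j(x) \sim x$.

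For the final assertion, assume $K \neq \emptyset$ and each $L_j$ is connected. Then $W \neq \emptyset$. Applying \cref{lem:connected_Alexandrov_space} to $L_j$ with its own preorder, all points of $L_j$ are $\sim$-equivalent, because $R_{L_j} \subset R$.

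Fix $x_0 \in K$. Every $y \in L_j$ satisfies $y \sim \theta_j(x_0) \sim x_0$. Every $x \in K$ satisfies $x \sim \theta_j(x) \sim \theta_j(x_0) \sim x_0$ for any $j$; this uses that $J$ is non-empty and the connectedness of $L_j$. Hence all of $W$ is one $\sim$-class.

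I expect no real obstacle. The only point needing care is the orientation reversal of the generating pairs in $W^{\open}$ and the use of $J \neq \emptyset$.
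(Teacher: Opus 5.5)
Your proof is correct and follows the paper's argument: you reduce (i)$\Leftrightarrow$(iii)$\Leftrightarrow$(ii) to \cref{lem:connected_Alexandrov_space} by checking that $R_{W^{\closed}}$, $R_{W^{\open}}$, and the relation in (iii) generate the same equivalence relation, which the paper calls straightforward and you verify explicitly. For the final assertion, the paper fixes a basepoint $y_0 \in L_{j_0}$ and treats three cases, whereas you fix $x_0 \in K$ and need only two; this is a cosmetic difference in the same chaining argument.
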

	
	\begin{proof}
		Let $R$ denote the equivalence relation on $W$ defined in (iii).
		For $\alpha \in \{\closed, \open\}$, it is straightforward to verify from the definitions that the equivalence relation on $W$ generated by $R_{W^{\alpha}}$ is equal to $R$.
		Hence, by \cref{lem:connected_Alexandrov_space}, the conditions (i), (ii), and (iii) are equivalent.
		
		Assume that $K \neq \emptyset$ and $L_j$ is connected for every $j \in J$.
		Choose $j_0 \in J$.
		Since $L_{j_0}$ is connected, it is non-empty by convention, and hence we may choose $y_0 \in L_{j_0}$.
		In particular, $W \neq \emptyset$.
		We show that $R=W \times W$.
		Since $R$ is symmetric and transitive, it suffices to show that $(w,y_0) \in R$ for every $w \in W$.
		We distinguish three cases.
		\begin{description}[font=\normalfont\scshape]
			\item[Case 1] $w \in L_{j_0}$.
			Since $L_{j_0}$ is connected, \cref{lem:connected_Alexandrov_space} gives $L_{j_0} \times L_{j_0} \subset R$.
			Hence, $(w, y_0) \in R$.
			
			\item[Case 2] $w \in K$.
			Then $(w,\theta_{j_0}(w)) \in R$.
			Since $L_{j_0}$ is connected, $(\theta_{j_0}(w), y_0) \in R$.
			By transitivity of $R$, we obtain $(w, y_0) \in R$.
			
			\item[Case 3] $w \in L_j$ for some $j \in J \setminus \{j_0\}$.
			Since $K \neq \emptyset$, choose $x \in K$.
			Then $(x,\theta_{j_0}(x)) \in R$ and $(x, \theta_j(x)) \in R$.
			Since $L_{j_0}$ is connected, $(y_0, \theta_{j_0}(x)) \in R$. 
			Since $L_j$ is connected, $(w, \theta_j(x)) \in R$. 
			Thus, by symmetry and transitivity of $R$, we obtain $(w, y_0) \in R$.
		\end{description}
		This completes the proof.
	\end{proof}

	\begin{rem}
		The construction in \cref{def:Wc_Wo} is inspired by Ladkani's work on universal derived equivalences of partially ordered sets in \cite{Ladkani_2007}. Indeed, the construction of $W^{\closed}$ and $W^{\open}$ can be regarded as a special case of Ladkani's construction when $K$ and all the $L_j$ for $j \in J$ are finite partially ordered sets. 
	\end{rem}

	\begin{rem}\label{rem:BGP-reflection}
		We explain how the construction in \cref{def:Wc_Wo} recovers the reflection of orientations in Hasse diagrams introduced in \cref{subsection:reflection_of_orientations_in_quivers}.
		
		Let $G=(V, E)$ be a tree and fix $x \in V$. Set 
		\[J_x:=\{y \in V \mid \{x, y\} \in E\}. \]
		For each $y \in J_x$, let $L_{x, y}$ denote the connected component containing $y$ in the subgraph induced by $V \setminus \{x\}$.
		Since $G$ is a tree, we have $L_{x, y} \cap L_{x, y'}=\emptyset$ for $y, y' \in J_x$ with $y \neq y'$.
		Hence,
		\[
		V=\{x\} \amalg \coprod_{y \in J_x} L_{x, y}
		\]
		as sets. 
		For each $y \in J_x$, define a map $\theta_{x, y} \colon \{x\} \to L_{x, y}$ by $\theta_{x, y}(x):=y$. 
		
		Given an orientation $X$ of $G$, let $L_{x, y}^X$ denote the partially ordered set $L_{x, y}$ equipped with the order induced by $X$. 
		Regarding $\{x\}$ as the partially ordered set, the map $\theta_{x, y} \colon \{x\} \to L_{x, y}^X$ is order-preserving.
		Let $V^{\closed}$ and $V^{\open}$ be the finite $T_0$-spaces obtained from the data $\{x\}$, $(L_{x, y}^X)_{y \in J_x}$, and $(\theta_{x, y})_{y \in J_x}$ as in \cref{def:Wc_Wo}. 
		If $x$ is a sink (closed point) in $X$, then 
		\[X=V^{\closed}, \quad \sigma_x X=V^{\open}. \]
		If $x$ is a source (open point) in $X$, then 
		\[X=V^{\open}, \quad \sigma_x X=V^{\closed}. \]
	\end{rem}

	\subsection{Definition of reflection functors}\label{subsection:definition_of_reflection_functors}
	
	\begin{defn}
		For each $j \in J$, we define a subset 
		\[W_j:=K \amalg L_j \subset W. \]
	\end{defn}
	
	\begin{prop}\label{prop:Wj_open_closed}
		For each $j \in J$, we have $W_j \in \Closed(W^{\closed})$ and $W_j \in \Open(W^{\open})$.
	\end{prop}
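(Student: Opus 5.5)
The plan is to verify the two claims directly with the criteria recalled in \cref{section:topologies_orders_quivers}: a subset $Y \subset W$ is closed in the Alexandrov space $(W,\Open_R)$ if and only if $R \cap ((W \setminus Y) \times Y)=\emptyset$, and open if and only if $R \cap (Y \times (W \setminus Y))=\emptyset$. This mirrors the proofs of \cref{prop:K_closed_open,prop:Lj_open_closed}. Throughout, fix $j \in J$ and note that
\[
W \setminus W_j=\coprod_{j' \in J \setminus \{j\}} L_{j'}.
\]

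For $W_j \in \Closed(W^{\closed})$, I will show that $R_{W^{\closed}} \cap ((W \setminus W_j) \times W_j)=\emptyset$. Take a pair $(x,y) \in R_{W^{\closed}}$ with $x \in L_{j'}$ for some $j' \neq j$. By \cref{def:RWc_RWo}, the only components of $R_{W^{\closed}}$ whose first coordinate can lie in $L_{j'}$ are $R_{L_{j'}}$ itself. The mixed pieces have first coordinate in $K$, and $R_K$ has both coordinates in $K$. Hence $y \in L_{j'}$, so $y \notin W_j$.

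For $W_j \in \Open(W^{\open})$, I will show that $R_{W^{\open}} \cap (W_j \times (W \setminus W_j))=\emptyset$. Take a pair $(x,y) \in R_{W^{\open}}$ with $y \in L_{j'}$ for some $j' \neq j$. The only components of $R_{W^{\open}}$ with second coordinate in $L_{j'}$ are $R_{L_{j'}}$, because the mixed pieces $\{(y,x) \in L_{j''} \times K\}$ have second coordinate in $K$. Hence $x \in L_{j'}$, so $x \notin W_j$.

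There is no real obstacle here. The only point needing care is bookkeeping about which factor of the disjoint union each piece of $R_{W^{\alpha}}$ lives in. In particular, the mixed pieces only connect $K$ with $L_{j''}$ and never connect two different $L$'s, which is exactly what makes $W_j$ closed, respectively open.
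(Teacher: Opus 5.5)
Your proof is correct and matches the paper's argument: the paper likewise deduces the claim from $R_{W^{\closed}} \cap ((W \setminus W_j) \times W_j)=\emptyset$ and $R_{W^{\open}} \cap (W_j \times (W \setminus W_j))=\emptyset$. You verify these identities by checking which components of the disjoint union in \cref{def:RWc_RWo} can contain pairs involving $L_{j'}$ for $j' \neq j$.
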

	
	\begin{proof}
		This follows from $R_{W^{\closed}} \cap ((W \setminus W_j) \times W_j)=\emptyset$ and $R_{W^{\open}} \cap (W_j \times (W \setminus W_j))=\emptyset$. 
	\end{proof}

	\begin{defn}\label{def:thetatildej}
		For each $j \in J$, we define a map 
		\[\widetilde{\theta}_j \colon W_j \to L_j\] 
		by
		\[
		\widetilde{\theta}_j(x):=
		\begin{dcases}
			x & \text{if $x \in L_j$},\\
			\theta_j(x) & \text{if $x \in K$}.
		\end{dcases}
		\]
	\end{defn}

	\begin{defn}
		For $j \in J$ and $\alpha \in \{\closed, \open\}$, let $W_j^{\alpha}$ denote the subspace of $W^{\alpha}$ with underlying set $W_j$. 
	\end{defn}
	
	\begin{prop}\label{prop:thetatildej_continuous}
		For $j \in J$ and $\alpha \in \{\closed, \open\}$, the map $\widetilde{\theta}_j \colon W_j \to L_j$ is a continuous map $W_j^{\alpha} \to L_j$. 
	\end{prop}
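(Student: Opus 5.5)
We must show that $\widetilde{\theta}_j$ is order-preserving from $(W_j, R_{W^{\alpha}} \cap (W_j \times W_j))$ to $(L_j, R_{L_j})$. This suffices because continuous maps between Alexandrov spaces are exactly order-preserving maps, and the relative topology on a subset of an Alexandrov space is the Alexandrov topology of the restricted preorder (this holds since every Alexandrov-open subset of $W_j$ is the trace of the open set it generates in $W^{\alpha}$).

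The plan is to take $(x,y) \in R_{W^{\alpha}}$ with $x,y \in W_j$ and split into cases according to which piece of the disjoint union in \cref{def:RWc_RWo} the pair lies in. Pairs lying in some $R_{L_{j'}}$ with $j' \neq j$, or in a cross term indexed by $j' \neq j$, cannot have both entries in $W_j = K \amalg L_j$, so only three kinds of pair remain.

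\begin{itemize}
\item If $(x,y) \in R_K$, then $(\theta_j(x),\theta_j(y)) \in R_{L_j}$ because $\theta_j$ is order-preserving.
\item If $(x,y) \in R_{L_j}$, the statement is immediate, since $\widetilde{\theta}_j$ is the identity on $L_j$.
\item The remaining pairs are cross pairs. For $\alpha=\closed$ such a pair is $(x,y)\in K\times L_j$ with $(\theta_j(x),y)\in R_{L_j}$, which is exactly $(\widetilde{\theta}_j(x),\widetilde{\theta}_j(y)) \in R_{L_j}$. For $\alpha=\open$ it is $(y,x)\in L_j\times K$ with $(y,\theta_j(x))\in R_{L_j}$, which is again the required relation.
\end{itemize}

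There is no real obstacle here. The only care needed is the remark that the subspace topology on $W_j^{\alpha}$ is the Alexandrov topology of the restricted preorder, together with the bookkeeping that the components indexed by $j'\neq j$ do not contribute.
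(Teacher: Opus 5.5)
Your proposal is correct and matches the paper's proof. The paper likewise restricts $R_{W^{\alpha}}$ to $W_j$, notes that the restriction decomposes into $R_K$, $R_{L_j}$ and the single cross term for $j$, and checks order-preservation on each piece using that $\theta_j$ is order-preserving. Your justification that the subspace topology on $W_j^{\alpha}$ is the Alexandrov topology of the restricted preorder is also correct; the paper leaves that point implicit.
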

	
	\begin{proof}
		Observe that 
		\[
		R_{W^{\alpha}} \cap (W_j \times W_j)=
		\begin{dcases}
			R_K \amalg R_{L_j} \amalg \{(x,y) \in K \times L_j \mid (\theta_j(x),y) \in R_{L_j}\}
			& \text{if $\alpha=\closed$},\\
			R_K \amalg R_{L_j} \amalg \{(y,x) \in L_j \times K \mid (y,\theta_j(x)) \in R_{L_j}\}
			& \text{if $\alpha=\open$}.
		\end{dcases}
		\]
		Since $\theta_j \colon K \to L_j$ is an order-preserving map between the preordered sets $(K, R_K)$ and $(L_j, R_{L_j})$, it is routine to verify from the above description that $\widetilde{\theta}_j \colon W_j \to L_j$ is an order-preserving map between the preordered sets $(W_j, R_{W^{\alpha}} \cap (W_j \times W_j))$ and $(L_j, R_{L_j})$. This shows the assertion. 
	\end{proof}
	
	\begin{defn}\label{def:p_Wj_Lj}
		For $j \in J$ and $\alpha \in \{\closed, \open\}$, let 
		\[p_{W_j^{\alpha}}^{L_j} \colon \Cstar(W_j^{\alpha}) \to \Cstar(L_j)\]
		denote the functor induced by the continuous map $\widetilde{\theta}_j \colon W_j^{\alpha} \to L_j$ as in \cref{def:functor_induced_by_continuous_map}.
	\end{defn}
	
	The following elementary lemma plays a crucial role. 
	
	\begin{lem}\label{lem:preimage_thetaj}
		Fix $j \in J$. 
		\begin{enumerate}[label=\textnormal{(\arabic*)}]
			\item \label{lem:preimage_thetaj_1}
			For $U \in \Open(W^{\closed})$, we have $U \cap K \subset \theta_j^{-1}(U \cap L_j)$.
			
			\item \label{lem:preimage_thetaj_2}
			For $F \in \Closed(W^{\open})$, we have $F \cap K \subset \theta_j^{-1}(F \cap L_j)$.
		\end{enumerate}
	\end{lem}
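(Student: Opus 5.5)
The plan is to unwind the definitions of the two preorders $R_{W^{\closed}}$ and $R_{W^{\open}}$ from \cref{def:RWc_RWo}, together with the description of open and closed subsets of an Alexandrov space recalled in \cref{section:topologies_orders_quivers}. Recall that $U$ is open in $(W,\Open_{R})$ if and only if $R \cap (U \times (W\setminus U))=\emptyset$; equivalently, whenever $(a,b)\in R$ and $a \in U$, then $b \in U$. Dually, $F$ is closed if and only if $R \cap ((W\setminus F)\times F)=\emptyset$; equivalently, whenever $(a,b)\in R$ and $b\in F$, then $a \in F$.

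For \labelcref{lem:preimage_thetaj_1}, we take $U \in \Open(W^{\closed})$ and $x \in U \cap K$. By \cref{def:RWc_RWo}, the pair $(x,\theta_j(x))$ belongs to $R_{W^{\closed}}$: it lies in the summand $\{(x,y)\in K\times L_j \mid (\theta_j(x),y)\in R_{L_j}\}$ with $y=\theta_j(x)$, using reflexivity of $R_{L_j}$. Since $x \in U$ and $U$ is Alexandrov open, we get $\theta_j(x)\in U$. As $\theta_j(x)\in L_j$, this gives $\theta_j(x)\in U\cap L_j$, that is, $x \in \theta_j^{-1}(U\cap L_j)$.

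For \labelcref{lem:preimage_thetaj_2}, we take $F \in \Closed(W^{\open})$ and $x \in F\cap K$. Again by \cref{def:RWc_RWo} and reflexivity of $R_{L_j}$, the pair $(\theta_j(x),x)$ lies in $R_{W^{\open}}$, namely in the summand $\{(y,x)\in L_j\times K \mid (y,\theta_j(x))\in R_{L_j}\}$. Since $x \in F$ and $F$ is Alexandrov closed, we get $\theta_j(x)\in F$. Hence $\theta_j(x) \in F\cap L_j$, and so $x\in\theta_j^{-1}(F\cap L_j)$.

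There is no real obstacle here. The only point requiring care is keeping the direction conventions straight, namely that open sets are upward closed with respect to $R$ in the sense $(a,b)\in R$, $a\in U \Rightarrow b\in U$, and matching this against the orientation of the cross terms in $R_{W^{\closed}}$ and $R_{W^{\open}}$. I would double-check this against \cref{prop:K_closed_open} and \cref{prop:Lj_open_closed}, for instance that $L_j$ is open in $W^{\closed}$, to confirm consistency.
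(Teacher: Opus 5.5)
Your proposal is correct and follows essentially the paper's argument: the paper proves \labelcref{lem:preimage_thetaj_1} by observing $(x,\theta_j(x)) \in R_{W^{\closed}}$ and using that $U$ is Alexandrov open, and obtains \labelcref{lem:preimage_thetaj_2} by symmetry. Your explicit treatment of \labelcref{lem:preimage_thetaj_2} via $(\theta_j(x),x)\in R_{W^{\open}}$ and the closedness of $F$ is exactly that symmetric argument, and your direction conventions are correct.
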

	
	\begin{proof}
		By symmetry, it suffices to show \labelcref{lem:preimage_thetaj_1}. 
		Take $x \in U \cap K$. Since $(x, \theta_j(x)) \in R_{W^{\closed}}$ and $U \in \Open(W^{\closed})$, we obtain $\theta_j(x) \in U$. This shows that $U \cap K \subset \theta_j^{-1}(U \cap L_j)$. 
	\end{proof}

	\begin{defn}\label{def:iota_pi_Wj_K}
		Fix $j \in J$.
		\begin{enumerate}[label=\textnormal{(\arabic*)}]
			\item
			By \cref{lem:preimage_thetaj} \labelcref{lem:preimage_thetaj_1}, for every $U \in \Open(W^{\closed})$, we have
			\[
			U \cap K
			\subset
			\theta_j^{-1}(U \cap L_j)
			\subset
			\widetilde{\theta}_j^{-1}(U \cap L_j).
			\]
			Since $K \in \Open(W_j^{\open})$, \cref{def:iota_pi} \labelcref{def:iota_pi_1} gives a morphism
			\[
			\iota_K^{W_j^{\open}}
			\colon
			i_K^{W^{\closed}} r_{W^{\open}}^K
			\Rightarrow
			i_{L_j}^{W^{\closed}} p_{W_j^{\open}}^{L_j} r_{W^{\open}}^{W_j^{\open}}
			\]
			in $[\Cstar(W^{\open}), \Cstar(W^{\closed})]$. 
			
			\item
			By \cref{lem:preimage_thetaj} \labelcref{lem:preimage_thetaj_2}, for every $F \in \Closed(W^{\open})$, we have
			\[
			F \cap K
			\subset
			\theta_j^{-1}(F \cap L_j)
			\subset
			\widetilde{\theta}_j^{-1}(F \cap L_j).
			\]
			Since $K \in \Closed(W_j^{\closed})$, \cref{def:iota_pi} \labelcref{def:iota_pi_2} gives a morphism
			\[
			\pi_{W_j^{\closed}}^K
			\colon
			i_{L_j}^{W^{\open}} p_{W_j^{\closed}}^{L_j} r_{W^{\closed}}^{W_j^{\closed}}
			\Rightarrow
			i_K^{W^{\open}} r_{W^{\closed}}^K
			\]
			in $[\Cstar(W^{\closed}), \Cstar(W^{\open})]$. 
		\end{enumerate}
	\end{defn}
	
	The following definition uses the operations on natural transformations associated with direct sums introduced in \cref{subsection:natural_transformations_associated_with_direct_sums}, as well as mapping cones in functor categories developed in \cref{subsection:higher-dimensional_mapping_cones}. Following \cref{nota:tensor_product}, we regard $D$ and $D_j$ as functors and $\rho_j$ as a natural transformation for each $j \in J$.
	
	\begin{defn}\label{def:reflection_functor}
		The \textit{reflection functors}
		\[
		\begin{tikzcd}[row sep=large]
			\Cstar(W^{\closed}) \ar[bend left=10]{r}{S_{\closed}^{\open}} &
			\ar[bend left=10]{l}{S_{\open}^{\closed}} \Cstar(W^{\open}) 
		\end{tikzcd}
		\]
		are defined as follows. 
		\begin{enumerate}[label=\textnormal{(\arabic*)}]
			\item \label{def:reflection_functor_1}
			Since $J$ is finite, we can define a morphism
			\[(\iota_K^{W_j^{\open}})_{j \in J} \colon i_K^{W^{\closed}} r_{W^{\open}}^K  \Rightarrow \bigoplus_{j \in J} i_{L_j}^{W^{\closed}} p_{W_j^{\open}}^{L_j} r_{W^{\open}}^{W_j^{\open}} \]
			in $[\Cstar(W^{\open}), \Cstar(W^{\closed})]$. 
			We define a functor 
			\[S_{\open}^{\closed} \colon \Cstar(W^{\open}) \to \Cstar(W^{\closed})\] 
			to be the mapping cone of this morphism.
			
			\item \label{def:reflection_functor_2}
			Since the $\ast$-homomorphisms $\rho_j \colon D_j \to D$ for $j \in J$ are mutually orthogonal, the corresponding morphisms $\rho_j \colon D_j \Rightarrow D$ in $[\Cstar(W^{\closed}), \Cstar(W^{\closed})]$ for $j \in J$ are also mutually orthogonal. Hence, the horizontal composites $\pi_{W_j^{\closed}}^K \rho_j \colon i_{L_j}^{W^{\open}} p_{W_j^{\closed}}^{L_j} r_{W^{\closed}}^{W_j^{\closed}} D_j \Rightarrow i_K^{W^{\open}} r_{W^{\closed}}^K D$ in $[\Cstar(W^{\closed}), \Cstar(W^{\open})]$ for $j \in J$ are mutually orthogonal. We can therefore define a morphism 
			\[\sum_{j \in J} \pi_{W_j^{\closed}}^K \rho_j \colon \bigoplus_{j \in J} i_{L_j}^{W^{\open}} p_{W_j^{\closed}}^{L_j} r_{W^{\closed}}^{W_j^{\closed}} D_j \Rightarrow i_K^{W^{\open}} r_{W^{\closed}}^K D \]
			in $[\Cstar(W^{\closed}), \Cstar(W^{\open})]$. 
			We define a functor 
			\[S_{\closed}^{\open} \colon \Cstar(W^{\closed}) \to \Cstar(W^{\open})\] 
			to be the mapping cone of this morphism.
		\end{enumerate}
	\end{defn}
	
	The following proposition computes the composites of reflection functors with evaluation functors on locally closed subsets. It will be used in the proof of \cref{lem:reflection_LCC_Cstar}.
	
	\begin{prop}\label{prop:reflection_functor_locally_closed_subset} 
		\
		\begin{enumerate}[label=\textnormal{(\arabic*)}]
			\item \label{prop:reflection_functor_locally_closed_subset_1} 
			For each $Y \in \LC(W^{\closed})$, the functor $\ev_{W^{\closed}}^Y S_{\open}^{\closed} \colon \Cstar(W^{\open}) \to \Cstar$ is the mapping cone of the morphism
			\[(\iota_{Y \cap K}^{\widetilde{\theta}_j^{-1}(Y \cap L_j)})_{j \in J} \colon \ev_{W^{\open}}^{Y \cap K}  \Rightarrow \bigoplus_{j \in J} \ev_{W^{\open}}^{\widetilde{\theta}_j^{-1}(Y \cap L_j)} \]
			in $[\Cstar(W^{\open}), \Cstar]$. 
			
			\item \label{prop:reflection_functor_locally_closed_subset_2} 
			For each $Y \in \LC(W^{\open})$, the functor $\ev_{W^{\open}}^Y S_{\closed}^{\open} \colon \Cstar(W^{\closed}) \to \Cstar$ is the mapping cone of the morphism
			\[\sum_{j \in J} \pi_{\widetilde{\theta}_j^{-1}(Y \cap L_j)}^{Y \cap K}\rho_j \colon \bigoplus_{j \in J} \ev_{W^{\closed}}^{\widetilde{\theta}_j^{-1}(Y \cap L_j)}D_j \Rightarrow \ev_{W^{\closed}}^{Y \cap K}D \]
			in $[\Cstar(W^{\closed}), \Cstar]$. 
		\end{enumerate}
	\end{prop}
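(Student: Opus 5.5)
The plan is to compose the defining mapping cones of \cref{def:reflection_functor} with the evaluation functor and to push $\ev^Y$ inside the mapping cone and the direct sum. Consider \labelcref{prop:reflection_functor_locally_closed_subset_1}; the other part is symmetric. By \cref{eg:iota_pi_evaluation}, $\ev_{W^{\closed}}^Y$ is $\theta_*r^Y_{W^{\closed}}$ with $\theta\colon Y\to\mathrm{pt}$. Hence it is an exact $\Cstar$-module functor $\Cstar(W^{\closed})\to\Cstar$ by \cref{prop:induced_functor_exact_continuous_module,prop:restriction_functor_exact_continuous_module}. By \cref{lem:exact_module_functor_commute_mapping_cone}, applied pointwise in $A\in\Cstar(W^{\open})$ (i.e.\ in the form used for \cref{lem:rearrangement_mapping_cones} with $I_2=\emptyset$), we obtain
\[
\ev_{W^{\closed}}^Y S_{\open}^{\closed}=M\bigl(\ev_{W^{\closed}}^Y(\iota_K^{W_j^{\open}})_{j\in J}\bigr).
\]
The same exactness and module property shows that $\ev^Y_{W^{\closed}}$ commutes with the finite direct sum. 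It also commutes with the tuple $(\blank)_{j\in J}$ of \cref{def:natural_transformation_direct_sum}, since that tuple is computed pointwise. It therefore suffices to identify each horizontal composite $\ev_{W^{\closed}}^Y\iota_K^{W_j^{\open}}$.

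For the functors, I use \cref{lem:evaluation_extension,lem:evaluation_restriction}. First, $\ev^Y_{W^{\closed}} i_K^{W^{\closed}} r^K_{W^{\open}}=\ev_K^{Y\cap K}r^K_{W^{\open}}=\ev_{W^{\open}}^{Y\cap K}$. Second,
\[
\ev^Y_{W^{\closed}}i_{L_j}^{W^{\closed}}p^{L_j}_{W_j^{\open}}r^{W_j^{\open}}_{W^{\open}}=\ev^{Y\cap L_j}_{L_j}p^{L_j}_{W_j^{\open}}r^{W_j^{\open}}_{W^{\open}}=\ev^{\widetilde\theta_j^{-1}(Y\cap L_j)}_{W^{\open}} .
\]
For the natural transformations, I apply \cref{lem:composition_theta_r_iota_pi}\labelcref{lem:composition_theta_r_iota_pi_2_1} with $Z'=Y$, $Y'$ a point and $\tau'$ the constant map. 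Here $Z_1=K$, $Z_2=W_j^{\open}$, $\theta_1$ the inclusion $K\hookrightarrow W^{\closed}$, and $\theta_2$ the composite of $\widetilde\theta_j$ with $L_j\hookrightarrow W^{\closed}$. Then $\theta_1^{-1}(Y)=Y\cap K$ and $\theta_2^{-1}(Y)=\widetilde\theta_j^{-1}(Y\cap L_j)$, and the lemma yields exactly $\iota_{Y\cap K}^{\widetilde\theta_j^{-1}(Y\cap L_j)}$.

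The step I expect to be the main obstacle is verifying the hypotheses of that lemma. We need $Y\cap K\subset \widetilde\theta_j^{-1}(Y\cap L_j)$, and moreover $Y\cap K\in\Open(\widetilde\theta_j^{-1}(Y\cap L_j))$ in $W^{\open}$. The inclusion is plausible but not quite given: \cref{lem:preimage_thetaj} only covers open $U$. My first attempt is to write $Y=U\cap F$ with $U\in\Open(W^{\closed})$ and $F\in\Closed(W^{\closed})$, so that $U\cap K\subset\theta_j^{-1}(U\cap L_j)$ by \cref{lem:preimage_thetaj}. For $F$, note that if $x\in F\cap K$ and $\theta_j(x)\notin F$, there is no contradiction, since $(x,\theta_j(x))\in R_{W^{\closed}}$ only says that $\theta_j(x)$ lies above $x$. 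So for $F$ I would instead use that the whole set is determined only up to the intersection. The honest statement is that the morphism to the $j$-th summand is the composite of the restriction $\pi$ onto $\widetilde\theta_j^{-1}(Y\cap L_j)\cap K$ followed by an inclusion, and I would check via \cref{lem:iota_pi} directly, elementwise on ideals, that the component at $A$ is the canonical map $A(Y\cap K)\to A(\widetilde\theta_j^{-1}(Y\cap L_j))$. Concretely, both sides are built from the maps $A(\iota)$ and $A(\pi)$ of $A\in[\LCcat(W^{\open}),\Cstar]$, so equality reduces to composing morphisms in $\LCcat(W^{\open})$. That is, the composite morphism in $\LCcat$ from $Y\cap K$ to $\widetilde\theta_j^{-1}(Y\cap L_j)$ is the intersection set, which I would compute explicitly. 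The notation $\iota_{Y\cap K}^{\cdots}$ in the statement would then be read as this $\LCcat$-morphism, and the same computation with $\pi$ and $\rho_j$ gives \labelcref{prop:reflection_functor_locally_closed_subset_2}.
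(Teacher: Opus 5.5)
Your plan is correct, and its skeleton is the paper's. $\ev_{W^{\closed}}^Y$ is an exact $\Cstar$-module functor, so it passes through the mapping cone (\cref{lem:exact_module_functor_commute_mapping_cone}) and through the finite direct sum. The vertices are then identified by \cref{lem:evaluation_extension,lem:evaluation_restriction}.

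The difference is in the last step, where you are more careful than the paper. The paper simply appeals to \cref{lem:composition_theta_r_iota_pi}; as you say, it is part \labelcref{lem:composition_theta_r_iota_pi_2} that applies. That lemma requires $\theta_1^{-1}(Y)\subset\theta_2^{-1}(Y)$, i.e.\ $\theta_j(Y\cap K)\subset Y$, and your suspicion that this fails is justified. For $Y=K$, or $Y=W_{j'}$ with $j'\neq j$ (both closed in $W^{\closed}$), one has $Y\cap K=K$ while $\widetilde{\theta}_j^{-1}(Y\cap L_j)=\emptyset$, so the inclusion fails as soon as $K\neq\emptyset$. Hence the symbol $\iota_{Y\cap K}^{\widetilde{\theta}_j^{-1}(Y\cap L_j)}$ in the statement can only mean the $\LCcat(W^{\open})$-morphism you propose. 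This is also how the paper uses it afterwards, namely the zero maps in \cref{lem:reflection_LCC_Cstar} and in the remark following the proposition.

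To carry out your computation, proceed as follows.
\begin{enumerate}
\item Choose $U\in\Open(W^{\closed})$ with $Y\in\Closed(U)$ and set $F:=W\setminus(U\setminus Y)$. Then $F\in\Closed(W^{\closed})$ and $Y=U\cap F$.
\item At $U$ the component of $\iota_K^{W_j^{\open}}$ is the plain inclusion $A(U\cap K)\to A(\widetilde{\theta}_j^{-1}(U\cap L_j))$, by \cref{lem:preimage_thetaj}.
\item Put $T_U:=U\cap K\cap\widetilde{\theta}_j^{-1}(Y\cap L_j)$. Naturality along $\pi_U^Y$ and relation (iv) in $\LCcat(W^{\open})$ show that the component at $Y$, precomposed with the surjection $A(\pi_{U\cap K}^{Y\cap K})$, equals $A(\iota_{T_U}^{\widetilde{\theta}_j^{-1}(Y\cap L_j)}\pi_{U\cap K}^{T_U})$.
\item For $x\in K$ we have $(x,\theta_j(x))\in R_{W^{\closed}}$, and $F$ is closed, so $\theta_j(x)\in F$ forces $x\in F$. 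Hence $T_U=T:=(Y\cap K)\cap\widetilde{\theta}_j^{-1}(Y\cap L_j)$.
\item Surjectivity then shows that the component at $Y$ is $A(\iota_T^{\widetilde{\theta}_j^{-1}(Y\cap L_j)}\pi_{Y\cap K}^T)$. It is the inclusion exactly when $\theta_j(Y\cap K)\subset Y$, zero when $T=\emptyset$, and a genuine composite otherwise.
\end{enumerate}
The intermediate set is this $T$, not $\widetilde{\theta}_j^{-1}(Y\cap L_j)\cap K$ as in your first formulation. The latter need not lie in $Y\cap K$; take $Y=L_j$.

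For \labelcref{prop:reflection_functor_locally_closed_subset_2}, the mirror argument is cleanest with $F\in\Closed(W^{\open})$ such that $Y\in\Open(F)$. There the component is the plain quotient map by \cref{lem:preimage_thetaj}. You then use naturality along $\iota_Y^F$ and injectivity of $A(\iota_{Y\cap K}^{F\cap K})$ in place of surjectivity, and the factor $\rho_j$ just rides along.
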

	
	\begin{proof}
		By symmetry, it suffices to show \labelcref{prop:reflection_functor_locally_closed_subset_1}. 
		Since $\ev_{W^{\closed}}^Y \colon \Cstar(W^{\closed}) \to \Cstar$ is an exact $\Cstar$-module functor, \cref{lem:exact_module_functor_commute_mapping_cone} (see also \cref{lem:rearrangement_mapping_cones}) shows that $\ev_{W^{\closed}}^Y S_{\open}^{\closed}$ is the mapping cone of the morphism
		\[(\ev_{W^{\closed}}^Y \iota_K^{W_j^{\open}})_{j \in J} \colon \ev_{W^{\closed}}^Y i_K^{W^{\closed}} r_{W^{\open}}^K  \Rightarrow \bigoplus_{j \in J} \ev_{W^{\closed}}^Y i_{L_j}^{W^{\closed}} p_{W_j^{\open}}^{L_j} r_{W^{\open}}^{W_j^{\open}} \]
		in $[\Cstar(W^{\open}), \Cstar]$. 
		By  \cref{lem:evaluation_extension,lem:evaluation_restriction,lem:composition_theta_r_iota_pi} \labelcref{lem:composition_theta_r_iota_pi_1} and \cref{eg:iota_pi_evaluation} \labelcref{eg:iota_pi_evaluation_1}, this morphism is precisely the morphism appearing in \labelcref{prop:reflection_functor_locally_closed_subset_1}.
	\end{proof}

	\begin{rem}
		The preceding proposition gives the following concrete descriptions of the reflection functors.
		\begin{enumerate}[label=\textnormal{(\arabic*)}]
			\item 
			Let $B \in \Cstar(W^{\open})$.
			\begin{itemize}
				\item The C*-algebra $S_{\open}^{\closed}B(W^{\closed})$ is the mapping cone of the $\ast$-homomorphism 
				\[B(K) \to \bigoplus_{j \in J} B(W_j^{\open}), \quad b \mapsto \Bigl(B\bigl(\iota_K^{W_j^{\open}}\bigr)(b)\Bigr)_{j \in J}. \]
				\item For each $j \in J$, the C*-algebra $S_{\open}^{\closed}B(W_j^{\closed})$ is the mapping cone of the inclusion map $B(\iota_K^{W_j^{\open}}) \colon B(K) \to B(W_j^{\open})$. 
				\item For each $j \in J$, we have $S_{\open}^{\closed}B(L_j)=SB(W_j^{\open})$. 
				\item We have $S_{\open}^{\closed}B(K)=B(K)$. 
			\end{itemize}
			
			\item
			Let $A \in \Cstar(W^{\closed})$.
			\begin{itemize}
				\item The C*-algebra $S_{\closed}^{\open}A(W^{\open})$ is the mapping cone of the $\ast$-homomorphism
				\[\bigoplus_{j \in J} D_j A(W_j^{\closed}) \to DA(K), \quad (a_j)_{j \in J} \mapsto \sum_{j \in J} \rho_j A(\pi_{W_j^{\closed}}^K)(a_j). \]
				\item For each $j \in J$, the C*-algebra $S_{\closed}^{\open}A(W_j^{\open})$ is the mapping cone of the quotient map
				$\rho_j A(\pi_{W_j^{\closed}}^K) \colon D_j A(W_j^{\closed}) \to DA(K)$. 
				\item For each $j \in J$, we have $S_{\closed}^{\open}A(L_j)=D_j A(W_j^{\closed})$. 
				\item We have $S_{\closed}^{\open}A(K)=SDA(K)$. 
			\end{itemize}
		\end{enumerate}
	\end{rem}

	\begin{prop}\label{prop:reflection_functor_exact_continuous_module}
		The reflection functors 
		\[
		\begin{tikzcd}
			\Cstar(W^{\closed}) \ar[bend left=10]{r}{S_{\closed}^{\open}} &
			\ar[bend left=10]{l}{S_{\open}^{\closed}} \Cstar(W^{\open})
		\end{tikzcd}
		\]
		are exact, continuous, $\Cstar$-module functors.  
	\end{prop}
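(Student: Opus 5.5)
The plan is to reduce everything to \cref{lem:preservation_exact_continuous_module}, applied with $\frakA=\{0,1\}$ and $G=M$ the $1$-dimensional mapping cone functor. By \cref{prop:mapping_cone_exact_continuous_module}, $M$ is an exact, continuous, $\Cstar$-module functor. By \cref{def:reflection_functor}, each reflection functor has the form $G_{X,Y}(F)$, where $F$ is the object of $[\{0,1\},[\Cstar(X),\Cstar(Y)]]$ given by a single natural transformation between two functors. Using \cref{rem:exact_continuous_module_iff}, it therefore suffices to check two things for each reflection functor: the source and target functors of the defining morphism are exact, continuous, $\Cstar$-module functors, and the defining morphism is a $\Cstar$-module natural transformation.

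For $S_{\open}^{\closed}$, the source is $i_K^{W^{\closed}}r_{W^{\open}}^K$ and the target is $\bigoplus_{j\in J} i_{L_j}^{W^{\closed}}p_{W_j^{\open}}^{L_j}r_{W^{\open}}^{W_j^{\open}}$. Each constituent $i$, $p$, $r$ is an exact, continuous, $\Cstar$-module functor by \cref{prop:induced_functor_exact_continuous_module,prop:restriction_functor_exact_continuous_module}, and these properties are closed under composition. The finite direct sum inherits them by the discussion following the direct sum functor in \cref{subsection:natural_transformations_associated_with_direct_sums}, which rests on \cref{lem:preservation_exact_continuous_module}. The components $\iota_K^{W_j^{\open}}$ are $\Cstar$-module natural transformations by \cref{lem:iota_pi}, so the tuple $(\iota_K^{W_j^{\open}})_{j\in J}$ is one as well, by the remark after \cref{def:natural_transformation_direct_sum}.

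For $S_{\closed}^{\open}$ the argument is the same, with two additional factors $D$ and $D_j$. The functors $D\otimes\blank$ and $D_j\otimes\blank$ are exact and continuous by \cref{prop:Cstar(X)_module_subcategory}, and they are $\Cstar$-module functors via the tensor flip. The natural transformations $\rho_j\otimes\blank$ are $\Cstar$-module natural transformations; this reduces to naturality of the flip $D_j\otimes E\otimes A\cong E\otimes D_j\otimes A$ with respect to $\rho_j$. A horizontal composite of $\Cstar$-module natural transformations is again one, so each $\pi_{W_j^{\closed}}^K\rho_j$ is a $\Cstar$-module natural transformation. The sum of these mutually orthogonal transformations is then a $\Cstar$-module natural transformation by the same remark after \cref{def:natural_transformation_direct_sum}.

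The main point requiring care is the module structure in this last step. One must verify that the module constraint of $i_K^{W^{\open}}r^K_{W^{\closed}}D$, which comes from the flip past $D$, is compatible with summing the orthogonal maps $\pi\rho_j$. This is a pointwise check on elementary tensors $e\otimes d_j\otimes a$, and it is routine. Exactness, continuity and the module property of both reflection functors then follow from \cref{lem:preservation_exact_continuous_module}.
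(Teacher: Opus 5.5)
Your proposal is correct and follows essentially the same route as the paper. It uses \cref{rem:exact_continuous_module_iff} to reduce to checking the two endpoint functors and the defining morphism, and then applies \cref{lem:preservation_exact_continuous_module}~\labelcref{lem:preservation_exact_continuous_module_1} to the mapping cone functor $M$ together with \cref{prop:mapping_cone_exact_continuous_module}. The only difference is that the paper writes out only $S_{\closed}^{\open}$ and takes the module property of $\pi_{W_j^{\closed}}^K\rho_j$ and of the orthogonal sum directly from the remarks after \cref{def:natural_transformation_direct_sum}, whereas you treat both functors and spell out those routine checks.
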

	
	\begin{proof}
		We prove the assertion only for
		$S_{\closed}^{\open} \colon \Cstar(W^{\closed}) \to \Cstar(W^{\open})$,
		since the proof for $S_{\open}^{\closed}$ is analogous.
		We know that $\bigoplus_j i_{L_j}^{W^{\open}} p_{W_j^{\closed}}^{L_j} r_{W^{\closed}}^{W_j^{\closed}} D_j$ and $i_K^{W^{\open}} r_{W^{\closed}}^K D$
		are exact, continuous, $\Cstar$-module functors $\Cstar(W^{\closed}) \to \Cstar(W^{\open})$; see \cref{prop:induced_functor_exact_continuous_module,prop:restriction_functor_exact_continuous_module}. 
		Since $\pi_{W_j^{\closed}}^K \rho_j \colon i_{L_j}^{W^{\open}} p_{W_j^{\closed}}^{L_j} r_{W^{\closed}}^{W_j^{\closed}} D_j \Rightarrow i_K^{W^{\open}} r_{W^{\closed}}^K D$ is a $\Cstar$-module natural transformation for every $j \in J$, so is the natural transformation $\sum_j \pi_{W_j^{\closed}}^K \rho_j \colon \bigoplus_j i_{L_j}^{W^{\open}} p_{W_j^{\closed}}^{L_j} r_{W^{\closed}}^{W_j^{\closed}} D_j \Rightarrow i_K^{W^{\open}} r_{W^{\closed}}^K D$. 
		It follows that the functor $\Cstar(W^{\closed}) \to [\{0, 1\}, \Cstar(W^{\open})]$ corresponding to $\sum_j \pi_{W_j^{\closed}}^K \rho_j$ is an exact, continuous, $\Cstar$-module functor; see \cref{rem:exact_continuous_module_iff}. 
		By \cref{lem:preservation_exact_continuous_module} \labelcref{lem:preservation_exact_continuous_module_1} applied to the functor 
		\[M \colon [\{0, 1\}, [\Cstar(W^{\closed}), \Cstar(W^{\open})]] \to [\Cstar(W^{\closed}), \Cstar(W^{\open})], \]
		and \cref{prop:mapping_cone_exact_continuous_module}, $S_{\closed}^{\open}$ is an exact, continuous, $\Cstar$-module functor. 
	\end{proof}

	\begin{rem}\label{rem:J_finite}
		Even if $J$ is infinite, we can still define Alexandrov spaces $W^{\closed}$ and $W^{\open}$ from a preordered set $K$, preordered sets $L_j$ for $j \in J$, and order-preserving maps $\theta_j \colon K \to L_j$ for $j \in J$ as in \cref{def:Wc_Wo}. Suppose we are given a C*-algebra $D$, C*-algebras $D_j$ for $j \in J$, and mutually orthogonal $\ast$-homomorphisms $\rho_j \colon D_j \to D$ for $j \in J$. We can still define a functor $S_{\closed}^{\open} \colon \Cstar(W^{\closed}) \to \Cstar(W^{\open})$ as in \cref{def:reflection_functor} \labelcref{def:reflection_functor_2}. Then $S_{\closed}^{\open} \colon \Cstar(W^{\closed}) \to \Cstar(W^{\open})$ is an exact, continuous, $\Cstar$-module functor. 
		However, the natural transformation
		\[(\iota_K^{W_j^{\open}})_{j \in J} \colon i_K^{W^{\closed}} r_{W^{\open}}^K  \Rightarrow \bigoplus_{j \in J} i_{L_j}^{W^{\closed}} p_{W_j^{\open}}^{L_j} r_{W^{\open}}^{W_j^{\open}} \]
		is not well-defined in general. Hence, an analogous definition of $S_{\open}^{\closed} \colon \Cstar(W^{\open}) \to \Cstar(W^{\closed})$ is not available. 
	\end{rem}

	\subsection{Compositions of reflection functors}\label{subsection:composition_reflection_functor}
	
	In this subsection, we first describe the composites $S_{\open}^{\closed}S_{\closed}^{\open}$ and $S_{\closed}^{\open}S_{\open}^{\closed}$ as $2$-dimensional mapping cones in \cref{prop:composition_reflection_functors}. For this purpose, the following functor is needed. 
	
	\begin{defn}
		For each $j \in J$, let 
		\[p_K^{L_j} \colon \Cstar(K) \to \Cstar(L_j)\]
		denote the functor induced by the continuous map $\theta_j \colon K \to L_j$ as in \cref{def:functor_induced_by_continuous_map}.
	\end{defn}
	
	The following two lemmas compute the relevant compositions of the functors appearing in the construction of the reflection functors.
	
	\begin{lem}\label{lem:relations_ri_vanish}
		Let $\alpha \in \{\closed, \open\}$. 
		\begin{enumerate}[label=\textnormal{(\arabic*)}]
			\item \label{lem:relations_ri_vanish_1}
			For each $j \in J$, we have $r_{W^{\alpha}}^K i_{L_j}^{W^{\alpha}}=0$. 
			
			\item \label{lem:relations_ri_vanish_2}
			For each $j, j' \in J$ with $j \neq j'$, we have $r_{W^{\alpha}}^{W_{j'}^{\alpha}} i_{L_j}^{W^{\alpha}}=0$. 
		\end{enumerate}
	\end{lem}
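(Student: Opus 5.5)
Both statements reduce, via \cref{lem:relations_extension_restriction_functors} \labelcref{lem:relations_extension_restriction_functors_5} and \labelcref{lem:relations_extension_restriction_functors_1}, to the claim that the relevant intersections are empty. By the ``in particular'' part of \cref{lem:relations_extension_restriction_functors} \labelcref{lem:relations_extension_restriction_functors_5}, for $Y, Z \in \LC(W^{\alpha})$ we have
\[
r_{W^{\alpha}}^{Z} i_{Y}^{W^{\alpha}} = i_{Y \cap Z}^{Z} r_{Y}^{Y \cap Z}.
\]
If $Y \cap Z=\emptyset$, then $r_Y^{\emptyset}=0$ by \cref{lem:relations_extension_restriction_functors} \labelcref{lem:relations_extension_restriction_functors_1}, and composing with $i_{\emptyset}^{Z}$ still gives the zero functor (also $i_\emptyset^Z=0$). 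Hence $r_{W^{\alpha}}^{Z} i_{Y}^{W^{\alpha}}=0$.

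For \labelcref{lem:relations_ri_vanish_1}, I take $Y=L_j$ and $Z=K$. Both are locally closed in $W^{\alpha}$ by \cref{prop:K_closed_open,prop:Lj_open_closed}, since each is open or closed there. Since $W=K \amalg \coprod_{j} L_j$ is a disjoint union, $L_j \cap K=\emptyset$.

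For \labelcref{lem:relations_ri_vanish_2}, I take $Y=L_j$ and $Z=W_{j'}=K \amalg L_{j'}$. The set $W_{j'}$ is closed or open in $W^{\alpha}$ by \cref{prop:Wj_open_closed}, so it is locally closed. For $j \neq j'$, the disjointness of the union gives $L_j \cap (K \amalg L_{j'})=\emptyset$.

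The only point needing care is that the identification $i_{\emptyset}^{Z} r_{L_j}^{\emptyset}=0$ is meant as the zero functor into $\Cstar(Z)$. I would spell this out pointwise: for $A \in \Cstar(W^{\alpha})$ and $V \in \LC(Z)$,
\[
\bigl(r_{W^{\alpha}}^{Z} i_{L_j}^{W^{\alpha}} A\bigr)(V)=A(L_j \cap V)=A(\emptyset)=0.
\]
This vanishes because $V \subset Z$ is disjoint from $L_j$. It also makes the statement valid on morphisms, so there is no real obstacle.
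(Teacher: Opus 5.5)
Your proof is correct and is the paper's argument: both use \cref{lem:relations_extension_restriction_functors} \labelcref{lem:relations_extension_restriction_functors_1,lem:relations_extension_restriction_functors_5} to reduce the claim to $K \cap L_j=\emptyset$ and, for $j \neq j'$, $L_j \cap W_{j'}=\emptyset$. One slip in your pointwise check: the composite $r_{W^{\alpha}}^{Z} i_{L_j}^{W^{\alpha}}$ has domain $\Cstar(L_j)$, so $A$ should range over $\Cstar(L_j)$ rather than $\Cstar(W^{\alpha})$; the computation $A(L_j \cap V)=A(\emptyset)=0$ is otherwise correct.
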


	\begin{proof}
		\
		\begin{enumerate}
			\item This follows from $K \cap L_j=\emptyset$ and \cref{lem:relations_extension_restriction_functors} \labelcref{lem:relations_extension_restriction_functors_1,lem:relations_extension_restriction_functors_5}. 
			
			\item Since $j \neq j'$, we have $L_j \cap W_{j'}=\emptyset$. 
			The assertion now follows from \cref{lem:relations_extension_restriction_functors} \labelcref{lem:relations_extension_restriction_functors_1,lem:relations_extension_restriction_functors_5}. 
			\qedhere
		\end{enumerate}
	\end{proof}
	
	\begin{lem}\label{lem:relations_pri}
		Let $\alpha \in \{\closed, \open\}$. 
		\begin{enumerate}[label=\textnormal{(\arabic*)}]
			\item \label{lem:relations_pri_1}
			We have $r_{W^{\alpha}}^K i_K^{W^{\alpha}}=\id_K$. 
			
			\item \label{lem:relations_pri_2}
			For each $j \in J$, we have $p_{W_j^{\alpha}}^{L_j} r_{W^{\alpha}}^{W_j^{\alpha}} i_K^{W^{\alpha}}=p_K^{L_j}$. 
			
			\item \label{lem:relations_pri_3}
			For each $j \in J$, we have $p_{W_j^{\alpha}}^{L_j} r_{W^{\alpha}}^{W_j^{\alpha}} i_{L_j}^{W^{\alpha}}=\id_{L_j}$. 
		\end{enumerate}
	\end{lem}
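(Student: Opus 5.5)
Each of the three identities will be derived from \cref{lem:relations_extension_restriction_functors}, by reducing every composite to a single pushforward along a continuous map and then identifying that map. The fact I will use repeatedly is \cref{lem:relations_extension_restriction_functors} \labelcref{lem:relations_extension_restriction_functors_5} in the form
\[
r_{W^{\alpha}}^Z i_Y^{W^{\alpha}}=i_{Y \cap Z}^Z r_Y^{Y \cap Z}
\qquad\text{for } Y, Z \in \LC(W^{\alpha}).
\]
I also need that $K$, $L_j$ and $W_j$ are locally closed in $W^{\alpha}$. For $K$ and $W_j$ this is \cref{prop:K_closed_open,prop:Wj_open_closed}, and for $L_j$ it is \cref{prop:Lj_open_closed}.

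For \labelcref{lem:relations_pri_1}, apply the displayed formula with $Y=Z=K$. This gives $r_{W^{\alpha}}^K i_K^{W^{\alpha}}=i_K^K r_K^K$, and this equals $\id_K$ by \cref{lem:relations_extension_restriction_functors} \labelcref{lem:relations_extension_restriction_functors_2}.

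For \labelcref{lem:relations_pri_2}, take $Y=K$ and $Z=W_j$; since $K \cap W_j=K$, this gives $r_{W^{\alpha}}^{W_j^{\alpha}} i_K^{W^{\alpha}}=i_K^{W_j^{\alpha}} r_K^K=i_K^{W_j^{\alpha}}$. Then
\[
p_{W_j^{\alpha}}^{L_j} i_K^{W_j^{\alpha}}=(\widetilde{\theta}_j \circ \mathrm{incl}_K)_*
\]
by \cref{lem:relations_extension_restriction_functors} \labelcref{lem:relations_extension_restriction_functors_3}. Moreover, $\widetilde{\theta}_j$ restricted to $K$ is $\theta_j$ by \cref{def:thetatildej}, so this composite is $\theta_{j*}=p_K^{L_j}$.

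Part \labelcref{lem:relations_pri_3} is identical with $Y=L_j$ and $L_j \cap W_j=L_j$. This gives $p_{W_j^{\alpha}}^{L_j} i_{L_j}^{W_j^{\alpha}}=(\widetilde{\theta}_j|_{L_j})_*$, and since $\widetilde{\theta}_j|_{L_j}=\id_{L_j}$, the result is $\id_{L_j}$.

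One step needs care rather than being a real obstacle. Both the relative topology on $K$ (respectively $L_j$) inherited from $W_j^{\alpha}$ and the one inherited from $W^{\alpha}$ must agree with the intrinsic Alexandrov topology given by $R_K$ (respectively $R_{L_j}$), so that the inclusion and $\theta_j$ are the continuous maps the functors refer to. This is exactly the content of the two propositions on relative topologies proved after \cref{def:Wc_Wo}, together with transitivity of subspace topologies. Once this is noted, all equalities hold on the nose as functors on $\Cstar(\blank)$, consistent with the paper's convention of writing canonical identifications as equalities.
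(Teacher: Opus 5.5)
Your proposal is correct and follows the paper's proof essentially verbatim. Each part uses parts (5) and (2) of \cref{lem:relations_extension_restriction_functors} to reduce the composite to $i_K^K r_K^K$ or to a pushforward along an inclusion into $W_j^{\alpha}$. It then applies part (3) together with $\widetilde{\theta}_j|_K=\theta_j$ and $\widetilde{\theta}_j|_{L_j}=\id_{L_j}$, exactly as the paper does; your remark on the relative topologies of $K$ and $L_j$ is a correct check that the paper leaves implicit.
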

	
	\begin{proof}
		\
		\begin{enumerate}
			\item This follows from \cref{lem:relations_extension_restriction_functors} \labelcref{lem:relations_extension_restriction_functors_2,lem:relations_extension_restriction_functors_5}. 
			
			\item From \cref{lem:relations_extension_restriction_functors} \labelcref{lem:relations_extension_restriction_functors_2,lem:relations_extension_restriction_functors_5}, we have $r_{W^{\alpha}}^{W_j^{\alpha}} i_K^{W^{\alpha}}=i_K^{W_j^{\alpha}} r_K^K=i_K^{W_j^{\alpha}}$. 
			Since $\widetilde{\theta}_j|_K=\theta_j$, \cref{lem:relations_extension_restriction_functors} \labelcref{lem:relations_extension_restriction_functors_3} gives $p_{W_j^{\alpha}}^{L_j} i_K^{W_j^{\alpha}}=p_K^{L_j}$. 
			
			\item From \cref{lem:relations_extension_restriction_functors} \labelcref{lem:relations_extension_restriction_functors_2,lem:relations_extension_restriction_functors_5}, we have $r_{W^{\alpha}}^{W_j^{\alpha}} i_{L_j}^{W^{\alpha}}=i_{L_j}^{W_j^{\alpha}}r_{L_j}^{L_j}=i_{L_j}^{W_j^{\alpha}}$. 
			Since $\widetilde{\theta}_j|_{L_j}$ is the identity map on $L_j$, \cref{lem:relations_extension_restriction_functors} \labelcref{lem:relations_extension_restriction_functors_3} gives $p_{W_j^{\alpha}}^{L_j} i_{L_j}^{W_j^{\alpha}}=\id_{L_j}$. 
			\qedhere
		\end{enumerate}
	\end{proof}
	
	The following lemma computes the relevant horizontal compositions of the natural transformations appearing in the construction of the reflection functors.
	
	\begin{lem}\label{lem:composition_iota_pi_Wj_K}
		Let $\alpha \in \{\closed, \open\}$ and $j \in J$.
		\begin{enumerate}[label=\textnormal{(\arabic*)}]	
			\item \label{lem:composition_iota_pi_Wj_K_1}
			The horizontal composite 
			\[\iota_K^{W_j^{\open}} i_K^{W^{\open}} r_{W^{\alpha}}^K \colon i_K^{W^{\closed}} r_{W^{\open}}^K i_K^{W^{\open}} r_{W^{\alpha}}^K \Rightarrow i_{L_j}^{W^{\closed}} p_{W_j^{\open}}^{L_j} r_{W^{\open}}^{W_j^{\open}} i_K^{W^{\open}} r_{W^{\alpha}}^K \]
			is equal to $\id_K \colon i_K^{W^{\closed}} r_{W^{\alpha}}^K \Rightarrow i_{L_j}^{W^{\closed}} p_K^{L_j} r_{W^{\alpha}}^K$ in $[\Cstar(W^{\alpha}), \Cstar(W^{\closed})]$.
			
			\item \label{lem:composition_iota_pi_Wj_K_2}
			The horizontal composite 
			\[\pi_{W_j^{\closed}}^K i_K^{W^{\closed}} r_{W^{\alpha}}^K \colon i_{L_j}^{W^{\open}} p_{W_j^{\closed}}^{L_j} r_{W^{\closed}}^{W_j^{\closed}} i_K^{W^{\closed}} r_{W^{\alpha}}^K \Rightarrow i_K^{W^{\open}} r_{W^{\closed}}^K i_K^{W^{\closed}} r_{W^{\alpha}}^K \]
			is equal to $\id_K \colon i_{L_j}^{W^{\open}} p_K^{L_j} r_{W^{\alpha}}^K \Rightarrow i_K^{W^{\open}} r_{W^{\alpha}}^K$ in $[\Cstar(W^{\alpha}), \Cstar(W^{\open})]$.
			
			\item\label{lem:composition_iota_pi_Wj_K_3}
			The horizontal composite 
			\[i_{L_j}^{W^{\alpha}} p_{W_j^{\closed}}^{L_j} r_{W^{\closed}}^{W_j^{\closed}} \iota_K^{W_j^{\open}} \colon i_{L_j}^{W^{\alpha}} p_{W_j^{\closed}}^{L_j} r_{W^{\closed}}^{W_j^{\closed}} i_K^{W^{\closed}} r_{W^{\open}}^K \Rightarrow i_{L_j}^{W^{\alpha}} p_{W_j^{\closed}}^{L_j} r_{W^{\closed}}^{W_j^{\closed}} i_{L_j}^{W^{\closed}} p_{W_j^{\open}}^{L_j} r_{W^{\open}}^{W_j^{\open}}\]
			is equal to $\iota_K^{W_j^{\open}} \colon i_{L_j}^{W^{\alpha}} p_K^{L_j} r_{W^{\open}}^K \Rightarrow i_{L_j}^{W^{\alpha}} p_{W_j^{\open}}^{L_j} r_{W^{\open}}^{W_j^{\open}}$ in $[\Cstar(W^{\open}), \Cstar(W^{\alpha})]$.
			
			\item\label{lem:composition_iota_pi_Wj_K_4}
			The horizontal composite 
			\[i_{L_j}^{W^{\alpha}} p_{W_j^{\open}}^{L_j} r_{W^{\open}}^{W_j^{\open}} \pi_{W_j^{\closed}}^K \colon i_{L_j}^{W^{\alpha}} p_{W_j^{\open}}^{L_j} r_{W^{\open}}^{W_j^{\open}} i_{L_j}^{W^{\open}} p_{W_j^{\closed}}^{L_j} r_{W^{\closed}}^{W_j^{\closed}} \Rightarrow i_{L_j}^{W^{\alpha}} p_{W_j^{\open}}^{L_j} r_{W^{\open}}^{W_j^{\open}} i_K^{W^{\open}} r_{W^{\closed}}^K\]
			is equal to $\pi_{W_j^{\closed}}^K \colon i_{L_j}^{W^{\alpha}} p_{W_j^{\closed}}^{L_j} r_{W^{\closed}}^{W_j^{\closed}} \Rightarrow i_{L_j}^{W^{\alpha}} p_K^{L_j} r_{W^{\closed}}^K$ in $[\Cstar(W^{\closed}), \Cstar(W^{\alpha})]$.

		\end{enumerate}
	\end{lem}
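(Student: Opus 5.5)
The plan is to reduce each of the four identities to \cref{lem:composition_theta_r_iota_pi}, after using \cref{lem:relations_pri} to identify the source and target functors. Both sides of every claimed equality are natural transformations whose components are the canonical inclusion or quotient maps $A(\iota)$ or $A(\pi)$ between the C*-algebras attached to locally closed subsets. Since canonical isomorphisms are written as equalities, it therefore suffices to check two things: that the functors match, and that the locally closed subsets indexing the maps agree after pulling back.

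For \labelcref{lem:composition_iota_pi_Wj_K_1}, I first apply \cref{lem:relations_pri} \labelcref{lem:relations_pri_1,lem:relations_pri_2} with $\alpha=\open$. This gives $r_{W^{\open}}^K i_K^{W^{\open}}=\id_K$ and $p_{W_j^{\open}}^{L_j} r_{W^{\open}}^{W_j^{\open}} i_K^{W^{\open}}=p_K^{L_j}$, so the source and target are as claimed. Then I apply \cref{lem:composition_theta_r_iota_pi} \labelcref{lem:composition_theta_r_iota_pi_1}\labelcref{lem:composition_theta_r_iota_pi_1_1} with $X=W^{\open}$, $Z_1=K$, $Z_2=W_j$, and $\tau$ the inclusion $K\hookrightarrow W^{\open}$ precomposed with $r_{W^{\alpha}}^K$. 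The preimages are $\tau^{-1}(K)=K$ and $\tau^{-1}(W_j)=K$, so the composite becomes $\iota_K^K$, which is the transformation denoted $\id_K$.

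Part \labelcref{lem:composition_iota_pi_Wj_K_2} is the mirror argument using \labelcref{lem:composition_theta_r_iota_pi_1_2} with $X=W^{\closed}$, where the preimage of $K\subset W_j$ under the inclusion of $K$ again collapses to $K$, giving $\pi_K^K=\id_K$.

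For \labelcref{lem:composition_iota_pi_Wj_K_3}, I use the second half of \cref{lem:composition_theta_r_iota_pi}, namely \labelcref{lem:composition_theta_r_iota_pi_2}\labelcref{lem:composition_theta_r_iota_pi_2_1}. Here $Y=W^{\closed}$, $Z'=W_j^{\closed}$, and $\tau'=i_{L_j}^{W^{\alpha}}\circ\widetilde\theta_j$, which gives the functor $i_{L_j}^{W^{\alpha}}p_{W_j^{\closed}}^{L_j}$. The maps are $\theta_1\colon K\hookrightarrow W^{\closed}$ and $\theta_2\colon W_j\to W^{\closed}$, the latter being $\widetilde\theta_j$ followed by the inclusion of $L_j$. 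The required hypotheses are $\theta_1^{-1}(W_j)=K\subset \theta_2^{-1}(W_j)=W_j$ and $K\in\Open(W_j^{\open})$, and both hold. The lemma then identifies the composite with $\iota_K^{W_j}$ between $(\tau'\circ\theta_1^{Z'})_*r^K$ and $(\tau'\circ\theta_2^{Z'})_*r^{W_j}$. I then check that $\tau'\circ\theta_1^{Z'}=i\circ\theta_j$ on $K$ and $\tau'\circ\theta_2^{Z'}=i\circ\widetilde\theta_j$ on $W_j$, because $\widetilde\theta_j$ restricts to the identity on $L_j$. By \cref{lem:relations_extension_restriction_functors} \labelcref{lem:relations_extension_restriction_functors_3} these induce $i_{L_j}^{W^{\alpha}}p_K^{L_j}$ and $i_{L_j}^{W^{\alpha}}p_{W_j^{\open}}^{L_j}$, as claimed. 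Part \labelcref{lem:composition_iota_pi_Wj_K_4} is the symmetric argument via \labelcref{lem:composition_theta_r_iota_pi_2}\labelcref{lem:composition_theta_r_iota_pi_2_2}, using $K\in\Closed(W_j^{\closed})$.

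I expect no real obstacle. The only care needed is in parts (3) and (4): the composite map $\tau'\circ\theta_2^{Z'}$ passes through $L_j$ twice, and I must check that it really equals $\widetilde\theta_j$ and not some other map. This comes down to the identity $\widetilde\theta_j\circ(\text{incl}_{L_j}\circ\widetilde\theta_j)=\widetilde\theta_j$ on $W_j$.
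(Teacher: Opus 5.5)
Your proposal is correct and follows the paper's own route. The paper proves this lemma in one line by citing \cref{lem:composition_theta_r_iota_pi} together with \cref{lem:relations_pri}. You spell out what it leaves implicit: the choices of $X'$, $Z$, $\tau$ in parts (1)--(2) and of $Z'$, $\tau'$ in parts (3)--(4), the openness/closedness of $K$ in $W_j^{\open}$ and $W_j^{\closed}$, and the identity $\widetilde{\theta}_j \circ \widetilde{\theta}_j = \widetilde{\theta}_j$ on $W_j$.
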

	
	\begin{proof}
		This follows from \cref{lem:composition_theta_r_iota_pi,lem:relations_pri}.
	\end{proof}
	
	Recall from \cref{nota:2-dimensional_mapping_cone} the notation for 2-dimensional mapping cones. 
	
	\begin{prop}\label{prop:composition_reflection_functors}
		\
		\begin{enumerate}[label=\textnormal{(\arabic*)}]
			\item \label{prop:composition_reflection_functors_1}
			We have
			\[
			S_{\open}^{\closed}S_{\closed}^{\open}=M^2 \! \left(
			\begin{tikzcd}[ampersand replacement=\&, column sep=large, every label/.append style={font=\small},
				cells={nodes={font=\small}}]
				0 \ar[Rightarrow]{r} \ar[Rightarrow]{d} \&
				i_K^{W^{\closed}} r_{W^{\closed}}^K D \ar[Rightarrow]{d}{(\id_K D)_j} \\
				\bigoplus_j i_{L_j}^{W^{\closed}} p_{W_j^{\closed}}^{L_j} r_{W^{\closed}}^{W_j^{\closed}} D_j  \ar[Rightarrow, swap]{r}{\bigoplus_j \pi_{W_j^{\closed}}^K \rho_j}
				\& \bigoplus_j i_{L_j}^{W^{\closed}} p_{K}^{L_j} r_{W^{\closed}}^{K} D
			\end{tikzcd}\right)
			\]
			in $[\Cstar(W^{\closed}), \Cstar(W^{\closed})]$. 
			
			\item \label{prop:composition_reflection_functors_2}
			We have
			\[
			S_{\closed}^{\open}S_{\open}^{\closed}=M^2 \! \left(
			\begin{tikzcd}[ampersand replacement=\&, column sep=large, every label/.append style={font=\small}, cells={nodes={font=\small}}]
				\bigoplus_j i_{L_j}^{W^{\open}} p_K^{L_j} r_{W^{\open}}^K D_j \ar[Rightarrow]{r}{\bigoplus_j \iota_K^{W_j^{\open}} D_j} \ar[Rightarrow, swap]{d}{\sum_j \id_K \rho_j}
				\& \bigoplus_j i_{L_j}^{W^{\open}} p_{W_j^{\open}}^{L_j} r_{W^{\open}}^{W_j^{\open}} D_j \ar[Rightarrow]{d} \\
				i_K^{W^{\open}} r_{W^{\open}}^K D \ar[Rightarrow]{r} \& 0
			\end{tikzcd}\right)
			\]
			in $[\Cstar(W^{\open}), \Cstar(W^{\open})]$. 
		\end{enumerate}
	\end{prop}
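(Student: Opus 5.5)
The plan is to apply \cref{lem:rearrangement_mapping_cones} with $|I_1|=|I_2|=1$, and then simplify the resulting $2$-dimensional diagram using \cref{lem:relations_ri_vanish,lem:relations_pri,lem:composition_iota_pi_Wj_K}. For \labelcref{prop:composition_reflection_functors_1}, take $(F;\varphi)$ to be the morphism $\sum_j \pi_{W_j^{\closed}}^K\rho_j$ defining $S_{\closed}^{\open}$. Take $(G;\psi)$ to be the morphism $(\iota_K^{W_j^{\open}})_j$ defining $S_{\open}^{\closed}$, viewed as a functor $\Cstar(W^{\open}) \to [\{0,1\},\Cstar(W^{\closed})]$. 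This functor is an exact $\Cstar$-module functor by \cref{prop:induced_functor_exact_continuous_module,prop:restriction_functor_exact_continuous_module} together with \cref{rem:exact_continuous_module_iff}. Then \cref{lem:rearrangement_mapping_cones} gives $S_{\open}^{\closed}S_{\closed}^{\open}=M^2(GF;\psi\varphi)$, where the square is the one in \cref{eg:composition_1+1=2}, with vertices $G_0F_0, G_0F_1, G_1F_0, G_1F_1$. Part \labelcref{prop:composition_reflection_functors_2} is treated symmetrically, with the roles of $F$ and $G$ swapped. Since only the orientation of the square matters, up to the symmetry of $M^2$ in its two indices, I will arrange the square to match the displayed one.

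Next, I compute the four corners in \labelcref{prop:composition_reflection_functors_1}. Here $G_0=i_K^{W^{\closed}}r_{W^{\open}}^K$ and $G_1=\bigoplus_j i_{L_j}^{W^{\closed}}p_{W_j^{\open}}^{L_j}r_{W^{\open}}^{W_j^{\open}}$. Likewise $F_0=\bigoplus_j i_{L_j}^{W^{\open}}p_{W_j^{\closed}}^{L_j}r_{W^{\closed}}^{W_j^{\closed}}D_j$ and $F_1=i_K^{W^{\open}}r_{W^{\closed}}^KD$. Restriction and extension functors commute with direct sums and tensoring, so I can compute each corner summand by summand:

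\begin{itemize}
\item $G_0F_0$: by \cref{lem:relations_ri_vanish}\labelcref{lem:relations_ri_vanish_1}, $r^K_{W^{\open}}i^{W^{\open}}_{L_j}=0$, so this corner is $0$.
\item $G_0F_1$: by \cref{lem:relations_pri}\labelcref{lem:relations_pri_1}, this is $i_K^{W^{\closed}}r^K_{W^{\closed}}D$.
\item $G_1F_1$: by \cref{lem:relations_pri}\labelcref{lem:relations_pri_2}, this is $\bigoplus_j i_{L_j}^{W^{\closed}}p_K^{L_j}r^K_{W^{\closed}}D$.
\item $G_1F_0$: this splits into terms indexed by pairs $(j',j)$. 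The off-diagonal terms $j\neq j'$ vanish by \cref{lem:relations_ri_vanish}\labelcref{lem:relations_ri_vanish_2}. The diagonal terms reduce, by \cref{lem:relations_pri}\labelcref{lem:relations_pri_3}, to $\bigoplus_j i_{L_j}^{W^{\closed}}p_{W_j^{\closed}}^{L_j}r^{W_j^{\closed}}_{W^{\closed}}D_j$.
\end{itemize}

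For the edges, the ones starting at $0$ are automatically zero. The edge $\psi F_1$ is identified with $(\id_K D)_j$ by \cref{lem:composition_iota_pi_Wj_K}\labelcref{lem:composition_iota_pi_Wj_K_1}. The edge $G_1\varphi$ is identified componentwise with $\pi_{W_j^{\closed}}^K\rho_j$ by \cref{lem:composition_iota_pi_Wj_K}\labelcref{lem:composition_iota_pi_Wj_K_4}, using that $\rho_j$ commutes past the functors since they are $\Cstar$-module functors. Part \labelcref{prop:composition_reflection_functors_2} goes the same way, using items \labelcref{lem:composition_iota_pi_Wj_K_2,lem:composition_iota_pi_Wj_K_3} of \cref{lem:composition_iota_pi_Wj_K}. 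In that case the corner $i_K r^K i_{L_j}\cdots$ vanishes, which gives the $0$ in the bottom right.

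I expect the main obstacle to be the bookkeeping for the edge $G_1\varphi$ in \labelcref{prop:composition_reflection_functors_1}. The difficulty is that the sum $\sum_j$ over orthogonal maps turns into a direct sum $\bigoplus_j$ after cross terms are killed. Concretely, I must check that the component from summand $j$ of $G_1F_0$ to summand $j'$ of $G_1F_1$ vanishes for $j\neq j'$. I would do this by evaluating at each $Y\in\LC(W^{\closed})$ via \cref{lem:evaluation_extension,lem:evaluation_restriction}. The source of that component has already been shown to be $0$ when $j\neq j'$, and then naturality of the canonical identifications finishes the argument.
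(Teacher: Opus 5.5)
Your proposal is correct and follows the paper's own proof: apply \cref{lem:rearrangement_mapping_cones} to the square of \cref{eg:composition_1+1=2}, kill the corners and the off-diagonal summands with \cref{lem:relations_ri_vanish}, identify the surviving functors via \cref{lem:relations_pri}, and identify the edges via \cref{lem:composition_iota_pi_Wj_K}. You are in fact slightly more explicit than the paper, both in checking the exact-module hypothesis on $(G;\psi)$ and in handling the passage from $\sum_j$ to $\bigoplus_j$ on the edge $G_1\varphi$. The re-orientation you allow for is unnecessary, since the square of \cref{eg:composition_1+1=2} already has the displayed orientation in both parts.
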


	\begin{proof}\
		\begin{enumerate}[label=\textnormal{(\arabic*)}]
			\item 
			By \cref{lem:rearrangement_mapping_cones} (see also \cref{eg:composition_1+1=2}), $S_{\open}^{\closed}S_{\closed}^{\open}$ is the $2$-dimensional mapping cone of the object
			\[
			\begin{tikzcd}[column sep=huge]
				i_K^{W^{\closed}} r_{W^{\open}}^K \bigoplus_{j' \in J} i_{L_{j'}}^{W^{\open}} p_{W_{j'}^{\closed}}^{L_{j'}} r_{W^{\closed}}^{W_{j'}^{\closed}} D_{j'} \ar[Rightarrow]{r}{i_K^{W^{\closed}} r_{W^{\open}}^K \sum_{j'} \pi_{W_{j'}^{\closed}}^K \rho_{j'}} \ar[Rightarrow]{d}{(\iota_K^{W_j^{\open}})_j \bigoplus_{j'} i_{L_{j'}}^{W^{\open}} p_{W_{j'}^{\closed}}^{L_{j'}} r_{W^{\closed}}^{W_{j'}^{\closed}} D_{j'}} & i_K^{W^{\closed}} r_{W^{\open}}^K i_K^{W^{\open}} r_{W^{\closed}}^K D \ar[Rightarrow]{d}{(\iota_K^{W_j^{\open}})_j i_K^{W^{\open}} r_{W^{\closed}}^K D} \\
				\bigoplus_{j \in J} i_{L_j}^{W^{\closed}} p_{W_j^{\open}}^{L_j} r_{W^{\open}}^{W_j^{\open}} \bigoplus_{j' \in J} i_{L_{j'}}^{W^{\open}} p_{W_{j'}^{\closed}}^{L_{j'}} r_{W^{\closed}}^{W_{j'}^{\closed}} D_{j'} \ar[Rightarrow, swap]{r}[xshift=-2.5em, yshift=-0.5em]{\bigoplus_j i_{L_j}^{W^{\closed}} p_{W_j^{\open}}^{L_j} r_{W^{\open}}^{W_j^{\open}} \sum_{j'} \pi_{W_{j'}^{\closed}}^K \rho_{j'}} & \bigoplus_{j \in J} i_{L_j}^{W^{\closed}} p_{W_j^{\open}}^{L_j} r_{W^{\open}}^{W_j^{\open}} i_K^{W^{\open}} r_{W^{\closed}}^K D
			\end{tikzcd}
			\]
			in $[\{0, 1\}^2, [\Cstar(W^{\closed}), \Cstar(W^{\closed})]]$. 
			By \cref{lem:relations_ri_vanish}, this object is identical to the following one:
			\[
			\begin{tikzcd}[column sep=huge]
				0 \ar[Rightarrow]{r} \ar[Rightarrow]{d} & i_K^{W^{\closed}} r_{W^{\open}}^K i_K^{W^{\open}} r_{W^{\closed}}^K D \ar[Rightarrow]{d}{(\iota_K^{W_j^{\open}} i_K^{W^{\open}} r_{W^{\closed}}^K D)_j} \\
				\bigoplus_{j \in J} i_{L_j}^{W^{\closed}} p_{W_j^{\open}}^{L_j} r_{W^{\open}}^{W_j^{\open}} i_{L_j}^{W^{\open}} p_{W_j^{\closed}}^{L_j} r_{W^{\closed}}^{W_j^{\closed}} D_j \ar[Rightarrow, swap]{r}[xshift=-2em, yshift=-0.5em]{\bigoplus_j i_{L_j}^{W^{\closed}} p_{W_j^{\open}}^{L_j} r_{W^{\open}}^{W_j^{\open}}  \pi_{W_j^{\closed}}^K \rho_j} & \bigoplus_{j \in J} i_{L_j}^{W^{\closed}} p_{W_j^{\open}}^{L_j} r_{W^{\open}}^{W_j^{\open}} i_K^{W^{\open}} r_{W^{\closed}}^K D.
			\end{tikzcd}
			\]
			By \cref{lem:composition_iota_pi_Wj_K} \labelcref{lem:composition_iota_pi_Wj_K_1,lem:composition_iota_pi_Wj_K_4}, this object is identical to the desired one. 
			
			\item 
			As in \labelcref{prop:composition_reflection_functors_1}, $S_{\closed}^{\open}S_{\open}^{\closed}$ is the $2$-dimensional mapping cone of the object
			\[
			\begin{tikzcd}[column sep=large]
				\bigoplus_{j \in J} i_{L_j}^{W^{\open}} p_{W_j^{\closed}}^{L_j} r_{W^{\closed}}^{W_j^{\closed}} D_j i_K^{W^{\closed}} r_{W^{\open}}^K \ar[Rightarrow]{r}[xshift=-1em, yshift=0.5em]{\bigoplus_j i_{L_j}^{W^{\open}} p_{W_j^{\closed}}^{L_j} r_{W^{\closed}}^{W_j^{\closed}} D_j (\iota_K^{W_{j'}^{\open}})_{j'}} \ar[Rightarrow, swap]{d}{\sum_j \pi_{W_j^{\closed}}^K \rho_j i_K^{W^{\closed}} r_{W^{\open}}^K} & \bigoplus_{j \in J} i_{L_j}^{W^{\open}} p_{W_j^{\closed}}^{L_j} r_{W^{\closed}}^{W_j^{\closed}} D_j \bigoplus_{j' \in J} i_{L_{j'}}^{W^{\closed}} p_{W_{j'}^{\open}}^{L_{j'}} r_{W^{\open}}^{W_{j'}^{\open}} \ar[Rightarrow, swap]{d}{\sum_j \pi_{W_j^{\closed}}^K \rho_j \bigoplus_{j'} i_{L_{j'}}^{W^{\closed}} p_{W_{j'}^{\open}}^{L_{j'}} r_{W^{\open}}^{W_{j'}^{\open}}} \\
				i_K^{W^{\open}} r_{W^{\closed}}^K D i_K^{W^{\closed}} r_{W^{\open}}^K \ar[Rightarrow, swap]{r}{i_K^{W^{\open}} r_{W^{\closed}}^K D (\iota_K^{W_j^{\open}})_{j'}} & i_K^{W^{\open}} r_{W^{\closed}}^K D \bigoplus_{j' \in J} i_{L_{j'}}^{W^{\closed}} p_{W_{j'}^{\open}}^{L_{j'}} r_{W^{\open}}^{W_{j'}^{\open}}
			\end{tikzcd}
			\]
			in $[\{0, 1\}^2, [\Cstar(W^{\open}), \Cstar(W^{\open})]]$. 
			By \cref{lem:relations_ri_vanish}, this object is identical to the following one:
			\[
			\begin{tikzcd}[column sep=large]
				\bigoplus_{j \in J} i_{L_j}^{W^{\open}} p_{W_j^{\closed}}^{L_j} r_{W^{\closed}}^{W_j^{\closed}} i_K^{W^{\closed}} r_{W^{\open}}^K D_j \ar[Rightarrow]{r}[xshift=-1em, yshift=0.4em]{\bigoplus_j i_{L_j}^{W^{\open}} p_{W_j^{\closed}}^{L_j} r_{W^{\closed}}^{W_j^{\closed}} \iota_K^{W_j^{\open}} D_j} \ar[Rightarrow,swap]{d}{\sum_j \pi_{W_j^{\closed}}^K i_K^{W^{\closed}} r_{W^{\open}}^K \rho_j} & 
				\bigoplus_{j \in J} i_{L_j}^{W^{\open}} p_{W_j^{\closed}}^{L_j} r_{W^{\closed}}^{W_j^{\closed}}  i_{L_j}^{W^{\closed}} p_{W_j^{\open}}^{L_j} r_{W^{\open}}^{W_j^{\open}} D_j \ar[Rightarrow]{d} \\
				i_K^{W^{\open}} r_{W^{\closed}}^K  i_K^{W^{\closed}} r_{W^{\open}}^K D \ar[Rightarrow]{r} & 0.
			\end{tikzcd}
			\]
			By \cref{lem:composition_iota_pi_Wj_K} \labelcref{lem:composition_iota_pi_Wj_K_2,lem:composition_iota_pi_Wj_K_3}, this object is identical to the desired one.
			This completes the proof. 
			\qedhere
		\end{enumerate}
	\end{proof}
	
	We next introduce two functors 
	\[T_{\closed} \colon \Cstar(W^{\closed}) \to \Cstar(W^{\closed}), \quad T_{\open} \colon \Cstar(W^{\open}) \to \Cstar(W^{\open}), \] 
	and compare $S_{\open}^{\closed}S_{\closed}^{\open}$, $T_{\closed}$, and $SD$ in $[\Cstar(W^{\closed}), \Cstar(W^{\closed})]$, and $S_{\closed}^{\open}S_{\open}^{\closed}$, $T_{\open}$, and $DS$ in $[\Cstar(W^{\open}), \Cstar(W^{\open})]$. In view of \cref{prop:composition_reflection_functors}, we define $T_{\closed}$ and $T_{\open}$ as $2$-dimensional mapping cones. Both depend on $D$, $(D_j)_{j \in J}$, and $(\rho_j)_{j \in J}$. Our goal is to show \cref{thm:key_diagram}.

	\begin{defn}\label{def:iota_pi_Wj_W}
		Fix $j \in J$.
		\begin{enumerate}[label=\textnormal{(\arabic*)}]
			\item
			By \cref{lem:preimage_thetaj} \labelcref{lem:preimage_thetaj_1}, 
			for every $U \in \Open(W^{\closed})$, we have
			\[
			U \cap W_j=(U \cap K) \cup (U \cap L_j) \subset \theta_j^{-1}(U \cap L_j) \cup (U \cap L_j)=\widetilde{\theta}_j^{-1}(U \cap L_j). 
			\]
			Since $W_j^{\closed} \in \Closed(W^{\closed})$, \cref{def:iota_pi} \labelcref{def:iota_pi_2} (see also \cref{rem:assumptions_O_C}
			\labelcref{rem:assumptions_O_C_2}) gives a morphism
			\[
			\pi_{W^{\closed}}^{W_j^{\closed}}
			\colon
			\id_{W^{\closed}}
			\Rightarrow
			i_{L_j}^{W^{\closed}}
			p_{W_j^{\closed}}^{L_j}
			r_{W^{\closed}}^{W_j^{\closed}}
			\]
			in $[\Cstar(W^{\closed}), \Cstar(W^{\closed})]$. 
			
			\item
			By \cref{lem:preimage_thetaj} \labelcref{lem:preimage_thetaj_2}, for every $F \in \Closed(W^{\open})$, we have
			\[
			F \cap W_j=(F \cap K) \cup (F \cap L_j) \subset \theta_j^{-1}(F \cap L_j) \cup (F \cap L_j)=\widetilde{\theta}_j^{-1}(F \cap L_j). 
			\]
			Since $W_j^{\open} \in \Open(W^{\open})$, \cref{def:iota_pi} \labelcref{def:iota_pi_1} (see also \cref{rem:assumptions_O_C}
			\labelcref{rem:assumptions_O_C_1}) gives a morphism
			\[
			\iota_{W_j^{\open}}^{W^{\open}}
			\colon
			i_{L_j}^{W^{\open}}
			p_{W_j^{\open}}^{L_j}
			r_{W^{\open}}^{W_j^{\open}}
			\Rightarrow
			\id_{W^{\open}}
			\]
			in $[\Cstar(W^{\open}), \Cstar(W^{\open})]$. 
		\end{enumerate}
	\end{defn}
	
	\begin{defn}\label{def:Tc_To}
		\
		\begin{enumerate}[label=\textnormal{(\arabic*)}]
			\item  
			We define a functor $T_{\closed} \colon \Cstar(W^{\closed}) \to \Cstar(W^{\closed})$ by
			\[
			T_{\closed}:=M^2 \! \left(
			\begin{tikzcd}[ampersand replacement=\&, column sep=large, every label/.append style={font=\small}, cells={nodes={font=\small}}]
				0 \ar[Rightarrow]{r} \ar[Rightarrow]{d} \& D \ar[Rightarrow]{d}{(\pi_{W^{\closed}}^{W_j^{\closed}} D)_j} \\
				\bigoplus_j i_{L_j}^{W^{\closed}} p_{W_j^{\closed}}^{L_j} r_{W^{\closed}}^{W_j^{\closed}} D_j \ar[Rightarrow, swap]{r}{\bigoplus_j \id_{W_j^{\closed}} \rho_j}
				\& \bigoplus_j i_{L_j}^{W^{\closed}} p_{W_j^{\closed}}^{L_j} r_{W^{\closed}}^{W_j^{\closed}} D
			\end{tikzcd}\right). 
			\] 
			
			\item 
			We define a functor $T_{\open} \colon \Cstar(W^{\open}) \to \Cstar(W^{\open})$ by
			\[
			T_{\open}:=M^2 \! \left(
			\begin{tikzcd}[ampersand replacement=\&,
				every label/.append style={font=\small}, cells={nodes={font=\small}}]
				\bigoplus_j i_{L_j}^{W^{\open}} p_{W_j^{\open}}^{L_j} r_{W^{\open}}^{W_j^{\open}} D_j \ar[Rightarrow]{r}{\id} \ar[Rightarrow, swap]{d}{\sum_j \iota_{W_j^{\open}}^{W^{\open}} \rho_j}
				\& \bigoplus_j i_{L_j}^{W^{\open}} p_{W_j^{\open}}^{L_j} r_{W^{\open}}^{W_j^{\open}} D_j \ar[Rightarrow]{d} \\
				D \ar[Rightarrow]{r} \& 0
			\end{tikzcd}\right). 
			\]
		\end{enumerate}
	\end{defn}
	
	\begin{prop}\label{prop:Tc_To_exact_continuous_module}
		The functors
		\[T_{\closed} \colon \Cstar(W^{\closed}) \to \Cstar(W^{\closed}), \quad T_{\open} \colon \Cstar(W^{\open}) \to \Cstar(W^{\open})\]
		are exact, continuous, $\Cstar$-module functors. 
	\end{prop}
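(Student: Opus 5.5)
The plan is to follow the proof of \cref{prop:reflection_functor_exact_continuous_module} essentially verbatim, now with the $2$-dimensional mapping cone functor $M^2$ in place of $M$. We treat $T_{\closed}$ first; the argument for $T_{\open}$ is symmetric.

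First, check that each vertex of the square defining $T_{\closed}$ is an exact, continuous, $\Cstar$-module functor $\Cstar(W^{\closed}) \to \Cstar(W^{\closed})$.
\begin{itemize}
\item The zero functor is trivially such a functor.
\item The functor $D$ (that is, $D \otimes \blank$) is one by \cref{prop:Cstar(X)_module_subcategory} and \cref{rem:induced_functor_exact_continuous_module}, since $D \otimes \blank$ is exact and continuous on $\Cstar$.
\item Each composite $i_{L_j}^{W^{\closed}} p_{W_j^{\closed}}^{L_j} r_{W^{\closed}}^{W_j^{\closed}} D_j$, and its analogue with $D$, is one by \cref{prop:induced_functor_exact_continuous_module,prop:restriction_functor_exact_continuous_module}, because these properties are closed under composition.
\item The finite direct sums over $j \in J$ then inherit the properties by \cref{lem:preservation_exact_continuous_module} \labelcref{lem:preservation_exact_continuous_module_1}.
\end{itemize}

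Next, check that each edge of the square is a $\Cstar$-module natural transformation.
\begin{itemize}
\item The morphism $\pi_{W^{\closed}}^{W_j^{\closed}}$ is one by \cref{lem:iota_pi}, so its horizontal composite with $D$ is one as well. The tuple $(\pi_{W^{\closed}}^{W_j^{\closed}} D)_j$ is then one by the remark following \cref{def:natural_transformation_direct_sum}, since $J$ is finite.
\item The morphism $\id_{W_j^{\closed}} \rho_j$ is the horizontal composite of a module natural transformation from \cref{lem:iota_pi} with $\rho_j \otimes \blank$. The latter is a module natural transformation because $\rho_j\otimes\blank$ commutes with the tensor flip. Direct sums of such morphisms are again module natural transformations by \cref{lem:preservation_exact_continuous_module} \labelcref{lem:preservation_exact_continuous_module_2}.
\item The edges out of $0$ are trivially module natural transformations.
\end{itemize}

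By \cref{rem:exact_continuous_module_iff}, these two verifications show that the square, regarded as a functor $\Cstar(W^{\closed}) \to [\{0,1\}^2, \Cstar(W^{\closed})]$, is an exact, continuous, $\Cstar$-module functor. Applying \cref{lem:preservation_exact_continuous_module} \labelcref{lem:preservation_exact_continuous_module_1} to $G = M^2$, which is exact, continuous and a $\Cstar$-module functor by \cref{prop:mapping_cone_exact_continuous_module}, we conclude that $T_{\closed}$ has all three properties.

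For $T_{\open}$ the argument is the same. The only change is that the left vertical edge is $\sum_j \iota_{W_j^{\open}}^{W^{\open}} \rho_j$, and here we must check mutual orthogonality before invoking \cref{def:natural_transformation_direct_sum} \labelcref{def:natural_transformation_direct_sum_2}. This follows exactly as in \cref{def:reflection_functor} \labelcref{def:reflection_functor_2}: the $\rho_j$ are mutually orthogonal, so the composites $\iota_{W_j^{\open}}^{W^{\open}}\rho_j$ have mutually orthogonal images in $D\otimes B(W^{\open})$. The sum of mutually orthogonal module natural transformations is again one, by the remark after \cref{def:natural_transformation_direct_sum}.

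I expect no real obstacle. The only point needing care is naturality of the tensor-flip compatibility for the horizontal composites with $D_j$, $D$ and $\rho_j$, and this is routine.
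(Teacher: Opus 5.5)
Your proposal is correct and is the paper's argument. The paper proves this by repeating the proof of \cref{prop:reflection_functor_exact_continuous_module} with \cref{lem:preservation_exact_continuous_module}~\labelcref{lem:preservation_exact_continuous_module_1} applied to $M^2$ in place of $M$, and your vertex-by-vertex and edge-by-edge checks, including the mutual orthogonality needed for $\sum_j \iota_{W_j^{\open}}^{W^{\open}} \rho_j$, simply spell out what the paper leaves as ``analogous.''
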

	
	\begin{proof}
		The proof is analogous to that of \cref{prop:reflection_functor_exact_continuous_module}, with \cref{lem:preservation_exact_continuous_module} \labelcref{lem:preservation_exact_continuous_module_1} applied to the functor
		\[M^2 \colon [\{0, 1\}^2, [\Cstar(W^{\alpha}), \Cstar(W^{\alpha})]] \to [\Cstar(W^{\alpha}), \Cstar(W^{\alpha})] \]
		for $\alpha \in \{\closed, \open\}$. 
	\end{proof}

	\begin{defn}\label{def:eta_epsilon}
		\ 
		\begin{enumerate}[label=\textnormal{(\arabic*)}]
			\item \label{lem:eta_epsilon_1}
			The functor $SD \colon \Cstar(W^{\closed}) \to \Cstar(W^{\closed})$ is the following $2$-dimensional mapping cone:
			\[
			SD=M^2 \! \left(
			\begin{tikzcd}[ampersand replacement=\&, every label/.append style={font=\small},
				cells={nodes={font=\small}}]
				0 \ar[Rightarrow]{r} \ar[Rightarrow]{d} \& \id_{W^{\closed}} \ar[Rightarrow]{d} \\
				0 \ar[Rightarrow]{r} \& 0
			\end{tikzcd}\right) D
			=M^2 \! \left(
			\begin{tikzcd}[ampersand replacement=\&, every label/.append style={font=\small},
				cells={nodes={font=\small}}]
				0 \ar[Rightarrow]{r} \ar[Rightarrow]{d} \& D \ar[Rightarrow]{d} \\
				0 \ar[Rightarrow]{r} \& 0
			\end{tikzcd}\right). 
			\]
			We define a natural transformation $\dot{\eta} \colon T_{\closed} \Rightarrow SD$ by
			\[\dot{\eta}:=M^2 
			\begin{pmatrix}
				0 & \id \\
				0 & 0
			\end{pmatrix}. \]
			Using the description of $S_{\open}^{\closed}S_{\closed}^{\open}$ in \cref{prop:composition_reflection_functors} \labelcref{prop:composition_reflection_functors_1}, we define a natural transformation $\ddot{\eta} \colon T_{\closed} \Rightarrow S_{\open}^{\closed}S_{\closed}^{\open}$ by
			\[\ddot{\eta}:=M^2
			\begin{pmatrix}
				0 & \pi_{W^{\closed}}^K D \\
				\id & \bigoplus_j \pi_{W_j^{\closed}}^K D
			\end{pmatrix}. \]
			
			\item \label{lem:eta_epsilon_2}
			The functor $D S \colon \Cstar(W^{\open}) \to \Cstar(W^{\open})$ is the following $2$-dimensional mapping cone:
			\[
			D S=D M^2 \! \left(
			\begin{tikzcd}[ampersand replacement=\&, every label/.append style={font=\small},
				cells={nodes={font=\small}}]
				0 \ar[Rightarrow]{r} \ar[Rightarrow]{d} \& 0 \ar[Rightarrow]{d} \\
				\id_{W^{\open}} \ar[Rightarrow]{r} \& 0
			\end{tikzcd}\right)
			=M^2 \! \left(
			\begin{tikzcd}[ampersand replacement=\&, every label/.append style={font=\small},
				cells={nodes={font=\small}}]
				0 \ar[Rightarrow]{r} \ar[Rightarrow]{d} \& 0 \ar[Rightarrow]{d} \\
				D \ar[Rightarrow]{r} \& 0
			\end{tikzcd}\right). 
			\]
			We define a natural transformation $\dot{\varepsilon} \colon D S \Rightarrow T_{\open}$ by
			\[\dot{\varepsilon}:=M^2
			\begin{pmatrix}
				0 & 0 \\
				\id & 0
			\end{pmatrix}. \]
			Using the description of $S_{\closed}^{\open}S_{\open}^{\closed}$ in \cref{prop:composition_reflection_functors} \labelcref{prop:composition_reflection_functors_2}, we define a natural transformation $\ddot{\varepsilon} \colon S_{\closed}^{\open}S_{\open}^{\closed} \Rightarrow T_{\open}$ by
			\[
			\ddot{\varepsilon}:=M^2
			\begin{pmatrix}
				\bigoplus_j \iota_K^{W_j^{\open}} D_j & \id \\
				\iota_K^{W^{\open}} D &  0
			\end{pmatrix}. 
			\]
		\end{enumerate}
	\end{defn}

	\begin{prop}\label{prop:eta_epsilon_module}
		The natural transformations
		\[\dot{\eta} \colon T_{\closed} \Rightarrow SD, \quad \ddot{\eta} \colon T_{\closed} \Rightarrow S_{\open}^{\closed}S_{\closed}^{\open}, \quad \dot{\varepsilon} \colon D S \Rightarrow T_{\open}, \quad \ddot{\varepsilon} \colon S_{\closed}^{\open}S_{\open}^{\closed} \Rightarrow T_{\open}\] 
		are $\Cstar$-module natural transformations.
	\end{prop}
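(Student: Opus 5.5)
Each of the four natural transformations is of the form $M^2\xi$ for a morphism $\xi$ between objects of $[\{0,1\}^2, [\Cstar(W^{\alpha}), \Cstar(W^{\alpha})]]$. The plan is to reduce the claim to the module property of the components of $\xi$, in the same way as in the proofs of \cref{prop:reflection_functor_exact_continuous_module,prop:Tc_To_exact_continuous_module}.

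Concretely, we view each such $\xi$ as a morphism between two functors $\Cstar(W^{\alpha}) \to [\{0,1\}^2, \Cstar(W^{\alpha})]$. By \cref{rem:exact_continuous_module_iff}, these source and target functors are $\Cstar$-module functors: their vertices and edges are built from $\theta_*$, $r$, $D$, $D_j$, direct sums, and the transformations $\iota$, $\pi$, $\rho_j$, $\id_K$. The vertices are $\Cstar$-module functors by \cref{prop:induced_functor_exact_continuous_module,prop:restriction_functor_exact_continuous_module} and \cref{nota:tensor_product}. The edges are $\Cstar$-module natural transformations by \cref{lem:iota_pi}, by the remarks after \cref{def:natural_transformation_direct_sum} on $(\varphi^{(j)})_j$ and $\sum_j \psi^{(j)}$, and because horizontal composites of module transformations are module transformations.

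Next we check that $\xi$ is a $\Cstar$-module natural transformation. By pointwise considerations (the analogue of \cref{rem:exact_continuous_module_iff} for morphisms), this amounts to each component $\xi_{\lambda}$, $\lambda \in \{0,1\}^2$, being a $\Cstar$-module natural transformation. Each component is one of the following: $0$; $\id$; $\pi_{W^{\closed}}^K D$; $\bigoplus_j \pi_{W_j^{\closed}}^K D$; $\bigoplus_j \iota_K^{W_j^{\open}} D_j$; or $\iota_K^{W^{\open}} D$. All of these are of the type covered by \cref{lem:iota_pi} (via \cref{eg:iota_pi_ir}), whiskered with the module functor $D$ or $D_j$ and summed over the finite set $J$. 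The one point that needs an argument is whiskering by $D$, i.e.\ that $\pi_{W^{\closed}}^K D$ is compatible with the module constraints. This holds because the tensor flip $D\otimes(E\otimes \blank)\cong E\otimes(D\otimes\blank)$ is natural and the transformations from \cref{lem:iota_pi} act pointwise by inclusions and quotient maps, which commute with $\id_E\otimes$.

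Finally, we apply \cref{lem:preservation_exact_continuous_module}\,\labelcref{lem:preservation_exact_continuous_module_2} with $G=M^2$, $\frakA=\{0,1\}^2$, and $X=Y=W^{\alpha}$. Its hypotheses hold because $M^2$ is an exact, continuous $\Cstar$-module functor by \cref{prop:mapping_cone_exact_continuous_module}. The lemma then gives that $M^2\xi$ is a $\Cstar$-module natural transformation.

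One bookkeeping point must be checked along the way: the module structures on $T_{\closed}$, $SD$, $DS$, and the composites $S_{\open}^{\closed}S_{\closed}^{\open}$ and $S_{\closed}^{\open}S_{\open}^{\closed}$ have to agree with the ones induced by their $2$-dimensional mapping cone descriptions. For the composites this means the identifications in \cref{prop:composition_reflection_functors}, which rest on \cref{lem:rearrangement_mapping_cones}, must be isomorphisms of module functors. I expect this to be the main (if mild) obstacle. I would handle it by noting that every identification used there is either a canonical isomorphism arising from a universal property (limits and direct sums), and hence compatible with the canonically induced module constraints, or an equality of functors from \cref{lem:relations_ri_vanish,lem:relations_pri,lem:composition_iota_pi_Wj_K}.
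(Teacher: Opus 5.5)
Your proposal is correct and follows the paper's proof. The paper likewise observes that each entry of the relevant $2\times 2$ array is a $\Cstar$-module natural transformation; for $\ddot{\varepsilon}$ these entries are $0$, $\id$, $\iota_K^{W^{\open}} D$ and $\bigoplus_j \iota_K^{W_j^{\open}} D_j$, and the other three cases are handled analogously. It then applies \cref{lem:preservation_exact_continuous_module}\,\labelcref{lem:preservation_exact_continuous_module_2} to $M^2$, while your extra bookkeeping about module structures under the identifications of \cref{prop:composition_reflection_functors} is covered in the paper by its standing convention that canonically induced isomorphisms are written as equalities.
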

	
	\begin{proof}
		We prove the assertion only for
		$\ddot{\varepsilon}$,
		since the proof for $\dot{\eta}$, $\ddot{\eta}$, and $\dot{\varepsilon}$ is analogous. Observe that the four natural transformations $0$, $\id$, $\iota_K^{W^{\open}} D$, and $\bigoplus_j \iota_K^{W_j^{\open}} D_j$ are $\Cstar$-module natural transformations. Hence, by applying \cref{lem:preservation_exact_continuous_module} \labelcref{lem:preservation_exact_continuous_module_2} to the functor
		\[M^2 \colon [\{0, 1\}^2, [\Cstar(W^{\open}), \Cstar(W^{\open})]] \to [\Cstar(W^{\open}), \Cstar(W^{\open})], \]
		we see that $\ddot{\varepsilon}$ is a $\Cstar$-module natural transformation. 
	\end{proof}
	
	The short exact sequences in the following lemma will be used to prove \cref{thm:key_diagram} and appear in \cref{lem:key_ses_admissible}. 
	
	\begin{lem}\label{lem:key_ses}
		Fix $j \in J$. 
		\begin{enumerate}[label=\textnormal{(\arabic*)}]
			\item \label{lem:key_ses_1}
			We have the short exact sequence
			\[
			\begin{tikzcd}
				0 \ar[Rightarrow]{r} & i_{L_j}^{W^{\closed}} r_{W^{\closed}}^{L_j} \ar[Rightarrow]{r}{\iota_{L_j}^{W_j^{\closed}}} & i_{L_j}^{W^{\closed}} p_{W_j^{\closed}}^{L_j} r_{W^{\closed}}^{W_j^{\closed}} \ar[Rightarrow]{r}{\pi_{W_j^{\closed}}^K} & i_{L_j}^{W^{\closed}} p_K^{L_j} r_{W^{\closed}}^K \ar[Rightarrow]{r} & 0
			\end{tikzcd}
			\] 
			in $[\Cstar(W^{\closed}), \Cstar(W^{\closed})]$. 
			
			\item \label{lem:key_ses_2}
			We have the short exact sequence
			\[
			\begin{tikzcd}
				0 \ar[Rightarrow]{r} & i_{L_j}^{W^{\open}} p_K^{L_j} r_{W^{\open}}^K \ar[Rightarrow]{r}{\iota_K^{W_j^{\open}}} & i_{L_j}^{W^{\open}} p_{W_j^{\open}}^{L_j} r_{W^{\open}}^{W_j^{\open}} \ar[Rightarrow]{r}{\pi_{W_j^{\open}}^{L_j}} & i_{L_j}^{W^{\open}} r_{W^{\open}}^{L_j} \ar[Rightarrow]{r} & 0
			\end{tikzcd}
			\]
			in $[\Cstar(W^{\open}), \Cstar(W^{\open})]$. 
		\end{enumerate}
	\end{lem}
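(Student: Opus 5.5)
By symmetry, we only describe the proof of \labelcref{lem:key_ses_1}; part \labelcref{lem:key_ses_2} is obtained by the same argument with the roles of open and closed exchanged.

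The plan is to check exactness pointwise. By definition, a sequence in $[\Cstar(W^{\closed}), \Cstar(W^{\closed})]$ is short exact if and only if, for every $A \in \Cstar(W^{\closed})$ and every $Z \in \LC(W^{\closed})$, its evaluation at $A$ and $Z$ is short exact in $\Cstar$. Using \cref{lem:evaluation_extension,lem:evaluation_restriction} together with \cref{lem:composition_theta_r_iota_pi}, applied exactly as in the proof of \cref{prop:reflection_functor_locally_closed_subset}, the three terms evaluate to
\[
A(Z \cap L_j), \quad A\bigl(\widetilde{\theta}_j^{-1}(Z \cap L_j)\bigr), \quad A\bigl(\theta_j^{-1}(Z \cap L_j)\bigr).
\]
The two maps between them are $A(\iota)$ and $A(\pi)$ for the corresponding inclusions of locally closed subsets.

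Set $Y_2 := \widetilde{\theta}_j^{-1}(Z \cap L_j) \subset W_j$. Since $\widetilde{\theta}_j$ is the identity on $L_j$ and equals $\theta_j$ on $K$, we have the decomposition
\[
Y_2 = (Z \cap L_j) \amalg \theta_j^{-1}(Z \cap L_j).
\]
The set $Y_2$ is locally closed in $W_j^{\closed}$, because it is the preimage of a locally closed set under the continuous map $\widetilde{\theta}_j$ of \cref{prop:thetatildej_continuous}. Since $W_j^{\closed}$ is closed in $W^{\closed}$, the set $Y_2$ is also locally closed in $W^{\closed}$. Put $Y_1 := Y_2 \cap L_j = Z \cap L_j$. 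Because $L_j \cap W_j$ is open in $W_j^{\closed}$ by \cref{prop:Lj_open_closed}, the set $Y_1$ is open in $Y_2$, and its complement is $Y_2 \cap K = \theta_j^{-1}(Z \cap L_j)$.

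The defining exactness property (i) of $\Cstar(W^{\closed}) = [\LCcat(W^{\closed}), \Cstar]_{\substack{\ex \\ \cont}}$ then gives the short exact sequence
\[
0 \to A(Y_1) \to A(Y_2) \to A(Y_2 \setminus Y_1) \to 0.
\]
It remains to check that these maps are the components at $A$ and $Z$ of $\iota_{L_j}^{W_j^{\closed}}$ and $\pi_{W_j^{\closed}}^K$. This follows from \cref{lem:composition_theta_r_iota_pi} \labelcref{lem:composition_theta_r_iota_pi_2}, with $\tau' = \id$ and $Z' = Z$.

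We also need both natural transformations to be well-defined, i.e.\ the hypotheses (O) and (C) for them. For (O), note that $L_j$ is open in $W_j^{\closed}$ and that $\widetilde{\theta}_j|_{L_j}$ is the inclusion, so the preimage condition is trivial. For (C), note that $K$ is closed in $W_j^{\closed}$ and that $\widetilde{\theta}_j|_K = \theta_j$, so the preimages of closed sets agree.

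The main obstacle is only bookkeeping: making sure the horizontal composites with $i_{L_j}^{W^{\closed}}$ evaluate to the stated inclusion and quotient maps. Everything else reduces to axiom (i) for $A$ applied to the open subset $Y_1 \subset Y_2$.
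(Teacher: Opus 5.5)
Your argument is correct, but it is organized differently from the paper's. You check exactness directly at each pair $(A,Z)$. You identify the three terms with $A(Y_1)$, $A(Y_2)$ and $A(Y_2\setminus Y_1)$ for $Y_2=\widetilde{\theta}_j^{-1}(Z\cap L_j)$, and then invoke axiom (i). This forces you to verify by hand several facts:
\begin{itemize}
\item $Y_2$ is locally closed in $W^{\closed}$;
\item $Y_1=Y_2\cap L_j$ is open in $Y_2$;
\item the components of $\iota_{L_j}^{W_j^{\closed}}$ and $\pi_{W_j^{\closed}}^K$ are the expected inclusion and quotient maps.
\end{itemize}

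The paper instead starts from the short exact sequence $0\Rightarrow i_{L_j}^{W^{\closed}}r_{W^{\closed}}^{L_j}\Rightarrow i_{W_j^{\closed}}^{W^{\closed}}r_{W^{\closed}}^{W_j^{\closed}}\Rightarrow i_K^{W^{\closed}}r_{W^{\closed}}^K\Rightarrow 0$ of \cref{lem:ses_ir}, whose proof is where axiom (i) is used pointwise. It composes this on the left with the exact functor $i_{L_j}^{W^{\closed}}p_{W_j^{\closed}}^{L_j}r_{W^{\closed}}^{W_j^{\closed}}$. It then identifies the resulting terms and maps formally, via the relations among $i$, $r$, $p$ (\cref{lem:relations_extension_restriction_functors,lem:relations_pri}) and the corresponding identities for the natural transformations, with no further point-set topology.

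Your route is more elementary and self-contained. The paper's route buys a structural fact that is used later: the sequence is the image of the $L_j\subset W_j\supset K$ sequence of \cref{lem:ses_ir} under an exact $\SCstar$-module functor. This is exactly what \cref{lem:key_ses_admissible} uses, via \cref{lem:admissible_exact_module}, to get admissibility for $A\in\KKcat(W^{\closed})_{\loc}$. A pointwise exactness check alone does not record this.

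A small wording point: for (C) you need the inclusion $\theta_j^{-1}(F\cap L_j)\subset\widetilde{\theta}_j^{-1}(F\cap L_j)$, not equality of preimages. It holds trivially because $\widetilde{\theta}_j|_K=\theta_j$.
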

	
	\begin{proof}
		\
		\begin{enumerate}[label=\textnormal{(\arabic*)}]
			\item
			By \cref{lem:ses_ir}, we have the short exact sequence 
			\[
			\begin{tikzcd}
				0 \ar[Rightarrow]{r} & i_{L_j}^{W^{\closed}} r_{W^{\closed}}^{L_j} \ar[Rightarrow]{r}{\iota_{L_j}^{W_j^{\closed}}} & i_{W_j^{\closed}}^{W^{\closed}} r_{W^{\closed}}^{W_j^{\closed}} \ar[Rightarrow]{r}{\pi_{W_j^{\closed}}^K} & i_K^{W^{\closed}} r_{W^{\closed}}^K \ar[Rightarrow]{r} & 0
			\end{tikzcd}
			\] 
			in $[\Cstar(W^{\closed}), \Cstar(W^{\closed})]$. 
			By horizontally composing the exact functor
			\[
			i_{L_j}^{W^{\closed}}
			p_{W_j^{\closed}}^{L_j}
			r_{W^{\closed}}^{W_j^{\closed}}
			\colon
			\Cstar(W^{\closed}) \to \Cstar(W^{\closed})
			\]
			from the left with this short exact sequence, we obtain a short exact sequence.
			With the help of \cref{lem:relations_extension_restriction_functors}
			\labelcref{lem:relations_extension_restriction_functors_2,lem:relations_extension_restriction_functors_5}, \cref{lem:relations_pri} \labelcref{lem:relations_pri_2,lem:relations_pri_3}, and  \cref{lem:composition_iota_pi_Wj_K}
			\labelcref{lem:composition_iota_pi_Wj_K_4},
			the resulting short exact sequence is precisely the desired one. 
			
			\item 
			As in the proof of \labelcref{lem:key_ses_1}, the desired short exact sequence is obtained by horizontally composing the exact functor
			\[
			i_{L_j}^{W^{\open}} p_{W_j^{\open}}^{L_j} r_{W^{\open}}^{W_j^{\open}}
			\colon
			\Cstar(W^{\open}) \to \Cstar(W^{\open})
			\]
			from the left with the short exact sequence
			\[
			\begin{tikzcd}
				0 \ar[Rightarrow]{r}
				&
				i_K^{W^{\open}} r_{W^{\open}}^K
				\ar[Rightarrow]{r}{\iota_K^{W_j^{\open}}}
				&
				i_{W_j^{\open}}^{W^{\open}} r_{W^{\open}}^{W_j^{\open}}
				\ar[Rightarrow]{r}{\pi_{W_j^{\open}}^{L_j}}
				&
				i_{L_j}^{W^{\open}} r_{W^{\open}}^{L_j}
				\ar[Rightarrow]{r}
				&
				0
			\end{tikzcd}
			\]
			in $[\Cstar(W^{\open}), \Cstar(W^{\open})]$.
			\qedhere
		\end{enumerate}
	\end{proof}
	
	The main result of this subsection is the following theorem. 
	The middle row and column of each diagram will play a key role in \cref{subsection:application_to_Bott_functors_1,subsection:application_to_bivariant_K-theory_1}. 
	Following \cref{nota:tensor_product}, we regard $C_{\rho_j}$ as a functor for each $j \in J$. 
	
	\begin{thm}\label{thm:key_diagram}
		\
		\begin{enumerate}[label=\textnormal{(\arabic*)}]
			\item \label{thm:key_diagram_1}
			We have a commutative diagram
			{\everymath{\displaystyle}
				\[
				\begin{tikzcd}[column sep=small, every label/.append style={font=\small}, cells={nodes={font=\small}}
					]
					& 0 \ar[Rightarrow]{d} & 0 \ar[Rightarrow]{d} & 0 \ar[Rightarrow]{d} & \\
					0 \ar[Rightarrow]{r} & \bigoplus_{j \in J} i_{L_j}^{W^{\closed}} r_{W^{\closed}}^{L_j}S^2D \ar[Rightarrow]{r} \ar[Rightarrow]{d} & \bigoplus_{j \in J} i_{L_j}^{W^{\closed}} p_{W_j^{\closed}}^{L_j} r_{W^{\closed}}^{W_j^{\closed}} S C_{\rho_j} \ar[Rightarrow]{r} \ar[Rightarrow]{d} & \bigoplus_{j \in J} i_{L_j}^{W^{\closed}} p_{W_j^{\open}}^{L_j} r_{W^{\open}}^{W_j^{\open}} SS_{\closed}^{\open} \ar[Rightarrow]{r}  \ar[Rightarrow]{d} & 0 \\
					0 \ar[Rightarrow]{r} & \bigoplus_{j \in J} i_{L_j}^{W^{\closed}} r_{W^{\closed}}^{L_j}CSD \ar[Rightarrow]{r} \ar[Rightarrow]{d} & T_{\closed}  \ar[Rightarrow]{r}{\ddot{\eta}} \ar[Rightarrow]{d}{\dot{\eta}} &  S_{\open}^{\closed}S_{\closed}^{\open} \ar[Rightarrow]{r} \ar[Rightarrow]{d} & 0 \\
					0\ar[Rightarrow]{r} & \bigoplus_{j \in J} i_{L_j}^{W^{\closed}} r_{W^{\closed}}^{L_j}SD \ar[Rightarrow]{r} \ar[Rightarrow]{d} & SD \ar[Rightarrow]{r} \ar[Rightarrow]{d} & i_K^{W^{\closed}} r_{W^{\closed}}^K SD \ar[Rightarrow]{r} \ar[Rightarrow]{d} & 0 \\
					& 0 & 0 & 0 & 
				\end{tikzcd}
				\]
			}
			with exact rows and columns in $[\Cstar(W^{\closed}), \Cstar(W^{\closed})]$. 
			
			\item \label{thm:key_diagram_2}
			We have a commutative diagram
			{\everymath{\displaystyle}
				\[
				\begin{tikzcd}[column sep=small, every label/.append style={font=\small}, cells={nodes={font=\small}}
					]
					& 0 \ar[Rightarrow]{d} & 0 \ar[Rightarrow]{d} & 0 \ar[Rightarrow]{d} & \\
					0 \ar[Rightarrow]{r} & i_K^{W^{\open}} r_{W^{\open}}^K D S \ar[Rightarrow]{r} \ar[Rightarrow]{d} & D S \ar[Rightarrow]{r} \ar[Rightarrow]{d}{\dot{\varepsilon}} & \bigoplus_{j \in J}  i_{L_j}^{W^{\open}} r_{W^{\open}}^{L_j} D S \ar[Rightarrow]{r}  \ar[Rightarrow]{d} & 0 \\
					0 \ar[Rightarrow]{r} & S_{\closed}^{\open}S_{\open}^{\closed} \ar[Rightarrow]{r}{\ddot{\varepsilon}} \ar[Rightarrow]{d} & T_{\open} \ar[Rightarrow]{r} \ar[Rightarrow]{d} & \bigoplus_{j \in J}  i_{L_j}^{W^{\open}} r_{W^{\open}}^{L_j} C_{\rho_j}  \ar[Rightarrow]{r} \ar[Rightarrow]{d} & 0 \\
					0\ar[Rightarrow]{r} & \bigoplus_{j \in J} i_{L_j}^{W^{\open}} p_{W_j^{\closed}}^{L_j} r_{W^{\closed}}^{W_j^{\closed}} D_j S_{\open}^{\closed} \ar[Rightarrow]{r} \ar[Rightarrow]{d} & \bigoplus_{j \in J} i_{L_j}^{W^{\open}} p_{W_j^{\open}}^{L_j} r_{W^{\open}}^{W_j^{\open}} D_j C \ar[Rightarrow]{r} \ar[Rightarrow]{d} & \bigoplus_{j \in J}  i_{L_j}^{W^{\open}} r_{W^{\open}}^{L_j} D_j \ar[Rightarrow]{r} \ar[Rightarrow]{d} & 0 \\
					& 0 & 0 & 0 & {}
				\end{tikzcd}
				\]
			}
			with exact rows and columns in $[\Cstar(W^{\open}), \Cstar(W^{\open})]$. 
			
		\end{enumerate}
	\end{thm}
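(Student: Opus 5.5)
The plan is to realize both diagrams as the image under the exact functor
\[
M^2 \colon [\{0, 1\}^2, [\Cstar(W^{\alpha}), \Cstar(W^{\alpha})]] \to [\Cstar(W^{\alpha}), \Cstar(W^{\alpha})]
\]
of a single $3 \times 3$ commutative diagram of \emph{squares}, with exact rows and columns. Its middle entries are the squares defining $T_{\closed}$ (respectively $T_{\open}$). The other corner entries are the squares from \cref{prop:composition_reflection_functors} and \cref{def:eta_epsilon}. All maps are of the form $M^2(\cdots)$ of morphisms of squares. Commutativity then follows from functoriality of $M^2$, and exactness from \cref{prop:mapping_cone_exact_continuous_module}, since exactness of a sequence of squares is checked vertex by vertex. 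So the work is to write down the $3 \times 3$ diagram of squares, check pointwise exactness vertex by vertex, and identify the $M^2$ of each outer square with the functor named in the statement.

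For \labelcref{thm:key_diagram_1}, write $P_j := i_{L_j}^{W^{\closed}} p_{W_j^{\closed}}^{L_j} r_{W^{\closed}}^{W_j^{\closed}}$.

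\textbf{Middle row.} The morphism of squares defining $\ddot{\eta}$ has components $0$, $\pi_{W^{\closed}}^K D$, $\id$, and $\bigoplus_j \pi_{W_j^{\closed}}^K D$. Each component is surjective, by \cref{lem:ses_ir} and \cref{lem:key_ses}~\labelcref{lem:key_ses_1}. Its kernel is computed vertex by vertex.
\begin{itemize}
\item At the top right, the kernel is $i_{W \setminus K}^{W^{\closed}} r_{W^{\closed}}^{W\setminus K} D$. This equals $\bigoplus_j i_{L_j}^{W^{\closed}} r_{W^{\closed}}^{L_j} D$ by \cref{lem:disjoint_ir}, because the $L_j$ are open by \cref{prop:Lj_open_closed} and hence clopen in $W \setminus K$.
\item At the bottom right, the kernel is $\bigoplus_j i_{L_j}^{W^{\closed}} r_{W^{\closed}}^{L_j} D$ by \cref{lem:key_ses}~\labelcref{lem:key_ses_1}.
\item The vertical map between these two kernels is the identity.
\end{itemize}
A square with zero left column and an isomorphism as its right column has $M^2$ equal to $S$ applied to the cone of that isomorphism. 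This gives $\bigoplus_j i_{L_j}^{W^{\closed}} r_{W^{\closed}}^{L_j} CSD$.

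\textbf{Middle column.} $\dot{\eta}$ projects onto the top-right vertex. Its kernel square is $0, 0, \bigoplus_j P_j D_j \to \bigoplus_j P_j D$ with maps $\id_{W_j^{\closed}} \rho_j$. Its $M^2$ is $S$ applied to the cone of this map, which equals $\bigoplus_j P_j S C_{\rho_j}$, since $P_j$ is an exact module functor and so commutes with cones by \cref{lem:exact_module_functor_commute_mapping_cone}.

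\textbf{Bottom row, right column, top row.} The bottom row is \cref{lem:ses_ir} tensored with $SD$, again using \cref{lem:disjoint_ir}. The right column is $M^2$ of the projection of the square for $S_{\open}^{\closed}S_{\closed}^{\open}$ onto its top-right vertex. Its kernel has $M^2$ equal to $S$ applied to the cone of $\bigoplus_j P_j D_j \to \bigoplus_j i_{L_j}^{W^{\closed}} p_K^{L_j} r_{W^{\closed}}^K D$. Using \cref{prop:reflection_functor_locally_closed_subset}~\labelcref{prop:reflection_functor_locally_closed_subset_2}, \cref{lem:composition_iota_pi_Wj_K} and \cref{lem:rearrangement_mapping_cones}, I would identify this cone with $\bigoplus_j i_{L_j}^{W^{\closed}} p_{W_j^{\open}}^{L_j} r_{W^{\open}}^{W_j^{\open}} S_{\closed}^{\open}$. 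The top row is then the row of kernels, that is, $M^2$ of the doubly-kernel squares. One checks directly that these give $\bigoplus_j i_{L_j}^{W^{\closed}} r_{W^{\closed}}^{L_j} S^2 D$ and the two stated terms, or deduces exactness of the top row from the nine lemma applied pointwise.

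Part~\labelcref{thm:key_diagram_2} is formally dual. Here $\ddot{\varepsilon}$ and $\dot{\varepsilon}$ are monomorphisms of squares, and one takes cokernels vertex by vertex. The two inputs are \cref{lem:key_ses}~\labelcref{lem:key_ses_2} and the decomposition $i_{W\setminus K}^{W^{\open}} r_{W^{\open}}^{W\setminus K} = \bigoplus_j i_{L_j}^{W^{\open}} r_{W^{\open}}^{L_j}$, using that the $L_j$ are closed by \cref{prop:Lj_open_closed}. The cokernel squares are $\id \colon D_j \to D_j$ and $\rho_j$ placed appropriately, giving $C_{\rho_j}$, $D_j C$ and $D_j$.

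The main obstacle I expect is bookkeeping rather than any single step. One must verify that each corner square, after using \cref{lem:relations_ri_vanish} and \cref{lem:relations_pri}, is literally the square whose $M^2$ is the functor written in the statement. The delicate identifications are the top-right corner of \labelcref{thm:key_diagram_1}, namely $P'_j S S_{\closed}^{\open}$ with $P'_j := i_{L_j}^{W^{\closed}} p_{W_j^{\open}}^{L_j} r_{W^{\open}}^{W_j^{\open}}$, and the bottom-left corner of \labelcref{thm:key_diagram_2}. The approach there is to compose $S_{\closed}^{\open}$ (respectively $S_{\open}^{\closed}$) with the restriction-extension functor via \cref{lem:rearrangement_mapping_cones} and kill the cross terms with \cref{lem:relations_ri_vanish}~\labelcref{lem:relations_ri_vanish_2}.
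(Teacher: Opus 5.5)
Your proposal is correct and follows essentially the same route as the paper. In both, every entry is written as $M^2$ of an explicit square in $[\{0,1\}^2,[\Cstar(W^{\alpha}),\Cstar(W^{\alpha})]]$, and the maps are $M^2$ of morphisms of squares built from \cref{lem:ses_ir,lem:disjoint_ir,lem:key_ses}; commutativity and exactness then follow from functoriality and exactness of $M^2$. The top-right corner of (1) is identified via \cref{lem:composition_iota_pi_Wj_K}~(4), and the bottom-left corner of (2) via \cref{lem:composition_iota_pi_Wj_K}~(3). Your appeal to \cref{prop:reflection_functor_locally_closed_subset} for the first of these is unnecessary, but the other tools you list suffice.
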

	
	\begin{proof}
		Throughout the proof, we use \cref{lem:rearrangement_mapping_cones} (see also \cref{eg:composition_1+1=2}). 
		\begin{enumerate}
			\item 
			From \cref{lem:composition_iota_pi_Wj_K} \labelcref{lem:composition_iota_pi_Wj_K_4}, we have
			\[\bigoplus_{j \in J} i_{L_j}^{W^{\closed}} p_{W_j^{\open}}^{L_j} r_{W^{\open}}^{W_j^{\open}} SS_{\closed}^{\open}=M^2 \! \left(
			\begin{tikzcd}[ampersand replacement=\&, column sep=large, every label/.append style={font=\small},
				cells={nodes={font=\small}}]
				0 \ar[Rightarrow]{r} \ar[Rightarrow]{d} \& 0 \ar[Rightarrow]{d} \\
				\bigoplus_j i_{L_j}^{W^{\closed}} p_{W_j^{\closed}}^{L_j} r_{W^{\closed}}^{W_j^{\closed}} D_j \ar[Rightarrow, swap]{r}{\bigoplus_j \pi_{W_j^{\closed}}^K \rho_j}
				\& \bigoplus_j i_{L_j}^{W^{\closed}} p_{K}^{L_j} r_{W^{\closed}}^{K} D
			\end{tikzcd}\right). \]
			We also have
			\begin{align*}
				\bigoplus_{j \in J} i_{L_j}^{W^{\closed}} r_{W^{\closed}}^{L_j} S^2D &=M^2 \! \left(
				\begin{tikzcd}[ampersand replacement=\&, every label/.append style={font=\small},
					cells={nodes={font=\small}}]
					0 \ar[Rightarrow]{r} \ar[Rightarrow]{d} \& 0 \ar[Rightarrow]{d} \\
					0 \ar[Rightarrow]{r} \& \bigoplus_j i_{L_j}^{W^{\closed}} r_{W^{\closed}}^{L_j} D
				\end{tikzcd}\right),
				\\
				\bigoplus_{j \in J} i_{L_j}^{W^{\closed}} p_{W_j^{\closed}}^{L_j} r_{W^{\closed}}^{W_j^{\closed}} S C_{\rho_j} &=M^2 \! \left(
				\begin{tikzcd}[ampersand replacement=\&, column sep=large, every label/.append style={font=\small},
					cells={nodes={font=\small}}]
					0 \ar[Rightarrow]{r} \ar[Rightarrow]{d} \& 0 \ar[Rightarrow]{d} \\
					\bigoplus_j i_{L_j}^{W^{\closed}} p_{W_j^{\closed}}^{L_j} r_{W^{\closed}}^{W_j^{\closed}} D_j \ar[Rightarrow, swap]{r}{\bigoplus_j \id_{W_j^{\closed}} \rho_j}
					\& \bigoplus_j i_{L_j}^{W^{\closed}} p_{W_j^{\closed}}^{L_j} r_{W^{\closed}}^{W_j^{\closed}} D
				\end{tikzcd}\right),
				\\
				\bigoplus_{j \in J} i_{L_j}^{W^{\closed}} r_{W^{\closed}}^{L_j}CSD &=M^2 \! \left(
				\begin{tikzcd}[ampersand replacement=\&, every label/.append style={font=\small},
					cells={nodes={font=\small}}]
					0 \ar[Rightarrow]{r} \ar[Rightarrow]{d} \&
					\bigoplus_j i_{L_j}^{W^{\closed}} r_{W^{\closed}}^{L_j} D \ar[Rightarrow]{d}{\id} \\
					0 \ar[Rightarrow]{r} \&
					\bigoplus_j i_{L_j}^{W^{\closed}} r_{W^{\closed}}^{L_j} D
				\end{tikzcd}\right),
				\\
				\bigoplus_{j \in J} i_{L_j}^{W^{\closed}} r_{W^{\closed}}^{L_j} SD &=M^2 \! \left(
				\begin{tikzcd}[ampersand replacement=\&, every label/.append style={font=\small},
					cells={nodes={font=\small}}]
					0 \ar[Rightarrow]{r} \ar[Rightarrow]{d} \&
					\bigoplus_j i_{L_j}^{W^{\closed}} r_{W^{\closed}}^{L_j} D \ar[Rightarrow]{d} \\
					0 \ar[Rightarrow]{r} \& 0
				\end{tikzcd}\right),
				\\
				i_K^{W^{\closed}} r_{W^{\closed}}^K SD &=M^2 \! \left(
				\begin{tikzcd}[ampersand replacement=\&, every label/.append style={font=\small},
					cells={nodes={font=\small}}]
					0 \ar[Rightarrow]{r} \ar[Rightarrow]{d} \&
					i_K^{W^{\closed}} r_{W^{\closed}}^K D \ar[Rightarrow]{d} \\
					0 \ar[Rightarrow]{r} \& 0
				\end{tikzcd}\right).
			\end{align*}
			The horizontal morphisms in the diagram in \labelcref{thm:key_diagram_1} are canonically defined. 
			For example, we define a morphism $\bigoplus_{j} i_{L_j}^{W^{\closed}} p_{W_j^{\closed}}^{L_j} r_{W^{\closed}}^{W_j^{\closed}} SC_{\rho_j} \Rightarrow T_{\closed}$ in $[\Cstar(W^{\closed}), \Cstar(W^{\closed})]$ as
			\[M^2 
			\!\begin{pmatrix}
				0 & 0 \\
				\id & \id
			\end{pmatrix}. \]
			By \cref{lem:ses_ir,lem:disjoint_ir}, we have the short exact sequence
			\[
			\begin{tikzcd}[ampersand replacement=\&]
				0 \ar[Rightarrow]{r} \& \bigoplus_{j \in J} i_{L_j}^{W^{\closed}} r_{W^{\closed}}^{L_j} D \ar[Rightarrow]{r}{\sum_j \iota_{L_j}^{W^{\closed}} D} \&
				D \ar[Rightarrow]{r}{\pi_{W^{\closed}}^K D} \& i_K^{W^{\closed}} r_{W^{\closed}}^K D \ar[Rightarrow]{r} \& 0
			\end{tikzcd}
			\]
			in $[\Cstar(W^{\closed}), \Cstar(W^{\closed})]$. 
			By \cref{lem:key_ses} \labelcref{lem:key_ses_1}, we have the short exact sequence
			\[
			\begin{tikzcd}[ampersand replacement=\&]
				0 \ar[Rightarrow]{r} \&
				\bigoplus_{j \in J} i_{L_j}^{W^{\closed}} r_{W^{\closed}}^{L_j} D \ar[Rightarrow]{r}{\bigoplus_j \iota_{L_j}^{W_j^{\closed}} D} \&
				\bigoplus_{j \in J} i_{L_j}^{W^{\closed}} p_{W_j^{\closed}}^{L_j} r_{W^{\closed}}^{W_j^{\closed}} D \ar[Rightarrow]{r}{\bigoplus_j \pi_{W_j^{\closed}}^K D} \&
				\bigoplus_{j \in J} i_{L_j}^{W^{\closed}} p_K^{L_j} r_{W^{\closed}}^K D \ar[Rightarrow]{r} \& 0
			\end{tikzcd}
			\]
			in $[\Cstar(W^{\closed}), \Cstar(W^{\closed})]$.
			The vertical morphisms in the diagram in \labelcref{thm:key_diagram_1} are defined by using these short exact sequences. 
			For example, we define a morphism $\bigoplus_j i_{L_j}^{W^{\closed}} r_{W^{\closed}}^{L_j}CSD \Rightarrow T_{\closed}$ in $[\Cstar(W^{\closed}), \Cstar(W^{\closed})]$ as
			\[M^2
			\!\begin{pmatrix}
				0 & \sum_j \iota_{L_j}^{W^{\closed}} D \\
				0 & \bigoplus_j \iota_{L_j}^{W_j^{\closed}} D
			\end{pmatrix}. \]
			By the functoriality and exactness of 
			\[M^2 \colon [\{0, 1\}^2, [\Cstar(W^{\closed}), \Cstar(W^{\closed})]] \to [\Cstar(W^{\closed}), \Cstar(W^{\closed})], \] 
			the commutativity and exactness of the corresponding diagram in 
			\[[\{0, 1\}^2, [\Cstar(W^{\closed}), \Cstar(W^{\closed})]]\] imply the desired commutativity and exactness in $[\Cstar(W^{\closed}), \Cstar(W^{\closed})]$. 
			
			\item
			From \cref{lem:composition_iota_pi_Wj_K} \labelcref{lem:composition_iota_pi_Wj_K_3}, we have
			\[\bigoplus_{j \in J} i_{L_j}^{W^{\open}} p_{W_j^{\closed}}^{L_j} r_{W^{\closed}}^{W_j^{\closed}} D_j S_{\open}^{\closed}=M^2 \! \left(
			\begin{tikzcd}[ampersand replacement=\&, column sep=large, 
				every label/.append style={font=\small}, cells={nodes={font=\small}}]
				\bigoplus_j i_{L_j}^{W^{\open}} p_K^{L_j} r_{W^{\open}}^K D_j \ar[Rightarrow]{r}{\bigoplus_j \iota_K^{W_j^{\open}} D_j} \ar[Rightarrow]{d}
				\& \bigoplus_j i_{L_j}^{W^{\open}} p_{W_j^{\open}}^{L_j} r_{W^{\open}}^{W_j^{\open}} D_j \ar[Rightarrow]{d} \\
				0 \ar[Rightarrow]{r} \& 0
			\end{tikzcd}\right).\]
			We also have
			\begin{align*}
				i_K^{W^{\open}} r_{W^{\open}}^K D S &=M^2 \! \left(
				\begin{tikzcd}[ampersand replacement=\&,
					every label/.append style={font=\small}, cells={nodes={font=\small}}]
					0 \ar[Rightarrow]{r} \ar[Rightarrow]{d} \& 0 \ar[Rightarrow]{d} \\
					i_K^{W^{\open}} r_{W^{\open}}^K D \ar[Rightarrow]{r} \& 0
				\end{tikzcd}\right),
				\\
				\bigoplus_{j \in J} i_{L_j}^{W^{\open}} r_{W^{\open}}^{L_j} DS &=M^2 \! \left(
				\begin{tikzcd}[ampersand replacement=\&, 
					every label/.append style={font=\small}, cells={nodes={font=\small}}]
					0 \ar[Rightarrow]{r} \ar[Rightarrow]{d} \& 0 \ar[Rightarrow]{d} \\
					\bigoplus_j i_{L_j}^{W^{\open}} r_{W^{\open}}^{L_j} D \ar[Rightarrow]{r} \& 0
				\end{tikzcd}\right), 
				\\
				\bigoplus_{j \in J}  i_{L_j}^{W^{\open}} r_{W^{\open}}^{L_j} C_{\rho_j} &=M^2 \! \left(
				\begin{tikzcd}[ampersand replacement=\&,
					every label/.append style={font=\small}, cells={nodes={font=\small}}]
					\bigoplus_j i_{L_j}^{W^{\open}} r_{W^{\open}}^{L_j} D_j \ar[Rightarrow]{r} \ar[Rightarrow, swap]{d}{\bigoplus_j \id_{L_j} \rho_j}
					\& 0 \ar[Rightarrow]{d} \\
					\bigoplus_j i_{L_j}^{W^{\open}} r_{W^{\open}}^{L_j} D \ar[Rightarrow]{r} \& 0
				\end{tikzcd}\right),
				\\
				\bigoplus_{j \in J} i_{L_j}^{W^{\open}} p_{W_j^{\open}}^{L_j} r_{W^{\open}}^{W_j^{\open}} D_j C &=M^2 \! \left(
				\begin{tikzcd}[ampersand replacement=\&,
					every label/.append style={font=\small}, cells={nodes={font=\small}}]
					\bigoplus_j i_{L_j}^{W^{\open}} p_{W_j^{\open}}^{L_j} r_{W^{\open}}^{W_j^{\open}} D_j \ar[Rightarrow]{r}{\id} \ar[Rightarrow]{d}
					\& \bigoplus_j i_{L_j}^{W^{\open}} p_{W_j^{\open}}^{L_j} r_{W^{\open}}^{W_j^{\open}} D_j \ar[Rightarrow]{d} \\
					0 \ar[Rightarrow]{r} \& 0
				\end{tikzcd}\right), \\
				\bigoplus_{j \in J}  i_{L_j}^{W^{\open}} r_{W^{\open}}^{L_j} D_j &=M^2 \! \left(
				\begin{tikzcd}[ampersand replacement=\&,
					every label/.append style={font=\small}, cells={nodes={font=\small}}]
					\bigoplus_j i_{L_j}^{W^{\open}} r_{W^{\open}}^{L_j} D_j \ar[Rightarrow]{r} \ar[Rightarrow]{d}
					\& 0 \ar[Rightarrow]{d} \\
					0 \ar[Rightarrow]{r} \& 0
				\end{tikzcd}\right).
			\end{align*}
			The horizontal morphisms in the diagram in \labelcref{thm:key_diagram_2} are canonically defined. 
			For example, we define a morphism $T_{\open} \Rightarrow \bigoplus_j i_{L_j}^{W^{\open}} p_{W_j^{\open}}^{L_j} r_{W^{\open}}^{W_j^{\open}} D_j C$ in $[\Cstar(W^{\open}), \Cstar(W^{\open})]$ as
			\[M^2 
			\!\begin{pmatrix}
				\id & \id \\
				0 & 0
			\end{pmatrix}. \]
			By \cref{lem:ses_ir,lem:disjoint_ir}, we have the short exact sequence
			\[
			\begin{tikzcd}[ampersand replacement=\&]
				0 \ar[Rightarrow]{r} \& i_K^{W^{\open}} r_{W^{\open}}^K D \ar[Rightarrow]{r}{\iota_K^{W^{\open}} D} \&
				D \ar[Rightarrow]{r}{(\pi_{W^{\open}}^{L_j} D)_j} \&  \bigoplus_{j \in J} i_{L_j}^{W^{\open}} r_{W^{\open}}^{L_j} D \ar[Rightarrow]{r} \& 0
			\end{tikzcd}
			\]
			in $[\Cstar(W^{\open}), \Cstar(W^{\open})]$.
			By \cref{lem:key_ses} \labelcref{lem:key_ses_2}, we have the short exact sequence
			\[
			\begin{tikzcd}[ampersand replacement=\&]
				0 \ar[Rightarrow]{r} \&
				\bigoplus_{j \in J} i_{L_j}^{W^{\open}} p_K^{L_j} r_{W^{\open}}^K D_j \ar[Rightarrow]{r}{\bigoplus_j \iota_K^{W_j^{\open}} D_j}  \&
				\bigoplus_{j \in J} i_{L_j}^{W^{\open}} p_{W_j^{\open}}^{L_j} r_{W^{\open}}^{W_j^{\open}} D_j \ar[Rightarrow]{r}{\bigoplus_j \pi_{W_j^{\open}}^{L_j} D_j} \&
				\bigoplus_{j \in J} i_{L_j}^{W^{\open}} r_{W^{\open}}^{L_j} D_j \ar[Rightarrow]{r} \& 0
			\end{tikzcd}
			\]
			in $[\Cstar(W^{\open}), \Cstar(W^{\open})]$.
			The vertical morphisms in the diagram in \labelcref{thm:key_diagram_2} are defined by using these short exact sequences. 
			For example, we define a morphism $T_{\open} \Rightarrow \bigoplus_j i_{L_j}^{W^{\open}} r_{W^{\open}}^{L_j} C_{\rho_j}$ in $[\Cstar(W^{\open}), \Cstar(W^{\open})]$ by
			\[M^2 
			\!\begin{pmatrix}
				\bigoplus_j \pi_{W_j^{\open}}^{L_j} D_j & 0 \\
				(\pi_{W^{\open}}^{L_j} D)_j &  0
			\end{pmatrix}. \]
			The remainder of the proof is the same as for \labelcref{thm:key_diagram_1}. This completes the proof. 
			\qedhere
		\end{enumerate}
	\end{proof}

	\subsection{Zigzag identities}\label{subsection:zigzag_identity}
	In this subsection, we prove the following theorem, which will be used to verify the zigzag identities in \cref{thm:equivalence_KKloc_Wc_Wo,thm:equivalence_E_Wc_Wo}. Recall from \cref{subsection:notation_for_Cstar-algebras_and_functor_categories,rem:full_subcategory_functor_category} the definition of homotopy between morphisms in functor categories.
	
	\begin{thm}\label{thm:zigzag_identity}
		The diagram
		\[
		\begin{tikzcd}
			T_{\closed} S_{\open}^{\closed} \ar[Rightarrow, swap]{d}{\dot{\eta}S_{\open}^{\closed}} \ar[Rightarrow]{rr}{\ddot{\eta}S_{\open}^{\closed}} &  &  S_{\open}^{\closed}S_{\closed}^{\open}S_{\open}^{\closed} \ar[Rightarrow]{d}{S_{\open}^{\closed}\ddot{\varepsilon}} \\
			SDS_{\open}^{\closed} \ar[equal]{r} & S_{\open}^{\closed}DS \ar[Rightarrow,swap]{r}{S_{\open}^{\closed}\dot{\varepsilon}} & S_{\open}^{\closed}T_{\open}
		\end{tikzcd}
		\]
		commutes up to homotopy in $[\Cstar(W^{\open}), \Cstar(W^{\closed})]$. 
	\end{thm}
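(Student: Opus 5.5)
The plan is to express every corner of the square as a $3$-dimensional mapping cone, so that each of the four natural transformations becomes a single morphism $M^3(\xi)$ of $\{0,1\}^3$-diagrams. Then compare the two composites $M^3(\xi_{\mathrm{top}})$ and $M^3(\xi_{\mathrm{bottom}})$ by an explicit homotopy.

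First, I write all four corners as $3$-dimensional mapping cones. Since $S_{\open}^{\closed}$ is the $1$-dimensional mapping cone of $(\iota_K^{W_j^{\open}})_j$, \cref{lem:rearrangement_mapping_cones} in the form of \cref{eg:composition_1+2=3} presents $T_{\closed}S_{\open}^{\closed}$ as the $M^3$ of a cube. This applies because $T_{\closed}$ is given by a $\{0,1\}^2$-diagram of exact $\Cstar$-module functors (\cref{prop:Tc_To_exact_continuous_module}). The same lemma, used in the other order, presents
\begin{itemize}
\item $S_{\open}^{\closed}S_{\closed}^{\open}S_{\open}^{\closed}$, via \cref{prop:composition_reflection_functors},
\item $SDS_{\open}^{\closed}=S_{\open}^{\closed}DS$, and
\item $S_{\open}^{\closed}T_{\open}$
\end{itemize}
each as the $M^3$ of a cube. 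I simplify each cube entry using \cref{lem:relations_ri_vanish}, \cref{lem:relations_pri} and \cref{lem:composition_iota_pi_Wj_K}, exactly as in the proof of \cref{prop:composition_reflection_functors}. Many vertices vanish, and the remaining ones are of the form $i_{L_j}^{W^{\closed}}p_K^{L_j}r_{W^{\open}}^KD$, $i_K^{W^{\closed}}r_{W^{\open}}^KD$, $i_{L_j}^{W^{\closed}}p_{W_j^{\open}}^{L_j}r_{W^{\open}}^{W_j^{\open}}D_j$, and so on.

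Under these identifications, $\dot\eta S_{\open}^{\closed}$, $\ddot\eta S_{\open}^{\closed}$, $S_{\open}^{\closed}\ddot\varepsilon$ and $S_{\open}^{\closed}\dot\varepsilon$ become $M^3$ of morphisms of cubes. Their entries are read off from \cref{def:eta_epsilon} and are built from $0$, $\id$, the maps $\pi_{W^{\closed}}^K D$, $\pi_{W_j^{\closed}}^K D$ and $\iota_K^{W_j^{\open}}D_j$, and $\rho_j$. Composing gives two morphisms of cubes $\Xi_{\mathrm{top}},\Xi_{\mathrm{bot}}$ between the same source and target cubes. Typically they will not be equal: the top route factors through vertices containing $D_j$, where $\rho_j$ is applied, while the bottom route passes through the $D$-vertex via $\id$. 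One composite therefore sends the relevant data to a different coordinate of the cube than the other.

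The main step, and the expected obstacle, is to produce the homotopy. The two composites should differ by moving mass between two adjacent vertices of the target cube along an edge. On mapping cones this amounts to a reparametrization of the cone coordinates $\Omega_{\lambda}=[0,1)^{\{i:\lambda(i)=1\}}$.

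I would reduce to a statement about $2$-dimensional mapping cones. Using \cref{lem:mapping_cone_of_mapping_cone}, I view $M^3$ as $M$ applied to a morphism of $M^2$'s, taken along the cube direction coming from $S_{\open}^{\closed}$. The problem then becomes a homotopy between two morphisms of $2$-dimensional mapping cones
\[
M^2\begin{pmatrix} A_{00}\to A_{01}\\ A_{10}\to A_{11}\end{pmatrix}\to M^2\begin{pmatrix} B_{00}\to B_{01}\\ B_{10}\to B_{11}\end{pmatrix},
\]
one of which "goes around the square the other way". I would construct it concretely by a rotation of $[0,1)^2$. Explicitly, I would rotate the coordinate pair $(s,t)$ through the angle $\tfrac{\pi}{2}u$ for $u\in[0,1]$, clipped to $[0,1)^2$, or alternatively use the homotopy $(s,t)\mapsto(\max(s,ut),\dots)$. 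This exchanges the roles of the edges $\omega_{01}^{11}$ and $\omega_{10}^{11}$, and I must check that it is compatible with the relations $\varphi_\lambda^\mu\circ f_\lambda=f_\mu\circ\omega_\lambda^\mu$.

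A sign or orientation issue may appear here, since swapping the two coordinates of $[0,1)^2$ is orientation-reversing. If the plain swap fails, I would use instead the rotation composed with a flip $t\mapsto 1-t$ suitably supported. The homotopy must be natural and $\Cstar$-module, hence built solely from maps of the parameter spaces, so the final verification is that it is a morphism $\to\conti([0,1])\otimes(\cdot)$ in $[\Cstar(W^{\open}),\Cstar(W^{\closed})]$ with endpoints $M^3(\Xi_{\mathrm{top}})$ and $M^3(\Xi_{\mathrm{bot}})$.
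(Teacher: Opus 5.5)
Your frame matches the paper's: present the five corners as $3$-dimensional mapping cones, reduce to $2$-dimensional ones via \cref{lem:mapping_cone_of_mapping_cone}, then write down an explicit homotopy. But the step you leave open is the whole content, and your set-up for it is wrong in two places.

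\textbf{The bottom route is not a cube morphism.} The identification $SDS_{\open}^{\closed}=S_{\open}^{\closed}DS$ equates the $3$-dimensional mapping cones of two \emph{different} cubes, with nonzero vertices at $010,011$ and at $010,110$ respectively. So there is no $\Xi_{\mathrm{bot}}$ between the same cubes as $\Xi_{\mathrm{top}}$. Your own remark that one route moves data to a different coordinate already contradicts it being a cube morphism.

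\textbf{You collapse the wrong direction.} The direction to collapse is the one along which $\rho_j$ acts (the arrows $\bigoplus_j\id_K\rho_j$ and $\bigoplus_j\id_{W_j^{\open}}\rho_j$). ``The direction coming from $S_{\open}^{\closed}$'' is ambiguous in any case: it is horizontal in $T_{\closed}S_{\open}^{\closed}$ but vertical in $S_{\open}^{\closed}T_{\open}$, and collapsing it leaves $\rho_j$ in the $2$-dimensional diagrams. The paper glues the cubes into one commutative $2\times 2\times 3$ diagram and takes mapping cones in the $\rho_j$-direction. This removes $\rho_j$ and yields a grid $A_1\Rightarrow A_2\Rightarrow 0$ over $A_3\Rightarrow A_4\Rightarrow A_5$. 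The square of the theorem then becomes exactly the square of \cref{lem:homotopy_2-dimensional_mapping_cone}, with the bottom route passing through $C_{\varphi_1^2}$, read once as a horizontal and once as a vertical $2$-dimensional cone.

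\textbf{The homotopy is not a rotation.} On elements $a=(a_1,a_2,a_3,a_4)$ the two composites are $(a_1,\varphi_3^4\circ a_3,\varphi_4^5\circ a_4)$ and $(a_1,\varphi_2^4\circ a_2,0)$. Their $1$-dimensional components are the restrictions of $a_4$ to the edges $t\mapsto(t,0)$ and $t\mapsto(0,t)$. Their $2$-dimensional components are $\varphi_4^5\circ a_4$ and $0$. A rotation of $[0,1)^2$ breaks the constraint $b_5(0,\cdot)=0$ forced by the zero vertex above $A_5$, and no reparametrization turns $\varphi_4^5\circ a_4$ into $0$. What works is to connect both maps to a common ``diagonal'' map via
\[
\bigl(a_1,\ a_4(t,ut),\ \varphi_4^5\circ a_4(t,s+ut)\bigr),\qquad \bigl(a_1,\ a_4(ut,t),\ \varphi_4^5\circ a_4(ut,s+t)\bigr),\qquad u\in[0,1],
\]
with functions extended by zero; the two families agree at $u=1$. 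They are well defined precisely because $\varphi_1^4=\varphi_2^4\circ\varphi_1^2$ and $\varphi_4^5\circ\varphi_2^4=0$. The latter means $\varphi_4^5\circ a_4$ vanishes on the edge $\{0\}\times[0,1)$. The maps $(t,s)\mapsto(t,s+ut)$ and $(t,s)\mapsto(ut,s+t)$ send that edge into itself, and the second sends everything into it at $u=0$.

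This also settles your orientation worry. If the composites really differed by a transposition of two suspension coordinates, they would differ by a sign in $\KKcat$ and the theorem would be false. Moreover, your fallback flip $t\mapsto 1-t$ is not admissible on a cone coordinate: it swaps the glued end $0$ with the end at which functions vanish.
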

	
	To this end, we prove the following lemma, which is based on an idea of Katsura. The author would like to thank him for this insight.
	
	\begin{lem}\label{lem:homotopy_2-dimensional_mapping_cone}
		Let $\frakA$ be a category. Suppose we are given a commutative diagram
		\[
		\begin{tikzcd}
			A_1 \ar[Rightarrow]{r}{\varphi_1^2} \ar[Rightarrow, swap]{d}{\varphi_1^3} & A_2 \ar[Rightarrow]{r} \ar[Rightarrow, swap]{d}{\varphi_2^4} & 0 \ar[Rightarrow]{d} \\
			A_3 \ar[Rightarrow, swap]{r}{\varphi_3^4} & A_4 \ar[Rightarrow, swap]{r}{\varphi_4^5} & A_5
		\end{tikzcd}
		\]
		in $[\frakA, \Cstar]$. Put $\varphi_1^4:=\varphi_2^4 \circ \varphi_1^2=\varphi_3^4 \circ \varphi_1^3 \colon A_1 \Rightarrow A_4$. 
		Then the diagram
		\[
		\begin{tikzcd}[ampersand replacement=\&, row sep=large, every label/.append style={font=\small}, cells={nodes={font=\small}}]
			M^2\mathopen{} \left(\Msquare{A_1}{A_2}{A_3}{A_4}
			{\varphi_1^2}{\varphi_1^3}{\varphi_2^4}{\varphi_3^4}\right)
			\ar[Rightarrow, swap]{d}{
				M^2\!\begin{pmatrix}\id & \id \\ 0 & 0\end{pmatrix}
			}
			\ar[Rightarrow]{rr}{
				M^2\!\begin{pmatrix}\id & 0 \\ \varphi_3^4 & \varphi_4^5\end{pmatrix}
			}
			\& \&
			M^2\mathopen{} \left(\Msquare{A_1}{0}{A_4}{A_5}
			{}{\varphi_1^4}{}{\varphi_4^5}\right)
			\\
			M^2\mathopen{} \left(\Msquare{A_1}{A_2}{0}{0}
			{\varphi_1^2}{}{}{}\right)
			\ar[equal]{r}
			\& C_{\varphi_1^2} \ar[equal]{r} \&
			M^2\mathopen{} \left(\Msquare{A_1}{0}{A_2}{0}
			{}{\varphi_1^2}{}{}\right)
			\ar[Rightarrow, swap]{u}{
				M^2\!\begin{pmatrix}\id & 0 \\ \varphi_2^4 & 0\end{pmatrix}
			}
		\end{tikzcd}
		\]
		commutes up to homotopy in $[\frakA, \Cstar]$.
	\end{lem}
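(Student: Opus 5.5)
The plan is to write down an explicit homotopy at the level of elements, pointwise in $\frakA$. Because all maps are built from post-composition with natural transformations and precomposition with fixed maps of parameter spaces, the construction will automatically be natural in $\frakA$ and give a morphism into $\conti([0,1]) \otimes M^2(\cdots)$ in $[\frakA, \Cstar]$.

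\textbf{Unwinding the two composites.} Fix an object of $\frakA$ and suppress it from the notation. Using \cref{eg:2-dimensional_Omega}, an element of the top-left cone is a tuple $(a,f,g,h)$ with
\begin{itemize}
\item $a \in A_1$, $f \in \conti_0([0,1),A_2)$, $g \in \conti_0([0,1),A_3)$, $h \in \conti_0([0,1)^2,A_4)$;
\item $f(0)=\varphi_1^2(a)$ and $g(0)=\varphi_1^3(a)$;
\item $h(0,t)=\varphi_2^4 f(t)$ and $h(t,0)=\varphi_3^4 g(t)$.
\end{itemize}
An element of the target $M^2$ is a triple $(b,k,l)$ with $b\in A_1$, $k\in \conti_0([0,1),A_4)$, $l\in\conti_0([0,1)^2,A_5)$, subject to
\begin{itemize}
\item $k(0)=\varphi_1^4(b)$;
\item $l(0,t)=0$ and $l(t,0)=\varphi_4^5 k(t)$.
\end{itemize}
The upper composite sends $(a,f,g,h)$ to $(a,\varphi_3^4\circ g,\varphi_4^5\circ h)$. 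The lower composite passes through $C_{\varphi_1^2}$ via $(a,f)$ and sends it to $(a,\varphi_2^4\circ f,0)$.

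In terms of $h$ alone, these are $(a,\ h(\cdot,0),\ \varphi_4^5\circ h)$ and $(a,\ h(0,\cdot),\ 0)$. The essential identity is that $\varphi_4^5 h$ vanishes on the axis $\{0\}\times[0,1)$. Indeed, $\varphi_4^5 h(0,t)=\varphi_4^5\varphi_2^4 f(t)=0$, since $\varphi_2^4$ followed by $\varphi_4^5$ factors through $0$ by the commutativity of the given diagram.

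\textbf{The homotopy: rotating the axis.} Write points of $[0,1)^2$ in "max-norm polar coordinates" $(r,\theta)$, with $r=\max(x,y)\in[0,1)$ and $\theta\in[0,\pi/2]$. This is a homeomorphism away from the origin that respects vanishing at infinity, because $r\to 1$ exactly at infinity. For $s\in[0,1]$, let $\gamma_s(t)$ be the point with $r=t$ and angle $s\pi/2$, and set $k_s:=h\circ\gamma_s$. For $l_s$, choose a continuous family of maps $\Gamma_s\colon[0,1)^2\to[0,1)^2$ with the following properties:
\begin{itemize}
\item $\Gamma_s$ preserves $r$;
\item $\Gamma_s$ sends the angle $\theta\in[0,\pi/2]$ to $s\pi/2+(1-s)\theta$, so the first axis goes to the ray at angle $s\pi/2$ and the second axis goes to the ray at angle $\pi/2$.
\end{itemize}
Define $l_s:=\varphi_4^5\circ h\circ\Gamma_s$.

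\textbf{Checking the conditions.} The relations hold for every $s$:
\begin{itemize}
\item $k_s(0)=h(0,0)=\varphi_1^4(a)$;
\item $l_s(t,0)=\varphi_4^5 k_s(t)$;
\item $l_s(0,t)=\varphi_4^5 h(0,t)=0$, by the vanishing identity above.
\end{itemize}
Consequently $(a,k_s,l_s)$ lies in the target cone for each $s$. At the endpoints:
\begin{itemize}
\item at $s=0$ we have $\Gamma_0=\id$, which recovers $(a,\varphi_3^4 g,\varphi_4^5 h)$;
\item at $s=1$ the map $\Gamma_1$ collapses onto the second axis, so $l_1=0$ and $k_1=\varphi_2^4 f$, which recovers the lower composite.
\end{itemize}

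The map $(a,f,g,h)\mapsto (s\mapsto(a,k_s,l_s))$ is a $\ast$-homomorphism, since every entry is obtained by precomposition with fixed maps and post-composition with $\ast$-homomorphisms. It is natural in $\frakA$ because the $\varphi$'s are natural transformations. It therefore gives the required homotopy.

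\textbf{Expected obstacle.} The main obstacle is purely analytic bookkeeping. One must check that $(s,x)\mapsto h(\Gamma_s(x))$ is jointly continuous, including at the origin where the polar coordinates degenerate; this holds because $\Gamma_s$ fixes the origin and is $r$-preserving. One must also check that these functions vanish at infinity uniformly in $s$; this holds because $r(\Gamma_s(x))=r(x)$. Once these points are verified, the result is a genuine element of $\conti([0,1])\otimes M^2(A_1,0,A_4,A_5)$.
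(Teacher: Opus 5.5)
Your proof is correct and follows essentially the same strategy as the paper. Both give an explicit element-wise homotopy, natural in $\frakA$, that moves the restriction of $h$ from the axis $[0,1)\times\{0\}$ to the axis $\{0\}\times[0,1)$, and both deform the $A_5$-component using the vanishing of $\varphi_4^5\circ h$ on $\{0\}\times[0,1)$. The difference is only in packaging: the paper uses two linear homotopies that meet at the diagonal, namely $h(t,ut)$ and $h(ut,t)$ together with the shears $\varphi_4^5\circ h(t,s+ut)$ and $\varphi_4^5\circ h(ut,s+t)$ extended by zero, whereas you do it in a single step with an $r$-preserving angular compression in max-norm polar coordinates.
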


	\begin{proof}
		Let $A$ and $B$ denote the $2$-dimensional mapping cones appearing in the upper-left and upper-right corners of the above diagram, respectively. Let $\varphi \colon A \Rightarrow B$ be the top horizontal morphism, and let $\psi \colon A \Rightarrow B$ be the morphism obtained by composing the two vertical morphisms and the two identity morphisms along the bottom row. 
		We shall prove that $\varphi$ and $\psi$ are homotopic in $[\frakA, \Cstar]$. 
		
		Throughout the proof, for any C*-algebra $E$, we extend functions in $\conti_0([0, 1), E)$ and $\conti_0([0, 1) \times [0, 1), E)$ by zero to $[0, \infty)$ and $[0, 1) \times [0, \infty)$, respectively, and use the same symbols for the resulting functions.

		Fix $Y \in \frakA$. To simplify notation, for any morphism in $[\frakA, \Cstar]$, we denote its component at $Y$ by the same symbol.
		The C*-algebra $A(Y)$ consists of all elements
		\[
		\begin{aligned}
			(a_1, a_2, a_3, a_4) \in {}&A_1(Y) \oplus \conti_0([0, 1), A_2(Y))
			\oplus \conti_0([0, 1), A_3(Y))\\
			&\oplus \conti_0([0, 1) \times [0, 1), A_4(Y))
		\end{aligned}
		\]
		such that $\varphi_1^2(a_1)=a_2(0)$, $\varphi_1^3(a_1)=a_3(0)$, $\varphi_2^4 \circ a_2(t)=a_4(0, t)$, and $\varphi_3^4 \circ a_3(t)=a_4(t, 0)$ for all $t \in [0, 1)$. 
		The C*-algebra $B(Y)$ consists of all elements
		\[(b_1, b_4, b_5) \in A_1(Y) \oplus \conti_0([0, 1), A_4(Y)) \oplus \conti_0([0, 1) \times [0, 1), A_5(Y)) \]
		satisfying $\varphi_1^4(b_1)=b_4(0)$, $b_5(0, t)=0$, and $\varphi_4^5 \circ b_4(t)=b_5(t, 0)$ for all $t \in [0, 1)$.
		The $\ast$-homomorphisms $\varphi, \psi \colon A(Y) \to B(Y)$ are given by
		\[\varphi(a):=(a_1, \varphi_3^4 \circ a_3, \varphi_4^5 \circ a_4), \quad \psi(a):=(a_1, \varphi_2^4 \circ a_2, 0)\]
		for $a=(a_1, a_2, a_3, a_4) \in A(Y)$. 
		
		We shall define two $\ast$-homomorphisms 
		\[\Phi, \Psi \colon A(Y) \to \conti([0, 1], B(Y))\] 
		as follows. The C*-algebra $\conti([0, 1], B(Y))$ consists of all elements
		\[
		\begin{aligned}
			(f_1, f_4, f_5) \in {}&\conti([0, 1], A_1(Y))
			\oplus \conti_0([0, 1] \times [0, 1), A_4(Y))\\
			&\oplus \conti_0([0, 1] \times [0, 1) \times [0, 1), A_5(Y))
		\end{aligned}
		\]
		satisfying $\varphi_1^4 \circ f_1(u)=f_4(u, 0)$, $f_5(u, 0, t)=0$, and $\varphi_4^5 \circ f_4(u, t)=f_5(u, t, 0)$ for all $u \in [0, 1]$ and $t \in [0, 1)$.
		We first define $\ast$-homomorphisms $\Phi$ and $\Psi$ from $A(Y)$ to 
		\[\conti([0, 1], A_1(Y)) \oplus \conti_0([0, 1] \times [0, 1), A_4(Y)) \oplus \conti_0([0, 1] \times [0, 1) \times [0, 1), A_5(Y))\]
		by 
		\[\Phi(a):=(f^a_1, f^a_4, f^a_5), \quad \Psi(a):=(f^a_1, g^a_4, g^a_5) \]
		for $a=(a_1, a_2, a_3, a_4) \in A(Y)$, where $f^a_1$, $f^a_4$, $f^a_5$, $g^a_4$, and $g^a_5$ are given by
		\[
		\begin{alignedat}{2}
			f^a_1(u):=a_1, \quad &f^a_4(u, t):=a_4(t, ut), & \quad &f^a_5(u, t, s):=\varphi_4^5 \circ a_4(t, s+ut), \\
			&g^a_4(u, t):=a_4(ut, t), & \quad &g^a_5(u, t, s):=\varphi_4^5 \circ a_4(ut, s+t)
		\end{alignedat},
		\]
		for $u \in [0, 1]$ and $t, s \in [0, 1)$. 
		
		We now show that $\Phi$ and $\Psi$ take values in
		$\conti([0,1],B(Y))$.
		Let $a=(a_1, a_2, a_3, a_4) \in A(Y)$, $u \in [0, 1]$, and $t \in [0, 1)$. From
		\begin{gather*}
			\varphi_1^4 \circ f^a_1(u)=\varphi_1^4(a_1)=\varphi_2^4 \circ  \varphi_1^2(a_1)=\varphi_2^4 \circ a_2(0)=a_4(0, 0)=f^a_4(u, 0), \\
			f^a_5(u, 0, t)=\varphi_4^5 \circ a_4(0, t)=\varphi_4^5 \circ \varphi_2^4 \circ a_2(t)=0, \\
			\varphi_4^5 \circ f^a_4(u, t)=\varphi_4^5 \circ a_4(t, ut)=f^a_5(u, t, 0), 
		\end{gather*}
		we see that $\Phi(a) \in \conti([0, 1], B(Y))$. Similarly, from
		\begin{gather*}
			\varphi_1^4 \circ f^a_1(u)=a_4(0, 0)=g^a_4(u, 0), \\
			g^a_5(u, 0, t)=\varphi_4^5 \circ a_4(0, t)=0, \\
			\varphi_4^5 \circ g^a_4(u, t)=\varphi_4^5 \circ a_4(ut, t)=g^a_5(u, t, 0), 
		\end{gather*}
		we see that $\Psi(a) \in \conti([0, 1], B(Y))$. 
		Thus, we obtain $\ast$-homomorphisms
		\[\Phi, \Psi \colon A(Y) \to \conti([0, 1], B(Y)).\] 
		
		We claim that
		\[\ev_0 \circ \Phi=\varphi, \quad \ev_0 \circ \Psi=\psi, \quad \ev_1 \circ \Phi=\ev_1 \circ \Psi. \]
		To prove the claim, let $a=(a_1, a_2, a_3, a_4) \in A(Y)$ and $t, s \in [0, 1)$. From
		\[f^a_1(0)=a_1, \quad f^a_4(0, t)=a_4(t, 0)=\varphi_3^4 \circ a_3(t), \quad f^a_5(0, t, s)=\varphi_4^5 \circ a_4(t, s), \]
		we get $\ev_0 \circ \Phi=\varphi$. From
		\begin{gather*}
			f^a_1(0)=a_1, \quad g^a_4(0, t)=a_4(0, t)=\varphi_2^4 \circ a_2(t), \\
			g^a_5(0, t, s)=\varphi_4^5 \circ a_4(0, s+t)=\varphi_4^5 \circ \varphi_2^4 \circ a_2(s+t)=0, 
		\end{gather*}
		we get $\ev_0 \circ \Psi=\psi$. Finally, from
		\[f^a_4(1, t)=a_4(t, t)=g^a_4(1, t), \quad f^a_5(1, t, s)=\varphi_4^5 \circ a_4(t, s+t)=g^a_5(1, t, s), \]
		we get $\ev_1 \circ \Phi=\ev_1 \circ \Psi$. This proves the claim. 
		Since $\varphi_4^5$ is natural in $Y \in \frakA$, so are $\Phi$ and $\Psi$. 
		Consequently, $\varphi$ and $\psi$ are homotopic in $[\frakA, \Cstar]$. 
	\end{proof}

	\begin{proof}[Proof of \textup{\cref{thm:zigzag_identity}}]
		Recall that $[\Cstar(W^{\open}), \Cstar(W^{\closed})]$ is a full subcategory of $[\frakA, \Cstar]$ with $\frakA:=\Cstar(W^{\open}) \times \LCcat(W^{\closed})$. It suffices to show that the diagram in the statement commutes up to homotopy in $[\frakA, \Cstar]$. 
		
		With the help of \cref{lem:composition_theta_r_iota_pi,lem:rearrangement_mapping_cones,lem:composition_iota_pi_Wj_K,eg:composition_1+2=3}, we can describe the objects $T_{\closed}S_{\open}^{\closed}$, $S_{\open}^{\closed}S_{\closed}^{\open}S_{\open}^{\closed}$, and $S_{\open}^{\closed}T_{\open}$ as the $3$-dimensional mapping cones of the following three commutative diagrams in $[\frakA, \Cstar]$, respectively.
		\[
		\begin{tikzcd}[
			column sep=small,
			ampersand replacement=\&,
			every label/.append style={font=\small},
			cells={nodes={font=\small}}
			]
			\& {} \&
			0
			\ar[Rightarrow]{dl}
			\ar[Rightarrow]{dd}
			\ar[Rightarrow]{rr}
			\& \& 
			0
			\ar[Rightarrow]{dl}
			\ar[Rightarrow]{dd}
			\\
			\& 
			i_K^{W^{\closed}} r_{W^{\open}}^K D
			\ar[Rightarrow, swap]{dd}{(\id_K D)_j}
			\& \& 
			\bigoplus_j i_{L_j}^{W^{\closed}} p_{W_j^{\open}}^{L_j} r_{W^{\open}}^{W_j^{\open}} D
			\ar[Rightarrow, from=ll, crossing over, near start, swap, "(\iota_K^{W_j^{\open}}D)_j"]
			\\
			\& \&
			\bigoplus_j i_{L_j}^{W^{\closed}} p_K^{L_j} r_{W^{\open}}^K D_j
			\ar[Rightarrow, near start]{dl}{\bigoplus_j \id_K \rho_j}
			\ar[Rightarrow, near start, swap]{rr}{\bigoplus_j \iota_K^{W_j^{\open}} D_j}
			\& \& 
			\bigoplus_j i_{L_j}^{W^{\closed}} p_{W_j^{\open}}^{L_j} r_{W^{\open}}^{W_j^{\open}} D_j
			\ar[Rightarrow]{dl}{\bigoplus_j \id_{W_j^{\open}} \rho_j}
			\\
			\&
			\bigoplus_j i_{L_j}^{W^{\closed}} p_K^{L_j} r_{W^{\open}}^K D
			\ar[Rightarrow, swap]{rr}{\bigoplus_j \iota_K^{W_j^{\open}} D}
			\& \& 
			\bigoplus_j i_{L_j}^{W^{\closed}} p_{W_j^{\open}}^{L_j} r_{W^{\open}}^{W_j^{\open}} D
			\ar[Rightarrow, from=uu, crossing over, near start, "\id"]
		\end{tikzcd}
		\]
		\[\begin{tikzcd}[ampersand replacement=\&, every label/.append style={font=\small}, cells={nodes={font=\small}}]
			\& {} \&
			[-2em] 0
			\ar[Rightarrow]{dl}
			\ar[Rightarrow]{dd}
			\ar[Rightarrow]{rr}
			\& \& 
			0
			\ar[Rightarrow]{dl}
			\ar[Rightarrow]{dd}
			\\
			\& 
			i_K^{W^{\closed}} r_{W^{\open}}^K D
			\ar[Rightarrow, swap]{dd}{(\id_K D)_j}
			\& \& 
			0
			\ar[Rightarrow, from=ll, crossing over]
			\\
			\& \&
			\bigoplus_j i_{L_j}^{W^{\closed}} p_K^{L_j} r_{W^{\open}}^K D_j
			\ar[Rightarrow, near start]{dl}{\bigoplus_j \id_K \rho_j}
			\ar[Rightarrow, very near start, swap]{rr}{\bigoplus_j \iota_K^{W_j^{\open}} D_j}
			\& \& 
			\bigoplus_j i_{L_j}^{W^{\closed}} p_{W_j^{\open}}^{L_j} r_{W^{\open}}^{W_j^{\open}} D_j
			\ar[Rightarrow]{dl}
			\\
			\&
			\bigoplus_j i_{L_j}^{W^{\closed}} p_K^{L_j} r_{W^{\open}}^K D
			\ar[Rightarrow, swap]{rr}
			\& \& 
			0
			\ar[Rightarrow, from=uu, crossing over]
		\end{tikzcd}
		\]
		\[
		\begin{tikzcd}[
			ampersand replacement=\&,
			every label/.append style={font=\small},
			cells={nodes={font=\small}}
			]
			\& {} \&
			[-2em] 0
			\ar[Rightarrow]{dl}
			\ar[Rightarrow]{dd}
			\ar[Rightarrow]{rr}
			\& \& 
			0
			\ar[Rightarrow]{dl}
			\ar[Rightarrow]{dd}
			\\
			\& 
			i_K^{W^{\closed}} r_{W^{\open}}^K D
			\ar[Rightarrow, swap]{dd}{(\id_K D)_j}
			\& \& 
			0
			\ar[Rightarrow, from=ll, crossing over]
			\\
			\& \&
			\bigoplus_j i_{L_j}^{W^{\closed}} p_{W_j^{\open}}^{L_j} r_{W^{\open}}^{W_j^{\open}} D_j
			\ar[Rightarrow, near start]{dl}{\bigoplus_j \id_{W_j^{\open}} \rho_j}
			\ar[Rightarrow, near start, swap]{rr}{\id}
			\& \& 
			\bigoplus_j i_{L_j}^{W^{\closed}} p_{W_j^{\open}}^{L_j} r_{W^{\open}}^{W_j^{\open}} D_j
			\ar[Rightarrow]{dl}
			\\
			\&
			\bigoplus_j i_{L_j}^{W^{\closed}} p_{W_j^{\open}}^{L_j} r_{W^{\open}}^{W_j^{\open}} D
			\ar[Rightarrow, swap]{rr}
			\& \& 
			0
			\ar[Rightarrow, from=uu, crossing over]
		\end{tikzcd}
		\]
		
		Using the description of $SD$ and $DS$ in \cref{def:eta_epsilon}, we can describe the objects $SDS_{\open}^{\closed}$ and $S_{\open}^{\closed}DS$ as the $3$-dimensional mapping cones of the following two commutative diagrams in $[\frakA, \Cstar]$, respectively. 
		\[
		\begin{tikzcd}[sep=tiny, ampersand replacement=\&, every label/.append style={font=\small}, cells={nodes={font=\small}}]
			\& {} \&
			[2em] 0
			\ar[Rightarrow]{dl}
			\ar[Rightarrow]{dd}
			\ar[Rightarrow]{rr}
			\& \& 
			0
			\ar[Rightarrow]{dl}
			\ar[Rightarrow]{dd}
			\\
			\& 
			i_K^{W^{\closed}} r_{W^{\open}}^K D
			\ar[Rightarrow]{dd}
			\& \& 
			\bigoplus_j i_{L_j}^{W^{\closed}} p_{W_j^{\open}}^{L_j} r_{W^{\open}}^{W_j^{\open}} D
			\ar[Rightarrow, from=ll, crossing over, near start, swap, "(\iota_K^{W_j^{\open}}D)_j"]
			\\
			\& \&
			[-2em] 0
			\ar[Rightarrow]{dl}
			\ar[Rightarrow, near start]{rr}
			\& \& 
			0
			\ar[Rightarrow]{dl}
			\\
			\&
			0
			\ar[Rightarrow, swap]{rr}
			\& \& 
			0
			\ar[Rightarrow, from=uu, crossing over]
		\end{tikzcd}
		\]
		\[
		\begin{tikzcd}[row sep=small, ampersand replacement=\&, every label/.append style={font=\small}, cells={nodes={font=\small}}]
			\& {} \&
			[-2em] 0
			\ar[Rightarrow]{dl}
			\ar[Rightarrow]{dd}
			\ar[Rightarrow]{rr}
			\& \& 
			[1em] 0
			\ar[Rightarrow]{dl}
			\ar[Rightarrow]{dd}
			\\
			\& 
			i_K^{W^{\closed}} r_{W^{\open}}^K D
			\ar[Rightarrow, swap]{dd}{(\iota_K^{W_j^{\open}}D)_j}
			\& \& 
			0
			\ar[Rightarrow, from=ll, crossing over]
			\\
			\& \&
			0
			\ar[Rightarrow]{dl}
			\ar[Rightarrow, near start]{rr}
			\& \& 
			0
			\ar[Rightarrow]{dl}
			\\
			\&
			\bigoplus_j i_{L_j}^{W^{\closed}} p_{W_j^{\open}}^{L_j} r_{W^{\open}}^{W_j^{\open}} D
			\ar[Rightarrow, swap]{rr}
			\& \& 
			0
			\ar[Rightarrow, from=uu, crossing over]
		\end{tikzcd}
		\]
		
		Following \cref{nota:3-dimensional_mapping_cone}, we can describe the four morphisms $\dot{\eta}S_{\open}^{\closed}$, $S_{\open}^{\closed}\dot{\varepsilon}$, $\ddot{\eta}S_{\open}^{\closed}$, and $S_{\open}^{\closed}\ddot{\varepsilon}$ as follows:
		\[\dot{\eta}S_{\open}^{\closed}=
		M^3\!\begin{pmatrix}
			& 0 & & 0 \\
			\id & & \id & \\
			& 0 & & 0 \\
			0 & & 0 & \\
		\end{pmatrix}, \quad 
		S_{\open}^{\closed}\dot{\varepsilon}=
		M^3\!\begin{pmatrix}
			& 0 & & 0 \\
			\id & & 0 & \\
			& 0 & & 0 \\
			\id & & 0 & \\
		\end{pmatrix}, 
		\]
		\[
		\ddot{\eta}S_{\open}^{\closed}=
		M^3\!\begin{pmatrix}
			& 0 & & 0 \\
			\id & & 0 & \\
			& \id & & \id \\
			\id & & 0 & \\
		\end{pmatrix}, \quad 
		S_{\open}^{\closed}\ddot{\varepsilon}=
		M^3\!\begin{pmatrix}
			& 0 & & 0 \\
			\id & & 0 & \\
			& \bigoplus_j \iota_K^{W_j^{\open}} D_j & & \id \\
			\bigoplus_j \iota_K^{W_j^{\open}} D & & 0 & \\
		\end{pmatrix}.
		\]
		
		We now consider the following commutative diagram in $[\frakA, \Cstar]$. 
		\[
		\resizebox{\textwidth}{!}{%
			\begin{tikzcd}[ampersand replacement=\&, column sep=tiny, every label/.append style={font=\small}, cells={nodes={font=\small}}]
				\& {} \&
				[-2em] 0
				\ar[Rightarrow]{dl}
				\ar[Rightarrow]{dd}
				\ar[Rightarrow]{rr}
				\& \& 
				[-2em] 0
				\ar[Rightarrow]{dl}
				\ar[Rightarrow]{dd}
				\ar[Rightarrow]{rr}
				\& \& 
				[1em] 0
				\ar[Rightarrow]{dl}
				\ar[Rightarrow]{dd}
				\\
				\& 
				i_K^{W^{\closed}} r_{W^{\open}}^K D
				\ar[Rightarrow, swap]{dd}{(\id_K D)_j}
				\& \& 
				\bigoplus_j i_{L_j}^{W^{\closed}} p_{W_j^{\open}}^{L_j} r_{W^{\open}}^{W_j^{\open}} D
				\ar[Rightarrow, from=ll, crossing over, near start, swap, "(\iota_K^{W_j^{\open}} D)_j"]
				\& \& 
				0
				\ar[Rightarrow, from=ll, crossing over]
				\\
				\& \&
				\bigoplus_j i_{L_j}^{W^{\closed}} p_K^{L_j} r_{W^{\open}}^K D_j
				\ar[Rightarrow, near start]{dl}{\bigoplus_j \id_K \rho_j}
				\ar[Rightarrow, near start, swap]{rr}{\bigoplus_j \iota_K^{W_j^{\open}} D_j}
				\& \& 
				\bigoplus_j i_{L_j}^{W^{\closed}} p_{W_j^{\open}}^{L_j} r_{W^{\open}}^{W_j^{\open}} D_j
				\ar[Rightarrow, near start]{dl}{\bigoplus_j \id_{W_j^{\open}} \rho_j}
				\ar[Rightarrow, near end, swap]{rr}{\id}
				\& \& 
				\bigoplus_j i_{L_j}^{W^{\closed}} p_{W_j^{\open}}^{L_j} r_{W^{\open}}^{W_j^{\open}} D_j
				\ar[Rightarrow]{dl}
				\\
				\&
				\bigoplus_j i_{L_j}^{W^{\closed}} p_K^{L_j} r_{W^{\open}}^K D
				\ar[Rightarrow, swap]{rr}{\bigoplus_j \iota_K^{W_j^{\open}} D}
				\& \& 
				\bigoplus_j i_{L_j}^{W^{\closed}} p_{W_j^{\open}}^{L_j} r_{W^{\open}}^{W_j^{\open}} D
				\ar[Rightarrow, from=uu, crossing over, near start, "\id"]
				\ar[Rightarrow]{rr}
				\& \& 
				0
				\ar[Rightarrow, from=uu, crossing over]
				\&
			\end{tikzcd}
		}
		\]
		By taking the mapping cones of the downward-left morphisms in the preceding diagram, we obtain a commutative diagram in $[\frakA, \Cstar]$ of the form appearing in \cref{lem:homotopy_2-dimensional_mapping_cone}. By the above descriptions of the five objects and four morphisms in terms of $3$-dimensional mapping cones, the corresponding diagram of $2$-dimensional mapping cones in \cref{lem:homotopy_2-dimensional_mapping_cone} is identical to the following one:
		\[
		\begin{tikzcd}[column sep=large]
			T_{\closed} S_{\open}^{\closed} \ar[Rightarrow, swap]{d}{\dot{\eta}S_{\open}^{\closed}} \ar[Rightarrow]{r}{S_{\open}^{\closed}\ddot{\varepsilon} \circ \ddot{\eta}S_{\open}^{\closed}} &  S_{\open}^{\closed}T_{\open} \\
			SDS_{\open}^{\closed} \ar[equal]{r} & S_{\open}^{\closed}DS \ar[Rightarrow, swap]{u}{S_{\open}^{\closed}\dot{\varepsilon}}.
		\end{tikzcd}
		\]
		\cref{lem:homotopy_2-dimensional_mapping_cone} shows that this diagram commutes up to homotopy in $[\frakA, \Cstar]$. This completes the proof. 
	\end{proof}

	It is worth emphasizing that \cref{thm:zigzag_identity} holds for arbitrary $D$, $(D_j)_{j \in J}$, and $(\rho_j)_{j \in J}$. Although $(\rho_j)_{j \in J}$ appears in the downward-left arrows in the $3$-dimensional mapping cones, the key idea of the proof is that, after taking the mapping cone of this part, the information carried by $(\rho_j)_{j \in J}$ disappears, which allows us to apply \cref{lem:homotopy_2-dimensional_mapping_cone} to the resulting $2$-dimensional mapping cones.
	
	On the other hand, the author expects that the diagram
	\[
	\begin{tikzcd}
		S_{\closed}^{\open}T_{\closed} \ar[Rightarrow]{r}{S_{\closed}^{\open}\dot{\eta}} \ar[Rightarrow, swap]{d}{S_{\closed}^{\open}\ddot{\eta}} & S_{\closed}^{\open}SD \ar[equal]{r} & DSS_{\closed}^{\open} \ar[Rightarrow]{d}{\dot{\varepsilon}S_{\closed}^{\open}} \\
		S_{\closed}^{\open}S_{\open}^{\closed}S_{\closed}^{\open} \ar[Rightarrow, swap]{rr}{\ddot{\varepsilon}S_{\closed}^{\open}} &  & T_{\open} S_{\closed}^{\open}
	\end{tikzcd}
	\]
	does not commute up to homotopy in $[\Cstar(W^{\closed}), \Cstar(W^{\open})]$, even when $D$, $(D_j)_{j \in J}$, and $(\rho_j)_{j \in J}$ are as in \cref{eg:reflection_suspension} or \cref{eg:reflection_matrix} below.
	We can describe the five objects and four morphisms in this diagram in terms of $3$-dimensional mapping cones, as in the proof of \cref{thm:zigzag_identity}. In fact, there is a dual version of \cref{lem:homotopy_2-dimensional_mapping_cone}, which we include in \cref{appendix:a_dual_version}. One might hope that \cref{lem:antihomotopy_2-dimensional_mapping_cone} would imply that the above diagram commutes up to homotopy. However, although $(\rho_j)_{j \in J}$ does not appear in the constructions of $\dot{\eta}$, $\ddot{\eta}$, $\dot{\varepsilon}$, and $\ddot{\varepsilon}$, it appears in both the horizontal and vertical arrows in the corresponding $3$-dimensional mapping cones, and therefore \cref{lem:antihomotopy_2-dimensional_mapping_cone} cannot be applied.
	
	Nevertheless, we can still establish \cref{thm:equivalence_KKloc_Wc_Wo,thm:equivalence_E_Wc_Wo} using the following elementary lemma, which is easy to verify; see also the proof of \cite[Proposition~4.4.5]{Riehl_2016}.

	\begin{lem}\label{lem:zigzag_identities_equivalence}
		Let $\frakA$ and $\frakB$ be categories. Suppose we are given functors
		\[
		\begin{tikzcd}
			\frakA \ar[bend left=20]{r}{F} &
			\ar[bend left=20]{l}{G} \frakB
		\end{tikzcd}
		\]
		and natural isomorphisms $\eta \colon \id_{\frakA} \Rightarrow GF$ and $\varepsilon \colon FG \Rightarrow \id_{\frakB}$. Then $\varepsilon F \circ F \eta=\id_F$ if and only if $G \varepsilon \circ \eta G=\id_G$. 
	\end{lem}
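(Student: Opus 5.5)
By symmetry it suffices to prove one direction: assume $\varepsilon F \circ F\eta=\id_F$ and deduce $G\varepsilon \circ \eta G=\id_G$. The converse follows by applying the same argument in the opposite categories, where the roles of $F$ and $G$ and of $\eta$ and $\varepsilon^{-1}$ are swapped.

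The strategy is to exploit that $\eta$ and $\varepsilon$ are isomorphisms. Put $\theta:=G\varepsilon \circ \eta G\colon G \Rightarrow G$; it suffices to show that $\theta$ is an idempotent isomorphism, since then $\theta=\theta^{-1}\theta\theta=\theta^{-1}\theta=\id_G$. It is an isomorphism because both $G\varepsilon$ and $\eta G$ are componentwise isomorphisms.

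For idempotency, one shows $\theta\circ\theta=\theta$ by the standard string-diagram manipulation. Naturality of $\eta$ applied to the morphism $\eta_{GB}$ gives $GF\eta_{GB}\circ\eta_{GB}=\eta_{GFGB}\circ\eta_{GB}$. Naturality of $\varepsilon$ applied to $\varepsilon_B$ gives $\varepsilon_B\circ FG\varepsilon_B=\varepsilon_B\circ\varepsilon_{FGB}$. Writing
\[\theta_B\circ\theta_B=G\varepsilon_B\circ\eta_{GB}\circ G\varepsilon_B\circ\eta_{GB},\]
one uses naturality of $\eta$ with respect to $G\varepsilon_B$ to move the inner $\eta_{GB}$ past $G\varepsilon_B$:
\[\eta_{GB}\circ G\varepsilon_B=GFG\varepsilon_B\circ\eta_{GFGB}.\]
This yields
\[G\varepsilon_B\circ GFG\varepsilon_B\circ\eta_{GFGB}\circ\eta_{GB}.\]
Next, rewrite $G\varepsilon_B\circ GFG\varepsilon_B=G\varepsilon_B\circ G\varepsilon_{FGB}$ and $\eta_{GFGB}\circ\eta_{GB}=GF\eta_{GB}\circ\eta_{GB}$. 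The middle factor becomes $G(\varepsilon_{FGB}\circ F\eta_{GB})$, which equals $G(\id_{FGB})$ by the hypothesis applied at the object $GB$. What remains is $G\varepsilon_B\circ\eta_{GB}=\theta_B$.

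The only mildly delicate point is the bookkeeping of which naturality square to use at each stage; there is no genuine obstacle. Alternatively, one could cite the proof of \cite[Proposition~4.4.5]{Riehl_2016} directly.
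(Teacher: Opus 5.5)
Your proof is correct and is the standard argument the paper defers to via Riehl's Proposition~4.4.5: $\theta=G\varepsilon\circ\eta G$ is idempotent by naturality plus the hypothesis, and an idempotent isomorphism is the identity. The only slip is in the duality remark, where you should pick one of two dualities rather than combine them. Either pass to opposite categories with $F'=G^{\mathrm{op}}$, $G'=F^{\mathrm{op}}$, $\eta'=\varepsilon^{\mathrm{op}}$, $\varepsilon'=\eta^{\mathrm{op}}$, or stay in $\frakB,\frakA$ with $F'=G$, $G'=F$, $\eta'=\varepsilon^{-1}$, $\varepsilon'=\eta^{-1}$, which inverts and interchanges the two triangle composites; using opposite categories together with $\varepsilon^{-1}$ does not type-check.
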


	\subsection{Application to homotopy-invariant, stable, half-exact functors}\label{subsection:application_to_Bott_functors_1}
	
	In this subsection, we apply \cref{thm:key_diagram} to homotopy-invariant, stable, half-exact functors. 
	\begin{lem}\label{lem:reflection_cone}
		Let $\frakA$ be an additive category. 
		\begin{enumerate}[label=\textnormal{(\arabic*)}]
			\item
			For a homotopy-invariant, stable, half-exact functor $F \colon \Cstar(W^{\closed}) \to \frakA$, the natural transformation $F\ddot{\eta} \colon FT_{\closed} \Rightarrow FS_{\open}^{\closed}S_{\closed}^{\open}$ is an isomorphism in $[\Cstar(W^{\closed}), \frakA]$. 
			
			\item
			For a homotopy-invariant, stable, half-exact functor $F \colon \Cstar(W^{\open}) \to \frakA$, the natural transformation $F\dot{\varepsilon} \colon FD S \Rightarrow FT_{\open}$ is an isomorphism in $[\Cstar(W^{\open}), \frakA]$. 
		\end{enumerate}
	\end{lem}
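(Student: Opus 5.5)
We will read both claims off \cref{thm:key_diagram}, using the middle row of diagram \labelcref{thm:key_diagram_1} for (1) and the middle column of diagram \labelcref{thm:key_diagram_2} for (2), and then use \cref{lem:Bott} to identify the relevant map as an isomorphism after applying $F$.

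For (1), fix $A \in \Cstar(W^{\closed})$ and evaluate the middle row of \cref{thm:key_diagram} \labelcref{thm:key_diagram_1} at $A$. This yields a short exact sequence
\[
0 \to \bigoplus_{j \in J} i_{L_j}^{W^{\closed}} r_{W^{\closed}}^{L_j} CSDA \to T_{\closed}A \xrightarrow{\ddot{\eta}_A} S_{\open}^{\closed}S_{\closed}^{\open}A \to 0
\]
in $\Cstar(W^{\closed})$. The kernel is $C \otimes E$ with $E := \bigoplus_j i_{L_j}^{W^{\closed}} r_{W^{\closed}}^{L_j} SDA$, because the functors $i$, $r$ and $\bigoplus$ are $\Cstar$-module functors. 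Since $C$ is contractible, a contraction $C \to \conti([0,1]) \otimes C$ tensored with $\id_E$ shows that $C \otimes E$ is contractible in $\Cstar(W^{\closed})$, and tensoring with $E$ is compatible with the $\LCcat$-functor structure. Hence \cref{lem:Bott} \labelcref{prop:Bott_1} applies, so $F(\ddot{\eta}_A)$ is an isomorphism. Naturality in $A$ is automatic because $\ddot{\eta}$ is a natural transformation, so $F\ddot{\eta}$ is an isomorphism in $[\Cstar(W^{\closed}), \frakA]$.

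For (2), fix $B \in \Cstar(W^{\open})$ and evaluate the middle column of \cref{thm:key_diagram} \labelcref{thm:key_diagram_2} at $B$. This yields a short exact sequence
\[
0 \to DSB \xrightarrow{\dot{\varepsilon}_B} T_{\open}B \to \bigoplus_{j \in J} i_{L_j}^{W^{\open}} p_{W_j^{\open}}^{L_j} r_{W^{\open}}^{W_j^{\open}} D_j CB \to 0 .
\]
The quotient is again of the form $C \otimes E'$ up to the canonical module isomorphisms, since $D_j C B = C \otimes D_j B$ and the functors $i,p,r,\bigoplus$ commute with tensoring by $C$. It is therefore contractible, and \cref{lem:Bott} \labelcref{prop:Bott_2} shows that $F(\dot{\varepsilon}_B)$ is an isomorphism. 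As before, naturality gives the statement in the functor category.

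The main point needing care is confirming that the kernel in (1) and the cokernel in (2) really are contractible objects of $\Cstar(W^{\alpha})$, that is, that the homotopy can be taken to be $W^{\alpha}$-equivariant. For this it suffices that $X \mapsto C \otimes X$ is a $\Cstar$-module functor on $\Cstar(W^{\alpha})$ (\cref{prop:Cstar(X)_module_subcategory}), together with the fact that the relevant functors commute with the tensor action by $C$ (\cref{prop:induced_functor_exact_continuous_module,prop:restriction_functor_exact_continuous_module}). Everything else is bookkeeping.
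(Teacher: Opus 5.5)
Your proposal is correct and follows essentially the same route as the paper. The paper applies Lemma~\ref{lem:Bott} directly to the middle row of Theorem~\ref{thm:key_diagram}~(1), whose kernel is contractible, and to the middle column of Theorem~\ref{thm:key_diagram}~(2), whose quotient is contractible; you do the same, and additionally spell out the objectwise evaluation and why the contracting homotopy is $W^{\alpha}$-equivariant.
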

	
	\begin{proof}\
		\begin{enumerate}
			\item
			Since $\bigoplus_j i_{L_j}^{W^{\closed}} r_{W^{\closed}}^{L_j} CSD$ is contractible in $[\Cstar(W^{\closed}), \Cstar(W^{\closed})]$, \cref{lem:Bott} \labelcref{prop:Bott_1} and \cref{thm:key_diagram} \labelcref{thm:key_diagram_1} show that $F\ddot{\eta} \colon FT_{\closed} \Rightarrow FS_{\open}^{\closed}S_{\closed}^{\open}$ is an isomorphism in $[\Cstar(W^{\closed}), \frakA]$. 
			
			\item
			Since $\bigoplus_j i_{L_j}^{W^{\open}} p_{W_j^{\open}}^{L_j} r_{W^{\open}}^{W_j^{\open}} D_jC$ is contractible in $[\Cstar(W^{\open}), \Cstar(W^{\open})]$, \cref{lem:Bott} \labelcref{prop:Bott_2} and \cref{thm:key_diagram} \labelcref{thm:key_diagram_2} show that $F\dot{\varepsilon} \colon FDS \Rightarrow FT_{\open}$ is an isomorphism in $[\Cstar(W^{\open}), \frakA]$.
			\qedhere
		\end{enumerate}
	\end{proof}
	
	Note that, in the following theorem, $C_{\rho_j}$ is regarded as a functor for each $j \in J$ as in \cref{nota:tensor_product}. 
	
	\begin{thm}\label{thm:reflection_Bott}
		Let $\frakA$ be an additive category. 
		\begin{enumerate}[label=\textnormal{(\arabic*)}]
			\item \label{thm:reflection_Bott_1}
			For a homotopy-invariant, stable, half-exact functor $F \colon \Cstar(W^{\closed}) \to \frakA$ satisfying $FC_{\rho_j}=0$ in $[\Cstar(W^{\closed}), \frakA]$ for all $j \in J$, the natural transformation $F\dot{\eta} \colon FT_{\closed} \Rightarrow FSD$ is an isomorphism in $[\Cstar(W^{\closed}), \frakA]$. Therefore, we obtain
			\[FS_{\open}^{\closed} S_{\closed}^{\open} \simeq FSD\]
			in $[\Cstar(W^{\closed}), \frakA]$ via the isomorphisms $F\dot{\eta}$ and $F\ddot{\eta}$. 
			
			\item \label{thm:reflection_Bott_2}
			For a homotopy-invariant, stable, half-exact functor $F \colon \Cstar(W^{\open}) \to \frakA$ satisfying $FC_{\rho_j}=0$ in $[\Cstar(W^{\open}), \frakA]$ for all $j \in J$, the natural transformation $F\ddot{\varepsilon} \colon FS_{\closed}^{\open}S_{\open}^{\closed} \Rightarrow FT_{\open}$ is an isomorphism in $[\Cstar(W^{\open}), \frakA]$. Therefore, we obtain
			\[FS_{\closed}^{\open} S_{\open}^{\closed} \simeq FDS\]
			in $[\Cstar(W^{\open}), \frakA]$ via the isomorphisms $F\dot{\varepsilon}$ and $F\ddot{\varepsilon}$. 
		\end{enumerate}
	\end{thm}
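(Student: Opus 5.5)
The plan is to read off both parts from the exact rows and columns of \cref{thm:key_diagram}, using \cref{lem:Bott} and \cref{lem:reflection_cone}. For \labelcref{thm:reflection_Bott_1}, look at the middle column of the diagram in \cref{thm:key_diagram} \labelcref{thm:key_diagram_1}:
\[
0 \Rightarrow \bigoplus_{j} i_{L_j}^{W^{\closed}} p_{W_j^{\closed}}^{L_j} r_{W^{\closed}}^{W_j^{\closed}} S C_{\rho_j} \Rightarrow T_{\closed} \xrightarrow{\dot{\eta}} SD \Rightarrow 0 .
\]
By \cref{lem:Bott} \labelcref{prop:Bott_2}, applied objectwise at each $A \in \Cstar(W^{\closed})$, $F\dot{\eta}_A$ is an isomorphism as soon as $F$ vanishes on the kernel $E_A$ and on $SE_A$.

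So the key step is to show $F(E_A)=0$ and $F(SE_A)=0$. Here $E_A=\bigoplus_j G_j(C_{\rho_j}\otimes SA)$ with $G_j:=i_{L_j}^{W^{\closed}} p_{W_j^{\closed}}^{L_j} r_{W^{\closed}}^{W_j^{\closed}}$, which is a $\Cstar$-module endofunctor of $\Cstar(W^{\closed})$. I would use the module structure to rewrite $G_j(C_{\rho_j}\otimes SA)=C_{\rho_j}\otimes G_j(SA)$, i.e.\ the functor $C_{\rho_j}$ of \cref{nota:tensor_product} evaluated at $G_j(SA)$. The hypothesis $FC_{\rho_j}=0$ in $[\Cstar(W^{\closed}),\frakA]$ then gives $F(C_{\rho_j}\otimes B)=0$ for every $B$. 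Since $F$ is half-exact, it is additive on finite direct sums (split exactness), so $F(E_A)=0$. The same argument with $SA$ replaced by $S^2A$ gives $F(SE_A)=0$.

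Hence $F\dot\eta$ is an isomorphism. Combining it with the isomorphism $F\ddot\eta$ from \cref{lem:reflection_cone} yields
\[
F S_{\open}^{\closed}S_{\closed}^{\open} \simeq FT_{\closed}\simeq FSD .
\]
Naturality is automatic because $F\dot{\eta}$ and $F\ddot{\eta}$ are whiskerings of natural transformations.

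Part \labelcref{thm:reflection_Bott_2} is dual and uses the middle row of \cref{thm:key_diagram} \labelcref{thm:key_diagram_2}:
\[
0\Rightarrow S_{\closed}^{\open}S_{\open}^{\closed}\xrightarrow{\ddot\varepsilon} T_{\open}\Rightarrow \bigoplus_j i_{L_j}^{W^{\open}} r_{W^{\open}}^{L_j} C_{\rho_j}\Rightarrow 0 .
\]
The quotient is $\bigoplus_j C_{\rho_j}\otimes i_{L_j}^{W^{\open}}r_{W^{\open}}^{L_j}B$, again by the module property. The hypothesis kills $F$ of it and of its suspension, so \cref{lem:Bott} \labelcref{prop:Bott_2} makes $F\ddot\varepsilon$ an isomorphism. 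Composing with $F\dot\varepsilon$ from \cref{lem:reflection_cone} gives $FS_{\closed}^{\open}S_{\open}^{\closed}\simeq FDS$.

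The only delicate point is the identification of $G_j(C_{\rho_j}\otimes -)$ with $C_{\rho_j}\otimes G_j(-)$. This follows from the module-functor property of $i$, $p$, $r$ (\cref{prop:induced_functor_exact_continuous_module,prop:restriction_functor_exact_continuous_module}), or more directly from the pointwise description of the tensor action. I expect no real obstacle beyond this bookkeeping.
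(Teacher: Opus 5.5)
Your proposal is correct and is essentially the paper's proof. The paper likewise uses the module property to rewrite $FS^k$ of the relevant kernel (resp.\ quotient) as $F\bigoplus_j C_{\rho_j}(\cdots)$, concludes that it vanishes for $k=0,1$, applies \cref{lem:Bott} to the middle column (resp.\ middle row) of \cref{thm:key_diagram}, and then combines the result with \cref{lem:reflection_cone}.

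One small citation slip: in part \labelcref{thm:reflection_Bott_1} the vanishing is on the kernel and $\dot{\eta}$ is the quotient map, so the relevant case is \cref{lem:Bott} \labelcref{prop:Bott_1}, not \labelcref{prop:Bott_2}. Your description of what needs to be shown is nonetheless right.
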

	
	\begin{proof}
		\
		\begin{enumerate}
			\item
			For $k=0, 1$, we have
			\[FS^k\bigoplus_{j \in J} i_{L_j}^{W^{\closed}} p_{W_j^{\closed}}^{L_j} r_{W^{\closed}}^{W_j^{\closed}}SC_{\rho_j}=F\bigoplus_{j \in J}C_{\rho_j} i_{L_j}^{W^{\closed}} p_{W_j^{\closed}}^{L_j} r_{W^{\closed}}^{W_j^{\closed}}S^{k+1}=0. \]
			The final equality follows because $F$ commutes with finite direct sums and $FC_{\rho_j}=0$ for every $j \in J$. 
			Hence, by \cref{lem:Bott} \labelcref{prop:Bott_1} and \cref{thm:key_diagram} \labelcref{thm:key_diagram_1}, $F\dot{\eta} \colon FT_{\closed} \Rightarrow FSD$ is an isomorphism in $[\Cstar(W^{\closed}), \frakA]$. 
			
			\item
			As in \labelcref{thm:reflection_Bott_1}, for $k=0, 1$, we have
			\[FS^k\bigoplus_{j \in J} i_{L_j}^{W^{\open}} r_{W^{\open}}^{L_j}C_{\rho_j}=F \bigoplus_{j \in J} C_{\rho_j} i_{L_j}^{W^{\open}} r_{W^{\open}}^{L_j}S^k=0. \]
			Therefore, by \cref{lem:Bott} \labelcref{prop:Bott_2} and \cref{thm:key_diagram} \labelcref{thm:key_diagram_2}, $F\ddot{\varepsilon} \colon FS_{\closed}^{\open}S_{\open}^{\closed} \Rightarrow FT_{\open}$ is an isomorphism in $[\Cstar(W^{\open}), \frakA]$.
			\qedhere
		\end{enumerate}
	\end{proof}
	
	We next give two important examples of $D$, $(D_j)_{j \in J}$, and $(\rho_j)_{j \in J}$ to which \cref{thm:reflection_Bott} can be applied.
	
	The first example is given by suspension, which will appear in \cref{subsection:application_to_bivariant_K-theory_1}. For a homeomorphism $h \colon (0, 1) \to (0, 1)$ onto its image, let $h_* \colon S \to S$ denote the $\ast$-homomorphism defined by
	\[
	h_*(f)(t)=
	\begin{dcases}
		f(s) & \text{if $t=h(s)$ for some $s \in (0, 1)$},\\
		0 & \text{otherwise}
	\end{dcases}
	\]
	for $f \in S$ and $t \in (0, 1)$. The key point is that, since $h \colon (0, 1) \to (0, 1)$ is a homeomorphism onto its image, the mapping cone $C_{h_*}$ is isomorphic to $CS$ and is therefore contractible. 
	
	\begin{eg}\label{eg:reflection_suspension}
		Let $(h_j)_{j \in J}$ be a family of homeomorphisms $h_j \colon (0, 1) \to (0, 1)$ onto their images such that the images of $h_j$ for $j \in J$ are pairwise disjoint. Let $D:=S$. For each $j \in J$, let $D_j:=S$ and $\rho_j:=h_{j*} \colon D_j \to D$. Then the $\ast$-homomorphisms $\rho_j$ for $j \in J$ are mutually orthogonal. 
		Let $\alpha, \beta \in \{\closed, \open\}$ with $\alpha \neq \beta$, let $\frakA$ be an additive category, and let $F \colon \Cstar(W^{\alpha}) \to \frakA$ be a homotopy-invariant, stable, half-exact functor. Since $FC_{\rho_j}=0$ in $[\Cstar(W^{\alpha}), \frakA]$ for every $j \in J$, \cref{thm:reflection_Bott}, together with Bott periodicity in \cref{lem:Bott}, shows that
		\[FS_{\beta}^{\alpha} S_{\alpha}^{\beta} \simeq FSD \simeq F\]
		in $[\Cstar(W^{\alpha}), \frakA]$.
	\end{eg}
	
	The second example is given by matrix algebras. For a finite set $F$, let $\M_F$ denote the C*-algebra of all complex $F \times F$ matrices. For each $j \in F$, let $e_j \in \M_F$ be the matrix whose $(j, j)$-th entry is $1$ and all other entries are $0$, and let $\iota_{e_j} \colon \C \hookrightarrow \M_F$ be the $\ast$-homomorphism given by $\iota_{e_j}(\lambda):=\lambda e_j$ for $\lambda \in \C$.

	\begin{eg}\label{eg:reflection_matrix}
		Let $D:=\M_J$. For each $j \in J$, let $D_j:=\C$ and $\rho_j:=\iota_{e_j} \colon D_j \to D$ for $j \in J$. Then the $\ast$-homomorphisms $\rho_j$ for $j \in J$ are mutually orthogonal. 
		Let $\alpha, \beta \in \{\closed, \open\}$ with $\alpha \neq \beta$, let $\frakA$ be an additive category, and let $F \colon \Cstar(W^{\alpha}) \to \frakA$ be a homotopy-invariant, stable, half-exact functor. Since $FC_{\rho_j}=0$ in $[\Cstar(W^{\alpha}), \frakA]$ for every $j \in J$, \cref{thm:reflection_Bott}, together with the stability of $F$, shows that
		\[FS_{\beta}^{\alpha} S_{\alpha}^{\beta} \simeq FSD \simeq FS\]
		in $[\Cstar(W^{\alpha}), \frakA]$. 
	\end{eg}

	\begin{rem}\label{rem:restriction_reflection_functor}
		Suppose that $D$, $(D_j)_{j \in J}$, and $(\rho_j)_{j \in J}$ are as in either \cref{eg:reflection_suspension} or \cref{eg:reflection_matrix}.
		Since $J$ is finite and $D$ and all the $D_j$ are separable, the four functors
		\[
		\begin{tikzcd}
			\ar[loop left, looseness=4, "T_{\closed}"]\Cstar(W^{\closed})\ar[bend left=10]{r}{S_{\closed}^{\open}}&
			\ar[bend left=10]{l}{S_{\open}^{\closed}}\Cstar(W^{\open})\ar[loop right, looseness=4, "T_{\open}"]
		\end{tikzcd}
		\]
		restrict to functors
		\[
		\begin{tikzcd}
			\ar[loop left, looseness=4, "T_{\closed}"]\SCstar(W^{\closed})\ar[bend left=10]{r}{S_{\closed}^{\open}}&
			\ar[bend left=10]{l}{S_{\open}^{\closed}}\SCstar(W^{\open})\ar[loop right, looseness=4, "T_{\open}"]
		\end{tikzcd}.
		\]
		Each of these restricted functors is an exact $\SCstar$-module functor and commutes with countable inductive limits.
		The four natural transformations defined in \cref{def:eta_epsilon} restrict to the natural transformations between the corresponding restricted functors. Each of these restricted natural transformations is a $\SCstar$-module natural transformation.
		Consequently, all results in \cref{subsection:composition_reflection_functor,subsection:zigzag_identity,subsection:application_to_Bott_functors_1} involving these functors and natural transformations remain valid when we replace $\Cstar(\blank)$ by $\SCstar(\blank)$.
		Since $D$ and all the $D_j$ are also nuclear in both examples, the same statements hold for the corresponding categories of nuclear C*-algebras or separable nuclear C*-algebras.
	\end{rem}
	
	\subsection{Application to bivariant K-theory}\label{subsection:application_to_bivariant_K-theory_1}
	
	In this subsection, we show that the reflection functors yield equivalences between the corresponding categories of bivariant K-theory. 
	Suppose that $K$ and all the $L_j$ for $j \in J$ are finite preordered sets.
	Then $W^{\closed}$ and $W^{\open}$ are finite topological spaces. Suppose also that $D$, $(D_j)_{j \in J}$, and $(\rho_j)_{j \in J}$ are as in \cref{eg:reflection_suspension}. Then, for each $j \in J$, the mapping cone $C_{\rho_j}$ is contractible. 
	
	We first investigate equivalences between categories of ideal-related KK-theory.  
	By \cref{lem:induced_functor_KK_E}, the four restricted functors in \cref{rem:restriction_reflection_functor} induce triangulated functors
	\[
	\begin{tikzcd}
		\KKcat(W^{\closed})
		\ar[loop left, looseness=5, "T_{\closed}"]
		\ar[bend left=10]{r}{S_{\closed}^{\open}}
		&
		\KKcat(W^{\open})
		\ar[bend left=10]{l}{S_{\open}^{\closed}}
		\ar[loop right, looseness=5, "T_{\open}"]
	\end{tikzcd}
	\]
	that commute with countable coproducts.
	Moreover, by \cref{lem:induced_natural_transformation_KK_E}, the four restricted natural transformations in \cref{rem:restriction_reflection_functor} induce morphisms of triangulated functors
	\[
	\dot{\eta} \colon T_{\closed} \Rightarrow SD,
	\quad
	\ddot{\eta} \colon T_{\closed} \Rightarrow S_{\open}^{\closed}S_{\closed}^{\open},
	\quad
	\dot{\varepsilon} \colon DS \Rightarrow T_{\open},
	\quad
	\ddot{\varepsilon} \colon S_{\closed}^{\open}S_{\open}^{\closed} \Rightarrow T_{\open}. 
	\]
	Here, we denote these induced functors and natural transformations by the same symbols.
	
	\begin{lem}\label{lem:key_diagram_semi-split}
		\
		\begin{enumerate}[label=\textnormal{(\arabic*)}]
			\item \label{lem:key_diagram_semi-split_1}
			For a separable C*-algebra $A$ over $W^{\closed}$, the short exact sequence
			\[\begin{tikzcd}[ampersand replacement=\&]
				0 \ar{r} \& \bigoplus_{j \in J} i_{L_j}^{W^{\closed}} p_{W_j^{\closed}}^{L_j} r_{W^{\closed}}^{W_j^{\closed}} SC_{\rho_j}A \ar{r} \&
				T_{\closed}A \ar{r}{\dot{\eta}_A} \&
				SDA \ar{r} \& 0
			\end{tikzcd}\]
			in $\SCstar(W^{\closed})$ obtained by evaluating the middle vertical short exact sequence in \textup{\cref{thm:key_diagram} \labelcref{thm:key_diagram_1}} at $A$ is semi-split. 
			
			\item \label{lem:key_diagram_semi-split_2}
			For a separable C*-algebra $B$ over $W^{\open}$, the short exact sequence 
			\[\begin{tikzcd}[ampersand replacement=\&]
				0 \ar{r} \& DSB \ar{r}{\dot{\varepsilon}_B} \&
				T_{\open}B \ar{r} \&
				\bigoplus_{j \in J} i_{L_j}^{W^{\open}} p_{W_j^{\open}}^{L_j} r_{W^{\open}}^{W_j^{\open}} D_jCB  \ar{r} \& 0
			\end{tikzcd}\]
			in $\SCstar(W^{\open})$ obtained by evaluating the middle vertical short exact sequence in \textup{\cref{thm:key_diagram} \labelcref{thm:key_diagram_2}} at $B$ is semi-split. 
		\end{enumerate}
		
	\end{lem}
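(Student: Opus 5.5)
The plan is to recognize each sequence as (isomorphic to) a mapping cone short exact sequence. Every mapping cone short exact sequence is semi-split, which is the same principle already used in \cref{lem:semi-split_2-dimensional_mapping_cone}. Concretely, I would realize each middle column of \cref{thm:key_diagram} as the image under $M^2$ of a short exact sequence of squares. In that sequence, one row (or column) of the square is killed. Then I would apply \cref{lem:semi-split_2-dimensional_mapping_cone}, or its transposed version, obtained by swapping the two coordinates of $\{0,1\}^2$; this is legitimate by \cref{lem:product_index_sets,lem:mapping_cone_of_mapping_cone}.

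For \labelcref{lem:key_diagram_semi-split_1}, I evaluate the defining square of $T_{\closed}$ at $A$ and at each $Y \in \LC(W^{\closed})$. This gives a square $(A';\varphi)$ in $[\{0,1\}^2,\SCstar(W^{\closed})]$ with $A'_{00}=0$, $A'_{01}=DA$, bottom row $\bigoplus_j i_{L_j}^{W^{\closed}} p_{W_j^{\closed}}^{L_j} r_{W^{\closed}}^{W_j^{\closed}} D_jA \to \bigoplus_j i_{L_j}^{W^{\closed}} p_{W_j^{\closed}}^{L_j} r_{W^{\closed}}^{W_j^{\closed}} DA$. This is an honest object of $[\{0,1\}^2,\SCstar(W^{\closed})]$, since $D$ and the $D_j$ are separable and $J$ is finite. \cref{lem:semi-split_2-dimensional_mapping_cone} then says that
\[
0 \to M^2(\text{bottom row only}) \to M^2(A';\varphi) \to M^2(\text{top row only}) \to 0
\]
is semi-split. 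Here the maps are $M^2\begin{pmatrix}0&0\\ \id&\id\end{pmatrix}$ and $M^2\begin{pmatrix}\id&\id\\0&0\end{pmatrix}$. Next I check that this is exactly the middle column of \cref{thm:key_diagram} \labelcref{thm:key_diagram_1} at $A$. The right term is $M^2$ of $0\to DA$ over $0\to 0$, which is $SDA$ by \cref{def:eta_epsilon}, and the quotient map is precisely $\dot{\eta}_A=M^2\begin{pmatrix}0&\id\\0&0\end{pmatrix}$. The left term is the 2-dimensional cone identified in the proof of \cref{thm:key_diagram} with $\bigoplus_j i_{L_j}^{W^{\closed}} p_{W_j^{\closed}}^{L_j} r_{W^{\closed}}^{W_j^{\closed}} SC_{\rho_j}A$, with inclusion $M^2\begin{pmatrix}0&0\\ \id&\id\end{pmatrix}$, as in that proof. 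Since semi-splitness is invariant under isomorphism of extensions, this gives \labelcref{lem:key_diagram_semi-split_1}.

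For \labelcref{lem:key_diagram_semi-split_2}, I use the transposed version of \cref{lem:semi-split_2-dimensional_mapping_cone}. There, the sequence kills the right column, with the left column as subobject and the right column as quotient. Equivalently, it is the mapping cone short exact sequence of the morphism of 1-dimensional cones $C_{\text{left column}} \to C_{\text{right column}}$ induced by the square; the proof of \cref{lem:semi-split_2-dimensional_mapping_cone} applies verbatim after swapping coordinates. Applied to the square defining $T_{\open}$ at $B$, the subobject is $M^2$ of the left column $\bigoplus_j(\cdots)D_jB \to DB$ with zeros on the right. That is not $DSB$, however, so I would instead choose the decomposition that matches the maps in \cref{thm:key_diagram} \labelcref{thm:key_diagram_2}. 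Those maps are $\dot{\varepsilon}_B=M^2\begin{pmatrix}0&0\\ \id&0\end{pmatrix}$, with $DSB$ being $M^2$ of the square concentrated at position $10$, and the quotient $M^2\begin{pmatrix}\id&\id\\0&0\end{pmatrix}$. This is the sequence in \cref{lem:semi-split_2-dimensional_mapping_cone} whose subobject is the bottom row $DB\to 0$, with cone $SDB=DSB$, and whose quotient is the top row $\id$, with cone $C$ of the middle term. So \cref{lem:semi-split_2-dimensional_mapping_cone} applies directly, without transposition.

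The main obstacle is the bookkeeping: verifying that the terms and maps of the middle columns of \cref{thm:key_diagram} coincide, up to the canonical identifications used there, with those of \cref{lem:semi-split_2-dimensional_mapping_cone}. The identifications to track are $M^2$ of a single nonzero corner being a suspension or cone, and commuting $i,p,r$ past $S$ and $C$. I expect this to be a direct matching, exactly as in the proof of \cref{thm:key_diagram}.
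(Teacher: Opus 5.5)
Your proposal is correct and follows the paper's argument: the paper also identifies both middle columns of \cref{thm:key_diagram}, by construction, as the sequence of \cref{lem:semi-split_2-dimensional_mapping_cone} applied to the squares defining $T_{\closed}$ and $T_{\open}$ (hence mapping cone short exact sequences), and concludes that they are semi-split. As you eventually note, no transposition is needed in part (2): the $11$-corner of the square defining $T_{\open}$ is $0$, so $M^2\begin{pmatrix}0&0\\ \id&\id\end{pmatrix}$ and $M^2\begin{pmatrix}\id&\id\\ 0&0\end{pmatrix}$ are exactly $\dot{\varepsilon}_B$ and the quotient map appearing in \cref{thm:key_diagram}.
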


	\begin{proof}
		By construction, these short exact sequences are mapping cone short exact sequences and hence are semi-split; see \cref{lem:semi-split_2-dimensional_mapping_cone}. 
	\end{proof}
	
	\begin{rem}
		\
		\begin{enumerate}[label=\textnormal{(\arabic*)}]
			\item
			For a separable C*-algebra $A$ over $W^{\closed}$, evaluating the other two vertical short exact sequences in \cref{thm:key_diagram} \labelcref{thm:key_diagram_1} at $A$ also gives semi-split short exact sequences in $\SCstar(W^{\closed})$. 
			
			\item 
			For a separable C*-algebra $B$ over $W^{\open}$, evaluating the other two vertical short exact sequences in \textup{\cref{thm:key_diagram} \labelcref{thm:key_diagram_2}} at $B$ also gives semi-split short exact sequences in $\SCstar(W^{\open})$. 
		\end{enumerate}
	\end{rem}
	
	For separable C*-algebras $A$ over $W^{\closed}$ and $B$ over $W^{\open}$, the objects
	\[
	\bigoplus_{j \in J} i_{L_j}^{W^{\closed}} p_{W_j^{\closed}}^{L_j}
	r_{W^{\closed}}^{W_j^{\closed}} SC_{\rho_j}A, \quad
	\bigoplus_{j \in J} i_{L_j}^{W^{\open}} p_{W_j^{\open}}^{L_j}
	r_{W^{\open}}^{W_j^{\open}} D_jCB
	\]
	are contractible in $\SCstar(W^{\closed})$ and $\SCstar(W^{\open})$, respectively. Hence, by \cref{lem:contractible_isomorphism_KK_E,lem:key_diagram_semi-split}, the morphisms
	\[
	\dot{\eta}_A \colon T_{\closed}A \to SDA, \quad
	\dot{\varepsilon}_B \colon DSB \to T_{\open}B
	\]
	are isomorphisms in $\KKcat(W^{\closed})$ and $\KKcat(W^{\open})$, respectively. Therefore, the morphisms
	\[
	\dot{\eta} \colon T_{\closed} \Rightarrow SD, \quad
	\dot{\varepsilon} \colon DS \Rightarrow T_{\open}
	\]
	are isomorphisms in $[\KKcat(W^{\closed}), \KKcat(W^{\closed})]$ and $[\KKcat(W^{\open}), \KKcat(W^{\open})]$, respectively.

	However, the short exact sequences involving $\ddot{\eta}$ and $\ddot{\varepsilon}$ are not semi-split in general. We therefore need to restrict the functors to $\KKcat(W^{\closed})_{\loc}$ and $\KKcat(W^{\open})_{\loc}$. 
	By restriction, we obtain triangulated functors
	\[
	\begin{tikzcd}
		\ar[loop left, looseness=4, "T_{\closed}"] \KKcat(W^{\closed})_{\loc} \ar[bend left=10]{r}{S_{\closed}^{\open}} &
		\ar[bend left=10]{l}{S_{\open}^{\closed}} \KKcat(W^{\open})_{\loc} \ar[loop right, looseness=4, "T_{\open}"]
	\end{tikzcd}, \quad 
	\begin{tikzcd}
		\ar[loop left, looseness=4, "T_{\closed}"] \Boot(W^{\closed}) \ar[bend left=10]{r}{S_{\closed}^{\open}} &
		\ar[bend left=10]{l}{S_{\open}^{\closed}} \Boot(W^{\open}) \ar[loop right, looseness=4, "T_{\open}"]
	\end{tikzcd}
	\]
	that commute with countable coproducts; see \cref{prop:extension_functor_KKloc_bootstrap,prop:restriction_functor_KKloc_bootstrap}. 
	
	\begin{lem}\label{lem:key_ses_admissible}
		Fix $j \in J$. 
		\begin{enumerate}[label=\textnormal{(\arabic*)}]
			\item \label{lem:key_ses_admissible_1}
			For a separable C*-algebra $A$ over $W^{\closed}$ belonging to $\KKcat(W^{\closed})_{\loc}$, the short exact sequence
			\[
			\begin{tikzcd}
				0 \ar{r} & i_{L_j}^{W^{\closed}} r_{W^{\closed}}^{L_j} A \ar{r} & i_{L_j}^{W^{\closed}} p_{W_j^{\closed}}^{L_j} r_{W^{\closed}}^{W_j^{\closed}} A \ar{r} & i_{L_j}^{W^{\closed}} p_K^{L_j} r_{W^{\closed}}^K A \ar{r} & 0
			\end{tikzcd}
			\] 
			in $\SCstar(W^{\closed})$ obtained by evaluating the short exact sequence in \textup{\cref{lem:key_ses} \labelcref{lem:key_ses_1}} at $A$ is admissible. 
			
			\item \label{lem:key_ses_admissible_2}
			For a separable C*-algebra $B$ over $W^{\open}$ belonging to $\KKcat(W^{\open})_{\loc}$, the short exact sequence
			\[
			\begin{tikzcd}
				0 \ar{r} & i_{L_j}^{W^{\open}} p_K^{L_j} r_{W^{\open}}^K B \ar{r} & i_{L_j}^{W^{\open}} p_{W_j^{\open}}^{L_j} r_{W^{\open}}^{W_j^{\open}} B \ar{r} & i_{L_j}^{W^{\open}} r_{W^{\open}}^{L_j} B \ar{r} & 0
			\end{tikzcd}
			\]
			in $\SCstar(W^{\open})$ obtained by evaluating the short exact sequence in \textup{\cref{lem:key_ses} \labelcref{lem:key_ses_2}} at $B$ is admissible. 
		\end{enumerate}
	\end{lem}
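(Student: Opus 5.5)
The plan is to realize each sequence as the image, under an exact $\SCstar$-module functor, of one of the sequences that the definition of $\KKcat(X)_{\loc}$ declares admissible, and then invoke \cref{lem:admissible_exact_module}. We treat \labelcref{lem:key_ses_admissible_1}; part \labelcref{lem:key_ses_admissible_2} is symmetric.

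By the characterization of $\KKcat(W^{\closed})_{\loc}$ recalled from \cite[Proposition~4.7]{MN_2009}, the short exact sequence
\[
0 \to i_{L_j}^{W^{\closed}} r_{W^{\closed}}^{L_j} A \to i_{W_j^{\closed}}^{W^{\closed}} r_{W^{\closed}}^{W_j^{\closed}} A \to i_K^{W^{\closed}} r_{W^{\closed}}^K A \to 0
\]
of \cref{lem:ses_ir} is admissible. It applies with $Y_2 := W_j$, which is closed and hence locally closed, and $Y_1 := L_j$, which is open in $W_j^{\closed}$ by \cref{prop:Lj_open_closed}, so that $Y_3 = K$. In the proof of \cref{lem:key_ses} \labelcref{lem:key_ses_1}, the sequence in question was obtained by applying the functor $G := i_{L_j}^{W^{\closed}} p_{W_j^{\closed}}^{L_j} r_{W^{\closed}}^{W_j^{\closed}}$ to exactly this sequence. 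There the identifications $G\, i_{L_j}^{W^{\closed}} r_{W^{\closed}}^{L_j} = i_{L_j}^{W^{\closed}} r_{W^{\closed}}^{L_j}$, $G\, i_{W_j^{\closed}}^{W^{\closed}} r_{W^{\closed}}^{W_j^{\closed}} = G$, and $G\, i_K^{W^{\closed}} r_{W^{\closed}}^K = i_{L_j}^{W^{\closed}} p_K^{L_j} r_{W^{\closed}}^K$ come from \cref{lem:relations_extension_restriction_functors,lem:relations_pri}. The arrows are matched by \cref{lem:composition_iota_pi_Wj_K} \labelcref{lem:composition_iota_pi_Wj_K_4}.

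The functor $G$ restricts to an exact $\SCstar$-module functor $\SCstar(W^{\closed}) \to \SCstar(W^{\closed})$, since it is a composite of functors of the form $\theta_*$ and $r$ (\cref{prop:induced_functor_exact_continuous_module,prop:restriction_functor_exact_continuous_module}). Applying \cref{lem:admissible_exact_module} to $G$ and to the admissible sequence above then yields admissibility of the desired sequence.

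For \labelcref{lem:key_ses_admissible_2}, the same argument uses $Y_2 := W_j$, now open in $W^{\open}$, and $Y_1 := K$, open by \cref{prop:K_closed_open}, so that $Y_3 = L_j$. The functor this time is $i_{L_j}^{W^{\open}} p_{W_j^{\open}}^{L_j} r_{W^{\open}}^{W_j^{\open}}$.

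The only delicate point is confirming that the evaluated sequence really equals the image of the $\loc$-sequence as a sequence of objects and morphisms, not merely up to isomorphism. This was already established in the proof of \cref{lem:key_ses}. Even an isomorphism of short exact sequences would suffice, because admissibility is invariant under isomorphisms of extensions.
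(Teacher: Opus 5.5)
Your proposal is correct and follows the paper's proof: you realize the sequence as the image of the admissible sequence of \cref{lem:ses_ir} (with $Y_2=W_j$, $Y_1=L_j$ or $K$) under the exact $\SCstar$-module functor $i_{L_j}^{W^{\alpha}} p_{W_j^{\alpha}}^{L_j} r_{W^{\alpha}}^{W_j^{\alpha}}$, and then invoke \cref{lem:admissible_exact_module}. Your extra checks on the choice of $Y_1, Y_2, Y_3$ and on the functor identifications are consistent with that argument.
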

	
	\begin{proof}
		Recall from the proof of \cref{lem:key_ses} \labelcref{lem:key_ses_1} that the short exact sequence in \labelcref{lem:key_ses_admissible_1} is obtained by applying the functor $i_{L_j}^{W^{\closed}} p_{W_j^{\closed}}^{L_j} r_{W^{\closed}}^{W_j^{\closed}} \colon \Cstar(W^{\closed}) \to \Cstar(W^{\closed})$ to the short exact sequence 
		\[
		\begin{tikzcd}
			0 \ar{r} & i_{L_j}^{W^{\closed}} r_{W^{\closed}}^{L_j} A \ar{r} & i_{W_j^{\closed}}^{W^{\closed}} r_{W^{\closed}}^{W_j^{\closed}} A \ar{r} & i_K^{W^{\closed}} r_{W^{\closed}}^K A \ar{r} & 0
		\end{tikzcd}
		\] 
		in $\SCstar(W^{\closed})$. The assumption $A \in \KKcat(W^{\closed})_{\loc}$ implies that this short exact sequence is admissible. Since $i_{L_j}^{W^{\closed}} p_{W_j^{\closed}}^{L_j} r_{W^{\closed}}^{W_j^{\closed}} \colon \SCstar(W^{\closed}) \to \SCstar(W^{\closed})$ is an exact $\SCstar$-module functor, \cref{lem:admissible_exact_module} shows that the short exact sequence in \labelcref{lem:key_ses_admissible_1} is also admissible. The same argument proves \labelcref{lem:key_ses_admissible_2}. 
	\end{proof}

	\begin{lem}\label{lem:key_diagram_admissible}
		\
		\begin{enumerate}[label=\textnormal{(\arabic*)}]
			\item \label{lem:key_diagram_admissible_1}
			For a separable C*-algebra $A$ over $W^{\closed}$ belonging to $\KKcat(W^{\closed})_{\loc}$, the short exact sequence
			\[\begin{tikzcd}[ampersand replacement=\&]
				0 \ar{r} \& \bigoplus_{j \in J} i_{L_j}^{W^{\closed}} r_{W^{\closed}}^{L_j} CSDA \ar{r} \&
				T_{\closed}A \ar{r}{\ddot{\eta}_A} \&
				S_{\open}^{\closed}S_{\closed}^{\open}A \ar{r} \& 0
			\end{tikzcd}\] 
			in $\SCstar(W^{\closed})$ obtained by evaluating the middle horizontal short exact sequence in \textup{\cref{thm:key_diagram} \labelcref{thm:key_diagram_1}} at $A$ is admissible. 
			
			\item \label{lem:key_diagram_admissible_2}
			For a separable C*-algebra $B$ over $W^{\open}$ belonging to $\KKcat(W^{\open})_{\loc}$, the short exact sequence
			\[\begin{tikzcd}[ampersand replacement=\&]
				0 \ar{r} \& S_{\closed}^{\open}S_{\open}^{\closed}B \ar{r}{\ddot{\varepsilon}_B} \&
				T_{\open}B \ar{r} \&
				\bigoplus_{j \in J} i_{L_j}^{W^{\open}} r_{W^{\open}}^{L_j} C_{\rho_j} B \ar{r} \& 0
			\end{tikzcd}\] 
			in $\SCstar(W^{\open})$ obtained by evaluating the middle horizontal short exact sequence in \textup{\cref{thm:key_diagram} \labelcref{thm:key_diagram_2}} at $B$ is admissible. 
		\end{enumerate}
	\end{lem}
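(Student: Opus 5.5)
The plan is to realize each middle horizontal sequence in \cref{thm:key_diagram} as the image under $M^2$ of a short exact sequence of $\{0,1\}^2$-diagrams, and then to apply \cref{lem:admissible_mapping_cones} with $|I|=2$. That lemma reduces admissibility of the $M^2$-sequence to admissibility of the four corner sequences.

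For \labelcref{lem:key_diagram_admissible_1}, recall from the proof of \cref{thm:key_diagram} \labelcref{thm:key_diagram_1} that the middle row
\[
0 \Rightarrow \bigoplus_j i_{L_j}^{W^{\closed}} r_{W^{\closed}}^{L_j}CSD \Rightarrow T_{\closed} \Rightarrow S_{\open}^{\closed}S_{\closed}^{\open} \Rightarrow 0
\]
is $M^2$ applied to a short exact sequence of squares. We evaluate it at $A$ and inspect the four corners.

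\begin{itemize}
\item At the corner $00$ all three terms vanish, so this corner is trivially admissible.
\item At the corner $10$ the sequence is $0 \to 0 \to \bigoplus_j i_{L_j}^{W^{\closed}} p_{W_j^{\closed}}^{L_j} r_{W^{\closed}}^{W_j^{\closed}} D_jA \xrightarrow{\id} (\text{same}) \to 0$. This splits, so it is admissible.
\item At the corner $01$ the sequence is $0 \to \bigoplus_j i_{L_j}^{W^{\closed}} r_{W^{\closed}}^{L_j} DA \to DA \to i_K^{W^{\closed}} r_{W^{\closed}}^K DA \to 0$. This is the sequence of \cref{lem:ses_ir}, combined with \cref{lem:disjoint_ir}, for $Y_2=W^{\closed}$ evaluated at $DA$.
\item At the corner $11$ the sequence is the direct sum over $j$ of the sequences of \cref{lem:key_ses} \labelcref{lem:key_ses_1} evaluated at $DA$.
\end{itemize}

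To handle the corners $01$ and $11$, we first note that $DA = SA$ lies in $\KKcat(W^{\closed})_{\loc}$. This holds because the localizing subcategory is closed under suspension, and alternatively because tensoring with $D$ is an exact $\SCstar$-module functor commuting with $i$ and $r$, so \cref{lem:admissible_exact_module} applies. With this in hand, the $01$ sequence is admissible directly by the characterization of $\KKcat(W^{\closed})_{\loc}$ via \cref{lem:ses_ir}. The $11$ sequence is admissible summandwise by \cref{lem:key_ses_admissible} \labelcref{lem:key_ses_admissible_1} applied to $DA$. A finite direct sum of admissible sequences is admissible, since $\KK_X(\iota')$ for the sum is the direct sum of the individual maps, and coproducts of isomorphisms are isomorphisms.

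Part \labelcref{lem:key_diagram_admissible_2} is handled in the same way, using the squares from \cref{thm:key_diagram} \labelcref{thm:key_diagram_2} evaluated at $B$.

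\begin{itemize}
\item At the corner $11$ all terms vanish.
\item At the corner $10$ the sequence is $0 \to i_K^{W^{\open}} r_{W^{\open}}^K DB \to DB \to \bigoplus_j i_{L_j}^{W^{\open}} r_{W^{\open}}^{L_j} DB \to 0$. Its admissibility follows from $DB \in \KKcat(W^{\open})_{\loc}$.
\item At the corner $00$ the sequence is the direct sum of the sequences of \cref{lem:key_ses} \labelcref{lem:key_ses_2} evaluated at $D_jB$. Each summand is admissible by \cref{lem:key_ses_admissible} \labelcref{lem:key_ses_admissible_2}.
\item At the corner $01$ the sequence is $0 \to \bigoplus_j i_{L_j}^{W^{\open}} p_{W_j^{\open}}^{L_j} r_{W^{\open}}^{W_j^{\open}} D_jB \xrightarrow{\id} (\text{same}) \to 0 \to 0$, which is split.
\end{itemize}

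The main obstacle is bookkeeping rather than a new idea. We must check that the morphisms of squares defining the middle row are componentwise exactly the maps from \cref{lem:ses_ir} and \cref{lem:key_ses}, and that evaluation at $A$ commutes with $M^2$. The latter follows from \cref{lem:rearrangement_mapping_cones}, or simply pointwise, since $M^2$ on functor categories is computed objectwise. We must also verify closure of $\KKcat(\blank)_{\loc}$ under the functor $D\otimes\blank$, which is immediate from \cref{lem:admissible_exact_module}, since tensoring with $D$ commutes with $i_Y^X r_X^Y$.
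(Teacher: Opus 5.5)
Your proposal is correct and is essentially the paper's proof. The paper also writes each middle row as $M^2$ of a short exact sequence of squares and lists the same four corner sequences: the zero sequence, the split identity sequence, the sequence of \cref{lem:ses_ir} at $DA$ (resp.\ $DB$), and the direct sum of the sequences of \cref{lem:key_ses}. It then concludes with \cref{lem:admissible_mapping_cones}. Your checks that $DA$, $DB$, $D_jB$ lie in the localizing subcategory (equivalently, that $D\otimes -$ preserves admissibility by \cref{lem:admissible_exact_module}) and that finite direct sums of admissible sequences are admissible fill in steps the paper leaves implicit.
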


	\begin{proof}\
		\begin{enumerate}
			\item 
			Consider the following four short exact sequences in  $\SCstar(W^{\closed})$:
			\begin{gather*}
				\begin{tikzcd}[ampersand replacement=\&]
					0 \ar{r} \& 0 \ar{r} \& 0 \ar{r} \& 0 \ar{r} \& 0,
				\end{tikzcd} \\
				\begin{tikzcd}[ampersand replacement=\&]
					0 \ar{r} \& 0 \ar{r} \&
					\bigoplus_{j \in J} i_{L_j}^{W^{\closed}} p_{W_j^{\closed}}^{L_j} r_{W^{\closed}}^{W_j^{\closed}} D_jA \ar{r} \&
					\bigoplus_{j \in J} i_{L_j}^{W^{\closed}} p_{W_j^{\closed}}^{L_j} r_{W^{\closed}}^{W_j^{\closed}} D_jA \ar{r} \& 0,
				\end{tikzcd} \\
				\begin{tikzcd}[column sep=large, ampersand replacement=\&]
					0 \ar{r} \& \bigoplus_{j \in J} i_{L_j}^{W^{\closed}} r_{W^{\closed}}^{L_j} DA \ar{r} \&
					DA \ar{r} \& i_K^{W^{\closed}} r_{W^{\closed}}^K DA \ar{r} \& 0,
				\end{tikzcd} \\
				\begin{tikzcd}[ampersand replacement=\&, column sep=small]
					0 \ar{r} \&
					\bigoplus_{j \in J} i_{L_j}^{W^{\closed}} r_{W^{\closed}}^{L_j} DA \ar{r} \&
					\bigoplus_{j \in J} i_{L_j}^{W^{\closed}} p_{W_j^{\closed}}^{L_j} r_{W^{\closed}}^{W_j^{\closed}} DA \ar{r} \&
					\bigoplus_{j \in J} i_{L_j}^{W^{\closed}} p_K^{L_j} r_{W^{\closed}}^K DA \ar{r} \& 0.
				\end{tikzcd}
			\end{gather*}
			The first and second short exact sequences are clearly split, and hence admissible. 
			The third short exact sequence is admissible because $A \in \KKcat(W^{\closed})_{\loc}$. The fourth short exact sequence is admissible by \cref{lem:key_ses_admissible} \labelcref{lem:key_ses_admissible_1}. 
			Recall that the short exact sequence in \labelcref{lem:key_diagram_admissible_1} is obtained by taking the $2$-dimensional mapping cones of these four admissible short exact sequences. Therefore, it is also admissible by \cref{lem:admissible_mapping_cones}. 
			
			\item
			Consider the following four short exact sequences in $\SCstar(W^{\open})$:
			\begin{gather*}
				\begin{tikzcd}[ampersand replacement=\&, column sep=small]
					0 \ar{r} \&
					\bigoplus_{j \in J} i_{L_j}^{W^{\open}} p_K^{L_j} r_{W^{\open}}^K D_jB \ar{r} \&
					\bigoplus_{j \in J} i_{L_j}^{W^{\open}} p_{W_j^{\open}}^{L_j} r_{W^{\open}}^{W_j^{\open}} D_jB \ar{r} \&
					\bigoplus_{j \in J} i_{L_j}^{W^{\open}} r_{W^{\open}}^{L_j} D_jB \ar{r} \& 0,
				\end{tikzcd} \\
				\begin{tikzcd}[column sep=large, ampersand replacement=\&]
					0 \ar{r} \& i_K^{W^{\open}} r_{W^{\open}}^K DB \ar{r} \&
					DB \ar{r} \& \bigoplus_{j \in J} i_{L_j}^{W^{\open}} r_{W^{\open}}^{L_j} DB \ar{r} \& 0,
				\end{tikzcd} \\
				\begin{tikzcd}[ampersand replacement=\&]
					0 \ar{r} \& \bigoplus_{j \in J} i_{L_j}^{W^{\open}} p_{W_j^{\open}}^{L_j} r_{W^{\open}}^{W_j^{\open}} D_j B \ar{r} \& \bigoplus_{j \in J} i_{L_j}^{W^{\open}} p_{W_j^{\open}}^{L_j} r_{W^{\open}}^{W_j^{\open}} D_j B \ar{r} \& 0 \ar{r} \& 0,
				\end{tikzcd}\\
				\begin{tikzcd}[ampersand replacement=\&]
					0 \ar{r} \& 0 \ar{r} \& 0 \ar{r} \& 0 \ar{r} \& 0.
				\end{tikzcd}
			\end{gather*}
			The first short exact sequence is admissible by \cref{lem:key_ses_admissible} \labelcref{lem:key_ses_admissible_2}. 
			The remainder of the proof is the same as for \labelcref{lem:key_diagram_admissible_1}. 
			\qedhere
		\end{enumerate}
	\end{proof}

	\begin{rem}
		\
		\begin{enumerate}[label=\textnormal{(\arabic*)}]
			\item 
			For a separable C*-algebra $A$ over $W^{\closed}$ belonging to $\KKcat(W^{\closed})_{\loc}$, evaluating the other two horizontal short exact sequences in \textup{\cref{thm:key_diagram} \labelcref{thm:key_diagram_1}} at $A$ also gives admissible short exact sequences in $\SCstar(W^{\closed})$. 
			
			\item 
			For a separable C*-algebra $B$ over $W^{\open}$ belonging to $\KKcat(W^{\open})_{\loc}$, evaluating the other two horizontal short exact sequences in \textup{\cref{thm:key_diagram} \labelcref{thm:key_diagram_2}} at $B$ also gives admissible short exact sequences in $\SCstar(W^{\open})$. 
		\end{enumerate}
	\end{rem}
	
	For separable C*-algebras $A$ over $W^{\closed}$ and $B$ over $W^{\open}$, the objects
	\[
	\bigoplus_{j \in J} i_{L_j}^{W^{\closed}} r_{W^{\closed}}^{L_j} CSDA, \quad
	\bigoplus_{j \in J} i_{L_j}^{W^{\open}} r_{W^{\open}}^{L_j} C_{\rho_j}B
	\]
	are contractible in $\SCstar(W^{\closed})$ and $\SCstar(W^{\open})$, respectively. Hence, by \cref{lem:contractible_isomorphism_KK_E,lem:key_diagram_admissible}, for $A \in \KKcat(W^{\closed})_{\loc}$ and $B \in \KKcat(W^{\open})_{\loc}$, the morphisms
	\[
	\ddot{\eta}_A \colon T_{\closed}A \to S_{\open}^{\closed}S_{\closed}^{\open}A, \quad
	\ddot{\varepsilon}_B \colon S_{\closed}^{\open}S_{\open}^{\closed}B \to T_{\open}B
	\]
	are isomorphisms in $\KKcat(W^{\closed})_{\loc}$ and $\KKcat(W^{\open})_{\loc}$, respectively. Consequently, the morphisms
	\[
	\ddot{\eta} \colon T_{\closed} \Rightarrow S_{\open}^{\closed}S_{\closed}^{\open}, \quad
	\ddot{\varepsilon} \colon S_{\closed}^{\open}S_{\open}^{\closed} \Rightarrow T_{\open}
	\]
	are isomorphisms in $[\KKcat(W^{\closed})_{\loc}, \KKcat(W^{\closed})_{\loc}]$ and $[\KKcat(W^{\open})_{\loc}, \KKcat(W^{\open})_{\loc}]$, respectively.
	
	Let $\sfb \colon SD \Rightarrow \id$ denote the Bott periodicity isomorphism in $[\KKcat, \KKcat]$, which induces an isomorphism $\sfb \colon SD \Rightarrow \id_X$ in $[\KKcat(X), \KKcat(X)]$ for every finite topological space $X$. 
	We define an isomorphism $\eta \colon \id_{W^{\closed}} \Rightarrow S_{\open}^{\closed}S_{\closed}^{\open}$ in $[\KKcat(W^{\closed})_{\loc}, \KKcat(W^{\closed})_{\loc}]$ to be the vertical composite
	\[\id_{W^{\closed}} \xRightarrow{\sfb^{-1}} SD \xRightarrow{\dot{\eta}^{-1}} T_{\closed} \xRightarrow{\ddot{\eta}} S_{\open}^{\closed}S_{\closed}^{\open}. \]
	We define an isomorphism $\varepsilon \colon S_{\closed}^{\open}S_{\open}^{\closed} \Rightarrow \id_{W^{\open}}$ in $[\KKcat(W^{\open})_{\loc}, \KKcat(W^{\open})_{\loc}]$ to be the vertical composite
	\[S_{\closed}^{\open}S_{\open}^{\closed} \xRightarrow{\ddot{\varepsilon}} T_{\open} \xRightarrow{\dot{\varepsilon}^{-1}} DS=SD \xRightarrow{\sfb} \id_{W^{\open}}. \]

	By \cref{thm:zigzag_identity}, we obtain
	\[S_{\open}^{\closed}\varepsilon \circ
	\eta S_{\open}^{\closed}=\id_{S_{\open}^{\closed}}\]
	in $[\KKcat(W^{\open})_{\loc}, \KKcat(W^{\closed})_{\loc}]$. Since $\eta \colon \id_{W^{\closed}} \Rightarrow S_{\open}^{\closed}S_{\closed}^{\open}$ and $\varepsilon \colon S_{\closed}^{\open}S_{\open}^{\closed} \Rightarrow \id_{W^{\open}}$ are isomorphisms, \cref{lem:zigzag_identities_equivalence} and the above equality imply that
	\[\varepsilon S_{\closed}^{\open} \circ S_{\closed}^{\open}\eta =\id_{S_{\closed}^{\open}} \]
	in $[\KKcat(W^{\closed})_{\loc}, \KKcat(W^{\open})_{\loc}]$. 
	
	The preceding discussion proves the following theorem.
	
	\begin{thm}\label{thm:equivalence_KKloc_Wc_Wo}
		The reflection functors 
		\[
		\begin{tikzcd}
			\KKcat(W^{\closed})_{\loc} \ar[bend left=10]{r}{S_{\closed}^{\open}} &
			\ar[bend left=10]{l}{S_{\open}^{\closed}} \KKcat(W^{\open})_{\loc}
		\end{tikzcd}
		\] 
		together with the natural transformations 
		\[\eta \colon \id_{W^{\closed}} \Rightarrow S_{\open}^{\closed}S_{\closed}^{\open}, \quad \varepsilon \colon S_{\closed}^{\open}S_{\open}^{\closed} \Rightarrow \id_{W^{\open}}\]
		form an adjoint equivalence of triangulated categories with countable coproducts, which restricts to an adjoint equivalence between $\Boot(W^{\closed})$ and $\Boot(W^{\open})$. 
	\end{thm}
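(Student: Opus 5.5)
The plan is to assemble the theorem from the pieces established in this subsection, so most of the work is bookkeeping. First, I will check that the functors are well-defined on the localizing and bootstrap subcategories. By \cref{lem:induced_functor_KK_E} together with \cref{rem:restriction_reflection_functor}, the reflection functors $S_{\closed}^{\open}$ and $S_{\open}^{\closed}$ induce triangulated functors between $\KKcat(W^{\closed})$ and $\KKcat(W^{\open})$ that commute with countable coproducts. Each reflection functor is a mapping cone of a morphism between finite direct sums of composites of the form $i_{L_j}^{W} p^{L_j} r^{W_j}$, $i_K^W r^K$, and tensoring with $S$. Each such composite preserves $\KKcat(\blank)_{\loc}$ and $\Boot(\blank)$ by \cref{prop:extension_functor_KKloc_bootstrap,prop:restriction_functor_KKloc_bootstrap}. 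Both subcategories are localizing, hence triangulated and closed under mapping cones and finite sums, so the reflection functors restrict to $\KKcat(\blank)_{\loc}$ and to $\Boot(\blank)$.

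Second, I will show that $\eta$ and $\varepsilon$ are natural isomorphisms. The discussion preceding the theorem already gives this: $\dot{\eta}$ and $\dot{\varepsilon}$ are isomorphisms on all of $\KKcat$, by \cref{lem:key_diagram_semi-split} and \cref{lem:contractible_isomorphism_KK_E}. Their partners $\ddot{\eta}$ and $\ddot{\varepsilon}$ are isomorphisms on the $\loc$ subcategories, by \cref{lem:key_diagram_admissible} and the contractibility of the kernel and cokernel terms, using that $C_{\rho_j}\simeq CS$ is contractible for the choice in \cref{eg:reflection_suspension}. The Bott isomorphism $\sfb$ is natural and is a morphism of triangulated functors. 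All four of $\dot\eta,\ddot\eta,\dot\varepsilon,\ddot\varepsilon$ are $\SCstar$-module natural transformations by \cref{prop:eta_epsilon_module}, so by \cref{lem:induced_natural_transformation_KK_E} they are morphisms of triangulated functors. Hence $\eta$ and $\varepsilon$ are natural isomorphisms of triangulated functors.

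Third, I will verify the zigzag identities. \cref{thm:zigzag_identity} gives the relevant square up to homotopy in functor categories, and homotopy-invariance of $\KK$ turns it into an honest equality after passing to $\KKcat$. I then need to rewrite this square as $S_{\open}^{\closed}\varepsilon\circ\eta S_{\open}^{\closed}=\id$. Substituting the definitions of $\eta$ and $\varepsilon$, this reduces to showing that the Bott factors cancel: the map $S_{\open}^{\closed}\sfb\circ\sfb^{-1}S_{\open}^{\closed}$ must be the identity under the identification $SDS_{\open}^{\closed}=S_{\open}^{\closed}DS$. I expect this to be the main obstacle. The identification involves a tensor flip of the two suspension factors, which may introduce a sign, and I must check that it matches the naturality of $\sfb$ with respect to $\SCstar$-module functors. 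To handle this, I would exploit that $S_{\open}^{\closed}$ is a $\Cstar$-module functor, so that $\sfb$, being given by exterior product with a fixed KK-class, commutes with it via the module structure. Once the first identity holds, \cref{lem:zigzag_identities_equivalence} yields the second identity, giving an adjoint equivalence. Restricting to $\Boot$ is immediate, because the functors preserve the bootstrap subcategories and $\eta,\varepsilon$ restrict along full subcategories.
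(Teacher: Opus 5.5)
Your proposal is correct and follows the paper's own argument. You restrict the induced functors to the localizing and bootstrap subcategories, show that $\dot\eta$ and $\dot\varepsilon$ are isomorphisms via semi-splitness and that $\ddot\eta$ and $\ddot\varepsilon$ are isomorphisms via admissibility on $\KKcat(W^{\closed})_{\loc}$ and $\KKcat(W^{\open})_{\loc}$, and then combine \cref{thm:zigzag_identity} (a homotopy becomes an equality in $\KKcat$) with \cref{lem:zigzag_identities_equivalence}. The paper leaves the Bott-factor cancellation you flag implicit, and your module-structure argument does settle it: the identification $SDS_{\open}^{\closed}=S_{\open}^{\closed}DS$ in \cref{thm:zigzag_identity} is exactly the module isomorphism of $S_{\open}^{\closed}$ (in the $3$-cube descriptions it only relabels the cone coordinate of $S_{\open}^{\closed}$ and keeps the suspension coordinate of $SD$ fixed, while $DS=SD$ is the literal equality of the two $M^2$-descriptions of $\conti_0((0,1),D\otimes A)$), so no transposition of the two copies of $S$ occurs and no sign appears.
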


	\begin{cor}\label{cor:KKloc_tree}
		Let $X$ and $Y$ be finite $T_0$-spaces whose Hasse diagrams are orientations of the same tree. Then there exist equivalences
		\[\KKcat(X)_{\loc} \simeq \KKcat(Y)_{\loc}, \quad \Boot(X) \simeq \Boot(Y)\]
		as triangulated categories with countable coproducts. 
	\end{cor}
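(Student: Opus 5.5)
By \cref{lem:tree_orientation}, we may choose an admissible sequence $(x_1,\dots,x_n)$ of open points in $X$ with $\sigma_{x_n}\cdots\sigma_{x_1}X=Y$. Put $X_0:=X$ and $X_k:=\sigma_{x_k}X_{k-1}$ for $k=1,\dots,n$. Every $X_k$ is a finite $T_0$-space whose Hasse diagram is an orientation of the same tree $G=(V,E)$, and $x_k$ is an open point (source) of $X_{k-1}$. Since equivalences of triangulated categories with countable coproducts compose, it suffices to produce one such equivalence
\[\KKcat(X_{k-1})_{\loc}\simeq\KKcat(X_k)_{\loc},\quad \Boot(X_{k-1})\simeq\Boot(X_k)\]
for each $k$. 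Concatenating these gives the two asserted equivalences.

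For a single step, fix $x:=x_k$ and use \cref{rem:BGP-reflection}. Take $K:=\{x\}$ and $J:=J_x$, the set of neighbours of $x$ in $G$. For $y\in J_x$, let $L_y:=L_{x,y}^{X_{k-1}}$ with the order induced from $X_{k-1}$, and let $\theta_{x,y}(x):=y$. Then $V^{\open}=X_{k-1}$ and $V^{\closed}=\sigma_xX_{k-1}=X_k$, both as $T_0$-spaces. There is one degenerate case: if $J_x=\emptyset$, then $G$ is a single vertex, so $X=Y$ and there is nothing to prove. Otherwise $J$ is a non-empty finite set, $K$ and the $L_y$ are finite partially ordered sets, and the hypotheses of \cref{subsection:application_to_bivariant_K-theory_1} hold once we choose $D$, $(D_j)$ and $(\rho_j)$ as in \cref{eg:reflection_suspension}. \Cref{thm:equivalence_KKloc_Wc_Wo} then gives an adjoint equivalence $\KKcat(X_k)_{\loc}\simeq\KKcat(X_{k-1})_{\loc}$ of triangulated categories with countable coproducts, and this equivalence restricts to $\Boot(X_k)\simeq\Boot(X_{k-1})$.

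The main point to check is that \cref{rem:BGP-reflection} really identifies $X_{k-1}$ and $X_k$ with $V^{\open}$ and $V^{\closed}$ as topological spaces, and not merely as sets. Concretely, the relation $R_{W^{\open}}$ of \cref{def:RWc_RWo} must coincide with the specialization order of $X_{k-1}$, which is the reachability order of its Hasse diagram. To see this, note that in an orientation of a tree any comparable pair $(a,b)$ is joined by a directed path. If that path passes through $x$, it does so via the unique neighbour $y$ in the relevant component, and the relation $(a,x)\in R$ with $a\in L_y$ is equivalent to $(a,y)\in R_{L_y}$. This is exactly the description in \cref{def:RWc_RWo}. Pairs lying in two different components $L_y$ and $L_{y'}$ are never comparable, because a path between them must pass through the source $x$ and therefore cannot be directed. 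With this identification in hand, the corollary follows by induction on $n$.
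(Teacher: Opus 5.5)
Your proposal is correct and follows the paper's proof. The paper chooses an admissible sequence of open points via \cref{lem:tree_orientation}, identifies each step $X_{k-1}\to X_k$ with the pair $V^{\open},V^{\closed}$ via \cref{rem:BGP-reflection}, applies \cref{thm:equivalence_KKloc_Wc_Wo} with the data of \cref{eg:reflection_suspension}, and composes the resulting equivalences. Your additional checks (the single-vertex case $J_x=\emptyset$, and that $R_{W^{\open}}$ agrees with the reachability order of $X_{k-1}$) are accurate details that the paper leaves implicit.
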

	
	\begin{proof}
		This follows from \cref{lem:tree_orientation,thm:equivalence_KKloc_Wc_Wo}; see also \cref{rem:BGP-reflection}. 
	\end{proof}	
	
	We next turn to ideal-related E-theory. 
	Unlike KK-theory, E-theory satisfies excision for all short exact sequences; see \cref{lem:contractible_isomorphism_KK_E}. Thus, there is no need to verify admissibility as in \cref{lem:key_diagram_semi-split,lem:key_diagram_admissible}. 
	The same argument as for KK-theory yields the following theorem.

	\begin{thm}\label{thm:equivalence_E_Wc_Wo}
		The reflection functors 
		\[
		\begin{tikzcd}
			\Ecat(W^{\closed}) \ar[bend left=10]{r}{S_{\closed}^{\open}} &
			\ar[bend left=10]{l}{S_{\open}^{\closed}} \Ecat(W^{\open})
		\end{tikzcd}
		\] 
		together with the natural transformations 
		\[\eta \colon \id_{W^{\closed}} \Rightarrow S_{\open}^{\closed}S_{\closed}^{\open}, \quad \varepsilon \colon S_{\closed}^{\open}S_{\open}^{\closed} \Rightarrow \id_{W^{\open}}\] 
		defined in the same way as for KK-theory, form an adjoint equivalence of triangulated categories with countable coproducts, which restricts to an adjoint equivalence between $\Boot_{\E}(W^{\closed})$ and $\Boot_{\E}(W^{\open})$. 
	\end{thm}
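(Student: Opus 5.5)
We follow the KK-theoretic argument of \cref{thm:equivalence_KKloc_Wc_Wo} step by step, replacing $\KKcat(\blank)_{\loc}$ by $\Ecat(\blank)$ and omitting every admissibility check.

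\textbf{Step 1: induced functors and transformations.} By \cref{rem:restriction_reflection_functor}, with $D$, $(D_j)_{j \in J}$, $(\rho_j)_{j \in J}$ as in \cref{eg:reflection_suspension}, the functors $S_{\closed}^{\open}$, $S_{\open}^{\closed}$, $T_{\closed}$, $T_{\open}$ restrict to exact $\SCstar$-module functors commuting with countable direct sums. \cref{lem:induced_functor_KK_E} then gives triangulated functors between $\Ecat(W^{\closed})$ and $\Ecat(W^{\open})$ that commute with countable coproducts. By \cref{prop:eta_epsilon_module} and \cref{lem:induced_natural_transformation_KK_E}, the transformations $\dot{\eta}$, $\ddot{\eta}$, $\dot{\varepsilon}$, $\ddot{\varepsilon}$ induce morphisms of triangulated functors.

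\textbf{Step 2: $\dot{\eta},\ddot{\eta},\dot{\varepsilon},\ddot{\varepsilon}$ are isomorphisms.} The functor $\E_{W^{\alpha}} \colon \SCstar(W^{\alpha}) \to \Ecat(W^{\alpha})$ is homotopy-invariant, stable, and half-exact. Each $C_{\rho_j}$ is contractible, so $\E_{W^{\alpha}}C_{\rho_j}=0$. \cref{lem:reflection_cone} and \cref{thm:reflection_Bott}, applied to $F=\E_{W^{\alpha}}$ on the separable categories, then show that all four components are isomorphisms for every separable $A$ and $B$. Equivalently, \cref{lem:contractible_isomorphism_KK_E} applies directly to the middle rows and columns of \cref{thm:key_diagram}, because E-theory has excision for all extensions. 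This is the only place where the argument differs from the KK case. Define $\eta$ and $\varepsilon$ by the same composites as before, using the Bott isomorphism $\sfb \colon SD \Rightarrow \id$ in $\Ecat(X)$.

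\textbf{Step 3: zigzag identities.} \cref{thm:zigzag_identity} gives a homotopy-commutative diagram of natural transformations in $[\SCstar(W^{\open}),\SCstar(W^{\closed})]$. Since $\E_{W^{\closed}}$ is homotopy-invariant, applying it makes the diagram commute strictly. Inverting $\dot{\eta}$ and $\dot{\varepsilon}$ and inserting $\sfb$ then yields $S_{\open}^{\closed}\varepsilon\circ\eta S_{\open}^{\closed}=\id$.

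One bookkeeping check is needed in this step: the identification $SDS_{\open}^{\closed}=S_{\open}^{\closed}DS$ must be compatible with $\sfb$. That is, the Bott map has to commute with the exact module functor $S_{\open}^{\closed}$. I expect this to be the main point requiring care. It follows because $\sfb$ is given by tensoring with a class in $\Ecat(SD,\C)$, and module functors commute with exterior products. The second identity then follows from \cref{lem:zigzag_identities_equivalence}.

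\textbf{Step 4: bootstrap categories.} To restrict to $\Boot_{\E}$, note that $S_{\closed}^{\open}$ and $S_{\open}^{\closed}$ are mapping cones of direct sums of composites of $i$, $p$, $r$, and tensoring with $S$ or $D$. These functors preserve $\Boot_{\E}(\blank)$ by \cref{prop:extension_functor_KKloc_bootstrap,prop:restriction_functor_KKloc_bootstrap}. Since $\Boot_{\E}(\blank)$ is localizing and closed under $S$, it is also closed under mapping cones of triangles. Hence the adjoint equivalence restricts to $\Boot_{\E}(W^{\closed})\simeq\Boot_{\E}(W^{\open})$.
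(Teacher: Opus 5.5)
Your proposal is correct and is essentially the paper's argument: the paper proves this theorem by rerunning the KK-theoretic proof of \cref{thm:equivalence_KKloc_Wc_Wo} with every semi-splitness and admissibility check dropped, since E-theory has excision for all extensions (\cref{lem:contractible_isomorphism_KK_E}), and it restricts to the bootstrap categories via \cref{prop:extension_functor_KKloc_bootstrap,prop:restriction_functor_KKloc_bootstrap}. The paper leaves implicit the compatibility of $\sfb$ with $S_{\open}^{\closed}$ under the identification $SDS_{\open}^{\closed}=S_{\open}^{\closed}DS$ in the zigzag step; your reduction of it to naturality of the module structure with respect to exterior products is the right justification.
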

	
	\begin{rem}\label{rem:equivalence_E_Wc_Wo_infinite}
		The categories $\Ecat(X)$ and $\Boot_{\E}(X)$ can be defined for a second countable space $X$; see \cite{DM_2012}. \cref{thm:equivalence_E_Wc_Wo} remains valid when $K$ and all the $L_j$ for $j \in J$ are countable partially ordered sets. In this case, $W^{\closed}$ and $W^{\open}$ are countable Alexandrov $T_0$-spaces. Note here that an Alexandrov $T_0$-space is second countable if and only if it is countable.
	\end{rem}
	
	\begin{cor}\label{cor:E_tree}
		Let $X$ and $Y$ be finite $T_0$-spaces whose Hasse diagrams are orientations of the same tree. 
		Then there exist equivalences
		\[\Ecat(X) \simeq \Ecat(Y), \quad \Boot_{\E}(X) \simeq \Boot_{\E}(Y)\]
		as triangulated categories with countable coproducts. 
	\end{cor}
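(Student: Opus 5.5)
The proof of \cref{cor:E_tree} follows the proof of \cref{cor:KKloc_tree}, with \cref{thm:equivalence_E_Wc_Wo} in place of \cref{thm:equivalence_KKloc_Wc_Wo}. By \cref{lem:tree_orientation}, there is an admissible sequence $(x_1,x_2,\dots,x_n)$ of open points in $X$ such that $\sigma_{x_n}\cdots\sigma_{x_1}X=Y$. Put $X_0:=X$ and $X_i:=\sigma_{x_i}X_{i-1}$ for $i=1,\dots,n$. Each $X_i$ is again a finite $T_0$-space whose Hasse diagram is an orientation of the same tree $G$, and $x_i$ is an open point (a source) of $X_{i-1}$. Since equivalences of triangulated categories with countable coproducts compose, and since restrictions of equivalences to corresponding subcategories compose, it suffices to prove one step. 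That is, it suffices to show $\Ecat(X_{i-1})\simeq\Ecat(X_i)$ and $\Boot_{\E}(X_{i-1})\simeq\Boot_{\E}(X_i)$.

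For a single step, we use \cref{rem:BGP-reflection} with $x:=x_i$ and the orientation $X_{i-1}$ of $G$. Set $K:=\{x\}$, $J:=J_x$, $L_j:=L^{X_{i-1}}_{x,j}$, and $\theta_j:=\theta_{x,j}$. Because $x$ is a source of $X_{i-1}$, the remark gives $X_{i-1}=V^{\open}$ and $X_i=\sigma_xX_{i-1}=V^{\closed}$, where these spaces are built as $W^{\open}$ and $W^{\closed}$ from this data. All the sets involved are finite, so the standing assumptions of \cref{subsection:application_to_bivariant_K-theory_1} hold once we choose $D$, $(D_j)$, and $(\rho_j)$ as in \cref{eg:reflection_suspension}.

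Two points need checking. First, $J$ must be non-empty, as required in \cref{section:reflection_functors}. If $J=\emptyset$, the tree has the single vertex $x$. Then $X=Y$ is a one-point space and there is nothing to prove, so we discard such steps or treat the trivial case separately. Second, the identifications in \cref{rem:BGP-reflection} are equalities of finite $T_0$-spaces, so $\Ecat(X_{i-1})=\Ecat(W^{\open})$ and $\Ecat(X_i)=\Ecat(W^{\closed})$ on the nose. Once both points are checked, \cref{thm:equivalence_E_Wc_Wo} supplies the adjoint equivalence $S^{\closed}_{\open}\colon\Ecat(W^{\open})\rightleftarrows\Ecat(W^{\closed})\colon S^{\open}_{\closed}$ of triangulated categories with countable coproducts. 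The same theorem gives its restriction to $\Boot_{\E}$.

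I expect no step to be a real obstacle, since everything reduces to results already proved. The only care needed is the bookkeeping in \cref{rem:BGP-reflection}: confirming that when $x$ is a source, $X_{i-1}$ really is $W^{\open}$ rather than $W^{\closed}$, and hence which reflection functor runs in which direction. The direction is in any case irrelevant for the existence of an equivalence.
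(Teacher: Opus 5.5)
Your proposal is correct and is essentially the paper's proof. The paper simply cites \cref{lem:tree_orientation}, \cref{thm:equivalence_E_Wc_Wo}, and \cref{rem:BGP-reflection}, and you spell out the same chain of single reflections at open points, including the identification $X_{i-1}=V^{\open}$, $X_i=V^{\closed}$ and the trivial one-vertex case where $J_x=\emptyset$.
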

	
	\begin{proof}
		This follows from \cref{lem:tree_orientation,thm:equivalence_E_Wc_Wo}; see also \cref{rem:BGP-reflection}.  
	\end{proof}

	\begin{rem}
		The Hasse diagrams of the four-point spaces that are not accordion spaces are shown in \cref{figure:Hasse_diagrams_non-accordion_four-point spaces}. In \cite{MN_2012,BK_2011}, it is shown that filtrated K-theory, enlarged by one additional invariant, satisfies the UCT for C*-algebras over the spaces $X_1,\dots,X_5$. The space $X_6$ was studied by Bentmann in \cite{Bentmann_2010}, where no finite refinement of filtrated K-theory satisfying the UCT was obtained, and it was suggested that such a finite refinement is unlikely to exist. The results of this subsection show that the corresponding categories for the spaces $X_1, \dots, X_5$ are all equivalent, whereas $X_6$ is the only exception. Thus, for these four-point spaces, the equivalences obtained here closely parallel the known behavior of finite refinements of filtrated K-theory satisfying the UCT. A similar phenomenon appears in Ladkani's theory of universal derived equivalences of partially ordered sets. The spaces $X_1, \dots, X_5$ are universally derived equivalent, while $X_6$ is distinguished from them; see \cite{Ladkani_2007}.
		
		\begin{figure}[htbp]
			\centering
			\renewcommand{\arraystretch}{1.4}
			\begin{tabular}{ccc}
				\begin{minipage}[c]{0.28\textwidth}
					\centering
					\begin{tikzcd}[row sep=2em, column sep=2.3em]
						\bullet & \bullet & \bullet \\
						& \ar[ul] \bullet \ar[u] \ar[ur] &
					\end{tikzcd}
					
					\vspace{0.3em}
					$X_1$
				\end{minipage}
				&
				\begin{minipage}[c]{0.28\textwidth}
					\centering
					\begin{tikzcd}[row sep=2em, column sep=2.3em]
						& \bullet & \\
						\bullet \ar[ur] & \bullet \ar[u] & \ar[ul] \bullet
					\end{tikzcd}
					
					\vspace{0.3em}
					$X_2$
				\end{minipage}
				&
				\begin{minipage}[c]{0.28\textwidth}
					\centering
					\begin{tikzcd}[row sep=1.5em, column sep=1.8em]
						\bullet & & \bullet \\
						& \ar[ul] \bullet \ar[ur] & \\
						& \bullet \ar[u] &
					\end{tikzcd}
					
					\vspace{0.3em}
					$X_3$
				\end{minipage}
				\\[1.2em]
				\begin{minipage}[c]{0.28\textwidth}
					\centering
					\begin{tikzcd}[row sep=1.5em, column sep=1.8em]
						& \bullet & \\
						& \bullet \ar[u] & \\
						\bullet \ar[ur] & & \ar[ul] \bullet
					\end{tikzcd}
					
					\vspace{0.3em}
					$X_4$
				\end{minipage}
				&
				\begin{minipage}[c]{0.28\textwidth}
					\centering
					\begin{tikzcd}[row sep=1.5em, column sep=1.8em]
						& \bullet & \\
						\bullet \ar[ur] & & \ar[ul] \bullet \\
						& \ar[ul] \bullet \ar[ur] &
					\end{tikzcd}
					
					\vspace{0.3em}
					$X_5$
				\end{minipage}
				&
				\begin{minipage}[c]{0.28\textwidth}
					\centering
					\begin{tikzcd}[row sep=3em, column sep=3.5em]
						\bullet & \bullet \\
						\bullet \ar[u] \ar[ur] &
						\ar[ul] \ar[u] \bullet
					\end{tikzcd}
					
					\vspace{0.3em}
					$X_6$
				\end{minipage}
			\end{tabular}
			\caption{Hasse diagrams of the non-accordion four-point spaces}
			\label{figure:Hasse_diagrams_non-accordion_four-point spaces}
		\end{figure}
	\end{rem}

	\begin{rem}
		In \cite[Proposition~4.5]{Bentmann_2014}, Bentmann proved that, for a finite $T_0$-space $X$, the category $\Boot_{\E}(X)$ embeds into the derived category of certain module spectra. The relationship between our results and Bentmann's result will be investigated in future work.
	\end{rem}
	
	\section{Higher-dimensional reflection functors}\label{section:higher-dimensional_reflection_functors}
	
	The reflection functors introduced in the preceding section change the open-closed status of a single subset. In this section, we construct higher-dimensional reflection functors that allow these choices to be changed at several subsets simultaneously. 
	
	Throughout \cref{subsection:setting_of_topological_spaces_Walpha,subsection:preliminaries_on_connected_subsets,subsection:definition_of_higher-dimensional_reflection_functors,subsection:rearrangements_of_higher-dimensional_reflection_functors,subsection:application_to_Bott_functors_2,subsection:application_to_bivariant_K-theory_2}, we fix 
	\begin{itemize}
		\item non-empty finite sets $I$ and $J$;
		\item subsets $J_i \subset J$ for $i \in I$ such that the undirected graph 
		\[G:=(I \amalg J, \{\{i, j\} \mid i \in I, j \in J_i\})\] 
		is a tree; 
		\item preordered sets $K_i=(K_i, R_{K_i})$ for $i \in I$; 
		\item preordered sets $L_j=(L_j, R_{L_j})$ for $j \in J$; 
		\item order-preserving maps $\theta_{i, j} \colon K_i \to L_j$ for $i \in I$ and $j \in J_i$; 
		\item C*-algebras $D_i$ for $i \in I$; 
		\item C*-algebras $D_{i, j}$ for $i \in I$ and $j \in J_i$; 
		\item $\ast$-homomorphisms $\rho_{i, j} \colon D_{i, j} \to D_i$ for $i \in I$ and $j \in J_i$ such that, for each $i \in I$, the $\ast$-homomorphisms $\rho_{i, j}$ for $j \in J_i$ are mutually orthogonal. 
	\end{itemize}
	We define sets
	\[V:=I \amalg J, \quad W:=\coprod_{i \in I} K_i \amalg \coprod_{j \in J} L_j. \]
	We define a map $\tau \colon W \to V$ by
	\[
	\tau(x):=
	\begin{dcases}
		i & \text{if $x \in K_i$},\\
		j & \text{if $x \in L_j$}.
	\end{dcases}
	\]
	
	The assumption that $G$ is a tree implies that $J=\bigcup_{i \in I} J_i$, and that $|J_i \cap J_{i'}|=0, 1$ for $i, i' \in I$ with $i \neq i'$. In particular, if $I=\{i_0\}$, then $J=J_{i_0}$, and the above setting is the same as that of \cref{section:reflection_functors}. 
	
	In \cref{subsection:setting_of_topological_spaces_Walpha}, for each $\alpha \in \{\closed, \open\}^I$, we first define a partial order $R_{V^{\alpha}}$ on the set $V$; see \cref{def:RValpha}. We next define a preorder $R_{W^{\alpha}}$ on the set $W$ using $(R_{K_i})_{i \in I}$, $(R_{L_j})_{j \in J}$, and $(\theta_{i, j})_{i \in I, j \in J_i}$; see \cref{def:RWalpha}. We denote the associated Alexandrov spaces by $V^{\alpha}$ and $W^{\alpha}$, respectively. The Hasse diagram of the finite $T_0$-space $V^{\alpha}$ is an orientation of the tree $G$, and the map $\tau \colon W \to V$ is a continuous map $W^{\alpha} \to V^{\alpha}$; see \cref{prop:tau_continuous}. This continuous map allows us to use the simpler space $V^{\alpha}$ to study $W^{\alpha}$, in particular to determine when the subsets $K_i$ are open or closed (see \cref{prop:Ki_closed_open}) and when $W^{\alpha}$ is $T_0$ (see \cref{prop:Walpha_T0}).
	Note that if $I=\{i_0\}$, then for $\alpha \in \{\closed, \open\}^I$, the Alexandrov space $W^{\alpha}$ coincides with the Alexandrov space obtained from the data $K_{i_0}$, $(L_j)_{j \in J}$, $(\theta_{i_0, j})_{j \in J}$, and $\alpha(i_0) \in \{\closed, \open\}$ as in \textup{\cref{def:Wc_Wo}}. 
	
	In \cref{subsection:preliminaries_on_connected_subsets}, we reconstruct $W^{\alpha}$ relative to an arbitrary subset $F \subset I$ by contracting the tree $G$. More precisely, we contract each connected component $\bfj$ of the subgraph of $G$ induced by $(I \setminus F) \amalg J$ to a single vertex, obtaining a new tree $G^{F}$. To each such component $\bfj$, we associate the subspace $L_{\bfj}^{\alpha} \subset W^{\alpha}$ with underlying set $L_{\bfj}:=\tau^{-1}(\bfj)$, while the subspaces $K_i$ for $i \in F$ remain separate. These subspaces, together with the corresponding maps $\theta_{i,\bfj} \colon K_i \to L_{\bfj}$, and the restriction $\alpha|_{F}$, reconstruct $W^{\alpha}$; see \cref{lem:reconstruction_Walpha}.
	
	In \cref{subsection:definition_of_higher-dimensional_reflection_functors}, for $\alpha, \beta \in \{\closed, \open\}^I$, we apply the preceding reconstruction to 
	\[I^{\alpha, \beta}:=\{i \in I \mid \alpha(i) \neq \beta(i)\}. \] 
	For each connected component $\bfj$ of the subgraph induced by $(I \setminus I^{\alpha, \beta}) \amalg J$, the corresponding subspaces $L_{\bfj}^{\alpha}$ and $L_{\bfj}^{\beta}$ coincide; see \cref{prop:Ljalpha_Ljbeta_equal}. Consequently, the data used to reconstruct $W^{\alpha}$ and $W^{\beta}$ have the same subspaces $K_i$ and $L_{\bfj}^{\alpha}$ and the same maps $\theta_{i, \bfj} \colon K_i \to L_{\bfj}$, and differ only in the restrictions of $\alpha$ and $\beta$ to $I^{\alpha, \beta}$. Using these data together with the C*-algebraic data fixed above, we define a higher-dimensional reflection functor
	\[
	S_{\alpha}^{\beta} \colon \Cstar(W^{\alpha}) \to \Cstar(W^{\beta})
	\]
	as an $I^{\alpha, \beta}$-dimensional mapping cone. 
	When $|I^{\alpha, \beta}|=1$, this construction agrees with the corresponding reflection functor introduced in \cref{subsection:definition_of_reflection_functors}; see \cref{prop:higher-dimensional_reflection_functor_singleton}.

	The main result of this section is the rearrangement theorem for higher-dimensional reflection functors, \cref{thm:rearrangement_reflection_functors}, proved in \cref{subsection:rearrangements_of_higher-dimensional_reflection_functors}. We develop its consequences in \cref{subsection:application_to_Bott_functors_2,subsection:application_to_bivariant_K-theory_2,subsection:Coxeter_functors}. 
	In particular, \cref{cor:Salphabeta_composition_reflection_functors} expresses a higher-dimensional reflection functor as a composite of reflection functors. In \cref{cor:higher-dimensional_reflection_Bott}, we apply higher-dimensional reflection functors to homotopy-invariant, stable, half-exact functors. \cref{cor:equivalence_KKloc_Walpha_Wbeta,cor:equivalence_E_Walpha_Wbeta} show that higher-dimensional reflection functors induce equivalences between the corresponding categories of bivariant K-theory. Finally, \cref{cor:Coxeter_functor_independence} allows us to define an analogue of Coxeter functors. 
	
	It is worth emphasizing that \cref{thm:rearrangement_reflection_functors} holds for arbitrary $(D_i)_{i \in I}$, $(D_{i, j})_{i \in I, j \in J_i}$, and $(\rho_{i, j})_{i \in I, j \in J_i}$. 
	When we apply higher-dimensional reflection functors to KK-theory and E-theory in \cref{subsection:application_to_bivariant_K-theory_2}, however, we use the specific choice of $(D_i)_{i \in I}$, $(D_{i, j})_{i \in I, j \in J_i}$, and $(\rho_{i, j})_{i \in I, j \in J_i}$ specified in \cref{eg:higher-dimensional_reflection_suspension} (see also \cref{eg:higher-dimensional_reflection_matrix}).

	\subsection{Setting of topological spaces}\label{subsection:setting_of_topological_spaces_Walpha}
	
	\begin{defn}\label{def:RValpha}
		For $\alpha \in \{\closed, \open\}^I$, we define a relation $R_{V^{\alpha}}$ on $V$ by
		\begin{align*}
			R_{V^{\alpha}}:=\ &\{(i, i) \mid i \in I\} \amalg \{(j, j) \mid j \in J\} \\
			&\amalg \{(i, j) \mid i \in I, j \in J_i, \alpha(i)=\closed\} \amalg \{(j, i) \mid i \in I, j \in J_i, \alpha(i)=\open\} \\
			&\amalg \{(i, i') \in I \times I \mid |J_i \cap J_{i'}|=1, \alpha(i)=\closed, \alpha(i')=\open\}. 
		\end{align*}
	\end{defn}
	
	Note that, in the last set appearing in the definition of $R_{V^{\alpha}}$, the condition $i \neq i'$ is automatic. Indeed, if $\alpha(i)=\closed$ and $\alpha(i')=\open$, then $i \neq i'$, since $\closed$ and $\open$ are distinct.
	
	\begin{lem}\label{lem:RValpha_partial_order}
		For each $\alpha \in \{\closed, \open\}^I$, the relation $R_{V^{\alpha}}$ is a partial order on $V$.  
	\end{lem}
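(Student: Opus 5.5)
Reflexivity holds by definition, since $R_{V^{\alpha}}$ contains the diagonal $\{(i,i)\}\amalg\{(j,j)\}$. The work is in antisymmetry and transitivity, and both follow from a short case analysis on the types of the elements involved ($I$ or $J$). The tree hypothesis enters only through the facts $J=\bigcup_i J_i$ and $|J_i\cap J_{i'}|\le 1$ for $i\neq i'$.

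For antisymmetry, suppose $(x,y),(y,x)\in R_{V^{\alpha}}$ with $x\neq y$.
\begin{itemize}
\item Both pairs cannot be of type $I\times J$ or $J\times I$. Indeed, $(i,j)$ forces $\alpha(i)=\closed$ and $j\in J_i$, while $(j,i)$ forces $\alpha(i)=\open$, a contradiction.
\item If $x,y\in I$, then $(x,y)$ requires $\alpha(x)=\closed$ and $\alpha(y)=\open$, while $(y,x)$ requires the opposite, again a contradiction.
\item There are no non-diagonal pairs inside $J\times J$.
\end{itemize}
So $x=y$.

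For transitivity, take $(x,y),(y,z)\in R_{V^{\alpha}}$ with $x\neq y\neq z$, since otherwise the claim is trivial. Non-diagonal pairs only have the three forms $(i,j)$ with $\alpha(i)=\closed$, $(j,i)$ with $\alpha(i)=\open$, and $(i,i')$ with $\alpha(i)=\closed$, $\alpha(i')=\open$, $J_i\cap J_{i'}\ne\emptyset$. We check every composable chain.
\begin{itemize}
\item $(i,j),(j,i')$: here $\alpha(i)=\closed$, $\alpha(i')=\open$ and $j\in J_i\cap J_{i'}$. If $i\ne i'$, the third clause gives $(i,i')\in R_{V^{\alpha}}$. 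The case $i=i'$ is impossible, since $\alpha(i)$ would be both $\closed$ and $\open$.
\item $(j,i),(i,j')$: this needs $\alpha(i)=\open$ and $\alpha(i)=\closed$ simultaneously, so it does not occur.
\item $(i,j),(j,\cdot)$ with the second step inside $J$, or $(j,i),(i,i')$: the second step $(i,i')$ needs $\alpha(i)=\closed$, contradicting $\alpha(i)=\open$ from $(j,i)$, so the latter chain cannot occur. The former reduces to the first case.
\item $(i,i'),(i',j)$: this needs $\alpha(i')=\closed$, but $\alpha(i')=\open$.
\item $(i,i'),(i',i'')$: this needs $\alpha(i')=\closed$ and $\alpha(i')=\open$.
\item $(j,i),(i,i')$ and $(i'',i),(i,i')$ fail in the same way.
\end{itemize}
Thus every composable non-diagonal chain of length two either produces a pair covered by the third clause or is impossible because $\alpha(\cdot)$ would take two values.

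The only step requiring care is the bookkeeping in this case check, in particular confirming that no chain of length two ever needs a new pair of type $(j,j')$ or $(j,i)$ beyond those listed. That is exactly why the third clause was included in \cref{def:RValpha}. No topological input is needed beyond this verification.
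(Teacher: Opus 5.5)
Your proof is correct and takes essentially the same route as the paper. The paper likewise reduces transitivity to the single composable chain $(i,j),(j,i')$ with $j\in J_i\cap J_{i'}$, $\alpha(i)=\closed$ and $\alpha(i')=\open$, so that $(i,i')\in R_{V^{\alpha}}$ by the last clause (using $|J_i\cap J_{i'}|=1$ from the tree hypothesis); it calls reflexivity and antisymmetry straightforward, which you verify explicitly. Your case list could be compressed as follows: in any non-diagonal pair, the first entry (if in $I$) has $\alpha$-value $\closed$ and the second entry (if in $I$) has $\alpha$-value $\open$, so the middle vertex of a chain must lie in $J$.
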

	
	\begin{proof}
		It is straightforward to verify that $R_{V^{\alpha}}$ is reflexive and antisymmetric. We show that $R_{V^{\alpha}}$ is transitive. Take $(v_1, v_2), (v_2, v_3) \in R_{V^{\alpha}}$. If $v_1=v_2$ or $v_2=v_3$, then it is clear that $(v_1, v_3) \in R_{V^{\alpha}}$. Otherwise, 
		\[v_1 \in I, \quad v_2 \in J_{v_1}, \quad \alpha(v_1)=\closed, \]
		and 
		\[v_3 \in I, \quad v_2 \in J_{v_3}, \quad \alpha(v_3)=\open. \]
		Thus, $(v_1, v_3) \in R_{V^{\alpha}}$. This shows that $R_{V^{\alpha}}$ is transitive. 
	\end{proof}

	\begin{defn}
		In view of \cref{lem:RValpha_partial_order}, for each $\alpha \in \{\closed, \open\}^I$, we define $V^{\alpha}$ to be the finite $T_0$-space associated with the partially ordered set $(V, R_{V^{\alpha}})$.
	\end{defn}
	
	The following proposition follows immediately from the definitions. 
	
	\begin{prop}\label{prop:Valpha_G}
		For every $\alpha \in \{\closed, \open\}^I$, the Hasse diagram of $V^{\alpha}$ is an orientation of the tree $G$.
	\end{prop}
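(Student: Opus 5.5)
The plan is to compute the Hasse diagram $(V, E_{R_{V^{\alpha}}})$ directly from the definition in \cref{subsection:partial_orders_and_Hasse_diagrams}. Concretely, I will identify which pairs of $R_{V^{\alpha}}$ are covering relations and check that they match the edges of $G$, each edge taken with exactly one direction. Recall that $E_{R}$ consists of the pairs $(x,y) \in R$ with $x \neq y$ that admit no intermediate element $z \notin \{x,y\}$ with $(x,z),(z,y) \in R$.

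First I classify the non-diagonal pairs in $R_{V^{\alpha}}$ according to \cref{def:RValpha}. There are three types:
\begin{enumerate}[label=\textnormal{(\alph*)}]
\item $(i,j)$ with $j \in J_i$ and $\alpha(i)=\closed$;
\item $(j,i)$ with $j \in J_i$ and $\alpha(i)=\open$;
\item $(i,i')$ with $|J_i \cap J_{i'}|=1$, $\alpha(i)=\closed$ and $\alpha(i')=\open$.
\end{enumerate}
The key observation is already in the proof of \cref{lem:RValpha_partial_order}. Whenever $(v_1,v_2),(v_2,v_3) \in R_{V^{\alpha}}$ with $v_1 \neq v_2 \neq v_3$, the first pair is of type (a) and the second of type (b). In particular $v_1, v_3 \in I$ and $v_2 \in J$.

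Next I treat type (c). Let $j$ be the unique element of $J_i \cap J_{i'}$. Then $(i,j)$ is of type (a) and $(j,i')$ is of type (b), and $j \notin \{i,i'\}$ because $I$ and $J$ are disjoint. So $j$ is an intermediate element, and type (c) pairs are not in $E_{R_{V^{\alpha}}}$.

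Then I treat types (a) and (b). By the observation above, any pair admitting an intermediate element lies in $I \times I$. Pairs of type (a) lie in $I \times J$ and pairs of type (b) lie in $J \times I$, so neither admits an intermediate element, and all of them belong to $E_{R_{V^{\alpha}}}$. Hence
\[
E_{R_{V^{\alpha}}}=\{(i,j) \mid i \in I,\ j \in J_i,\ \alpha(i)=\closed\} \amalg \{(j,i) \mid i \in I,\ j \in J_i,\ \alpha(i)=\open\}.
\]

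Finally I compare this with the edges of $G$. Each edge of $G$ has the form $\{i,j\}$ with $i \in I$ and $j \in J_i$. Since $\alpha(i)$ is either $\closed$ or $\open$ but not both, exactly one of $(i,j)$ and $(j,i)$ lies in $E_{R_{V^{\alpha}}}$. Conversely, every element of $E_{R_{V^{\alpha}}}$ comes from such an edge. So the Hasse diagram of $V^{\alpha}$ is an orientation of $G$.

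I expect no serious obstacle here. The only point that needs care is ruling out type (c) pairs as arrows, and the tree hypothesis enters there through $|J_i \cap J_{i'}| \le 1$.
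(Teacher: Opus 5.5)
Your proof is correct and is the same direct unpacking of the definitions that the paper leaves implicit; the paper's only justification is that the proposition ``follows immediately from the definitions.'' One small correction to your closing remark: the covering computation never uses the tree hypothesis, since ruling out type (c) needs only $J_i \cap J_{i'} \neq \emptyset$, and that is already built into the definition of $R_{V^{\alpha}}$. The tree hypothesis ($|J_i \cap J_{i'}| \le 1$ for $i \neq i'$) is used earlier, in \cref{lem:RValpha_partial_order}, to make $R_{V^{\alpha}}$ transitive so that the Hasse diagram is defined at all.
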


	\begin{eg}\label{eg:Valpha_I_singleton}
		Consider the case where $|I|=1$. Let $i_0$ be the unique element of $I$. Since $G$ is a tree, we have $J=J_{i_0}$. Let $j_1, j_2, \dots, j_n$ be the elements of $J$. Then the tree $G$ is depicted below. 
		\[
		\begin{tikzcd}[column sep=small]
			& & i_0 & & \\
			j_1 \ar[-]{urr} & j_2 \ar[-]{ur} & \dots & j_{n-1} \ar[-]{ul} & j_n \ar[-]{ull}
		\end{tikzcd}
		\]
		For $\alpha \in \{\closed, \open\}^I$, we have
		\[
		R_{V^{\alpha}}=
		\begin{dcases}
			\{(i_0, i_0)\}\amalg \{(j, j) \mid j \in J\}\amalg \{(i_0, j)\mid j \in J\}
			& \text{if $\alpha(i_0)=\closed$},\\
			\{(i_0, i_0)\}\amalg \{(j, j) \mid j \in J\}\amalg \{(j, i_0)\mid j \in J\}
			& \text{if $\alpha(i_0)=\open$}.
		\end{dcases}
		\]
		The Hasse diagrams of $V^{\alpha}$ for $\alpha \in \{\closed, \open\}^I$ are depicted below. 
		\[
		\begin{array}{cc}
			\begin{tikzcd}[column sep=small]
				& & i_0 & & \\
				j_1 \ar{urr} & j_2 \ar{ur} & \dots & j_{n-1} \ar{ul} & j_n \ar{ull}
			\end{tikzcd}
			&
			\begin{tikzcd}[column sep=small]
				j_1 & j_2 & \dots & j_{n-1} & j_n \\
				& & i_0 \ar{ull} \ar{ul} \ar{ur} \ar{urr} & &
			\end{tikzcd}
			\\
			\alpha(i_0)=\closed & \alpha(i_0)=\open
		\end{array}
		\]
	\end{eg}

	\begin{eg}\label{eg:Valpha_I_two_points}
		Consider the case where $|I|=2$. Let $i_1$ and $i_2$ be the elements of $I$. Since $G$ is a tree, we have $J=J_{i_1}\cup J_{i_2}$, and the intersection $J_{i_1}\cap J_{i_2}$ has exactly one element.
		For example, let $J=\{j_1,j_2,j_3,j_4\}$ and
		\[
		J_{i_1}:=\{j_1,j_2\}, \quad J_{i_2}:=\{j_2,j_3,j_4\}.
		\]
		The corresponding undirected graph $G$ is a tree, depicted below. 
		\[
		\begin{tikzcd}
			& i_1 & & i_2 & \\
			j_1 \ar[-]{ur} & & j_2 \ar[-]{ul} \ar[-]{ur} & j_3 \ar[-]{u} & j_4 \ar[-]{ul}
		\end{tikzcd}
		\]
		The Hasse diagrams of $V^{\alpha}$ for $\alpha \in \{\closed, \open\}^I$ are shown in \cref{fig:Hasse_diagram_Valpha}.
		\begin{figure}[htbp]
			\centering
			\begin{tabular}{cc}
				\begin{minipage}{6cm}
					\begin{tikzcd}[column sep=small]
						& i_1 & & i_2 & \\
						j_1 \ar{ur} & & j_2 \ar{ul} \ar{ur} & j_3 \ar{u} & j_4 \ar{ul}
					\end{tikzcd}
					\caption*{$\alpha(i_1)=\closed$, $\alpha(i_2)=\closed$}
				\end{minipage}
				&
				\begin{minipage}{6cm}
					\begin{tikzcd}[column sep=small]
						& i_1 & & & \\
						j_1 \ar{ur} & & j_2 \ar{ul} & j_3 & j_4 \\
						& & & i_2 \ar{ul} \ar{u} \ar{ur} &
					\end{tikzcd}
					\caption*{$\alpha(i_1)=\closed$, $\alpha(i_2)=\open$}
				\end{minipage}
				\\
				\begin{minipage}{6cm}
					\begin{tikzcd}[column sep=small]
						& & & i_2 & \\
						j_1 & & j_2 \ar{ur} & j_3 \ar{u} & j_4 \ar{ul} \\
						& i_1 \ar{ul} \ar{ur} & & &
					\end{tikzcd}
					\caption*{$\alpha(i_1)=\open$, $\alpha(i_2)=\closed$}
				\end{minipage}
				&
				\begin{minipage}{6cm}
					\begin{tikzcd}[column sep=small]
						j_1 & & j_2 & j_3 & j_4 \\
						& i_1 \ar{ul} \ar{ur} & & i_2 \ar{ul} \ar{u} \ar{ur} &
					\end{tikzcd}
					\caption*{$\alpha(i_1)=\open$, $\alpha(i_2)=\open$}
				\end{minipage}
			\end{tabular}
			\caption{Hasse diagrams of $V^{\alpha}$ in \cref{eg:Valpha_I_two_points}}
			\label{fig:Hasse_diagram_Valpha}
		\end{figure}
	\end{eg}

	\begin{defn}
		For $i \in I$ and $j \in J_i$, we define sets
		\begin{align*}
			&R_{i, j}:=\{(x, y) \in K_i \times L_j \mid (\theta_{i, j}(x), y) \in R_{L_j}\}, \\
			&R_{j, i}:=\{(y, x) \in L_j \times K_i \mid (y, \theta_{i, j}(x)) \in R_{L_j}\}. 
		\end{align*}
		For $i, i' \in I$ with $i \neq i'$ and $|J_i \cap J_{i'}|=1$, we define a set
		\[R_{i, i'}:=\{(x, x') \in K_i \times K_{i'} \mid (\theta_{i, j}(x), \theta_{i', j}(x')) \in R_{L_{j}}\}, \]
		where $j$ is the unique element of $J_i \cap J_{i'}$. 
	\end{defn}

	\begin{defn}\label{def:RWalpha}
		For $\alpha \in \{\closed, \open\}^I$, we define a relation $R_{W^{\alpha}}$ on $W$ by
		\[R_{W^{\alpha}}:=\coprod_{i \in I} R_{K_i} \amalg \coprod_{j \in J} R_{L_j} \amalg \coprod_{\substack{i \in I, \\ j \in J_i; \\ \alpha(i)=\closed}} R_{i, j} \amalg \coprod_{\substack{i \in I, \\ j \in J_i; \\ \alpha(i)=\open}} R_{j, i} 
		\amalg \coprod_{\substack{i, i' \in I; \\ |J_i \cap J_{i'}|=1 \\ \alpha(i)=\closed,  \alpha(i')=\open}} R_{i, i'}. \]
	\end{defn}

	\begin{lem}\label{lem:RWalpha_preorder}
		For each $\alpha \in \{\closed, \open\}^I$, the relation $R_{W^{\alpha}}$ is a preorder on $W$. 
	\end{lem}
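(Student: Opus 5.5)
Reflexivity is immediate: every $R_{K_i}$ and every $R_{L_j}$ is reflexive, and the sets $K_i$, $L_j$ cover $W$. The real content is transitivity. I would prove it by a case analysis on which summand of \cref{def:RWalpha} each of two composable pairs $(x,y),(y,z) \in R_{W^{\alpha}}$ lies in. It is convenient to organize the cases through $\tau$ and the partial order $R_{V^{\alpha}}$ of \cref{lem:RValpha_partial_order}. By construction, $(x,y) \in R_{W^{\alpha}}$ implies $(\tau(x),\tau(y)) \in R_{V^{\alpha}}$, and the summand containing $(x,y)$ is determined by the pair $(\tau(x),\tau(y))$. Transitivity of $R_{V^{\alpha}}$ then tells us which summand should contain $(x,z)$. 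What remains is to check the defining inequality in $R_{L_j}$ for each possible configuration.

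The cases are the following; they match the numbering referred to in the proof of \cref{lem:RWc_RWo_preorder}.
\begin{enumerate}
\item Both pairs lie inside a single $K_i$ or a single $L_j$. Then transitivity of $R_{K_i}$ or $R_{L_j}$ gives the claim.
\item One pair lies in $R_{K_i}$ or $R_{L_j}$ and the other is a mixed pair. Here I would use that $\theta_{i,j}$ is order-preserving. For example, if $(x,y)\in R_{K_i}$ and $(y,z)\in R_{i,j}$, then $\theta_{i,j}(x)\le\theta_{i,j}(y)\le z$, so $(x,z) \in R_{i,j}$. The same argument handles $R_{j,i}$ and $R_{i,i'}$ composed with $R_{K_i}$, $R_{K_{i'}}$, $R_{L_j}$ on either side.
\item $(x,y)\in R_{i,j}$ and $(y,z)\in R_{j,i'}$, with $\alpha(i)=\closed$ and $\alpha(i')=\open$. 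Here $i$ and $i'$ both lie adjacent to $j$ in $G$. If $i\ne i'$, then $J_i\cap J_{i'}=\{j\}$ and the chain $\theta_{i,j}(x)\le y\le\theta_{i',j}(z)$ gives $(x,z)\in R_{i,i'}$. The case $i=i'$ cannot occur, since $\alpha(i)$ cannot be both $\closed$ and $\open$.
\item Compositions involving $R_{i,i'}$ with $R_{i',j}$ or with $R_{j,i}$. Consider $(x,y)\in R_{i,i'}$ and $(y,z)\in R_{j',i''}$, say. The relation $R_{V^{\alpha}}$ forbids any further step out of an open $i'$ except to $i'$ itself. Concretely, the pairs of $R_{V^{\alpha}}$ starting at $i'$ with $\alpha(i')=\open$ are only $(i',i')$. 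So the only nontrivial compositions are with $R_{K_{i'}}$ on the right and $R_{K_i}$ on the left, and these are covered by case 2. Likewise $R_{i,j}$ with $\alpha(i)=\closed$ can only be followed by $R_{L_j}$ or $R_{j,i'}$, and the latter is case 3.
\item A composition where the shared index $j$ does not match. For instance, $(x,y)\in R_{i,i'}$ passes through the unique $j\in J_i\cap J_{i'}$, while the next pair $(y,z)\in R_{K_{i'}}$ is fine. Alternatively, $(x,y)\in R_{L_j}$ followed by $(y,z)\in R_{j,i}$ gives $x\le y\le\theta_{i,j}(z)$, so $(x,z) \in R_{j,i}$. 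In each such case the tree property guarantees that the index $j$ appearing in the definition of $R_{i,i'}$ is the same one used along the chain.
\end{enumerate}

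The main obstacle is bookkeeping: making sure that whenever a chain passes from $K_i$ through $L_j$ to $K_{i'}$, the $j$ used is exactly the unique element of $J_i\cap J_{i'}$. Here the tree hypothesis on $G$ enters, since it gives $|J_i\cap J_{i'}|\le1$. I would handle it by first recording that $\tau$ is order-preserving into $V^{\alpha}$. Given that, the list of possible pairs $(\tau(x),\tau(y),\tau(z))$ is short: length-two chains in $V^{\alpha}$ have the form $i\to j\to i'$ with $\alpha(i)=\closed$ and $\alpha(i')=\open$. This is exactly case 3, and the remaining compositions degenerate to cases where one step stays inside a single block.
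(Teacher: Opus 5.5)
Your argument is correct and is essentially the paper's proof: a case analysis on which summands of $R_{W^{\alpha}}$ contain $(x,y)$ and $(y,z)$. Compositions with $R_{K_i}$ use that the $\theta_{i,j}$ are order-preserving, compositions with $R_{L_j}$ use transitivity of $R_{L_j}$, and the composition of $R_{i,j}$ with $R_{j,i'}$ uses the tree property to make $j$ the unique element of $J_i \cap J_{i'}$. Routing the bookkeeping through $\tau$ and $R_{V^{\alpha}}$ simply makes explicit why the paper's nine cases are exhaustive.

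One small correction: your five groups do not match the case numbers cited in the proof of \cref{lem:RWc_RWo_preorder}. Those refer to the paper's nine-case split (Cases 1, 2, 3, 5 for $\closed$ and 1, 2, 4, 6 for $\open$). Also, your group 5 contains no case not already covered by groups 2 and 3.
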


	\begin{proof}
		The reflexivity of $R_{W^{\alpha}}$ follows from the reflexivity of $R_{K_i}$ for $i \in I$ and $R_{L_j}$ for $j \in J$. We show the transitivity of $R_{W^{\alpha}}$ using the transitivity of $R_{K_i}$ for $i \in I$ and of $R_{L_j}$ for $j \in J$. Take $(x, y), (y, z) \in R_{W^{\alpha}}$. To show that $(x, z) \in R_{W^{\alpha}}$, it suffices to check the following nine cases.
		\begin{description}[font=\normalfont\scshape]
			\item[Case 1] $(x, y), (y, z) \in R_{K_i}$ for some $i \in I$. Then $(x, z) \in R_{K_i} \subset R_{W^{\alpha}}$. 
			
			\item[Case 2] $(x, y), (y, z) \in R_{L_j}$ for some $j \in J$. Then $(x, z) \in R_{L_j} \subset R_{W^{\alpha}}$. 
			
			\item[Case 3] $(x, y) \in R_{K_i}$ and $(y, z) \in R_{i, j}$ for some $i \in I$ and $j \in J_i$ with $\alpha(i)=\closed$. Then $(\theta_{i, j}(y), z) \in R_{L_j}$. Since $\theta_{i, j} \colon K_i \to L_j$ is order-preserving, $(\theta_{i, j}(x), \theta_{i, j}(y)) \in R_{L_j}$. Hence, $(\theta_{i, j}(x), z) \in R_{L_j}$. This shows that $(x, z) \in R_{i, j} \subset R_{W^{\alpha}}$. 
			
			\item[Case 4] $(x, y) \in R_{j, i}$ and $(y, z) \in R_{K_i}$ for some $i \in I$ and $j \in J_i$ with $\alpha(i)=\open$. As in Case 3, we get $(x, z) \in R_{j, i} \subset R_{W^{\alpha}}$. 
			
			\item[Case 5] $(x, y) \in R_{i, j}$ and $(y, z) \in R_{L_j}$ for some $i \in I$ and $j \in J_i$ with $\alpha(i)=\closed$. Then $(\theta_{i, j}(x), y) \in R_{L_j}$. Hence, $(\theta_{i, j}(x), z) \in R_{L_j}$. This shows that $(x, z) \in R_{i, j} \subset R_{W^{\alpha}}$. 
			
			\item[Case 6] $(x, y) \in R_{L_j}$ and $(y, z) \in R_{j, i}$ for some $i \in I$ and $j \in J_i$ with $\alpha(i)=\open$. As in Case 5, we get $(x, z) \in R_{j, i} \subset R_{W^{\alpha}}$. 
			
			\item[Case 7] $(x, y) \in R_{K_i}$ and $(y, z) \in R_{i, i'}$ for some $i, i' \in I$ with $|J_i \cap J_{i'}|=1$, $\alpha(i)=\closed$, and $\alpha(i')=\open$. Let $j \in J$ be the unique element of $J_i \cap J_{i'}$. Then $(\theta_{i, j}(y), \theta_{i', j}(z)) \in R_{L_{j}}$. Since $\theta_{i, j} \colon K_i \to L_{j}$ is order-preserving, $(\theta_{i, j}(x), \theta_{i, j}(y)) \in R_{L_{j}}$. Thus, $(\theta_{i, j}(x), \theta_{i', j}(z)) \in R_{L_{j}}$. This shows that $(x, z) \in R_{i, i'} \subset R_{W^{\alpha}}$. 
			
			\item[Case 8] $(x, y) \in R_{i, i'}$ and $(y, z) \in R_{K_{i'}}$ for some $i, i' \in I$ with $|J_i \cap J_{i'}|=1$, $\alpha(i)=\closed$, and $\alpha(i')=\open$. As in Case 7, we get $(x, z) \in R_{i, i'} \subset R_{W^{\alpha}}$. 
			
			\item[Case 9] $(x, y) \in R_{i, j}$ and $(y, z) \in R_{j, i'}$ for some $i, i' \in I$ and $j \in J_i \cap J_{i'}$ with $\alpha(i)=\closed$ and $\alpha(i')=\open$. Since $(\theta_{i, j}(x), y), (y, \theta_{i', j}(z)) \in R_{L_j}$, we get $(\theta_{i, j}(x), \theta_{i', j}(z)) \in R_{L_j}$. It follows that $(x, z) \in R_{i, i'} \subset R_{W^{\alpha}}$. 
		\end{description}
		This completes the proof. 
	\end{proof}
	
	\begin{defn}\label{def:Walpha}
		In view of \cref{lem:RWalpha_preorder}, for each $\alpha \in \{\closed, \open\}^I$, we define $W^{\alpha}$ to be the Alexandrov space associated with the preordered set $(W, R_{W^{\alpha}})$.
	\end{defn}

	\begin{rem}\label{rem:Walpha_I_singleton}
		Consider the case where $|I|=1$. Let $i_0$ be the unique element of $I$. Recall that $J=J_{i_0}$. The corresponding tree $G$ and the Hasse diagrams of $V^{\alpha}$ for $\alpha \in \{\closed, \open\}^I$ are shown in \cref{eg:Valpha_I_singleton}.
		For $\alpha \in \{\closed, \open\}^I$, the Alexandrov space $W^{\alpha}$ defined in \cref{def:Walpha} coincides with the Alexandrov space obtained from the data $K_{i_0}$, $(L_j)_{j \in J}$, $(\theta_{i_0, j})_{j \in J}$, and $\alpha(i_0) \in \{\closed, \open\}$ as in \textup{\cref{def:Wc_Wo}}.
	\end{rem}

	\begin{prop}\label{prop:tau_continuous}
		The map $\tau \colon W \to V$ is a continuous map $W^{\alpha} \to V^{\alpha}$ for any $\alpha \in \{\closed, \open\}^I$. 
	\end{prop}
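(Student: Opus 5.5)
By the correspondence recalled in \cref{section:topologies_orders_quivers}, a map between Alexandrov spaces is continuous if and only if it is order-preserving between the associated preordered sets. So it suffices to show that, for every $(x,y) \in R_{W^{\alpha}}$, we have $(\tau(x),\tau(y)) \in R_{V^{\alpha}}$. The plan is to go through the five kinds of pieces in the disjoint union defining $R_{W^{\alpha}}$ in \cref{def:RWalpha} and compare each with the corresponding piece of $R_{V^{\alpha}}$ in \cref{def:RValpha}.

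First, if $(x,y) \in R_{K_i}$, then $\tau(x)=\tau(y)=i$, and $(i,i) \in R_{V^{\alpha}}$ by reflexivity. The case $(x,y) \in R_{L_j}$ is identical, giving $(j,j) \in R_{V^{\alpha}}$.

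Next, suppose $(x,y) \in R_{i,j}$ with $j \in J_i$ and $\alpha(i)=\closed$. Then $\tau(x)=i$ and $\tau(y)=j$, and $(i,j)$ lies in the third piece of $R_{V^{\alpha}}$. Symmetrically, if $(x,y) \in R_{j,i}$ with $\alpha(i)=\open$, then $(\tau(x),\tau(y))=(j,i)$ lies in the fourth piece.

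Finally, suppose $(x,y) \in R_{i,i'}$ with $|J_i \cap J_{i'}|=1$, $\alpha(i)=\closed$ and $\alpha(i')=\open$. Then $(\tau(x),\tau(y))=(i,i')$, which belongs to the last piece of $R_{V^{\alpha}}$ under exactly the same indexing conditions.

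No step here is a real obstacle: the definitions of $R_{W^{\alpha}}$ and $R_{V^{\alpha}}$ were set up in parallel, so each piece of $R_{W^{\alpha}}$ maps under $\tau \times \tau$ into the matching piece of $R_{V^{\alpha}}$. The only point requiring care is to check that the index conditions attached to each piece, namely $j \in J_i$, the value of $\alpha$, and $|J_i\cap J_{i'}|=1$, coincide on the two sides, and this can be read off directly from \cref{def:RValpha,def:RWalpha}.
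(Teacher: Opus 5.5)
Your proposal is correct and is the same argument as the paper's. The paper simply observes that $\tau$ is order-preserving from $(W,R_{W^{\alpha}})$ to $(V,R_{V^{\alpha}})$ and invokes the correspondence between continuity and order preservation for Alexandrov spaces. Your piece-by-piece comparison of \cref{def:RWalpha} with \cref{def:RValpha} just makes that verification explicit.
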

	
	\begin{proof}
		This follows because $\tau \colon W \to V$ is an order-preserving map between preordered sets $(W, R_{W^{\alpha}})$ and $(V, R_{V^{\alpha}})$. 
	\end{proof}
	
	\begin{prop}
		For $i \in I$ and $\alpha \in \{\closed, \open\}^I$, the relative topology on $K_i$ induced from $W^{\alpha}$ coincides with the Alexandrov topology on $K_i$ given by the order $R_{K_i}$. 
	\end{prop}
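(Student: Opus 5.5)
The plan is to compute $R_{W^{\alpha}} \cap (K_i \times K_i)$ directly from \cref{def:RWalpha} and show that it equals $R_{K_i}$. The relative topology on a subset of an Alexandrov space is again Alexandrov, with the induced preorder. So once this equality is established, the relative topology on $K_i$ is the Alexandrov topology of $(K_i, R_{K_i})$. This mirrors the one-line proof of the analogous proposition in \cref{subsection:setting_of_topological_spaces}.

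First I would check that the relative topology on a subset $Y$ of an Alexandrov space $(V, \Open_R)$ is the Alexandrov topology of $R \cap (Y \times Y)$. One inclusion is immediate: if $U$ is open in $V$, then $U \cap Y$ is Alexandrov open for the restricted preorder. For the converse, take $U' \subset Y$ that is Alexandrov open for $R \cap (Y \times Y)$. Its upward closure $U:=\{z \in V \mid (y,z) \in R \text{ for some } y \in U'\}$ is open in $V$, and $U \cap Y = U'$ precisely because $U'$ is up-closed in $Y$.

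The main step is to go through the summands of $R_{W^{\alpha}}$ in \cref{def:RWalpha} and record which ones meet $K_i \times K_i$:
\begin{itemize}
\item the sets $R_{L_j}$, $R_{i',j}$, and $R_{j,i'}$ all involve some $L_j$ in at least one coordinate, so they are disjoint from $K_i \times K_i$;
\item the sets $R_{K_{i'}}$ with $i' \neq i$ are disjoint from $K_i \times K_i$ as well;
\item a set $R_{i',i''}$ is only defined when $\alpha(i')=\closed$ and $\alpha(i'')=\open$, which forces $i' \neq i''$, so it lies in $K_{i'} \times K_{i''}$ and cannot lie in $K_i \times K_i$.
\end{itemize}
The only surviving summand is $R_{K_i}$, so $R_{W^{\alpha}} \cap (K_i \times K_i) = R_{K_i}$.

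I do not expect any real obstacle. The only point needing care is the observation, already noted after \cref{def:RValpha}, that $R_{i,i'}$ never has $i=i'$.
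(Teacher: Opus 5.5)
Your proposal is correct and matches the paper's proof, which consists of the single identity $R_{W^{\alpha}} \cap (K_i \times K_i)=R_{K_i}$, obtained from the same summand-by-summand inspection of \cref{def:RWalpha}. You also verify that a subspace of an Alexandrov space carries the Alexandrov topology of the restricted preorder; the paper uses this fact implicitly.
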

	
	\begin{proof}
		This follows from $R_{W^{\alpha}} \cap (K_i \times K_i)=R_{K_i}$. 
	\end{proof} 
	
	\begin{prop}
		For $\alpha \in \{\closed, \open\}^I$, the relative topology on $\coprod_{j \in J} L_j$ induced from $W^{\alpha}$ coincides with the Alexandrov topology on $\coprod_{j \in J} L_j$ given by the preorder $\coprod_{j \in J} R_{L_j}$. In particular, for each $j \in J$, the relative topology on $L_j$ induced from $W^{\alpha}$ coincides with the Alexandrov topology on $L_j$ given by the preorder $R_{L_j}$. 
	\end{prop}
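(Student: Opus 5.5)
The statement has two parts: the first is about $\coprod_{j \in J} L_j$, the second about a single $L_j$. For the first, I will follow the proofs of the analogous propositions in \cref{subsection:setting_of_topological_spaces}. The relative topology on a subset $Y$ of an Alexandrov space $(W, R)$ is the Alexandrov topology of the restricted preorder $R \cap (Y \times Y)$. Hence it suffices to show
\[
R_{W^{\alpha}} \cap \Bigl(\coprod_{j \in J} L_j \times \coprod_{j \in J} L_j\Bigr)=\coprod_{j \in J} R_{L_j}.
\]

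To verify this, I will go through the pieces of the disjoint union in \cref{def:RWalpha} and record the types of pairs each one contains:
\begin{itemize}
\item $R_{K_i}$ consists of pairs in $K_i \times K_i$;
\item $R_{i,j}$ consists of pairs in $K_i \times L_j$;
\item $R_{j,i}$ consists of pairs in $L_j \times K_i$;
\item $R_{i,i'}$ consists of pairs in $K_i \times K_{i'}$.
\end{itemize}
Each of these pieces involves at least one coordinate in some $K_i$. Since the $K_i$ and the $L_j$ are pairwise disjoint in $W$, none of these pieces meets $\coprod_j L_j \times \coprod_j L_j$. The only pieces left are the $R_{L_j}$, and each lies in $L_j \times L_j$. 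This gives the displayed equality, and so the first assertion.

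For the second assertion, I will apply the same reasoning to $L_j \subset \coprod_{j'} L_{j'}$. Intersecting $\coprod_{j'} R_{L_{j'}}$ with $L_j \times L_j$ leaves exactly $R_{L_j}$, again because the $L_{j'}$ are pairwise disjoint. Equivalently, one can note that the relative topology is transitive: a subspace of a subspace carries the relative topology from the ambient space.

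I do not expect any real obstacle. The only point needing care is to check that no cross term $R_{i,j}$, $R_{j,i}$ or $R_{i,i'}$ relates two points of $\coprod_j L_j$. This is immediate from their definitions as subsets of $K_i \times L_j$, $L_j \times K_i$ and $K_i \times K_{i'}$.
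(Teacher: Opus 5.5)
Your proposal is correct and follows the paper's own proof: the paper likewise reduces the first assertion to the identity $R_{W^{\alpha}} \cap \bigl(\coprod_{j \in J} L_j \times \coprod_{j \in J} L_j\bigr)=\coprod_{j \in J} R_{L_j}$ and deduces the second assertion from the first. Your check that every cross term $R_{i,j}$, $R_{j,i}$, $R_{i,i'}$ has a coordinate in some $K_i$ spells out the verification the paper leaves implicit.
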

	
	\begin{proof}
		The first assertion follows from 
		\[R_{W^{\alpha}} \cap \Bigl(\coprod_{j \in J} L_j \times \coprod_{j \in J} L_j\Bigr)=\coprod_{j \in J} R_{L_j}.\]
		The second assertion follows from the first. 
	\end{proof}
	
	\begin{lem}\label{lem:i_closed_open}
		Let $\alpha \in \{\closed, \open\}^I$. 
		\begin{enumerate}[label=\textnormal{(\arabic*)}]
			\item  
			For $i \in I$ with $\alpha(i)=\closed$, we have $\{i\} \in \Closed(V^{\alpha})$. 
			
			\item
			For $i \in I$ with $\alpha(i)=\open$, we have $\{i\} \in \Open(V^{\alpha})$. 
		\end{enumerate}
	\end{lem}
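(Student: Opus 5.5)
By the characterization of closed and open subsets of an Alexandrov space recorded in \cref{section:topologies_orders_quivers}, the whole statement comes down to checking which pairs in $R_{V^{\alpha}}$ can involve the point $i$. For (1), $\{i\}$ is closed in $V^{\alpha}$ if and only if
\[
R_{V^{\alpha}} \cap \bigl((V \setminus \{i\}) \times \{i\}\bigr)=\emptyset,
\]
that is, no element $v \neq i$ satisfies $(v,i) \in R_{V^{\alpha}}$. Dually, for (2), $\{i\}$ is open if and only if
\[
R_{V^{\alpha}} \cap \bigl(\{i\} \times (V \setminus \{i\})\bigr)=\emptyset.
\]

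For (1), assume $\alpha(i)=\closed$ and suppose $(v,i) \in R_{V^{\alpha}}$ with $v \neq i$. We go through the five pieces of \cref{def:RValpha}.
\begin{itemize}
\item The two diagonal pieces only contain pairs with $v=i$, so they are excluded.
\item The piece $\{(i',j)\}$ has second coordinate in $J$, so it cannot contain a pair with second coordinate $i \in I$.
\item A pair $(j,i)$ from the piece $\{(j,i') \mid \alpha(i')=\open\}$ would force $\alpha(i)=\open$, a contradiction.
\item A pair $(i',i)$ from the last piece requires $\alpha(i)=\open$, again a contradiction.
\end{itemize}
Hence no such $v$ exists, and $\{i\}$ is closed.

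Part (2) is the mirror argument. Assume $\alpha(i)=\open$ and suppose $(i,v) \in R_{V^{\alpha}}$ with $v \neq i$.
\begin{itemize}
\item The diagonal pieces are excluded as before.
\item A pair $(i,j)$ requires $\alpha(i)=\closed$.
\item The piece $\{(j,i')\}$ has first coordinate in $J$, so it cannot contain a pair with first coordinate $i$.
\item A pair $(i,i')$ from the last piece requires $\alpha(i)=\closed$.
\end{itemize}
Each case contradicts $\alpha(i)=\open$, so $\{i\}$ is open.

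There is no real obstacle here. The only points needing care are that the diagonal pieces are excluded by the condition $v \neq i$, and that $I$ and $J$ are disjoint in $V=I \amalg J$, which is what rules out the pieces whose relevant coordinate lies in $J$.
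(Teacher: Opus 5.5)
Your proposal is correct and is the same argument as the paper's: both reduce to showing $R_{V^{\alpha}} \cap \bigl((V \setminus \{i\}) \times \{i\}\bigr)=\emptyset$ when $\alpha(i)=\closed$ and $R_{V^{\alpha}} \cap \bigl(\{i\} \times (V \setminus \{i\})\bigr)=\emptyset$ when $\alpha(i)=\open$. The paper simply asserts these two emptiness statements; your piece-by-piece check of \cref{def:RValpha} supplies the verification it leaves implicit.
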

	
	\begin{proof}
		If $\alpha(i)=\closed$, then $R_{V^{\alpha}} \cap ((V \setminus \{i\}) \times \{i\})=\emptyset$. 
		If $\alpha(i)=\open$, then $R_{V^{\alpha}} \cap (\{i\} \times (V \setminus \{i\}))=\emptyset$. This shows the assertion. 
	\end{proof}

	\begin{prop}\label{prop:Ki_closed_open}
		Let $\alpha \in \{\closed, \open\}^I$. 
		\begin{enumerate}[label=\textnormal{(\arabic*)}]
			\item  
			For $i \in I$ with $\alpha(i)=\closed$, we have $K_i \in \Closed(W^{\alpha})$. 
			
			\item
			For $i \in I$ with $\alpha(i)=\open$, we have $K_i \in \Open(W^{\alpha})$. 
		\end{enumerate}
	\end{prop}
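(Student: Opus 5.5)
The plan is to pull back along the continuous map $\tau \colon W^{\alpha} \to V^{\alpha}$ of \cref{prop:tau_continuous}. Preimages of closed sets under a continuous map are closed, and preimages of open sets are open. By construction $\tau^{-1}(\{i\})=K_i$. So it suffices to know that $\{i\}$ is closed in $V^{\alpha}$ when $\alpha(i)=\closed$, and open when $\alpha(i)=\open$. This is exactly \cref{lem:i_closed_open}.

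Concretely, for (1) take $i \in I$ with $\alpha(i)=\closed$. By \cref{lem:i_closed_open} we have $\{i\} \in \Closed(V^{\alpha})$. Continuity of $\tau$ then gives $K_i=\tau^{-1}(\{i\}) \in \Closed(W^{\alpha})$. Part (2) is identical, using openness in place of closedness.

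The only point needing care is the continuity of $\tau$, that is, that $\tau$ is order-preserving from $(W,R_{W^{\alpha}})$ to $(V,R_{V^{\alpha}})$. This is already recorded in \cref{prop:tau_continuous}. If one wanted to recheck it, one would go through the pieces of \cref{def:RWalpha}:
\begin{itemize}
\item $R_{K_i}$ and $R_{L_j}$ map to the diagonal;
\item $R_{i,j}$ with $\alpha(i)=\closed$ maps to $(i,j)$;
\item $R_{j,i}$ with $\alpha(i)=\open$ maps to $(j,i)$;
\item $R_{i,i'}$ maps to $(i,i')$.
\end{itemize}
Each image lies in $R_{V^{\alpha}}$ by \cref{def:RValpha}.

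Alternatively, one can argue directly: for $\alpha(i)=\closed$, no pair of $R_{W^{\alpha}}$ goes from outside $K_i$ into $K_i$. The only pairs ending in $K_i$ are those of $R_{K_i}$, $R_{j,i}$ (which requires $\alpha(i)=\open$), and $R_{i',i}$ (which also requires $\alpha(i)=\open$). Hence $R_{W^{\alpha}} \cap ((W\setminus K_i)\times K_i)=\emptyset$, which is the closedness criterion from the Alexandrov topology section. The open case is dual. I expect no real obstacle here.
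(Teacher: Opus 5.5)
Your proposal is correct and matches the paper's proof: it also pulls back along the continuous map $\tau \colon W^{\alpha} \to V^{\alpha}$ of \cref{prop:tau_continuous}, uses $\tau^{-1}(\{i\})=K_i$, and invokes \cref{lem:i_closed_open}. Your alternative direct check, $R_{W^{\alpha}} \cap ((W\setminus K_i)\times K_i)=\emptyset$ when $\alpha(i)=\closed$, is also valid; it mirrors how the paper proves \cref{prop:K_closed_open} in the single-$K$ setting.
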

	
	\begin{proof}
		Since $\tau^{-1}(\{i\})=K_i$ for $i \in I$, the assertion follows from \cref{prop:tau_continuous} and \cref{lem:i_closed_open}.
	\end{proof}

	\begin{prop}\label{prop:Walpha_T0}
		The following conditions are equivalent: 
		\begin{enumerate}[label=\textnormal{(\roman*)}]
			\item $W^{\alpha}$ is $T_0$ for every $\alpha \in \{\closed, \open\}^I$;
			\item $W^{\alpha}$ is $T_0$ for some $\alpha \in \{\closed, \open\}^I$; 
			\item $K_i$ is $T_0$ for every $i \in I$, and $L_j$ is $T_0$ for every $j \in J$. 
		\end{enumerate}
	\end{prop}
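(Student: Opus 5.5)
The strategy is the one used for the analogous statement in \cref{section:reflection_functors}, now with \cref{lem:T0condition} applied to the continuous map $\tau \colon W^{\alpha} \to V^{\alpha}$ of \cref{prop:tau_continuous}. We prove the implications $\text{(i)} \Rightarrow \text{(ii)}$, $\text{(ii)} \Rightarrow \text{(iii)}$, and $\text{(iii)} \Rightarrow \text{(i)}$.

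The implication $\text{(i)} \Rightarrow \text{(ii)}$ holds because $\{\closed, \open\}^I$ is non-empty. For $\text{(ii)} \Rightarrow \text{(iii)}$, fix $\alpha$ with $W^{\alpha}$ a $T_0$-space. Subspaces of $T_0$-spaces are $T_0$. By the two propositions preceding \cref{lem:i_closed_open}, the subspace topologies on $K_i$ and on $L_j$ are the Alexandrov topologies of $R_{K_i}$ and $R_{L_j}$. Equivalently, $R_{W^{\alpha}} \cap (K_i \times K_i) = R_{K_i}$ and $R_{W^{\alpha}} \cap (L_j \times L_j) = R_{L_j}$, so antisymmetry of $R_{W^{\alpha}}$ restricts to antisymmetry of $R_{K_i}$ and of $R_{L_j}$.

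For $\text{(iii)} \Rightarrow \text{(i)}$, fix an arbitrary $\alpha$. By \cref{prop:tau_continuous}, $\tau \colon (W, R_{W^{\alpha}}) \to (V, R_{V^{\alpha}})$ is order-preserving, and $R_{V^{\alpha}}$ is a partial order by \cref{lem:RValpha_partial_order}. By \cref{lem:T0condition}, it therefore suffices to show that the preorder induced by $R_{W^{\alpha}}$ on each fibre $\tau^{-1}(\{v\})$ is a partial order. These fibres are exactly $K_i$ for $v = i \in I$ and $L_j$ for $v = j \in J$. The induced preorders are $R_{K_i}$ and $R_{L_j}$, which are partial orders by assumption. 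Finally, an Alexandrov space is $T_0$ precisely when its specialization preorder is a partial order (\cref{section:topologies_orders_quivers}), so $W^{\alpha}$ is $T_0$.

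The only step that needs checking is the claim $R_{W^{\alpha}} \cap (K_i \times K_i) = R_{K_i}$ and $R_{W^{\alpha}} \cap (L_j \times L_j) = R_{L_j}$. It follows by inspecting \cref{def:RWalpha}: every mixed piece $R_{i,j}$, $R_{j,i}$, $R_{i,i'}$ relates elements of distinct blocks, since $i \neq i'$ in $R_{i,i'}$. So there is no real obstacle.
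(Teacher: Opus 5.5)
Your proposal is correct and is essentially the paper's proof. The paper applies \cref{lem:T0condition} in both directions to $\tau \colon W^{\alpha} \to V^{\alpha}$, whose fibres are the $K_i$ and $L_j$, and then observes that the resulting condition does not depend on $\alpha$; your direct subspace argument for (ii)$\Rightarrow$(iii) is just the trivial ``only if'' half of that lemma.
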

	
	\begin{proof}
		Recall that $V^{\alpha}$ is a finite $T_0$-space, that $W^{\alpha}$ is an Alexandrov space, and that $\tau \colon W^{\alpha} \to V^{\alpha}$ is continuous by \cref{prop:tau_continuous}. 
		Since $\tau^{-1}(\{i\})=K_i$ for $i \in I$ and $\tau^{-1}(\{j\})=L_j$ for $j \in J$, \cref{lem:T0condition} shows that $W^{\alpha}$ is $T_0$ if and only if all the $K_i$ and  $L_j$ are $T_0$. Since the latter condition is independent of $\alpha$, the conditions (i), (ii), and (iii) are equivalent.
	\end{proof}
	
	\begin{eg}\label{eg:Walpha_I_two_point_set}
		Consider the case where $|I|=2$. Let $i_1$ and $i_2$ be the elements of $I$.
		Recall that $J=J_{i_1}\cup J_{i_2}$ and that $J_{i_1}\cap J_{i_2}$ has exactly one element.
		As in \cref{eg:Valpha_I_two_points}, let $J=\{j_1,j_2,j_3,j_4\}$ and
		\[
		J_{i_1}:=\{j_1,j_2\}, \quad J_{i_2}:=\{j_2,j_3,j_4\}.
		\]
		The corresponding tree $G$ and the Hasse diagrams of $V^{\alpha}$ for $\alpha \in \{\closed, \open\}^I$ are shown in \cref{eg:Valpha_I_two_points}.
		Let $K_{i_1}=\{1,2\}$, $K_{i_2}=\{3,4,5,6\}$, $L_{j_1}=\{7\}$, $L_{j_2}=\{8,9,10\}$, $L_{j_3}=\{11\}$, and $L_{j_4}=\{12,13\}$ be the partially ordered sets whose Hasse diagrams are depicted below. 
		\[
		\begin{array}{cccccc}
			\begin{tikzcd}
				1 \\
				2 \ar{u}
			\end{tikzcd}
			&
			\begin{tikzcd}[sep=small]
				& 3 &\\
				4 \ar{ur} & & 5 \ar{ul} \\
				& 6 \ar{ul} \ar{ur} &
			\end{tikzcd}
			&
			7
			&
			\begin{tikzcd}[column sep=small]
				8 & & 9 \\
				& 10 \ar{ul} \ar{ur} &
			\end{tikzcd}
			&
			11
			&
			\begin{tikzcd}
				12 \\
				13 \ar{u}
			\end{tikzcd}
			\\[1em]
			K_{i_1} & K_{i_2} & L_{j_1} & L_{j_2} & L_{j_3} & L_{j_4}
		\end{array}
		\]
		Let $\theta_{i,j} \colon K_i \to L_j$ for $i \in I$ and $j \in J_i$ be the order-preserving maps defined by
		\begin{align*}
			&\theta_{i_1,j_1}(1)=\theta_{i_1,j_1}(2):=7, \quad \theta_{i_1,j_2}(1)=\theta_{i_1,j_2}(2):=8, \\
			&\theta_{i_2,j_2}(3)=\theta_{i_2,j_2}(4):=9, \quad \theta_{i_2,j_2}(5)=\theta_{i_2,j_2}(6):=10, \\
			&\theta_{i_2,j_3}(3)=\theta_{i_2,j_3}(4)=\theta_{i_2,j_3}(5)=\theta_{i_2,j_3}(6):=11, \\
			&\theta_{i_2,j_4}(3):=12, \quad \theta_{i_2,j_4}(4)=\theta_{i_2,j_4}(5)=\theta_{i_2,j_4}(6):=13.
		\end{align*}
		The Hasse diagrams of $W^{\alpha}$ for $\alpha \in \{\closed, \open\}^I$ are shown in \cref{fig:Hasse_diagram_Walpha}.
		\begin{figure}[htbp]
			\centering
			\begin{tabular}{cc}
				\begin{minipage}{6cm}
					\begin{tikzcd}[row sep=small, column sep=tiny]
						& & & & & 3 & & \\
						& 1 & & & 4 \ar{ur} & & 5 \ar{ul} & \\
						& 2 \ar{u} & & & & 6 \ar{ul} \ar{ur} & & \\
						7 \ar{ur} & 8 \ar{u} & & 9 \ar{uur} & & & & 12 \ar[bend left=20]{uuull} \\
						& & 10 \ar{ul} \ar{ur} \ar[bend right=20]{uurrr} & & & 11 \ar{uu} & & 13 \ar{uull} \ar{u}
					\end{tikzcd}
					\caption*{$\alpha(i_1)=\closed$, $\alpha(i_2)=\closed$}
				\end{minipage} & \quad
				\begin{minipage}{6cm}
					\begin{tikzcd}[row sep=small, column sep=tiny]
						& 1 & & & & & & \\
						& 2 \ar{u} & & & & & & \\
						7 \ar{ur} & 8 \ar{u} & & 9 & & & & 12 \\
						& & 10 \ar{ul} \ar{ur} & & & 11 & & 13 \ar{u} \\
						& & & & & 3 \ar{uull} \ar{u} \ar{uurr} & & \\
						& & & & 4 \ar{ur} \ar[bend right=20]{uurrr} & & 5 \ar[bend left=10]{uullll} \ar{ul} \ar[bend right=20]{uur} & \\
						& & & & & 6 \ar{ul} \ar{ur} & &
					\end{tikzcd}
					\caption*{$\alpha(i_1)=\closed$, $\alpha(i_2)=\open$}
				\end{minipage} \\
				\begin{minipage}{6cm}
					\begin{tikzcd}[row sep=small, column sep=tiny]
						& & & & & 3 & & \\
						& & & & 4 \ar{ur} & & 5 \ar{ul} & \\
						& & & & & 6 \ar{ul} \ar{ur} & & \\
						7 & 8 & & 9 \ar{uur} & & & & 12 \ar[bend left=20]{uuull} \\
						& & 10 \ar{ul} \ar{ur} \ar[bend right=20]{uurrr} & & & 11 \ar{uu} & & 13 \ar{uull} \ar{u} \\
						& 1 \ar{uul} \ar{uu} & & & & & & \\
						& 2 \ar{u} & & & & & &
					\end{tikzcd}
					\caption*{$\alpha(i_1)=\open$, $\alpha(i_2)=\closed$}
				\end{minipage} & \quad
				\begin{minipage}{6cm}
					\begin{tikzcd}[row sep=small, column sep=tiny]
						& & & & & & & \\
						& & & & & & & \\
						7 & 8 & & 9 & & & & 12 \\
						& & 10 \ar{ul} \ar{ur} & & & 11 & & 13 \ar{u} \\
						& 1 \ar{uul} \ar{uu} & & & & 3 \ar{uull} \ar{u} \ar{uurr} & & \\
						& 2 \ar{u} & & & 4 \ar{ur} \ar[bend right=20]{uurrr} & & 5 \ar[bend left=10]{uullll} \ar{ul} \ar[bend right=20]{uur} & \\
						& & & & & 6 \ar{ul} \ar{ur} & &
					\end{tikzcd}
					\caption*{$\alpha(i_1)=\open$, $\alpha(i_2)=\open$}
				\end{minipage}
			\end{tabular}
			\caption{Hasse diagrams of $W^{\alpha}$ in \cref{eg:Walpha_I_two_point_set}}
			\label{fig:Hasse_diagram_Walpha}
		\end{figure}
	\end{eg}
	
	\begin{prop}\label{prop:connectedness_Walpha}
		The following conditions are equivalent: \begin{enumerate}[label=\textnormal{(\roman*)}]
			\item $W^{\alpha}$ is connected for every $\alpha \in \{\closed, \open\}^I$; 
			\item $W^{\alpha}$ is connected for some $\alpha \in \{\closed, \open\}^I$; 
			\item $W \neq \emptyset$ and the equivalence relation on $W$ generated by
			\[\coprod_{i \in I} R_{K_i} \amalg \coprod_{j \in J} R_{L_j} \amalg \coprod_{i \in I, j \in J_i} \{(x, \theta_{i, j}(x)) \mid x \in K_i\} \]
			is equal to $W \times W$. 
		\end{enumerate}
		Furthermore, if $K_i \neq \emptyset$ for every $i \in I$ and if $L_j$ is connected for every $j \in J$, then these equivalent conditions hold. 
	\end{prop}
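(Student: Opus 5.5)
The argument has the same shape as \cref{prop:connectedness_Wc_Wo}. Let $R$ denote the equivalence relation on $W$ defined in (iii). By \cref{lem:connected_Alexandrov_space}, $W^{\alpha}$ is connected if and only if $W \neq \emptyset$ and the equivalence relation generated by $R_{W^{\alpha}}$ equals $W \times W$. So it suffices to show that, for every $\alpha \in \{\closed, \open\}^I$, the equivalence relation generated by $R_{W^{\alpha}}$ is exactly $R$. Since this condition does not depend on $\alpha$, (i), (ii), and (iii) then become equivalent; the implication (i)$\Rightarrow$(ii) is trivial because $\{\closed, \open\}^I$ is non-empty.

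To compare the two equivalence relations, denote by $\langle R_{W^{\alpha}} \rangle$ the equivalence relation generated by $R_{W^{\alpha}}$. I will check the inclusions in both directions on generators, following \cref{def:RWalpha}.

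For $\langle R_{W^{\alpha}} \rangle \subset R$:
\begin{itemize}
\item $R_{K_i}$ and $R_{L_j}$ lie in $R$ by definition.
\item For $(x,y) \in R_{i,j}$, we have $(\theta_{i,j}(x), y) \in R_{L_j}$ and $(x, \theta_{i,j}(x)) \in R$, so $(x,y) \in R$. The case $R_{j,i}$ is symmetric.
\item For $(x,x') \in R_{i,i'}$, with $j$ the unique element of $J_i \cap J_{i'}$, we chain $x \sim \theta_{i,j}(x) \sim \theta_{i',j}(x') \sim x'$ in $R$.
\end{itemize}

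For $R \subset \langle R_{W^{\alpha}} \rangle$, it suffices to show $(x, \theta_{i,j}(x)) \in \langle R_{W^{\alpha}} \rangle$ for $i \in I$, $j \in J_i$, and $x \in K_i$:
\begin{itemize}
\item If $\alpha(i)=\closed$, then $(x, \theta_{i,j}(x)) \in R_{i,j}$ by reflexivity of $R_{L_j}$.
\item If $\alpha(i)=\open$, then $(\theta_{i,j}(x), x) \in R_{j,i}$ by reflexivity of $R_{L_j}$.
\end{itemize}
In both cases the pair lies in $\langle R_{W^{\alpha}} \rangle$ after symmetrization. This completes the equivalence.

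For the final assertion, assume every $K_i$ is non-empty and every $L_j$ is connected. I will verify (iii) by an induction along the tree $G$.
\begin{enumerate}
\item Each $L_j$ is connected, so $L_j$ is non-empty by convention and $W \neq \emptyset$. \cref{lem:connected_Alexandrov_space} gives $L_j \times L_j \subset R$, i.e.\ each $L_j$ lies in a single $R$-class.
\item For $i \in I$ and $j \in J_i$, every $x \in K_i$ is $R$-related to $\theta_{i,j}(x) \in L_j$, so $K_i$ lies in the class of $L_j$.
\item Since $K_i \neq \emptyset$, this ties together the classes of all $L_j$ with $j \in J_i$, and they coincide with the class of $K_i$.
\item Because $G$ is connected, any two vertices of $V$ are joined by a path alternating between $I$ and $J$. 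Propagating step 3 along such a path shows that all $\tau^{-1}(v)$, $v \in V$, lie in one $R$-class.
\end{enumerate}
Since these fibres cover $W$, we get $R = W \times W$.

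The main obstacle is bookkeeping rather than any conceptual difficulty. The one point needing care is the $R_{i,i'}$ generators: there one must use the shared vertex $j \in J_i \cap J_{i'}$ and the fact that $(i,i')$ is an edge-adjacent pair of the contracted tree.
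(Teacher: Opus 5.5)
Your proposal is correct and follows the paper's proof. You identify the equivalence relation generated by $R_{W^{\alpha}}$ with $R$ for every $\alpha$ (the paper calls this straightforward; you check it generator by generator), conclude via \cref{lem:connected_Alexandrov_space}, and get the final assertion by propagating along paths in the tree $G$ as in \cref{prop:connectedness_Wc_Wo}. One small correction to your closing remark: the generators $R_{i,i'}$ only need $i$ and $i'$ to share the neighbour $j \in J_i \cap J_{i'}$ in $G$, with no contracted tree involved, and your chain $x \sim \theta_{i,j}(x) \sim \theta_{i',j}(x') \sim x'$ already uses only this.
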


	\begin{proof}
		Let $R$ denote the equivalence relation on $W$ appearing in (iii).
		For each $\alpha \in \{\closed, \open\}^I$, it is straightforward to verify from the definitions that the equivalence relation on $W$ generated by $R_{W^{\alpha}}$ is equal to $R$. 
		Hence, by \cref{lem:connected_Alexandrov_space}, $W^{\alpha}$ is connected if and only if (iii) holds.
		Since the latter condition is independent of $\alpha$, the conditions (i), (ii), and (iii) are equivalent.
		The final assertion follows by the same argument as the final assertion of \cref{prop:connectedness_Wc_Wo}, by proceeding along the unique simple paths in $G$.
	\end{proof}
	
	\subsection{Reconstruction of topological spaces from connected subsets}\label{subsection:preliminaries_on_connected_subsets}
	
	\begin{defn}
		For a connected subset $\bfj \subset V$ in $G$, we define a subset
		\[L_{\bfj}:=\tau^{-1}(\bfj) \subset W. \] 
	\end{defn}
	
	For connected subsets $\bfj, \bfj' \subset V$ in $G$ satisfying $\bfj \cap \bfj' \neq \emptyset$, the intersection $\bfj \cap \bfj'$ is also connected in $G$ because $G$ is a tree. In this case, we have 
	\[L_{\bfj \cap \bfj'}=L_{\bfj} \cap L_{\bfj'}. \]
	
	\begin{prop}\label{prop:Lj_locally_closed}
		For a connected subset $\bfj \subset V$ in $G$ and $\alpha \in \{\closed, \open\}^I$, we have $L_{\bfj} \in \LC(W^{\alpha})$.
	\end{prop}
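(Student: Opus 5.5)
The plan is to push the question down to the finite $T_0$-space $V^{\alpha}$ along the continuous map $\tau \colon W^{\alpha} \to V^{\alpha}$ of \cref{prop:tau_continuous}, and then apply \cref{lem:connected_locally_closed_tree}. The first step is to note that, by \cref{prop:Valpha_G}, the Hasse diagram of $V^{\alpha}$ is an orientation of the tree $G$. Since $\bfj$ is connected in $G$, \cref{lem:connected_locally_closed_tree} then shows that $\bfj \in \LC(V^{\alpha})$.

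The second step is to pull back along $\tau$. Write $\bfj = U \cap F$ with $U \in \Open(V^{\alpha})$ and $F \in \Closed(V^{\alpha})$. Because $\tau \colon W^{\alpha} \to V^{\alpha}$ is continuous,
\[
L_{\bfj} = \tau^{-1}(\bfj) = \tau^{-1}(U) \cap \tau^{-1}(F),
\]
and this is the intersection of an open and a closed subset of $W^{\alpha}$. Hence $L_{\bfj} \in \LC(W^{\alpha})$. Equivalently, in the preorder language of \cref{section:topologies_orders_quivers}: if $(x,y),(y,z) \in R_{W^{\alpha}}$ with $x,z \in L_{\bfj}$, then applying $\tau$ gives $(\tau(x),\tau(y)),(\tau(y),\tau(z)) \in R_{V^{\alpha}}$ with $\tau(x),\tau(z) \in \bfj$. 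Local closedness of $\bfj$ gives $\tau(y) \in \bfj$, so $y \in L_{\bfj}$.

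I expect no real obstacle. The only point needing care is that \cref{lem:connected_locally_closed_tree} is stated for a $T_0$-space whose Hasse diagram is an orientation of $G$; this holds because $R_{V^{\alpha}}$ is a partial order by \cref{lem:RValpha_partial_order}, and its Hasse diagram is the orientation given in \cref{prop:Valpha_G}.
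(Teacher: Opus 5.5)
Your proposal is correct and follows the paper's proof essentially verbatim: you get $\bfj \in \LC(V^{\alpha})$ from \cref{lem:connected_locally_closed_tree} and \cref{prop:Valpha_G}, then pull back along the continuous map $\tau \colon W^{\alpha} \to V^{\alpha}$ of \cref{prop:tau_continuous}. Your remark that $V^{\alpha}$ really is a finite $T_0$-space, as that lemma requires, is a fair point of care that the paper leaves implicit.
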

	
	\begin{proof}
		By \cref{lem:connected_locally_closed_tree,prop:Valpha_G}, we have $\bfj \in \LC(V^{\alpha})$. Since $\tau \colon W^{\alpha} \to V^{\alpha}$ is continuous by \cref{prop:tau_continuous}, we obtain $L_{\bfj}=\tau^{-1}(\bfj) \in \LC(W^{\alpha})$. 
	\end{proof}
	
	\begin{defn}
		For a connected subset $\bfj \subset V$ in $G$ and $\alpha \in \{\closed, \open\}^I$, let  $L_{\bfj}^{\alpha}$ denote the subspace of $W^{\alpha}$ with underlying set $L_{\bfj}$. 
	\end{defn}
	
	For a connected subset $\bfj \subset V$ in $G$, the undirected graph 
	\[\bigl((\bfj \cap I) \amalg (\bfj \cap J), \{\{i, j\} \mid i \in \bfj \cap I, j \in \bfj \cap J_i\}\bigr)\]
	is the subgraph of the tree $G$ induced by $\bfj$, and hence is itself a tree. We then have the following proposition. 
	
	\begin{prop}\label{prop:Ljalpha}
		For a connected subset $\bfj \subset V$ in $G$ and $\alpha \in \{\closed, \open\}^I$, the subspace $L_{\bfj}^{\alpha}$ of $W^{\alpha}$ coincides with the Alexandrov space obtained from the data $(K_i)_{i \in \bfj \cap I}$, $(L_j)_{j \in \bfj \cap J}$, $(\theta_{i, j})_{i \in \bfj \cap I, j \in \bfj \cap J_i}$, and $\alpha|_{\bfj \cap I}$ as in \textup{\cref{def:Walpha}}. 
	\end{prop}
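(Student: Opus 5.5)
The claim is an equality of preorders on the set $L_{\bfj}=\tau^{-1}(\bfj)$. On one side is the restriction $R_{W^{\alpha}}\cap(L_{\bfj}\times L_{\bfj})$; it defines the relative topology because, for Alexandrov spaces, the subspace topology on a subset is the Alexandrov topology of the restricted preorder. On the other side is the relation $R'$ built by \cref{def:RWalpha} from the data $(K_i)_{i\in\bfj\cap I}$, $(L_j)_{j\in\bfj\cap J}$, $(\theta_{i,j})_{i \in \bfj\cap I,\, j\in\bfj\cap J_i}$, and $\alpha|_{\bfj\cap I}$. This data is legitimate: the graph induced by $\bfj$ is a tree, as noted just before the statement. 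If $\bfj\cap I=\emptyset$, then $\bfj=\{j\}$ is a single vertex of $J$ and both sides equal $R_{L_j}$, so this case is trivial; for nonempty $\bfj\cap I$ the general argument applies. So the whole proof reduces to comparing the summands of the two disjoint unions in \cref{def:RWalpha}.

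I would go through the summands of $R_{W^{\alpha}}$ one at a time and intersect each with $L_{\bfj}\times L_{\bfj}$. The pieces $R_{K_i}$ and $R_{L_j}$ survive exactly when $i\in\bfj$, respectively $j\in\bfj$, and these are precisely the corresponding pieces of $R'$. A piece $R_{i,j}$ or $R_{j,i}$ with $j\in J_i$ lies in $L_{\bfj}\times L_{\bfj}$ only if it is empty or $i,j\in\bfj$. In the latter case $j\in \bfj\cap J_i$, so the piece appears in $R'$ with the same definition and the same value $\alpha(i)$.

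The only nontrivial point is the pieces $R_{i,i'}$. Such a piece is indexed by pairs with $|J_i\cap J_{i'}|=1$; write $\{j\}=J_i\cap J_{i'}$. In $R'$ the corresponding condition is $|(\bfj\cap J_i)\cap(\bfj\cap J_{i'})|=1$, and the piece is defined using the unique common neighbour inside $\bfj$. I therefore need the following: if $i,i'\in\bfj$ and $J_i\cap J_{i'}=\{j\}$, then $j\in\bfj$. Here I use that $\bfj$ is connected in the tree $G$. The induced subgraph contains a path from $i$ to $i'$, and the unique simple path in $G$ between them is $i,j,i'$, so $j\in\bfj$. Conversely, a common neighbour inside $\bfj$ is a common neighbour in $G$. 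Hence the two index sets coincide, and so do the shared vertex $j$ and the maps $\theta_{i,j}$, $\theta_{i',j}$. This tree/path argument is the main (though mild) obstacle; everything else is bookkeeping.

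Summands indexed by vertices outside $\bfj$ meet $L_{\bfj}\times L_{\bfj}$ trivially, since their elements have a coordinate in $\tau^{-1}(V\setminus\bfj)$. Combining all cases gives $R_{W^{\alpha}}\cap(L_{\bfj}\times L_{\bfj})=R'$, which proves the proposition.
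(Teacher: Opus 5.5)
Your proposal is correct and is essentially the paper's proof: intersect each summand of $R_{W^{\alpha}}$ with $L_{\bfj}\times L_{\bfj}$, observe that only the summands indexed by vertices of $\bfj$ survive, and use connectedness of $\bfj$ in the tree $G$ to see that the unique element of $J_i\cap J_{i'}$ lies in $\bfj$ whenever $i,i'\in\bfj\cap I$. In the converse direction of the index-set comparison you implicitly use $|J_i\cap J_{i'}|\le 1$, which holds because $G$ is a tree.
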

	
	\begin{proof}
		The underlying set of $L_{\bfj}^{\alpha}$ is
		\[L_{\bfj}=\coprod_{i \in \bfj \cap I} K_i \amalg \coprod_{j \in \bfj \cap J} L_j. \]
		The specialization preorder on $L_{\bfj}$ is
		\[R_{W^{\alpha}} \cap (L_{\bfj} \times L_{\bfj})=\coprod_{i \in \bfj \cap I} R_{K_i} \amalg \coprod_{j \in \bfj \cap J} R_{L_j} \amalg \coprod_{\substack{i \in \bfj \cap I, \\ j \in \bfj \cap J_i; \\ \alpha(i)=\closed}} R_{i, j} \amalg \coprod_{\substack{i \in \bfj \cap I, \\ j \in \bfj \cap J_i; \\ \alpha(i)=\open}} R_{j, i} 
		\amalg \coprod_{\substack{i, i' \in \bfj \cap I; \\ |J_i \cap J_{i'}|=1 \\ \alpha(i)=\closed,  \alpha(i')=\open}} R_{i, i'}. \]
		In the indexing set of the last disjoint union, the element of $J_i \cap J_{i'}$ belongs to $\bfj$ because $\bfj$ is connected in a tree $G$. This shows the assertion. 
	\end{proof}
	
	The following notation will be used in this subsection and, more importantly, in the next subsection to specify the tensor factors in the C*-algebraic data; see \cref{def:tensor_Di_Dij}.
	
	\begin{defn}
		Since $G$ is a tree, for a connected subset $\bfj \subset V$ in $G$ and $i \in I \setminus \bfj$, there exists a unique element $j \in J_i$ such that there exists a path between $j$ and some element of $\bfj$ in the subgraph of $G$ induced by $(I \setminus \{i\}) \cup J$. 
		This unique element $j$ is denoted by $j_{i, \bfj}$.
	\end{defn}

	\begin{eg}\label{eg:tree_jij}
		Consider the case in \cref{eg:Valpha_I_two_points}. 
		We have $j_{i_1, \{j_1\}}=j_1$. For any connected subset $\bfj \subset \{i_2, j_2, j_3, j_4\}$ in $G$, we have $j_{i_1, \bfj}=j_2$. %For example, we have $I_{j_1}=\{i_1\}$ and $I_{\{j_2\}=\{i_1, i_2\}$. 
		\end{eg}

		The following elementary lemma records the compatibility of the notation $j_{i,\bfj}$ with connected subsets and will be used repeatedly in what follows.
		
		\begin{lem}\label{lem:j_i_bfj}
			For connected subsets $\bfj, \bfj' \subset V$ in $G$ and $i \in I \setminus (\bfj \cup \bfj')$, we have 
			\[j_{i, \bfj}=j_{i, \bfj'}\] 
			if, for some $v \in \bfj$ and $v' \in \bfj'$, there exists a path between $v$ and $v'$ in the subgraph of $G$ induced by $(I \setminus \{i\}) \cup J$. 
		\end{lem}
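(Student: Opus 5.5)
The plan is to argue in the tree $G$ using uniqueness of simple paths. By definition, $j_{i,\bfj}$ is the unique $j \in J_i$ that is joined to some element of $\bfj$ by a path in the subgraph $G_i$ of $G$ induced by $(I \setminus \{i\}) \cup J$, i.e.\ in $G$ with the vertex $i$ deleted. Since $G$ is a tree, the connected components of $G_i$ correspond bijectively to the neighbours $j \in J_i$ of $i$: each component contains exactly one element of $J_i$. Indeed, two distinct neighbours $j, j'$ in one component would give, together with the edges $\{i,j\}$ and $\{i,j'\}$, a cycle. Conversely, any vertex $w \neq i$ reaches $i$ in $G$, and the last vertex before $i$ on the simple path from $w$ to $i$ is some $j \in J_i$ lying in the component of $w$. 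Hence $j_{i,\bfj}$ is characterised as the neighbour of $i$ lying in the component of $G_i$ that contains $\bfj$.

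I will first check that this component is well defined. Since $i \notin \bfj$ and $\bfj$ is connected in $G$, the subgraph induced by $\bfj$ is a connected subgraph of $G_i$, so $\bfj$ lies in a single component $\Gamma_{\bfj}$ of $G_i$. Then $j_{i,\bfj}$ is the unique element of $J_i \cap \Gamma_{\bfj}$: any $j \in J_i$ connected to some element of $\bfj$ in $G_i$ lies in $\Gamma_{\bfj}$. The same holds for $\bfj'$, with its component $\Gamma_{\bfj'}$.

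Now apply the hypothesis: there exist $v \in \bfj$ and $v' \in \bfj'$ joined by a path in $G_i$. Then $v$ and $v'$ lie in the same component of $G_i$, so $\Gamma_{\bfj} = \Gamma_{\bfj'}$. Since each component contains exactly one element of $J_i$, we get $j_{i,\bfj} = j_{i,\bfj'}$.

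There is no serious obstacle here. The only point needing care is the claim that distinct neighbours of $i$ lie in distinct components of $G_i$, which is exactly the uniqueness of simple paths in the tree $G$ and is already implicit in the definition of $j_{i,\bfj}$.
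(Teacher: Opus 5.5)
Your proof is correct. The paper records this lemma as elementary and gives no proof; your argument, that the components of $G$ with $i$ deleted correspond bijectively to the neighbours $J_i$ (by uniqueness of simple paths in the tree) and that the hypothesis places $\bfj$ and $\bfj'$ in the same component, is exactly the reasoning already implicit in the paper's definition of $j_{i,\bfj}$.
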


		\begin{defn}
			For a connected subset $\bfj \subset V$ in $G$, we define a set
			\[
			I_{\bfj}:=\{i \in I \setminus \bfj \mid j_{i, \bfj} \in \bfj \}. 
			\]
		\end{defn}

		\begin{defn}
			For a connected subset $\bfj \subset V$ in $G$ and $i \in I_{\bfj}$, we define a map 
			\[\theta_{i, \bfj} \colon K_i \to L_{\bfj}\] 
			to be the composite 
			\[
			K_i \xrightarrow{\theta_{i, j_{i, \bfj}}} L_{j_{i, \bfj}} \hookrightarrow L_{\bfj}.
			\]
		\end{defn}

		\begin{prop}
			Let $\bfj \subset V$ be a connected subset in $G$ and $i \in I_{\bfj}$. The map $\theta_{i, \bfj} \colon K_i \to L_{\bfj}$ is a continuous map $K_i \to L_{\bfj}^{\alpha}$ for any $\alpha \in \{\closed, \open\}^I$. 
		\end{prop}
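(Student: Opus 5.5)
Continuity between Alexandrov spaces is the same as order preservation, so I will show that $\theta_{i,\bfj}$ is an order-preserving map from $(K_i, R_{K_i})$ to $(L_{\bfj}, R_{W^{\alpha}} \cap (L_{\bfj} \times L_{\bfj}))$. The map factors as
\[
K_i \xrightarrow{\theta_{i,j}} L_j \hookrightarrow L_{\bfj}, \qquad j := j_{i,\bfj} \in \bfj \cap J_i,
\]
where membership of $j$ in $\bfj$ is exactly the condition $i \in I_{\bfj}$. A composite of continuous maps is continuous, so it suffices to check the two factors separately.

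The first factor $\theta_{i,j} \colon K_i \to L_j$ is order-preserving by hypothesis. Here $L_j$ carries the Alexandrov topology of $R_{L_j}$, which agrees with the relative topology induced from $W^{\alpha}$ by the proposition on the relative topology of $\coprod_{j} L_j$.

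For the second factor, note that $\{j\} \subset \bfj$, so $L_j = L_{\{j\}} \subset L_{\bfj} \subset W^{\alpha}$. The subspace topology on $L_j$ inherited from $L_{\bfj}^{\alpha}$ therefore equals the one inherited from $W^{\alpha}$, namely the Alexandrov topology of $R_{L_j}$. Hence the inclusion $L_j \hookrightarrow L_{\bfj}^{\alpha}$ is continuous. Concretely, $R_{L_j} \subset R_{W^{\alpha}} \cap (L_{\bfj} \times L_{\bfj})$, which is immediate from \cref{def:RWalpha} or from the description of the induced preorder in \cref{prop:Ljalpha}.

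Composing the two factors gives the claim for every $\alpha$. I expect no real obstacle; the only point needing care is the well-definedness of $j_{i,\bfj}$ and the fact that it lies in $\bfj$. Both are built into the definitions of $j_{i,\bfj}$ and $I_{\bfj}$.
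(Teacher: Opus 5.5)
Your proposal is correct and follows essentially the same route as the paper: both factor $\theta_{i,\bfj}$ as $\theta_{i,j_{i,\bfj}}$ followed by the inclusion $L_{j_{i,\bfj}} \hookrightarrow L_{\bfj}^{\alpha}$, and both justify continuity of the inclusion via $R_{L_{j_{i,\bfj}}} \subset R_{W^{\alpha}} \cap (L_{\bfj} \times L_{\bfj})$.
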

		
		\begin{proof}
			We know that $\theta_{i, j_{i, \bfj}} \colon K_i \to L_{j_{i, \bfj}}$ is continuous.
			The inclusion map $L_{j_{i, \bfj}} \hookrightarrow L_{\bfj}^{\alpha}$ is continuous because $R_{L_{j_{i, \bfj}}} \subset R_{W^{\alpha}} \cap (L_{\bfj} \times L_{\bfj})$. This shows the assertion. 
		\end{proof}

		As in the construction of the reflection functors in \cref{subsection:definition_of_reflection_functors}, we need to enlarge $L_{\bfj}$ by adjoining the neighboring subsets $K_i$. This leads to the following subsets, which will be used in the construction of higher-dimensional reflection functors in the next subsection. 
		
		\begin{defn}
			For a connected subset $\bfj \subset V$ in $G$, the subset $\bfj \cup I_{\bfj} \subset V$ is connected in $G$. Hence, we can define a subset
			\[W_{\bfj}:=L_{\bfj \cup I_{\bfj}} \subset W. \]
		\end{defn}
		
		As in the construction of the reflection functors, the following maps will induce the functors entering the definition of higher-dimensional reflection functors; see \cref{def:functor_Wbfj_Lbfj}. 
		
		\begin{defn}
			For a connected subset $\bfj \subset V$ in $G$, we define a map \[\widetilde{\theta}_{\bfj} \colon W_{\bfj} \to L_{\bfj}\]
			by
			\[
			\widetilde{\theta}_{\bfj}(x):=
			\begin{dcases}
				x & \text{if $\tau(x) \in \bfj$},\\
				\theta_{\tau(x),\bfj}(x) & \text{if $\tau(x) \in I_{\bfj}$}.
			\end{dcases}
			\]
		\end{defn}
		
		\begin{defn}
			For a connected subset $\bfj \subset V$ in $G$ and $\alpha \in \{\closed, \open\}^I$, let $W_{\bfj}^{\alpha}$ denote the subspace of $W^{\alpha}$ with underlying set $W_{\bfj}$. 
		\end{defn}

		\begin{prop}\label{prop:thetatilde_bfj_continuous}
			Let $\bfj \subset V$  be a connected subset in $G$. The map $\widetilde{\theta}_{\bfj} \colon W_{\bfj} \to L_{\bfj}$ is a continuous map $W_{\bfj}^{\alpha} \to L_{\bfj}^{\alpha}$ for any $\alpha \in \{\closed, \open\}^I$. 
		\end{prop}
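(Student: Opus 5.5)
The statement is that $\widetilde{\theta}_{\bfj}$ is order-preserving from $(W_{\bfj}, R_{W^{\alpha}} \cap (W_{\bfj} \times W_{\bfj}))$ to $(L_{\bfj}, R_{W^{\alpha}} \cap (L_{\bfj}\times L_{\bfj}))$. Both sides are Alexandrov spaces, so continuity is equivalent to this. I would check it on each of the five types of pairs appearing in \cref{def:RWalpha} that lie inside $W_{\bfj}$. By \cref{prop:Ljalpha}, applied to the connected set $\bfj \cup I_{\bfj}$, these pairs are: pairs in some $R_{K_i}$ or $R_{L_j}$; pairs in $R_{i,j}$ or $R_{j,i}$ with $i,j \in \bfj\cup I_{\bfj}$ and $j \in J_i$; and pairs in $R_{i,i'}$ with $i,i' \in \bfj\cup I_{\bfj}$.

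The first step is to record the combinatorics. The vertices of $I_{\bfj}$ are pairwise non-adjacent in $G$, and none of them is joined to a $J$-vertex of $\bfj \cup I_{\bfj}$ other than through $j_{i,\bfj}$. More precisely, I would show the following. If $i \in I_{\bfj}$, $j \in J_i$ and $j \in \bfj\cup I_{\bfj}$, then $j = j_{i,\bfj}$. Indeed, $j$ must lie in $\bfj$, since $I_{\bfj}\subset I$. Then $j$ itself is an element of $\bfj$ connected to $j$, so $j=j_{i,\bfj}$ by uniqueness in the definition. Likewise, if $i, i'$ are both in $\bfj \cup I_{\bfj}$ with $|J_i\cap J_{i'}|=1$ and $i \in I_{\bfj}$, then the common neighbour $j$ must be in $\bfj$. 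Otherwise $i' \in I_{\bfj}$ as well, and the path $i - j - i'$ together with $i'$'s connection to $\bfj$ would force $j=j_{i,\bfj}\in\bfj$, a contradiction. Again $j = j_{i,\bfj}$, and symmetrically $j=j_{i',\bfj}$ if $i'\in I_{\bfj}$. This uses only that $G$ is a tree, together with \cref{lem:j_i_bfj}.

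With this in hand, the cases go as follows. Pairs inside $R_{K_i}$ with $i\in\bfj$, and pairs inside $R_{L_j}$, are fixed by $\widetilde{\theta}_{\bfj}$. For $R_{K_i}$ with $i \in I_{\bfj}$, the image pair lies in $R_{L_{j_{i,\bfj}}}$ because $\theta_{i,j_{i,\bfj}}$ is order-preserving.

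Now take $(x,y)\in R_{i,j}$ (or $R_{j,i}$). If both $i$ and $j$ lie in $\bfj$, the pair is fixed. If $i \in I_{\bfj}$, then $j=j_{i,\bfj}$ and the definition gives $(\theta_{i,j}(x),y)\in R_{L_j}$, which is exactly the image pair.

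Finally take $(x,x')\in R_{i,i'}$ with common neighbour $j$. If both $i,i'\in\bfj$, the pair is fixed. If $i\in I_{\bfj}$ and $i'\in \bfj$, the image is $(\theta_{i,j}(x), x')$ with $\alpha(i')=\open$. From $(\theta_{i,j}(x),\theta_{i',j}(x'))\in R_{L_j}$ we get $(\theta_{i,j}(x), x') \in R_{j,i'}\subset R_{W^{\alpha}}$. The case $i' \in I_{\bfj}$, $i \in \bfj$ is symmetric and lands in $R_{i,j}$. If both lie in $I_{\bfj}$, then both map into $L_j$ and the image pair is the defining relation in $R_{L_j}$.

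The main obstacle is the combinatorial step above. One must make sure that every $J$-vertex adjacent to an $i\in I_{\bfj}$ inside $\bfj\cup I_{\bfj}$, and every shared neighbour of two such vertices, is exactly $j_{i,\bfj}$; otherwise the image pair would land in a different $L_j$. Once that is settled via the uniqueness of paths in the tree $G$, the remaining verification is the same routine case check as in \cref{lem:RWalpha_preorder} and \cref{prop:thetatildej_continuous}.
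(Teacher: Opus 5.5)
Your proposal is correct and takes the same approach as the paper: a case-by-case check that $\widetilde{\theta}_{\bfj}$ preserves each of the five kinds of pairs in $R_{W^{\alpha}}$. The one addition is that you justify the identities $j=j_{i,\bfj}$ (and $j=j_{i',\bfj}$) from the tree structure, whereas the paper only asserts them. In that justification, the case $i'\in\bfj$ with $j\notin\bfj$ is not excluded by your sentence ``otherwise $i'\in I_{\bfj}$ as well''. It is handled by the same argument, though, because the edge $\{j,i'\}$ already connects $j$ to $\bfj$ while avoiding $i$, so no real gap arises.
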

		
		\begin{proof}
			Take $x, y \in W_{\bfj}^{\alpha}$ with $(x, y) \in R_{W^{\alpha}}$. We show $(\widetilde{\theta}_{\bfj}(x), \widetilde{\theta}_{\bfj}(y)) \in R_{W^{\alpha}}$ by cases. 
			\begin{description}[font=\normalfont\scshape]
				\item[Case 1] $(x, y) \in R_{L_j}$ for some $j \in J$. Then $(\widetilde{\theta}_{\bfj}(x), \widetilde{\theta}_{\bfj}(y))=(x, y) \in R_{L_j} \subset R_{W^{\alpha}}$. 
				
				\item[Case 2] $(x, y) \in R_{K_i}$ for some $i \in I$. If $i \in \bfj$, then 
				\[(\widetilde{\theta}_{\bfj}(x), \widetilde{\theta}_{\bfj}(y))=(x, y) \in R_{K_i} \subset R_{W^{\alpha}}. \] 
				If $i \in I_{\bfj}$, then 
				\[(\widetilde{\theta}_{\bfj}(x), \widetilde{\theta}_{\bfj}(y))=(\theta_{i, j}(x), \theta_{i, j}(y)) \in R_{L_j} \subset R_{W^{\alpha}} \]
				because $\theta_{i, j} \colon K_i \to L_j$ is order-preserving, where $j:=j_{i, \bfj}$. 
				
				\item[Case 3] $(x, y) \in R_{i, j}$ for some $i \in I$ and $j \in J_i$ with $\alpha(i)=\closed$. If $i \in \bfj$, then 
				\[(\widetilde{\theta}_{\bfj}(x), \widetilde{\theta}_{\bfj}(y))=(x, y) \in R_{i, j} \subset R_{W^{\alpha}}. \]
				If $i \in I_{\bfj}$, then $j_{i, \bfj}=j$ and  \[(\widetilde{\theta}_{\bfj}(x), \widetilde{\theta}_{\bfj}(y))=(\theta_{i, j}(x), y) \in R_{L_j} \subset R_{W^{\alpha}}. \]
				
				\item[Case 4] $(x, y) \in R_{j, i}$ for some $i \in I$ and $j \in J_i$ with $\alpha(i)=\open$. As in Case 3, we have $(\widetilde{\theta}_{\bfj}(x), \widetilde{\theta}_{\bfj}(y)) \in R_{W^{\alpha}}$. 
				
				\item[Case 5] $(x, y) \in R_{i, i'}$ for some $i, i' \in I$ with $|J_i \cap J_{i'}|=1$, $\alpha(i)=\closed$, and $\alpha(i')=\open$. Let $j \in J$ be the unique element of $J_i \cap J_{i'}$. If $i, i' \in \bfj$, then 
				\[(\widetilde{\theta}_{\bfj}(x), \widetilde{\theta}_{\bfj}(y))=(x, y) \in R_{i, i'} \subset R_{W^{\alpha}}. \]
				If $i \in I_{\bfj}$ and $i' \in \bfj$, then $j_{i, \bfj}=j$ and  
				\[(\widetilde{\theta}_{\bfj}(x), \widetilde{\theta}_{\bfj}(y))=(\theta_{i, j}(x), y) \in R_{j, i'} \subset R_{W^{\alpha}}. \]
				By symmetry, if $i \in \bfj$ and $i' \in I_{\bfj}$, then $(\widetilde{\theta}_{\bfj}(x), \widetilde{\theta}_{\bfj}(y)) \in R_{i, j} \subset  R_{W^{\alpha}}$. 
				If $i, i' \in I_{\bfj}$, then $j_{i, \bfj}=j_{i', \bfj}=j$ and \[(\widetilde{\theta}_{\bfj}(x), \widetilde{\theta}_{\bfj}(y))=(\theta_{i, j}(x), \theta_{i', j}(y)) \in R_{L_j} \subset R_{W^{\alpha}}. \]
			\end{description}
			This completes the proof. 
		\end{proof}
		
		The following elementary lemma will be used to show the next lemma. 
		
		\begin{lem}\label{lem:preimage_tau_theta}
			For a connected subset $\bfj \subset V$ in $G$ and a subset $F \subset \bfj$, we have 
			\[(\tau \circ \widetilde{\theta}_{\bfj})^{-1}(F)=\tau^{-1}(F) \cup \tau^{-1}(\{i \in I_{\bfj} \mid j_{i, \bfj} \in F\}). \]
			In particular, if $F \subset \bfj \cap I$, then $(\tau \circ \widetilde{\theta}_{\bfj})^{-1}(F)=\tau^{-1}(F)$. 
		\end{lem}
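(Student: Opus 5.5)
The plan is to prove the set equality by double inclusion, computing $\tau \circ \widetilde{\theta}_{\bfj}$ pointwise on $W_{\bfj}$ using the two-case definition of $\widetilde{\theta}_{\bfj}$. Recall that
\[W_{\bfj}=L_{\bfj \cup I_{\bfj}}=\tau^{-1}(\bfj) \amalg \tau^{-1}(I_{\bfj}),\]
a disjoint union because $I_{\bfj} \cap \bfj=\emptyset$ by definition. I would first record the formula for $\tau \circ \widetilde{\theta}_{\bfj}$ on each piece. For $x$ with $\tau(x) \in \bfj$ we have $\widetilde{\theta}_{\bfj}(x)=x$, so $\tau(\widetilde{\theta}_{\bfj}(x))=\tau(x)$. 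For $x \in K_i$ with $i=\tau(x) \in I_{\bfj}$ we have $\widetilde{\theta}_{\bfj}(x)=\theta_{i, j_{i, \bfj}}(x) \in L_{j_{i, \bfj}}$, so $\tau(\widetilde{\theta}_{\bfj}(x))=j_{i,\bfj}$.

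With this, the preimage of $F \subset \bfj$ splits accordingly. On the first piece, the points $x$ with $\tau(x) \in \bfj$ and $\tau(x) \in F$ form exactly $\tau^{-1}(F)$, since $F \subset \bfj$. On the second piece, the condition is $j_{\tau(x),\bfj} \in F$ with $\tau(x) \in I_{\bfj}$, which gives $\tau^{-1}(\{i \in I_{\bfj} \mid j_{i,\bfj} \in F\})$. Taking the union yields the stated formula, and both inclusions are immediate from this case split.

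For the ``in particular'' statement, take $F \subset \bfj \cap I$. By definition $j_{i,\bfj} \in J_i \subset J$, so $j_{i,\bfj} \notin F$ for every $i$. The second set is therefore empty, and the preimage reduces to $\tau^{-1}(F)$.

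There is no real obstacle here. The only point needing care is that the two cases in the definition of $\widetilde{\theta}_{\bfj}$ are exhaustive and disjoint on $W_{\bfj}$, and this follows from $I_{\bfj} \subset I \setminus \bfj$.
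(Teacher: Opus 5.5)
Your proposal is correct. The paper states this lemma without proof as elementary, and your pointwise case split on $W_{\bfj}=\tau^{-1}(\bfj)\amalg\tau^{-1}(I_{\bfj})$, using $\tau(\theta_{i,\bfj}(x))=j_{i,\bfj}$ for $x \in K_i$ with $i \in I_{\bfj}$ and $I \cap J=\emptyset$ for the ``in particular'' clause, is exactly the direct verification it leaves to the reader.
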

		
		The following lemma will be used to show \cref{lem:composition_thetaj,lem:ipripr}. 
		
		\begin{lem}\label{lem:W_intersection}
			Let $\bfj, \bfj' \subset V$ be connected subsets in $G$. Suppose that $J_i \subset \bfj$ for all $i \in \bfj \cap I$. 
			\begin{enumerate}[label=\textnormal{(\arabic*)}]
				\item \label{lem:W_intersection_1}
				If $\bfj \cap \bfj'=\emptyset$, then 
				\[L_{\bfj} \cap W_{\bfj'}=\emptyset. \]
				
				\item \label{lem:W_intersection_2}
				If $\bfj \cap \bfj' \neq \emptyset$, then 
				\[\widetilde{\theta}_{\bfj}^{-1}(L_{\bfj} \cap W_{\bfj'})=W_{\bfj \cap \bfj'}. \]
			\end{enumerate}
		\end{lem}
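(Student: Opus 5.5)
Both parts reduce to bookkeeping on the tree $G$ through the map $\tau$: since $L_{\bfj}=\tau^{-1}(\bfj)$ and $W_{\bfj'}=\tau^{-1}(\bfj'\cup I_{\bfj'})$, everything is decided at the level of vertex sets in $V$. The hypothesis that $J_i\subset\bfj$ for all $i\in\bfj\cap I$ says that every $I$-vertex of $\bfj$ has all its neighbours inside $\bfj$.

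For (1), we have $L_{\bfj}\cap W_{\bfj'}=\tau^{-1}(\bfj\cap(\bfj'\cup I_{\bfj'}))$, and $\bfj\cap\bfj'=\emptyset$ by assumption. So it suffices to show $\bfj\cap I_{\bfj'}=\emptyset$. Suppose $i\in\bfj\cap I_{\bfj'}$. By definition of $I_{\bfj'}$, the vertex $j_{i,\bfj'}$ belongs to $\bfj'$, and $j_{i,\bfj'}\in J_i$. By the hypothesis, $J_i\subset\bfj$, so $j_{i,\bfj'}\in\bfj\cap\bfj'$. This contradicts $\bfj\cap\bfj'=\emptyset$.

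For (2), first compute $\tau^{-1}$ of $L_{\bfj}\cap W_{\bfj'}$ as $\tau^{-1}(\bfj\cap(\bfj'\cup I_{\bfj'}))$. Then I would use \cref{lem:preimage_tau_theta}, which describes $\widetilde{\theta}_{\bfj}^{-1}(\tau^{-1}(F))=(\tau\circ\widetilde{\theta}_{\bfj})^{-1}(F)$ for $F\subset\bfj$. With $F=\bfj\cap(\bfj'\cup I_{\bfj'})$, this gives
\[
\tau^{-1}(F)\cup\tau^{-1}(\{i\in I_{\bfj}\mid j_{i,\bfj}\in F\}).
\]
The claim then becomes the set identity
\[
F\cup\{i\in I_{\bfj}\mid j_{i,\bfj}\in F\}=(\bfj\cap\bfj')\cup I_{\bfj\cap\bfj'},
\]
whose right-hand side is exactly the vertex set underlying $W_{\bfj\cap\bfj'}$.

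The main obstacle is verifying this identity via \cref{lem:j_i_bfj} and the tree structure; I would check it by comparing vertex classes.
\begin{itemize}
\item[(a)] \emph{$I$-vertices of $F$.} An element $i\in\bfj\cap I_{\bfj'}$ has $j_{i,\bfj'}\in\bfj'$ and $j_{i,\bfj'}\in J_i\subset\bfj$. Since $\bfj\cap\bfj'$ is connected and $i\notin\bfj'$, the vertex $j_{i,\bfj'}$ is the neighbour of $i$ on the side of $\bfj\cap\bfj'$. Hence $j_{i,\bfj\cap\bfj'}=j_{i,\bfj'}\in\bfj\cap\bfj'$, so $i\in I_{\bfj\cap\bfj'}$. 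Conversely, if $i\in I_{\bfj\cap\bfj'}$ and $i\in\bfj$, then $i\in I_{\bfj'}$ by \cref{lem:j_i_bfj}.
\item[(b)] \emph{$I$-vertices outside $\bfj$.} Take $i\in I_{\bfj}$ with $j_{i,\bfj}\in F$. If $j_{i,\bfj}\in\bfj'$, then $j_{i,\bfj}\in\bfj\cap\bfj'$, so $i\in I_{\bfj\cap\bfj'}$. The case $j_{i,\bfj}\in I_{\bfj'}$ is impossible, because $j_{i,\bfj}$ is a $J$-vertex. Conversely, if $i\in I_{\bfj\cap\bfj'}$ with $i\notin\bfj$, then \cref{lem:j_i_bfj} gives $j_{i,\bfj}=j_{i,\bfj\cap\bfj'}\in\bfj\cap\bfj'\subset F$.
\item[(c)] \emph{Vertices of $\bfj\cap\bfj'$.} These appear on both sides directly.
\end{itemize}
The nonemptiness of $\bfj\cap\bfj'$ is used so that $j_{i,\bfj\cap\bfj'}$ and $I_{\bfj\cap\bfj'}$ are defined. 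Since $\tau$ is surjective onto each vertex with nonempty fibre and $\tau^{-1}$ commutes with unions, the identity of vertex sets yields the claim.
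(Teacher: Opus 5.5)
Your proposal is correct and matches the paper's proof essentially step for step. Part (1) is the same contradiction via $j_{i,\bfj'}\in J_i\subset\bfj$. Part (2) likewise uses \cref{lem:preimage_tau_theta} to reduce to the vertex-set identity $I_{\bfj\cap\bfj'}=\{i\in I_{\bfj}\mid j_{i,\bfj}\in\bfj'\}\cup(\bfj\cap I_{\bfj'})$, verified by the same case split using \cref{lem:j_i_bfj} and the hypothesis $J_i\subset\bfj$. The differences are cosmetic: you apply \cref{lem:preimage_tau_theta} once to $F=\bfj\cap(\bfj'\cup I_{\bfj'})$ rather than separately to $\bfj\cap\bfj'$ and $\bfj\cap I_{\bfj'}$, and your final appeal to surjectivity of $\tau$ is unnecessary.
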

		
		\begin{proof}
			\
			\begin{enumerate}[label=\textnormal{(\arabic*)}]
				\item 
				Suppose that $\bfj \cap \bfj'=\emptyset$. Since
				\[
				L_{\bfj} \cap W_{\bfj'}=\tau^{-1}(\bfj) \cap \tau^{-1}(\bfj' \cup I_{\bfj'})=\tau^{-1}(\bfj \cap \bfj') \cup \tau^{-1}(\bfj \cap I_{\bfj'})=\tau^{-1}(\bfj \cap I_{\bfj'}),
				\]
				it suffices to show that $\bfj \cap I_{\bfj'}=\emptyset$.
				Suppose that $i \in \bfj \cap I_{\bfj'}$. Since $i \in I_{\bfj'}$, we have $j_{i,\bfj'} \in \bfj'$. Since $i \in \bfj \cap I$, we also have $j_{i,\bfj'} \in J_i \subset \bfj$ by assumption. Thus, $\bfj \cap \bfj' \neq \emptyset$, a contradiction.
				
				\item
				Suppose that $\bfj \cap \bfj' \neq \emptyset$. We claim that
				\[
				I_{\bfj \cap \bfj'}=\{i \in I_{\bfj} \mid j_{i, \bfj} \in \bfj'\} \cup (\bfj \cap I_{\bfj'}).
				\]
				We repeatedly use \cref{lem:j_i_bfj}. First, take $i \in I_{\bfj \cap \bfj'}$. If $i \notin \bfj$, then
				$j_{i, \bfj}=j_{i, \bfj \cap \bfj'} \in \bfj \cap \bfj'$, and hence $i \in I_{\bfj}$ with $j_{i, \bfj} \in \bfj'$. If $i \in \bfj$, then $i \notin \bfj'$ and
				$j_{i, \bfj'}=j_{i, \bfj \cap \bfj'} \in \bfj'$. Hence, $i \in \bfj \cap I_{\bfj'}$. This proves the inclusion $\subset$.
				
				Conversely, let $i \in I_{\bfj}$ with $j_{i, \bfj} \in \bfj'$. Then
				$j_{i, \bfj \cap \bfj'}=j_{i, \bfj} \in \bfj \cap \bfj'$, and hence $i \in I_{\bfj \cap \bfj'}$. Now let $i \in \bfj \cap I_{\bfj'}$. Then
				$j_{i, \bfj \cap \bfj'}=j_{i, \bfj'} \in \bfj'$. Since $i \in \bfj \cap I$, we also have
				$j_{i, \bfj \cap \bfj'} \in J_i \subset \bfj$ by assumption. Thus, $i \in I_{\bfj \cap \bfj'}$. This proves the reverse inclusion $\supset$ and hence the claim.
				
				We now have
				\begin{align*}
					\widetilde{\theta}_{\bfj}^{-1}(L_{\bfj} \cap W_{\bfj'})
					&=(\tau \circ \widetilde{\theta}_{\bfj})^{-1}(\bfj \cap \bfj') \cup (\tau \circ \widetilde{\theta}_{\bfj})^{-1}(\bfj \cap I_{\bfj'}) \\
					&=\tau^{-1}(\bfj \cap \bfj') \cup \tau^{-1}(\{i \in I_{\bfj} \mid j_{i, \bfj} \in \bfj'\}) \cup \tau^{-1}(\bfj \cap I_{\bfj'}) \\
					&=\tau^{-1}(\bfj \cap \bfj') \cup \tau^{-1}(I_{\bfj \cap \bfj'})
					=W_{\bfj \cap \bfj'}.
				\end{align*}
				The second equality follows from \cref{lem:preimage_tau_theta}, and the third follows from the claim. This completes the proof. 
				\qedhere
			\end{enumerate}
		\end{proof}
		
		We now turn to the reconstruction of the entire space $W^{\alpha}$. 
		
		\begin{defn}\label{def:reconstruction_J}
			For a subset $F \subset I$, let $J^{F}$ be the set of all connected components of the subgraph of $G$ induced by $(I \setminus F) \amalg J$.
			For each $j \in J$, let $[j]^{F} \in J^{F}$ denote the connected component containing $j$.
			For each $i \in F$, define a subset
			\[
			J_i^{F}:=\{[j]^{F} \mid j \in J_i\} \subset J^{F}.
			\]
		\end{defn}
		
		The following lemma is easily verified. 
		
		\begin{lem}\label{lem:JOmega_i}
			Let $F \subset I$. For each $i \in F$, the map
			\[
			J_i \to J^{F}_i, \quad j \mapsto [j]^{F}
			\]
			is a bijection and its inverse is the map
			\[
			J_i^{F} \to J_i, \quad \bfj \mapsto j_{i,\bfj}.
			\]
			Moreover, for $i \in F$ and $\bfj \in J^{F}$, we have $\bfj \in J^{F}_i$ if and only if $i \in I_{\bfj}$. 
		\end{lem}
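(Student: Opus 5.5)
The plan is to argue purely combinatorially in the tree $G$, using the definition of $j_{i,\bfj}$ together with \cref{lem:j_i_bfj}. Fix $F \subset I$ and $i \in F$, and let $G_i$ denote the subgraph of $G$ induced by $(I \setminus \{i\}) \cup J$. Since $G$ is a tree, deleting the vertex $i$ splits $G$ into exactly $|J_i|$ connected components, each containing exactly one neighbour $j \in J_i$. Write $C_j$ for the component of $G_i$ containing $j \in J_i$.

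First I would check that $\bfj \mapsto j_{i,\bfj}$ inverts $j \mapsto [j]^{F}$ on $J_i^{F}$. Let $j \in J_i$ and $\bfj:=[j]^{F}$. Then $i \notin \bfj$ because $i \in F$. The component $\bfj$ is connected in the subgraph induced by $(I \setminus F) \amalg J$, which lies inside $G_i$, so $\bfj \subset C_j$. Since $j \in \bfj$, the defining property of $j_{i,\bfj}$ forces $j_{i,\bfj}=j$. Thus $j_{i,[j]^{F}}=j$, so $j \mapsto [j]^F$ is injective. It is surjective onto $J_i^{F}$ by the definition of $J_i^{F}$. Hence it is a bijection with the stated inverse.

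For the final equivalence, suppose first that $\bfj \in J_i^{F}$, say $\bfj=[j]^{F}$ with $j \in J_i$. Then $i \notin \bfj$, and $j_{i,\bfj}=j \in \bfj$, so $i \in I_{\bfj}$. Conversely, suppose $i \in I_{\bfj}$ with $\bfj \in J^{F}$. Then $j:=j_{i,\bfj}$ lies in $J_i \cap \bfj$. Since $\bfj$ is a connected component containing $j$, we get $\bfj=[j]^{F} \in J_i^{F}$.

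I do not expect a real obstacle here. The only care needed is to confirm that $\bfj$ is a connected subset of $G$ (it is a component of an induced subgraph) and that it avoids $i$, so that $j_{i,\bfj}$ is defined.
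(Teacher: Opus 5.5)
Your proof is correct and is exactly the direct check the paper leaves to the reader; it only says the lemma is ``easily verified''. The trivial path from $j$ to $j \in [j]^{F}$ inside the subgraph induced by $(I\setminus\{i\}) \cup J$ forces $j_{i,[j]^{F}}=j$ by the uniqueness built into the definition of $j_{i,\bfj}$, and this gives both the bijection and the equivalence $\bfj \in J_i^{F} \Leftrightarrow i \in I_{\bfj}$ exactly as you argue; your preliminary decomposition into the components $C_j$ and the inclusion $\bfj \subset C_j$ are harmless but not actually needed.
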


		\begin{defn}\label{def:reconstruction_G}
			For a subset $F \subset I$, define an undirected graph 
			\[
			G^{F}:=
			\left(F \amalg J^{F}, \{\{i,\bfj\} \mid i \in F,\ \bfj \in J_i^{F}\}\right). 
			\]
		\end{defn}
		
		The undirected graph $G^{F}$ is obtained from the tree $G$ by contracting each connected component of the subgraph induced by $(I\setminus F)\amalg J$.
		By a basic fact from graph theory, contracting connected subgraphs of a tree again yields a tree; see \cite[Chapter~I, \S2, Corollary~2]{Serre_1980}.
		Thus, $G^{F}$ is a tree. 
		
		\begin{eg}
			Consider the case in \cref{eg:Valpha_I_two_points}. For example, let $F:=\{i_2\} \subset I$. 
			Then
			\[
			[j_2]^{F}=\{i_1, j_1, j_2\}, \quad
			[j_3]^{F}=\{j_3\}, \quad
			[j_4]^{F}=\{j_4\}. 
			\]
			We have 
			\[J^{F}=J^{F}_{i_2}=\{[j_2]^{F}, [j_3]^{F}, [j_4]^{F}\}. \] 
			Note that $I_{\bfj}=\{i_2\}$ for every $\bfj \in J^{F}$. 
			The corresponding tree $G^{F}$ is depicted below. 
			\[
			\begin{tikzcd}
				& i_2 & \\
				{[j_2]^{F}} \ar[-]{ur} & {[j_3]^{F}} \ar[-]{u} & {[j_4]^{F}} \ar[-]{ul} 
			\end{tikzcd}
			\]
		\end{eg}
		
		For a subset $F \subset I$ and $\bfj, \bfj' \in J^{F}$ with $\bfj \neq \bfj'$, we have $\bfj \cap \bfj'=\emptyset$ and hence $L_{\bfj} \cap L_{\bfj'}=\emptyset$. 
		
		\begin{lem}\label{lem:reconstruction_Walpha}
			Let $\alpha \in \{\closed, \open\}^I$. For every $F \subset I$, the Alexandrov space $W^{\alpha}$ is the Alexandrov space obtained from the data $(K_i)_{i \in F}$, $(L_{\bfj}^{\alpha})_{\bfj \in J^{F}}$, $(\theta_{i,\bfj})_{i \in F,\ \bfj \in J_i^{F}}$, and $\alpha|_{F} \in \{\closed, \open\}^{F}$ as in \textup{\cref{def:Walpha}}.
		\end{lem}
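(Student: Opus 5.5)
The plan is to compare the two preorders on $W$ directly. Write $R'$ for the relation obtained by applying \cref{def:RWalpha} to the new data $(K_i)_{i\in F}$, $(L_{\bfj}^{\alpha})_{\bfj\in J^{F}}$, $(\theta_{i,\bfj})$, $\alpha|_F$, with underlying tree $G^F$. First, the underlying sets agree. The components $\bfj\in J^F$ partition $(I\setminus F)\amalg J$, so
\[
W=\coprod_{i\in F}K_i\amalg\coprod_{\bfj\in J^F}L_{\bfj}.
\]
Since $G^F$ is a tree, the new data are admissible in the sense fixed at the start of the section, and $\theta_{i,\bfj}$ is order-preserving into $L_{\bfj}^{\alpha}$ by the proposition preceding \cref{prop:thetatilde_bfj_continuous}.

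Next I decompose $R_{W^{\alpha}}$ according to the pieces of $R'$, using the summands of \cref{def:RWalpha}.

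Pairs within a single $L_{\bfj}$ are, by definition of the subspace, exactly $R_{W^{\alpha}}\cap(L_{\bfj}\times L_{\bfj})=R_{L_{\bfj}^{\alpha}}$. Pairs within $K_i$ for $i\in F$ give $R_{K_i}$ in both relations.

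Pairs between distinct components $\bfj\ne\bfj'$ contribute nothing to $R'$, so I must show they are absent from $R_{W^{\alpha}}$. Any summand of $R_{W^{\alpha}}$ relates vertices $\tau(x),\tau(y)$ that are equal, adjacent in $G$, or of the form $i,i'$ with $J_i\cap J_{i'}=\{j\}$. In each case the relevant vertices ($j$ together with $i,i'\notin F$) lie in a single connected component of $(I\setminus F)\amalg J$.

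For $i\in F$ and $y\in L_{\bfj}$, every summand of $R_{W^{\alpha}}$ relating $K_i$ with $L_{\bfj}$ is of one of the following kinds:
\begin{itemize}
\item $R_{i,j}$ or $R_{j,i}$ with $j\in J_i\cap\bfj$. By \cref{lem:JOmega_i}, $j=j_{i,\bfj}$ and $\bfj\in J_i^F$.
\item $R_{i,i'}$ or $R_{i',i}$ with $i'\in\bfj\cap I$ and shared vertex $j\in J_i\cap J_{i'}$. Here $j\in\bfj$, so again $j=j_{i,\bfj}$.
\end{itemize}
Now take $\alpha(i)=\closed$; the open case is symmetric. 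The condition defining the new $R'_{i,\bfj}$ is $(\theta_{i,j}(x),y)\in R_{L_{\bfj}^{\alpha}}$. I check it agrees with the old description in each case:
\begin{itemize}
\item If $y\in L_j$, it is $(\theta_{i,j}(x),y)\in R_{L_j}$, which is exactly the old $R_{i,j}$.
\item If $y\in K_{i'}$ with $i'$ adjacent to $j$, it is $(\theta_{i,j}(x),y)\in R_{j,i'}$ (forcing $\alpha(i')=\open$), which matches the old $R_{i,i'}$.
\item If $y$ lies elsewhere in $L_{\bfj}$, I must show both sides are empty. On the new side, a relation from $L_j$ to $y$ inside $L_{\bfj}^{\alpha}$ would, by transitivity (\cref{lem:RWalpha_preorder}), yield $(x,y)\in R_{W^{\alpha}}$. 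So it suffices to show the old side is empty, i.e. to show $R_{W^{\alpha}}$ relates $x$ only to points of $L_j$ or of $K_{i'}$ with $i'$ adjacent to $j$. That is precisely the summand analysis above.
\end{itemize}

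Finally, pairs $K_i\times K_{i'}$ with $i,i'\in F$ arise in $R_{W^{\alpha}}$ only when $J_i\cap J_{i'}=\{j\}$. Then $[j]^F$ is the unique element of $J_i^F\cap J_{i'}^F$, with $j_{i,[j]^F}=j_{i',[j]^F}=j$, and the defining conditions agree.

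The main obstacle I expect is this bookkeeping of the tree combinatorics, in particular showing that $|J_i^F\cap J_{i'}^F|=1$ happens exactly when $|J_i\cap J_{i'}|=1$. Suppose $[j]^F=[j']^F$ with $j\in J_i$ and $j'\in J_{i'}$. There is then a path from $j$ to $j'$ avoiding $F$. Combined with the edges $i$--$j$ and $j'$--$i'$, and the path from $i$ to $i'$ through their common $J$-neighbour, this would create a cycle in $G$ unless $j=j'$. I would invoke \cref{lem:j_i_bfj} and \cref{lem:JOmega_i} for these identifications, and the uniqueness of simple paths in $G$ for the remaining inclusion claims.
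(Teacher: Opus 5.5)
Your strategy of checking that the underlying sets agree and then comparing $R_{W^{\alpha}}$ with the new relation summand by summand is the same as the paper's. Your handling of pairs inside one $L_{\bfj}$, pairs between distinct components, and pairs in $K_i\times L_{\bfj}$ is sound, including the transitivity trick for the ``elsewhere'' case.

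The gap is in the last step. The equivalence you plan to prove, $|J_i^{F}\cap J_{i'}^{F}|=1$ if and only if $|J_i\cap J_{i'}|=1$, is false. Take $I=\{i,k,i'\}$, $J=\{j,j'\}$, $J_i=\{j\}$, $J_k=\{j,j'\}$, $J_{i'}=\{j'\}$, so that $G$ is the path with consecutive vertices $i,j,k,j',i'$, and take $F=\{i,i'\}$. Then $J^{F}=\{\bfj\}$ with $\bfj=\{j,k,j'\}$, so $J_i^{F}\cap J_{i'}^{F}=\{\bfj\}$ although $J_i\cap J_{i'}=\emptyset$. Your cycle argument is circular: it uses ``the path from $i$ to $i'$ through their common $J$-neighbour'', which is exactly what would have to be proved. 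It only recovers the direction you already have, namely that $|J_i\cap J_{i'}|=1$ forces $|J_i^{F}\cap J_{i'}^{F}|=1$ with matching maps. So when $\alpha(i)=\closed$ and $\alpha(i')=\open$, the new relation has a summand $R^{\alpha}_{i,i'}$ on $K_i\times K_{i'}$ with no counterpart in $R_{W^{\alpha}}$. Your plan never shows this summand is contained in $R_{W^{\alpha}}$.

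The repair is short, and it is what the paper does: in this situation $R^{\alpha}_{i,i'}=\emptyset$. There are two ways to see it.
\begin{itemize}
\item Since $J_i\cap J_{i'}=\emptyset$, we have $j_{i,\bfj}\neq j_{i',\bfj}$. No summand of $R_{W^{\alpha}}$, and hence no pair of $R_{L^{\alpha}_{\bfj}}$, lies in $L_{j_1}\times L_{j_2}$ with $j_1\neq j_2$.
\item Alternatively, reuse your transitivity trick. Both $(x,\theta_{i,\bfj}(x))$ and $(\theta_{i',\bfj}(x'),x')$ lie in $R_{W^{\alpha}}$, because $\alpha(i)=\closed$ and $\alpha(i')=\open$. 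So any pair in $R^{\alpha}_{i,i'}$ would give $(x,x')\in R_{W^{\alpha}}\cap(K_i\times K_{i'})$, and by your summand analysis this set is empty when $J_i\cap J_{i'}=\emptyset$.
\end{itemize}
With this extra case added, your comparison of the two relations goes through.
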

		
		\begin{proof}
			Let $W^{\alpha|_{F}}$ denote the Alexandrov space obtained from the data $(K_i)_{i \in F}$, $(L_{\bfj}^{\alpha})_{\bfj \in J^{F}}$, $(\theta_{i,\bfj})_{i \in F,\ \bfj \in J_i^{F}}$, and $\alpha|_{F} \in \{\closed, \open\}^{F}$ as in \textup{\cref{def:Walpha}}. 
			The underlying set of $W^{\alpha|_{F}}$ is equal to 
			\begin{align*}
				\coprod_{i \in F} K_i \amalg \coprod_{\bfj \in J^{F}} L_{\bfj}
				&=\coprod_{i \in F} K_i \amalg \coprod_{\bfj \in J^{F}} \Biggl(\coprod_{i \in \bfj \cap I} K_i \amalg \coprod_{j \in \bfj \cap J} L_j \Biggr) \\
				&=\coprod_{i \in F} K_i \amalg \coprod_{i \in I \setminus F} K_i \amalg \coprod_{j \in J} L_j=W. 
			\end{align*}
			Thus, $W^{\alpha|_{F}}$ and $W^{\alpha}$ have the same underlying set $W$.
			
			It remains to compare the specialization preorders on $W^{\alpha|_{F}}$ and $W^{\alpha}$. Recall that the specialization preorder $R_{W^{\alpha}}$ on $W^{\alpha}$ is specified in \cref{def:RWalpha}. 
			Let $R_{W^{\alpha|_{F}}}$ denote the specialization preorder on $W^{\alpha|_{F}}$. 
			For each $\bfj \in J^{F}$, set
			\[R_{L_{\bfj}^{\alpha}}:=R_{W^{\alpha}} \cap (L_{\bfj} \times L_{\bfj}). \]
			For $i \in F$ and $\bfj \in J^{F}_i$, set
			\begin{align*}
				&R_{i, \bfj}^{\alpha}:=\{(x, y) \in K_i \times L_{\bfj} \mid (\theta_{i, \bfj}(x), y) \in R_{L_{\bfj}^{\alpha}}\}, \\
				&R_{\bfj, i}^{\alpha}:=\{(y, x) \in L_{\bfj} \times K_i \mid (y, \theta_{i, \bfj}(x)) \in R_{L_{\bfj}^{\alpha}}\}. 
			\end{align*}
			For $i, i' \in F$ with $i \neq i'$ and $|J^{F}_i \cap J^{F}_{i'}|=1$, set
			\[R_{i, i'}^{\alpha}:=\{(x, x') \in K_i \times K_{i'} \mid (\theta_{i, \bfj}(x), \theta_{i', \bfj}(x')) \in R_{L_{\bfj}^{\alpha}}\}, \]
			where $\bfj$ is the unique element of $J^{F}_i \cap J^{F}_{i'}$. 
			By the definition of $R_{W^{\alpha|_{F}}}$, we have
			\[R_{W^{\alpha|_{F}}}=\coprod_{i \in F} R_{K_i} \amalg \coprod_{\bfj \in J^{F}} R_{L_{\bfj}^{\alpha}} \amalg \coprod_{\substack{i \in F, \\ \bfj \in J^{F}_i; \\ \alpha(i)=\closed}} R_{i, \bfj}^{\alpha} \amalg \coprod_{\substack{i' \in F, \\ \bfj \in J^{F}_{i'}; \\ \alpha(i')=\open}} R_{\bfj, i'}^{\alpha} \amalg \coprod_{\substack{i, i' \in F; \\ |J^{F}_i \cap J^{F}_{i'}|=1 \\ \alpha(i)=\closed, \alpha(i')=\open}} R_{i, i'}^{\alpha}. \]
			We have
			\[\coprod_{\bfj \in J^{F}} R_{L_{\bfj}^{\alpha}}=\coprod_{i \in I \setminus F} R_{K_i} \amalg \coprod_{j \in J} R_{L_j} \amalg \coprod_{\substack{i \in I \setminus F, \\ j \in J_i; \\ \alpha(i)=\closed}} R_{i, j} \amalg \coprod_{\substack{i' \in I \setminus F, \\ j \in J_{i'}; \\ \alpha(i')=\open}} R_{j, i'} 
			\amalg \coprod_{\substack{i, i' \in I \setminus F; \\ |J_i \cap J_{i'}|=1 \\ \alpha(i)=\closed, \alpha(i')=\open}} R_{i, i'}. \]
			With the help of \cref{lem:JOmega_i}, we have
			\begin{align*}
				\coprod_{\substack{i \in F, \\ \bfj \in J^{F}_i; \\ \alpha(i)=\closed}} R_{i, \bfj}^{\alpha}
				&=\coprod_{\substack{i \in F, \\ j \in J_i; \\ \alpha(i)=\closed}} R_{i, j} \amalg \coprod_{\substack{i \in F, i' \in I \setminus F; \\ |J_i \cap J_{i'}|=1 \\ \alpha(i)=\closed,  \alpha(i')=\open}} R_{i, i'}, \\
				\coprod_{\substack{i' \in F, \\ \bfj \in J^{F}_{i'}; \\ \alpha(i')=\open}} R_{\bfj, i'}^{\alpha}
				&=\coprod_{\substack{i' \in F, \\ j \in J_{i'}; \\ \alpha(i')=\open}} R_{j, i'} \amalg \coprod_{\substack{i \in I \setminus F, i' \in F; \\ |J_i \cap J_{i'}|=1 \\ \alpha(i)=\closed,  \alpha(i')=\open}} R_{i, i'}. 
			\end{align*}
			For $i, i' \in F$ with $i \neq i'$ and $|J_i \cap J_{i'}|=1$, we have $|J^{F}_i \cap J^{F}_{i'}|=1$ and $R_{i, i'}^{\alpha}=R_{i, i'}$. 
			For $i, i' \in F$ with $i \neq i'$, $J_i \cap J_{i'}=\emptyset$, and $|J^{F}_i \cap J^{F}_{i'}|=1$, we have $R_{i, i'}^{\alpha}=\emptyset$. 
			Therefore,
			\[\coprod_{\substack{i, i' \in F; \\ |J^{F}_i \cap J^{F}_{i'}|=1 \\ \alpha(i)=\closed, \alpha(i')=\open}} R_{i, i'}^{\alpha}=\coprod_{\substack{i, i' \in F; \\ |J_i \cap J_{i'}|=1 \\ \alpha(i)=\closed, \alpha(i')=\open}} R_{i, i'}.\]
			Combining the preceding identities, we obtain
			\[R_{W^{\alpha|_{F}}}=R_{W^{\alpha}}. \]
			Consequently, $W^{\alpha|_{F}}=W^{\alpha}$ as Alexandrov spaces.
		\end{proof}
		
		\begin{cor}\label{cor:Walpha_Wc_Wo}
			Let $\alpha \in \{\closed, \open\}^I$. For every $i \in I$, the Alexandrov space $W^{\alpha}$ coincides with the Alexandrov space obtained from the data $K_i$, $(L_{[j]^{\{i\}}}^{\alpha})_{j \in J_i}$, $(\theta_{i,[j]^{\{i\}}})_{j \in J_i}$, and $\alpha(i) \in \{\closed, \open\}$ as in \textup{\cref{def:Wc_Wo}}.
		\end{cor}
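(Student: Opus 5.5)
This is the special case $F=\{i\}$ of \cref{lem:reconstruction_Walpha}, read through \cref{rem:Walpha_I_singleton}. I would first apply \cref{lem:reconstruction_Walpha} with $F:=\{i\}$. It identifies $W^{\alpha}$ with the Alexandrov space of \cref{def:Walpha} built from the data $K_i$, $(L_{\bfj}^{\alpha})_{\bfj \in J^{\{i\}}}$, $(\theta_{i,\bfj})_{\bfj \in J_i^{\{i\}}}$, and $\alpha|_{\{i\}}$. The underlying tree of these data is $G^{\{i\}}$, which is a tree by the contraction remark following \cref{def:reconstruction_G}.

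Next I would show that this one-vertex datum has the shape required in \cref{rem:Walpha_I_singleton}. Since $G^{\{i\}}$ is a tree with the single $I$-type vertex $i$, every element of $J^{\{i\}}$ must be adjacent to $i$; otherwise the graph would be disconnected. Hence $J^{\{i\}}=J_i^{\{i\}}$.

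By \cref{lem:JOmega_i}, the map $j \mapsto [j]^{\{i\}}$ is a bijection $J_i \to J_i^{\{i\}}$. Its inverse is $\bfj \mapsto j_{i,\bfj}$. I would use this to reindex the families by $j \in J_i$, which gives $(L_{[j]^{\{i\}}}^{\alpha})_{j\in J_i}$ and $(\theta_{i,[j]^{\{i\}}})_{j\in J_i}$.

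Finally I would invoke \cref{rem:Walpha_I_singleton}. For a singleton index set, the construction of \cref{def:Walpha} agrees with that of \cref{def:Wc_Wo}, applied to $K_i$, the reindexed $L$'s and $\theta$'s, and $\alpha(i)$. Two points need checking here.

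First, the $L_{\bfj}^{\alpha}$ are only preordered sets; they are not among the original fixed data. This is harmless, because \cref{def:Wc_Wo} and \cref{def:Walpha} both allow arbitrary preordered sets.

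Second, the single-vertex case of \cref{def:RWalpha} has no $R_{i,i'}$ terms. What remains is exactly the relation $R_{W^{\closed}}$ or $R_{W^{\open}}$ of \cref{def:RWc_RWo}, according to $\alpha(i)$.

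The only step that takes any care is the identification $J^{\{i\}}=J_i^{\{i\}}$, and it follows directly from connectedness of the tree $G^{\{i\}}$.
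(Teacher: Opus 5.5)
Your proposal is correct and follows the paper's own proof, which simply cites \cref{lem:reconstruction_Walpha} with $F=\{i\}$ together with \cref{rem:Walpha_I_singleton}. Your added checks (that $J^{\{i\}}=J_i^{\{i\}}$ by connectedness of $G^{\{i\}}$, and the reindexing via \cref{lem:JOmega_i}) are sound and make explicit what the paper leaves implicit.
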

		
		\begin{proof}
			This follows from \cref{lem:reconstruction_Walpha,rem:Walpha_I_singleton}. 
		\end{proof}
		
		\subsection{Definition of higher-dimensional reflection functors}\label{subsection:definition_of_higher-dimensional_reflection_functors}
		
		Throughout this subsection, we fix $\alpha, \beta \in \{\closed, \open\}^I$.
		
		\begin{defn}
			We define subsets of $I$ by
			\begin{align*}
				I^{\alpha, \beta}_{\open, \closed}
				&:=\{i \in I \mid \alpha(i)=\open,\ \beta(i)=\closed\}, \\
				I^{\alpha, \beta}_{\closed, \open}
				&:=\{i \in I \mid \alpha(i)=\closed,\ \beta(i)=\open\}, \\
				I^{\alpha, \beta}
				&:=I^{\alpha, \beta}_{\closed, \open} \cup I^{\alpha, \beta}_{\open, \closed}
				=\{i \in I \mid \alpha(i) \neq \beta(i)\}.
			\end{align*}
		\end{defn}

		Applying \cref{def:reconstruction_J,def:reconstruction_G} to the subset $I^{\alpha, \beta} \subset I$, we introduce the following simplified notation:
		\[J^{\alpha, \beta}:=J^{I^{\alpha, \beta}}, \quad G^{\alpha, \beta}:=G^{I^{\alpha, \beta}}. \] 
		For each $i \in I$, we write
		\[
		J_i^{\alpha, \beta}:=J_i^{I^{\alpha, \beta}}.
		\]
		For each $j \in J$, we simply write $[j]=[j]^{\alpha, \beta}:=[j]^{I^{\alpha, \beta}} \in J^{\alpha, \beta}$.

		\begin{eg}\label{eg:Galphabeta}
			Let $I=\{i_1, i_2, i_3, i_4, i_5\}$, $J=\{j_1, j_2, j_3, j_4, j_5, j_6, j_7\}$, and
			\[J_{i_1}=\{j_1, j_2, j_3\}, \ J_{i_2}=\{j_3, j_4, j_5\}, \ J_{i_3}=\{j_5, j_6\}, \ J_{i_4}=\{j_5, j_7\}, \ J_{i_5}=\{j_7\}. \]
			Then the corresponding undirected graph $G$ is a tree, depicted below. 
			\[
			\begin{tikzcd}
				& i_1 & & i_2 & i_3 & i_4 & i_5 \\
				j_1 \ar[-]{ur} & j_2 \ar[-]{u} & j_3 \ar[-]{ul} \ar[-]{ur} & j_4 \ar[-]{u} & j_5 \ar[-]{ul} \ar[-]{u} \ar[-]{ur} & j_6 \ar[-]{ul} & j_7 \ar[-]{ul} \ar[-]{u}
			\end{tikzcd}
			\]
			Let $\alpha, \beta \in \{\closed, \open\}^I$ be defined by
			\[
			\begin{alignedat}{2}
				&\alpha(i_1)=\alpha(i_4)=\alpha(i_5):=\closed, &\quad  &\alpha(i_2)=\alpha(i_3):=\open, \\
				&\beta(i_3)=\beta(i_5):=\closed, &\quad &\beta(i_1)=\beta(i_2)=\beta(i_4):=\open. 
			\end{alignedat}
			\]
			Then
			\[I^{\alpha, \beta}_{\open, \closed}=\{i_3\}, \ I^{\alpha, \beta}_{\closed, \open}=\{i_1, i_4\}, \ I^{\alpha, \beta}=\{i_1, i_3, i_4\}. \]
			Set
			\[
			\bfj_1:=\{j_1\}, \quad
			\bfj_2:=\{j_2\}, \quad
			\bfj_3:=\{i_2, j_3, j_4, j_5\}, \quad
			\bfj_4:=\{j_6\}, \quad
			\bfj_5:=\{i_5, j_7\}. 
			\]
			Then $J^{\alpha, \beta}=\{\bfj_1, \bfj_2, \bfj_3, \bfj_4, \bfj_5\}$ and
			\[J^{\alpha, \beta}_{i_1}=\{\bfj_1, \bfj_2, \bfj_3\}, \quad J^{\alpha, \beta}_{i_3}=\{\bfj_3, \bfj_4\}, \quad J^{\alpha, \beta}_{i_4}=\{\bfj_3, \bfj_5\}. \]
			Note that 
			\[I_{\bfj_1}=I_{\bfj_2}=\{i_1\}, \quad I_{\bfj_3}=\{i_1, i_3, i_4\}, \quad I_{\bfj_4}=\{i_3\}, \quad I_{\bfj_5}=\{i_4\}. \]
			The corresponding tree $G^{\alpha, \beta}$ is depicted below. 
			\[
			\begin{tikzcd}
				& i_1 & i_3 & i_4 & \\
				\bfj_1 \ar[-]{ur} & \bfj_2 \ar[-]{u} & \bfj_3 \ar[-]{ul} \ar[-]{u} \ar[-]{ur} & \bfj_4 \ar[-]{ul} & \bfj_5 \ar[-]{ul}
			\end{tikzcd}
			\]
		\end{eg}
		
		\begin{prop}\label{prop:Ljalpha_Ljbeta_equal}
			For each $\bfj \in J^{\alpha, \beta}$, we have 
			\[L_{\bfj}^{\alpha}=L_{\bfj}^{\beta} \]
			as Alexandrov spaces. 
		\end{prop}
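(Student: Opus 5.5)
By \cref{prop:Ljalpha}, the subspace $L_{\bfj}^{\alpha}$ is the Alexandrov space built as in \cref{def:Walpha} from the data $(K_i)_{i \in \bfj \cap I}$, $(L_j)_{j \in \bfj \cap J}$, $(\theta_{i,j})_{i \in \bfj \cap I,\, j \in \bfj \cap J_i}$ and $\alpha|_{\bfj \cap I}$. The same proposition applied to $\beta$ describes $L_{\bfj}^{\beta}$ by identical data, except that $\beta|_{\bfj \cap I}$ replaces $\alpha|_{\bfj \cap I}$. Both spaces have underlying set $L_{\bfj}$, so it suffices to show that the two specialization preorders $R_{W^{\alpha}} \cap (L_{\bfj} \times L_{\bfj})$ and $R_{W^{\beta}} \cap (L_{\bfj} \times L_{\bfj})$ coincide.

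The key observation is that $\bfj \in J^{\alpha,\beta}$ is a connected component of the subgraph induced by $(I \setminus I^{\alpha,\beta}) \amalg J$. Hence $\bfj \cap I \subset I \setminus I^{\alpha,\beta}$, which means $\alpha(i) = \beta(i)$ for every $i \in \bfj \cap I$. Therefore $\alpha|_{\bfj \cap I} = \beta|_{\bfj \cap I}$.

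It then follows that every term in the explicit formula for $R_{W^{\alpha}} \cap (L_{\bfj} \times L_{\bfj})$ given in the proof of \cref{prop:Ljalpha} agrees with the corresponding term for $\beta$. These terms are the pieces $R_{K_i}$, $R_{L_j}$, $R_{i,j}$, $R_{j,i}$ and $R_{i,i'}$, each indexed by conditions that involve only $\alpha(i)$ for $i \in \bfj \cap I$. The two preorders are therefore equal, and so are the Alexandrov spaces.

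There is no real obstacle here. The only point needing care is to recall that the indexing of the last disjoint union in \cref{prop:Ljalpha} already restricts to $i, i' \in \bfj \cap I$, which uses the connectedness of $\bfj$ in the tree $G$. No $K_i$ with $i \in I^{\alpha,\beta}$ meets $L_{\bfj}$, because $\tau^{-1}(\bfj)$ contains no such $K_i$.
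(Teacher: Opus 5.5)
Your proposal is correct and matches the paper's proof. The paper simply notes that $\alpha|_{\bfj \cap I}=\beta|_{\bfj \cap I}$ (because $\bfj \cap I \subset I \setminus I^{\alpha,\beta}$) and invokes Proposition~\ref{prop:Ljalpha}, which already makes your explicit comparison of the two preorder formulas unnecessary, though harmless.
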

		
		\begin{proof}
			Since $\alpha|_{\bfj \cap I}=\beta|_{\bfj \cap I}$, \cref{prop:Ljalpha} applies. 
		\end{proof}

		\begin{defn}\label{def:functor_Wbfj_Lbfj}
			For each $\bfj \in J^{\alpha, \beta}$, \cref{prop:thetatilde_bfj_continuous,prop:Ljalpha_Ljbeta_equal} give a continuous map $\widetilde{\theta}_{\bfj} \colon W_{\bfj}^{\alpha} \to L_{\bfj}^{\alpha}=L_{\bfj}^{\beta}$. 
			Let
			\[p_{W_{\bfj}^{\alpha}}^{L_{\bfj}^{\beta}} \colon \Cstar(W_{\bfj}^{\alpha}) \to \Cstar(L_{\bfj}^{\beta})\]
			denote the functor induced by this continuous map as in \cref{def:functor_induced_by_continuous_map}. 
		\end{defn}
		
		The following lemma is analogous to \cref{lem:preimage_thetaj}. 
		
		\begin{lem}\label{lem:preimage_thetaij}
			Let $i \in I^{\alpha, \beta}$ and $\bfj \in J^{\alpha, \beta}_i$. 
			\begin{enumerate}[label=\textnormal{(\arabic*)}]
				\item \label{lem:preimage_thetaij_1}
				If $i \in I^{\alpha, \beta}_{\open, \closed}$, then $U \cap K_i \subset \theta_{i, \bfj}^{-1}(U \cap L_{\bfj})$ for every $U \in \Open(W^{\beta})$.
				\item \label{lem:preimage_thetaij_2}
				If $i \in I^{\alpha, \beta}_{\closed, \open}$, then $F \cap K_i \subset \theta_{i, \bfj}^{-1}(F \cap L_{\bfj})$ for every $F \in \Closed(W^{\beta})$.
			\end{enumerate}
		\end{lem}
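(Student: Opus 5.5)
The plan is to reduce \cref{lem:preimage_thetaij} to \cref{lem:preimage_thetaj} via the single-vertex reconstruction of \cref{cor:Walpha_Wc_Wo}, and then to check the one relation of $R_{W^{\beta}}$ that is actually needed by hand.

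Fix $i$ and $\bfj \in J^{\alpha,\beta}_i$, and put $j:=j_{i,\bfj} \in J_i$. By \cref{lem:JOmega_i}, $j$ is well defined and $i \in I_{\bfj}$. By definition, $\theta_{i,\bfj}$ is the composite of $\theta_{i,j}\colon K_i \to L_j$ with the inclusion $L_j \hookrightarrow L_{\bfj}$. Hence, for any subset $Y \subset W$,
\[\theta_{i,\bfj}^{-1}(Y \cap L_{\bfj})=\theta_{i,j}^{-1}(Y \cap L_j).\]
It therefore suffices to show, for every $x \in K_i$ lying in the relevant open or closed set, that $\theta_{i,j}(x)$ lies in the same set.

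\emph{Case (1).} Here $i \in I^{\alpha,\beta}_{\open,\closed}$, so $\beta(i)=\closed$ and $j \in J_i$. By \cref{def:RWalpha}, the set $R_{i,j}$ is contained in $R_{W^{\beta}}$. Let $x \in U \cap K_i$ with $U \in \Open(W^{\beta})$. By reflexivity of $R_{L_j}$, we have $(x,\theta_{i,j}(x)) \in R_{i,j} \subset R_{W^{\beta}}$. Since $U$ is Alexandrov open, it is closed upward along $R_{W^{\beta}}$, so $\theta_{i,j}(x) \in U$. Thus $x \in \theta_{i,j}^{-1}(U \cap L_j)$.

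\emph{Case (2).} Here $\beta(i)=\open$, so $R_{j,i} \subset R_{W^{\beta}}$, and $(\theta_{i,j}(x),x) \in R_{W^{\beta}}$ for every $x \in K_i$. Let $x \in F \cap K_i$ with $F \in \Closed(W^{\beta})$. By the characterization of closed subsets of an Alexandrov space, $F$ is closed downward, so $\theta_{i,j}(x) \in F$.

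Alternatively, the result follows by applying \cref{cor:Walpha_Wc_Wo} to the vertex $i$, which identifies $W^{\beta}$ with the space $W^{\closed}$ or $W^{\open}$ built from $K_i$ and the sets $L^{\beta}_{[j']^{\{i\}}}$, and then invoking \cref{lem:preimage_thetaj} directly. However, $[j]^{\{i\}}$ generally differs from $\bfj$, so the direct relation check above is cleaner.

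There is no real obstacle. The only point requiring care is the bookkeeping that $\theta_{i,\bfj}$ factors through $L_{j_{i,\bfj}}$ with $j_{i,\bfj} \in J_i$, which is exactly what \cref{lem:JOmega_i} guarantees. Once that is in place, each case is the one-line argument of \cref{lem:preimage_thetaj}.
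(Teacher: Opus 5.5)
Your proof is correct and is essentially the paper's argument. You identify $\bfj$ with $[j]$ for the unique $j=j_{i,\bfj}\in J_i$, observe $(x,\theta_{i,j}(x))\in R_{i,j}\subset R_{W^{\beta}}$ (respectively $(\theta_{i,j}(x),x)\in R_{j,i}\subset R_{W^{\beta}}$) by reflexivity of $R_{L_j}$, and conclude because Alexandrov-open sets are upward closed (respectively closed sets are downward closed). The only differences are that the paper writes out case (1) and invokes symmetry for (2), and your remark about \cref{cor:Walpha_Wc_Wo} is an unneeded aside.
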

		
		\begin{proof}
			By symmetry, it suffices to show \labelcref{lem:preimage_thetaij_1}. 
			Let $j \in J_i$ be the unique element satisfying $\bfj=[j]$. Take $x \in U \cap K_i$. We have 
			\[(x, \theta_{i, \bfj}(x))=(x, \theta_{i, j}(x)) \in R_{i, j} \subset R_{W^{\beta}}. \]
			Here, the last inclusion follows because $\beta(i)=\closed$. Since $x \in U$ and $U \in \Open(W^{\beta})$, we obtain $\theta_{i, \bfj}(x) \in U$. This shows that $U \cap K_i \subset \theta_{i, \bfj}^{-1}(U \cap L_{\bfj})$. 
		\end{proof}

		\begin{defn}\label{def:iota_pi_Wbfj_W}
			Let $i \in I^{\alpha, \beta}$ and $\bfj \in J^{\alpha, \beta}_i$. 
			\begin{enumerate}[label=\textnormal{(\arabic*)}]
				\item  
				If $i \in I^{\alpha, \beta}_{\open, \closed}$, then  \cref{lem:preimage_thetaij} \labelcref{lem:preimage_thetaij_1} shows that
				\[U \cap K_i \subset \theta_{i, \bfj}^{-1}(U \cap L_{\bfj}) \subset \widetilde{\theta}_{\bfj}^{-1}(U \cap L_{\bfj})\] 
				for every $U \in \Open(W^{\beta})$. 
				Since $K_i \in \Open(W_{\bfj}^{\alpha})$, \cref{def:iota_pi} \labelcref{def:iota_pi_1} gives a morphism 
				\[\iota_{K_i}^{W_{\bfj}^{\alpha}} \colon i_{K_i}^{W^{\beta}} r_{W^{\alpha}}^{K_i} \Rightarrow i_{L_{\bfj}^{\beta}}^{W^{\beta}} p_{W_{\bfj}^{\alpha}}^{L_{\bfj}^{\beta}} r_{W^{\alpha}}^{W_{\bfj}^{\alpha}}\]
				in $[\Cstar(W^{\alpha}), \Cstar(W^{\beta})]$.
				
				\item
				If $i \in I^{\alpha, \beta}_{\closed, \open}$, then \cref{lem:preimage_thetaij} \labelcref{lem:preimage_thetaij_2} shows that
				\[F \cap K_i \subset \theta_{i, \bfj}^{-1}(F \cap L_{\bfj}) \subset \widetilde{\theta}_{\bfj}^{-1}(F \cap L_{\bfj})\] 
				for every $F \in \Closed(W^{\beta})$. 
				Since $K_i \in \Closed(W_{\bfj}^{\alpha})$, \cref{def:iota_pi} \labelcref{def:iota_pi_2} gives a morphism 
				\[\pi_{W_{\bfj}^{\alpha}}^{K_i} \colon i_{L_{\bfj}^{\beta}}^{W^{\beta}} p_{W_{\bfj}^{\alpha}}^{L_{\bfj}^{\beta}} r_{W^{\alpha}}^{W_{\bfj}^{\alpha}} \Rightarrow i_{K_i}^{W^{\beta}} r_{W^{\alpha}}^{K_i}\]
				in $[\Cstar(W^{\alpha}), \Cstar(W^{\beta})]$.
			\end{enumerate}
		\end{defn}
		
		In the following definition, we adopt the convention that the maximal tensor product over the empty index set is $\C$.
		
		\begin{defn}\label{def:tensor_Di_Dij}
			\
			\begin{enumerate}[label=\textnormal{(\arabic*)}]
				\item  
				For $i \in I^{\alpha, \beta}_{\open, \closed}$, we define a C*-algebra $D_i^{\alpha, \beta}$ as the maximal tensor product 
				\[D_i^{\alpha, \beta}:=\bigotimes_{k \in I^{\alpha, \beta}_{\closed, \open}} D_{k, j_{k, \{i\}}}. \]
				
				\item
				For $i \in I^{\alpha, \beta}_{\closed, \open}$, we define a C*-algebra $D_i^{\alpha, \beta}$ as the maximal tensor product 
				\[D_i^{\alpha, \beta}:=D_i \otimes \bigotimes_{k \in I^{\alpha, \beta}_{\closed, \open} \setminus \{i\}} D_{k, j_{k, \{i\}}}. \]
				
				\item 
				For $\bfj \in J^{\alpha, \beta}$, we define a C*-algebra $D_{\bfj}^{\alpha, \beta}$ as the maximal tensor product 
				\[D_{\bfj}^{\alpha, \beta}:=\bigotimes_{k \in I^{\alpha, \beta}_{\closed, \open}} D_{k, j_{k, \bfj}}. \] 
			\end{enumerate}
		\end{defn}

		\begin{defn}
			Let $i \in I^{\alpha, \beta}$ and $\bfj \in J^{\alpha, \beta}_i$. 
			\begin{enumerate}[label=\textnormal{(\arabic*)}]
				\item  
				If $i \in I^{\alpha, \beta}_{\open, \closed}$, then  \cref{lem:j_i_bfj} implies that $j_{k, \{i\}}=j_{k, \bfj}$ for all $k \in I^{\alpha, \beta}_{\closed, \open}$, whence $D_i^{\alpha, \beta}=D_{\bfj}^{\alpha, \beta}$. 
				Let
				\[\id_{i, \bfj}^{\alpha, \beta} \colon D_i^{\alpha, \beta} \to D_{\bfj}^{\alpha, \beta}\] 
				denote the identity map. 
				
				\item If $i \in I^{\alpha, \beta}_{\closed, \open}$, then \cref{lem:j_i_bfj} implies that $j_{k, \bfj}=j_{k, \{i\}}$ for all $k \in I^{\alpha, \beta}_{\closed, \open} \setminus \{i\}$. 
				Define a $\ast$-homomorphism 
				\[\rho_{i, \bfj}^{\alpha, \beta} \colon D_{\bfj}^{\alpha, \beta} \to D_i^{\alpha, \beta}\] 
				as the maximal tensor product
				\[\rho_{i, j_{i, \bfj}} \otimes \id \colon D_{i, j_{i, \bfj}} \otimes \bigotimes_{k \in I^{\alpha, \beta}_{\closed, \open} \setminus \{i\}} D_{k, j_{k, \{i\}}} \to D_i \otimes \bigotimes_{k \in I^{\alpha, \beta}_{\closed, \open} \setminus \{i\}} D_{k, j_{k, \{i\}}}. \]	
			\end{enumerate}
		\end{defn}
		
		\begin{eg}\label{eg:tensor_Di_Dj}
			Consider the case in \cref{eg:Galphabeta}. 
			The C*-algebras $D_i^{\alpha, \beta}$ for $i \in I^{\alpha, \beta}$ are given by 
			\[D_{i_1}^{\alpha, \beta}=D_{i_1}\otimes D_{i_4,j_5}, \quad
			D_{i_3}^{\alpha, \beta}=D_{i_1,j_3}\otimes D_{i_4,j_5}, \quad
			D_{i_4}^{\alpha, \beta}= D_{i_1,j_3}\otimes D_{i_4}. \]
			The C*-algebras $D_{\bfj}^{\alpha, \beta}$ for $\bfj \in J^{\alpha, \beta}$ are given by 
			\[
			\begin{alignedat}{3}
				D_{\bfj_1}^{\alpha, \beta} &= D_{i_1,j_1}\otimes D_{i_4,j_5}, \quad&
				D_{\bfj_2}^{\alpha, \beta} &= D_{i_1,j_2}\otimes D_{i_4,j_5}, \quad&
				D_{\bfj_3}^{\alpha, \beta} &= D_{i_1,j_3}\otimes D_{i_4,j_5}, \\
				D_{\bfj_4}^{\alpha, \beta} &= D_{i_1,j_3}\otimes D_{i_4,j_5}, \quad&
				D_{\bfj_5}^{\alpha, \beta} &= D_{i_1,j_3}\otimes D_{i_4,j_7}. &&
			\end{alignedat}
			\]
			Observe that $D_{i_3}^{\alpha, \beta}=D_{\bfj_3}^{\alpha, \beta}=D_{\bfj_4}^{\alpha, \beta}$. 
			The $\ast$-homomorphisms $\rho_{i, \bfj}^{\alpha, \beta} \colon D_{\bfj}^{\alpha, \beta} \to D_i^{\alpha, \beta}$ for $i \in I^{\alpha, \beta}_{\closed, \open}$ and $\bfj \in J^{\alpha, \beta}_i$ are given by
			\[
			\begin{alignedat}{3}
				\rho_{i_1, \bfj_1}^{\alpha, \beta} &=\rho_{i_1, j_1} \otimes \id_{D_{i_4, j_5}}, \quad &
				\rho_{i_1, \bfj_2}^{\alpha, \beta} &=\rho_{i_1, j_2} \otimes \id_{D_{i_4, j_5}},
				\quad &
				\rho_{i_1, \bfj_3}^{\alpha, \beta} &=\rho_{i_1, j_3} \otimes \id_{D_{i_4, j_5}}, \\
				\rho_{i_4, \bfj_3}^{\alpha, \beta} &=\id_{D_{i_1, j_3}} \otimes \rho_{i_4, j_5}, \quad &
				\rho_{i_4, \bfj_5}^{\alpha, \beta} &=\id_{D_{i_1, j_3}} \otimes \rho_{i_4, j_7}. &&
			\end{alignedat}
			\]
		\end{eg}
		
		Following \cref{nota:tensor_product}, we may regard C*-algebras and $\ast$-homomorphisms as functors and natural transformations, respectively.
		
		The following notation specifies certain vertices of the $I^{\alpha, \beta}$-dimensional cube $\{0, 1\}^{I^{\alpha, \beta}}$ indexing the diagram considered below. We distinguish one vertex $\chi^{\alpha, \beta}$ and, for each $i \in I^{\alpha, \beta}$, the adjacent vertex $\chi_i^{\alpha, \beta}$ obtained by changing only the $i$-th coordinate.
		
		\begin{defn}
			Define $\chi^{\alpha, \beta} \in \{0, 1\}^{I^{\alpha, \beta}}$ by
			\[
			\chi^{\alpha, \beta}(k):=
			\begin{dcases}
				0 & \text{if $k \in I^{\alpha, \beta}_{\closed, \open}$},\\
				1 & \text{if $k \in I^{\alpha, \beta}_{\open, \closed}$}.
			\end{dcases}
			\]
			For each $i \in I^{\alpha, \beta}$, let $\chi_i^{\alpha, \beta} \in \{0, 1\}^{I^{\alpha, \beta}}$ be the unique element such that $\chi_i^{\alpha, \beta}(k)=\chi^{\alpha, \beta}(k)$ for every $k \in I^{\alpha, \beta} \setminus \{i\}$ and $\chi_i^{\alpha, \beta}(i) \neq \chi^{\alpha, \beta}(i)$. 
		\end{defn}
		
		Observe that $(\chi_i^{\alpha, \beta}, \chi^{\alpha, \beta}) \in R_{\{0, 1\}^{I^{\alpha, \beta}}}$ for every $i \in I^{\alpha, \beta}_{\open, \closed}$, and that $(\chi^{\alpha, \beta}, \chi_i^{\alpha, \beta}) \in R_{\{0, 1\}^{I^{\alpha, \beta}}}$ for every $i \in I^{\alpha, \beta}_{\closed, \open}$.

		\begin{defn}\label{def:Falphabeta}
			For each $\lambda \in \{0, 1\}^{I^{\alpha, \beta}}$, we define a functor 
			\[(F_{\alpha}^{\beta})_{\lambda} \colon \Cstar(W^{\alpha}) \to \Cstar(W^{\beta})\]
			by
			\[
			(F_{\alpha}^{\beta})_{\lambda}:=
			\begin{dcases}
				\id
				& \text{if $\alpha=\beta$},\\
				i_{K_i}^{W^{\beta}} r_{W^{\alpha}}^{K_i} D_i^{\alpha, \beta}
				& \text{if $\alpha \neq \beta$ and $\lambda=\chi_i^{\alpha, \beta}$ for some $i \in I^{\alpha, \beta}$},\\
				\bigoplus_{\bfj \in J^{\alpha, \beta}} i_{L_{\bfj}^{\beta}}^{W^{\beta}} p_{W_{\bfj}^{\alpha}}^{L_{\bfj}^{\beta}} r_{W^{\alpha}}^{W_{\bfj}^{\alpha}} D_{\bfj}^{\alpha, \beta}
				& \text{if $\alpha \neq \beta$ and $\lambda=\chi^{\alpha, \beta}$},\\
				0
				& \text{otherwise}.
			\end{dcases}
			\]
		\end{defn}
		
		\begin{defn}\label{def:phialphabeta}
			For $(\lambda, \mu) \in R_{\{0, 1\}^{I^{\alpha, \beta}}}$, we define a natural transformation 
			\[(\varphi_{\alpha}^{\beta})_{\lambda}^{\mu} \colon (F_{\alpha}^{\beta})_{\lambda} \Rightarrow (F_{\alpha}^{\beta})_{\mu}\] 
			by cases.
			Since the map $J_i \ni j \mapsto [j] \in J^{\alpha, \beta}$ is injective by \cref{lem:JOmega_i}, we shall use the operations for natural transformations associated with direct sums in \cref{def:natural_transformation_direct_sum}. 
			\begin{description}[font=\normalfont\scshape]
				\item[Case 1] $\lambda=\mu$. We define $(\varphi_{\alpha}^{\beta})_{\lambda}^{\mu}$ to be the identity natural transformation. 
				
				\item[Case 2] $\lambda=\chi_i^{\alpha, \beta}$ for some $i \in I^{\alpha, \beta}_{\open, \closed}$, and $\mu=\chi^{\alpha, \beta}$. 
				For each $j \in J_i$, we have the horizontal composite
				\[\iota_{K_i}^{W_{[j]}^{\alpha}} \id_{i, [j]}^{\alpha, \beta} \colon i_{K_i}^{W^{\beta}} r_{W^{\alpha}}^{K_i} D_i^{\alpha, \beta} \Rightarrow i_{L_{[j]}^{\beta}}^{W^{\beta}} p_{W_{[j]}^{\alpha}}^{L_{[j]}^{\beta}} r_{W^{\alpha}}^{W_{[j]}^{\alpha}} D_{[j]}^{\alpha, \beta}. \]
				Since $J_i$ is finite, we can define $(\varphi_{\alpha}^{\beta})_{\lambda}^{\mu}$ to be the natural transformation
				\[\Bigl(\iota_{K_i}^{W_{[j]}^{\alpha}} \id_{i, [j]}^{\alpha, \beta}\Bigr)_{j \in J_i} \colon i_{K_i}^{W^{\beta}} r_{W^{\alpha}}^{K_i} D_i^{\alpha, \beta} \Rightarrow \bigoplus_{\bfj \in J^{\alpha, \beta}} i_{L_{\bfj}^{\beta}}^{W^{\beta}} p_{W_{\bfj}^{\alpha}}^{L_{\bfj}^{\beta}} r_{W^{\alpha}}^{W_{\bfj}^{\alpha}} D_{\bfj}^{\alpha, \beta}. \]
				
				\item[Case 3] $\lambda=\chi^{\alpha, \beta}$, and $\mu=\chi_i^{\alpha, \beta}$ for some $i \in I^{\alpha, \beta}_{\closed, \open}$. The $\ast$-homomorphisms $\rho_{i, [j]}^{\alpha, \beta} \colon D_{[j]}^{\alpha, \beta} \to D_i^{\alpha, \beta}$ for $j \in J_i$ are mutually orthogonal. 
				Hence, the horizontal composites
				\[\pi_{W_{[j]}^{\alpha}}^{K_i} \rho_{i, [j]}^{\alpha, \beta} \colon i_{L_{[j]}^{\beta}}^{W^{\beta}} p_{W_{[j]}^{\alpha}}^{L_{[j]}^{\beta}} r_{W^{\alpha}}^{W_{[j]}^{\alpha}} D_{[j]}^{\alpha, \beta} \Rightarrow i_{K_i}^{W^{\beta}} r_{W^{\alpha}}^{K_i} D_i^{\alpha, \beta}, \quad j \in J_i\]
				are mutually orthogonal. We can therefore define $(\varphi_{\alpha}^{\beta})_{\lambda}^{\mu}$ to be the natural transformation
				\[\sum_{j \in J_i} \pi_{W_{[j]}^{\alpha}}^{K_i} \rho_{i, [j]}^{\alpha, \beta} \colon \bigoplus_{\bfj \in J^{\alpha, \beta}} i_{L_{\bfj}^{\beta}}^{W^{\beta}} p_{W_{\bfj}^{\alpha}}^{L_{\bfj}^{\beta}} r_{W^{\alpha}}^{W_{\bfj}^{\alpha}} D_{\bfj}^{\alpha, \beta} \Rightarrow i_{K_i}^{W^{\beta}} r_{W^{\alpha}}^{K_i} D_i^{\alpha, \beta}. \]
				\item[Case 4] Otherwise. We define $(\varphi_{\alpha}^{\beta})_{\lambda}^{\mu}:=0$. 
			\end{description}
		\end{defn}

		\begin{lem}\label{lem:functoriality_Falphabeta}
			For $(\lambda, \mu), (\mu, \nu) \in R_{\{0, 1\}^{I^{\alpha, \beta}}}$, we have
			\[(\varphi_{\alpha}^{\beta})_{\mu}^{\nu} \circ (\varphi_{\alpha}^{\beta})_{\lambda}^{\mu}=(\varphi_{\alpha}^{\beta})_{\lambda}^{\nu}\]
			as natural transformations $(F_{\alpha}^{\beta})_{\lambda} \Rightarrow (F_{\alpha}^{\beta})_{\nu}$.
		\end{lem}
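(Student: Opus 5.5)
The plan is a case analysis on the positions of $\lambda,\mu,\nu$ in the cube $\{0,1\}^{I^{\alpha,\beta}}$. If $\alpha=\beta$, then $I^{\alpha,\beta}=\emptyset$, the cube has a single vertex, and everything is the identity, so the statement is trivial. Assume $\alpha\neq\beta$. If $\lambda=\mu$ or $\mu=\nu$, the identity is immediate from Case~1 of \cref{def:phialphabeta}. So we may assume $\lambda<\mu<\nu$ strictly in the product order.

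The nonzero objects $(F_\alpha^\beta)_\kappa$ sit only at $\chi^{\alpha,\beta}$ and at its neighbours $\chi_i^{\alpha,\beta}$. Moreover, the only nonidentity morphisms that are nonzero run from $\chi_i^{\alpha,\beta}$ to $\chi^{\alpha,\beta}$ for $i\in I^{\alpha,\beta}_{\open,\closed}$, or from $\chi^{\alpha,\beta}$ to $\chi_i^{\alpha,\beta}$ for $i\in I^{\alpha,\beta}_{\closed,\open}$. It follows that the composite $(\varphi)_\mu^\nu\circ(\varphi)_\lambda^\mu$ can be nonzero only when $\lambda=\chi_i^{\alpha,\beta}$, $\mu=\chi^{\alpha,\beta}$, $\nu=\chi_{i'}^{\alpha,\beta}$ with $i\in I^{\alpha,\beta}_{\open,\closed}$ and $i'\in I^{\alpha,\beta}_{\closed,\open}$. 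In every other strict chain, one of the two factors is $0$ by Case~4, or an intermediate object is $0$, so the composite is $0$. It then remains to check that $(\varphi)_\lambda^\nu$ is also $0$ in those chains. Since $\lambda<\nu$ differ in at least two coordinates, $(\lambda,\nu)$ is never of the form in Case~2 or Case~3, so $(\varphi)_\lambda^\nu=0$ by Case~4. This also covers the exceptional chain, where $\lambda$ and $\nu$ differ at both $i$ and $i'$.

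The main obstacle is therefore the exceptional chain: we must show that
\[
\Bigl(\sum_{j'\in J_{i'}}\pi_{W^\alpha_{[j']}}^{K_{i'}}\rho^{\alpha,\beta}_{i',[j']}\Bigr)\circ\Bigl(\iota_{K_i}^{W^\alpha_{[j]}}\id^{\alpha,\beta}_{i,[j]}\Bigr)_{j\in J_i}=0 .
\]
Componentwise, this composite is the sum over pairs $(j,j')$ with $[j]=[j']=:\bfj$, that is, over $\bfj\in J_i^{\alpha,\beta}\cap J_{i'}^{\alpha,\beta}$, of
\[
\bigl(\pi_{W^\alpha_{\bfj}}^{K_{i'}}\circ \iota_{K_i}^{W^\alpha_{\bfj}}\bigr)\,\bigl(\rho^{\alpha,\beta}_{i',\bfj}\circ\id^{\alpha,\beta}_{i,\bfj}\bigr).
\]
The $C^*$-factor is harmless. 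The topological factor is a natural transformation from $i_{K_i}^{W^\beta}r_{W^\alpha}^{K_i}$ to $i_{K_{i'}}^{W^\beta}r_{W^\alpha}^{K_{i'}}$. Its component at $A$ and at a locally closed $Z\subset W^\beta$ is the composite
\[
A(Z\cap K_i)\to A(\widetilde\theta_{\bfj}^{-1}(Z\cap L_{\bfj}))\to A(Z\cap K_{i'}) .
\]
Here the first map is the inclusion of an open subset and the second is the quotient onto a closed subset, in the sense of \cref{lem:composition_theta_r_iota_pi}. Using the relation
\[
\pi_{Y_2}^{Y_3}\iota_{Y_1}^{Y_2}=\iota_{Y_1\cap Y_3}^{Y_3}\pi_{Y_1}^{Y_1\cap Y_3}
\]
in $\LCcat(W^\alpha)$, this composite factors through $A(Z\cap K_i\cap K_{i'})$. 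Since $i\neq i'$, we have $K_i\cap K_{i'}=\emptyset$, so this algebra is $A(\emptyset)=0$ and the composite vanishes. Hence each summand is $0$, the composite is $0$, and this agrees with $(\varphi)_\lambda^\nu=0$. The only care needed is to write the components of these natural transformations evaluated at each $Z\in\LC(W^\beta)$ explicitly via \cref{lem:composition_theta_r_iota_pi}, so that the factorization through the empty intersection is visible.
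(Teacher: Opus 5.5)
Your proof is correct and is essentially the paper's argument. After the trivial cases, the only chain with a possibly nonzero composite is $\chi_i^{\alpha,\beta}\to\chi^{\alpha,\beta}\to\chi_{i'}^{\alpha,\beta}$ with $i\in I^{\alpha,\beta}_{\open,\closed}$ and $i'\in I^{\alpha,\beta}_{\closed,\open}$. There $(\varphi_{\alpha}^{\beta})_{\lambda}^{\nu}=0$, and the composite vanishes because $K_i\cap K_{i'}=\emptyset$. Indexing the common summands by $J_i^{\alpha,\beta}\cap J_{i'}^{\alpha,\beta}$, as you do, is exactly right, since $i$ and $i'$ can share a summand even when $J_i\cap J_{i'}=\emptyset$.

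One small correction: at a non-open $Z\in\LC(W^{\beta})$, the components need not be an open inclusion followed by a closed quotient, because one can have $Z\cap K_i\not\subset\widetilde{\theta}_{\bfj}^{-1}(Z\cap L_{\bfj})$. This is harmless. A morphism in $\Cstar(W^{\beta})$ is determined by its component at $W^{\beta}$ itself, where your description is exact and $A(\pi_{W_{\bfj}^{\alpha}}^{K_{i'}})\circ A(\iota_{K_i}^{W_{\bfj}^{\alpha}})$ factors through $A(K_i\cap K_{i'})=A(\emptyset)=0$.
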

		
		\begin{proof}
			If $\alpha=\beta$, equivalently, $I^{\alpha, \beta}=\emptyset$, then $\{0, 1\}^{I^{\alpha, \beta}}$ is the category with one object and one morphism. Hence, the assertion is clear. 
			Suppose that $\alpha \neq \beta$. 
			We may assume that $\lambda \neq \mu$ and $\mu \neq \nu$. 
			We show that
			\[(\varphi_{\alpha}^{\beta})_{\mu}^{\nu} \circ (\varphi_{\alpha}^{\beta})_{\lambda}^{\mu}=0=(\varphi_{\alpha}^{\beta})_{\lambda}^{\nu}. \]
			We proceed by cases. 
			\begin{description}[font=\normalfont\scshape]
				\item[Case 1] $(F_{\alpha}^{\beta})_{\lambda}=0$ or $(F_{\alpha}^{\beta})_{\nu}=0$. 
				The desired equality follows because a morphism is unique whenever either its source or its target is a zero object. 
				\item[Case 2] 
				$(F_{\alpha}^{\beta})_{\lambda} \neq 0$ and $(F_{\alpha}^{\beta})_{\nu} \neq 0$. 
				Then $\lambda=\chi_i^{\alpha, \beta}$ for some $i \in I^{\alpha, \beta}_{\open, \closed}$, and $\nu=\chi_{i'}^{\alpha, \beta}$ for some $i' \in I^{\alpha, \beta}_{\closed, \open}$. 
				Hence, $(\varphi_{\alpha}^{\beta})_{\lambda}^{\nu} \colon (F_{\alpha}^{\beta})_{\lambda} \Rightarrow (F_{\alpha}^{\beta})_{\nu}$ is equal to
				\[0 \colon i_{K_i}^{W^{\beta}} r_{W^{\alpha}}^{K_i} D_i^{\alpha, \beta} \Rightarrow i_{K_{i'}}^{W^{\beta}} r_{W^{\alpha}}^{K_{i'}} D_{i'}^{\alpha, \beta}. \]
				If $(F_{\alpha}^{\beta})_{\mu}=0$, then $(\varphi_{\alpha}^{\beta})_{\lambda}^{\mu}=0$ and $(\varphi_{\alpha}^{\beta})_{\mu}^{\nu}=0$. Hence, the desired equality holds. Suppose that $(F_{\alpha}^{\beta})_{\mu} \neq 0$. Then $\mu=\chi^{\alpha, \beta}$. 
				By the definitions of
				$(\varphi_{\alpha}^{\beta})_{\lambda}^{\mu}$ and
				$(\varphi_{\alpha}^{\beta})_{\mu}^{\nu}$, their vertical composite is the sum of the vertical composites through the common summands indexed by $j \in J_i \cap J_{i'}$.\footnote{Note that $|J_i \cap J_{i'}|=0, 1$ because $G$ is a tree. } For $j \in J_i \cap J_{i'}$, the corresponding composite is
				\[i_{K_i}^{W^{\beta}} r_{W^{\alpha}}^{K_i} D_i^{\alpha, \beta} \xRightarrow{\iota_{K_i}^{W_{[j]}^{\alpha}} \id_{i, [j]}^{\alpha, \beta}} i_{L_{[j]}^{\beta}}^{W^{\beta}} p_{W_{[j]}^{\alpha}}^{L_{[j]}^{\beta}} r_{W^{\alpha}}^{W_{[j]}^{\alpha}} D_{[j]}^{\alpha, \beta} \xRightarrow{\pi_{W_{[j]}^{\alpha}}^{K_{i'}} \rho_{i', [j]}^{\alpha, \beta}} i_{K_{i'}}^{W^{\beta}} r_{W^{\alpha}}^{K_{i'}} D_{i'}^{\alpha, \beta}. \]
				Since $K_i \cap K_{i'}=\emptyset$, this composite is equal to $0$.
				Thus, the desired equality holds.
			\end{description}
			This completes the proof. 
		\end{proof}

		\begin{defn}
			By \cref{lem:functoriality_Falphabeta}, 
			the functors $(F_{\alpha}^{\beta})_{\lambda}$ for $\lambda \in \{0, 1\}^{I^{\alpha, \beta}}$, and the natural transformations $(\varphi_{\alpha}^{\beta})_{\lambda}^{\mu}$ for $(\lambda, \mu) \in R_{\{0, 1\}^{I^{\alpha, \beta}}}$ define a functor
			\[(F_{\alpha}^{\beta}; \varphi_{\alpha}^{\beta}) \colon \{0, 1\}^{I^{\alpha, \beta}} \to [\Cstar(W^{\alpha}), \Cstar(W^{\beta})]. \]
		\end{defn}
		
		\begin{defn}\label{def:higher-dimensional_reflection_functor}
			We define a functor
			\[S_{\alpha}^{\beta} \colon \Cstar(W^{\alpha}) \to \Cstar(W^{\beta})\] by $S_{\alpha}^{\beta}:=M^{I^{\alpha, \beta}}(F_{\alpha}^{\beta}; \varphi_{\alpha}^{\beta})$. We call $S_{\alpha}^{\beta}$ the \textit{higher-dimensional reflection functor}. 
		\end{defn}
		
		\begin{rem}
			If $\alpha=\beta$, equivalently, $I^{\alpha, \beta}=\emptyset$, then 
			\[[\{0, 1\}^{I^{\alpha, \beta}}, [\Cstar(W^{\alpha}), \Cstar(W^{\beta})]]=[\Cstar(W^{\alpha}), \Cstar(W^{\alpha})]. \] 
			Under this identification, $(F_{\alpha}^{\beta}; \varphi_{\alpha}^{\beta})$ is the identity functor on $\Cstar(W^{\alpha})$ and $M^{I^{\alpha, \beta}}$ is the identity functor on $[\Cstar(W^{\alpha}), \Cstar(W^{\alpha})]$; see \cref{eg:0-dimensional_mapping_cone}. Hence, $S_{\alpha}^{\beta}$ is the identity functor on $\Cstar(W^{\alpha})$. 
		\end{rem}

		\begin{prop}\label{prop:higher-dimensional_reflection_functor_singleton}
			Suppose that $I^{\alpha, \beta}=\{i\}$ for some $i \in I$. Recall from \textup{\cref{cor:Walpha_Wc_Wo}} that the Alexandrov space $W^{\alpha}$ (respectively, $W^{\beta}$) coincides with the Alexandrov space obtained from the data $K_i$, $(L_{[j]}^{\alpha})_{j \in J_i}$, $(\theta_{i,[j]})_{j \in J_i}$, and $\alpha(i) \in \{\closed, \open\}$ (respectively, $\beta(i) \in \{\closed, \open\}$) as in \textup{\cref{def:Wc_Wo}}; see also \textup{\cref{prop:Ljalpha_Ljbeta_equal}}. 
			
			\begin{enumerate}[label=\textnormal{(\arabic*)}]
				\item  
				If $i \in I^{\alpha, \beta}_{\open, \closed}$, then the higher-dimensional reflection functor 
				\[S_{\alpha}^{\beta} \colon \Cstar(W^{\alpha}) \to \Cstar(W^{\beta})\] 
				coincides with the reflection functor defined in \textup{\cref{def:reflection_functor} \labelcref{def:reflection_functor_1}}. 
				
				\item 
				If $i \in I^{\alpha, \beta}_{\closed, \open}$, then the higher-dimensional reflection functor 
				\[S_{\alpha}^{\beta} \colon \Cstar(W^{\alpha}) \to \Cstar(W^{\beta})\] 
				coincides with the reflection functor obtained from the data $D_i^{\alpha, \beta}$, $(D_{[j]}^{\alpha, \beta})_{j \in J_i}$, and $(\rho_{i, [j]}^{\alpha, \beta})_{j \in J_i}$ defined in \textup{\cref{def:reflection_functor} \labelcref{def:reflection_functor_2}}. 
			\end{enumerate}
		\end{prop}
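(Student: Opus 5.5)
The argument is a direct unwinding of \cref{def:Falphabeta,def:phialphabeta} in the case $|I^{\alpha,\beta}|=1$, followed by a comparison with \cref{def:reflection_functor}, using \cref{cor:Walpha_Wc_Wo} to identify the topological data. Since $I^{\alpha,\beta}=\{i\}$, the cube $\{0,1\}^{I^{\alpha,\beta}}$ is $\{0,1\}$ and $M^{I^{\alpha,\beta}}=M$ is the ordinary mapping cone (\cref{eg:1-dimensional_mapping_cone}). It therefore suffices to show that the morphism $(\varphi_{\alpha}^{\beta})_0^1$ in $[\Cstar(W^{\alpha}),\Cstar(W^{\beta})]$ coincides with the morphism whose mapping cone defines the reflection functor.

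First I record the combinatorics. Here $J^{\alpha,\beta}=J^{\{i\}}$, and by \cref{lem:JOmega_i} the map $j\mapsto[j]$ is a bijection $J_i\to J^{\alpha,\beta}$; the graph $G^{\{i\}}$ is a star centred at $i$, so $J^{\alpha,\beta}=J_i^{\alpha,\beta}$. For each $\bfj=[j]$ we have $I_{\bfj}=\{i\}$ with $j_{i,\bfj}=j$. Consequently $W_{\bfj}=K_i\amalg L_{\bfj}$ is the set $W_j$ of \cref{section:reflection_functors} for the reconstructed data, and $\widetilde{\theta}_{\bfj}$ is the map of \cref{def:thetatildej} built from $\theta_{i,[j]}$. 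Hence $p_{W_{\bfj}^{\alpha}}^{L_{\bfj}^{\beta}}$ agrees with $p_{W_j^{\alpha(i)}}^{L_j}$, using \cref{prop:Ljalpha_Ljbeta_equal}. Next, since $\chi^{\alpha,\beta}$ and $\chi_i^{\alpha,\beta}$ exhaust $\{0,1\}$, \cref{def:Falphabeta} gives the two vertices $i_{K_i}^{W^{\beta}}r_{W^{\alpha}}^{K_i}D_i^{\alpha,\beta}$ and $\bigoplus_{\bfj}i_{L_{\bfj}^{\beta}}^{W^{\beta}}p_{W_{\bfj}^{\alpha}}^{L_{\bfj}^{\beta}}r_{W^{\alpha}}^{W_{\bfj}^{\alpha}}D_{\bfj}^{\alpha,\beta}$.

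In case (1), $I^{\alpha,\beta}_{\closed,\open}=\emptyset$. The tensor products in \cref{def:tensor_Di_Dij} are therefore empty, so $D_i^{\alpha,\beta}=D_{\bfj}^{\alpha,\beta}=\C$ and each $\id_{i,\bfj}^{\alpha,\beta}$ is $\id_\C$. With $\chi^{\alpha,\beta}=1$ and $\chi_i^{\alpha,\beta}=0$, Case 2 of \cref{def:phialphabeta} gives $(\iota_{K_i}^{W_{[j]}^{\alpha}})_{j\in J_i}$, which is exactly the morphism in \cref{def:reflection_functor} \labelcref{def:reflection_functor_1}; there $\alpha(i)=\open$ and $\beta(i)=\closed$. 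In case (2), $I^{\alpha,\beta}_{\closed,\open}=\{i\}$, so $D_i^{\alpha,\beta}=D_i$ and $D_{\bfj}^{\alpha,\beta}=D_{i,j_{i,\bfj}}$, and $\rho_{i,\bfj}^{\alpha,\beta}=\rho_{i,j_{i,\bfj}}$. With $\chi^{\alpha,\beta}=0$ and $\chi_i^{\alpha,\beta}=1$, Case 3 gives $\sum_{j}\pi_{W_{[j]}^{\alpha}}^{K_i}\rho_{i,[j]}^{\alpha,\beta}$, which is the morphism of \cref{def:reflection_functor} \labelcref{def:reflection_functor_2} for the stated data. In both cases the vertex $0$ is the source and $1$ is the target, matching the mapping cone orientation.

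The main obstacle is bookkeeping rather than substance: checking that the natural transformations $\iota_{K_i}^{W_{\bfj}^{\alpha}}$ and $\pi_{W_{\bfj}^{\alpha}}^{K_i}$ from \cref{def:iota_pi_Wbfj_W} are literally those of \cref{def:iota_pi_Wj_K}. Both are defined componentwise by the inclusion or quotient maps $A(\iota)$ and $A(\pi)$ between the same locally closed subsets, so they coincide once the spaces are identified via \cref{cor:Walpha_Wc_Wo}.
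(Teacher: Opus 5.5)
Your proposal is correct and follows essentially the same route as the paper. In both, since $|I^{\alpha,\beta}|=1$, the diagram $(F_{\alpha}^{\beta};\varphi_{\alpha}^{\beta})$ is a single morphism. Cases 2 and 3 of the definition of $\varphi_{\alpha}^{\beta}$ then reduce literally to the morphisms defining the two reflection functors, once the tensor factors are identified: they collapse to $\C$ in case (1), and are $D_i^{\alpha,\beta}$, $D_{[j]}^{\alpha,\beta}$ in case (2). The only difference is that you spell out identifications the paper leaves implicit, namely $J^{\alpha,\beta}=J_i^{\alpha,\beta}$, $I_{[j]}=\{i\}$, $W_{[j]}=K_i\amalg L_{[j]}$, and the matching of $\widetilde{\theta}_{[j]}$ with the map built from $\theta_{i,[j]}$.
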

		
		\begin{proof}
			Since $|I^{\alpha, \beta}|=1$, each object of $[\{0, 1\}^{I^{\alpha, \beta}}, [\Cstar(W^{\alpha}), \Cstar(W^{\beta})]]$ may be regarded as a morphism in $[\Cstar(W^{\alpha}), \Cstar(W^{\beta})]$. 
			
			If $i \in I^{\alpha, \beta}_{\open, \closed}$, then the functors $D_i^{\alpha, \beta}$ and $D_{[j]}^{\alpha, \beta}$ for $j \in J_i$ coincide with the identity functor on $\Cstar(W^{\alpha})$. Thus, the functor $(F_{\alpha}^{\beta}; \varphi_{\alpha}^{\beta})$
			is regarded as the natural transformation
			\[\Bigl(\iota_{K_i}^{W_{[j]}^{\alpha}} \id_{i, [j]}^{\alpha, \beta}\Bigr)_{j \in J_i} \colon i_{K_i}^{W^{\beta}} r_{W^{\alpha}}^{K_i} \Rightarrow \bigoplus_{j \in J_i} i_{L_{[j]}^{\beta}}^{W^{\beta}} p_{W_{[j]}^{\alpha}}^{L_{[j]}^{\beta}} r_{W^{\alpha}}^{W_{[j]}^{\alpha}}. \]
			This is precisely the natural transformation appearing in \textup{\cref{def:reflection_functor} \labelcref{def:reflection_functor_1}}.
			
			If $i \in I^{\alpha, \beta}_{\closed, \open}$, then the functor $(F_{\alpha}^{\beta}; \varphi_{\alpha}^{\beta})$
			is regarded as the natural transformation
			\[\sum_{j \in J_i} \pi_{W_{[j]}^{\alpha}}^{K_i} \rho_{i, [j]}^{\alpha, \beta} \colon \bigoplus_{j \in J_i} i_{L_{[j]}^{\beta}}^{W^{\beta}} p_{W_{[j]}^{\alpha}}^{L_{[j]}^{\beta}} r_{W^{\alpha}}^{W_{[j]}^{\alpha}} D_{[j]}^{\alpha, \beta} \Rightarrow i_{K_i}^{W^{\beta}} r_{W^{\alpha}}^{K_i} D_i^{\alpha, \beta}. \]
			This is precisely the natural transformation appearing in \textup{\cref{def:reflection_functor} \labelcref{def:reflection_functor_2}}.
			The assertions now follow from the definitions of the reflection functors and $S_{\alpha}^{\beta}$.
		\end{proof}
		
		\begin{eg}\label{eg:3-dimensional_reflection_functor}
			Consider the case where $|I^{\alpha, \beta}|=3$ as in \cref{eg:Galphabeta}; see also \cref{eg:tensor_Di_Dj}. The higher-dimensional reflection functor 
			\[S_{\alpha}^{\beta} \colon \Cstar(W^{\alpha}) \to \Cstar(W^{\beta})\] 
			is the 3-dimensional mapping cone of the commutative diagram 
			\[
			\begin{tikzcd}[column sep=large]
				& {} &
				[-4em] i_{K_{i_3}}^{W^{\beta}} r_{W^{\alpha}}^{K_{i_3}} D_{i_3}^{\alpha, \beta}
				\ar[Rightarrow, swap]{dl}{(\iota_{K_{i_3}}^{W_{\bfj_3}^{\alpha}} \id_{i_3, \bfj_3}^{\alpha, \beta}, \iota_{K_{i_3}}^{W_{\bfj_4}^{\alpha}} \id_{i_3, \bfj_4}^{\alpha, \beta})}
				\ar[Rightarrow]{dd}
				\ar[Rightarrow]{rr}
				& & 
				0
				\ar[Rightarrow]{dl}
				\ar[Rightarrow]{dd}
				\\
				& 
				\bigoplus_{\bfj \in J^{\alpha, \beta}} i_{L_{\bfj}^{\beta}}^{W^{\beta}} p_{W_{\bfj}^{\alpha}}^{L_{\bfj}^{\beta}} r_{W^{\alpha}}^{W_{\bfj}^{\alpha}} D_{\bfj}^{\alpha, \beta}
				\ar[Rightarrow, swap]{dd}{\pi_{W_{\bfj_3}^{\alpha}}^{K_{i_4}} \rho^{\alpha, \beta}_{i_4, \bfj_3}+\pi_{W_{\bfj_5}^{\alpha}}^{K_{i_4}} \rho^{\alpha, \beta}_{i_4, \bfj_5}}
				& & 
				i_{K_{i_1}}^{W^{\beta}} r_{W^{\alpha}}^{K_{i_1}} D_{i_1}^{\alpha, \beta}
				\ar[Rightarrow, from=ll, crossing over, swap, "\pi_{W_{\bfj_1}^{\alpha}}^{K_{i_1}} \rho^{\alpha, \beta}_{i_1, \bfj_1}+\pi_{W_{\bfj_2}^{\alpha}}^{K_{i_1}} \rho^{\alpha, \beta}_{i_1, \bfj_2}+\pi_{W_{\bfj_3}^{\alpha}}^{K_{i_1}} \rho^{\alpha, \beta}_{i_1, \bfj_3}"]
				\\
				& &
				0
				\ar[Rightarrow]{dl}
				\ar[Rightarrow]{rr}
				& & 
				0
				\ar[Rightarrow]{dl}
				\\
				&
				i_{K_{i_4}}^{W^{\beta}} r_{W^{\alpha}}^{K_{i_4}} D_{i_4}^{\alpha, \beta}
				\ar[Rightarrow, swap]{rr}
				& & 
				0
				\ar[Rightarrow, from=uu, crossing over]
			\end{tikzcd}
			\]
			in $[\Cstar(W^{\alpha}), \Cstar(W^{\beta})]$. 
		\end{eg}
		
		Under the identification 
		\[[\{0, 1\}^{I^{\alpha, \beta}}, [\Cstar(W^{\alpha}), \Cstar(W^{\beta})]]=[\Cstar(W^{\alpha}), [\{0, 1\}^{I^{\alpha, \beta}}, \Cstar(W^{\beta})]], \]
		we may regard $(F_{\alpha}^{\beta}; \varphi_{\alpha}^{\beta})$ as a functor $\Cstar(W^{\alpha}) \to [\{0, 1\}^{I^{\alpha, \beta}}, \Cstar(W^{\beta})]$. 
		
		\begin{lem}\label{lem:Falphabeta_exact_continuous_module}
			The functor 
			\[(F_{\alpha}^{\beta}; \varphi_{\alpha}^{\beta}) \colon \Cstar(W^{\alpha}) \to [\{0, 1\}^{I^{\alpha, \beta}}, \Cstar(W^{\beta})]\]
			is an exact, continuous, $\Cstar$-module functor. 
		\end{lem}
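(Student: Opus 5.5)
The plan is to reduce the statement to the pointwise criteria in \cref{rem:exact_continuous_module_iff}. Regard $(F_{\alpha}^{\beta}; \varphi_{\alpha}^{\beta})$ as a functor $\{0, 1\}^{I^{\alpha, \beta}} \to [\Cstar(W^{\alpha}), \Cstar(W^{\beta})]$. By that remark it suffices to prove two things: each vertex functor $(F_{\alpha}^{\beta})_{\lambda} \colon \Cstar(W^{\alpha}) \to \Cstar(W^{\beta})$ is exact, continuous, and a $\Cstar$-module functor; and each edge $(\varphi_{\alpha}^{\beta})_{\lambda}^{\mu}$ is a $\Cstar$-module natural transformation. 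The case $\alpha=\beta$ is trivial, since then the functor is $\id_{W^{\alpha}}$. So assume $\alpha \neq \beta$.

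For the vertices we go through the four cases of \cref{def:Falphabeta}.
\begin{itemize}
\item The zero functor and the identity have all three properties trivially.
\item Each $i_{K_i}^{W^{\beta}} r_{W^{\alpha}}^{K_i} D_i^{\alpha, \beta}$ is a composite of $i_{K_i}^{W^{\beta}}$, $r_{W^{\alpha}}^{K_i}$ and $D_i^{\alpha, \beta} \otimes \blank$. These are exact, continuous module functors by \cref{prop:induced_functor_exact_continuous_module,prop:restriction_functor_exact_continuous_module,prop:Cstar(X)_module_subcategory}. All three properties are closed under composition, as noted in the proof of \cref{lem:preservation_exact_continuous_module}.
\item The same argument applies to each summand $i_{L_{\bfj}^{\beta}}^{W^{\beta}} p_{W_{\bfj}^{\alpha}}^{L_{\bfj}^{\beta}} r_{W^{\alpha}}^{W_{\bfj}^{\alpha}} D_{\bfj}^{\alpha, \beta}$, using that $p_{W_{\bfj}^{\alpha}}^{L_{\bfj}^{\beta}}$ is induced by a continuous map. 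The finite direct sum over $J^{\alpha, \beta}$ then inherits the properties by \cref{lem:preservation_exact_continuous_module}~\labelcref{lem:preservation_exact_continuous_module_1}, applied to $\bigoplus$ as in \cref{subsection:natural_transformations_associated_with_direct_sums}.
\end{itemize}

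For the edges we follow the cases of \cref{def:phialphabeta}. Identities and zero transformations are clearly module natural transformations. In Case 2, each $\iota_{K_i}^{W_{[j]}^{\alpha}}$ is a $\Cstar$-module natural transformation by \cref{lem:iota_pi}. The identity $\id_{i,[j]}^{\alpha,\beta}$, viewed as $D_i^{\alpha,\beta}\otimes\blank \Rightarrow D_{[j]}^{\alpha,\beta}\otimes\blank$, is a module natural transformation. Horizontal composites of module natural transformations are again module natural transformations. The assembled map $(\cdots)_{j \in J_i}$ into the direct sum is then a module natural transformation by the remark following \cref{def:natural_transformation_direct_sum}, since $J_i$ is finite. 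Case 3 is dual. Each $\pi_{W_{[j]}^{\alpha}}^{K_i}$ is a module natural transformation by \cref{lem:iota_pi}. Each $\rho_{i,[j]}^{\alpha,\beta}\otimes\blank$ is a module natural transformation, because the tensor flip is natural in the $\ast$-homomorphism. The sum of these mutually orthogonal module transformations is again a module transformation by the same remark.

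The main point needing care is the assertion that $D \otimes \blank$, and a $\ast$-homomorphism $\rho \otimes \blank$, give module functors and transformations compatibly with the flip isomorphisms. This should follow from the symmetric monoidal structure of $\Cstar$ and was already used implicitly in \cref{prop:reflection_functor_exact_continuous_module}. Also, exactness of $D\otimes\blank$ for the maximal tensor product, which we rely on, is cited in \cref{prop:Cstar(X)_module_subcategory}. With these points settled, \cref{rem:exact_continuous_module_iff} yields the lemma.
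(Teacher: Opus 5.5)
Your proposal is correct and follows the paper's own route. The paper reduces via \cref{rem:exact_continuous_module_iff} to the vertex functors $(F_{\alpha}^{\beta})_{\lambda}$ and the edge transformations $(\varphi_{\alpha}^{\beta})_{\lambda}^{\mu}$, then refers to the argument of \cref{prop:reflection_functor_exact_continuous_module}; your case-by-case check spells out exactly that argument.
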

		
		\begin{proof}
			It suffices to show that $(F_{\alpha}^{\beta})_{\lambda} \colon \Cstar(W^{\alpha}) \to \Cstar(W^{\beta})$ is an exact, continuous, $\Cstar$-module functor for every $\lambda \in \{0, 1\}^{I^{\alpha, \beta}}$ and that $(\varphi_{\alpha}^{\beta})_{\lambda}^{\mu} \colon (F_{\alpha}^{\beta})_{\lambda} \Rightarrow (F_{\alpha}^{\beta})_{\mu}$ is a $\Cstar$-module natural transformation for every $(\lambda, \mu) \in R_{\{0, 1\}^{I^{\alpha, \beta}}}$; see \cref{rem:exact_continuous_module_iff}. 
			This follows by the same argument as in the proof of
			\cref{prop:reflection_functor_exact_continuous_module}.
		\end{proof}

		\begin{prop}\label{prop:Salphabeta_exact_continuous_module}
			The higher-dimensional reflection functor
			\[S_{\alpha}^{\beta} \colon \Cstar(W^{\alpha}) \to \Cstar(W^{\beta})\]
			is an exact, continuous, $\Cstar$-module functor. 
		\end{prop}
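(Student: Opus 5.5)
The plan is to reduce the statement to \cref{lem:Falphabeta_exact_continuous_module} together with the general transfer principle \cref{lem:preservation_exact_continuous_module}, exactly as in the proof of \cref{prop:reflection_functor_exact_continuous_module}. By \cref{def:higher-dimensional_reflection_functor}, $S_{\alpha}^{\beta}=M^{I^{\alpha, \beta}}(F_{\alpha}^{\beta}; \varphi_{\alpha}^{\beta})$. Here the mapping cone functor is
\[M^{I^{\alpha, \beta}} \colon [\{0, 1\}^{I^{\alpha, \beta}}, [\Cstar(W^{\alpha}), \Cstar(W^{\beta})]] \to [\Cstar(W^{\alpha}), \Cstar(W^{\beta})].\]
This is precisely the functor $G_{X, Y}$ of \cref{lem:preservation_exact_continuous_module}, with $X:=W^{\alpha}$, $Y:=W^{\beta}$, $\frakA:=\{0,1\}^{I^{\alpha, \beta}}$ and $G:=M^{I^{\alpha, \beta}} \colon [\{0, 1\}^{I^{\alpha, \beta}}, \Cstar] \to \Cstar$.

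First, I will invoke \cref{prop:mapping_cone_exact_continuous_module}: for the finite set $I^{\alpha, \beta}$, the functor $M^{I^{\alpha, \beta}} \colon [\{0, 1\}^{I^{\alpha, \beta}}, \Cstar] \to \Cstar$ is an exact, continuous, $\Cstar$-module functor. So the hypotheses on $G$ in \cref{lem:preservation_exact_continuous_module} hold.

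Second, under the identification $[\{0, 1\}^{I^{\alpha, \beta}}, [\Cstar(W^{\alpha}), \Cstar(W^{\beta})]]=[\Cstar(W^{\alpha}), [\{0, 1\}^{I^{\alpha, \beta}}, \Cstar(W^{\beta})]]$, I regard $(F_{\alpha}^{\beta}; \varphi_{\alpha}^{\beta})$ as a functor $\Cstar(W^{\alpha}) \to [\{0, 1\}^{I^{\alpha, \beta}}, \Cstar(W^{\beta})]$. By \cref{lem:Falphabeta_exact_continuous_module}, this functor is exact, continuous and a $\Cstar$-module functor. Applying \cref{lem:preservation_exact_continuous_module} \labelcref{lem:preservation_exact_continuous_module_1} then shows that $G_{W^{\alpha}, W^{\beta}}(F_{\alpha}^{\beta}; \varphi_{\alpha}^{\beta})=S_{\alpha}^{\beta}$ inherits all three properties.

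The degenerate case $\alpha=\beta$ is immediate, since $S_{\alpha}^{\beta}$ is then the identity functor by the preceding remark. There is no genuine obstacle here; the substantive input is \cref{lem:Falphabeta_exact_continuous_module}, which itself rests on two facts. First, each vertex functor $(F_{\alpha}^{\beta})_{\lambda}$ is built from $i$, $p$, $r$, tensoring by the C*-algebras $D_i^{\alpha, \beta}$, $D_{\bfj}^{\alpha, \beta}$, and finite direct sums, all of which are exact, continuous $\Cstar$-module functors. Second, the edge maps are built from the $\Cstar$-module natural transformations $\iota$, $\pi$, $\rho$ via the direct-sum operations of \cref{def:natural_transformation_direct_sum}, which preserve the module property.
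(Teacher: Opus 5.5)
Your proposal is correct and follows essentially the same route as the paper: apply \cref{lem:preservation_exact_continuous_module} \labelcref{lem:preservation_exact_continuous_module_1} to $M^{I^{\alpha, \beta}}$, using \cref{prop:mapping_cone_exact_continuous_module} for the hypotheses on $M^{I^{\alpha, \beta}}$ and \cref{lem:Falphabeta_exact_continuous_module} for the diagram $(F_{\alpha}^{\beta}; \varphi_{\alpha}^{\beta})$. The separate treatment of $\alpha=\beta$ is harmless but not needed, since $M^{\emptyset}$ is the identity and the general argument already covers that case.
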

		
		\begin{proof}
			This follows from \cref{lem:preservation_exact_continuous_module} \labelcref{lem:preservation_exact_continuous_module_1} applied to the functor
			\[M^{I^{\alpha, \beta}} \colon [\{0, 1\}^{I^{\alpha, \beta}}, [\Cstar(W^{\alpha}), \Cstar(W^{\beta})]] \to [\Cstar(W^{\alpha}), \Cstar(W^{\beta})]\] 
			and from \cref{prop:mapping_cone_exact_continuous_module,lem:Falphabeta_exact_continuous_module}. 
		\end{proof}

		\subsection{Rearrangements of higher-dimensional reflection functors}\label{subsection:rearrangements_of_higher-dimensional_reflection_functors}
		
		In this subsection, we prove the following main theorem of this section.
		
		\begin{thm}\label{thm:rearrangement_reflection_functors}
			For $\alpha, \beta, \gamma \in \{\closed, \open\}^I$ with $I^{\alpha, \beta} \cap I^{\beta, \gamma}=\emptyset$, we have
			\[S_{\beta}^{\gamma} S_{\alpha}^{\beta}=S_{\alpha}^{\gamma}\]
			in $[\Cstar(W^{\alpha}), \Cstar(W^{\gamma})]$. 
		\end{thm}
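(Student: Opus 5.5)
The plan is to reduce the composite to a single higher-dimensional mapping cone via \cref{lem:rearrangement_mapping_cones}, and then identify the resulting cube diagram with $(F_{\alpha}^{\gamma};\varphi_{\alpha}^{\gamma})$ vertex by vertex and edge by edge. Set $I_1:=I^{\alpha,\beta}$ and $I_2:=I^{\beta,\gamma}$, so that $I^{\alpha,\gamma}=I_1\amalg I_2$. By \cref{lem:Falphabeta_exact_continuous_module}, $(F_{\beta}^{\gamma};\varphi_{\beta}^{\gamma})\colon \Cstar(W^{\beta})\to[\{0,1\}^{I_2},\Cstar(W^{\gamma})]$ is an exact $\Cstar$-module functor. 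Hence \cref{lem:rearrangement_mapping_cones} gives
\[S_{\beta}^{\gamma}S_{\alpha}^{\beta}=M^{I^{\alpha,\gamma}}\bigl(F_{\beta}^{\gamma}F_{\alpha}^{\beta};\varphi_{\beta}^{\gamma}\varphi_{\alpha}^{\beta}\bigr).\]
It therefore suffices to show that the composed cube $(F_{\beta}^{\gamma}F_{\alpha}^{\beta};\varphi_{\beta}^{\gamma}\varphi_{\alpha}^{\beta})$ equals $(F_{\alpha}^{\gamma};\varphi_{\alpha}^{\gamma})$ in $[\{0,1\}^{I^{\alpha,\gamma}},[\Cstar(W^{\alpha}),\Cstar(W^{\gamma})]]$, up to the canonical identifications that the paper writes as equalities. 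One checks directly that $\chi^{\alpha,\gamma}=(\chi^{\alpha,\beta},\chi^{\beta,\gamma})$ and that $\chi^{\alpha,\gamma}_i$ corresponds to $(\chi^{\alpha,\beta}_i,\chi^{\beta,\gamma})$ for $i\in I_1$ and to $(\chi^{\alpha,\beta},\chi^{\beta,\gamma}_i)$ for $i\in I_2$.

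The first step is to establish a family of composition rules for the building blocks, in the style of \cref{lem:relations_ri_vanish,lem:relations_pri,lem:composition_iota_pi_Wj_K}, using \cref{lem:relations_extension_restriction_functors,lem:composition_theta_r_iota_pi}. The rules needed are:
\begin{itemize}
\item $r^{K_k}_{W^{\beta}}i^{W^{\beta}}_{K_i}=0$ for $k\ne i$;
\item $r^{K_i}_{W^{\beta}}i^{W^{\beta}}_{L_{\bfj}^{\beta}}=0$ for $i\in I_2$ and $\bfj\in J^{\alpha,\beta}$, since $i\notin\bfj$ when $i\notin I_1$;
\item $p^{L^{\gamma}_{\bfj'}}_{W^{\beta}_{\bfj'}}r^{W^{\beta}_{\bfj'}}_{W^{\beta}}i^{W^{\beta}}_{K_i}$ is either $0$ or the extension along $\theta_{i,\bfj'}$, according as $i\in\bfj'\cup I_{\bfj'}$ or not;
\item for $\bfj\in J^{\alpha,\beta}$ and $\bfj'\in J^{\beta,\gamma}$, the composite $p^{L^{\gamma}_{\bfj'}}_{W^{\beta}_{\bfj'}}r^{W^{\beta}_{\bfj'}}_{W^{\beta}}i^{W^{\beta}}_{L^{\beta}_{\bfj}}p^{L^{\beta}_{\bfj}}_{W^{\alpha}_{\bfj}}r^{W^{\alpha}_{\bfj}}_{W^{\alpha}}$ vanishes when $\bfj\cap\bfj'=\emptyset$, and equals $p^{L^{\gamma}_{\bfj\cap\bfj'}}_{W^{\alpha}_{\bfj\cap\bfj'}}r^{W^{\alpha}_{\bfj\cap\bfj'}}_{W^{\alpha}}$ composed with the inclusion $L_{\bfj\cap\bfj'}\hookrightarrow L_{\bfj'}$ otherwise.
\end{itemize}
The last rule rests on \cref{lem:W_intersection}. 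Its hypothesis, that $J_i\subset\bfj$ for $i\in\bfj\cap I$, holds because the components of $J^{\alpha,\beta}$ are closed under adjoining neighbours of their $I$-vertices. The nonempty intersections $\bfj\cap\bfj'$ are exactly the components in $J^{\alpha,\gamma}$, each arising from a unique such pair.

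The second step is the vertex-by-vertex comparison. At $(\chi^{\alpha,\beta},\chi^{\beta,\gamma})$, the double sum $\bigoplus_{\bfj'}\bigoplus_{\bfj}$ collapses to $\bigoplus_{\bfj''\in J^{\alpha,\gamma}}$. At $\chi_i^{\alpha,\gamma}$, only the term $i_{K_i}r^{K_i}$ survives. Every other vertex pairs a vertex in one factor with a zero or $K$-vertex in the other, so the corresponding functor vanishes by the rules above. One must also check that the tensor factors match, for example $D_i^{\beta,\gamma}\otimes D_i^{\alpha,\beta}=D_i^{\alpha,\gamma}$ and $D^{\beta,\gamma}_{\bfj'}\otimes D^{\alpha,\beta}_{\bfj}=D^{\alpha,\gamma}_{\bfj\cap\bfj'}$. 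This reduces to the identity $j_{k,\bfj}=j_{k,\bfj\cap\bfj'}$ from \cref{lem:j_i_bfj}, together with the fact that $I^{\alpha,\gamma}_{\closed,\open}=I^{\alpha,\beta}_{\closed,\open}\amalg I^{\beta,\gamma}_{\closed,\open}$. The tensor factors $D$ are moved past the extension and restriction functors by the module structure.

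The third step is the edge-by-edge comparison, which I expect to be the main obstacle. The horizontal composites of $\iota$'s, $\pi$'s and the summation operations of \cref{def:natural_transformation_direct_sum} must reduce, summand by summand, to $\iota_{K_i}^{W^{\alpha}_{[j]}}\id$ and $\pi^{K_i}_{W^{\alpha}_{[j]}}\rho^{\alpha,\gamma}_{i,[j]}$. I will handle this by applying \cref{lem:composition_theta_r_iota_pi} twice for each type of edge (from $I_1$ or from $I_2$), tracking how $[j]^{\alpha,\beta}$ and $[j]^{\beta,\gamma}$ intersect to give $[j]^{\alpha,\gamma}$. Commutativity is automatic once the vertices and edges agree, and the tree property ($|J_i\cap J_{i'}|\le1$) ensures that no extra cross terms appear.
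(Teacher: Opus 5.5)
Your overall route is the paper's: reduce via \cref{lem:rearrangement_mapping_cones} and \cref{lem:Falphabeta_exact_continuous_module} to the cube identity of \cref{prop:Falphagamma_varphialphagamma}, then compare vertices and generating edges. However, your second composition rule is false, and it fails exactly in the $I^{\beta,\gamma}$-directions of the cube. The justification ``$i\notin\bfj$ when $i\notin I_1$'' is reversed. The elements of $J^{\alpha,\beta}$ are the connected components of the subgraph induced by $(I\setminus I^{\alpha,\beta})\amalg J$. Every $i\in I_2$ is not in $I_1$, because $I_1\cap I_2=\emptyset$, so it lies in exactly one $\bfj_0\in J^{\alpha,\beta}$. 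For that $\bfj_0$ we have $K_i\subset L_{\bfj_0}$, and $\widetilde\theta_{\bfj_0}$ is the identity on $K_i$. Hence
\[
r^{K_i}_{W^{\beta}}i^{W^{\beta}}_{L^{\beta}_{\bfj_0}}p^{L^{\beta}_{\bfj_0}}_{W^{\alpha}_{\bfj_0}}r^{W^{\alpha}_{\bfj_0}}_{W^{\alpha}}=r^{K_i}_{W^{\alpha}},
\]
and only the other $\bfj$ give $0$; this is \cref{lem:ripr}. If your rule were true, the vertex $(\chi^{\alpha,\beta},\chi^{\beta,\gamma}_i)=\chi^{\alpha,\gamma}_i$ of the composed cube would vanish. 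But $(F^{\gamma}_{\alpha})_{\chi^{\alpha,\gamma}_i}=i^{W^{\gamma}}_{K_i}r^{K_i}_{W^{\alpha}}D^{\alpha,\gamma}_i$ is nonzero in general. All edges in the $I_2$-directions would also be forced to be $0$. So the argument as written would contradict the theorem. It also contradicts your own claim that the $i_{K_i}r^{K_i}$ term survives at $\chi^{\alpha,\gamma}_i$.

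Your mirror rule has the same confusion. Take $i\in I_1$. Then $i$ lies in exactly one $\bfj'\in J^{\beta,\gamma}$ and never in $I_{\bfj'}$, since one always has $I_{\bfj'}\subset I^{\beta,\gamma}$. So $\widetilde\theta_{\bfj'}$ restricts to the inclusion on $K_i$, and $i^{W^{\gamma}}_{L^{\gamma}_{\bfj'}}p^{L^{\gamma}_{\bfj'}}_{W^{\beta}_{\bfj'}}r^{W^{\beta}_{\bfj'}}_{W^{\beta}}i^{W^{\beta}}_{K_i}=i^{W^{\gamma}}_{K_i}$ by \cref{lem:ipri}. It is not an extension along $\theta_{i,\bfj'}$, which is not even defined here, and the zero and nonzero alternatives in your phrasing are swapped. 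Likewise, the tensor identity you cite, $D^{\beta,\gamma}_i\otimes D^{\alpha,\beta}_i=D^{\alpha,\gamma}_i$, is meaningless, because no $i$ lies in both $I^{\alpha,\beta}$ and $I^{\beta,\gamma}$. What is needed is $D^{\alpha,\beta}_i\otimes D^{\beta,\gamma}_{\bfj'}=D^{\alpha,\gamma}_i$ for $i\in I_1\cap\bfj'$, and $D^{\alpha,\beta}_{\bfj}\otimes D^{\beta,\gamma}_i=D^{\alpha,\gamma}_i$ for $i\in I_2\cap\bfj$ (\cref{lem:tensor_Di_Dj}). The point you are missing is that a $K_i$-vertex produced by one factor sits inside a single component of the other factor's partition and is carried through unchanged. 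With these corrections, your vertex and edge comparisons go through as in the paper.
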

		
		\cref{thm:rearrangement_reflection_functors} follows from \cref{lem:rearrangement_mapping_cones,lem:Falphabeta_exact_continuous_module,prop:Falphagamma_varphialphagamma} below. 
		Note that the assumption in \cref{thm:rearrangement_reflection_functors} is equivalent to $I^{\alpha, \beta} \cup I^{\beta, \gamma}=I^{\alpha, \gamma}$. Under this assumption, we have
		\[\{0, 1\}^{I^{\alpha, \beta}} \times \{0, 1\}^{I^{\beta, \gamma}}=\{0, 1\}^{I^{\alpha, \gamma}} \] 
		as categories; see \cref{lem:product_index_sets}. 
		
		\begin{prop}\label{prop:Falphagamma_varphialphagamma}
			For $\alpha, \beta, \gamma \in \{\closed, \open\}^I$ with $I^{\alpha, \beta} \cap I^{\beta, \gamma}=\emptyset$, we have 
			\[(F_{\beta}^{\gamma} F_{\alpha}^{\beta}; \varphi_{\beta}^{\gamma} \varphi_{\alpha}^{\beta})=(F_{\alpha}^{\gamma}; \varphi_{\alpha}^{\gamma})\]
			in $[\{0, 1\}^{I^{\alpha, \gamma}}, [\Cstar(W^{\alpha}), \Cstar(W^{\gamma})]]$. 
		\end{prop}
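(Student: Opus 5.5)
Write $I_1:=I^{\alpha,\beta}$, $I_2:=I^{\beta,\gamma}$, so that $I^{\alpha,\gamma}=I_1\amalg I_2$. A vertex of $\{0,1\}^{I^{\alpha,\gamma}}$ is a pair $(\lambda,\lambda')$. The composite diagram has vertex $(F_\beta^\gamma F_\alpha^\beta)_{(\lambda,\lambda')}=(F_\beta^\gamma)_{\lambda'}(F_\alpha^\beta)_\lambda$. Since $\chi^{\alpha,\gamma}$ restricts to $\chi^{\alpha,\beta}$ on $I_1$ and to $\chi^{\beta,\gamma}$ on $I_2$, the distinguished vertices match:
\[
\chi^{\alpha,\gamma}=(\chi^{\alpha,\beta},\chi^{\beta,\gamma}),\qquad
\chi_i^{\alpha,\gamma}=
\begin{cases}
(\chi_i^{\alpha,\beta},\chi^{\beta,\gamma}) & \text{for } i\in I_1,\\
(\chi^{\alpha,\beta},\chi_i^{\beta,\gamma}) & \text{for } i\in I_2.
\end{cases}
\]
The plan is a vertex-by-vertex comparison of the objects, followed by an edge-by-edge comparison of the morphisms. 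The trivial cases $I_1=\emptyset$ or $I_2=\emptyset$ are handled by the remark that $S$ is then the identity, so assume both are nonempty.

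\textbf{Vertices.} Every composite at a vertex where one factor is $0$ is $0$. The remaining vertices are those where both factors are nonzero. For each of these we simplify the composite of $i$, $r$, $p$ functors using \cref{lem:relations_extension_restriction_functors} \labelcref{lem:relations_extension_restriction_functors_3,lem:relations_extension_restriction_functors_5}. We need the following intersection computations.
\begin{itemize}
\item $K_i\cap K_{i'}=\emptyset$ for $i\ne i'$. Hence $(\chi_i,\chi_{i'})$ gives $0$, which agrees with $F_\alpha^\gamma$ there, since that vertex differs from $\chi^{\alpha,\gamma}$ in two coordinates.
\item $K_i\cap W_{\bfj'}$ for $i\in I_1$ and $\bfj'\in J^{\beta,\gamma}$. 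This is $K_i$ exactly when $i\in\bfj'\cup I_{\bfj'}$. Since $i\notin I_2$, the vertex $i$ lies in some component $\bfj'$, so exactly one summand survives and we get $i_{K_i}^{W^\gamma}r^{K_i}_{W^\alpha}$. The tensor factors must also combine: $D_i^{\alpha,\beta}\otimes D_{\bfj'}^{\beta,\gamma}=D_i^{\alpha,\gamma}$, because the $j_{k,\cdot}$ indices agree by \cref{lem:j_i_bfj}.
\item $L_{\bfj}\cap K_{i'}$ for $i'\in I_2$, and $L_{\bfj}\cap W_{\bfj'}$ for $\bfj\in J^{\alpha,\beta}$, $\bfj'\in J^{\beta,\gamma}$. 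These are handled by \cref{lem:W_intersection}. Its hypothesis holds because components of the graph induced by $(I\setminus F)\amalg J$ contain all $J_i$ of their $I$-vertices. The main identity is $\widetilde\theta_{\bfj}^{-1}(L_{\bfj}\cap W_{\bfj'})=W_{\bfj\cap\bfj'}$.
\end{itemize}
The nonempty intersections $\bfj\cap\bfj'$ are exactly the components in $J^{\alpha,\gamma}$, and each such component arises from a unique pair $(\bfj,\bfj')$. We also need $\widetilde\theta_{\bfj'}\circ\widetilde\theta_{\bfj}=\widetilde\theta_{\bfj\cap\bfj'}$ on $W_{\bfj\cap\bfj'}$, which is checked directly from the definitions. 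Together these give
\[
(F_\beta^\gamma)_{\chi^{\beta,\gamma}}(F_\alpha^\beta)_{\chi^{\alpha,\beta}}
=\bigoplus_{\bfj''\in J^{\alpha,\gamma}} i\,p\,r\,D_{\bfj''}^{\alpha,\gamma},
\]
with the tensor identity $D_{\bfj}^{\alpha,\beta}\otimes D_{\bfj'}^{\beta,\gamma}=D_{\bfj\cap\bfj'}^{\alpha,\gamma}$ again following from \cref{lem:j_i_bfj}. The mixed vertices $(\chi^{\alpha,\beta},\chi_{i'}^{\beta,\gamma})$ collapse to $i_{K_{i'}}r^{K_{i'}}D_{i'}^{\alpha,\gamma}$ by the same argument.

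\textbf{Edges.} The morphisms are horizontal composites $\psi\varphi$ of $\iota$'s, $\pi$'s, $\rho$'s, identities and zeros. For each we apply \cref{lem:composition_theta_r_iota_pi}, which says that whiskering an $\iota$ or $\pi$ by an $i$/$r$/$p$ functor yields the corresponding $\iota$ or $\pi$ of the preimages. We then check that the $\iota$'s and $\pi$'s land on the same summands as those specified in \cref{def:phialphabeta} for $\alpha\to\gamma$. Here the bijection $J_i\to J_i^F$ of \cref{lem:JOmega_i} is compatible with refining $[j]^{\alpha,\gamma}\subset[j]^{\alpha,\beta}$. On the algebra side, $\rho^{\alpha,\beta}_{i,\bfj}\otimes\id=\rho^{\alpha,\gamma}_{i,\bfj\cap\bfj'}$. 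The sums and tuples from \cref{def:natural_transformation_direct_sum} commute with whiskering by the additive exact functors involved.

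\textbf{Main obstacle.} I expect the hardest step to be the combinatorial bookkeeping at the vertex $\chi^{\alpha,\gamma}$ and on the edges into and out of it. One must verify that pairs $(\bfj,\bfj')$ with empty intersection really contribute $0$ (via \cref{lem:W_intersection}\labelcref{lem:W_intersection_1}), and that the surviving summands are matched consistently with the indexing maps $j\mapsto[j]$, including the sign-free orthogonal sums. I would first carry out a careful case split on whether $i\in I_1$ or $i\in I_2$ and whether it is open-to-closed or closed-to-open, and treat each case using these lemmas.
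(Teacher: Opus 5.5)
Your proposal is correct and follows essentially the same route as the paper. The paper does the same vertex-by-vertex comparison, packaging your intersection and tensor-factor computations as \cref{lem:ipri,lem:ripr,lem:ipripr,lem:tensor_Di_Dj,lem:Jalphagamma}, then checks only the one-coordinate edges between $\chi_i^{\alpha,\gamma}$ and $\chi^{\alpha,\gamma}$ (the reduction is \cref{lem:equality_diagram}) using \cref{lem:composition_theta_r_iota_pi}. One step deserves to be stated explicitly: in your second bullet, exactly one summand survives because $i \notin I_{\bfj''}$ for every component $\bfj'' \neq \bfj'$, which holds since $j_{i,\bfj''} \in J_i \subset \bfj'$ and $\bfj'' \cap \bfj' = \emptyset$.
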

		
		In what follows, we shall prove \cref{prop:Falphagamma_varphialphagamma}. 
		The following elementary lemma reduces the amount of computation needed in the proof of \cref{prop:Falphagamma_varphialphagamma}.
		We will apply it with $F=I^{\alpha, \gamma}$ and $\frakA=[\Cstar(W^{\alpha}), \Cstar(W^{\gamma})]$. 
		
		\begin{lem}\label{lem:equality_diagram}
			Let $F$ be a finite set. Let $\frakA$ be a category with a zero object $0$. Let 
			\[(A; \varphi), (B; \psi) \in [\{0, 1\}^F, \frakA].\]
			Suppose that 
			\begin{itemize}
				\item $A_{\lambda}=B_{\lambda}$ for every $\lambda \in \{0, 1\}^F$; 
				\item $\varphi_{\lambda}^{\mu}=\psi_{\lambda}^{\mu}$ for every $(\lambda, \mu) \in R_{\{0, 1\}^F}$ satisfying $A_{\lambda} \neq 0$, $A_{\mu} \neq 0$, and $|\{i \in F \mid \lambda_i \neq \mu_i\}|=1$.
			\end{itemize}
			Then $(A; \varphi)=(B; \psi)$. 
		\end{lem}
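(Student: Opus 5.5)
Objects agree by hypothesis, so the task is to show $\varphi_{\lambda}^{\mu}=\psi_{\lambda}^{\mu}$ for every $(\lambda,\mu)\in R_{\{0,1\}^F}$. I would argue by induction on the Hamming distance
\[
d(\lambda,\mu):=|\{i\in F\mid \lambda_i\neq\mu_i\}|.
\]
\textbf{Distance $0$.} Here $\lambda=\mu$, and both morphisms are the identity of $A_\lambda=B_\lambda$ by functoriality.

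\textbf{Zero endpoints.} If $A_\lambda=0$ or $A_\mu=0$, then $\varphi_\lambda^\mu$ and $\psi_\lambda^\mu$ both have a zero object as source or target. Such a morphism is unique, so $\varphi_\lambda^\mu=\psi_\lambda^\mu$. The same remark covers every distance-$1$ pair excluded by the second hypothesis.

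\textbf{Distance $1$, nonzero endpoints.} This is exactly the second hypothesis.

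\textbf{Distance $\geq 2$.} Choose an intermediate vertex $\nu$ with $(\lambda,\nu),(\nu,\mu)\in R_{\{0,1\}^F}$ and $\nu\neq\lambda,\mu$; this is possible since $\lambda<\mu$ differ in at least two coordinates, so flip one of them. Then
\[
\varphi_\lambda^\mu=\varphi_\nu^\mu\circ\varphi_\lambda^\nu=\psi_\nu^\mu\circ\psi_\lambda^\nu=\psi_\lambda^\mu,
\]
where the outer equalities are functoriality of $(A;\varphi)$ and $(B;\psi)$ and the middle one is the induction hypothesis, since both pairs have smaller distance.

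It is better to apply the induction uniformly to all pairs and to invoke the zero-object argument only at distance $1$. The induction never needs the intermediate objects to be nonzero: functoriality holds regardless, and the distance-$1$ base case already includes the zero cases. In particular, factoring through a vertex $\nu$ with $A_\nu=0$ is harmless.

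The only step needing any care is checking that an intermediate vertex always exists in the poset, which is immediate. I expect no real obstacle.
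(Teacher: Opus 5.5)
Your proof is correct and is essentially the paper's argument: the paper picks a full chain of single-coordinate steps from $\lambda$ to $\mu$, shows the two diagrams agree on each step (by hypothesis or by uniqueness of morphisms into or out of a zero object), and composes using functoriality. Your induction on Hamming distance, peeling off one step at a time, is the same reasoning organized recursively.
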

		
		\begin{proof}
			It suffices to show that $\varphi_{\lambda}^{\mu}=\psi_{\lambda}^{\mu}$ for every $(\lambda, \mu) \in R_{\{0, 1\}^F}$. 
			If $\lambda=\mu$, then $\varphi_{\lambda}^{\mu}$ and $\psi_{\lambda}^{\mu}$ are the identity morphism on $A_{\lambda}$, and hence $\varphi_{\lambda}^{\mu}=\psi_{\lambda}^{\mu}$. Suppose that $\lambda \neq \mu$. 
			Choose a sequence
			\[
			\lambda=\lambda_0, \lambda_1, \dots, \lambda_n=\mu,
			\]
			in $\{0, 1\}^F$ such that
			\[
			(\lambda_{k-1}, \lambda_k) \in R_{\{0, 1\}^F}, \quad \bigl|\{i \in F \mid \lambda_{k-1}(i) \neq \lambda_k(i)\}\bigr|=1,
			\]
			for every $k=1, 2, \dots, n$.
			For each $k=1, 2, \dots, n$, if both $A_{\lambda_{k-1}} \neq 0$ and $A_{\lambda_k} \neq 0$, then $\varphi_{\lambda_{k-1}}^{\lambda_k}=\psi_{\lambda_{k-1}}^{\lambda_k}$ by assumption.
			If either $A_{\lambda_{k-1}} = 0$ or $A_{\lambda_k} = 0$, the same equality holds because a morphism is unique whenever either its source or its target is a zero object. 
			Therefore,
			\[
			\varphi_{\lambda_{k-1}}^{\lambda_k}
			=
			\psi_{\lambda_{k-1}}^{\lambda_k}
			\]
			for every $k=1, 2, \dots, n$.
			This, together with the functoriality of $(A; \varphi)$ and $(B; \psi)$, shows that
			\[\varphi_{\lambda}^{\mu}
			=
			\varphi_{\lambda_{n-1}}^{\lambda_n}
			\circ \cdots \circ
			\varphi_{\lambda_0}^{\lambda_1}
			=
			\psi_{\lambda_{n-1}}^{\lambda_n}
			\circ \cdots \circ
			\psi_{\lambda_0}^{\lambda_1}
			=
			\psi_{\lambda}^{\mu}.
			\]
			This completes the proof.
		\end{proof}
		
		Fix $\alpha, \beta, \gamma \in \{\closed, \open\}^I$ satisfying $I^{\alpha, \beta} \cap I^{\beta, \gamma}=\emptyset$. We may assume that $\alpha \neq \beta$ and $\beta \neq \gamma$. Then $\alpha \neq \gamma$.  
		
		\begin{lem}\label{lem:ipri}
			For $i \in I^{\alpha, \beta}$ and $\bfj \in J^{\beta, \gamma}$, we have
			\[
			i_{L_{\bfj}^{\gamma}}^{W^{\gamma}}
			p_{W_{\bfj}^{\beta}}^{L_{\bfj}^{\gamma}}
			r_{W^{\beta}}^{W_{\bfj}^{\beta}}
			i_{K_i}^{W^{\beta}}
			=
			\begin{dcases}
				i_{K_i}^{W^{\gamma}} & \text{if $i \in \bfj$},\\
				0 & \text{if $i \notin \bfj$}.
			\end{dcases}
			\]
		\end{lem}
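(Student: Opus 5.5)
The plan is to reduce the composite to the functor induced by a single continuous map, using \cref{lem:relations_extension_restriction_functors}. First, by \cref{lem:relations_extension_restriction_functors} \labelcref{lem:relations_extension_restriction_functors_5}, we rewrite
\[
r_{W^{\beta}}^{W_{\bfj}^{\beta}} i_{K_i}^{W^{\beta}}=i_{K_i \cap W_{\bfj}}^{W_{\bfj}^{\beta}} r_{K_i}^{K_i \cap W_{\bfj}}.
\]
Since $W_{\bfj}=\tau^{-1}(\bfj \cup I_{\bfj})$ and $K_i=\tau^{-1}(\{i\})$, the intersection $K_i \cap W_{\bfj}$ is either $K_i$ (when $i \in \bfj \cup I_{\bfj}$) or empty. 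If it is empty, \cref{lem:relations_extension_restriction_functors} \labelcref{lem:relations_extension_restriction_functors_1} gives $0$.

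Next we need the combinatorial fact that, for $i \in I^{\alpha,\beta}$, we never have $i \in I_{\bfj}$ with $\bfj \in J^{\beta,\gamma}$. Indeed, since $I^{\alpha,\beta} \cap I^{\beta,\gamma}=\emptyset$, we have $i \in I \setminus I^{\beta,\gamma}$. Every element of $J_i$ is adjacent to $i$ in the subgraph induced by $(I\setminus I^{\beta,\gamma}) \amalg J$, so $i$ and all of $J_i$ lie in the same component $[j]^{\beta,\gamma}$. Consequently, either $i \in \bfj$, or $i \notin \bfj$ and then $J_i \cap \bfj=\emptyset$, so $j_{i,\bfj} \notin \bfj$ and hence $i \notin I_{\bfj}$. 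This handles the case $i \notin \bfj$, which yields $0$.

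If $i \in \bfj$, the composite becomes
\[
i_{L_{\bfj}^{\gamma}}^{W^{\gamma}} p_{W_{\bfj}^{\beta}}^{L_{\bfj}^{\gamma}} i_{K_i}^{W_{\bfj}^{\beta}}.
\]
By \cref{lem:relations_extension_restriction_functors} \labelcref{lem:relations_extension_restriction_functors_3}, this is the functor induced by the composite map $K_i \hookrightarrow W_{\bfj} \xrightarrow{\widetilde{\theta}_{\bfj}} L_{\bfj} \hookrightarrow W$. Because $\tau(x)=i \in \bfj$ for $x \in K_i$, the map $\widetilde{\theta}_{\bfj}$ is the identity on $K_i$, so the composite map is the inclusion $K_i \hookrightarrow W^{\gamma}$. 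Its induced functor is $i_{K_i}^{W^{\gamma}}$, where the topology on $K_i$ is $R_{K_i}$ in every case.

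The only mild obstacle is the tree argument placing $i$ and $J_i$ in one component, together with the remark that $I_{\bfj}$ excludes such $i$. Everything else is bookkeeping with the identities of \cref{lem:relations_extension_restriction_functors}.
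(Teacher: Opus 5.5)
Your proof is correct and follows essentially the same route as the paper: rewrite $r_{W^{\beta}}^{W_{\bfj}^{\beta}} i_{K_i}^{W^{\beta}}$ via the base-change identity, obtain $0$ when $i \notin \bfj$, and otherwise identify the composite map $K_i \hookrightarrow W_{\bfj} \to L_{\bfj} \hookrightarrow W$ with the inclusion. Your explicit check that $i \notin I_{\bfj}$ fills in a step the paper leaves implicit when it asserts that $K_i \cap W_{\bfj}$ is either $K_i$ or empty; the check works because $i \notin I^{\beta,\gamma}$ forces $i$ and all of $J_i$ into the same component of $J^{\beta,\gamma}$.
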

		
		\begin{proof}
			By \cref{lem:relations_extension_restriction_functors}
			\labelcref{lem:relations_extension_restriction_functors_1,lem:relations_extension_restriction_functors_2,lem:relations_extension_restriction_functors_5}, we have
			\[
			r_{W^{\beta}}^{W_{\bfj}^{\beta}}
			i_{K_i}^{W^{\beta}}
			=
			i_{K_i \cap W_{\bfj}^{\beta}}^{W_{\bfj}^{\beta}}
			r_{K_i}^{K_i \cap W_{\bfj}^{\beta}}
			=
			\begin{dcases}
				i_{K_i}^{W_{\bfj}^{\beta}} & \text{if $i \in \bfj$},\\
				0 & \text{if $i \notin \bfj$}.
			\end{dcases}
			\]
			Thus, if $i \notin \bfj$, then
			\[
			i_{L_{\bfj}^{\gamma}}^{W^{\gamma}}
			p_{W_{\bfj}^{\beta}}^{L_{\bfj}^{\gamma}}
			r_{W^{\beta}}^{W_{\bfj}^{\beta}}
			i_{K_i}^{W^{\beta}}
			=0. 
			\]
			If $i \in \bfj$, then the composite map
			\[
			K_i \hookrightarrow W_{\bfj}
			\xrightarrow{\widetilde{\theta}_{\bfj}}
			L_{\bfj}
			\hookrightarrow W
			\]
			is the inclusion map $K_i \hookrightarrow W$.
			Therefore, by \cref{lem:relations_extension_restriction_functors}
			\labelcref{lem:relations_extension_restriction_functors_3}, we obtain
			\[
			i_{L_{\bfj}^{\gamma}}^{W^{\gamma}}
			p_{W_{\bfj}^{\beta}}^{L_{\bfj}^{\gamma}}
			i_{K_i}^{W_{\bfj}^{\beta}}
			=
			i_{K_i}^{W^{\gamma}}.
			\]
			This proves the case $i \in \bfj$ and completes the proof.
		\end{proof}
		
		\begin{lem}\label{lem:ripr}
			For $\bfj \in J^{\alpha, \beta}$ and $i \in I^{\beta, \gamma}$, we have
			\[
			r_{W^{\beta}}^{K_i}
			i_{L_{\bfj}^{\beta}}^{W^{\beta}}
			p_{W_{\bfj}^{\alpha}}^{L_{\bfj}^{\beta}}
			r_{W^{\alpha}}^{W_{\bfj}^{\alpha}}
			=
			\begin{dcases}
				r_{W^{\alpha}}^{K_i} & \text{if $i \in \bfj$},\\
				0 & \text{if $i \notin \bfj$}.
			\end{dcases}
			\]
		\end{lem}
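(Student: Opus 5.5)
The plan mirrors the proof of \cref{lem:ipri}, with restriction and extension functors swapped, and reduces everything to \cref{lem:relations_extension_restriction_functors}. The first step is to rewrite $r_{W^{\beta}}^{K_i} i_{L_{\bfj}^{\beta}}^{W^{\beta}}$ using \cref{lem:relations_extension_restriction_functors} \labelcref{lem:relations_extension_restriction_functors_5}:
\[
r_{W^{\beta}}^{K_i} i_{L_{\bfj}^{\beta}}^{W^{\beta}}=i_{K_i \cap L_{\bfj}}^{K_i} r_{L_{\bfj}^{\beta}}^{K_i \cap L_{\bfj}}.
\]
Since $L_{\bfj}=\tau^{-1}(\bfj)$ and $K_i=\tau^{-1}(\{i\})$, the intersection is $K_i$ if $i \in \bfj$ and empty otherwise. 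In the case $i \notin \bfj$, \cref{lem:relations_extension_restriction_functors} \labelcref{lem:relations_extension_restriction_functors_1} then makes the whole composite $0$.

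Now suppose $i \in \bfj$. The left-hand side becomes $r_{L_{\bfj}^{\beta}}^{K_i} p_{W_{\bfj}^{\alpha}}^{L_{\bfj}^{\beta}} r_{W^{\alpha}}^{W_{\bfj}^{\alpha}}$. Next, apply \cref{lem:relations_extension_restriction_functors} \labelcref{lem:relations_extension_restriction_functors_5} to the continuous map $\widetilde{\theta}_{\bfj}$ and the locally closed subset $K_i \subset L_{\bfj}$. This gives
\[
r_{L_{\bfj}^{\beta}}^{K_i} p_{W_{\bfj}^{\alpha}}^{L_{\bfj}^{\beta}}=(\widetilde{\theta}_{\bfj}^{K_i})_* r_{W_{\bfj}^{\alpha}}^{\widetilde{\theta}_{\bfj}^{-1}(K_i)}.
\]
The key point is to show that $\widetilde{\theta}_{\bfj}^{-1}(K_i)=K_i$ and that the restricted map is the identity. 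Here $i \in \bfj \cap I$, so \cref{lem:preimage_tau_theta} with $F=\{i\}$ gives $(\tau\circ\widetilde{\theta}_{\bfj})^{-1}(\{i\})=\tau^{-1}(\{i\})=K_i$. Since $K_i \subset L_{\bfj}$, the preimage $\widetilde{\theta}_{\bfj}^{-1}(K_i)$ is exactly this set. Moreover, $\widetilde{\theta}_{\bfj}$ is the identity on $\tau^{-1}(\bfj) \supset K_i$. Hence $(\widetilde{\theta}_{\bfj}^{K_i})_*=\id$ by \cref{lem:relations_extension_restriction_functors} \labelcref{lem:relations_extension_restriction_functors_2}, where the topology on $K_i$ is the one given by $R_{K_i}$ from either side.

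It remains to collapse the restrictions. By \cref{lem:relations_extension_restriction_functors} \labelcref{lem:relations_extension_restriction_functors_4}, $r_{W_{\bfj}^{\alpha}}^{K_i} r_{W^{\alpha}}^{W_{\bfj}^{\alpha}}=r_{W^{\alpha}}^{K_i}$, which is the claimed formula.

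The only point needing care is locally-closedness bookkeeping. We need $K_i \in \LC(L_{\bfj}^{\beta})$ (true since $K_i \in \LC(W^{\beta})$) and $K_i \in \LC(W_{\bfj}^{\alpha})$. Both follow from \cref{prop:Lj_locally_closed} applied to the connected set $\{i\}$, since $K_i=L_{\{i\}}$.

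The hypothesis $i \in I^{\beta,\gamma}$, and hence $i \notin I^{\alpha,\beta}$, is used only to guarantee that $i$ can lie in some $\bfj \in J^{\alpha,\beta}$ at all. I expect the preimage computation via \cref{lem:preimage_tau_theta} to be the main step, though it is straightforward.
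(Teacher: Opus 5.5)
Your proof is correct and follows the paper's argument essentially step by step. You use item (5) of \cref{lem:relations_extension_restriction_functors} to rewrite $r_{W^{\beta}}^{K_i} i_{L_{\bfj}^{\beta}}^{W^{\beta}}$ and get vanishing via item (1) when $i \notin \bfj$; otherwise you apply item (5) to $\widetilde{\theta}_{\bfj}$, which is the identity on $K_i$, and then item (4). Your explicit check that $\widetilde{\theta}_{\bfj}^{-1}(K_i)=K_i$ via \cref{lem:preimage_tau_theta}, together with the locally-closedness bookkeeping, fills in details the paper leaves implicit.
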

		
		\begin{proof}
			By \cref{lem:relations_extension_restriction_functors}
			\labelcref{lem:relations_extension_restriction_functors_1,lem:relations_extension_restriction_functors_2,lem:relations_extension_restriction_functors_5}, we have
			\[
			r_{W^{\beta}}^{K_i}
			i_{L_{\bfj}^{\beta}}^{W^{\beta}}
			=
			i_{K_i \cap L_{\bfj}^{\beta}}^{K_i}
			r_{L_{\bfj}^{\beta}}^{K_i \cap L_{\bfj}^{\beta}}
			=
			\begin{dcases}
				r_{L_{\bfj}^{\beta}}^{K_i} & \text{if $i \in \bfj$},\\
				0 & \text{if $i \notin \bfj$}.
			\end{dcases}
			\]
			Thus, if $i \notin \bfj$, then
			\[
			r_{W^{\beta}}^{K_i}
			i_{L_{\bfj}^{\beta}}^{W^{\beta}}
			p_{W_{\bfj}^{\alpha}}^{L_{\bfj}^{\beta}}
			r_{W^{\alpha}}^{W_{\bfj}^{\alpha}}
			=0.
			\]
			
			Suppose that $i \in \bfj$.
			Then $\widetilde{\theta}_{\bfj} \colon W_{\bfj} \to L_{\bfj}$ restricts to the identity map on $K_i$.
			Hence, by \cref{lem:relations_extension_restriction_functors}
			\labelcref{lem:relations_extension_restriction_functors_5}, we have
			\[
			r_{L_{\bfj}^{\beta}}^{K_i}
			p_{W_{\bfj}^{\alpha}}^{L_{\bfj}^{\beta}}
			=
			r_{W_{\bfj}^{\alpha}}^{K_i}.
			\]
			By \cref{lem:relations_extension_restriction_functors}
			\labelcref{lem:relations_extension_restriction_functors_4}, we also have
			\[
			r_{W_{\bfj}^{\alpha}}^{K_i}
			r_{W^{\alpha}}^{W_{\bfj}^{\alpha}}
			=
			r_{W^{\alpha}}^{K_i}.
			\]
			Combining these identities gives
			\[
			r_{W^{\beta}}^{K_i}
			i_{L_{\bfj}^{\beta}}^{W^{\beta}}
			p_{W_{\bfj}^{\alpha}}^{L_{\bfj}^{\beta}}
			r_{W^{\alpha}}^{W_{\bfj}^{\alpha}}
			=
			r_{W^{\alpha}}^{K_i}.
			\]
			This proves the case $i \in \bfj$ and completes the proof.
		\end{proof}

		\begin{lem}\label{lem:Jalphagamma}
			For each $j \in J$, we have
			\[[j]^{\alpha, \beta} \cap [j]^{\beta, \gamma}=[j]^{\alpha, \gamma}. \]
			Moreover, we have
			\[J^{\alpha, \gamma}=\{\bfj \cap \bfj' \mid (\bfj, \bfj') \in J^{\alpha, \beta} \times J^{\beta, \gamma}, \ \bfj \cap \bfj' \neq \emptyset\}. \]
		\end{lem}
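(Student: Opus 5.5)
We prove the first identity by comparing vertex sets and then edge sets, and deduce the second from it. For the vertex sets, a vertex lies in $[j]^{\alpha,\gamma}$ exactly when it is joined to $j$ by a path in the subgraph of $G$ induced by $(I \setminus I^{\alpha,\gamma}) \amalg J$. Since $I^{\alpha,\gamma}=I^{\alpha,\beta}\amalg I^{\beta,\gamma}$, we have
\[
(I \setminus I^{\alpha,\gamma}) \amalg J=\bigl((I \setminus I^{\alpha,\beta}) \amalg J\bigr) \cap \bigl((I \setminus I^{\beta,\gamma}) \amalg J\bigr).
\]
Hence any such path lies in both larger induced subgraphs, and this gives the inclusion $[j]^{\alpha,\gamma} \subset [j]^{\alpha,\beta} \cap [j]^{\beta,\gamma}$.

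For the reverse inclusion we use that $G$ is a tree. Take $v \in [j]^{\alpha,\beta} \cap [j]^{\beta,\gamma}$. Since $[j]^{\alpha,\beta}$ is connected and contains both $v$ and $j$, it contains the unique simple path between them. By the same reasoning, $[j]^{\beta,\gamma}$ also contains that path. So every vertex of the path lies in $[j]^{\alpha,\beta} \cap [j]^{\beta,\gamma}$, which is contained in $(I \setminus I^{\alpha,\gamma}) \amalg J$. Therefore the path lies in the subgraph induced by $(I \setminus I^{\alpha,\gamma}) \amalg J$, and $v \in [j]^{\alpha,\gamma}$. This is the fact recalled earlier that intersections of connected subsets of a tree are connected. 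The only care needed here is that the intersection is nonempty, which holds because it contains $j$.

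For the second statement, we first note that every component in $J^{\alpha,\gamma}$ has the form $[j]^{\alpha,\gamma}$ for some $j \in J$. To see this, observe that each vertex $i \in I \setminus I^{\alpha,\gamma}$ is adjacent to some $j \in J_i$. Indeed, $J_i \neq \emptyset$: $G$ is a connected tree and $J \neq \emptyset$, so $i$ has at least one neighbour, and every neighbour of $i$ lies in $J_i$. Given this, the inclusion $\subset$ follows from the first part, since $[j]^{\alpha,\gamma}=[j]^{\alpha,\beta} \cap [j]^{\beta,\gamma}$ with $[j]^{\alpha,\beta} \in J^{\alpha,\beta}$ and $[j]^{\beta,\gamma} \in J^{\beta,\gamma}$.

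For the inclusion $\supset$, let $\bfj \in J^{\alpha,\beta}$ and $\bfj' \in J^{\beta,\gamma}$ with $\bfj \cap \bfj' \neq \emptyset$, and pick $v \in \bfj \cap \bfj'$. If $v \in J$, then $\bfj=[v]^{\alpha,\beta}$ and $\bfj'=[v]^{\beta,\gamma}$, and the first part gives $\bfj \cap \bfj' \in J^{\alpha,\gamma}$. If $v \in I$, then $v \notin I^{\alpha,\gamma}$, and we choose a neighbour $j \in J_v$. This $j$ lies in both $\bfj$ and $\bfj'$ because it is adjacent to $v$ within each induced subgraph. We are then back in the previous case.

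The main obstacle is the reverse inclusion in the first statement. It is where the tree hypothesis on $G$ is genuinely needed: in a general graph, the intersection of two connected components could be disconnected, and the argument would fail.
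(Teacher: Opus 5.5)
Your proof is correct and follows the same route as the paper. The paper's one-line proof cites exactly the two ingredients you use: the identity $I \setminus I^{\alpha,\gamma}=(I\setminus I^{\alpha,\beta})\cap(I\setminus I^{\beta,\gamma})$, and the fact that in the tree $G$ two connected subsets sharing a vertex have connected intersection. You spell out the details the paper leaves implicit, including the check (via $J_i \neq \emptyset$) that every component in $J^{\alpha,\gamma}$, and every nonempty intersection $\bfj\cap\bfj'$, contains an element of $J$.
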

		
		\begin{proof}
			This follows because $G$ is a tree and $I \setminus I^{\alpha, \gamma}=(I \setminus I^{\alpha, \beta}) \cap (I \setminus I^{\beta, \gamma})$. 
		\end{proof}

		For $\bfj \in J^{\alpha, \beta}$, we have $J_i \subset \bfj$ for all $i \in \bfj \cap I$, and hence \cref{lem:W_intersection} can be applied.  
		The following lemma is straightforward to verify. 
		
		\begin{lem}\label{lem:composition_thetaj}
			Let $\bfj \in J^{\alpha, \beta}$ and $\bfj' \in J^{\beta, \gamma}$ with $\bfj \cap \bfj' \neq \emptyset$. By \textup{\cref{lem:W_intersection} \labelcref{lem:W_intersection_2}}, the map $\widetilde{\theta}_{\bfj} \colon W_{\bfj} \to L_{\bfj}$ restricts to a map $W_{\bfj \cap \bfj'} \to L_{\bfj} \cap W_{\bfj'}$, which we also denote by $\widetilde{\theta}_{\bfj}$. The two composites
			\[W_{\bfj \cap \bfj'} \xrightarrow{\widetilde{\theta}_{\bfj}} L_{\bfj} \cap W_{\bfj'}\hookrightarrow W_{\bfj'} \xrightarrow{\widetilde{\theta}_{\bfj'}} L_{\bfj'} \hookrightarrow W\]
			and
			\[W_{\bfj \cap \bfj'} \xrightarrow{\widetilde{\theta}_{\bfj \cap \bfj'}} L_{\bfj \cap \bfj'} \hookrightarrow W \]
			are equal.
		\end{lem}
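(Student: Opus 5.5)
We have to show that two composites of maps $W_{\bfj\cap\bfj'}\to W$ coincide. Since both are maps of sets (continuity is not at issue), the plan is to compare them pointwise. We split according to $\tau(x)$ for $x\in W_{\bfj\cap\bfj'}$. Recall that $W_{\bfj\cap\bfj'}=\tau^{-1}(\bfj\cap\bfj')\amalg\tau^{-1}(I_{\bfj\cap\bfj'})$. We also use the description of $I_{\bfj\cap\bfj'}$ established in the proof of \cref{lem:W_intersection} \labelcref{lem:W_intersection_2}:
\[
I_{\bfj\cap\bfj'}=\{i\in I_{\bfj}\mid j_{i,\bfj}\in\bfj'\}\cup(\bfj\cap I_{\bfj'}).
\]
This description is valid because $\bfj\in J^{\alpha,\beta}$ satisfies $J_i\subset\bfj$ for every $i\in\bfj\cap I$.

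\textbf{Case $\tau(x)\in\bfj\cap\bfj'$.} Here $\widetilde{\theta}_{\bfj\cap\bfj'}(x)=x$. Since $\tau(x)\in\bfj$, we get $\widetilde{\theta}_{\bfj}(x)=x$, and since $\tau(x)\in\bfj'$, also $\widetilde{\theta}_{\bfj'}(x)=x$. Both composites therefore send $x$ to $x$.

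\textbf{Case $i:=\tau(x)\in I_{\bfj}$ with $j_{i,\bfj}\in\bfj'$.} By \cref{lem:j_i_bfj}, we have $j_{i,\bfj\cap\bfj'}=j_{i,\bfj}=:j$. The second composite gives $\theta_{i,j}(x)\in L_j$. For the first composite, $\widetilde{\theta}_{\bfj}(x)=\theta_{i,j}(x)$ and $\tau(\theta_{i,j}(x))=j\in\bfj\cap\bfj'$, so $\widetilde{\theta}_{\bfj'}$ fixes this point. Hence the two composites agree.

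\textbf{Case $i:=\tau(x)\in\bfj\cap I_{\bfj'}$.} Now $\widetilde{\theta}_{\bfj}(x)=x$ since $i\in\bfj$, and then $\widetilde{\theta}_{\bfj'}(x)=\theta_{i,j_{i,\bfj'}}(x)$. On the other side we get $\theta_{i,j_{i,\bfj\cap\bfj'}}(x)$. It remains to see that $j_{i,\bfj\cap\bfj'}=j_{i,\bfj'}$. The element $j_{i,\bfj'}\in J_i$ lies in $\bfj$ by the assumption $J_i\subset\bfj$, and it lies in $\bfj'$ because $i\in I_{\bfj'}$. So $j_{i,\bfj'}$ is a vertex of $\bfj\cap\bfj'$ adjacent to $i$. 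By uniqueness in the definition of $j_{i,\cdot}$ (equivalently, by \cref{lem:j_i_bfj}), it equals $j_{i,\bfj\cap\bfj'}$.

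The only subtle point is the bookkeeping of the indices $j_{i,\cdot}$ in the last two cases. The tree property of $G$, via \cref{lem:j_i_bfj}, handles this bookkeeping.
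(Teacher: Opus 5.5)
Your proof is correct and is exactly the pointwise verification the paper has in mind when it calls this lemma ``straightforward to verify'' without writing out a proof. You split $W_{\bfj\cap\bfj'}$ using the decomposition of $I_{\bfj\cap\bfj'}$ from the proof of \cref{lem:W_intersection}~\labelcref{lem:W_intersection_2}, and you match the indices $j_{i,\cdot}$ via \cref{lem:j_i_bfj} and the uniqueness in the definition of $j_{i,\bfj}$.
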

		
		\begin{lem}\label{lem:ipripr}
			For $\bfj \in J^{\alpha, \beta}$ and $\bfj' \in J^{\beta, \gamma}$, we have
			\[
			i_{L_{\bfj'}^{\gamma}}^{W^{\gamma}}
			p_{W_{\bfj'}^{\beta}}^{L_{\bfj'}^{\gamma}}
			r_{W^{\beta}}^{W_{\bfj'}^{\beta}}
			i_{L_{\bfj}^{\beta}}^{W^{\beta}}
			p_{W_{\bfj}^{\alpha}}^{L_{\bfj}^{\beta}}
			r_{W^{\alpha}}^{W_{\bfj}^{\alpha}}
			=
			\begin{dcases}
				i_{L_{\bfj \cap \bfj'}^{\gamma}}^{W^{\gamma}}
				p_{W_{\bfj \cap \bfj'}^{\alpha}}^{L_{\bfj \cap \bfj'}^{\gamma}}
				r_{W^{\alpha}}^{W_{\bfj \cap \bfj'}^{\alpha}}
				& \text{if $\bfj \cap \bfj' \neq \emptyset$},\\
				0
				& \text{if $\bfj \cap \bfj'=\emptyset$}.
			\end{dcases}
			\]
		\end{lem}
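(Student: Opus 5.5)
We prove \cref{lem:ipripr} by collapsing the composite of functors into a single functor of the form $\sigma_* r_{W^{\alpha}}^{Z}$, where $\sigma$ is a continuous map and $Z$ is a locally closed subset. We then identify $Z$ and $\sigma$ using \cref{lem:W_intersection,lem:composition_thetaj}. Every step is an application of the rules in \cref{lem:relations_extension_restriction_functors}.

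\emph{Middle factor.} First we simplify $r_{W^{\beta}}^{W_{\bfj'}^{\beta}} i_{L_{\bfj}^{\beta}}^{W^{\beta}}$. By \cref{lem:relations_extension_restriction_functors} \labelcref{lem:relations_extension_restriction_functors_5}, this equals
\[
i_{L_{\bfj} \cap W_{\bfj'}}^{W_{\bfj'}^{\beta}}\, r_{L_{\bfj}^{\beta}}^{L_{\bfj} \cap W_{\bfj'}}.
\]
Then the remaining middle piece is
\[
r_{L_{\bfj}^{\beta}}^{L_{\bfj}\cap W_{\bfj'}}\, p_{W_{\bfj}^{\alpha}}^{L_{\bfj}^{\beta}}.
\]
We apply \labelcref{lem:relations_extension_restriction_functors_5} again, now to the continuous map $\widetilde{\theta}_{\bfj}$. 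This rewrites the piece as $(\widetilde{\theta}_{\bfj})_* \, r_{W_{\bfj}^{\alpha}}^{\widetilde{\theta}_{\bfj}^{-1}(L_{\bfj}\cap W_{\bfj'})}$, where $\widetilde{\theta}_{\bfj}$ now denotes its restriction.

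\emph{Case $\bfj \cap \bfj' = \emptyset$.} Since $\bfj \in J^{\alpha,\beta}$, we have $J_i \subset \bfj$ for every $i \in \bfj \cap I$, so \cref{lem:W_intersection} applies. Part \labelcref{lem:W_intersection_1} gives $L_{\bfj} \cap W_{\bfj'} = \emptyset$. Hence the composite factors through $r^{\emptyset}$, and it is $0$ by \labelcref{lem:relations_extension_restriction_functors_1}.

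\emph{Case $\bfj \cap \bfj' \neq \emptyset$.} By \cref{lem:W_intersection} \labelcref{lem:W_intersection_2}, the preimage is exactly $W_{\bfj\cap\bfj'}$. We combine the two restrictions using \labelcref{lem:relations_extension_restriction_functors_4}:
\[
r_{W_{\bfj}^{\alpha}}^{W_{\bfj\cap\bfj'}}\, r_{W^{\alpha}}^{W_{\bfj}^{\alpha}} = r_{W^{\alpha}}^{W_{\bfj\cap\bfj'}^{\alpha}}.
\]
This uses that $W_{\bfj\cap\bfj'} \subset W_{\bfj}$ is locally closed; it is the preimage of a locally closed set under a continuous map, which also follows from \cref{prop:Lj_locally_closed}. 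All the pushforwards then compose by \labelcref{lem:relations_extension_restriction_functors_3} into the pushforward along
\[
W_{\bfj\cap\bfj'} \xrightarrow{\widetilde{\theta}_{\bfj}} L_{\bfj}\cap W_{\bfj'} \hookrightarrow W_{\bfj'} \xrightarrow{\widetilde{\theta}_{\bfj'}} L_{\bfj'} \hookrightarrow W,
\]
viewed as a map into $W^{\gamma}$. By \cref{lem:composition_thetaj}, this composite equals $W_{\bfj\cap\bfj'} \xrightarrow{\widetilde{\theta}_{\bfj\cap\bfj'}} L_{\bfj\cap\bfj'} \hookrightarrow W$. Applying \labelcref{lem:relations_extension_restriction_functors_3} once more, in reverse, gives $i_{L_{\bfj \cap \bfj'}^{\gamma}}^{W^{\gamma}} p_{W_{\bfj\cap\bfj'}^{\alpha}}^{L_{\bfj\cap\bfj'}^{\gamma}}$.

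\emph{Main obstacle: topologies.} The identities above are equalities of set maps, so we must check that the topologies on the intermediate spaces match. Two points need care.

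First, $L_{\bfj\cap\bfj'}^{\gamma}$ is the target of $p_{W_{\bfj\cap\bfj'}^{\alpha}}^{L_{\bfj\cap\bfj'}^{\gamma}}$. It makes sense because $\bfj\cap\bfj' \in J^{\alpha,\gamma}$ by \cref{lem:Jalphagamma}, and then \cref{prop:Ljalpha_Ljbeta_equal} gives $L^{\alpha}_{\bfj\cap\bfj'} = L^{\gamma}_{\bfj\cap\bfj'}$.

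Second, the subspaces $L_{\bfj} \cap W_{\bfj'}$ must carry consistent topologies whether viewed inside $W^{\alpha}$ or $W^{\beta}$. This is guaranteed because $L^{\alpha}_{\bfj} = L^{\beta}_{\bfj}$ and $L^{\beta}_{\bfj'} = L^{\gamma}_{\bfj'}$, again by \cref{prop:Ljalpha_Ljbeta_equal}.

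Once these identifications are in place, the functors $\theta_*$ depend only on the underlying maps together with the spaces, and the equality follows.
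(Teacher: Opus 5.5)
Your proposal is correct and follows essentially the same route as the paper's proof. You move the restriction past the extension and then past $p_{W_{\bfj}^{\alpha}}^{L_{\bfj}^{\beta}}$ using part (5) of \cref{lem:relations_extension_restriction_functors}, use \cref{lem:W_intersection} to dispose of the disjoint case and identify the preimage as $W_{\bfj\cap\bfj'}$, and then merge the restrictions and pushforwards using parts (4) and (3) together with \cref{lem:composition_thetaj}, with your remarks on topologies (via \cref{lem:Jalphagamma,prop:Ljalpha_Ljbeta_equal}) only making explicit what the paper leaves implicit.
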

		
		\begin{proof}
			By \cref{lem:relations_extension_restriction_functors}
			\labelcref{lem:relations_extension_restriction_functors_5}, we have
			\[
			r_{W^{\beta}}^{W_{\bfj'}^{\beta}}
			i_{L_{\bfj}^{\beta}}^{W^{\beta}}
			=
			i_{L_{\bfj}^{\beta} \cap W_{\bfj'}^{\beta}}^{W_{\bfj'}^{\beta}}
			r_{L_{\bfj}^{\beta}}^{L_{\bfj}^{\beta} \cap W_{\bfj'}^{\beta}}.
			\]
			If $\bfj \cap \bfj'=\emptyset$, then $L_{\bfj} \cap W_{\bfj'}=\emptyset$ by
			\cref{lem:W_intersection} \labelcref{lem:W_intersection_1}.
			Hence,
			\[
			i_{L_{\bfj}^{\beta} \cap W_{\bfj'}^{\beta}}^{W_{\bfj'}^{\beta}}
			r_{L_{\bfj}^{\beta}}^{L_{\bfj}^{\beta} \cap W_{\bfj'}^{\beta}}
			=0.
			\]
			This proves the case $\bfj \cap \bfj'=\emptyset$.
			
			It remains to consider the case $\bfj \cap \bfj' \neq \emptyset$.
			Let
			\[
			p_{W_{\bfj \cap \bfj'}^{\alpha}}^{L_{\bfj}^{\beta} \cap W_{\bfj'}^{\beta}}
			\colon
			\Cstar(W_{\bfj \cap \bfj'}^{\alpha})
			\to
			\Cstar(L_{\bfj}^{\beta} \cap W_{\bfj'}^{\beta})
			\]
			denote the functor induced by the continuous map
			$\widetilde{\theta}_{\bfj} \colon
			W_{\bfj \cap \bfj'}^{\alpha} \to
			L_{\bfj}^{\beta} \cap W_{\bfj'}^{\beta}$.
			By \cref{lem:relations_extension_restriction_functors}
			\labelcref{lem:relations_extension_restriction_functors_5}
			and \cref{lem:W_intersection} \labelcref{lem:W_intersection_2}, we have
			\[
			r_{L_{\bfj}^{\beta}}^{L_{\bfj}^{\beta} \cap W_{\bfj'}^{\beta}}
			p_{W_{\bfj}^{\alpha}}^{L_{\bfj}^{\beta}}
			=
			p_{W_{\bfj \cap \bfj'}^{\alpha}}^{L_{\bfj}^{\beta} \cap W_{\bfj'}^{\beta}}
			r_{W_{\bfj}^{\alpha}}^{W_{\bfj \cap \bfj'}^{\alpha}}.
			\]
			By \cref{lem:relations_extension_restriction_functors}
			\labelcref{lem:relations_extension_restriction_functors_3}
			and \cref{lem:composition_thetaj}, we obtain
			\[
			i_{L_{\bfj'}^{\gamma}}^{W^{\gamma}}
			p_{W_{\bfj'}^{\beta}}^{L_{\bfj'}^{\gamma}}
			i_{L_{\bfj}^{\beta} \cap W_{\bfj'}^{\beta}}^{W_{\bfj'}^{\beta}}
			p_{W_{\bfj \cap \bfj'}^{\alpha}}^{L_{\bfj}^{\beta} \cap W_{\bfj'}^{\beta}}
			=
			i_{L_{\bfj \cap \bfj'}^{\gamma}}^{W^{\gamma}}
			p_{W_{\bfj \cap \bfj'}^{\alpha}}^{L_{\bfj \cap \bfj'}^{\gamma}}.
			\]
			Finally, by \cref{lem:relations_extension_restriction_functors}
			\labelcref{lem:relations_extension_restriction_functors_4}, we have
			\[
			r_{W_{\bfj}^{\alpha}}^{W_{\bfj \cap \bfj'}^{\alpha}}
			r_{W^{\alpha}}^{W_{\bfj}^{\alpha}}
			=
			r_{W^{\alpha}}^{W_{\bfj \cap \bfj'}^{\alpha}}.
			\]
			Combining the preceding identities gives
			\[
			i_{L_{\bfj'}^{\gamma}}^{W^{\gamma}}
			p_{W_{\bfj'}^{\beta}}^{L_{\bfj'}^{\gamma}}
			r_{W^{\beta}}^{W_{\bfj'}^{\beta}}
			i_{L_{\bfj}^{\beta}}^{W^{\beta}}
			p_{W_{\bfj}^{\alpha}}^{L_{\bfj}^{\beta}}
			r_{W^{\alpha}}^{W_{\bfj}^{\alpha}}
			=
			i_{L_{\bfj \cap \bfj'}^{\gamma}}^{W^{\gamma}}
			p_{W_{\bfj \cap \bfj'}^{\alpha}}^{L_{\bfj \cap \bfj'}^{\gamma}}
			r_{W^{\alpha}}^{W_{\bfj \cap \bfj'}^{\alpha}}.
			\]
			This proves the case $\bfj \cap \bfj' \neq \emptyset$ and completes the proof.
		\end{proof}

		\begin{lem}\label{lem:tensor_Di_Dj}
			\
			\begin{enumerate}[label=\textnormal{(\arabic*)}]
				\item \label{lem:tensor_Di_Dj_1}
				For $i \in I^{\alpha, \beta}$ and $\bfj \in J^{\beta, \gamma}$ with $i \in \bfj$, we have
				\[
				D_i^{\alpha, \beta} \otimes D_{\bfj}^{\beta, \gamma}
				=
				D_i^{\alpha, \gamma}.
				\]
				
				\item \label{lem:tensor_Di_Dj_2}
				For $\bfj \in J^{\alpha, \beta}$ and $i \in I^{\beta, \gamma}$ with $i \in \bfj$, we have
				\[
				D_{\bfj}^{\alpha, \beta} \otimes D_i^{\beta, \gamma}
				=
				D_i^{\alpha, \gamma}.
				\]
				
				\item \label{lem:tensor_Di_Dj_3}
				For $\bfj \in J^{\alpha, \beta}$ and $\bfj' \in J^{\beta, \gamma}$ with $\bfj \cap \bfj' \neq \emptyset$, we have
				\[
				D_{\bfj}^{\alpha, \beta} \otimes D_{\bfj'}^{\beta, \gamma}
				=
				D_{\bfj \cap \bfj'}^{\alpha, \gamma}.
				\]
			\end{enumerate}
		\end{lem}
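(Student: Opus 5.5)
The plan is to compare the index sets of the tensor factors on both sides and check that they agree factor by factor. The key tool is \cref{lem:j_i_bfj}, together with the disjoint decompositions
\[I^{\alpha,\gamma}_{\closed,\open}=I^{\alpha,\beta}_{\closed,\open}\amalg I^{\beta,\gamma}_{\closed,\open},\qquad I^{\alpha,\gamma}_{\open,\closed}=I^{\alpha,\beta}_{\open,\closed}\amalg I^{\beta,\gamma}_{\open,\closed}.\]
These hold because $I^{\alpha,\beta}\cap I^{\beta,\gamma}=\emptyset$. Concretely, on $I^{\alpha,\beta}$ we have $\beta=\gamma$, and on $I^{\beta,\gamma}$ we have $\alpha=\beta$. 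Since all tensor products are maximal and taken over disjoint index sets, each identity reduces to checking that the tensor factor $D_{k,j}$ (or $D_k$) attached to each $k\in I^{\alpha,\gamma}_{\closed,\open}$ is the same on both sides. Here we use the canonical identification of a tensor product over $A\amalg B$ with the tensor product of the two partial tensor products, which the paper writes as an equality.

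For \labelcref{lem:tensor_Di_Dj_1}, let $i\in I^{\alpha,\beta}$ and $\bfj\in J^{\beta,\gamma}$ with $i\in\bfj$. The factors of $D_i^{\alpha,\beta}$ are indexed by $I^{\alpha,\beta}_{\closed,\open}$; they are $D_i$ if $i$ is itself closed-to-open, and $D_{k,j_{k,\{i\}}}$ otherwise. The factors of $D_{\bfj}^{\beta,\gamma}$ are $D_{k,j_{k,\bfj}}$ for $k\in I^{\beta,\gamma}_{\closed,\open}$. For such $k$ we have $k\notin\bfj$, since $\bfj$ avoids $I^{\beta,\gamma}$. Also $i\in\bfj$, so a path from $i$ to itself trivially lies in $(I\setminus\{k\})\cup J$. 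Hence \cref{lem:j_i_bfj} gives $j_{k,\bfj}=j_{k,\{i\}}$. Collecting the factors gives exactly the definition of $D_i^{\alpha,\gamma}$ in \cref{def:tensor_Di_Dij}, with case (1) or (2) according to whether $i\in I^{\alpha,\gamma}_{\open,\closed}$ or $I^{\alpha,\gamma}_{\closed,\open}$; which of these holds is determined by $\alpha(i)$ and $\beta(i)=\gamma(i)$. Part \labelcref{lem:tensor_Di_Dj_2} is symmetric. For $k\in I^{\alpha,\beta}_{\closed,\open}$ we have $k\notin\bfj$ and $i\in\bfj$, so $j_{k,\bfj}=j_{k,\{i\}}$ by \cref{lem:j_i_bfj}, and the factors of $D_i^{\beta,\gamma}$ are already of the required form.

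For \labelcref{lem:tensor_Di_Dj_3}, take $v\in\bfj\cap\bfj'$. By \cref{lem:Jalphagamma}, the set $\bfj\cap\bfj'$ is connected and lies in $J^{\alpha,\gamma}$. For $k\in I^{\alpha,\beta}_{\closed,\open}$ we have $k\notin\bfj$, and $k\notin\bfj\cap\bfj'$. Since $v$ belongs to both sets, \cref{lem:j_i_bfj} gives $j_{k,\bfj}=j_{k,\bfj\cap\bfj'}$. The same argument for $k\in I^{\beta,\gamma}_{\closed,\open}$ gives $j_{k,\bfj'}=j_{k,\bfj\cap\bfj'}$. Combining the two families of factors yields the definition of $D_{\bfj\cap\bfj'}^{\alpha,\gamma}$.

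The only step needing care is the bookkeeping of the disjoint decompositions and the hypotheses of \cref{lem:j_i_bfj}, namely that $k$ lies outside both connected sets. This follows in each case because $\bfj\in J^{\alpha,\beta}$ is a component avoiding $I^{\alpha,\beta}$, and likewise for $\bfj'$ and $I^{\beta,\gamma}$.
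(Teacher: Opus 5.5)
Your proof is correct and follows the paper's argument essentially verbatim. Like the paper, you split $I^{\alpha,\gamma}_{\closed,\open}$ as the disjoint union of $I^{\alpha,\beta}_{\closed,\open}$ and $I^{\beta,\gamma}_{\closed,\open}$, use \cref{lem:j_i_bfj} to identify $j_{k,\bfj}$ with $j_{k,\{i\}}$ (resp.\ with $j_{k,\bfj\cap\bfj'}$) factor by factor, and obtain (2) from (1) by symmetry. Your additional checks, that each such $k$ lies outside both connected sets and that $i$ falls into the matching case of \cref{def:tensor_Di_Dij} for the pair $(\alpha,\gamma)$, are exactly the details the paper leaves implicit.
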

		
		\begin{proof}
			Since $I^{\alpha, \beta} \cap I^{\beta, \gamma}=\emptyset$, we have
			\[
			I^{\alpha, \beta}_{\closed, \open}
			\cap
			I^{\beta, \gamma}_{\closed, \open}
			=
			\emptyset,
			\qquad
			I^{\alpha, \beta}_{\closed, \open}
			\cup
			I^{\beta, \gamma}_{\closed, \open}
			=
			I^{\alpha, \gamma}_{\closed, \open}.
			\]
			We use these identities for the index sets appearing in the tensor products below.
			
			\begin{enumerate}[label=\textnormal{(\arabic*)}]
				\item
				By \cref{lem:j_i_bfj}, we have $j_{k,\bfj}=j_{k,\{i\}}$ for all $k \in I^{\beta, \gamma}_{\closed, \open}$ because $i \in \bfj$.
				If $i \in I^{\alpha, \beta}_{\open, \closed}$, then
				\begin{align*}
					D_i^{\alpha, \beta} \otimes D_{\bfj}^{\beta, \gamma}
					&=
					\bigotimes_{k \in I^{\alpha, \beta}_{\closed, \open}}
					D_{k,j_{k,\{i\}}}
					\otimes
					\bigotimes_{k \in I^{\beta, \gamma}_{\closed, \open}}
					D_{k,j_{k,\{i\}}} \\
					&=
					\bigotimes_{k \in I^{\alpha, \gamma}_{\closed, \open}}
					D_{k,j_{k,\{i\}}}
					=
					D_i^{\alpha, \gamma}.
				\end{align*}
				If $i \in I^{\alpha, \beta}_{\closed, \open}$, then
				\begin{align*}
					D_i^{\alpha, \beta} \otimes D_{\bfj}^{\beta, \gamma}
					&=
					D_i
					\otimes
					\bigotimes_{k \in I^{\alpha, \beta}_{\closed, \open} \setminus \{i\}}
					D_{k,j_{k,\{i\}}}
					\otimes
					\bigotimes_{k \in I^{\beta, \gamma}_{\closed, \open}}
					D_{k,j_{k,\{i\}}} \\
					&=
					D_i
					\otimes
					\bigotimes_{k \in I^{\alpha, \gamma}_{\closed, \open} \setminus \{i\}}
					D_{k,j_{k,\{i\}}}
					=
					D_i^{\alpha, \gamma}.
				\end{align*}
				
				\item
				By symmetry, \labelcref{lem:tensor_Di_Dj_2} follows from
				\labelcref{lem:tensor_Di_Dj_1}.
				
				\item
				By \cref{lem:j_i_bfj}, we have $j_{k,\bfj}=j_{k,\bfj \cap \bfj'}$ for all $k \in I^{\alpha, \beta}_{\closed, \open}$, and $j_{k,\bfj'}=j_{k,\bfj \cap \bfj'}$ for all $k \in I^{\beta, \gamma}_{\closed, \open}$.
				Thus,
				\begin{align*}
					D_{\bfj}^{\alpha, \beta} \otimes D_{\bfj'}^{\beta, \gamma}
					&=
					\bigotimes_{k \in I^{\alpha, \beta}_{\closed, \open}}
					D_{k,j_{k,\bfj \cap \bfj'}}
					\otimes
					\bigotimes_{k \in I^{\beta, \gamma}_{\closed, \open}}
					D_{k,j_{k,\bfj \cap \bfj'}} \\
					&=
					\bigotimes_{k \in I^{\alpha, \gamma}_{\closed, \open}}
					D_{k,j_{k,\bfj \cap \bfj'}}
					=
					D_{\bfj \cap \bfj'}^{\alpha, \gamma}.
				\end{align*}
			\end{enumerate}
			This completes the proof.
		\end{proof}

		\begin{proof}[Proof of \textup{\cref{prop:Falphagamma_varphialphagamma}}]
			We first prove that
			\[(F_{\beta}^{\gamma})_{\lambda'} (F_{\alpha}^{\beta})_{\lambda}=(F_{\alpha}^{\gamma})_{(\lambda, \lambda')}\]
			for $\lambda \in \{0, 1\}^{I^{\alpha, \beta}}$ and $\lambda' \in \{0, 1\}^{I^{\beta, \gamma}}$. 
			\begin{description}[font=\normalfont\scshape]
				\item[Case 1] $\lambda=\chi_i^{\alpha, \beta}$ for some $i \in I^{\alpha, \beta}$, and $\lambda'=\chi_{i'}^{\beta, \gamma}$ for some $i' \in I^{\beta, \gamma}$. Since $K_i \cap K_{i'}=\emptyset$, \cref{lem:relations_extension_restriction_functors} \labelcref{lem:relations_extension_restriction_functors_1,lem:relations_extension_restriction_functors_5} show that
				\[r_{W^{\beta}}^{K_{i'}} i_{K_i}^{W^{\beta}}=0. \] 
				It follows that
				\begin{align*}
					(F_{\beta}^{\gamma})_{\lambda'} (F_{\alpha}^{\beta})_{\lambda}
					&=i_{K_{i'}}^{W^{\gamma}} r_{W^{\beta}}^{K_{i'}} D_{i'}^{\beta, \gamma} i_{K_i}^{W^{\beta}} r_{W^{\alpha}}^{K_i} D_i^{\alpha, \beta} \\
					&=i_{K_{i'}}^{W^{\gamma}} r_{W^{\beta}}^{K_{i'}} i_{K_i}^{W^{\beta}} r_{W^{\alpha}}^{K_i} D_{i'}^{\beta, \gamma} D_i^{\alpha, \beta} \\
					&=0=(F_{\alpha}^{\gamma})_{(\lambda, \lambda')}. 
				\end{align*}
				
				\item[Case 2] $\lambda=\chi_i^{\alpha, \beta}$ for some $i \in I^{\alpha, \beta}$, and $\lambda'=\chi^{\beta, \gamma}$. Then $(\lambda, \lambda')=\chi_i^{\alpha, \gamma}$ and 
				\begin{align*}
					(F_{\beta}^{\gamma})_{\lambda'} (F_{\alpha}^{\beta})_{\lambda}
					&=\bigoplus_{\bfj' \in J^{\beta, \gamma}} i_{L_{\bfj'}^{\gamma}}^{W^{\gamma}} p_{W_{\bfj'}^{\beta}}^{L_{\bfj'}^{\gamma}} r_{W^{\beta}}^{W_{\bfj'}^{\beta}} D_{\bfj'}^{\beta, \gamma} i_{K_i}^{W^{\beta}} r_{W^{\alpha}}^{K_i} D_i^{\alpha, \beta} \\
					&=\bigoplus_{\bfj' \in J^{\beta, \gamma}} i_{L_{\bfj'}^{\gamma}}^{W^{\gamma}} p_{W_{\bfj'}^{\beta}}^{L_{\bfj'}^{\gamma}} r_{W^{\beta}}^{W_{\bfj'}^{\beta}} i_{K_i}^{W^{\beta}} r_{W^{\alpha}}^{K_i} D_{\bfj'}^{\beta, \gamma} D_i^{\alpha, \beta} \\
					&=i_{K_i}^{W^{\gamma}} r_{W^{\alpha}}^{K_i} D_i^{\alpha, \gamma}=(F_{\alpha}^{\gamma})_{(\lambda, \lambda')}. 
				\end{align*}
				The third equality follows from \cref{lem:ipri,lem:tensor_Di_Dj} \labelcref{lem:tensor_Di_Dj_1} and the fact that there exists a unique element $\bfj' \in J^{\beta, \gamma}$ such that $i \in \bfj'$. 
				
				\item[Case 3] $\lambda=\chi^{\alpha, \beta}$ and $\lambda'=\chi_i^{\beta, \gamma}$ for some $i \in I^{\beta, \gamma}$. 
				Then $(\lambda, \lambda')=\chi_i^{\alpha, \gamma}$ and 
				\begin{align*}
					(F_{\beta}^{\gamma})_{\lambda'} (F_{\alpha}^{\beta})_{\lambda}
					&=i_{K_i}^{W^{\gamma}} r_{W^{\beta}}^{K_i} D_i^{\beta, \gamma} \bigoplus_{\bfj \in J^{\alpha, \beta}} i_{L_{\bfj}^{\beta}}^{W^{\beta}} p_{W_{\bfj}^{\alpha}}^{L_{\bfj}^{\beta}} r_{W^{\alpha}}^{W_{\bfj}^{\alpha}} D_{\bfj}^{\alpha, \beta} \\
					&=\bigoplus_{\bfj \in J^{\alpha, \beta}} i_{K_i}^{W^{\gamma}} r_{W^{\beta}}^{K_i} i_{L_{\bfj}^{\beta}}^{W^{\beta}} p_{W_{\bfj}^{\alpha}}^{L_{\bfj}^{\beta}} r_{W^{\alpha}}^{W_{\bfj}^{\alpha}} D_i^{\beta, \gamma} D_{\bfj}^{\alpha, \beta} \\
					&=i_{K_i}^{W^{\gamma}} r_{W^{\alpha}}^{K_i} D_i^{\alpha, \gamma}=(F_{\alpha}^{\gamma})_{(\lambda, \lambda')}. 
				\end{align*}
				The third equality follows from \cref{lem:ripr,lem:tensor_Di_Dj} \labelcref{lem:tensor_Di_Dj_2} and the fact that there exists a unique element $\bfj \in J^{\alpha, \beta}$ such that $i \in \bfj$. 
				
				\item[Case 4] $\lambda=\chi^{\alpha, \beta}$ and $\lambda'=\chi^{\beta, \gamma}$. Then $(\lambda, \lambda')=\chi^{\alpha, \gamma}$ and 
				\begin{align*}
					(F_{\beta}^{\gamma})_{\lambda'} (F_{\alpha}^{\beta})_{\lambda}
					&=\bigoplus_{\bfj' \in J^{\beta, \gamma}} i_{L_{\bfj'}^{\gamma}}^{W^{\gamma}} p_{W_{\bfj'}^{\beta}}^{L_{\bfj'}^{\gamma}} r_{W^{\beta}}^{W_{\bfj'}^{\beta}} D_{\bfj'}^{\beta, \gamma} \bigoplus_{\bfj \in J^{\alpha, \beta}} i_{L_{\bfj}^{\beta}}^{W^{\beta}} p_{W_{\bfj}^{\alpha}}^{L_{\bfj}^{\beta}} r_{W^{\alpha}}^{W_{\bfj}^{\alpha}} D_{\bfj}^{\alpha, \beta} \\
					&=\bigoplus_{(\bfj, \bfj') \in J^{\alpha, \beta} \times J^{\beta, \gamma}} i_{L_{\bfj'}^{\gamma}}^{W^{\gamma}} p_{W_{\bfj'}^{\beta}}^{L_{\bfj'}^{\gamma}} r_{W^{\beta}}^{W_{\bfj'}^{\beta}} i_{L_{\bfj}^{\beta}}^{W^{\beta}} p_{W_{\bfj}^{\alpha}}^{L_{\bfj}^{\beta}} r_{W^{\alpha}}^{W_{\bfj}^{\alpha}} D_{\bfj'}^{\beta, \gamma} D_{\bfj}^{\alpha, \beta} \\
					&=\bigoplus_{\substack{(\bfj, \bfj') \in J^{\alpha, \beta} \times J^{\beta, \gamma} \\ \bfj \cap \bfj'\neq \emptyset}} i_{L_{\bfj \cap \bfj'}^{\gamma}}^{W^{\gamma}} p_{W_{\bfj \cap \bfj'}^{\alpha}}^{L_{\bfj \cap \bfj'}^{\gamma}} r_{W^{\alpha}}^{W_{\bfj \cap \bfj'}^{\alpha}} D_{\bfj \cap \bfj'}^{\alpha, \gamma} \\
					&=\bigoplus_{\bfj'' \in J^{\alpha, \gamma}} i_{L_{\bfj''}^{\gamma}}^{W^{\gamma}} p_{W_{\bfj''}^{\alpha}}^{L_{\bfj''}^{\gamma}} r_{W^{\alpha}}^{W_{\bfj''}^{\alpha}} D_{\bfj''}^{\alpha, \gamma}=(F_{\alpha}^{\gamma})_{(\lambda, \lambda')}.
				\end{align*}
				The third equality follows from \cref{lem:ipripr,lem:tensor_Di_Dj} \labelcref{lem:tensor_Di_Dj_3}. The fourth equality follows from \cref{lem:Jalphagamma}. 
				
				\item[Case 5] Otherwise. We have \[(F_{\beta}^{\gamma})_{\lambda'} (F_{\alpha}^{\beta})_{\lambda}=0=(F_{\alpha}^{\gamma})_{(\lambda, \lambda')}. \]
			\end{description}
			We have proved that $(F_{\beta}^{\gamma})_{\lambda'} (F_{\alpha}^{\beta})_{\lambda}=(F_{\alpha}^{\gamma})_{(\lambda, \lambda')}$ for all $\lambda \in \{0, 1\}^{I^{\alpha, \beta}}$ and $\lambda' \in \{0, 1\}^{I^{\beta, \gamma}}$. 
			
			By \cref{lem:equality_diagram}, it remains only to prove that
			\[
			(\varphi_{\beta}^{\gamma})_{\lambda'}^{\mu'} (\varphi_{\alpha}^{\beta})_{\lambda}^{\mu}
			=
			(\varphi_{\alpha}^{\gamma})_{(\lambda, \lambda')}^{(\mu, \mu')}
			\]
			for the following four cases of $(\lambda, \mu) \in R_{\{0, 1\}^{I^{\alpha, \beta}}}$ and $(\lambda', \mu') \in R_{\{0, 1\}^{I^{\beta, \gamma}}}$.
			\begin{description}[font=\normalfont\scshape]
				\item[Case 1] $\lambda=\chi_i^{\alpha, \beta}$ for some $i \in I^{\alpha, \beta}_{\open, \closed}$, $\mu=\chi^{\alpha, \beta}$, and $\lambda'=\mu'=\chi^{\beta, \gamma}$.
				
				\item[Case 2] $\lambda=\chi^{\alpha, \beta}$, $\mu=\chi_i^{\alpha, \beta}$ for some $i \in I^{\alpha, \beta}_{\closed, \open}$, and $\lambda'=\mu'=\chi^{\beta, \gamma}$. 
				
				\item[Case 3] $\lambda=\mu=\chi^{\alpha, \beta}$, $\lambda'=\chi_i^{\beta, \gamma}$ for some $i \in I^{\beta, \gamma}_{\open, \closed}$, and $\mu'=\chi^{\beta, \gamma}$. 
				
				\item[Case 4] $\lambda=\mu=\chi^{\alpha, \beta}$, $\lambda'=\chi^{\beta, \gamma}$, and $\mu'=\chi_i^{\beta, \gamma}$ for some $i \in I^{\beta, \gamma}_{\closed, \open}$. 
			\end{description}
			Since the arguments for these cases are analogous, we only treat Case 1.
			In the following computation of the horizontal composite, we use the identifications established in Cases 2 and 4 of the preceding computation of the composite functors. 
			We have
			\begin{align*}
				(\varphi_{\beta}^{\gamma})_{\lambda'}^{\mu'} (\varphi_{\alpha}^{\beta})_{\lambda}^{\mu} 
				&=\bigoplus_{\bfj \in J^{\beta, \gamma}} i_{L_{\bfj}^{\gamma}}^{W^{\gamma}} p_{W_{\bfj}^{\beta}}^{L_{\bfj}^{\gamma}} r_{W^{\beta}}^{W_{\bfj}^{\beta}} D_{\bfj}^{\beta, \gamma} \Bigl(\iota_{K_i}^{W_{[j]^{\alpha, \beta}}^{\alpha}} \id_{i, [j]^{\alpha, \beta}}^{\alpha, \beta} \Bigr)_{j \in J_i} \\
				&=\Bigl(i_{L_{[j]^{\beta, \gamma}}^{\gamma}}^{W^{\gamma}} p_{W_{[j]^{\beta, \gamma}}^{\beta}}^{L_{[j]^{\beta, \gamma}}^{\gamma}} r_{W^{\beta}}^{W_{[j]^{\beta, \gamma}}^{\beta}} \iota_{K_i}^{W_{[j]^{\alpha, \beta}}^{\alpha}} D_{[j]^{\beta, \gamma}}^{\beta, \gamma} \id_{i, [j]^{\alpha, \beta}}^{\alpha, \beta} \Bigr)_{j \in J_i} \\
				&=\Bigl(\iota_{K_i}^{W_{[j]^{\alpha, \gamma}}^{\alpha}} \id_{i, [j]^{\alpha, \gamma}}^{\alpha, \gamma} \Bigr)_{j \in J_i}=(\varphi_{\alpha}^{\gamma})_{(\lambda, \lambda')}^{(\mu, \mu')}. 
			\end{align*}
			The second equality follows because for $j \in J_i$ and $\bfj \in J^{\beta, \gamma}$, we have $[j]^{\alpha, \beta} \cap \bfj \neq \emptyset$ if and only if $\bfj=[j]^{\beta, \gamma}$. 
			The third equality follows from \cref{lem:composition_theta_r_iota_pi} \labelcref{lem:composition_theta_r_iota_pi_2} \labelcref{lem:composition_theta_r_iota_pi_2_1} and \cref{lem:Jalphagamma}. 
			This completes the proof of \cref{prop:Falphagamma_varphialphagamma}.
		\end{proof}
		
		We have completed the proof of \cref{thm:rearrangement_reflection_functors}.

		\subsection{Application to homotopy-invariant, stable, half-exact functors}\label{subsection:application_to_Bott_functors_2}

		\begin{cor}\label{cor:Salphabeta_composition_reflection_functors}
			Let $\alpha, \beta \in \{\closed, \open\}^I$ be such that $\alpha \neq \beta$. Let $n:=|I^{\alpha, \beta}| \geq 1$. 
			Let $f \colon \{1, 2, \dots, n\} \to I^{\alpha, \beta}$ be a bijection. For each $k=0, 1, \dots, n$, we define $\alpha_k \in \{\closed, \open\}^I$ by
			\[
			\alpha_k(i):=
			\begin{dcases}
				\alpha(i) & \text{if $i \in f(\{k+1,k+2,\dots,n\})$},\\
				\beta(i) & \text{if $i \in f(\{1,2,\dots,k\})$},\\
				\alpha(i)=\beta(i) & \text{if $i \in I \setminus I^{\alpha, \beta}$}.
			\end{dcases}
			\]
			Then $\alpha_0=\alpha$, $\alpha_n=\beta$, and
			\[S_{\alpha_{n-1}}^{\alpha_n}S_{\alpha_{n-2}}^{\alpha_{n-1}} \cdots S_{\alpha_0}^{\alpha_1}=S_{\alpha}^{\beta} \]
			in $[\Cstar(W^{\alpha}), \Cstar(W^{\beta})]$. 
		\end{cor}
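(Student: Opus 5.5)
The plan is to argue by induction on $n$, using \cref{thm:rearrangement_reflection_functors} at each step. First I would check the boundary values. For $k=0$ the set $f(\{1,\dots,0\})$ is empty, so $\alpha_0$ agrees with $\alpha$ on $I^{\alpha,\beta}$ and also off it, hence $\alpha_0=\alpha$. Symmetrically, for $k=n$ the set $f(\{n+1,\dots,n\})$ is empty, so $\alpha_n=\beta$.

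Next I would compute the difference sets. For $0\le k\le m\le n$, the functions $\alpha_k$ and $\alpha_m$ disagree exactly at the indices $f(k+1),\dots,f(m)$, because $\alpha(i)\neq\beta(i)$ for $i\in I^{\alpha,\beta}$. Hence
\[
I^{\alpha_k,\alpha_m}=f(\{k+1,\dots,m\}).
\]
In particular $I^{\alpha_0,\alpha_{k}}=f(\{1,\dots,k\})$ and $I^{\alpha_k,\alpha_{k+1}}=\{f(k+1)\}$. These two sets are disjoint, since $f$ is a bijection.

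The induction claims that $S_{\alpha_{k-1}}^{\alpha_k}\cdots S_{\alpha_0}^{\alpha_1}=S_{\alpha_0}^{\alpha_k}$ for $k=1,\dots,n$.
\begin{itemize}
\item The case $k=1$ is trivial.
\item For the inductive step, apply \cref{thm:rearrangement_reflection_functors} with $(\alpha,\beta,\gamma)$ replaced by $(\alpha_0,\alpha_k,\alpha_{k+1})$. The disjointness of the difference sets is exactly its hypothesis, so it gives $S_{\alpha_k}^{\alpha_{k+1}}S_{\alpha_0}^{\alpha_k}=S_{\alpha_0}^{\alpha_{k+1}}$. Combined with the inductive hypothesis, this yields the claim for $k+1$.
\end{itemize}
Setting $k=n$ gives the statement.

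There is no real obstacle here. The only points needing care are the bookkeeping of the difference sets and the fact that the composites are equalities of functors, which \cref{thm:rearrangement_reflection_functors} already provides. One could also remark that each $S_{\alpha_{k-1}}^{\alpha_k}$ is a reflection functor in the sense of \cref{def:reflection_functor}, by \cref{prop:higher-dimensional_reflection_functor_singleton}.
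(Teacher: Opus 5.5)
Your proof is correct and follows essentially the same route as the paper: an induction on the number of factors, invoking \cref{thm:rearrangement_reflection_functors} once per step, with disjointness of the difference sets as its hypothesis. The only difference is the direction of the induction. The paper peels off the first factor $S_{\alpha}^{\alpha_1}$ and applies the induction hypothesis to the pair $(\alpha_1,\beta)$, which implicitly requires re-indexing $f$; you instead build up the composite from $\alpha_0$ via the triples $(\alpha_0,\alpha_k,\alpha_{k+1})$, which avoids that re-indexing.
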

		
		\begin{proof}
			It is clear that $\alpha_0=\alpha$ and $\alpha_n=\beta$. 
			The proof proceeds by induction on $n$. The case $n=1$ is immediate. Assume that $n \geq 2$ and that the assertion holds for $n-1$.
			Applying the induction hypothesis to the pair $(\alpha_1, \beta)$, we obtain
			\[
			S_{\alpha_{n-1}}^{\alpha_n}
			S_{\alpha_{n-2}}^{\alpha_{n-1}}
			\cdots
			S_{\alpha_1}^{\alpha_2}
			=
			S_{\alpha_1}^{\beta}.
			\]
			Since $I^{\alpha,\alpha_1} \cap I^{\alpha_1,\beta}=\emptyset$, \cref{thm:rearrangement_reflection_functors} gives
			\[
			S_{\alpha_1}^{\beta}S_{\alpha}^{\alpha_1}
			=
			S_{\alpha}^{\beta}.
			\]
			This completes the induction.
		\end{proof}
		
		In the setting of \cref{cor:Salphabeta_composition_reflection_functors}, for each $k=1, 2, \dots, n$, we have
		\[
		I^{\alpha_{k-1},\alpha_k}=\{f(k)\}.
		\]
		Hence, by \cref{prop:higher-dimensional_reflection_functor_singleton}, the higher-dimensional reflection functor
		\[
		S_{\alpha_{k-1}}^{\alpha_k} \colon \Cstar(W^{\alpha_{k-1}}) \to \Cstar(W^{\alpha_k})
		\]
		coincides with the reflection functor corresponding to $\{f(k)\}$.
		Thus, \cref{cor:Salphabeta_composition_reflection_functors} implies that the higher-dimensional reflection functor $S_{\alpha}^{\beta}$ is the composite of the reflection functors corresponding to $\{f(k)\}$ for $k=1, 2, \dots, n$.

		For $\alpha, \beta \in \{\closed, \open\}^I$, define a C*-algebra
		\[E^{\alpha, \beta}:=\bigotimes_{i \in I^{\alpha, \beta}} SD_i. \]
		Following \cref{nota:tensor_product}, we regard $E^{\alpha, \beta}$ as a functor.

		\begin{cor}\label{cor:higher-dimensional_reflection_Bott}
			Let $\alpha, \beta \in \{\closed, \open\}^I$. 
			Let $\frakA$ be an additive category and $F \colon \Cstar(W^{\alpha}) \to \frakA$ be a homotopy-invariant, stable, half-exact functor. If $FC_{\rho_{i, j}}=0$ in $[\Cstar(W^{\alpha}), \frakA]$ for every $i \in I$ and $j \in J_i$, then
			\[FS_{\beta}^{\alpha} S_{\alpha}^{\beta} \simeq FE^{\alpha, \beta} \]
			in $[\Cstar(W^{\alpha}), \frakA]$. 
		\end{cor}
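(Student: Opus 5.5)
We reduce to the one-step case via the rearrangement theorem and then apply \cref{thm:reflection_Bott} repeatedly. If $\alpha=\beta$ the statement is trivial, since both functors are the identity and $E^{\alpha,\beta}=\C$. Otherwise let $n:=|I^{\alpha,\beta}|$ and choose a bijection $f\colon\{1,\dots,n\}\to I^{\alpha,\beta}$, with the interpolating family $\alpha=\alpha_0,\alpha_1,\dots,\alpha_n=\beta$ as in \cref{cor:Salphabeta_composition_reflection_functors}. Then
\[
S_\alpha^\beta=S_{\alpha_{n-1}}^{\alpha_n}\cdots S_{\alpha_0}^{\alpha_1},
\]
and, reading the same family backwards for the pair $(\beta,\alpha)$,
\[
S_\beta^\alpha=S_{\alpha_1}^{\alpha_0}\cdots S_{\alpha_n}^{\alpha_{n-1}}.
\]
Hence $S_\beta^\alpha S_\alpha^\beta$ is a nested composite $S_{\alpha_1}^{\alpha_0}\cdots\bigl(S_{\alpha_n}^{\alpha_{n-1}}S_{\alpha_{n-1}}^{\alpha_n}\bigr)\cdots S_{\alpha_0}^{\alpha_1}$. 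Each factor is a one-point reflection functor, identified with the one in \cref{def:reflection_functor} by \cref{prop:higher-dimensional_reflection_functor_singleton} and \cref{cor:Walpha_Wc_Wo}.

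Next we peel off the innermost pair by induction. Put $G_k:=S_{\alpha_{k-1}}^{\alpha_k}\cdots S_{\alpha_0}^{\alpha_1}$, which is an exact $\Cstar$-module functor by \cref{prop:Salphabeta_exact_continuous_module}. Also put $H_k:=S_{\alpha_1}^{\alpha_0}\cdots S_{\alpha_k}^{\alpha_{k-1}}$, so that $FH_k\colon\Cstar(W^{\alpha_k})\to\frakA$ is homotopy-invariant and stable by \cref{lem:homotopy-invariant_stable_module} and half-exact by exactness. The induction claim is
\[
FH_kG_k\simeq F\bigotimes_{m\le k}SD_{f(m)}^{(\ast)}.
\]
Here the step from $k-1$ to $k$ applies \cref{thm:reflection_Bott} to the functor $FH_{k-1}$ and the one-point reflection at $i=f(k)$. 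This gives
\[
FH_{k-1}\bigl(S_{\alpha_k}^{\alpha_{k-1}}S_{\alpha_{k-1}}^{\alpha_k}\bigr)\simeq FH_{k-1}SD',
\]
where $D'$ is the relevant algebra $D_i^{\alpha_{k-1},\alpha_k}$ or $D_i^{\alpha_k,\alpha_{k-1}}$. We then precompose with $G_{k-1}$ and pass the tensor factor $SD'$ through $H_{k-1}$ and $G_{k-1}$ using their module structure.

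The main obstacle is the bookkeeping of the tensor factors. By \cref{def:tensor_Di_Dij}, in each step the "$D$" algebra is not simply $D_i$: it is $D_i$ (for the closed-to-open direction), tensored with extra factors $D_{k,j}$ from the other closed-to-open indices, and the $\rho$ maps are $\rho_{i,j}\otimes\id$. Since $I^{\alpha_{k-1},\alpha_k}=\{f(k)\}$ is a singleton, these sets are empty, so in fact $D'=D_i$ when the reflection goes closed to open. When it goes open to closed, $D'=\C$ and $D_{[j]}=\C$ with $\rho=\id$. In that case I would instead use the reverse orientation, applying \cref{thm:reflection_Bott}\labelcref{thm:reflection_Bott_2} to the pair written the other way, so that the composite is still identified with the data $(D_i,D_{i,j},\rho_{i,j})$.

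The hypothesis needed at each step is $FH_{k-1}C_{\rho'}=0$, where $C_{\rho'}$ is either $C_{\rho_{i,j}}$ or $C_{\id}$ ($=C$, contractible). This holds because $H_{k-1}$ commutes with tensoring by $C_{\rho_{i,j}}$ (module functor), so $FH_{k-1}C_{\rho_{i,j}}=FC_{\rho_{i,j}}H_{k-1}$. That does not immediately vanish from $FC_{\rho_{i,j}}=0$ on $\Cstar(W^{\alpha})$, since $H_{k-1}$ lands in $\Cstar(W^\alpha)$ only after the full composite. I would therefore order the argument so that the functor to which \cref{thm:reflection_Bott} is applied is $F$ composed with $H_{k-1}$, and check vanishing as $F(C_{\rho_{i,j}}\otimes H_{k-1}(\blank))=0$, which follows from the hypothesis evaluated at the objects $H_{k-1}(B)$.

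Collecting the factors gives $FS_\beta^\alpha S_\alpha^\beta\simeq F\bigotimes_{i\in I^{\alpha,\beta}}SD_i=FE^{\alpha,\beta}$. Commutativity of the maximal tensor product lets us reorder the factors, and naturality of all the isomorphisms makes the result independent of the chosen order $f$.
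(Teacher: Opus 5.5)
Your proposal is correct and is essentially the paper's proof written out along a fixed ordering. The paper inducts on $|I^{\alpha,\beta}|$ by splitting off one index $i_0$ via $\beta_0$ and writes $FS_\beta^\alpha S_\alpha^\beta=FS_{\beta_0}^\alpha S_\beta^{\beta_0}S_{\beta_0}^\beta S_\alpha^{\beta_0}$. It then applies \cref{thm:reflection_Bott} to $FS_{\beta_0}^\alpha$ (your $FH_{k-1}$) and moves $SD_{i_0}$ past $S_\alpha^{\beta_0}$ (your $G_{k-1}$) using the module structure, exactly as you do.

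Two of your side worries are unnecessary and can be dropped:
\begin{itemize}
\item $H_{k-1}$ already lands in $\Cstar(W^\alpha)$, so $FC_{\rho_{i,j}}H_{k-1}=0$ follows directly from the hypothesis.
\item The open-to-closed one-point reflection functor carries no C*-algebraic data at all. So each pair is always the reflection pair with data $D_i$, $(D_{i,j})_j$, $(\rho_{i,j})_j$, and no $C_{\id}$ case ever arises.
\end{itemize}
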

		
		\begin{proof}
			The proof proceeds by induction on $n:=|I^{\alpha, \beta}|$. If $n=0$, equivalently, $\alpha=\beta$, then the functors $S_{\beta}^{\alpha} S_{\alpha}^{\beta}$ and $E^{\alpha, \beta}$ are the identity functor on $\Cstar(W^{\alpha})$. This shows the case $n=0$. 
			
			Assume that $n \geq 1$ and that the assertion holds for $n-1$. Choose $i_0 \in I^{\alpha, \beta}$. Define $\beta_0 \in \{\closed, \open\}^I$ by
			\[
			\beta_0(i):=
			\begin{dcases}
				\alpha(i) & \text{if $i=i_0$},\\
				\beta(i) & \text{if $i \in I^{\alpha, \beta} \setminus \{i_0\}$},\\
				\alpha(i)=\beta(i) & \text{if $i \in I \setminus I^{\alpha, \beta}$}.
			\end{dcases}
			\]
			By \cref{thm:rearrangement_reflection_functors}, we have
			\[FS_{\beta}^{\alpha} S_{\alpha}^{\beta}=FS_{\beta_0}^{\alpha} S_{\beta}^{\beta_0} S_{\beta_0}^{\beta} S_{\alpha}^{\beta_0}. \]
			Since $I^{\beta_0, \beta}=\{i_0\}$, \cref{prop:higher-dimensional_reflection_functor_singleton} shows that the higher-dimensional reflection functors
			\[
			\begin{tikzcd}
				\Cstar(W^{\beta_0}) \ar[bend left=10]{r}{S_{\beta_0}^{\beta}}&
				\Cstar(W^{\beta}) \ar[bend left=10]{l}{S_{\beta}^{\beta_0}}
			\end{tikzcd}
			\]
			coincide with the corresponding reflection functors as in \cref{def:reflection_functor}. 
			By \cref{lem:homotopy-invariant_stable_module,prop:Salphabeta_exact_continuous_module}, $FS_{\beta_0}^{\alpha} \colon \Cstar(W^{\beta_0}) \to \frakA$ is a homotopy-invariant, stable, half-exact functor. Moreover, $FS_{\beta_0}^{\alpha} C_{\rho_{i_0, j}}=FC_{\rho_{i_0, j}}S_{\beta_0}^{\alpha}=0$ in $[\Cstar(W^{\beta_0}), \frakA]$ for all $j \in J_{i_0}$. Therefore, \cref{thm:reflection_Bott} shows that
			\[FS_{\beta_0}^{\alpha} S_{\beta}^{\beta_0} S_{\beta_0}^{\beta} \simeq FS_{\beta_0}^{\alpha} SD_{i_0}\]
			in $[\Cstar(W^{\beta_0}), \frakA]$. 
			It follows that
			\[FS_{\beta}^{\alpha} S_{\alpha}^{\beta} \simeq FS_{\beta_0}^{\alpha} SD_{i_0}S_{\alpha}^{\beta_0}=FS_{\beta_0}^{\alpha}S_{\alpha}^{\beta_0} SD_{i_0}. \]
			Since $|I^{\alpha, \beta_0}|=n-1$, the induction hypothesis applied to the pair $(\alpha, \beta_0)$ gives
			\[FS_{\beta_0}^{\alpha}S_{\alpha}^{\beta_0} \simeq FE^{\alpha, \beta_0} \]
			in $[\Cstar(W^{\alpha}), \frakA]$. 
			Since $i_0 \notin I^{\alpha, \beta_0}$ and $I^{\alpha, \beta_0} \cup \{i_0\}=I^{\alpha, \beta}$, we have
			\[E^{\alpha, \beta_0} SD_{i_0}=E^{\alpha, \beta}. \]
			This completes the proof. 
		\end{proof}
		
		As in \cref{eg:reflection_suspension,eg:reflection_matrix}, we give two important examples of $(D_i)_{i \in I}$, $(D_{i, j})_{i \in I, j \in J_i}$, and $(\rho_{i, j})_{i \in I, j \in J_i}$ to which \cref{cor:higher-dimensional_reflection_Bott} applies.
		
		\begin{eg}\label{eg:higher-dimensional_reflection_suspension}
			Let $(h_{i, j})_{i \in I, j \in J_i}$ be a family of homeomorphisms $h_{i, j} \colon (0, 1) \to (0, 1)$ onto their images such that, for each $i \in I$, the images of $h_{i, j}$ for $j \in J_i$ are pairwise disjoint. For each $i \in I$, let $D_i:=S$, and for each $j \in J_i$, let $D_{i, j}:=S$ and $\rho_{i, j}:=h_{i, j*} \colon D_{i, j} \to D_i$. Then, for each $i \in I$, the $\ast$-homomorphisms $\rho_{i, j}$ for $j \in J_i$ are mutually orthogonal. Let $\alpha, \beta \in \{\closed, \open\}^I$, let $\frakA$ be an additive category, and let $F \colon \Cstar(W^{\alpha}) \to \frakA$ be a homotopy-invariant, stable, half-exact functor. Since $FC_{\rho_{i, j}}=0$ in $[\Cstar(W^{\alpha}), \frakA]$ for every $i \in I$ and $j \in J_i$, \cref{cor:higher-dimensional_reflection_Bott}, together with Bott periodicity, shows that
			\[FS_{\beta}^{\alpha} S_{\alpha}^{\beta} \simeq F\]
			in $[\Cstar(W^{\alpha}), \frakA]$. 
		\end{eg}

		\begin{eg}\label{eg:higher-dimensional_reflection_matrix}
			For each $i \in I$, let $D_i:=\M_{J_i}$, and for each $j \in J_i$, let $D_{i, j}:=\C$ and $\rho_{i, j}:=\iota_{e_j} \colon D_{i, j} \to D_i$. Then, for each $i \in I$, the $\ast$-homomorphisms $\rho_{i, j}$ for $j \in J_i$ are mutually orthogonal. Let $\alpha, \beta \in \{\closed, \open\}^I$, let $\frakA$ be an additive category, and let $F \colon \Cstar(W^{\alpha}) \to \frakA$ be a homotopy-invariant, stable, half-exact functor. Since $FC_{\rho_{i, j}}=0$ in $[\Cstar(W^{\alpha}), \frakA]$ for every $i \in I$ and $j \in J_i$, \cref{cor:higher-dimensional_reflection_Bott}, together with the stability of $F$ and Bott periodicity, shows that
			\[FS_{\beta}^{\alpha} S_{\alpha}^{\beta} \simeq \left\{
			\begin{alignedat}{2}
				&F &  & \text{if $|I^{\alpha, \beta}|$ is even}, \\
				&FS & \qquad & \text{if $|I^{\alpha, \beta}|$ is odd}
			\end{alignedat}
			\right. \]
			in $[\Cstar(W^{\alpha}), \frakA]$. 
		\end{eg}

		\begin{rem}\label{rem:restriction_higher-dimensional_reflection_functor}
			Suppose that $(D_i)_{i \in I}$, $(D_{i, j})_{i \in I, j \in J_i}$, and $(\rho_{i, j})_{i \in I, j \in J_i}$ are as in either \cref{eg:higher-dimensional_reflection_suspension} or \cref{eg:higher-dimensional_reflection_matrix}. 
			Since $J^{\alpha, \beta}$ is finite and all the $D_i$ and $D_{i, j}$ are separable, the higher-dimensional reflection functor
			\[S_{\alpha}^{\beta} \colon \Cstar(W^{\alpha}) \to \Cstar(W^{\beta})\]
			restricts to a functor
			\[S_{\alpha}^{\beta} \colon \SCstar(W^{\alpha}) \to \SCstar(W^{\beta}).\]
			The resulting functor is an exact $\SCstar$-module functor that commutes with countable inductive limits. All results in \cref{subsection:rearrangements_of_higher-dimensional_reflection_functors,subsection:application_to_Bott_functors_2} involving this functor remain valid when the categories are replaced by their full subcategories consisting of separable C*-algebras.
			Since all the $D_i$ and $D_{i, j}$ are also nuclear in both examples, the same statements hold for the corresponding categories of nuclear C*-algebras or separable nuclear C*-algebras. 
		\end{rem}
		
		\subsection{Application to bivariant K-theory}\label{subsection:application_to_bivariant_K-theory_2}
		
		In this subsection, we apply higher-dimensional reflection functors to bivariant K-theory. 
		Suppose that all the $K_i$ for $i \in I$ and all the $L_j$ for $j \in J$ are finite preordered sets. Then $W^{\alpha}$ is a finite topological space for every $\alpha \in \{\closed, \open\}^I$. Suppose also that $(D_i)_{i \in I}$, $(D_{i, j})_{i \in I, j \in J_i}$, and $(\rho_{i, j})_{i \in I, j \in J_i}$ are as in \cref{eg:higher-dimensional_reflection_suspension}. 
		
		For $\alpha, \beta \in \{\closed, \open\}^I$, the restricted functor in \cref{rem:restriction_higher-dimensional_reflection_functor} induces, by \cref{prop:Salphabeta_exact_continuous_module}, a triangulated functor
		\[S_{\alpha}^{\beta} \colon \KKcat(W^{\alpha}) \to \KKcat(W^{\beta}) \]
		that commutes with countable coproducts. This induced functor restricts to functors
		\[S_{\alpha}^{\beta} \colon \KKcat(W^{\alpha})_{\loc} \to \KKcat(W^{\beta})_{\loc}, \quad S_{\alpha}^{\beta} \colon \Boot(W^{\alpha}) \to \Boot(W^{\beta}). \]
		
		\begin{cor}\label{cor:equivalence_KKloc_Walpha_Wbeta}
			For $\alpha, \beta \in \{\closed, \open\}^I$, the higher-dimensional reflection functors
			\[
			\begin{tikzcd}
				\KKcat(W^{\alpha})_{\loc} \ar[bend left=10]{r}{S_{\alpha}^{\beta}} &
				\ar[bend left=10]{l}{S_{\beta}^{\alpha}} \KKcat(W^{\beta})_{\loc}
			\end{tikzcd}
			\]
			form an equivalence of triangulated categories with countable coproducts, which restricts to an equivalence between $\Boot(W^{\alpha})$ and $\Boot(W^{\beta})$. 
		\end{cor}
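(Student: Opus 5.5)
The plan is to reduce the statement to the one-step case, \cref{thm:equivalence_KKloc_Wc_Wo}, by factoring $S_{\alpha}^{\beta}$ and $S_{\beta}^{\alpha}$ into single reflection functors. If $\alpha=\beta$, both functors are identities and there is nothing to prove, so assume $n:=|I^{\alpha,\beta}|\geq 1$.

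\textbf{Factorization.} Fix a bijection $f\colon\{1,\dots,n\}\to I^{\alpha,\beta}$ and form the chain $\alpha=\alpha_0,\alpha_1,\dots,\alpha_n=\beta$ as in \cref{cor:Salphabeta_composition_reflection_functors}. That corollary gives
\[S_{\alpha}^{\beta}=S_{\alpha_{n-1}}^{\alpha_n}\cdots S_{\alpha_0}^{\alpha_1}.\]
Apply the same corollary to the pair $(\beta,\alpha)$ with the reversed bijection $k\mapsto f(n+1-k)$. The resulting chain is $\alpha_n,\alpha_{n-1},\dots,\alpha_0$, so
\[S_{\beta}^{\alpha}=S_{\alpha_1}^{\alpha_0}\cdots S_{\alpha_n}^{\alpha_{n-1}}.\]
These identities hold already at the level of $\SCstar$ by \cref{rem:restriction_higher-dimensional_reflection_functor}. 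Hence they persist for the induced functors on $\KKcat$, by the uniqueness in \cref{lem:induced_functor_KK_E}, and also on $\KKcat(\blank)_{\loc}$ and $\Boot(\blank)$.

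\textbf{One-step equivalences.} For each $k$, \cref{cor:Walpha_Wc_Wo} identifies $W^{\alpha_{k-1}}$ and $W^{\alpha_k}$ with the spaces $W^{\closed}$ and $W^{\open}$ (in some order) built from the data $K_{f(k)}$, $(L_{[j]}^{\alpha_{k-1}})_{j\in J_{f(k)}}$ and $(\theta_{f(k),[j]})_{j\in J_{f(k)}}$. By \cref{prop:Ljalpha_Ljbeta_equal} these data are the same for $\alpha_{k-1}$ and $\alpha_k$. By \cref{prop:higher-dimensional_reflection_functor_singleton}, the pair $S_{\alpha_{k-1}}^{\alpha_k}$, $S_{\alpha_k}^{\alpha_{k-1}}$ coincides with the reflection functors of \cref{def:reflection_functor}. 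These are built from the C*-data $D_{f(k)}^{\alpha',\beta'}$, $D_{[j]}^{\alpha',\beta'}$ and $\rho_{f(k),[j]}^{\alpha',\beta'}$, where $(\alpha',\beta')$ is whichever of $(\alpha_{k-1},\alpha_k)$ and $(\alpha_k,\alpha_{k-1})$ goes from closed to open at $f(k)$.

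\textbf{Main obstacle.} I expect the delicate point to be that \cref{thm:equivalence_KKloc_Wc_Wo} was proved for the specific data of \cref{eg:reflection_suspension}. Here I would check that this is exactly what arises. Since $I^{\alpha',\beta'}_{\closed,\open}=\{f(k)\}$, the tensor products in \cref{def:tensor_Di_Dij} reduce to $D_{f(k)}=S$ and $D_{f(k),j}=S$. The maps $\rho_{f(k),[j]}^{\alpha',\beta'}=\rho_{f(k),j}=h_{f(k),j*}$ have pairwise disjoint images, so this is precisely the setting of \cref{eg:reflection_suspension}. In fact only contractibility of $C_{\rho_j}$ and $D=S$ (for Bott periodicity) were used there, and both hold.

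\textbf{Conclusion.} \cref{thm:equivalence_KKloc_Wc_Wo} then shows that each pair $(S_{\alpha_{k-1}}^{\alpha_k},S_{\alpha_k}^{\alpha_{k-1}})$ is an equivalence of triangulated categories with countable coproducts between the localizing subcategories, restricting to the bootstrap categories. Composing these quasi-inverse pairs yields natural isomorphisms $S_{\beta}^{\alpha}S_{\alpha}^{\beta}\simeq\id$ and $S_{\alpha}^{\beta}S_{\beta}^{\alpha}\simeq\id$. The composites are triangulated and preserve countable coproducts, since each factor does.
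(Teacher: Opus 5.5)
Your proposal is correct and follows the paper's proof, which simply combines \cref{cor:Salphabeta_composition_reflection_functors} with \cref{thm:equivalence_KKloc_Wc_Wo}. The paper leaves two details implicit that you supply: you reverse the bijection to write $S_{\beta}^{\alpha}$ as the reverse composite, and you check that the one-step C*-data reduce exactly to those of \cref{eg:reflection_suspension}.
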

		
		\begin{proof}
			This follows from \cref{thm:equivalence_KKloc_Wc_Wo,cor:Salphabeta_composition_reflection_functors}.
		\end{proof}
		
		The same argument as for KK-theory applies to E-theory. 
		
		\begin{cor}\label{cor:equivalence_E_Walpha_Wbeta}
			For $\alpha, \beta \in \{\closed, \open\}^I$, the higher-dimensional reflection functors
			\[
			\begin{tikzcd}
				\Ecat(W^{\alpha}) \ar[bend left=10]{r}{S_{\alpha}^{\beta}} &
				\ar[bend left=10]{l}{S_{\beta}^{\alpha}} \Ecat(W^{\beta})
			\end{tikzcd}
			\]
			defined in the same way as for KK-theory, form an equivalence of triangulated categories with countable coproducts, which restricts to an equivalence between $\Boot_{\E}(W^{\alpha})$ and $\Boot_{\E}(W^{\beta})$. 
		\end{cor}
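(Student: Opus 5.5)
The plan is to deduce this from the single-step equivalence of \cref{thm:equivalence_E_Wc_Wo} together with the factorization in \cref{cor:Salphabeta_composition_reflection_functors}, exactly parallel to \cref{cor:equivalence_KKloc_Walpha_Wbeta}. First, by \cref{rem:restriction_higher-dimensional_reflection_functor} and \cref{prop:Salphabeta_exact_continuous_module}, $S_{\alpha}^{\beta}$ restricts to an exact $\SCstar$-module functor $\SCstar(W^{\alpha}) \to \SCstar(W^{\beta})$ commuting with countable direct sums. \cref{lem:induced_functor_KK_E} then yields a triangulated functor $S_{\alpha}^{\beta} \colon \Ecat(W^{\alpha}) \to \Ecat(W^{\beta})$ commuting with countable coproducts, and likewise for $S_{\beta}^{\alpha}$. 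By uniqueness in \cref{lem:induced_functor_KK_E}, inducing to E-theory is compatible with composition. Hence the identity of \cref{cor:Salphabeta_composition_reflection_functors} at the C*-level gives $S_{\alpha}^{\beta}=S_{\alpha_{n-1}}^{\alpha_n}\cdots S_{\alpha_0}^{\alpha_1}$ in E-theory as well. The same holds for $S_{\beta}^{\alpha}$, using the reversed chain $\alpha_n,\dots,\alpha_0$ for the pair $(\beta,\alpha)$.

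Each factor changes only one index, $I^{\alpha_{k-1},\alpha_k}=\{f(k)\}$. By \cref{prop:higher-dimensional_reflection_functor_singleton} and \cref{cor:Walpha_Wc_Wo}, it is therefore a reflection functor of \cref{def:reflection_functor} for the data $K_{f(k)}$, $(L_{[j]})_{j\in J_{f(k)}}$ and $(\theta_{f(k),[j]})$. The C*-algebraic data are $D^{\alpha_{k-1},\alpha_k}_{f(k)}$, $(D^{\alpha_{k-1},\alpha_k}_{[j]})$ and $(\rho^{\alpha_{k-1},\alpha_k}_{f(k),[j]})$.

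The one point needing care is that \cref{thm:equivalence_E_Wc_Wo} was stated for the specific data of \cref{eg:reflection_suspension}. In a one-element change, however, $I^{\alpha_{k-1},\alpha_k}_{\closed,\open}$ is either empty or equal to $\{f(k)\}$. If it is empty, all $D$'s are $\C$ and no $\rho$ is involved. Otherwise $D^{\alpha_{k-1},\alpha_k}_{f(k)}=S$ and each $D_{[j]}=S$ with $\rho=h_{f(k),j*}$, which is exactly the data of \cref{eg:reflection_suspension}.

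The proof of \cref{thm:equivalence_E_Wc_Wo} also shows that the functor $S_{\open}^{\closed}$ is independent of $D$, so it is an equivalence. Combined with the paired reflection functor, each factor is therefore an equivalence $\Ecat(W^{\alpha_{k-1}}) \simeq \Ecat(W^{\alpha_k})$. The relevant quasi-inverse is the reverse single-step functor $S^{\alpha_{k-1}}_{\alpha_k}$, which again is one of the two reflection functors in \cref{thm:equivalence_E_Wc_Wo}.

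Composing these, $S_{\alpha}^{\beta}$ and $S_{\beta}^{\alpha}$ are composites of mutually quasi-inverse equivalences in reverse order. They are therefore quasi-inverse triangulated equivalences preserving countable coproducts.

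For the bootstrap categories, each single-step equivalence restricts to $\Boot_{\E}(W^{\alpha_{k-1}})\simeq\Boot_{\E}(W^{\alpha_k})$ by \cref{thm:equivalence_E_Wc_Wo}, so the composite restricts as well. The main obstacle is the bookkeeping check that the tensor-product data of each one-step factor really agree with the hypotheses under which \cref{thm:equivalence_E_Wc_Wo} was proved. I would handle it by the case split on whether the changed index lies in $I_{\closed,\open}$ or $I_{\open,\closed}$, as above.
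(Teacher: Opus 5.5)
Your proposal is correct and follows the paper's proof, which simply cites \cref{thm:equivalence_E_Wc_Wo} and \cref{cor:Salphabeta_composition_reflection_functors}. As in the paper, you factor $S_{\alpha}^{\beta}$, and via the reversed chain $S_{\beta}^{\alpha}$, into single-step reflection functors and compose the one-step equivalences. Your additional checks supply details the paper leaves implicit: that passing to E-theory respects composition by the uniqueness in \cref{lem:induced_functor_KK_E}, and that for a one-index change the data $D^{\alpha_{k-1},\alpha_k}_{f(k)}$, $D^{\alpha_{k-1},\alpha_k}_{[j]}$, $\rho^{\alpha_{k-1},\alpha_k}_{f(k),[j]}$ reduce either to $\C$ or to the suspension data of \cref{eg:reflection_suspension}.
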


		\begin{proof}
			This follows from \cref{thm:equivalence_E_Wc_Wo,cor:Salphabeta_composition_reflection_functors}. 
		\end{proof}
		
		\begin{rem}\label{rem:equivalence_E_Walpha_Wbeta_infinite}
			\cref{cor:equivalence_E_Walpha_Wbeta} remains valid when all the $K_i$ for $i \in I$ and all the $L_j$ for $j \in J$ are countable partially ordered sets; see \cref{rem:equivalence_E_Wc_Wo_infinite}. In this case, $W^{\alpha}$ is a countable Alexandrov $T_0$-space for every $\alpha \in \{\closed, \open\}^I$. 
		\end{rem}

		\subsection{Coxeter functors}\label{subsection:Coxeter_functors}
		
		The classical Coxeter functors associated with an oriented tree are obtained by composing the BGP-reflection functors along an admissible numbering of its vertices; see \cite{BGP_1973}. In this subsection, we consider the analogous construction for the reflection functors introduced in \cref{section:reflection_functors}. As a consequence of \cref{thm:rearrangement_reflection_functors}, we show in \cref{cor:Coxeter_functor_independence} that the resulting composite is independent of the choice of admissible numbering, paralleling the corresponding property of classical Coxeter functors; see \cite[Lemma~1.2(3)]{BGP_1973}. 
		
		Throughout this subsection, we fix a tree $G=(V,E)$. Recall from \cref{rem:BGP-reflection} that the construction in \cref{def:Wc_Wo} recovers the reflection of orientations in Hasse diagrams introduced in \cref{subsection:reflection_of_orientations_in_quivers}. 
		
		For an orientation $X$ of $G$ and an open point $x$ in $X$, we write
		\[
		S_x^X \colon \Cstar(X) \to \Cstar(\sigma_xX)
		\]
		for the reflection functor defined as in \cref{def:reflection_functor}
		\labelcref{def:reflection_functor_1}. 
		
		We next consider the case for closed points. For each $x \in V$, set 
		\[J_x:=\{y \in V \mid \{x, y\} \in E\}. \]
		In what follows, we fix
		\begin{itemize}
			\item C*-algebras $D_x$ for $x \in V$;
			\item C*-algebras $D_{x, y}$ for $x \in V$ and $y \in J_x$;
			\item $\ast$-homomorphisms $\rho_{x, y} \colon D_{x, y} \to D_x$
			for $x \in V$ and $y \in J_x$ such that, for every $x \in V$,
			the $\ast$-homomorphisms $\rho_{x, y}$ for $y \in J_x$ are mutually orthogonal.
		\end{itemize}
		For an orientation $X$ of $G$ and a closed point $x$ in $X$, we write 
		\[
		S_X^x \colon \Cstar(X) \to \Cstar(\sigma_xX)
		\]
		for the reflection functor defined as in \cref{def:reflection_functor}
		\labelcref{def:reflection_functor_2} using $D_x$, $(D_{x, y})_{y \in J_x}$, and
		$(\rho_{x, y})_{y \in J_x}$. 
		
		Let $X$ be an orientation of $G$. Let $x=(x_1,x_2,\dots,x_n)$ be a finite sequence in $V$.
		For $i=0, 1, \dots, n$, define $X_i$ recursively by
		\[
		X_i:=
		\begin{cases}
			X & \text{if $i=0$},\\
			\sigma_{x_i}X_{i-1} & \text{if $i \geq 1$}.
		\end{cases}
		\]
		If $x$ is an admissible sequence of open points in $X$, define a functor
		\[
		S_{x}^X
		\colon
		\Cstar(X) \to \Cstar(X_n)
		\]
		by
		\[
		S_{x}^X
		:=
		S_{x_n}^{X_{n-1}}
		S_{x_{n-1}}^{X_{n-2}}
		\cdots
		S_{x_1}^{X_0}.
		\]
		If $x$ is an admissible sequence of closed points in $X$, define a functor
		\[
		S_X^{x}
		\colon
		\Cstar(X) \to \Cstar(X_n)
		\]
		by
		\[
		S_X^{x}
		:=
		S_{X_{n-1}}^{x_n}
		S_{X_{n-2}}^{x_{n-1}}
		\cdots
		S_{X_0}^{x_1}.
		\]

		\begin{cor}\label{cor:Coxeter_functor_independence}
			Let $X$ be a finite $T_0$-space whose Hasse diagram is an orientation
			of $G$.
			Let $x=(x_1, x_2, \dots, x_n)$ be a sequence in $V$, and let \[f \colon \{1, 2, \dots, n\} \to \{1, 2, \dots, n\}\] 
			be a bijection.
			Set
			\[
			x_f:=(x_{f(1)}, x_{f(2)}, \dots, x_{f(n)}).
			\]
			Define a finite $T_0$-space
			\[Y:=\sigma_{x_n}\sigma_{x_{n-1}}\cdots\sigma_{x_1}X=\sigma_{x_{f(n)}}\sigma_{x_{f(n-1)}}\cdots\sigma_{x_{f(1)}}X. \]
			Then the following equalities hold in $[\Cstar(X), \Cstar(Y)]$. 
			\begin{enumerate}[label=\textnormal{(\arabic*)}]
				\item \label{cor:Coxeter_functor_independence_1}
				If $x$ and $x_f$ are admissible sequences of open points in $X$, then
				\[
				S_{x}^X=S_{x_f}^X. 
				\]
				
				\item \label{cor:Coxeter_functor_independence_2}
				If $x$ and $x_f$ are admissible sequences of closed points in $X$, then
				\[
				S_X^{x}=S_X^{x_f}. 
				\]
			\end{enumerate}
		\end{cor}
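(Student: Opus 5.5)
The plan is to realize the composite $S_x^X$ as a single higher-dimensional reflection functor $S_{\alpha}^{\beta}$ for a suitable instance of the data of \cref{section:higher-dimensional_reflection_functors}. Once that is done, \cref{cor:Salphabeta_composition_reflection_functors} shows that any ordering gives the same functor. This works directly when the entries of $x$ are pairwise distinct. The general case reduces to a block decomposition.

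\textbf{Setting up the data.} First suppose $x_1,\dots,x_n$ are distinct, and put $I:=\{x_1,\dots,x_n\}\subset V$. Subdivide the tree so that every vertex of $I$ becomes a node of type $I$. Concretely, contract each connected component of the subgraph of $G$ induced by $V\setminus I$ to one $J$-vertex; call the set of these components $J$. For $i\in I$, let $J_i$ consist of the components adjacent to $i$. When two elements of $I$ are adjacent in $G$, we must insert an artificial component. To avoid this, I would instead take $J$ to be the set of edges of $G$ together with the components. A cleaner choice is to let $J$ index the vertices $y\in V$ adjacent to some point of $I$, grouped so that $G$ becomes bipartite. The expected finish: with $K_i:=\{i\}$, $L_{\mathbf j}$ the subposets given by the components, and $\theta_{i,\mathbf j}$ the neighbour maps, \cref{lem:reconstruction_Walpha} and \cref{rem:BGP-reflection} identify $W^{\alpha}=X$ and $W^{\beta}=Y$. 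Here $\alpha(i)=\open$ for every $i\in I$ in case (1), and $\beta(i)=\closed$ for every $i\in I$.

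For each $k$, \cref{prop:higher-dimensional_reflection_functor_singleton} identifies the singleton step $S_{\alpha_{k-1}}^{\alpha_k}$ with $S_{x_k}^{X_{k-1}}$. This requires checking that the subspace $X_{k-1}$ and the data $L_{[j]}$ attached at $x_k$ are exactly the components used in \cref{rem:BGP-reflection}. That check is essentially \cref{cor:Walpha_Wc_Wo}. In case (2), the $D$-data $D_x$, $D_{x,y}$, $\rho_{x,y}$ must also match $D_i^{\alpha,\beta}$, $D_{[j]}^{\alpha,\beta}$ and $\rho^{\alpha,\beta}$. Because $I^{\alpha_{k-1},\alpha_k}_{\closed,\open}=\{x_k\}$, the tensor products collapse to $D_{x_k}$, $D_{x_k,y}$ and $\rho_{x_k,y}$. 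Hence \cref{cor:Salphabeta_composition_reflection_functors} gives $S_x^X=S_\alpha^\beta$. The same holds for $x_f$, with the bijection $k\mapsto x_{f(k)}$ in place of $k\mapsto x_k$, and this proves the claim.

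\textbf{Repeated entries.} Admissibility forces an alternation: between two occurrences of the same $x$, every neighbour of $x$ must itself have been reflected. I would try to split $x$ into maximal blocks of distinct entries, for example by first occurrence rounds, and compare blocks. This comparison is the step I expect to be the main obstacle. A permutation $f$ that keeps both sequences admissible need not preserve such blocks. So I would instead argue by adjacent transpositions. Any two admissible orderings of the same multiset should be connected by swaps of consecutive entries $x_k,x_{k+1}$ that are distinct and non-adjacent in $G$; this is the classical BGP argument. Each such swap is handled by applying the distinct case to the two-element sequence $(x_k,x_{k+1})$ on $X_{k-1}$. That settles (1), and (2) follows identically.
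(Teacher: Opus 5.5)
Your last paragraph is, in substance, the paper's proof. The paper applies \cref{thm:rearrangement_reflection_functors} to two distinct open points $y,z$ of an orientation $Z$ to get $S_z^{\sigma_yZ}S_y^Z=S_y^{\sigma_zZ}S_z^Z$, and the analogue for closed points. It then invokes the combinatorial argument of BGP to pass from $x$ to $x_f$ by admissible transpositions of consecutive entries.

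Your first step does not work. It claims that for pairwise distinct entries, $S_x^X$ is a single $S_\alpha^\beta$ with $\alpha\equiv\open$ and $\beta\equiv\closed$ on $I=\{x_1,\dots,x_n\}$. But distinct entries of an admissible sequence may be adjacent in $G$, and a later entry $x_k$ need only be open in $X_{k-1}$, not in $X$. For example, let $G$ be the path with edges $\{a,b\}$ and $\{b,c\}$, oriented so that $a,c$ are open and $b$ is closed in $X$. Then $(a,c,b)$ and $(c,a,b)$ are both admissible with distinct entries. Yet $b$ is closed in $X$, so no $\alpha$ with $\alpha(b)=\open$ has $W^{\alpha}=X$; moreover $I=V$ leaves no room for $J$-vertices. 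This is not a bookkeeping issue that ``artificial components'' or a regrouping of $J$ could fix. In \cref{section:higher-dimensional_reflection_functors}, $I$ is necessarily an independent set of $G$, because every $L_j$ is nonempty ($\theta_{i,j}\colon K_i\to L_j$ exists with $K_i\neq\emptyset$). Each index is also flipped exactly once from its initial value, whereas reflecting at $a$ changes the open/closed status of its neighbour $b$.

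The repair is to drop the first step and run your transposition argument for every sequence, not only those with repetitions. Whenever swapping consecutive entries $x_k\neq x_{k+1}$ preserves admissibility, both are open in $X_{k-1}$. Hence they are non-adjacent, since adjacent vertices cannot both be sources. So the two-point realization with $I=\{x_k,x_{k+1}\}$, $K_i=\{i\}$, and $J$ the components of $G\setminus I$ is legitimate, and it yields the needed local commutation.

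The combinatorial claim you hedge on is true. Let $v$ be the first entry of $x_f$, and let $m$ be the first index with $x_m=v$. Since $v$ is open in $X$, no neighbour $w$ of $v$ can be reflected before $v$: at its first reflection the edge $\{v,w\}$ would still point from $v$ to $w$, so $w$ would not be a source. Thus neither $v$ nor any neighbour of $v$ occurs among $x_1,\dots,x_{m-1}$. Hence $x_m$ can be moved to the front through admissible swaps with non-adjacent entries, and induction on $n$ then applies to $\sigma_vX$. The closed case is identical with sinks in place of sources.
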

		
		\begin{proof}
			By \cref{thm:rearrangement_reflection_functors}, for an orientation $Z$ of $G$ and open points $y, z$ in $Z$ with $y \neq z$, we have
			\[
			S_z^{\sigma_yZ}S_y^Z
			=
			S_y^{\sigma_zZ}S_z^Z.
			\]
			Similarly, for closed points $y, z$ in $Z$ with $y \neq z$, we have
			\[
			S_{\sigma_yZ}^zS_Z^y
			=
			S_{\sigma_zZ}^yS_Z^z.
			\]
			The assertions now follow from the combinatorial argument in the proof of \cite[Lemma~1.2(3)]{BGP_1973}.
		\end{proof}

		\section{Application of reflection functors to filtrated K-theory}\label{section:application_of_reflection_functors_to_filtrated_K-theory}
		
		In this section, we apply reflection functors to filtrated K-theory and obtain the UCT for C*-algebras over accordion spaces. 
		
		In \cref{subsection:general_machinery}, we define filtrated K-theory in terms of functor categories. When one studies whether a given homological invariant satisfies the UCT, the essential point is to verify that the values of the invariant admit projective resolutions of length $1$. We introduce general machinery that is useful for this verification; see \cref{lem:isomorphism_full_subcategories,lem:projective_resolution_iff}.
		
		In \cref{subsection:reflection_of_connected_locally_closed_subsets}, we study the composites obtained by applying filtrated K-theory functors after reflection functors in a particular setting. The main results of this subsection are \cref{thm:isomorphism_NT,cor:equivalence_UCT}. 
		
		In \cref{subsection:classification_of_Cstar-algebras_over_accordion_spaces}, combining \cref{cor:equivalence_UCT} with the classification result of Meyer and Nest in \cite{MN_2012}, we prove that filtrated K-theory satisfies the UCT for C*-algebras over accordion spaces; see \cref{thm:UCT_accordion}. 
		
		As observed in \cref{subsection:application_to_bivariant_K-theory_1}, E-theory is better suited to reflection functors than KK-theory. 
		By \cite[Theorem~7.1]{Gabe_2022}, Kirchberg's classification theorem \cite{Kirchberg_2000} (see \cite[Theorem~G]{Gabe_2024}) remains valid with KK-theory replaced by E-theory. 
		Moreover, the classification results in \cite{MN_2012} also remain valid with KK-theory replaced by E-theory due to the general machinery in \cite{MN_2010}. 
		For these reasons, we use a formulation in terms of E-theory in this section. 
		Readers who wish to interpret the results in terms of KK-theory may replace $\Ecat(\blank)$ by $\KKcat(\blank)$ and replace $\Boot_{\E}(\blank)$ by $\Boot(\blank)$. However, in \cref{thm:isomorphism_NT}, $\Ecat(\blank)$ must be replaced by $\KKcat(\blank)_{\loc}$.

		\subsection{General machinery}\label{subsection:general_machinery}
		We first define filtrated K-theory in terms of functor categories. 
		
		For preadditive categories $\frakB$ and $\frakA$, let $\Add(\frakB, \frakA)$ be the full subcategory of $[\frakB, \frakA]$ consisting of additive functors. Then $\Add(\frakB, \frakA)$ is a preadditive category with respect to the pointwise structure. If $\frakA$ is an abelian category, then so is $\Add(\frakB, \frakA)$ with respect to the pointwise structure. 
		
		Let $\frakB$ and $\frakA$ be preadditive categories and let $\calN$ be a full subcategory of $\Add(\frakB, \frakA)$. Then $\calN$ is also a preadditive category. 
		Using the identification $\Add(\frakB, \Add(\calN, \frakA))=\Add(\calN, \Add(\frakB, \frakA))$, we define an additive functor 
		\[I_{\calN} \colon \frakB \to \Add(\calN, \frakA)\]
		as the inclusion functor $\calN \hookrightarrow \Add(\frakB, \frakA)$. 
		
		Let $\Ab$ denote the abelian category whose objects are abelian groups and whose morphisms are group homomorphisms. Let $\Abc$ denote the full subcategory of $\Ab$ consisting of countable abelian groups. Then $\Abc$ is also an abelian category. 
		
		Let $\Ktheory_0 \colon \Cstar \to \Ab$ denote the $\Ktheory_0$-functor. Let $\Ktheory_1:=\Ktheory_0 S \colon \Cstar \to \Ab$. Recall that $\Ktheory_1 S \simeq \Ktheory_0$ in $[\Cstar, \Ab]$ by Bott periodicity. For $i \in \{0, 1\}$, the functor $\Ktheory_i \colon \Cstar \to \Ab$ restricts to a functor $\Ktheory_i \colon \SCstar \to \Abc$, which induces an additive functor $\Ktheory_i \colon \Ecat \to \Abc$. 
		
		\begin{defn}
			Let $X$ be a finite $T_0$-space. 
			Let $\LCc(X)$ denote the set of all connected locally closed subsets of $X$. 
			For each $(i, Y) \in \{0, 1\} \times \LCc(X)$, we associate the additive functor 
			\[\FK^X_{i, Y}:=\Ktheory_i \ev_X^Y \colon \Ecat(X) \xrightarrow{\ev_X^Y} \Ecat \xrightarrow{\Ktheory_i} \Abc, \]
			where $\ev_X^Y \colon \Ecat(X) \to \Ecat$ denotes the functor induced by the evaluation functor $\ev_X^Y \colon \SCstar(X) \to \SCstar$  at $Y$ as in \cref{lem:induced_functor_KK_E}. 
			Let $\NT(X)$ denote the full subcategory of $\Add(\Ecat(X), \Abc)$ consisting of all the objects $\FK^X_{i, Y}$ for $(i, Y) \in \{0, 1\} \times \LCc(X)$. 
			Then $\NT(X)$ is a preadditive category. Following the notation in \cite{MN_2012}, we write 
			\[\Mod(\NT(X)):=\Add(\NT(X), \Abc). \] 
			Then $\Mod(\NT(X))$ is an abelian category. 
			We define an additive functor
			\[\FK^X:=I_{\NT(X)} \colon \Ecat(X) \to \Mod(\NT(X)), \] 
			which we call \textit{filtrated K-theory}. 
		\end{defn}
		
		\begin{rem}
			The definition above differs from the original definition of filtrated K-theory due to Meyer and Nest \cite{MN_2012} in several respects. First, it is formulated in terms of E-theory rather than KK-theory. Namely, we use $\Ecat(X)$, rather than $\KKcat(X)$, as the domain of filtrated K-theory. 
			Second, we use $\LCc(X)$ instead of $\LC(X)$. However, as explained by \cite{MN_2012}, and more explicitly by \cite[Remark~3.1]{BK_2011}, one may restrict from $\LC(X)$ to $\LCc(X)$ without changing filtrated K-theory in any essential way. The restriction to connected locally closed subsets will play an essential role in the arguments of \cref{subsection:reflection_of_connected_locally_closed_subsets}; see \cref{lem:connected_locally_closed_subsets_Wc_Wo}.
			Third, we use the set $\{0, 1\}$ instead of a $\Z/2\Z$-grading. As explained by \cite{ABE_2014}, this formulation does not change filtrated K-theory in any essential way. Since this formulation is more concise, we adopt it throughout this paper.
		\end{rem}
		
		In view of \cite[Theorem~4.5]{MN_2012}, the essential point for the UCT to hold is whether $\FK^X(A)$ admits a projective resolution of length $1$ for every $A \in \Boot_{\E}(X)$. By \cite[Theorem~4.9]{MN_2012}, this is the case when $X$ arises from a totally ordered set. 
		Bentmann and Köhler extended this result to accordion spaces in \cite{BK_2011} by reducing the problem to the totally ordered case.
		By applying the following lemma to reflection functors, we will obtain a simpler way to carry out this reduction.

		\begin{lem}\label{lem:isomorphism_full_subcategories}
			Let $\frakB_1$, $\frakB_2$, and $\frakA$ be preadditive categories. Let $\Omega_1$ and $\Omega_2$ be sets. Let $(F^{(1)}_i)_{i \in \Omega_1}$ and $(F^{(2)}_j)_{j \in \Omega_2}$ be families of objects
			of $\Add(\frakB_1, \frakA)$ and $\Add(\frakB_2, \frakA)$, respectively, such that
			$F^{(1)}_i \simeq F^{(1)}_{i'}$ implies $i=i'$ for all $i,i' \in \Omega_1$, and
			$F^{(2)}_j \simeq F^{(2)}_{j'}$ implies $j=j'$ for all $j,j' \in \Omega_2$.
			Let $\calN_1$ and $\calN_2$ be the full subcategories of $\Add(\frakB_1, \frakA)$ and $\Add(\frakB_2, \frakA)$ whose object sets are
			$\{F^{(1)}_i \mid i \in \Omega_1 \}$ and $\{F^{(2)}_j \mid j \in \Omega_2 \}$, respectively.
			Suppose we are given an equivalence $G \colon \frakB_1 \to \frakB_2$ of categories, and a bijection $f \colon \Omega_2 \to \Omega_1$. If $F^{(2)}_j G \simeq F^{(1)}_{f(j)}$ in $\Add(\frakB_1, \frakA)$ for every $j \in \Omega_2$, then there exists an isomorphism
			\[\Phi \colon \calN_2 \to \calN_1 \] 
			of preadditive categories, which makes the diagram
			\[
			\begin{tikzcd}
				\frakB_1 \ar{r}{G}[swap]{\simeq} \ar[swap]{d}{I_{\calN_1}} & \frakB_2 \ar{d}{I_{\calN_2}}  \\
				\Add(\calN_1, \frakA) \ar{r}[swap]{\Phi^*}{\simeq} & \Add(\calN_2, \frakA)
			\end{tikzcd}
			\]
			commute up to natural isomorphism. 
		\end{lem}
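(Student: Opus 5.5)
The plan is to build $\Phi$ directly from composition with $G$. First fix the data: a quasi-inverse $H \colon \frakB_2 \to \frakB_1$ of $G$, natural isomorphisms $GH \simeq \id$ and $HG \simeq \id$, and for each $j \in \Omega_2$ an isomorphism $\xi_j \colon F^{(2)}_j G \Rightarrow F^{(1)}_{f(j)}$ in $\Add(\frakB_1, \frakA)$. On objects, set $\Phi(F^{(2)}_j) := F^{(1)}_{f(j)}$. This is well defined and bijective on objects, because the families are pairwise non-isomorphic and hence distinct as objects of $\calN_1,\calN_2$, and because $f$ is a bijection. On morphisms, send a natural transformation $\zeta \colon F^{(2)}_j \Rightarrow F^{(2)}_{j'}$ to
\[\Phi(\zeta) := \xi_{j'} \circ \zeta G \circ \xi_j^{-1} \colon F^{(1)}_{f(j)} \Rightarrow F^{(1)}_{f(j')}.\]
Functoriality and additivity are immediate, since whiskering by $G$ is additive and compatible with vertical composition, and conjugation by the fixed isomorphisms $\xi_j$ preserves identities, composition and sums.

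Next, show that $\Phi$ is fully faithful. The main step is that precomposition $G^* \colon \Add(\frakB_2, \frakA) \to \Add(\frakB_1, \frakA)$ is fully faithful, since $G$ is an equivalence. Note that $G^*$ lands in additive functors because an equivalence between preadditive categories is additive. An inverse on Hom-sets is given by $\zeta' \mapsto$ the composite $F \simeq FGH \xRightarrow{\zeta' H} F'GH \simeq F'$. I expect the main care to be needed here: I would verify that this map is inverse to $\zeta \mapsto \zeta G$ using naturality of $\zeta$ together with the triangle identities. To make these available, I would first promote the equivalence $G$ to an adjoint equivalence.

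Since conjugation by the $\xi_j$ is bijective on Hom-sets, $\Phi$ is bijective on Hom-sets. Together with the bijectivity on objects, this makes $\Phi$ an isomorphism of preadditive categories.

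Finally, check that the square commutes up to natural isomorphism. For $B \in \frakB_1$, we have $\Phi^* I_{\calN_1}(B) \colon F^{(2)}_j \mapsto F^{(1)}_{f(j)}(B)$, while $I_{\calN_2}G(B) \colon F^{(2)}_j \mapsto F^{(2)}_j(GB)$. The components $(\xi_j)_B^{-1}$ then define the required isomorphism. It is natural in the $\calN_2$-variable precisely because of how $\Phi$ was defined on morphisms: $\Phi(\zeta)_B \circ (\xi_j)_B = (\xi_{j'})_B \circ \zeta_{GB}$. It is natural in $B$ because each $\xi_j$ is a natural transformation.

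Because $\Phi$ is an isomorphism, $\Phi^*$ is an isomorphism as well, with inverse $(\Phi^{-1})^*$. Together with the equivalence $G$, this gives the indicated equivalences in the square.
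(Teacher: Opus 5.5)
Your proposal is correct and follows the paper's proof essentially step for step. It defines $\Phi(\zeta)=\xi_{j'}\circ\zeta G\circ\xi_j^{-1}$, gets bijectivity on objects from $f$ and full faithfulness from $G$ being an equivalence, and uses the components $(\xi_j)_B$ (or their inverses) to give $I_{\calN_2}G\simeq\Phi^*I_{\calN_1}$.

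One aside in your argument is false, though harmless. An equivalence between merely preadditive categories need not be additive: for example, $x\mapsto x^3$ is a non-additive automorphism of the one-object category with endomorphism ring $\mathbb{F}_5$. You never actually need $G^*$ to preserve additivity, however. Precomposition with an equivalence is fully faithful on all functors, and $F^{(2)}_jG$ is additive by hypothesis, since it is isomorphic to $F^{(1)}_{f(j)}$.
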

		
		\begin{proof} 
			For each $j \in \Omega_2$, choose an isomorphism $\varphi_j \colon F^{(2)}_j G \Rightarrow F^{(1)}_{f(j)}$ in $\Add(\frakB_1, \frakA)$. 
			
			We define a functor $\Phi \colon \calN_2 \to \calN_1$ as follows. 
			For each $j \in \Omega_2$, we define $\Phi(F^{(2)}_j):=F^{(1)}_{f(j)}$. 
			For $j, j' \in \Omega_2$ and a morphism $\xi \colon F^{(2)}_j \Rightarrow F^{(2)}_{j'}$ in $\calN_2$, we define a morphism $\Phi(\xi) \colon \Phi(F^{(2)}_j) \Rightarrow \Phi(F^{(2)}_{j'})$ in $\calN_1$ to be the composite
			\[\Phi(F^{(2)}_j) \xRightarrow{\varphi_j^{-1}} F^{(2)}_jG \xRightarrow{\xi G} F^{(2)}_{j'}G \xRightarrow{\varphi_{j'}} \Phi(F^{(2)}_{j'}). \]
			Since the objects $F^{(2)}_j$ for $j \in \Omega_2$ are pairwise distinct, these assignments define $\Phi$ without ambiguity. It is straightforward to verify that $\Phi$ is an additive functor. 
			
			Since $f$ is bijective, $\Phi$ is bijective-on-objects. Since $G$ is an equivalence of categories, $\Phi$ is fully faithful. Thus, $\Phi$ is an isomorphism of preadditive categories. 
			
			Since $\Phi$ is additive, the functor $\Phi^* \colon [\calN_1, \frakA] \to [\calN_2, \frakA]$ given by composing $\Phi$ on the right restricts to a functor
			\[\Phi^* \colon \Add(\calN_1, \frakA) \to \Add(\calN_2, \frakA). \]
			Since $\Phi$ is isomorphism of preadditive categories, so is $\Phi^*$. 
			
			For $B \in \frakB_1$ and $j \in \Omega_2$, we have
			\[I_{\calN_2} G(B)(F^{(2)}_j)=F^{(2)}_jG(B), \quad \Phi^* I_{\calN_1}(B)(F^{(2)}_j)=F^{(1)}_{f(j)}(B). \]
			We know that the isomorphisms $(\varphi_j)_B \colon F^{(2)}_jG(B) \to F^{(1)}_{f(j)}(B)$ in $\frakA$ are natural in $B \in \frakB_1$. By the definition of $\Phi$, the isomorphisms $(\varphi_j)_B$ are natural in $F^{(2)}_j \in \calN_2$. It follows that the isomorphisms $(\varphi_j)_B$ for $B \in \frakB_1$ and $j \in \Omega_2$ define an isomorphism $I_{\calN_2} G \Rightarrow \Phi^* I_{\calN_1}$ in $[\frakB_1, \Add(\calN_2, \frakA)]$. 
		\end{proof}
		
		In the setting of \cref{lem:isomorphism_full_subcategories}, suppose that $\frakA$ is an abelian category. Then
		\[\Phi^* \colon \Add(\calN_1, \frakA) \to \Add(\calN_2, \frakA) \]
		is an isomorphism of abelian categories. Therefore, projective resolutions in $\Add(\calN_1, \frakA)$ and $\Add(\calN_2, \frakA)$ are preserved via the isomorphism $\Phi^*$. Using also that $G$ is an equivalence of categories and an isomorphism $I_{\calN_2} G \simeq \Phi^* I_{\calN_1}$ in $[\frakB_1, \Add(\calN_2, \frakA)]$, we can easily verify the following lemma, which reduces the amount of verification of the existence of projective resolutions. 
		
		\begin{lem}\label{lem:projective_resolution_iff}
			In the setting of \textup{\cref{lem:isomorphism_full_subcategories}}, suppose that $\frakA$ is an abelian category. Let $n=0, 1, 2, \dots$. Then $I_{\calN_1}(B)$ has a projective resolution of length $n$ in $\Add(\calN_1, \frakA)$ for every $B \in \frakB_1$ if and only if $I_{\calN_2}(A)$ has a projective resolution of length $n$ in $\Add(\calN_2, \frakA)$ for every $A \in \frakB_2$.
		\end{lem}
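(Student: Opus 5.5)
We prove the two implications separately; by the symmetry of the setting (replace $G$ by a quasi-inverse and use $f^{-1}$) it suffices to prove the ``only if'' direction carefully, and we also spell out the transfer step explicitly. The plan is to transport projective resolutions along the isomorphism $\Phi^*$ of abelian categories from \cref{lem:isomorphism_full_subcategories} and then along the essential surjectivity of $G$.

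First, suppose that $I_{\calN_1}(B)$ has a projective resolution of length $n$ in $\Add(\calN_1, \frakA)$ for every $B \in \frakB_1$. Let $A \in \frakB_2$. Since $G$ is an equivalence of categories, it is essentially surjective, so we choose $B \in \frakB_1$ and an isomorphism $G(B) \simeq A$ in $\frakB_2$. Applying the additive functor $I_{\calN_2}$ gives $I_{\calN_2}(A) \simeq I_{\calN_2}G(B)$. By the natural isomorphism $I_{\calN_2}G \simeq \Phi^* I_{\calN_1}$ from \cref{lem:isomorphism_full_subcategories}, we obtain $I_{\calN_2}(A) \simeq \Phi^*(I_{\calN_1}(B))$ in $\Add(\calN_2, \frakA)$.

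Next, take a projective resolution $0 \to P_n \to \cdots \to P_0 \to I_{\calN_1}(B) \to 0$ in $\Add(\calN_1, \frakA)$. Since $\Phi^*$ is an isomorphism of abelian categories (with inverse $(\Phi^{-1})^*$), it is exact and preserves projective objects: a projective $P$ maps to an object whose $\operatorname{Hom}$-functor is $\operatorname{Hom}(P, (\Phi^{-1})^*(\blank))$, which is exact. Hence applying $\Phi^*$ yields a projective resolution of length $n$ of $\Phi^*(I_{\calN_1}(B))$. Splicing in the isomorphism above, by composing the augmentation with it, gives a projective resolution of length $n$ of $I_{\calN_2}(A)$.

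For the converse, we run the same argument with the roles reversed. Let $B \in \frakB_1$. The isomorphism $I_{\calN_2}G \simeq \Phi^* I_{\calN_1}$ gives $I_{\calN_1}(B) \simeq (\Phi^{-1})^*(I_{\calN_2}(G(B)))$. A resolution of $I_{\calN_2}(G(B))$ of length $n$ then transports through the exact, projective-preserving functor $(\Phi^{-1})^*$ to a resolution of $I_{\calN_1}(B)$ of the same length. The only point requiring care is the preservation of projectives and exactness by $\Phi^*$, which is immediate from its being an isomorphism of categories compatible with the additive structure; no step is a genuine obstacle.
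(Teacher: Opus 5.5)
Your proposal is correct and takes essentially the same route the paper indicates: transport projective resolutions along the isomorphism of abelian categories $\Phi^*$ (and its inverse $(\Phi^{-1})^*$), using the natural isomorphism $I_{\calN_2}G \simeq \Phi^* I_{\calN_1}$ from \cref{lem:isomorphism_full_subcategories}. For the direction from $\frakB_1$ to $\frakB_2$ you also need the essential surjectivity of $G$, and you use it correctly.
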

		
		For $i, j \in \{0, 1\}$, $Y \in \LCc(X)$, and $x \in X$, we have
		\[
		\FK^X_{i,Y} S^j i_{\{x\}}^X\C \simeq
		\begin{cases}
			\Z & \text{if $i=j$ and $x \in Y$},\\
			0 & \text{otherwise}.
		\end{cases}
		\]
		Using this calculation, we can easily check the following lemma, which allows us to apply \cref{lem:isomorphism_full_subcategories,lem:projective_resolution_iff} to filtrated K-theory. 
		
		\begin{lem}\label{lem:filtrated_K-theory_essentially_injective}
			Let $X$ be a finite $T_0$-space. Let $(i, Y), (i', Y') \in \{0, 1\} \times \LCc(X)$. If $\FK^X_{i, Y} \simeq \FK^X_{i', Y'}$ in $\Add(\Ecat(X), \Abc)$, then $(i, Y)=(i', Y')$. 
		\end{lem}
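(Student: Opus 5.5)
The plan is to use the displayed computation of $\FK^X_{i,Y}$ on the test objects $S^j i_{\{x\}}^X\C$ to tell the functors apart. Suppose $\FK^X_{i,Y} \simeq \FK^X_{i',Y'}$ in $\Add(\Ecat(X), \Abc)$. A natural isomorphism gives, for every object $A$ of $\Ecat(X)$, an isomorphism of abelian groups $\FK^X_{i,Y}(A) \simeq \FK^X_{i',Y'}(A)$. In particular, whether $\FK^X_{i,Y}(A)$ vanishes is an invariant of the isomorphism class of the functor. We evaluate on the objects $A = S^j i_{\{x\}}^X\C$ for $j \in \{0,1\}$ and $x \in X$. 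These are separable C*-algebras over $X$, since $\{x\}$ is locally closed in the finite $T_0$-space $X$ and $i_{\{x\}}^X$ sends $\SCstar(\{x\})=\SCstar$ to $\SCstar(X)$.

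\emph{Step 1 (recover $Y$).} Consider the pairs $(j,x)$ with $j=i$. By the displayed formula, $\FK^X_{i,Y}(S^i i_{\{x\}}^X\C)$ is nonzero exactly when $x \in Y$. So the set $\{x \in X \mid \FK^X_{i,Y}(S^{j}i_{\{x\}}^X\C)\neq 0 \text{ for some } j\}$ equals $Y$. To justify this, recall that the formula is nonzero only when $j=i$. The same set computed for $\FK^X_{i',Y'}$ equals $Y'$, and the isomorphism of functors makes the two sets coincide. Hence $Y=Y'$.

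\emph{Step 2 (recover $i$).} Since $Y \in \LCc(X)$ is connected, it is nonempty by the paper's convention, so we may choose $x \in Y$. Then $\FK^X_{i,Y}(S^{j}i_{\{x\}}^X\C)\neq 0$ holds if and only if $j=i$. Applying the same criterion to $\FK^X_{i',Y}$ gives $j=i'$, so $i=i'$.

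The only point needing care is the displayed calculation itself, which the paper asserts just before the lemma. If one wanted to check it, the route would be the following.
\begin{itemize}
\item Use \cref{lem:evaluation_extension}, which gives $\ev_X^Y i_{\{x\}}^X = \ev_{\{x\}}^{\{x\}\cap Y}$.
\item The right-hand side evaluates $S^j\C$ to itself if $x\in Y$ and to $0$ otherwise.
\item Then $\Ktheory_i(S^j\C)$ is $\Z$ for $i=j$ and $0$ for $i\neq j$, with $i,j\in\{0,1\}$, by Bott periodicity and $\Ktheory_1(\C)=0$.
\end{itemize}
I expect this step to be routine, and the argument carries over unchanged to $\KKcat(X)$.
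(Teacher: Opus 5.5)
Your proposal is correct and follows the paper's approach. The paper also separates the functors $\FK^X_{i,Y}$ by evaluating them on the test objects $S^j i_{\{x\}}^X\C$, recovering $Y$ and $i$ from where the value is $\Z$ rather than $0$; your justification of that computation via \cref{lem:evaluation_extension} and Bott periodicity is accurate.
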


		\begin{rem}		
			Bentmann and Meyer classify objects in the bootstrap class that have projective resolutions of length $2$ using a homological invariant together with an obstruction class; see \cite{MR_2017}. \cref{lem:isomorphism_full_subcategories,lem:projective_resolution_iff} are useful for verifying the existence of projective resolutions of length $2$. The application of reflection functors to their results will be pursued in future work.
		\end{rem}

		\subsection{Reflection of connected locally closed subsets}\label{subsection:reflection_of_connected_locally_closed_subsets}
		
		Throughout this subsection, we fix a non-empty finite set $J$, a finite partially ordered set $K$, finite partially ordered sets $L_j$ for $j \in J$, and order-preserving maps $\theta_j \colon K \to L_j$ for $j \in J$. For $\alpha \in \{\closed, \open\}$, let $W^{\alpha}$ be the finite $T_0$-space defined as in \cref{def:Wc_Wo}. 
		
		Throughout this subsection, we assume that $K$ consists of a single element, denoted by $x$. For each $j \in J$, put 
		\[y_j:=\theta_j(x) \in L_j. \] 
		In this case, we can describe all connected locally closed subsets of $W^{\closed}$ and $W^{\open}$ as follows. 
		For each non-empty subset $F \subset J$, put
		\[
		\calL_{F}:=\Bigg\{\{x\} \amalg \coprod_{j \in F}Y_j \Bigm| \text{$Y_j \in \LCc(L_j)$, $y_j \in Y_j$ for all $j \in F$} \Bigg\}.
		\]
		
		\begin{lem}\label{lem:connected_locally_closed_subsets_Wc_Wo}
			For $\alpha \in \{\closed, \open\}$, we have
			\[
			\LCc(W^{\alpha})=\{\{x\}\} \amalg \coprod_{j \in J}\LCc(L_j) \amalg \coprod_{\emptyset \neq F \subset J} \calL_{F}.
			\]
		\end{lem}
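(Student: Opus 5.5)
Throughout, fix $\alpha \in \{\closed, \open\}$ and write $R:=R_{W^{\alpha}}$. The plan is to prove the two inclusions separately. For both directions we work with the preorder $R$ directly, using \cref{lem:connected_Alexandrov_space} for connectedness and the criterion $(R \times Y) \cap (Y \times R) \subset Y \times Y \times Y$ for local closedness. One fact will be used repeatedly. Since $K=\{x\}$, the only relations between different blocks are those between $x$ and elements $y \in L_j$. For $\alpha=\closed$ these are $(x,y)$ with $y_j \leq y$; for $\alpha=\open$ they are $(y,x)$ with $y \leq y_j$. There are no relations between $L_j$ and $L_{j'}$ for $j \neq j'$. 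The three pieces $\{\{x\}\}$, $\LCc(L_j)$ and $\calL_F$ on the right-hand side are pairwise disjoint. This is immediate, because they are distinguished by whether $x \in Y$ and by which $L_j$ the set $Y$ meets.

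For the inclusion $\supset$, first note that $\{x\}$ is a point, so it is connected and locally closed. Next take $Y \in \LCc(L_j)$. Since $L_j \in \Open(W^{\closed})$ and $L_j \in \Closed(W^{\open})$ by \cref{prop:Lj_open_closed}, $Y$ is locally closed in a locally closed subset of $W^{\alpha}$, hence is in $\LC(W^{\alpha})$. The relative topology on $L_j$ agrees with the one given by $R_{L_j}$, so $Y$ is connected. Finally, for $Y=\{x\} \amalg \coprod_{j \in F} Y_j \in \calL_F$, connectedness follows from \cref{lem:connected_Alexandrov_space}: each $Y_j$ is connected, and $y_j \in Y_j$ is related to $x$. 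For local closedness, suppose $(a,b,c) \in (R \times Y) \cap (Y \times R)$ with $a,c \in Y$. We check $b \in Y$ by cases on which blocks contain $a$, $b$, $c$. If all three lie in one $L_j$, this is local closedness of $Y_j$. Otherwise the chain must pass through $x$. In the closed case, for example, $a=x$ and $b,c \in L_j$ give $y_j \leq b \leq c$ with $y_j, c \in Y_j$, so $b \in Y_j$. The case $a,b \in L_j$ with $c=x$ cannot occur for $\alpha=\closed$, since no element of $L_j$ lies below $x$. A chain with $a \in L_j$, $c \in L_{j'}$ and $j \neq j'$ is also impossible. The open case is symmetric.

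For the inclusion $\subset$, let $Y \in \LCc(W^{\alpha})$. If $x \notin Y$, then $Y \subset \coprod_j L_j$. There are no relations between distinct $L_j$, so connectedness of $Y$ forces $Y \subset L_j$ for a single $j$. Then $Y \in \LC(L_j)$ because it is an intersection of locally closed sets, and $Y$ is connected, so $Y \in \LCc(L_j)$. If $x \in Y$, set $F:=\{j \in J \mid Y \cap L_j \neq \emptyset\}$ and $Y_j:=Y \cap L_j$. If $F=\emptyset$, then $Y=\{x\}$. Otherwise each $Y_j$ is locally closed in $L_j$, and we still have to show that $y_j \in Y_j$ and that $Y_j$ is connected.

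This last step is where we expect the main obstacle. The claim is that every connected component $C$ of $Y_j$ contains $y_j$. Take $\alpha=\closed$. Since $Y$ is connected and the only relations leaving $L_j$ go through $x$, the component $C$ must contain some $c$ with $(x,c) \in R$, that is, $y_j \leq c$. Local closedness applied to the chain $x \leq y_j \leq c$, with $x, c \in Y$, gives $y_j \in Y$. Moreover $c$ and $y_j$ are related, so they lie in the same component, and hence $y_j \in C$. Every component of $Y_j$ therefore contains $y_j$, so there is only one component and $Y_j$ is connected. The open case is dual, using the chain $c \leq y_j \leq x$. Hence $Y \in \calL_F$, which completes the proof.
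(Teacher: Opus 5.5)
Your proof is correct and follows essentially the same route as the paper. For $\supset$, you check connectedness and the local-closedness criterion case by case. For $\subset$, you apply local closedness of $Y$ to the chain through $x$ and $y_j$ to get $y_j \in Y_j$ and the connectedness of $Y_j$. The only difference is cosmetic: you argue that every connected component of $Y_j$ must be related to $x$, whereas the paper takes an explicit comparability chain in $Y$ from a point of $Y_j$ to $x$ and looks at where it first leaves $L_j$.
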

		
		\begin{proof}
			We first prove the inclusion $\supset$. It suffices to prove that $\calL_F \subset \LCc(W^{\alpha})$ for every non-empty subset $F \subset J$, since the remainder is clear. Let $Y \in \calL_F$.
			Then
			\[
			Y=\{x\}\amalg\coprod_{j \in F}Y_j,
			\]
			for some $(Y_j)_{j \in F}$ such that $Y_j \in \LCc(L_j)$ and $y_j \in Y_j$ for every $j \in F$.
			Write $Y^{\alpha}$ for the subspace of $W^{\alpha}$ with underlying set $Y$. For each $j \in F$, write $Y_j^{\alpha}$ for the subspace of $W^{\alpha}$ with underlying set $Y_j$.
			The subspace $Y^{\alpha}$ of $W^{\alpha}$ is the Alexandrov space obtained from the data $\{x\}$, $(Y_j^{\alpha})_{j \in F}$, and $(\theta_j \colon \{x\} \to Y_j)_{j \in F}$ as in \textup{\cref{def:Wc_Wo}}. Since $F \neq \emptyset$ and $Y_j^{\alpha}$ is a connected space for every $j \in F$, \cref{prop:connectedness_Wc_Wo} implies that $Y^{\alpha}$ is a connected space. Hence, $Y$ is a connected subset of $W^{\alpha}$. 
			
			It remains to prove that $Y \in \LC(W^{\alpha})$. Consider the case $\alpha=\closed$. Let $a, b, c \in W$ satisfy $a, c \in Y$ and
			$(a, b), (b, c) \in R_{W^{\closed}}$. We show that $b \in Y$. If $b=x$, this is immediate. Suppose that $b \in L_j$ for some $j \in J$. Since $(b, c) \in R_{W^{\closed}}$ and $c \in Y$, we have $j \in F$ and $c \in Y_j$.
			If $a=x$, then $(y_j, b) \in R_{L_j}$. Since $y_j, c \in Y_j$, $(y_j, b), (b, c) \in R_{L_j}$, and $Y_j \in \LC(L_j)$, we obtain $b \in Y_j$. If $a \neq x$, then $a \in Y_j$ and $(a, b), (b, c) \in R_{L_j}$, and again $b \in Y_j$ because $Y_j \in \LC(L_j)$. Thus, $b \in Y$, and hence $Y \in \LC(W^{\closed})$.
			By symmetry, $Y \in \LC(W^{\open})$ as well. Consequently, $Y \in \LCc(W^{\alpha})$ and hence $\calL_F \subset \LCc(W^{\alpha})$. 
			
			We next prove the reverse inclusion $\subset$. 
			Let $Y \in \LCc(W^{\alpha})$. 
			Set $F:=\{j \in J \mid Y \cap L_j \neq \emptyset\}$. 
			If $F=\emptyset$, then $Y=\{x\}$ since $Y \neq \emptyset$. 
			Suppose that $F \neq \emptyset$. If $x \notin Y$, then the connectedness of $Y$ implies that $F=\{j\}$ for some $j \in J$, and hence $Y \in \LCc(L_j)$.
			It remains to consider the case where $x \in Y$. For each $j \in F$, set $Y_j:=Y \cap L_j$. Then $Y_j \in \LC(L_j)$ because $Y \in \LC(W^{\alpha})$. 
			We claim that $y_j \in Y_j$ and $Y_j$ is connected for every $j \in F$. 
			Since $Y_j \neq \emptyset$, choose $z \in Y_j$.
			By \cref{lem:connected_Alexandrov_space} applied to $Y$, there exists a finite sequence
			\[
			z=z_0, z_1, \dots, z_n=x
			\]
			in $Y$ such that $z_{i-1}$ and $z_i$ are comparable in $W^{\alpha}$ for every $i=1, 2, \dots, n$.
			Let $m \in \{1, 2, \dots, n\}$ be the smallest index such that $z_m \notin L_j$.
			By the definition of $R_{W^{\alpha}}$, we have $z_m=x$. Put $w:=z_{m-1}\in Y_j$.
			If $\alpha=\closed$, then
			\[
			(x,y_j),(y_j,w)\in R_{W^{\alpha}},
			\]
			whereas if $\alpha=\open$, then
			\[
			(w,y_j),(y_j,x)\in R_{W^{\alpha}}.
			\]
			Since $Y \in \LC(W^{\alpha})$ and $x, w\in Y$, it follows that $y_j\in Y$. Therefore, $y_j\in Y\cap L_j=Y_j$.
			Now let $z \in Y_j$ be arbitrary and choose a sequence as above.
			Then $z_0, \dots, z_{m-1}$ all belong to $Y_j$, and $z_{m-1}$ is
			comparable with $y_j$. Thus, $z$ belongs to the same equivalence class as $y_j$ under the equivalence relation on $Y_j$ generated by $R_{L_j} \cap (Y_j \times Y_j)$. Hence, \cref{lem:connected_Alexandrov_space} shows that $Y_j$ is connected. This proves the claim. 
			It follows that 
			\[Y=\{x\} \amalg \coprod_{j \in F} Y_j \in \calL_{F}. \]
			This completes the proof. 
		\end{proof}

		We determine how reflection functors act on the connected locally closed subsets that index filtrated K-theory. 
		
		\begin{lem}\label{lem:reflection_LCC_Cstar}
			In $[\SCstar(W^{\open}), \SCstar]$, we have the following. 
			\begin{enumerate}[label=\textnormal{(\arabic*)}]		
				\item \label{lem:reflection_LCC_Cstar_1} 
				We have
				\[\ev_{W^{\closed}}^{\{x\}} S_{\open}^{\closed}=\ev_{W^{\open}}^{\{x\}}. \] 
				
				\item \label{lem:reflection_LCC_Cstar_2}
				For $j \in J$ and $Y \in \LCc(L_j)$ with $y_j \notin Y$, we have
				\[\ev_{W^{\closed}}^Y S_{\open}^{\closed}=S \ev_{W^{\open}}^Y. \] 
				
				\item \label{lem:reflection_LCC_Cstar_3} 
				For $j \in J$ and $Y \in \LCc(L_j)$ with $y_j \in Y$, we have
				\[\ev_{W^{\closed}}^Y S_{\open}^{\closed}=S \ev_{W^{\open}}^{Y \cup \{x\}}. \] 
				
				\item \label{lem:reflection_LCC_Cstar_4} 
				For $j \in J$ and $Y \in \LCc(L_j)$ with $y_j \in Y$, we have a short exact sequence
				\[
				\begin{tikzcd}[ampersand replacement=\&]
					0 \ar[Rightarrow]{r} \& C \ev_{W^{\open}}^{\{x\}} \ar[Rightarrow]{r} \& \ev_{W^{\closed}}^{Y \cup \{x\}} S_{\open}^{\closed} \ar[Rightarrow]{r} \& S \ev_{W^{\open}}^Y \ar[Rightarrow]{r} \& 0. 
				\end{tikzcd} 
				\]
				\item \label{lem:reflection_LCC_Cstar_5} 
				For $F \subset J$ with $|F|=2$ and for $Y \in \calL_{F}$, we have a short exact sequence
				\[
				\begin{tikzcd}[ampersand replacement=\&]
					0 \ar[Rightarrow]{r} \& \ev_{W^{\closed}}^Y S_{\open}^{\closed} \ar[Rightarrow]{r} \& S \ev_{W^{\open}}^Y \ar[Rightarrow]{r} \& C\ev_{W^{\open}}^{Y \setminus \{x\}} \ar[Rightarrow]{r} \& 0. 
				\end{tikzcd} 
				\]
			\end{enumerate}
		\end{lem}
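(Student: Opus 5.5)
The approach is to compute everything through \cref{prop:reflection_functor_locally_closed_subset} \labelcref{prop:reflection_functor_locally_closed_subset_1}. For each $Y \in \LC(W^{\closed})$ it presents $\ev_{W^{\closed}}^Y S_{\open}^{\closed}$ as the mapping cone of
\[
(\iota_{Y \cap K}^{\widetilde{\theta}_j^{-1}(Y \cap L_j)})_{j \in J} \colon \ev_{W^{\open}}^{Y \cap K} \Rightarrow \bigoplus_{j \in J} \ev_{W^{\open}}^{\widetilde{\theta}_j^{-1}(Y \cap L_j)}.
\]
Here $K=\{x\}$ and $\widetilde{\theta}_j^{-1}(Z)=Z \cup \{x\}$ if $y_j \in Z$, while $\widetilde{\theta}_j^{-1}(Z)=Z$ otherwise. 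So in each case I will first identify $Y \cap K$ and the sets $\widetilde{\theta}_j^{-1}(Y \cap L_j)$. Then I will read off the cone using the elementary observations of \cref{eg:1-dimensional_mapping_cone}: $C_\varphi=A_0$ if $A_1=0$, and $C_\varphi=SA_1$ if $A_0=0$. I will also use $\ev^{\emptyset}=0$. The same statements in $\SCstar$ are obtained by restriction.

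The first three items are immediate from this. For \labelcref{lem:reflection_LCC_Cstar_1}, take $Y=\{x\}$. Every $Y \cap L_j$ is empty, so the target vanishes and the cone is $\ev^{\{x\}}$. For \labelcref{lem:reflection_LCC_Cstar_2}, $Y \subset L_j$ gives $Y \cap K=\emptyset$. Only the $j$-th summand survives, and it equals $\ev^{Y}$ because $y_j \notin Y$, so the cone is $S\ev^Y$. For \labelcref{lem:reflection_LCC_Cstar_3}, the source is again zero, while the $j$-th summand is now $\ev^{Y\cup\{x\}}$, giving $S\ev^{Y\cup\{x\}}$. Each item is a single substitution.

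For \labelcref{lem:reflection_LCC_Cstar_4}, put $Y'=Y\cup\{x\}$. The cone is that of the inclusion $\iota \colon \ev^{\{x\}} \Rightarrow \ev^{Y'}$, computed in $W^{\open}$, where $\{x\}$ is open in $Y'$. I will use the exact sequence $0 \to \ev^{\{x\}} \to \ev^{Y'} \to \ev^{Y} \to 0$. I apply exactness of the mapping cone functor $M$ to the short exact sequence of arrows
\[
(\id\colon \ev^{\{x\}}\to\ev^{\{x\}}) \to (\iota\colon\ev^{\{x\}}\to\ev^{Y'}) \to (0\to \ev^Y).
\]
This yields $0\to C\ev^{\{x\}} \to C_\iota \to S\ev^Y\to 0$, as required, since $C_{\id}=C$ and the cone of $0\to A$ is $SA$.

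For \labelcref{lem:reflection_LCC_Cstar_5}, write $F=\{j_1,j_2\}$ and $Y=\{x\}\amalg Y_{1}\amalg Y_{2}$. Then $\widetilde\theta_{j_k}^{-1}(Y\cap L_{j_k})=Y_k\cup\{x\}$, and the cone is that of the diagonal $\delta\colon\ev^{\{x\}}\Rightarrow \ev^{Y_1\cup\{x\}}\oplus\ev^{Y_2\cup\{x\}}$. Here lies the main obstacle: producing a natural comparison with $S\ev^Y$, where $Y$ is taken in $W^{\open}$. There $\{x\}$ is open and $Y_1,Y_2$ are clopen in $Y\setminus\{x\}$. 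I will use the pullback square $\ev^Y \to \ev^{Y_k\cup\{x\}}$ (each component a quotient map, since $Y_k\cup\{x\}$ is closed in $Y$) together with $\ev^Y \cong \ev^{Y_1\cup\{x\}}\times_{\ev^{\{x\}}}\ev^{Y_2\cup\{x\}}$. This comes from exactness and the splitting $\ev^{Y\setminus\{x\}}=\ev^{Y_1}\oplus\ev^{Y_2}$ of \cref{lem:disjoint_ir}. It gives a short exact sequence
\[
0\to\ev^Y\to\ev^{Y_1\cup\{x\}}\oplus\ev^{Y_2\cup\{x\}}\xrightarrow{\ \text{difference}\ }\ev^{\{x\}}\to 0.
\]
I then intend to map $C_\delta$ into $S\ev^Y$ via the mapping-cone/pullback description. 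Concretely, an element $(a,f_1,f_2)$ of $C_\delta$, with $f_k(0)=a$, is sent to the path obtained by concatenating $f_1$ and the reverse of $f_2$. Concatenation produces a loop in the fibre product over $\ev^{\{x\}}$, and hence an element of $S\ev^Y$ up to the identification above. The cokernel consists of paths in $\ev^{Y\setminus\{x\}}$ starting at the juncture, which is $C\ev^{Y\setminus\{x\}}$. I expect checking injectivity, surjectivity onto the kernel, and naturality in this concatenation map to be the tedious part. If it resists, the fallback is to write both sides as $2$-dimensional mapping cones and compare them via \cref{lem:admissible_mapping_cones}-style nine-lemma arguments.
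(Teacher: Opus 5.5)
Your items (1)--(4) are correct and match the paper's proof. They are direct substitution into \cref{prop:reflection_functor_locally_closed_subset}~(1), and for (4) exactness of $M$ on the morphism of extensions with vertical maps $\id$, $\iota_{\{x\}}^{Y\cup\{x\}}$, $0$.

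Item (5), however, rests on a false topological claim. In $W^{\open}$ the set $Y_k\cup\{x\}$ is \emph{open}, not closed, in $Y$. Its complement $Y_{k'}=Y\cap L_{j_{k'}}$ ($k'\neq k$) is closed in $Y$ because $L_{j_{k'}}$ is closed in $W^{\open}$ (\cref{prop:Lj_open_closed}). It is not open because $y_{j_{k'}}\in\overline{\{x\}}$. Likewise $\{x\}$ is open but not closed in $Y_k\cup\{x\}$; you use this yourself when you take the components of $\delta$ to be inclusions. So $\ev^{Y_k\cup\{x\}}_{W^{\open}}$ is an ideal in $\ev^{Y}_{W^{\open}}$ with quotient $\ev^{Y_{k'}}_{W^{\open}}$, and $\ev^{\{x\}}_{W^{\open}}$ is an ideal in $\ev^{Y_k\cup\{x\}}_{W^{\open}}$. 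There are no canonical $\ast$-homomorphisms $\ev^Y\Rightarrow\ev^{Y_k\cup\{x\}}$ or $\ev^{Y_k\cup\{x\}}\Rightarrow\ev^{\{x\}}$. Hence there is no fibre product $\ev^{Y_1\cup\{x\}}\times_{\ev^{\{x\}}}\ev^{Y_2\cup\{x\}}$ to identify with $\ev^Y$, and no ``difference'' sequence (which would not consist of $\ast$-homomorphisms anyway). That picture is the one for $Y$ regarded inside $W^{\closed}$.

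Concretely, take $Y_k=\{y_{j_k}\}$ and $A=i_Y^{W^{\open}}\conti([-1,1])$ with $-1\mapsto y_{j_1}$, $1\mapsto y_{j_2}$, $(-1,1)\mapsto x$. Then $\ev^{\{x\}}A=\conti_0((-1,1))\subset\ev^{Y_1\cup\{x\}}A=\conti_0([-1,1))\subset\ev^{Y}A=\conti([-1,1])$. The structure maps among these three algebras are these inclusions, pointing the opposite way from what you need. Consequently your concatenation is not defined as described: $f_1$ and $f_2$ take values in different algebras, and your setup provides no common target.

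The missing ingredient is to use the inclusions $\iota_{Y_k\cup\{x\}}^{Y}$ to land both paths in $\ev^Y$. With them, your concatenation is exactly the map $C_\delta\Rightarrow C_\Delta$ induced by the diagonal $\Delta\colon\ev^Y\Rightarrow\ev^Y\oplus\ev^Y$. Moreover $C_\Delta=S\ev^Y$, because $S$ is the mapping cone of the diagonal $\C\to\C\oplus\C$.

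This is the paper's argument, and it needs no hand verification. It forms the morphism of short exact sequences from $0\to\ev^{\{x\}}\to\ev^Y\to\ev^{Y_2}\oplus\ev^{Y_1}\to0$ to $0\to\ev^{Y_1\cup\{x\}}\oplus\ev^{Y_2\cup\{x\}}\to\ev^Y\oplus\ev^Y\to\ev^{Y_2}\oplus\ev^{Y_1}\to0$, with vertical maps $\delta$, $\Delta$, $\id$. It then applies exactness of $M$. The three cones are $\ev^Y_{W^{\closed}}S_{\open}^{\closed}$, $S\ev^Y$, and $C(\ev^{Y_1}\oplus\ev^{Y_2})=C\ev^{Y\setminus\{x\}}$ (the cone of an identity). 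So injectivity, the cokernel, and naturality all come for free, and your $2$-dimensional fallback is unnecessary.
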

		
		\begin{proof}
			Throughout the proof, we use \cref{prop:reflection_functor_locally_closed_subset} \labelcref{prop:reflection_functor_locally_closed_subset_1} to compute $\ev_{W^{\closed}}^Y S_{\open}^{\closed}$ for $Y \in \LC(W^{\closed})$.
			\begin{enumerate}[label=\textnormal{(\arabic*)}]		
				\item
				We have
				\[\ev_{W^{\closed}}^{\{x\}} S_{\open}^{\closed}=M \bigl(\ev_{W^{\open}}^{\{x\}} \Rightarrow 0 \bigr)=\ev_{W^{\open}}^{\{x\}}. \] 
				
				\item 
				For $j \in J$ and $Y \in \LCc(L_j)$ with $y_j \notin Y$, we have
				\[\ev_{W^{\closed}}^Y S_{\open}^{\closed}=M \bigl(0 \Rightarrow \ev_{W^{\open}}^Y \bigr)=S \ev_{W^{\open}}^Y . \] 
				
				\item
				For $j \in J$ and $Y \in \LCc(L_j)$ with $y_j \in Y$, we have			
				\[\ev_{W^{\closed}}^Y S_{\open}^{\closed}=M \Bigl(0 \Rightarrow \ev_{W^{\open}}^{Y \cup \{x\}} \Bigr)=S\ev_{W^{\open}}^{Y \cup \{x\}}. \] 
				
				\item
				For $j \in J$ and $Y \in \LCc(L_j)$ with $y_j \in Y$, we have the commutative diagram 
				\[
				\begin{tikzcd}
					0 \ar[Rightarrow]{r} & \ev_{W^{\open}}^{\{x\}} \ar[Rightarrow]{r} \ar[equal]{d} & \ev_{W^{\open}}^{\{x\}} \ar[Rightarrow]{r} \ar[Rightarrow, swap]{d}{\iota_{\{x\}}^{Y \cup \{x\}}} & 0 \ar[Rightarrow]{r} \ar[Rightarrow]{d}& 0 \\
					0 \ar[Rightarrow]{r} & \ev_{W^{\open}}^{\{x\}} \ar[Rightarrow, swap]{r}{\iota_{\{x\}}^{Y \cup \{x\}}}  & \ev_{W^{\open}}^{Y \cup \{x\}} \ar[Rightarrow, swap]{r}{\pi_{Y \cup \{x\}}^Y} & \ev_{W^{\open}}^Y \ar[Rightarrow]{r} & 0
				\end{tikzcd}
				\] 
				with exact rows in $[\SCstar(W^{\open}), \SCstar]$. 
				By taking the mapping cones of the vertical morphisms, we obtain the desired short exact sequence. 
				
				\item
				Let $F \subset J$ with $|F|=2$, and let $j$ and $j'$ be the elements of $F$. For $Y \in \calL_F$, we have $Y=Z \cup Z' \cup \{x\}$ for some $Z \in \LCc(L_j)$ with $y_j \in Z$, and some $Z' \in \LCc(L_{j'})$ with $y_{j'} \in Z'$. We have the commutative diagram 
				\[
				\begin{tikzcd}
					0 \ar[Rightarrow]{r} & \ev_{W^{\open}}^{\{x\}} \ar[Rightarrow]{r}{\iota_{\{x\}}^Y} \ar[Rightarrow, swap]{d}{(\iota_{\{x\}}^{Z \cup \{x\}}, \iota_{\{x\}}^{Z' \cup \{x\}})} & \ev_{W^{\open}}^Y \ar[Rightarrow]{r}{(\pi_Y^{Z'}, \pi_Y^Z)} \ar[Rightarrow, swap]{d}{(\id_Y, \id_Y)} & \ev_{W^{\open}}^{Z'} \oplus \ev_{W^{\open}}^Z \ar[Rightarrow]{r} \ar[equal]{d} & 0 \\
					0 \ar[Rightarrow]{r} & \ev_{W^{\open}}^{Z \cup \{x\}} \oplus \ev_{W^{\open}}^{Z' \cup \{x\}} \ar[Rightarrow, swap]{r}[yshift=-0.3em]{\iota_{Z \cup \{x\}}^Y \oplus \iota_{Z' \cup \{x\}}^Y}  & \ev_{W^{\open}}^Y \oplus \ev_{W^{\open}}^Y \ar[Rightarrow, swap]{r}{\pi_Y^{Z'} \oplus \pi_Y^Z} & \ev_{W^{\open}}^{Z'} \oplus \ev_{W^{\open}}^Z \ar[Rightarrow]{r} & 0
				\end{tikzcd}
				\] 
				with exact rows in $[\SCstar(W^{\open}), \SCstar]$. 
				The objects $\ev_{W^{\closed}}^Y S_{\open}^{\closed}$ and $C \ev_{W^{\open}}^{Y \setminus \{x\}}$ are the mapping cones of the left and right vertical morphisms, respectively. 
				The object $S \ev_{W^{\open}}^Y$ is the mapping cone of the middle vertical morphism because the C*-algebra $S$ is the mapping cone of the $\ast$-homomorphism $\C \ni \lambda \mapsto (\lambda, \lambda) \in \C \oplus \C$. Therefore, by taking the mapping cones of the vertical morphisms, we obtain the desired short exact sequence. 
				\qedhere
			\end{enumerate}
		\end{proof}
		
		As in \cref{subsection:application_to_bivariant_K-theory_1}, the reflection functor $S_{\open}^{\closed} \colon \SCstar(W^{\open}) \to \SCstar(W^{\closed})$ induces a functor 
		\[S_{\open}^{\closed} \colon \Ecat(W^{\open}) \to \Ecat(W^{\closed}). \]
		
		\begin{lem}\label{lem:reflection_LCC_Ktheory}
			In $[\Ecat(W^{\open}), \Abc]$, we have the following for $i \in \{0, 1\}$. 
			\begin{enumerate}[label=\textnormal{(\arabic*)}]		
				\item \label{lem:reflection_LCC_Ktheory_1} 
				We have
				\[\FK_{i, \{x\}}^{W^{\closed}} S_{\open}^{\closed}=\FK_{i, \{x\}}^{W^{\open}}. \] 
				
				\item \label{lem:reflection_LCC_Ktheory_2}
				For $j \in J$ and $Y \in \LCc(L_j)$ with $y_j \notin Y$, we have an isomorphism
				\[\FK_{i, Y}^{W^{\closed}} S_{\open}^{\closed} \simeq \FK_{1-i, Y}^{W^{\open}}. \] 
				
				\item \label{lem:reflection_LCC_Ktheory_3} 
				For $j \in J$ and $Y \in \LCc(L_j)$ with $y_j \in Y$, we have an isomorphism
				\[\FK_{i, Y}^{W^{\closed}} S_{\open}^{\closed} \simeq \FK_{1-i, Y \cup \{x\}}^{W^{\open}}. \]  
				
				\item \label{lem:reflection_LCC_Ktheory_4} 
				For $j \in J$ and $Y \in \LCc(L_j)$ with $y_j \in Y$, we have an isomorphism
				\[\FK_{i, Y \cup \{x\}}^{W^{\closed}} S_{\open}^{\closed} \simeq \FK_{1-i, Y}^{W^{\open}}. \]  
				
				\item \label{lem:reflection_LCC_Ktheory_5} 
				For $F \subset J$ with $|F|=2$ and for $Y \in \calL_{F}$, we have an isomorphism
				\[\FK_{i, Y}^{W^{\closed}} S_{\open}^{\closed} \simeq \FK_{1-i, Y}^{W^{\open}}. \] 
			\end{enumerate}
		\end{lem}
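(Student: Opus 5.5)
The plan is to apply $\Ktheory_i$ to each of the five identities and exact sequences in \cref{lem:reflection_LCC_Cstar}, after passing to E-theory. Let $\ev$ also denote the functors $\Ecat(\blank) \to \Ecat$ induced as in \cref{lem:induced_functor_KK_E}. Then each equality of functors $\SCstar(W^{\open}) \to \SCstar$ in \cref{lem:reflection_LCC_Cstar} induces an equality of functors $\Ecat(W^{\open}) \to \Ecat$. This uses the uniqueness part of the universal property, since both sides are induced by the same exact $\SCstar$-module functor. The short exact sequences of functors in \cref{lem:reflection_LCC_Cstar} \labelcref{lem:reflection_LCC_Cstar_4,lem:reflection_LCC_Cstar_5} give, after evaluation at each $A \in \SCstar(W^{\open})$, short exact sequences in $\SCstar$, which become isomorphisms in $\Ecat$ once a contractible term is discarded.

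Items \labelcref{lem:reflection_LCC_Ktheory_1,lem:reflection_LCC_Ktheory_2,lem:reflection_LCC_Ktheory_3} are direct. Item \labelcref{lem:reflection_LCC_Ktheory_1} is $\Ktheory_i$ applied to \cref{lem:reflection_LCC_Cstar} \labelcref{lem:reflection_LCC_Cstar_1}. For \labelcref{lem:reflection_LCC_Ktheory_2,lem:reflection_LCC_Ktheory_3}, apply $\Ktheory_i$ to \cref{lem:reflection_LCC_Cstar} \labelcref{lem:reflection_LCC_Cstar_2,lem:reflection_LCC_Cstar_3} to get $\Ktheory_i S \ev_{W^{\open}}^{Z}$ with $Z=Y$ or $Z=Y \cup \{x\}$. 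Then use $\Ktheory_0 S=\Ktheory_1$ (by definition) and $\Ktheory_1 S \simeq \Ktheory_0$ (Bott periodicity) to rewrite this as $\Ktheory_{1-i}\ev_{W^{\open}}^{Z}=\FK^{W^{\open}}_{1-i,Z}$. Note that $Y \cup \{x\}$ is connected and locally closed in $W^{\open}$ by \cref{lem:connected_locally_closed_subsets_Wc_Wo}, since it lies in $\calL_{\{j\}}$; the same holds for $Y$ and for $Y\setminus\{x\}$ where they occur.

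For \labelcref{lem:reflection_LCC_Ktheory_4}, evaluate the sequence in \cref{lem:reflection_LCC_Cstar} \labelcref{lem:reflection_LCC_Cstar_4} at $A$. Its kernel $C A(\{x\})$ is contractible, so by \cref{lem:contractible_isomorphism_KK_E} the quotient map $\ev_{W^{\closed}}^{Y\cup\{x\}} S_{\open}^{\closed} A \to S A(Y)$ is an isomorphism in $\Ecat$. Naturality in $A$ for morphisms coming from $\SCstar(W^{\open})$ is clear. Naturality for all morphisms of $\Ecat(W^{\open})$ then follows as in \cref{lem:induced_natural_transformation_KK_E}, using \cref{lem:morphisms_in_KK_E}. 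Applying $\Ktheory_i$ and Bott periodicity gives $\FK^{W^{\open}}_{1-i,Y}$.

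Item \labelcref{lem:reflection_LCC_Ktheory_5} is handled the same way. Now the contractible term $C A(Y\setminus\{x\})$ is the quotient, so the inclusion $\ev_{W^{\closed}}^Y S_{\open}^{\closed} A \to S A(Y)$ is an E-equivalence. Again this holds naturally in $A$, and applying $\Ktheory_i$ finishes the proof. The main (mild) obstacle is this naturality on all of $\Ecat(W^{\open})$ rather than just on $\SCstar(W^{\open})$, and the reduction to \cref{lem:morphisms_in_KK_E} should handle it.
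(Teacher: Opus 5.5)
Your proposal is correct and follows the paper's proof essentially step by step. Items (1)--(3) come from applying $\Ktheory_i$ and Bott periodicity to the identities in \cref{lem:reflection_LCC_Cstar}. Items (4)--(5) come from discarding the contractible cone term via \cref{lem:contractible_isomorphism_KK_E} and then applying $\Ktheory_i$ and Bott periodicity. Your extra check that these E-equivalences are natural on all of $\Ecat(W^{\open})$ (via \cref{lem:induced_natural_transformation_KK_E,lem:morphisms_in_KK_E}) makes explicit a point the paper leaves implicit.
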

		
		\begin{proof}
			\
			\begin{enumerate}[label=\textnormal{(\arabic*)}]		
				\item
				By \cref{lem:reflection_LCC_Cstar} \labelcref{lem:reflection_LCC_Cstar_1}, we have
				\[\FK_{i, \{x\}}^{W^{\closed}} S_{\open}^{\closed}=\Ktheory_i\ev_{W^{\closed}}^{\{x\}} S_{\open}^{\closed}=\Ktheory_i \ev_{W^{\open}}^{\{x\}}=\FK_{i, \{x\}}^{W^{\open}} \] 
				in $[\Ecat(W^{\open}), \Abc]$. 
				
				\item 
				For $j \in J$ and $Y \in \LCc(L_j)$ with $y_j \notin Y$, by \cref{lem:reflection_LCC_Cstar} \labelcref{lem:reflection_LCC_Cstar_2} and Bott periodicity, we have
				\[\FK_{i, Y}^{W^{\closed}} S_{\open}^{\closed}=\Ktheory_i\ev_{W^{\closed}}^Y S_{\open}^{\closed}=\Ktheory_i S\ev_{W^{\open}}^Y \simeq \Ktheory_{1-i} \ev_{W^{\open}}^Y=\FK_{1-i, Y}^{W^{\open}} \]
				in $[\Ecat(W^{\open}), \Abc]$. 
				
				\item 
				The same argument as in \labelcref{lem:reflection_LCC_Ktheory_2}, using \cref{lem:reflection_LCC_Cstar} \labelcref{lem:reflection_LCC_Cstar_3} in place of \labelcref{lem:reflection_LCC_Cstar_2}, proves the assertion.
				
				\item 
				For $j \in J$ and $Y \in \LCc(L_j)$ with $y_j \in Y$, the object $C\ev_{W^{\open}}^{\{x\}}$ is contractible in $[\SCstar(W^{\open}), \SCstar]$. Therefore, by \cref{lem:contractible_isomorphism_KK_E} and \cref{lem:reflection_LCC_Cstar} \labelcref{lem:reflection_LCC_Cstar_4}, we obtain
				\[\ev_{W^{\closed}}^{Y \cup \{x\}} S_{\open}^{\closed} \simeq S\ev_{W^{\open}}^Y\]
				in $[\Ecat(W^{\open}), \Ecat]$. Applying $\Ktheory_i$ and Bott periodicity gives the desired isomorphism.
				
				\item 
				For $F \subset J$ with $|F|=2$ and for $Y \in \calL_{F}$, the object $C\ev_{W^{\open}}^{Y \setminus \{x\}}$ is contractible in $[\SCstar(W^{\open}), \SCstar]$. 
				Hence, by \cref{lem:contractible_isomorphism_KK_E} and \cref{lem:reflection_LCC_Cstar} \labelcref{lem:reflection_LCC_Cstar_5}, we obtain
				\[\ev_{W^{\closed}}^Y S_{\open}^{\closed} \simeq S\ev_{W^{\open}}^Y\]
				in $[\Ecat(W^{\open}), \Ecat]$. Applying $\Ktheory_i$ and Bott periodicity gives the desired isomorphism.
				\qedhere
			\end{enumerate}		
		\end{proof}

		\begin{lem}\label{lem:bijection_connected_locally_closed}
			Suppose that $|J|=1, 2$. 
			Then there exists a bijection
			\[f \colon \{0, 1\} \times \LCc(W^{\closed}) \to \{0, 1\} \times \LCc(W^{\open}) \]
			such that, for each $(i, Y) \in \{0, 1\} \times \LCc(W^{\closed})$, there exists an isomorphism
			\[\FK_{i, Y}^{W^{\closed}} S_{\open}^{\closed} \simeq \FK_{f(i, Y)}^{W^{\open}}\]
			in $[\Ecat(W^{\open}), \Abc]$. 
		\end{lem}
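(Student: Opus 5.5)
We define $f$ explicitly using the decomposition of \cref{lem:connected_locally_closed_subsets_Wc_Wo}, which holds for both $\alpha=\closed$ and $\alpha=\open$. With $|J|\le 2$, the nonempty subsets $F\subset J$ have $|F|=1$ or $|F|=2$. We therefore split $\LCc(W^{\closed})$ into five disjoint pieces: the singleton $\{x\}$; the sets $Y\in\LCc(L_j)$ with $y_j\notin Y$; the sets $Y\in\LCc(L_j)$ with $y_j\in Y$; the sets $Y\cup\{x\}\in\calL_{\{j\}}$; and, if $|J|=2$, the sets $Y\in\calL_J$. The same five pieces make up $\LCc(W^{\open})$, since both decompositions have identical underlying index sets.

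Following \cref{lem:reflection_LCC_Ktheory}, define $f$ piece by piece:
\begin{itemize}
\item $f(i,\{x\})=(i,\{x\})$;
\item $f(i,Y)=(1-i,Y)$ for $Y\in\LCc(L_j)$ with $y_j\notin Y$;
\item $f(i,Y)=(1-i,Y\cup\{x\})$ for $Y\in\LCc(L_j)$ with $y_j\in Y$;
\item $f(i,Y\cup\{x\})=(1-i,Y)$ for $Y\in\LCc(L_j)$ with $y_j\in Y$;
\item $f(i,Y)=(1-i,Y)$ for $Y\in\calL_J$ when $|J|=2$.
\end{itemize}
Parts (1)--(5) of \cref{lem:reflection_LCC_Ktheory} then give the required isomorphism $\FK^{W^{\closed}}_{i,Y}S_{\open}^{\closed}\simeq\FK^{W^{\open}}_{f(i,Y)}$ in every case. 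Part (5) needs $|F|=2$, which is exactly why the hypothesis $|J|\le 2$ enters: for $|J|\ge3$ the sets in $\calL_F$ with $|F|\ge 3$ are not covered.

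To check that $f$ is a bijection, note that it preserves the first two pieces and, within them, acts bijectively (the identity or the flip $i\mapsto 1-i$). On the pieces $\{(i,Y):y_j\in Y\in\LCc(L_j)\}$ and $\{(i,Y\cup\{x\}):y_j\in Y\in\LCc(L_j)\}$, $f$ interchanges them through the bijection $Y\leftrightarrow Y\cup\{x\}$ composed with $i\mapsto1-i$. On $\calL_J$ it is the flip. So $f$ is a bijection, and in fact an involution.

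The only point requiring care is that these cases exhaust $\LCc(W^{\closed})$ without overlap. This follows from \cref{lem:connected_locally_closed_subsets_Wc_Wo} together with the observation that $\calL_{\{j\}}=\{Y\cup\{x\}: Y\in\LCc(L_j),\,y_j\in Y\}$. Since the substantive work is already done in \cref{lem:reflection_LCC_Ktheory}, this bookkeeping is the main step.
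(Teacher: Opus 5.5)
Your proposal is correct and is essentially the paper's proof. The paper defines the same self-inverse map $g$ on $\LCc(W^{\closed})=\LCc(W^{\open})$ piece by piece from the decomposition in \cref{lem:connected_locally_closed_subsets_Wc_Wo}. It then sets $f(i,Y)=(i,g(Y))$ for $Y=\{x\}$ and $f(i,Y)=(1-i,g(Y))$ otherwise, and reads off the required isomorphisms case by case from \cref{lem:reflection_LCC_Ktheory}.
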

		
		\begin{proof}
			We treat the cases for $|J|=1$ and $|J|=2$ simultaneously. 
			With the help of \cref{lem:connected_locally_closed_subsets_Wc_Wo}, we can define a map $g \colon \LCc(W^{\closed}) \to \LCc(W^{\open})$ as follows:
			\[
			g(Y):=
			\begin{dcases}
				\{x\} & \text{if $Y=\{x\}$},\\
				Y & \text{if $Y \in \LCc(L_j)$ and $y_j \notin Y$ for some $j \in J$},\\
				Y\cup\{x\} & \text{if $Y \in \LCc(L_j)$ and $y_j \in Y$ for some $j \in J$},\\
				Y \setminus \{x\} & \text{if $Y \in \calL_{F}$ for some $F \subset J$ with $|F|=1$},\\
				Y & \text{if $Y \in \calL_{F}$ for some $F \subset J$ with $|F|=2$}.
			\end{dcases}
			\]
			Then $g$ is bijective and its inverse is given by the same formula.
			Define a bijection
			\[
			f \colon \{0, 1\} \times \LCc(W^{\closed}) \to \{0, 1\} \times \LCc(W^{\open})
			\]
			by
			\[
			f(i, Y):=
			\begin{dcases}
				(i, g(Y)) & \text{if $Y=\{x\}$},\\
				(1-i, g(Y)) & \text{otherwise}.
			\end{dcases}
			\]
			By \cref{lem:reflection_LCC_Ktheory}, this $f$ shows the assertion. 
		\end{proof}

		\begin{rem}
			The assumption $|J|=1, 2$ cannot be removed in general.
			Suppose that $|J| \geq 3$ and that all the $L_j$ for $j \in J$ are connected.
			Then $W \in \LCc(W^{\closed})$. 
			Let $B:=i_{\{x\}}^{W^{\open}} \C$. 
			The C*-algebra $S_{\open}^{\closed}B(W^{\closed})$ is the mapping cone of the $\ast$-homomorphism
			\[(\id_{\C})_j \colon \C \to \bigoplus_{j \in J} \C. \]
			We have
			\[
			\FK^{W^{\closed}}_{0, W}S_{\open}^{\closed}B=0, 
			\quad 
			\FK^{W^{\closed}}_{1, W}S_{\open}^{\closed}B \simeq \Z^{|J|-1}.  \]
			Therefore, 
			\[\FK_{1, W}^{W^{\closed}} S_{\open}^{\closed}B \not\simeq \FK_{f(1, W)}^{W^{\open}}B\]
			in $\Abc$ for every bijection 
			\[
			f \colon \{0, 1\} \times \LCc(W^{\closed}) \to \{0, 1\} \times \LCc(W^{\open}). 
			\]
		\end{rem}

		\begin{thm}\label{thm:isomorphism_NT}
			Suppose that $|J|=1, 2$.
			Then there exists an isomorphism 
			\[\Phi \colon \NT(W^{\closed}) \to \NT(W^{\open})\]
			of preadditive categories, which makes the diagram
			\[
			\begin{tikzcd}
				\Ecat(W^{\open}) \ar{r}{S_{\open}^{\closed}}[swap]{\simeq} \ar[swap]{d}{\FK^{W^{\open}}} & \Ecat(W^{\closed}) \ar{d}{\FK^{W^{\closed}}} \\
				\Mod(\NT(W^{\open})) \ar{r}[swap]{\Phi^*}{\simeq} & \Mod(\NT(W^{\closed}))
			\end{tikzcd}
			\]
			commute up to natural isomorphism. 
		\end{thm}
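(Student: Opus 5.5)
The plan is to apply \cref{lem:isomorphism_full_subcategories} with $\frakB_1:=\Ecat(W^{\open})$, $\frakB_2:=\Ecat(W^{\closed})$, $\frakA:=\Abc$, and $G:=S_{\open}^{\closed}$. The two families are $F^{(1)}_{(i,Y)}:=\FK^{W^{\open}}_{i,Y}$, indexed by $\Omega_1:=\{0,1\}\times\LCc(W^{\open})$, and $F^{(2)}_{(i,Y)}:=\FK^{W^{\closed}}_{i,Y}$, indexed by $\Omega_2:=\{0,1\}\times\LCc(W^{\closed})$. With these choices the full subcategories $\calN_1$ and $\calN_2$ are precisely $\NT(W^{\open})$ and $\NT(W^{\closed})$. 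The functors $I_{\calN_1}$ and $I_{\calN_2}$ are $\FK^{W^{\open}}$ and $\FK^{W^{\closed}}$, and $\Add(\calN_k,\Abc)$ is $\Mod(\NT(\blank))$.

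The hypotheses are then checked one at a time.
\begin{enumerate}
\item Pairwise non-isomorphism of the members of each family is exactly \cref{lem:filtrated_K-theory_essentially_injective}, applied to $W^{\open}$ and to $W^{\closed}$.
\item $G=S_{\open}^{\closed}\colon\Ecat(W^{\open})\to\Ecat(W^{\closed})$ is an equivalence of categories by \cref{thm:equivalence_E_Wc_Wo}. This is where the choice of $D$, $(D_j)$, $(\rho_j)$ from \cref{eg:reflection_suspension} is used. It does not matter for the functor $S_{\open}^{\closed}$ itself, which is independent of this data, but it is needed for $S_{\closed}^{\open}$ to be a quasi-inverse.
\item For the bijection, take the map $f\colon\Omega_2\to\Omega_1$ of \cref{lem:bijection_connected_locally_closed}, which needs the hypothesis $|J|=1,2$. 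That lemma gives $\FK^{W^{\closed}}_{i,Y}S_{\open}^{\closed}\simeq\FK^{W^{\open}}_{f(i,Y)}$ in $[\Ecat(W^{\open}),\Abc]$. These functors are additive, so this is an isomorphism in $\Add(\Ecat(W^{\open}),\Abc)$, which is the required condition $F^{(2)}_jG\simeq F^{(1)}_{f(j)}$.
\end{enumerate}

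\cref{lem:isomorphism_full_subcategories} then yields an isomorphism $\Phi\colon\calN_2=\NT(W^{\closed})\to\calN_1=\NT(W^{\open})$ of preadditive categories. It also gives $I_{\calN_2}G\simeq\Phi^*I_{\calN_1}$, that is, $\FK^{W^{\closed}}S_{\open}^{\closed}\simeq\Phi^*\FK^{W^{\open}}$, which is the commutativity up to natural isomorphism of the stated square.

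Almost all of the substance has already been established in \cref{lem:reflection_LCC_Ktheory,lem:bijection_connected_locally_closed}. The only delicate point is matching conventions: the lemma's $\frakB_1$ must be the source of $G$, so it is $W^{\open}$ here, and $\Phi$ goes from $\NT(W^{\closed})$ to $\NT(W^{\open})$, as the statement requires. One should also note that $\Phi^*$ is an isomorphism of abelian categories, which labels the bottom arrow as $\simeq$.
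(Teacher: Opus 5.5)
Your proposal is correct and is the paper's argument: the paper also takes the equivalence $S_{\open}^{\closed}\colon\Ecat(W^{\open})\to\Ecat(W^{\closed})$ from \cref{thm:equivalence_E_Wc_Wo} and then applies \cref{lem:isomorphism_full_subcategories}, using \cref{lem:filtrated_K-theory_essentially_injective} for pairwise non-isomorphism and \cref{lem:bijection_connected_locally_closed} for the bijection and the isomorphisms. Your assignment $\frakB_1=\Ecat(W^{\open})$, $\calN_1=\NT(W^{\open})$, $\calN_2=\NT(W^{\closed})$ correctly yields $\Phi\colon\NT(W^{\closed})\to\NT(W^{\open})$, as the statement requires.
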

		
		\begin{proof}
			By \cref{thm:equivalence_E_Wc_Wo}, $S_{\open}^{\closed} \colon \Ecat(W^{\open}) \to \Ecat(W^{\closed})$ is an equivalence of categories.
			The assertion now follows from \cref{lem:isomorphism_full_subcategories,lem:filtrated_K-theory_essentially_injective,lem:bijection_connected_locally_closed}.
		\end{proof}

		\begin{cor}\label{cor:equivalence_UCT}
			Suppose that $|J|=1, 2$.
			Then $\FK^{W^{\open}}(B)$ has a projective resolution of length $1$ in $\Mod(\NT(W^{\open}))$ for every $B \in \Boot_{\E}(W^{\open})$ if and only if $\FK^{W^{\closed}}(A)$ has a projective resolution of length $1$ in $\Mod(\NT(W^{\closed}))$ for every $A \in \Boot_{\E}(W^{\closed})$. 
		\end{cor}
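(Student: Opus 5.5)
This corollary should follow by feeding \cref{thm:isomorphism_NT} into \cref{lem:projective_resolution_iff}. We apply \cref{lem:isomorphism_full_subcategories} with the following data:
\[
\frakB_1=\Ecat(W^{\open}),\quad \frakB_2=\Ecat(W^{\closed}),\quad \frakA=\Abc,\quad G=S_{\open}^{\closed},
\]
with $\Omega_1=\{0,1\}\times\LCc(W^{\open})$ and $\Omega_2=\{0,1\}\times\LCc(W^{\closed})$. The families are the filtrated K-theory functors $\FK^{W^{\open}}_{i,Y}$ and $\FK^{W^{\closed}}_{i,Y}$, and we take $f$ to be the bijection of \cref{lem:bijection_connected_locally_closed}. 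The hypotheses are met as follows: pairwise non-isomorphism comes from \cref{lem:filtrated_K-theory_essentially_injective}, and $G$ is an equivalence by \cref{thm:equivalence_E_Wc_Wo}. This is exactly the input of \cref{thm:isomorphism_NT}. Since $\Abc$ is abelian, \cref{lem:projective_resolution_iff} with $n=1$ then says that $\FK^{W^{\open}}(B)$ has a length-$1$ projective resolution for every $B\in\Ecat(W^{\open})$ if and only if $\FK^{W^{\closed}}(A)$ has one for every $A\in\Ecat(W^{\closed})$.

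This alone is not quite the statement, which quantifies only over the bootstrap categories. So the main step is to rerun the argument of \cref{lem:projective_resolution_iff} with $\frakB_1,\frakB_2$ replaced by $\Boot_{\E}(W^{\open})$ and $\Boot_{\E}(W^{\closed})$. By \cref{thm:equivalence_E_Wc_Wo}, $S_{\open}^{\closed}$ restricts to an equivalence $\Boot_{\E}(W^{\open})\simeq\Boot_{\E}(W^{\closed})$.

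The argument then has three ingredients. First, the natural isomorphism $\FK^{W^{\closed}}S_{\open}^{\closed}\simeq\Phi^*\FK^{W^{\open}}$ of \cref{thm:isomorphism_NT} holds objectwise. Second, $\Phi^*$ is an isomorphism of abelian categories, so it preserves and reflects projectives and exact sequences, and hence projective resolutions of a given length. Third, every $A\in\Boot_{\E}(W^{\closed})$ is isomorphic in $\Ecat(W^{\closed})$ to some $S_{\open}^{\closed}B$ with $B\in\Boot_{\E}(W^{\open})$, by essential surjectivity of the restricted equivalence. Since $\FK^{W^{\closed}}$ is a functor, isomorphic objects have isomorphic modules.

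Combining these, the forward direction goes as follows. Given $A$, choose $B$ with $A\simeq S_{\open}^{\closed}B$. Then
\[
\FK^{W^{\closed}}(A)\simeq\Phi^*\FK^{W^{\open}}(B),
\]
and $\Phi^*$ carries a length-$1$ resolution of $\FK^{W^{\open}}(B)$ to one of $\FK^{W^{\closed}}(A)$. Conversely, given $B$, a resolution of $\Phi^*\FK^{W^{\open}}(B)\simeq\FK^{W^{\closed}}(S_{\open}^{\closed}B)$ is pulled back along $(\Phi^*)^{-1}$. The one point to check is that $S_{\open}^{\closed}B$ lies in $\Boot_{\E}(W^{\closed})$, which again follows from \cref{thm:equivalence_E_Wc_Wo}.

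The only genuine obstacle is this bootstrap restriction, and it is fully handled by the restriction clause of \cref{thm:equivalence_E_Wc_Wo}. Everything else is formal.
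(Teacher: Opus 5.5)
Your proposal is correct and is essentially the paper's argument: restrict $S_{\open}^{\closed}$ to an equivalence $\Boot_{\E}(W^{\open})\simeq\Boot_{\E}(W^{\closed})$ via \cref{thm:equivalence_E_Wc_Wo}, then transport length-$1$ projective resolutions across the square of \cref{thm:isomorphism_NT} as in \cref{lem:projective_resolution_iff}. Your explicit rerun of that lemma over the bootstrap categories, using essential surjectivity of the restricted equivalence and the objectwise isomorphism $\FK^{W^{\closed}}S_{\open}^{\closed}\simeq\Phi^*\FK^{W^{\open}}$, just spells out what the paper calls restricting the diagram to these full subcategories.
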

		
		\begin{proof}
			By \cref{thm:equivalence_E_Wc_Wo}, the reflection functor 	$S_{\open}^{\closed} \colon \Ecat(W^{\open}) \to \Ecat(W^{\closed})$ restricts to an equivalence between
			$\Boot_{\E}(W^{\open})$ and $\Boot_{\E}(W^{\closed})$.
			Restricting the diagram of \cref{thm:isomorphism_NT} to these full subcategories and applying
			\cref{lem:projective_resolution_iff} gives the assertion.
		\end{proof}
		
		\subsection{Classification of C*-algebras over accordion spaces}\label{subsection:classification_of_Cstar-algebras_over_accordion_spaces}
		
		\begin{thm}\label{thm:UCT_accordion}
			Let $X$ be an accordion space.
			Then $\FK^X(A)$ has a projective resolution of length $1$ in $\Mod(\NT(X))$ for every $A \in \Boot_{\E}(X)$.
		\end{thm}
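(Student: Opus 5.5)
The strategy is to reduce to the case of a totally ordered space, where \cite[Theorem~4.9]{MN_2012} supplies the projective resolution of length $1$. The transfer between orientations is done with \cref{cor:equivalence_UCT}, applied one reflection at a time.

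Let $X$ be an accordion space, so its Hasse diagram is an orientation of a Dynkin diagram $G$ of type A, that is, a path $v_1 - v_2 - \cdots - v_n$. Let $Y$ be the finite $T_0$-space whose Hasse diagram is the linear orientation of the same path. Then $Y$ is totally ordered. By \cite[Theorem~4.9]{MN_2012}, transported to E-theory through the machinery of \cite{MN_2010} as explained at the start of this section, $\FK^Y(A)$ has a projective resolution of length $1$ for every $A \in \Boot_{\E}(Y)$.

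By \cref{lem:tree_orientation}, there is an admissible sequence $(x_1,\dots,x_m)$ of open points in $Y$ such that $\sigma_{x_m}\cdots\sigma_{x_1}Y=X$. It therefore suffices to show the following claim: for an orientation $Z$ of $G$ and an open point $x$ of $Z$, the length-$1$ resolution property holds for all objects of $\Boot_{\E}(Z)$ if and only if it holds for all objects of $\Boot_{\E}(\sigma_xZ)$. Induction along the sequence then carries the property from $Y$ to $X$.

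To prove the claim I use \cref{rem:BGP-reflection} with $K=\{x\}$, $J=J_x$, the subspaces $L_{x,y}$ for $y \in J_x$, and $\theta_{x,y}(x)=y$. Since $x$ is open, $Z=V^{\open}$ and $\sigma_xZ=V^{\closed}$. The only extra hypothesis needed in \cref{cor:equivalence_UCT} is $|J|\in\{1,2\}$. This holds because every vertex of a type A Dynkin diagram has degree $1$ or $2$ (for $n\ge 2$; the case $n=1$ is the one-point space, where there is nothing to prove). The sets $L_{x,y}$ are finite partially ordered sets, so the standing assumptions of \cref{subsection:reflection_of_connected_locally_closed_subsets} are met, and \cref{cor:equivalence_UCT} gives the claim.

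The main points to watch are two. First, the identification in \cref{rem:BGP-reflection} must be an equality of topological spaces, so that $\FK$ and $\NT$ match on both sides. Second, the Meyer--Nest totally ordered case must be stated with $\LCc$ and E-theory rather than $\LC$ and KK-theory. Both are covered by the remarks after the definition of filtrated K-theory, so I do not expect any genuine obstacle beyond bookkeeping.
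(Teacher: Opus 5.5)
Your proof is correct and follows the paper's argument: start from the totally ordered case via the E-theoretic version of \cite[Theorem~4.9]{MN_2012}, connect it to $X$ by an admissible sequence of open-point reflections from \cref{lem:tree_orientation}, and transfer the length-$1$ resolution property at each step by \cref{cor:equivalence_UCT}, which applies because every vertex of a type A Dynkin diagram has one or two neighbours. Your explicit treatment of the one-vertex case and of the identification $Z=V^{\open}$, $\sigma_xZ=V^{\closed}$ via \cref{rem:BGP-reflection} makes explicit some bookkeeping that the paper leaves implicit.
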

		
		\begin{proof}
			Let $Y$ be the finite $T_0$-space associated with a totally ordered set with the same cardinality as $X$. Since $X$ and $Y$ are obtained by orienting the same tree, \cref{lem:tree_orientation} gives an admissible sequence $(x_1, x_2, \dots, x_n)$ of open points such that
			\[
			\sigma_{x_n}\sigma_{x_{n-1}}\cdots\sigma_{x_1}Y=X.
			\]
			For $i=0, 1, \dots, n$, define $Y_i$ recursively by
			\[
			Y_i:=
			\begin{cases}
				Y & \text{if $i=0$},\\
				\sigma_{x_i} Y_{i-1} & \text{if $i \geq 1$}.
			\end{cases}
			\]
			The E-theoretic version of \cite[Theorem~4.9]{MN_2012} shows that $\FK^{Y_0}(A)$ has a projective resolution of length $1$ in $\Mod(\NT(Y_0))$ for every $A \in \Boot_{\E}(Y_0)$. Since the underlying tree is a Dynkin diagram of type $A$, each $x_i$ has one or two adjacent vertices. Therefore, \cref{cor:equivalence_UCT}, applied successively to $Y_{i-1}$ and $Y_i$ for $i=1, 2, \dots, n$, shows that the same property holds for $Y_n$. Since $Y_n=X$, the assertion follows.
		\end{proof}
		
		\begin{cor}\label{cor:filtrated_K-theory_E-equivalence_accordion}
			Let $X$ be an accordion space. If $A$ and $B$ are separable C*-algebras over $X$ belonging to $\Boot_{\E}(X)$, then every isomorphism $\FK^X(A) \simeq \FK^X(B)$ in $\Mod(\NT(X))$ lifts to an isomorphism $A \simeq B$ in $\Ecat(X)$. 
		\end{cor}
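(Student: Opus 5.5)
This should follow from \cref{thm:UCT_accordion} combined with the E-theoretic version of the abstract UCT machinery of Meyer and Nest. The plan is to apply \cite[Theorem~4.5]{MN_2012}, or rather its E-theoretic analogue obtained from the general homological-algebra framework of \cite{MN_2010}, to the stable homological functor $\FK^X \colon \Ecat(X) \to \Mod(\NT(X))$. By \cref{thm:UCT_accordion}, for every $A \in \Boot_{\E}(X)$ the module $\FK^X(A)$ has a projective resolution of length $1$. This is the hypothesis that yields a universal coefficient short exact sequence
\[
0 \to \mathrm{Ext}^1_{\NT(X)}\bigl(\FK^X(SA),\FK^X(B)\bigr) \to \E_*(X;A,B) \to \mathrm{Hom}_{\NT(X)}\bigl(\FK^X(A),\FK^X(B)\bigr) \to 0
\]
for $A \in \Boot_{\E}(X)$ and arbitrary separable $B$ over $X$.

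To set this up, I need two inputs. First, $\Boot_{\E}(X)$ is the localizing subcategory generated by the objects $i_{\{x\}}^X\C$ for $x \in X$. Second, the ideal $\ker \FK^X$ is generated by these objects in the sense of Meyer and Nest. This second point follows because the representing objects of the functors $\FK^X_{i,Y}$ are $S^i$-shifts of objects built from $i_{Y}^X$ applied to suitable C*-algebras, so that $\FK^X_{i,Y}$ is corepresentable on $\Ecat(X)$. Meyer and Nest establish this in KK-theory in \cite{MN_2012}; I would transfer the argument verbatim to E-theory, using that $\Ecat(X)$ has the analogous universal property and that the relevant representability comes from adjunctions between $i_Y^X$ and $r_X^Y$. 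I expect this transfer to be the main point needing care, although the paper has already stated that the classification results of \cite{MN_2012} remain valid for E-theory by \cite{MN_2010}.

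Given the UCT sequence, the lifting follows by the standard argument. Let $\varphi \colon \FK^X(A) \to \FK^X(B)$ be an isomorphism. Surjectivity of the right-hand map gives $f \in \E(X;A,B)$ with $\FK^X(f)=\varphi$. Both $A$ and $B$ lie in $\Boot_{\E}(X)$, so $\varphi^{-1}$ lifts to some $g \in \E(X;B,A)$. Then $g \circ f - \id_A$ and $f\circ g - \id_B$ lie in the Ext-term, so they are nilpotent: a product of two elements of $\ker \FK^X$ vanishes by naturality of the UCT filtration. Hence $g\circ f$ and $f \circ g$ are invertible, and $f$ is an isomorphism in $\Ecat(X)$.

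Alternatively, and more simply, I would cite \cite[Theorem~4.5 and its corollary]{MN_2012} in E-theoretic form. That result states directly that, once projective dimension at most $1$ holds on the bootstrap class, isomorphisms of $\FK^X$-modules lift to E-equivalences. In that case the corollary follows at once from \cref{thm:UCT_accordion}.
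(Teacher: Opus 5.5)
Your proposal is correct and matches the paper's proof, which simply combines \cref{thm:UCT_accordion} with the E-theoretic version of \cite[Corollary~4.6]{MN_2012}; this is exactly your final paragraph. Your more detailed sketch (lifting $\varphi$ and $\varphi^{-1}$ through the UCT sequence, then using that endomorphisms in $\ker\FK^X$ of bootstrap objects with projective resolutions of length $1$ square to zero) just unpacks that cited corollary. One small correction: the middle term of the sequence should be $\E(X;A,B)$ in degree $0$ rather than $\E_*(X;A,B)$, since $\Mod(\NT(X))$ already encodes both parities.
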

		
		\begin{proof}
			Combine \cref{thm:UCT_accordion} with the E-theoretic version of \cite[Corollary~4.6]{MN_2012}. 
		\end{proof}
		
		\begin{cor}\label{cor:filtrated_K-theory_complete_invariant_accordion}
			Let $X$ be an accordion space. If $A$ and $B$ are separable, stable, nuclear, $\calO_{\infty}$-stable, tight C*-algebras over $X$ belonging to $\Boot_{\E}(X)$, then every isomorphism $\FK^X(A) \simeq \FK^X(B)$ in $\Mod(\NT(X))$ lifts to an isomorphism $A \simeq B$ in $\SCstar(X)$. 
		\end{cor}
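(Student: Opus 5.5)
This follows by combining \cref{cor:filtrated_K-theory_E-equivalence_accordion} with the E-theoretic form of Kirchberg's classification theorem. The first step is to start from an isomorphism $\FK^X(A) \simeq \FK^X(B)$ in $\Mod(\NT(X))$. Since $A, B \in \Boot_{\E}(X)$, \cref{cor:filtrated_K-theory_E-equivalence_accordion} lifts it to an isomorphism $A \simeq B$ in $\Ecat(X)$, that is, an $\E(X)$-equivalence. That corollary in turn rests on \cref{thm:UCT_accordion}, which provides the length-$1$ projective resolutions, together with the E-theoretic version of \cite[Corollary~4.6]{MN_2012}.

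The second step is to apply Kirchberg's classification theorem in its E-theoretic form \cite[Theorem~7.1]{Gabe_2022}; see also \cite[Theorem~G]{Gabe_2024} for the KK-version. It states that an $\E(X)$-equivalence between separable, stable, nuclear, $\calO_{\infty}$-stable, tight C*-algebras over $X$ lifts to an $X$-equivariant $\ast$-isomorphism. These are exactly the hypotheses on $A$ and $B$.

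Two points of compatibility need checking. First, the paper's notion of a C*-algebra over $X$, given by ideals indexed by open sets as in \cref{def:C-algebras_over_X}, must agree with the $\Prim$-based notion used by Kirchberg and Gabe. This holds because a finite $T_0$-space is sober (see the remark after \cref{def:C-algebras_over_X}, citing \cite[Lemma 2.25]{MN_2009}). Second, $X$-equivariant $\ast$-isomorphisms are exactly the isomorphisms in $\SCstar(X)$. Tightness refers to the map $\Prim A \to X$ being a homeomorphism, and it transfers under the same identification.

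I expect the main obstacle to be bookkeeping rather than mathematics. One must ensure that the E-theory category $\Ecat(X)$ of Dadarlat--Meyer \cite{DM_2012}, used in this paper, is the one to which Gabe's E-theoretic classification applies. Gabe's statement is phrased for $\E(X)$ over suitable spaces, and finite $T_0$-spaces are covered.

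If one prefers to avoid the E-theoretic classification theorem, there is an alternative route. Nuclear algebras lie in $\KKcat(X)_{\loc}$, and for them the natural comparison functor $\KKcat(X) \to \Ecat(X)$ is fully faithful on nuclear objects. The $\E(X)$-equivalence can therefore be upgraded to a $\KK(X)$-equivalence, after which Kirchberg's original theorem \cite[Theorem~G]{Gabe_2024} applies.
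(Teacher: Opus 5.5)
Your proof is correct and is exactly the paper's argument: first lift the isomorphism $\FK^X(A) \simeq \FK^X(B)$ to an isomorphism in $\Ecat(X)$ via \cref{cor:filtrated_K-theory_E-equivalence_accordion}, then apply the E-theoretic Kirchberg classification theorem \cite[Theorem~7.1]{Gabe_2022}. Your compatibility remarks and the alternative KK-route are not needed; the latter also relies on a KK/E comparison for nuclear objects that the paper never uses.
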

		
		\begin{proof}
			Combine \cref{cor:filtrated_K-theory_E-equivalence_accordion} with the E-theoretic version of Kirchberg's classification theorem specified in \cite[Theorem~7.1]{Gabe_2022}. 
		\end{proof}

		\appendix
		\crefalias{section}{appendix}

		\section{Proof of Proposition~\texorpdfstring{\ref{prop:CstarX_functor}}{\ref*{prop:CstarX_functor}}}\label{appendix:proof_CstarX_functor}
		
		In this appendix, we prove \cref{prop:CstarX_functor}. Fix a topological space $X$. Let $i \colon \Open(X) \hookrightarrow \LCcat(X)$ denote the inclusion functor. We may identify $\Cstar(X)$ with a full subcategory of $[\Open(X), \Cstar]$.
		It is easily checked that the functor 
		\[i^* \colon [\LCcat(X), \Cstar] \to [\Open(X), \Cstar]\] 
		restricts to a functor
		\[i^* \colon [\LCcat(X), \Cstar]_{\substack{\ex \\ \cont}} \to \Cstar(X). \]
		
		We first prove that $i^* \colon [\LCcat(X), \Cstar]_{\substack{\ex \\ \cont}} \to \Cstar(X)$ is fully faithful. 
		
		\begin{prop}\label{prop:faithful}
			The functor $i^* \colon [\LCcat(X), \Cstar]_{\substack{\ex \\ \cont}} \to \Cstar(X)$ is faithful. 
		\end{prop}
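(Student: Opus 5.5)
The plan is to show that a natural transformation $\xi \colon A \Rightarrow B$ between objects of $[\LCcat(X), \Cstar]_{\substack{\ex \\ \cont}}$ is determined by its components at open subsets. Suppose $\xi, \zeta \colon A \Rightarrow B$ satisfy $\xi_U = \zeta_U$ for every $U \in \Open(X)$. We want to prove that $\xi_Y = \zeta_Y$ for every $Y \in \LC(X)$.

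First fix $Y \in \LC(X)$ and write it as $Y = U \setminus V$ with $U, V \in \Open(X)$ and $V \subset U$. One can take $U$ to be an open set in which $Y$ is relatively closed, and $V := U \setminus Y$, which is open in $U$ and hence in $X$. With this choice $Y \in \Closed(U)$, so the morphism $\pi_U^Y \colon U \to Y$ exists in $\LCcat(X)$.

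Next apply the exactness condition (i) to the pair $V \in \Open(U)$ with $U \setminus V = Y$. It says that $A(\pi_U^Y) \colon A(U) \to A(Y)$ is surjective, since it is the quotient map in a short exact sequence.

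Naturality of $\xi$ and $\zeta$ with respect to $\pi_U^Y$ then gives
\[
\xi_Y \circ A(\pi_U^Y) = B(\pi_U^Y) \circ \xi_U = B(\pi_U^Y) \circ \zeta_U = \zeta_Y \circ A(\pi_U^Y).
\]
Since $A(\pi_U^Y)$ is surjective, it is an epimorphism, and hence $\xi_Y = \zeta_Y$. This proves faithfulness.

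The only point that needs care is representing an arbitrary locally closed set $Y$ as a closed subset of an open set $U$, with complement $U \setminus Y$ open in $X$. This follows from the definition of local closedness, namely that $Y$ is open in its closure $\overline{Y}$. Writing $Y = \overline{Y} \cap U$ with $U$ open, we have $U \setminus Y = U \setminus \overline{Y}$, which is open. Continuity (condition (ii)) is not needed for faithfulness.
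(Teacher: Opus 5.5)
Your proof is correct and matches the paper's argument: choose $U \in \Open(X)$ with $Y \in \Closed(U)$, apply naturality along $\pi_U^Y$, and cancel $A(\pi_U^Y)$, which is surjective by exactness. The only difference is that you spell out why such a $U$ exists and why $U \setminus Y$ is open, which the paper leaves implicit.
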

		
		\begin{proof}
			Let $A, B \in [\LCcat(X), \Cstar]_{\substack{\ex \\ \cont}}$. Let $\varphi, \varphi' \colon A \Rightarrow B$ be morphisms in $[\LCcat(X), \Cstar]$ such that $\varphi_U=\varphi'_U$ as $\ast$-homomorphisms $A(U) \to B(U)$ for every $U \in \Open(X)$. For $Y \in \LC(X)$, choose $U \in \Open(X)$ such that $Y \in \Closed(U)$. Then
			\[\varphi_Y \circ A(\pi_U^Y)=B(\pi_U^Y) \circ \varphi_U=B(\pi_U^Y) \circ \varphi'_U=\varphi'_Y \circ A(\pi_U^Y). \] 
			Since $A(\pi_U^Y) \colon A(U) \to A(Y)$ is surjective, we obtain $\varphi_Y=\varphi'_Y$. Therefore, $\varphi=\varphi'$. This shows that $i^*$ is faithful.  
		\end{proof}

		\begin{prop}\label{prop:full}
			The functor $i^* \colon [\LCcat(X), \Cstar]_{\substack{\ex \\ \cont}} \to \Cstar(X)$ is full. 
		\end{prop}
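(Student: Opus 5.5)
Given $A,B\in[\LCcat(X),\Cstar]_{\substack{\ex\\\cont}}$ and an $X$-equivariant $\ast$-homomorphism, i.e.\ a natural transformation $\psi\colon i^*A\Rightarrow i^*B$ with components $\psi_U\colon A(U)\to B(U)$ for $U\in\Open(X)$, I will construct components $\varphi_Y$ for all $Y\in\LC(X)$. Given $Y$, choose $U\in\Open(X)$ with $Y\in\Closed(U)$; the set $U\setminus Y$ is then open in $X$. By exactness, $A(\pi_U^Y)$ is surjective with kernel $A(\iota_{U\setminus Y}^U)(A(U\setminus Y))$, and similarly for $B$. Naturality of $\psi$ on the inclusion $U\setminus Y\subset U$ shows that $\psi_U$ maps this kernel into the corresponding kernel for $B$. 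Hence $\psi_U$ descends to a unique $\ast$-homomorphism $\varphi_Y\colon A(Y)\to B(Y)$ with $\varphi_Y\circ A(\pi_U^Y)=B(\pi_U^Y)\circ\psi_U$.

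Next I check that $\varphi_Y$ does not depend on $U$. If $U_1,U_2$ are two such choices, then $U_1\cap U_2$ is open and contains $Y$ as a closed subset, so it suffices to compare $U'\subset U$. Relation (iv) in $\LCcat(X)$ gives $\pi_U^Y\iota_{U'}^U=\pi_{U'}^Y$, because $U'\cap Y=Y$. Combining this with naturality of $\psi$ for $U'\subset U$ and surjectivity of $A(\pi_{U'}^Y)$ yields the equality. For $Y$ open one takes $U=Y$, so $\varphi_Y=\psi_Y$, and the extension restricts to $\psi$.

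It remains to verify naturality of $\varphi$. Every morphism in $\LCcat(X)$ factors as $\iota_Z^{Y_2}\pi_{Y_1}^Z$, so it is enough to treat $\pi_{Y_1}^{Z}$ with $Z$ closed in $Y_1$, and $\iota_{Y_1}^{Y_2}$ with $Y_1$ open in $Y_2$.

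\emph{Closed case.} Choose $U$ with $Y_1\in\Closed(U)$; then $Z\in\Closed(U)$ as well. Using $\pi_{Y_1}^Z\pi_U^{Y_1}=\pi_U^Z$ together with surjectivity of $A(\pi_U^{Y_1})$, naturality follows directly from the defining equations.

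\emph{Open case.} This is the main obstacle. Choose $U$ open in $X$ with $Y_2\in\Closed(U)$, and then an open $V\subset U$ with $V\cap Y_2=Y_1$; this is possible because $Y_1$ is open in $Y_2$, which carries the relative topology. Then $Y_1\in\Closed(V)$, and relation (iv) gives $\pi_U^{Y_2}\iota_V^U=\iota_{Y_1}^{Y_2}\pi_V^{Y_1}$. Precomposing with the surjection $A(\pi_V^{Y_1})$, I need
\[
\varphi_{Y_2}\circ A(\iota_{Y_1}^{Y_2})\circ A(\pi_V^{Y_1})=B(\iota_{Y_1}^{Y_2})\circ\varphi_{Y_1}\circ A(\pi_V^{Y_1}).
\]
The left side equals $\varphi_{Y_2}A(\pi_U^{Y_2})A(\iota_V^U)=B(\pi_U^{Y_2})\psi_U A(\iota_V^U)=B(\pi_U^{Y_2})B(\iota_V^U)\psi_V$, and the right side equals $B(\iota_{Y_1}^{Y_2})B(\pi_V^{Y_1})\psi_V$, which coincides with it by the same relation for $B$. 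The continuity axiom (ii) is not needed for fullness; it only matters for essential surjectivity.
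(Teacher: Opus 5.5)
Your proof is correct and follows essentially the same argument as the paper. You descend $\psi_U$ through the exact sequence $0\to A(U\setminus Y)\to A(U)\to A(Y)\to 0$, show independence of $U$ by passing to an intersection and using relation (iv), and verify naturality separately on $\pi$- and $\iota$-morphisms using surjectivity of the quotient maps. The only cosmetic difference is in the open case: the paper uses a single open $U$ disjoint from $Z$ with $Z\cup U$ open and works with $Y\cup U\subset Z\cup U$, whereas you choose $V\subset U$ with $V\cap Y_2=Y_1$, and both reduce to the same instance of relation (iv).
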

		
		\begin{proof}
			Fix $A, B \in [\LCcat(X), \Cstar]_{\substack{\ex \\ \cont}}$ and a morphism $\psi \colon i^* A \Rightarrow i^* B$ in $[\Open(X), \Cstar]$. For $U \in \Open(X)$ and $Y \in \Closed(U)$, we have the diagram
			\[
			\begin{tikzcd}
				0 \ar{r} & A(U \setminus Y) \ar{r}{A(\iota_{U \setminus Y}^U)} \ar[swap]{d}{\psi_{U \setminus Y}} & A(U) \ar{r}{A(\pi_U^Y)} \ar[swap]{d}{\psi_U} & A(Y) \ar{r} \ar[dashed]{d}{\varphi_{Y, U}} & 0 \\
				0 \ar{r} & B(U \setminus Y) \ar[swap]{r}{B(\iota_{U \setminus Y}^U)} & B(U) \ar[swap]{r}{B(\pi_U^Y)} &  B(Y) \ar{r} & 0
			\end{tikzcd}
			\] 
			with exact rows. Since the left square commutes, there exists a unique $\ast$-homomorphism $\varphi_{Y, U} \colon A(Y) \to B(Y)$ making the right square commute. 
			
			We first show that $\varphi_{Y, U}$ is independent of $U$. Let
			$U_1,U_2 \in \Open(X)$ with $Y \in \Closed(U_1) \cap \Closed(U_2)$, and put
			$U:=U_1 \cap U_2$. Then $U \in \Open(X)$ and $Y \in \Closed(U)$. 
			For $j=1, 2$, the diagram
			\[
			\begin{tikzcd}[sep=large]
				A(U) \ar[twoheadrightarrow, bend left=20]{rr}{A(\pi_{U}^Y)} \ar[hookrightarrow, swap]{r}{A(\iota_{U}^{U_j})} \ar[swap]{d}{\psi_{U}} & A(U_j)  \ar[twoheadrightarrow, swap]{r}{A(\pi_{U_j}^Y)} \ar[swap]{d}{\psi_{U_j}} & A(Y) \ar{d}{\varphi_{Y, U_j}} \\
				B(U) \ar[twoheadrightarrow, bend right=20, swap]{rr}{B(\pi_{U}^Y)} \ar[hookrightarrow]{r}{B(\iota_{U}^{U_j})} & B(U_j) \ar[twoheadrightarrow]{r}{B(\pi_{U_j}^Y)} & B(Y)
			\end{tikzcd}
			\] 
			commutes. 
			Hence, 
			\[\varphi_{Y,U_j} \circ A(\pi_U^Y)=B(\pi_U^Y) \circ \psi_U. \]
			This shows that 
			\[\varphi_{Y, U_1}=\varphi_{Y, U}=\varphi_{Y, U_2}. \] 
			
			For $Y \in \LC(X)$, we define a $\ast$-homomorphism
			\[\varphi_Y :=\varphi_{Y,U} \colon A(Y) \to B(Y), \]
			where $U \in \Open(X)$ is arbitrary with $Y \in \Closed(U)$.
			We prove that $\varphi_Y$ is natural in $Y \in \LCcat(X)$. Let $Z \in \LC(X)$. Choose $U \in \Open(X)$ such that $Z \cap U=\emptyset$ and $Z \cup U \in \Open(X)$.
			For $Y \in \Open(Z)$, we have $Y \cap U=\emptyset$ and $Y \cup U \in \Open(X)$.
			Consider the diagram
			\[
			\begin{tikzcd}[sep=small]
				& {} &
				A(Y \cup U)
				\ar[swap]{dl}{\psi_{Y \cup U}}
				\ar[twoheadrightarrow, near start]{dd}{A(\pi_{Y \cup U}^Y)}
				\ar[hookrightarrow]{rr}{A(\iota_{Y \cup U}^{Z \cup U})}
				& & 
				A(Z \cup U)
				\ar[swap]{dl}{\psi_{Z \cup U}}
				\ar[twoheadrightarrow, near start]{dd}{A(\pi_{Z \cup U}^Z)}
				\\
				& 
				B(Y \cup U)
				\ar[twoheadrightarrow, swap]{dd}{B(\pi_{Y \cup U}^Y)}
				& & 
				B(Z \cup U)
				\ar[from=ll, hookrightarrow, crossing over, near start, swap, "B(\iota_{Y \cup U}^{Z \cup U})"]
				&
				\\
				& &
				A(Y)
				\ar{dl}{\varphi_Y}
				\ar[hookrightarrow, near start, swap]{rr}{A(\iota_Y^Z)}
				& & 
				A(Z)
				\ar{dl}{\varphi_Z}
				& 
				\\
				&
				B(Y)
				\ar[hookrightarrow, swap]{rr}{B(\iota_Y^Z)}
				& & 
				B(Z)
				\ar[twoheadrightarrow, from=uu, crossing over, near start, "B(\pi_{Z \cup U}^Z)"]
				& & 
			\end{tikzcd}
			\]
			The commutativity of the squares except the bottom square and the surjectivity of $A(\pi_{Y \cup U}^Y)$ give the commutativity of the bottom square: 
			\[
			\varphi_Z \circ A(\iota_Y^Z)=B(\iota_Y^Z) \circ \varphi_Y. 
			\]
			For $Y \in \Closed(Z)$, the same argument, using the relation $\pi_Z^Y \pi_{Z \cup U}^Z=\pi_{Z \cup U}^Y$ in $\LCcat(X)$, gives
			\[\varphi_Y \circ A(\pi_Z^Y)=B(\pi_Z^Y) \circ \varphi_Z. \]
			Since every morphism from $Y_1$ to $Y_2$ in $\LCcat(X)$ is of the form
			$\iota_{Y_3}^{Y_2} \pi_{Y_1}^{Y_3}$ for some
			$Y_3 \in \Closed(Y_1) \cap \Open(Y_2)$, the preceding two equalities show that
			$\varphi_Y$ is natural in $Y \in \LCcat(X)$. Thus, the $\ast$-homomorphisms $\varphi_Y$ for $Y \in \LCcat(X)$ define a natural transformation $\varphi \colon A \Rightarrow B$. By construction, $i^*\varphi=\psi$. Consequently, $i^*$ is full.
		\end{proof}
		
		We next prove that $i^* \colon [\LCcat(X), \Cstar]_{\substack{\ex \\ \cont}} \to \Cstar(X)$ is essentially surjective. For this purpose, we shall use the following universal property of $\LCcat(X)$. 
		
		\begin{lem}\label{lem:universal_property_LC}
			Suppose we are given a category $\frakA$, objects $A_Y \in \frakA$ for $Y \in \LC(X)$, morphisms $\dot{\iota}_Y^Z \in \frakA(A_Y, A_Z)$ for $Z \in \LC(X)$ and $Y \in \Open(Z)$, and morphisms $\dot{\pi}_Z^Y \in \frakA(A_Z, A_Y)$ for $Z \in \LC(X)$ and $Y \in \Closed(Z)$, satisfying the following relations:
			\begin{enumerate}[label=\textnormal{(\roman*)}]
				\item $\dot{\iota}_Y^Y=\dot{\pi}_Y^Y=\id_{A_Y}$ for $Y \in \LC(X)$;
				
				\item $\dot{\iota}_{Y_2}^{Y_3}\dot{\iota}_{Y_1}^{Y_2}=\dot{\iota}_{Y_1}^{Y_3}$ for $Y_3 \in \LC(X)$, $Y_2 \in \Open(Y_3)$, and $Y_1 \in \Open(Y_2)$;
				
				\item $\dot{\pi}_{Y_2}^{Y_3}\dot{\pi}_{Y_1}^{Y_2}=\dot{\pi}_{Y_1}^{Y_3}$ for $Y_1 \in \LC(X)$, $Y_2 \in \Closed(Y_1)$, and $Y_3 \in \Closed(Y_2)$;
				
				\item $\dot{\pi}_{Y_2}^{Y_3}\dot{\iota}_{Y_1}^{Y_2}=\dot{\iota}_{Y_1 \cap Y_3}^{Y_3}\dot{\pi}_{Y_1}^{Y_1 \cap Y_3}$ for $Y_2 \in \LC(X)$, $Y_1 \in \Open(Y_2)$, and $Y_3 \in \Closed(Y_2)$.
			\end{enumerate}
			Then there exists a unique functor 
			\[A \colon \LCcat(X) \to \frakA\]
			such that $A(Y)=A_Y$ for all $Y \in \LC(X)$, $A(\iota_Y^Z)=\dot{\iota}_Y^Z$ for $Z \in \LC(X)$ and $Y \in \Open(Z)$, and $A(\pi_Z^Y)=\dot{\pi}_Z^Y$ for $Z \in \LC(X)$ and $Y \in \Closed(Z)$.
		\end{lem}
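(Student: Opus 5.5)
The plan is to build $A$ on objects and generating morphisms, extend to all morphisms using the normal form, and then check well-definedness, identities and composition from relations (i)--(iv). On objects, set $A(Y):=A_Y$. Every morphism $Z\colon Y_1\to Y_2$ in $\LCcat(X)$, with $Z\in\Closed(Y_1)\cap\Open(Y_2)$, factors uniquely as $\iota_Z^{Y_2}\pi_{Y_1}^Z$, since $Z$ itself is the morphism. So the only possible definition is
\[
A(Z):=\dot{\iota}_Z^{Y_2}\,\dot{\pi}_{Y_1}^Z .
\]
This gives uniqueness at once: any functor with the prescribed values on the $\iota$'s and $\pi$'s must send $Z=\iota_Z^{Y_2}\pi_{Y_1}^Z$ to this composite. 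Well-definedness is automatic because the morphism $Z$ determines the factorization.

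Next come the normalizations. Relation (i) gives $A(\id_Y)=\dot{\iota}_Y^Y\dot{\pi}_Y^Y=\id_{A_Y}$. For $Y\in\Open(Z)$, taking the morphism $Y\colon Y\to Z$ gives $A(\iota_Y^Z)=\dot{\iota}_Y^Z\dot{\pi}_Y^Y=\dot{\iota}_Y^Z$, again by (i). The case of $\pi_Z^Y$ is symmetric.

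The main step is compatibility with composition. Take $Z_1\colon Y_1\to Y_2$ and $Z_2\colon Y_2\to Y_3$, whose composite is $Z_1\cap Z_2$. We need
\[
\dot{\iota}_{Z_2}^{Y_3}\dot{\pi}_{Y_2}^{Z_2}\dot{\iota}_{Z_1}^{Y_2}\dot{\pi}_{Y_1}^{Z_1}
=\dot{\iota}_{Z_1\cap Z_2}^{Y_3}\dot{\pi}_{Y_1}^{Z_1\cap Z_2}.
\]
Apply (iv) inside $Y_2$, with $Z_1\in\Open(Y_2)$ and $Z_2\in\Closed(Y_2)$, to rewrite the middle factor:
\[
\dot{\pi}_{Y_2}^{Z_2}\dot{\iota}_{Z_1}^{Y_2}=\dot{\iota}_{Z_1\cap Z_2}^{Z_2}\dot{\pi}_{Z_1}^{Z_1\cap Z_2}.
\]
Then (ii) combines $\dot{\iota}_{Z_2}^{Y_3}\dot{\iota}_{Z_1\cap Z_2}^{Z_2}=\dot{\iota}_{Z_1\cap Z_2}^{Y_3}$, and (iii) combines $\dot{\pi}_{Z_1}^{Z_1\cap Z_2}\dot{\pi}_{Y_1}^{Z_1}=\dot{\pi}_{Y_1}^{Z_1\cap Z_2}$.

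Before using (ii) and (iii) we must check their topological hypotheses:
\begin{itemize}
\item $Z_1\cap Z_2\in\Open(Z_2)$, because $Z_1$ is open in $Y_2\supset Z_2$;
\item $Z_1\cap Z_2\in\Closed(Z_1)$, because $Z_2$ is closed in $Y_2\supset Z_1$.
\end{itemize}
This verification of the relative open/closed conditions is the only point needing care; it is exactly the bookkeeping the paper already used when defining composition in $\LCcat(X)$, so I expect no real obstacle.
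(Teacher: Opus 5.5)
Your proposal is correct and follows the paper's proof essentially verbatim: define $A$ on the normal form $\iota_Z^{Y_2}\pi_{Y_1}^Z$, get identities from (i), and verify composition by rewriting the middle factor with (iv) and then collapsing with (ii) and (iii). Your explicit checks that $Z_1\cap Z_2\in\Open(Z_2)$ and $Z_1\cap Z_2\in\Closed(Z_1)$, and your use of (i) to recover $A(\iota_Y^Z)=\dot{\iota}_Y^Z$ and $A(\pi_Z^Y)=\dot{\pi}_Z^Y$, simply make explicit what the paper leaves implicit.
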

		
		\begin{proof}
			Uniqueness is clear.
			We define $A(Y):=A_Y$ for each $Y \in \LC(X)$, and
			\[
			A(\iota_Z^{Y_2}\pi_{Y_1}^Z)
			:=
			\dot{\iota}_Z^{Y_2}\dot{\pi}_{Y_1}^Z
			\]
			for $Y_1, Y_2 \in \LC(X)$ and $Z \in \Closed(Y_1) \cap \Open(Y_2)$.
			We show that $A$ preserves identity morphisms and composition.
			From (i), we have $A(\id_Y)=\id_{A(Y)}$ for $Y \in \LC(X)$.
			For $Y_1, Y_2, Y_3 \in \LC(X)$, $Z_1 \in \Closed(Y_1) \cap \Open(Y_2)$, and $Z_2 \in \Closed(Y_2) \cap \Open(Y_3)$, we have
			\begin{align*} 
				A(\iota_{Z_2}^{Y_3} \pi_{Y_2}^{Z_2}) A(\iota_{Z_1}^{Y_2} \pi_{Y_1}^{Z_1}) &=\dot{\iota}_{Z_2}^{Y_3} \dot{\pi}_{Y_2}^{Z_2} \dot{\iota}_{Z_1}^{Y_2} \dot{\pi}_{Y_1}^{Z_1}=\dot{\iota}_{Z_2}^{Y_3} \dot{\iota}_{Z_1 \cap Z_2}^{Z_2} \dot{\pi}_{Z_1}^{Z_1 \cap Z_2} \dot{\pi}_{Y_1}^{Z_1}=\dot{\iota}_{Z_1 \cap Z_2}^{Y_3} \dot{\pi}_{Y_1}^{Z_1 \cap Z_2} \\ 
				&=A(\iota_{Z_1 \cap Z_2}^{Y_3} \pi_{Y_1}^{Z_1 \cap Z_2})=A(\iota_{Z_2}^{Y_3} \pi_{Y_2}^{Z_2} \iota_{Z_1}^{Y_2} \pi_{Y_1}^{Z_1}). 
			\end{align*}
			The second equality follows from (iv), and the third equality follows from (ii) and (iii). The final equality follows by the same relations in $\LCcat(X)$. 
			Thus, $A$ is a functor $\LCcat(X) \to \frakA$.
			By construction, $A$ satisfies the desired conditions.
			This completes the proof.
		\end{proof}
		
		For the remainder of the proof, we fix $B \in \Cstar(X)$. For $U \in \Open(X)$ and $Y \in \Closed(U)$, set
		\[
		B_U(Y):=\frac{B(U)}{B(U \setminus Y)}.
		\]
		
		\begin{lem}\label{lem:isomorphism_between_quotients}
			Let $U_1, U_2 \in \Open(X)$ and $Y \in \Closed(U_1) \cap \Closed(U_2)$.
			For each $V \in \Open(X)$ with $Y \subset V \subset U_1 \cap U_2$, there exists a unique $\ast$-isomorphism
			\[
			\sigma_{U_1,Y}^{U_2,V} \colon B_{U_1}(Y) \to B_{U_2}(Y)
			\]
			making the diagram
			\[
			\begin{tikzcd}[row sep=tiny, column sep=small]
				& B(U_1) \ar[twoheadrightarrow]{r} & B_{U_1}(Y) \ar{dd}{\sigma_{U_1,Y}^{U_2,V}} \\
				B(V) \ar[hookrightarrow]{ur} \ar[hookrightarrow]{dr} & \\
				& B(U_2) \ar[twoheadrightarrow]{r} & B_{U_2}(Y)
			\end{tikzcd}
			\]
			commute.
			Moreover,
			\[
			\sigma_{U_1,Y}^{U_2,V_1}=\sigma_{U_1,Y}^{U_2,V_2}
			\]
			for $V_1, V_2 \in \Open(X)$ with $Y \subset V_1 \subset U_1 \cap U_2$ and $Y \subset V_2 \subset U_1 \cap U_2$.
		\end{lem}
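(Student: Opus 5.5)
The plan is to build $\sigma_{U_1,Y}^{U_2,V}$ as a composite of two $\ast$-isomorphisms through the common intermediate quotient $B_V(Y)=B(V)/B(V\setminus Y)$. First, $Y$ is closed in $V$ because $Y\in\Closed(U_1)$ and $Y\subset V\subset U_1$, so $B_V(Y)$ makes sense. For $k=1,2$, the inclusion $B(V)\hookrightarrow B(U_k)$ followed by the quotient $B(U_k)\to B_{U_k}(Y)$ is a $\ast$-homomorphism. Its kernel is $B(V)\cap B(U_k\setminus Y)$. By \cref{def:C-algebras_over_X}(ii), this kernel equals $B(V\cap (U_k\setminus Y))=B(V\setminus Y)$. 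Hence it descends to an injective $\ast$-homomorphism $\tau_k\colon B_V(Y)\to B_{U_k}(Y)$.

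The main step is surjectivity of $\tau_k$. The sets $V$ and $U_k\setminus Y$ are open and their union is $U_k$, since $Y\subset V$. So \cref{def:C-algebras_over_X}(ii) gives $B(U_k)=B(V)+B(U_k\setminus Y)$, and therefore every class in $B_{U_k}(Y)$ has a representative in $B(V)$. Thus $\tau_k$ is a $\ast$-isomorphism. This is the only point where the lattice axioms are really used, and I expect it to be the main (though mild) obstacle.

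Set $\sigma_{U_1,Y}^{U_2,V}:=\tau_2\circ\tau_1^{-1}$. The diagram commutes by construction. Uniqueness holds because $B(V)\to B_{U_1}(Y)$ is surjective, as just shown, so any map making the diagram commute is determined on all of $B_{U_1}(Y)$.

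For independence of $V$, take $V_3:=V_1\cap V_2$. This set is open and satisfies $Y\subset V_3\subset U_1\cap U_2$. Since $B(V_3)\subset B(V_1)$, the map $\sigma_{U_1,Y}^{U_2,V_1}$ also makes the diagram for $V_3$ commute. By uniqueness it equals $\sigma_{U_1,Y}^{U_2,V_3}$. The same argument applied to $V_2$ gives $\sigma_{U_1,Y}^{U_2,V_2}=\sigma_{U_1,Y}^{U_2,V_3}$, and the two are equal.
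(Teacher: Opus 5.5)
Your proposal is correct and follows essentially the same route as the paper: both factor $\sigma_{U_1,Y}^{U_2,V}$ through $B_V(Y)$ using $B(V)\cap B(U_k\setminus Y)=B(V\setminus Y)$ and $B(V)+B(U_k\setminus Y)=B(U_k)$, derive uniqueness from the surjectivity of $B(V)\to B_{U_1}(Y)$, and prove independence of $V$ by passing to $V_1\cap V_2$. The only cosmetic difference is that you obtain the isomorphisms $B_V(Y)\to B_{U_k}(Y)$ by a direct kernel computation, whereas the paper reads them off a morphism of short exact sequences.
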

		
		\begin{proof}
			We prove the first assertion. Let $V \in \Open(X)$ with $Y \subset V \subset U_1 \cap U_2$. Then $Y \in \Closed(V)$. 
			For $j=1, 2$, consider the diagram
			\[
			\begin{tikzcd}
				0 \ar{r} & B(V \setminus Y) \ar{r} \ar[hookrightarrow]{d} & B(V) \ar{r} \ar[hookrightarrow]{d} & B_V(Y) \ar{r} \ar[dashed]{d} & 0 \\
				0 \ar{r} & B(U_j \setminus Y) \ar{r} & B(U_j) \ar{r} & B_{U_j}(Y) \ar{r} & 0
			\end{tikzcd}
			\]
			with exact rows. Since the left square commutes, there exists a unique $\ast$-homomorphism $B_V(Y) \to B_{U_j}(Y)$ making the right square commute. Since
			\[
			V \cap (U_j \setminus Y)=V \setminus Y,
			\quad
			V \cup (U_j \setminus Y)=U_j,
			\]
			the defining property of $B \in \Cstar(X)$ gives
			\[
			B(V) \cap B(U_j \setminus Y)=B(V \setminus Y),
			\quad
			B(V)+B(U_j \setminus Y)=B(U_j).
			\]
			Hence, the induced $\ast$-homomorphism is a $\ast$-isomorphism. Using these $\ast$-isomorphisms, define
			\[
			\sigma_{U_1,Y}^{U_2,V} \colon
			B_{U_1}(Y)
			\xrightarrow{\simeq}
			B_V(Y)
			\xrightarrow{\simeq}
			B_{U_2}(Y).
			\]
			The diagram in the statement then commutes.
			Since $B(U_1)=B(V)+B(U_1 \setminus Y)$, the composite
			\[
			B(V) \hookrightarrow B(U_1) \twoheadrightarrow B_{U_1}(Y)
			\]
			is surjective. Thus, $\sigma_{U_1,Y}^{U_2,V}$ is the unique $\ast$-homomorphism $B_{U_1}(Y) \to B_{U_2}(Y)$ making the diagram in the statement commute.
			
			We prove the second assertion. 
			Let $V_1, V_2 \in \Open(X)$ satisfy the assumptions and put $V:=V_1 \cap V_2$. Then $V \in \Open(X)$ and $Y \subset V \subset U_1 \cap U_2$. For $j=1,2$, the commutativity of the defining diagram for $\sigma_{U_1,Y}^{U_2,V_j}$ remains valid after restricting the two composites from $B(V_j)$ to $B(V)$. Hence, $\sigma_{U_1,Y}^{U_2,V_j}$ also makes the defining diagram for $\sigma_{U_1,Y}^{U_2,V}$ commute. Therefore, we obtain $\sigma_{U_1,Y}^{U_2,V_1}=\sigma_{U_1,Y}^{U_2,V}=\sigma_{U_1,Y}^{U_2,V_2}$. 
		\end{proof}
		
		In view of \cref{lem:isomorphism_between_quotients}, for $U_1, U_2 \in \Open(X)$ and $Y \in \Closed(U_1) \cap \Closed(U_2)$, we define a $\ast$-isomorphism
		\[
		\sigma_{U_1,Y}^{U_2}:=\sigma_{U_1,Y}^{U_2,V} \colon B_{U_1}(Y) \to B_{U_2}(Y),
		\]
		where $V \in \Open(X)$ is arbitrary with $Y \subset V \subset U_1 \cap U_2$.
		
		\begin{lem}\label{lem:composition_alpha}
			For $U_1, U_2, U_3 \in \Open(X)$ and $Y \in \Closed(U_1) \cap \Closed(U_2) \cap \Closed(U_3)$, we have 
			\[\sigma_{U_2, Y}^{U_3} \circ \sigma_{U_1, Y}^{U_2}=\sigma_{U_1, Y}^{U_3}. \]
		\end{lem}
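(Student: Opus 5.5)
The plan is to reduce the claim to the uniqueness part of \cref{lem:isomorphism_between_quotients}. There, $\sigma_{U_1,Y}^{U_3,V}$ is characterized as the \emph{unique} $\ast$-homomorphism $B_{U_1}(Y) \to B_{U_3}(Y)$ making the relevant square commute. The square is commutative when the two composites $B(V) \hookrightarrow B(U_1) \twoheadrightarrow B_{U_1}(Y) \to B_{U_3}(Y)$ and $B(V) \hookrightarrow B(U_3) \twoheadrightarrow B_{U_3}(Y)$ agree. So it suffices to exhibit one admissible $V$ for which $\sigma_{U_2,Y}^{U_3}\circ\sigma_{U_1,Y}^{U_2}$ makes this square commute.

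First, I would take $V:=U_1 \cap U_2 \cap U_3$. This set is open, and it contains $Y$ because $Y$ is contained in each $U_k$. Since $Y \subset V \subset U_1\cap U_2$, and similarly for the pairs $(U_2,U_3)$ and $(U_1,U_3)$, the independence statement in \cref{lem:isomorphism_between_quotients} lets me write
\[\sigma_{U_1,Y}^{U_2}=\sigma_{U_1,Y}^{U_2,V},\quad \sigma_{U_2,Y}^{U_3}=\sigma_{U_2,Y}^{U_3,V},\quad \sigma_{U_1,Y}^{U_3}=\sigma_{U_1,Y}^{U_3,V}.\]
All three maps are thus computed with the same $V$.

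Next, I would chase an element $b \in B(V)$. By the defining diagram for $\sigma_{U_1,Y}^{U_2,V}$, the class of $b$ in $B_{U_1}(Y)$ is sent to the class of $b$ in $B_{U_2}(Y)$. By the defining diagram for $\sigma_{U_2,Y}^{U_3,V}$, that class is in turn sent to the class of $b$ in $B_{U_3}(Y)$. Hence the composite makes the defining square for $\sigma_{U_1,Y}^{U_3,V}$ commute. Uniqueness then gives equality; uniqueness holds because $B(V)\to B_{U_1}(Y)$ is surjective, as shown in that lemma.

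I do not expect a genuine obstacle here. The only point needing care is choosing a single common $V$ inside all three $U_k$, so that the uniqueness clauses are invoked for one and the same $V$.
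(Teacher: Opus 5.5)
Your proposal is correct and follows the paper's proof essentially verbatim: the paper also sets $V:=U_1\cap U_2\cap U_3$, computes all three $\ast$-isomorphisms with this common $V$, and concludes from the commutativity of the two defining diagrams together with the uniqueness clause of \cref{lem:isomorphism_between_quotients}. Your explicit element chase through $B(V)$ is just a spelled-out version of that diagram argument.
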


		\begin{proof}
			Put $V:=U_1 \cap U_2 \cap U_3$. Then $V \in \Open(X)$ and $Y \subset V$. 
			By the preceding convention, we may use $V$ in the definitions of all three $\ast$-isomorphisms in the statement. 
			The commutativity of the defining diagrams for $\sigma_{U_1, Y}^{U_2, V}$ and $\sigma_{U_2, Y}^{U_3, V}$ shows that
			$\sigma_{U_2, Y}^{U_3, V} \circ \sigma_{U_1, Y}^{U_2, V}$ also makes the defining diagram for $\sigma_{U_1, Y}^{U_3, V}$ commute. Thus, we obtain $\sigma_{U_2, Y}^{U_3} \circ \sigma_{U_1, Y}^{U_2}=\sigma_{U_1, Y}^{U_3, V}=\sigma_{U_1, Y}^{U_3}$.
		\end{proof}
		
		For $Y \in \LC(X)$, we fix $U_Y \in \Open(X)$ with $Y \in \Closed(U_Y)$, and define
		\[
		A_Y:=B_{U_Y}(Y).
		\]
		For $U \in \Open(X)$ and $Y \in \Closed(U)$, we define a surjective $\ast$-homomorphism
		\[
		\rho_U^Y \colon B(U) \twoheadrightarrow B_U(Y) \xrightarrow{\sigma_{U,Y}^{U_Y}} B_{U_Y}(Y)=A_Y.
		\]
		Note that $\rho_Y^Y=\sigma_{Y,Y}^{U_Y}$ for $Y \in \Open(X)$.
		
		\begin{lem}\label{lem:rho_iota}
			For $U, V \in \Open(X)$ and $Y \in \Closed(U)$ with $Y \subset V \subset U$, we have $Y \in \Closed(V)$ and the diagram
			\[
			\begin{tikzcd}
				B(V) \ar[hookrightarrow]{d} \ar[twoheadrightarrow]{dr}{\rho_V^Y} & \\
				B(U) \ar[twoheadrightarrow, swap]{r}{\rho_U^Y} & A_Y
			\end{tikzcd}
			\]
			commutes.
		\end{lem}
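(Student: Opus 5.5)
First, $Y \in \Closed(V)$: since $Y$ is closed in $U$, we can write $Y = U \cap F$ for some closed $F \subset X$. Because $Y \subset V \subset U$, we get $Y = V \cap F$, so $Y$ is closed in $V$.

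For the commutativity, unwind the definitions: $\rho_V^Y = \sigma_{V,Y}^{U_Y} \circ q_V$ and $\rho_U^Y = \sigma_{U,Y}^{U_Y} \circ q_U$. Here $q_V \colon B(V) \twoheadrightarrow B_V(Y)$ and $q_U \colon B(U) \twoheadrightarrow B_U(Y)$ are the quotient maps. By \cref{lem:composition_alpha}, applied to $V$, $U$, $U_Y$, we have $\sigma_{V,Y}^{U_Y} = \sigma_{U,Y}^{U_Y} \circ \sigma_{V,Y}^{U}$. It therefore suffices to prove
\[
\sigma_{V,Y}^{U} \circ q_V = q_U \circ \iota, \qquad \iota \colon B(V) \hookrightarrow B(U).
\]

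To see this, compute $\sigma_{V,Y}^{U}$ using the auxiliary open set $V$ itself, which is allowed since $Y \subset V \subset V \cap U$. With this choice, \cref{lem:isomorphism_between_quotients} says that $\sigma_{V,Y}^{U,V}$ is the unique $\ast$-homomorphism for which the square with vertex $B(V)$ commutes. One leg of that square is $B(V) \xrightarrow{\id} B(V) \twoheadrightarrow B_V(Y)$, i.e. $q_V$. The other leg is $B(V) \hookrightarrow B(U) \twoheadrightarrow B_U(Y)$, i.e. $q_U \circ \iota$. So commutativity of the square is exactly the identity we need.

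The only subtle point is whether choosing $V$ as the auxiliary set is legitimate. The independence statement in \cref{lem:isomorphism_between_quotients} guarantees that $\sigma_{V,Y}^{U}$ does not depend on this choice, so the argument reduces to routine bookkeeping.
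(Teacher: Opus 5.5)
Your proof is correct and follows the paper's argument: the paper also factors $\sigma_{V,Y}^{U_Y}$ as $\sigma_{U,Y}^{U_Y}\circ\sigma_{V,Y}^{U}$ via \cref{lem:composition_alpha}, and obtains the remaining square from the defining property of $\sigma_{V,Y}^{U}$ in \cref{lem:isomorphism_between_quotients}. Your explicit verification that $Y = V \cap F$ is closed in $V$ spells out a step the paper simply calls clear.
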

		
		\begin{proof}
			It is clear that $Y \in \Closed(V)$. Consider the diagram
			\[
			\begin{tikzcd}
				B(V) \ar[hookrightarrow]{d} \ar[twoheadrightarrow]{r} & B_V(Y) \ar[swap]{d}{\sigma_{V,Y}^{U}} \ar{dr}{\sigma_{V,Y}^{U_Y}} & \\
				B(U) \ar[twoheadrightarrow]{r} & B_U(Y) \ar[swap]{r}{\sigma_{U,Y}^{U_Y}} & B_{U_Y}(Y)=A_Y.
			\end{tikzcd}
			\]
			The left square commutes by the definition of $\sigma_{V,Y}^{U}$; see \cref{lem:isomorphism_between_quotients}. The right triangle commutes by \cref{lem:composition_alpha}. This shows the desired commutativity.
		\end{proof}

		Let $Z \in \LC(X)$ and $Y \in \Closed(Z)$.
		For $U \in \Open(X)$ with $Z \in \Closed(U)$, we have $Y \in \Closed(U)$. Consider the diagram
		\[
		\begin{tikzcd}
			0 \ar{r} & B(U \setminus Z) \ar{r} \ar[hookrightarrow]{d} & B(U) \ar{r}{\rho_U^Z} \ar[equal]{d} & A_Z \ar{r} \ar[dashed]{d}{\dot{\pi}_{Z, U}^Y} & 0 \\
			0 \ar{r} & B(U \setminus Y) \ar{r} & B(U) \ar[swap]{r}{\rho_U^Y} & A_Y \ar{r} & 0
		\end{tikzcd}
		\]
		with exact rows. Since the left square commutes, there exists a unique $\ast$-homomorphism $\dot{\pi}_{Z, U}^Y \colon A_Z \to A_Y$ making the right square commute.
		
		We show that $\dot{\pi}_{Z, U}^Y$ is independent of $U$.
		Let $U_1, U_2 \in \Open(X)$ with $Z \in \Closed(U_1) \cap \Closed(U_2)$, and put $U:=U_1 \cap U_2$.
		Then $U \in \Open(X)$ and $Z \in \Closed(U)$. 
		For $j=1,2$, \cref{lem:rho_iota} shows that the commutativity of the defining diagram for $\dot{\pi}_{Z,U_j}^Y$ remains valid after restricting $\rho_{U_j}^Z$ and $\rho_{U_j}^Y$ to $B(U)$. Hence, $\dot{\pi}_{Z,U_j}^Y$ also makes the defining diagram for $\dot{\pi}_{Z,U}^Y$ commute. Thus,
		\[
		\dot{\pi}_{Z,U_1}^Y=\dot{\pi}_{Z,U}^Y=\dot{\pi}_{Z,U_2}^Y.
		\]
		Thus, for $Z \in \LC(X)$ and $Y \in \Closed(Z)$, we define a $\ast$-homomorphism
		\[
		\dot{\pi}_Z^Y:=\dot{\pi}_{Z, U}^Y \colon A_Z \to A_Y, 
		\]
		where $U \in \Open(X)$ is arbitrary with $Z \in \Closed(U)$.
		
		Let $Z \in \LC(X)$ and $Y \in \Open(Z)$. 
		For $U, V \in \Open(X)$ satisfying $V \subset U$, $Y \cap V=Z \cap U=\emptyset$, and $Y \cup V, Z \cup U \in \Open(X)$, consider the diagram 
		\[
		\begin{tikzcd}
			0 \ar{r} & B(V) \ar{r} \ar[hookrightarrow]{d} & B(Y \cup V) \ar{r}{\rho_{Y \cup V}^Y} \ar[hookrightarrow]{d} & A_Y \ar{r} \ar[dashed]{d}{\dot{\iota}_{Y, V}^{Z, U}} & 0 \\
			0 \ar{r} & B(U) \ar{r} & B(Z \cup U) \ar[swap]{r}{\rho_{Z \cup U}^Z} & A_Z \ar{r} & 0
		\end{tikzcd}
		\] 
		with exact rows. Since the left square commutes, there exists a unique $\ast$-homomorphism $\dot{\iota}_{Y, V}^{Z, U} \colon A_Y \to A_Z$ making the right square commute. 
		
		We show that $\dot{\iota}_{Y, V}^{Z, U}$ is independent of $U$ and $V$.
		Let $(U_j, V_j)$ for $j=1,2$ satisfy the above conditions, and put $U:=U_1 \cap U_2$ and $V:=V_1 \cap V_2$. Then $(U, V)$ also satisfies the above conditions. 
		For $j=1,2$, \cref{lem:rho_iota} shows that the commutativity of the defining diagram for $\dot{\iota}_{Y,V_j}^{Z,U_j}$ remains valid after restricting $\rho_{Y \cup V_j}^Y$ and $\rho_{Z \cup U_j}^Z$ to $B(Y \cup V)$ and $B(Z \cup U)$, respectively. Hence, $\dot{\iota}_{Y,V_j}^{Z,U_j}$ also makes the defining diagram for $\dot{\iota}_{Y, V}^{Z, U}$ commute. 
		Therefore,
		\[
		\dot{\iota}_{Y, V_1}^{Z, U_1}=\dot{\iota}_{Y, V}^{Z, U}=\dot{\iota}_{Y, V_2}^{Z, U_2}.
		\]
		Thus, for $Z \in \LC(X)$ and $Y \in \Open(Z)$, we define a $\ast$-homomorphism
		\[
		\dot{\iota}_Y^Z:=\dot{\iota}_{Y, V}^{Z, U} \colon A_Y \to A_Z, 
		\]
		where $U, V \in \Open(X)$ are arbitrary with the above conditions.

		\begin{lem}
			The C*-algebras $A_Y$ for $Y \in \LC(X)$, the $\ast$-homomorphisms $\dot{\iota}_Y^Z \colon A_Y \to A_Z$ for $Z \in \LC(X)$ and $Y \in \Open(Z)$, and the $\ast$-homomorphisms $\dot{\pi}_Z^Y \colon A_Z \to A_Y$ for $Z \in \LC(X)$ and $Y \in \Closed(Z)$, satisfy the conditions \textup{(i)--(iv)} specified in \textup{\cref{lem:universal_property_LC}}. 
		\end{lem}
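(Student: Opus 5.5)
The verification is carried out one relation at a time, and the tool throughout is surjectivity: every $\rho_U^Y \colon B(U) \twoheadrightarrow A_Y$ is surjective. So to prove an equality of $\ast$-homomorphisms out of $A_Y$, it suffices to check it after precomposing with a suitable $\rho_U^Y$. After that precomposition, each relation reduces to a statement about inclusions and quotient maps between the ideals $B(U)$, and \cref{lem:rho_iota} lets us shrink open sets freely.

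For (i), the defining diagrams already show that $\id_{A_Y}$ satisfies the condition characterising $\dot{\pi}_Y^Y$. For $\dot{\iota}_Y^Y$ we take $U=V$ with $Y \cap V=\emptyset$ and $Y \cup V$ open. Then the two rows coincide, and $\id_{A_Y}$ satisfies the defining property.

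For (iii), we fix a single $U \in \Open(X)$ with $Y_1 \in \Closed(U)$; then $Y_2, Y_3 \in \Closed(U)$ as well. By definition,
\[
\dot{\pi}_{Y_1}^{Y_2}\rho_U^{Y_1}=\rho_U^{Y_2}
\quad\text{and}\quad
\dot{\pi}_{Y_2}^{Y_3}\rho_U^{Y_2}=\rho_U^{Y_3}.
\]
Composing these gives $\dot{\pi}_{Y_2}^{Y_3}\dot{\pi}_{Y_1}^{Y_2}\rho_U^{Y_1}=\rho_U^{Y_3}$, which is the defining property of $\dot{\pi}_{Y_1}^{Y_3}$. Surjectivity of $\rho_U^{Y_1}$ then finishes (iii).

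For (ii), we choose nested opens $V_1 \subset V_2 \subset V_3$ adapted to $Y_1 \subset Y_2 \subset Y_3$, meaning that $Y_k \cap V_k=\emptyset$ and $Y_k \cup V_k$ is open for each $k$. Such a choice exists: take $V_3$ open with $Y_3 \in \Closed(Y_3 \cup V_3)$, then $V_2:=V_3 \cup (Y_3 \setminus Y_2)$ and $V_1:=V_2 \cup (Y_2 \setminus Y_1)$. These are open, because $Y_3 \cup V_3 \setminus Y_2$ is open and $Y_3 \setminus Y_2$ is relatively open in $Y_3$. The defining squares then stack, since the relevant inclusions of ideals compose. Composing the squares shows that $\dot{\iota}_{Y_2}^{Y_3}\dot{\iota}_{Y_1}^{Y_2}$ satisfies the defining property of $\dot{\iota}_{Y_1}^{Y_3}$ for the pair $(V_1,V_3)$, and independence of the choice of $(U,V)$ gives (ii). The only care needed is choosing the $V$'s compatibly, and the construction above handles this.

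The main obstacle is (iv), the mixed relation $\dot{\pi}_{Y_2}^{Y_3}\dot{\iota}_{Y_1}^{Y_2}=\dot{\iota}_{Y_1\cap Y_3}^{Y_3}\dot{\pi}_{Y_1}^{Y_1\cap Y_3}$. The plan is as follows.
\begin{enumerate}
\item Choose $V$ open with $Y_1 \cap V=\emptyset$ and $Y_1 \cup V=:U_1$ open, and take $U_2:=V \cup Y_2$. This set is open because $Y_1 \in \Open(Y_2)$.
\item By the defining square of $\dot{\iota}_{Y_1}^{Y_2}$, the left-hand side precomposed with $\rho_{U_1}^{Y_1}$ equals $\dot{\pi}_{Y_2}^{Y_3}\rho_{U_2}^{Y_2}$ restricted to $B(U_1)$. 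This is $\rho_{U_2}^{Y_3}$ restricted to $B(U_1)$, since $Y_3$ is closed in $U_2$.
\item For the right-hand side, $Y_1\cap Y_3$ is closed in $U_1$. Hence $\dot{\pi}_{Y_1}^{Y_1\cap Y_3}\rho_{U_1}^{Y_1}=\rho_{U_1}^{Y_1\cap Y_3}$.
\item Next, $Y_1\cap Y_3$ is open in $Y_3$, with complement $Y_3\setminus Y_1$. Use $V':=V\cup(Y_3\setminus Y_1)$ and $U':=U_2$ in the definition of $\dot{\iota}_{Y_1\cap Y_3}^{Y_3}$; then $(Y_1\cap Y_3)\cup V'=U_1\cup(Y_3 \setminus Y_1)$ is open. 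Via \cref{lem:rho_iota}, this gives that $\dot{\iota}_{Y_1\cap Y_3}^{Y_3}\rho_{U_1}^{Y_1\cap Y_3}$ agrees with $\rho_{U_2}^{Y_3}$ on $B(U_1)$.
\item The two sides therefore agree after precomposition with the surjection $\rho_{U_1}^{Y_1}$, so they are equal.
\end{enumerate}
The delicate point is checking that the auxiliary sets are open and disjoint as required. I would verify each using $Y_1\in\Open(Y_2)$, $Y_3\in\Closed(Y_2)$, and local closedness of $Y_2$.
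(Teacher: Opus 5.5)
Your strategy is the paper's: verify each relation after precomposing with a surjective $\rho$, using the defining squares. The paper writes out only (iv), as a cube all of whose faces except the bottom commute by definition, and then invokes surjectivity of $\rho_{Y_1\cup U}^{Y_1}$. Parts (i) and (iii) of your proposal are fine. However, the auxiliary open sets, which you correctly identify as the real content, are chosen wrongly in (ii) and (iv), and those steps fail as written.

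\textbf{Part (ii).} Since $Y_2\in\Open(Y_3)$, the set $Y_3\setminus Y_2$ is relatively \emph{closed} in $Y_3$, not open. So $V_2=(Y_3\cup V_3)\setminus Y_2$ need not be open: in the Sierpi\'nski space $X=\{a,b\}$ with $\{a\}$ open, taking $Y_3=X$, $V_3=\emptyset$, $Y_2=\{a\}$ gives $V_2=\{b\}$. Moreover, even when open, your sets satisfy $V_3\subset V_2\subset V_1$. This is the reverse of the inclusion $V\subset U$ required in the definition of $\dot{\iota}_{Y,V}^{Z,U}$.

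\textbf{Part (iv), Step~1.} This runs the implication backwards. An open $V$ adapted to $Y_1$ need not be disjoint from $Y_2$, nor make $Y_2\cup V$ open. For $X=\{a,b\}$ discrete, $Y_1=\{a\}$, $Y_2=Y_3=X$, $V=\{b\}$, the map $\rho_{U_2}^{Y_2}=\rho_X^X$ is injective. It therefore cannot agree on $B(U_1)$ with $\dot{\iota}_{Y_1}^{Y_2}\rho_{U_1}^{Y_1}$, which kills $B(\{b\})$. So Step~2 fails whenever $B(\{b\})\neq 0$.

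\textbf{Part (iv), Step~4.} The set $V'=V\cup(Y_3\setminus Y_1)$ is again generally not open, because $Y_3\setminus Y_1$ is the relatively closed part of $Y_3$. In the Sierpi\'nski space with $Y_1=\{a\}$, $Y_2=Y_3=X$, $V=\emptyset$, you get $V'=\{b\}$. Two further problems:
\begin{itemize}
\item $(Y_1\cap Y_3)\cup V'$ equals $Y_3\cup V$, not $U_1\cup(Y_3\setminus Y_1)$ as you claim.
\item $U'=U_2$ meets $Y_3$, so it cannot serve as the set $U$ in the definition of $\dot{\iota}_{Y_1\cap Y_3}^{Y_3}$.
\end{itemize}

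\textbf{The repair.} Use the paper's choice: pick one open set adapted to the \emph{largest} set involved, and note that adaptedness passes to relatively open subsets. For the closed piece, enlarge by what lies off $Y_3$.
\begin{itemize}
\item In (ii), take $U$ open with $Y_3\cap U=\emptyset$ and $Y_3\cup U$ open. Then $Y_1\cup U$ and $Y_2\cup U$ are open, and setting $V_1=V_2=V_3=U$ makes your stacking argument work.
\item In (iv), take $V$ open with $Y_2\cap V=\emptyset$ and $U_2=Y_2\cup V$ open; then $U_1=Y_1\cup V$ is open. Define $\dot{\iota}_{Y_1\cap Y_3}^{Y_3}$ using $U_1\setminus Y_3=V\cup(Y_1\setminus Y_3)$ in the role of $V$ and $U_2\setminus Y_3=V\cup(Y_2\setminus Y_3)$ in the role of $U$.
\end{itemize}
These two sets are open because $Y_3$ is closed in $Y_2$, hence in $U_2$. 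They also satisfy $(Y_1\cap Y_3)\cup(U_1\setminus Y_3)=U_1$ and $Y_3\cup(U_2\setminus Y_3)=U_2$. So the defining square of $\dot{\iota}_{Y_1\cap Y_3}^{Y_3}$ sits directly over $B(U_1)\hookrightarrow B(U_2)$, and no appeal to \cref{lem:rho_iota} is needed. With these choices, your Steps~2, 3 and~5 reproduce the paper's proof.
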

		
		\begin{proof}
			We only prove (iv), since the proofs of (i)--(iii) are similar. Let $Y_2 \in \LC(X)$, $Y_1 \in \Open(Y_2)$, and $Y_3 \in \Closed(Y_2)$. Choose $U \in \Open(X)$ with $Y_2 \cap U=\emptyset$ and $Y_2 \cup U \in \Open(X)$. Consider the diagram
			\[
			\begin{tikzcd}[column sep=small]
				B(Y_1 \cup U) \ar[hookrightarrow]{rr} \ar[twoheadrightarrow]{dr}[near start, below]{\rho_{Y_1 \cup U}^{Y_1}} \ar[twoheadrightarrow]{dd}[swap]{\rho_{Y_1 \cup U}^{Y_1 \cap Y_3}} & & B(Y_2 \cup U) \ar[twoheadrightarrow]{dr}{\rho_{Y_2 \cup U}^{Y_2}} &  \\
				& A_{Y_1} \ar[hookrightarrow, near start, swap]{rr}{\dot{\iota}_{Y_1}^{Y_2}} \ar[twoheadrightarrow]{dl}{\dot{\pi}_{Y_1}^{Y_1 \cap Y_3}} & & A_{Y_2} \ar[twoheadrightarrow]{dl}{{\dot{\pi}_{Y_2}^{Y_3}}} \\
				A_{Y_1 \cap Y_3} \ar[hookrightarrow, swap]{rr}{\dot{\iota}_{Y_1 \cap Y_3}^{Y_3}} & & A_{Y_3}. \ar[from=uu, twoheadrightarrow, crossing over, near start, swap, "\rho_{Y_2 \cup U}^{Y_3}"] &
			\end{tikzcd}
			\]
			By the defining properties of $\dot{\pi}_{Y_2}^{Y_3}$, $\dot{\pi}_{Y_1}^{Y_1 \cap Y_3}$, and $\dot{\iota}_{Y_1}^{Y_2}$, the corresponding faces commute. For $\dot{\iota}_{Y_1 \cap Y_3}^{Y_3}$, apply its defining property with
			\[
			U':=(Y_2 \setminus Y_3) \cup U, \quad V':=(Y_1 \setminus Y_3) \cup U.
			\]
			These sets satisfy the required conditions, and
			\[
			(Y_1 \cap Y_3) \cup V'=Y_1 \cup U, \quad
			Y_3 \cup U'=Y_2 \cup U.
			\]
			Hence, all faces except the bottom square commute.
			Since $\rho_{Y_1 \cup U}^{Y_1}$ is surjective, the bottom square also commutes. This proves (iv). 
		\end{proof}
		
		By \cref{lem:universal_property_LC}, there exists a unique functor $A \colon \LCcat(X) \to \Cstar$ such that $A(Y)=A_Y$ for all $Y \in \LC(X)$, $A(\iota_Y^Z)=\dot{\iota}_Y^Z$ for $Z \in \LC(X)$ and $Y \in \Open(Z)$, and $A(\pi_Z^Y)=\dot{\pi}_Z^Y$ for $Z \in \LC(X)$ and $Y \in \Closed(Z)$.
		
		\begin{lem}
			We have $A \in [\LCcat(X), \Cstar]_{\substack{\ex \\ \cont}}$. 
		\end{lem}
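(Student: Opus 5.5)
We must verify the two defining conditions of $[\LCcat(X), \Cstar]_{\substack{\ex \\ \cont}}$ for the functor $A$ just constructed: exactness of $0 \to A(Y_1) \to A(Y_2) \to A(Y_3) \to 0$ for $Y_1 \in \Open(Y_2)$, $Y_3:=Y_2 \setminus Y_1$, and the continuity condition for increasing nets of open subsets of a locally closed set. The strategy is to reduce everything to statements about the ideals $B(U)$, $U \in \Open(X)$, by choosing one auxiliary open set $U$ with $Y_2 \cap U=\emptyset$ and $Y_2 \cup U \in \Open(X)$. Such a $U$ exists for any locally closed $Y_2$: write $Y_2=O \cap F$ with $O$ open and $F$ closed, and take $U:=O \setminus F$, so that $Y_2 \cup U=O$.

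For exactness, fix such a $U$ and put $V:=Y_1 \cup U$ and $O:=Y_2 \cup U$. Both are open, since $Y_1$ is open in $Y_2$ and hence $Y_1 \cup U=O \setminus Y_3$. We have $Y_1 \in \Closed(V)$, $Y_2 \in \Closed(O)$, and $Y_3 \in \Closed(O)$ with $O \setminus Y_3=V$. By construction and \cref{lem:rho_iota}, $A(Y_1) \cong B(V)/B(U)$, $A(Y_2) \cong B(O)/B(U)$, and $A(Y_3) \cong B(O)/B(V)$. Under these identifications, $\dot{\iota}_{Y_1}^{Y_2}$ is the map induced by $B(V) \hookrightarrow B(O)$, and $\dot{\pi}_{Y_2}^{Y_3}$ is the quotient $B(O)/B(U) \to B(O)/B(V)$; this follows from the defining diagrams, using the independence of the choices of $U$ and $V$ already shown. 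Exactness then reduces to the third isomorphism theorem for the ideals $B(U) \subset B(V) \subset B(O)$, where the inclusions come from monotonicity of $B$.

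For continuity, let $Y \in \LC(X)$ and let $(Y_\lambda)$ be an increasing net in $\Open(Y)$ with union $Y$. Choose $U$ as above for $Y$. Then $U_\lambda:=Y_\lambda \cup U$ is an increasing net of open subsets of $O:=Y \cup U$ with union $O$. Condition (iii) of \cref{def:C-algebras_over_X} gives $B(O)=\overline{\bigcup_\lambda B(U_\lambda)}$. The surjection $\rho_O^Y \colon B(O) \to A(Y)$ carries $B(U_\lambda)$ onto $\dot{\iota}_{Y_\lambda}^Y(A(Y_\lambda))$, again by the defining square of $\dot{\iota}$ with the pair $(O,U_\lambda)$. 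Since a surjective $\ast$-homomorphism maps a dense subset onto a dense subset, the continuity condition follows.

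The main obstacle is the bookkeeping in the identifications: checking that the abstract $\dot{\iota}$ and $\dot{\pi}$, which were defined through the fixed $U_Y$ and the isomorphisms $\sigma$, really coincide with the naive inclusion and quotient maps for the particular choice of $U$. I would handle this uniformly by invoking the uniqueness clauses in the defining diagrams, which are available because the relevant maps out of $B(\cdot)$ are surjective, together with \cref{lem:rho_iota} and \cref{lem:composition_alpha}.
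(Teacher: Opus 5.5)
Your proposal is correct and follows essentially the same route as the paper: you use the same auxiliary open set $U$ with $Y_2\cap U=\emptyset$ and $Y_2\cup U$ open, and you get exactness from the chain of ideals $B(U)\subset B(Y_1\cup U)\subset B(Y_2\cup U)$ (your third isomorphism theorem is the paper's nine-lemma argument). The only difference is cosmetic: for continuity you push condition (iii) for $B$ through the surjection $\rho_{Y\cup U}^{Y}$ and note that a surjective $\ast$-homomorphism maps a dense subset onto a dense subset, whereas the paper appeals to exactness of inductive limits.
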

		
		\begin{proof}
			Let $Z \in \LC(X)$ and $Y \in \Open(Z)$. Choose $U \in \Open(X)$ with $Z \cap U=\emptyset$ and $Z \cup U \in \Open(X)$. Then $Y \cap U=\emptyset$ and $Y \cup U \in \Open(X)$. The diagram
			\[
			\begin{tikzcd}
				& 0 \ar{d} & 0 \ar{d} & & \\
				& B(U) \ar[equal]{r} \ar{d} & B(U) \ar{d} & & \\
				0\ar{r} & B(Y \cup U) \ar{r} \ar[swap]{d}{\rho_{Y \cup U}^Y} & B(Z \cup U) \ar{r}{\rho_{Z \cup U}^{Z \setminus Y}} \ar[swap]{d}{\rho_{Z \cup U}^Z} & A(Z \setminus Y) \ar{r} \ar[equal]{d} & 0 \\
				0\ar{r} & A(Y) \ar[swap]{r}{A(\iota_Y^Z)} \ar{d} & A(Z) \ar[swap]{r}{A(\pi_Z^{Z \setminus Y})} \ar{d} & A(Z \setminus Y) \ar{r} & 0 \\
				& 0 & 0 & & 
			\end{tikzcd}
			\]
			commutes. 
			Since the left and right columns and the middle row are exact, so is the bottom row by the nine lemma.  
			
			Let $Y \in \LC(X)$, let $\Lambda=(\Lambda, R_{\Lambda})$ be an upward-directed set, and let $(Y_{\lambda})_{\lambda \in \Lambda} \subset \Open(Y)$ be an increasing net with $Y=\bigcup_{\lambda \in \Lambda} Y_{\lambda}$. Choose $U \in \Open(X)$ with $Y \cap U=\emptyset$ and $Y \cup U \in \Open(X)$. Then, for each $\lambda \in \Lambda$, we have $Y_{\lambda} \cap U=\emptyset$ and $Y_{\lambda} \cup U \in \Open(X)$. For $(\lambda, \mu) \in R_{\Lambda}$, we have the commutative diagram 
			\[
			\begin{tikzcd}[column sep=tiny]
				& 0
				\ar{rr}
				& &
				[1em] B(U)
				\ar{dd}{\id}
				\ar{rr}
				& & 
				[-1em] B(Y_{\lambda} \cup U)
				\ar[hookrightarrow]{dd}
				\ar{rr}{\rho_{Y_{\lambda} \cup U}^{Y_{\lambda}}}
				& & 
				[1em] A(Y_{\lambda})
				\ar[hookrightarrow]{dd}{A(\iota_{Y_{\lambda}}^{Y_{\mu}})}
				\ar{rr}
				& &
				0
				\\
				& & & & & & & & &
				\\
				&
				0
				\ar{rr}
				& &
				B(U)
				\ar{rr}
				\ar{dl}{\id}
				& & 
				B(Y_{\mu} \cup U)
				\ar[near start]{rr}{\rho_{Y_{\mu} \cup U}^{Y_{\mu}}}
				\ar[hookrightarrow]{dl}
				& & 
				A(Y_{\mu})
				\ar{rr}
				\ar[hookrightarrow]{dl}{A(\iota_{Y_{\mu}}^Y)}
				& &
				0 
				\\
				0
				\ar{rr}
				& &
				B(U)
				\ar{rr}
				\ar[from=uuur, crossing over, bend right=20, swap, "\id"]
				& & 
				B(Y \cup U)
				\ar[swap]{rr}{\rho_{Y \cup U}^Y}
				\ar[from=uuur, crossing over, hookrightarrow, bend right=20]
				& & 
				A(Y)
				\ar{rr}
				\ar[from=uuur, crossing over, hookrightarrow, near start, bend right=20, swap, "A(\iota_{Y_{\lambda}}^Y)"]
				& &
				0
			\end{tikzcd}
			\]
			with exact rows. 
			Since $Y \cup U \in \Open(X)$ and $(Y_{\lambda} \cup U)_{\lambda \in \Lambda}$ is an increasing net in $\Open(Y \cup U)$, the defining property of $B \in \Cstar(X)$ gives
			\[B(Y \cup U)=\overline{\bigcup_{\lambda \in \Lambda} B(Y_{\lambda} \cup U)}. \]
			By the exactness of the inductive limit functor $[\Lambda, \Cstar] \to \Cstar$, we obtain
			\[A(Y)=\overline{\bigcup_{\lambda \in \Lambda} A(\iota_{Y_{\lambda}}^Y)(A(Y_{\lambda}))}. \]
			Therefore, $A \in [\LCcat(X), \Cstar]_{\substack{\ex \\ \cont}}$. 
		\end{proof}
		
		\begin{lem}
			We have $i^*A \simeq B$ in $\Cstar(X)$.
		\end{lem}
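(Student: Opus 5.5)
We must show $i^*A \simeq B$, i.e. for each $U \in \Open(X)$ an isomorphism $A(U) \cong B(U)$ compatible with inclusions $U \subset U'$ (viewed as ideals). The natural candidate is $\rho_U^U \colon B(U) \to A_U$. For $U \in \Open(X)$ we have $U \in \Closed(U)$, so $\rho_U^U$ is the composite $B(U) \twoheadrightarrow B_U(U)=B(U)/B(\emptyset)=B(U)$ followed by $\sigma_{U,U}^{U_U}$. Since $B(\emptyset)=0$ and $\sigma$ is a $\ast$-isomorphism by \cref{lem:isomorphism_between_quotients}, $\rho_U^U$ is a $\ast$-isomorphism. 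I would therefore define $\psi_U:=(\rho_U^U)^{-1} \colon A(U) \to B(U)$ for every $U \in \Open(X)$.

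It then remains to check naturality with respect to inclusions $U \subset U'$ of open sets, i.e.
\[
\rho_{U'}^{U'} \circ (\text{inclusion } B(U) \hookrightarrow B(U')) = A(\iota_U^{U'}) \circ \rho_U^U = \dot{\iota}_U^{U'} \circ \rho_U^U .
\]
To compute $\dot{\iota}_U^{U'}$, I will use its defining diagram with $Z=U'$ and $Y=U$, taking $U_0=V_0=\emptyset$; this choice is allowed because $U \cup \emptyset$ and $U' \cup \emptyset$ are open. The defining square then reads $\dot{\iota}_U^{U'} \circ \rho_U^U = \rho_{U'}^{U'} \circ (B(U) \hookrightarrow B(U'))$, which is exactly the required identity. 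Alternatively, one can deduce it from \cref{lem:rho_iota} together with the independence of the choices of $(U,V)$.

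Once this holds, the family $(\psi_U)_U$ is a natural isomorphism in $[\Open(X),\Cstar]$ between $i^*A$ and $B$. Because $\Cstar(X)$ is a full subcategory of $[\Open(X), \Cstar]$ and $i^*A \in \Cstar(X)$, this is the desired isomorphism in $\Cstar(X)$. The only step that needs any care is checking that the degenerate choice $U_0=V_0=\emptyset$ satisfies the hypotheses in the definition of $\dot{\iota}$; after that, the argument is a direct unwinding of the definitions.

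Together with \cref{prop:faithful,prop:full}, this gives essential surjectivity, and hence completes the proof of \cref{prop:CstarX_functor}.
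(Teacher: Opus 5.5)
Your proposal is correct and is essentially the paper's argument. The paper also uses that each $\rho_U^U$ is a $\ast$-isomorphism because $B(\emptyset)=0$. It obtains compatibility from the defining square of $\dot{\iota}$ with the same degenerate choice of empty auxiliary open sets, namely $A(\iota_U^X)=\dot{\iota}_{U,\emptyset}^{X,\emptyset}$. The only cosmetic difference is that the paper checks compatibility only against $X$: it shows that $\rho_X^X$ is an $X$-equivariant $\ast$-homomorphism $B \to i^*A$ that restricts to $\rho_U^U$ on each ideal $B(U)$. You instead check naturality for all inclusions $U \subset U'$, which amounts to the same thing.
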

		
		\begin{proof}
			For $U \in \Open(X)$, by the defining property of $A(\iota_U^X)=\dot{\iota}_{U, \emptyset}^{X, \emptyset}$, the diagram
			\[
			\begin{tikzcd}
				B(U) \ar{r}{\rho_U^U} \ar[hookrightarrow]{d} & A(U) \ar[hookrightarrow]{d}{A(\iota_U^X)}\\
				B(X) \ar[swap]{r}{\rho_X^X} & A(X)
			\end{tikzcd}
			\] 
			commutes. Hence, $\rho_X^X \colon B(X) \to A(X)$ is an $X$-equivariant $\ast$-homomorphism $B \to i^*A$. Since $\rho_U^U \colon B(U) \to  A(U)$ is a $\ast$-isomorphism for every $U \in \Open(X)$, we obtain $B \simeq i^*A$ in $\Cstar(X)$. 
		\end{proof}
		
		The preceding discussion proves the following proposition. 
		
		\begin{prop}\label{prop:essentially_surjective}
			The functor $i^* \colon [\LCcat(X), \Cstar]_{\substack{\ex \\ \cont}} \to \Cstar(X)$ is essentially surjective. 
		\end{prop}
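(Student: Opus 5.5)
The plan is to assemble the preceding lemmas. Given $B \in \Cstar(X)$, I would first apply \cref{lem:universal_property_LC} to the data $A_Y$, $\dot{\iota}_Y^Z$, $\dot{\pi}_Z^Y$ constructed above. This is legitimate because the preceding lemma verifies relations (i)--(iv), so we obtain a functor $A \colon \LCcat(X) \to \Cstar$.

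The second step is to note that $A$ lies in $[\LCcat(X), \Cstar]_{\substack{\ex \\ \cont}}$. Exactness comes from the nine-lemma argument, using an auxiliary open set $U$ with $Z \cap U = \emptyset$ and $Z \cup U$ open. Continuity comes from passing to inductive limits in the diagrams built from $B(Y_\lambda \cup U)$.

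The final step is the isomorphism $i^*A \simeq B$ in $\Cstar(X)$, given by the last lemma via the $\ast$-isomorphisms $\rho_U^U$. For $X$-equivariance it suffices to check compatibility with the inclusions $A(\iota_U^X)$, and this is exactly the defining square of $\dot{\iota}_U^X$ with $V=U'=\emptyset$. Hence every object of $\Cstar(X)$ is isomorphic to one in the essential image of $i^*$.

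Combined with \cref{prop:faithful,prop:full}, this gives \cref{prop:CstarX_functor}. The only step where care is needed is the well-definedness of $\dot{\iota}_Y^Z$ and $\dot{\pi}_Z^Y$, namely their independence of the auxiliary open sets. This was handled above by intersecting the two choices and using \cref{lem:rho_iota}, so no new obstacle remains in the proof of the proposition itself.
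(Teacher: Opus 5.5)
Your proposal is correct and follows the paper's own argument. The paper proves the proposition in the same way, by assembling three pieces: the functor $A$ obtained from \cref{lem:universal_property_LC}, the nine-lemma and inductive-limit verification that $A \in [\LCcat(X), \Cstar]_{\substack{\ex \\ \cont}}$, and the isomorphism $i^*A \simeq B$ given by the $\ast$-isomorphisms $\rho_U^U$, whose compatibility with $A(\iota_U^X)=\dot{\iota}_{U,\emptyset}^{X,\emptyset}$ is exactly the defining square you cite.
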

		
		By \cref{prop:faithful,prop:full,prop:essentially_surjective}, $i^* \colon [\LCcat(X), \Cstar]_{\substack{\ex \\ \cont}} \to \Cstar(X)$ is an equivalence of categories. 
		We have completed the proof of \cref{prop:CstarX_functor}.

		\section{Proof of Lemma~\texorpdfstring{\ref{lem:mapping_cone_of_mapping_cone}}{\ref*{lem:mapping_cone_of_mapping_cone}}}\label{appendix:proof_of_mapping_cones_of_mapping_cones}

		We first establish the universal property of the higher-dimensional mapping cone functor $M^I \colon [\{0, 1\}^I, \Cstar] \to \Cstar$. We shall view $M^I$ as the limit of a certain diagram; see \cref{prop:mapping_cone_limit}. 
		
		\begin{defn}
			Let $I$ be a finite set.
			Define a partial order $R_{R_{\{0, 1\}^I}}$ on $R_{\{0, 1\}^I}$ by
			\[
			R_{R_{\{0, 1\}^I}}
			:=
			\bigl\{
			((\lambda, \mu), (\lambda', \mu'))
			\in R_{\{0, 1\}^I} \times R_{\{0, 1\}^I}
			\bigm|
			(\lambda', \lambda), (\mu, \mu')
			\in R_{\{0, 1\}^I}
			\bigr\}.
			\]
			We regard the partially ordered set $\bigl(R_{\{0, 1\}^I}, R_{R_{\{0, 1\}^I}}\bigr)$
			as a category and denote it again by $R_{\{0, 1\}^I}$.
		\end{defn}

		\begin{eg}\label{eg:relation_I_singleton}
			If $|I|=1$, then $R_{\{0, 1\}^I}=\{(0, 0), (0, 1), (1, 1)\}$. 
			Regarded as a category, $R_{\{0, 1\}^I}$ can be depicted as follows, where identity morphisms and morphisms obtained by composition are omitted. 
			\[
			\begin{tikzcd}
				& (0, 0) \ar{d} \\
				(1, 1) \ar{r} & (0, 1)
			\end{tikzcd}
			\]
		\end{eg}
		
		\begin{eg}\label{eg:relation_I_two_point_set}
			If $|I|=2$, then 
			\[R_{\{0, 1\}^I}=\{(\lambda, \lambda) \mid \lambda \in \{0, 1\}^I\} \cup \{(00, 01), (00, 10), (00, 11), (01, 11), (10, 11)\}. \]
			Regarded as a category, $R_{\{0, 1\}^I}$ can be depicted as follows, where identity morphisms and morphisms obtained by composition are omitted. 
			\[
			\begin{tikzcd}
				(00, 00) \ar{r} \ar{dr} & (00, 01) \ar{dr} & \\
				(01, 01) \ar{ur} \ar{dr} & (00, 10) \ar{r} & (00, 11) \\
				(10, 10) \ar{ur} \ar{dr} & (01, 11) \ar{ur} & \\
				(11, 11) \ar{r} \ar{ur} & (10, 11) \ar{uur} &
			\end{tikzcd}
			\]
		\end{eg}
		
		\begin{defn}
			Let $I$ be a finite set.
			We define a functor
			\[
			D^I
			\colon
			R_{\{0, 1\}^I}
			\to
			[[\{0, 1\}^I, \Cstar], \Cstar]
			\]
			as follows.
			For $(\lambda, \mu) \in R_{\{0, 1\}^I}$, define a functor $D^I_{\lambda, \mu} \colon [\{0, 1\}^I, \Cstar] \to \Cstar$
			by
			\[
			D^I_{\lambda, \mu}(A; \varphi):=\conti_0(\Omega_{\lambda}, A_{\mu})
			\]
			for $(A; \varphi) \in [\{0, 1\}^I, \Cstar]$, and
			\[
			D^I_{\lambda, \mu}(\xi):=\xi_{\mu*} \colon \conti_0(\Omega_{\lambda}, A_{\mu}) \to \conti_0(\Omega_{\lambda}, B_{\mu})
			\]
			for a morphism $\xi \colon (A; \varphi) \Rightarrow (B; \psi)$ in $[\{0, 1\}^I, \Cstar]$.
			For $((\lambda, \mu), (\lambda', \mu')) \in R_{R_{\{0, 1\}^I}}$, define a natural transformation $D^I_{\lambda, \mu} \Rightarrow D^I_{\lambda', \mu'}$ whose component at $(A; \varphi) \in [\{0, 1\}^I, \Cstar]$ is the $\ast$-homomorphism
			\[
			\conti_0(\Omega_{\lambda}, A_{\mu})
			\to
			\conti_0(\Omega_{\lambda'}, A_{\mu'}),
			\quad
			f
			\mapsto
			\varphi_{\mu}^{\mu'} \circ f \circ \omega_{\lambda'}^{\lambda}.
			\]
			These assignments define the functor $D^I$.
		\end{defn}
		
		\begin{eg}\label{eg:mapping_cone_pullback}
			Consider the case where $|I|=1$; see \cref{eg:relation_I_singleton}. 
			For $(A; \varphi) \in [\{0, 1\}^I, \Cstar]$, the object $D^I(A; \varphi) \in [R_{\{0, 1\}^I}, \Cstar]$ is the diagram
			\[
			\begin{tikzcd}
				& A_0 \ar{d}{\varphi} \\
				CA_1 \ar[twoheadrightarrow]{r}[swap]{\ev_0} & A_1
			\end{tikzcd}
			\]
			in $\Cstar$. 
			The mapping cone $M^I(A; \varphi)=C_{\varphi}$ is the limit of this diagram, that is, the pullback along the $\ast$-homomorphisms $\varphi \colon A_0 \to A_1$ and $\ev_0 \colon CA_1 \to A_1$. The pullback diagram fits into the commutative diagram
			\[
			\begin{tikzcd}
				0 \ar{r} & SA_1 \ar{r} \ar[equal]{d} & C_{\varphi} \ar{r} \ar{d} & A_0 \ar{r} \ar{d}{\varphi} & 0 \\
				0 \ar{r} & SA_1 \ar{r} & CA_1 \ar[swap]{r}{\ev_0} & A_1 \ar{r} & 0
			\end{tikzcd}
			\] 
			with exact rows in $\Cstar$. 
			The upper row is the mapping cone short exact sequence of $\varphi$. 
		\end{eg}
		
		\begin{eg}
			Consider the case where $|I|=2$; see \cref{eg:relation_I_two_point_set}. 
			For $(A; \varphi) \in [\{0, 1\}^I, \Cstar]$, the object $D^I(A; \varphi) \in [R_{\{0, 1\}^I}, \Cstar]$ is the commutative diagram
			\[
			\begin{tikzcd}[column sep=huge, row sep=large]
				A_{00} \ar{r}{\varphi_{00}^{01}} \ar{dr}[near start, xshift=-0.5em]{\varphi_{00}^{10}} & A_{01} \ar{dr}{\varphi_{01}^{11}} & \\
				CA_{01} \ar[twoheadrightarrow]{ur}[swap, near start, xshift=-0.5em]{\ev_0} \ar{dr}[near start, xshift=-0.5em]{C\varphi_{01}^{11}} & A_{10} \ar{r}{\varphi_{10}^{11}} & A_{11} \\
				CA_{10} \ar[twoheadrightarrow]{ur}[swap, near start, xshift=-0.5em]{\ev_0} \ar{dr}[near start, xshift=-0.5em]{C\varphi_{10}^{11}} & CA_{11} \ar[twoheadrightarrow]{ur}{\ev_0} & \\
				C^2A_{11} \ar[twoheadrightarrow]{ur}[swap, near start, xshift=-0.5em]{C\ev_0} \ar[twoheadrightarrow]{r}[swap]{\ev_0C} & CA_{11} \ar[twoheadrightarrow]{uur}[swap]{\ev_0} &
			\end{tikzcd}
			\]
			in $\Cstar$, where the $\ast$-homomorphisms $C\ev_0$ and $\ev_0C$ are given by
			\[C\ev_0(f)(t):=f(t, 0), \quad \ev_0C(f)(t):=f(0, t)\]
			for $f \in C^2A_{11}=\conti_0([0, 1) \times [0, 1), A_{11})$ and $t \in [0, 1)$. 
			The $2$-dimensional mapping cone $M^I(A; \varphi)$ is the limit of this diagram. 
		\end{eg}
		
		The following proposition is straightforward to verify.
		
		\begin{prop}\label{prop:mapping_cone_limit}
			Let $I$ be a finite set.
			For each $(\lambda, \mu) \in R_{\{0, 1\}^I}$, consider the morphism
			\[
			M^I \Rightarrow D^I_{\lambda, \mu}
			\]
			in $[[\{0, 1\}^I, \Cstar], \Cstar]$ whose component at $(A; \varphi) \in [\{0, 1\}^I, \Cstar]$ is the $\ast$-homomorphism
			\[
			M^I(A;\varphi)
			\to
			D^I_{\lambda, \mu}(A; \varphi),
			\quad
			f
			\mapsto
			\varphi_{\lambda}^{\mu}\circ f_{\lambda}
			=
			f_{\mu} \circ \omega_{\lambda}^{\mu}.
			\]
			Together with these morphisms, the functor 
			\[M^I \colon [\{0, 1\}^I, \Cstar] \to \Cstar\]
			is the limit of the diagram
			$D^I \colon R_{\{0, 1\}^I} \to [[\{0, 1\}^I, \Cstar], \Cstar]$. 
		\end{prop}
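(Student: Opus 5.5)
The plan is to show directly that $M^I$, together with the stated maps, is a limiting cone over $D^I$, and to do this componentwise. Limits in $[[\{0,1\}^I,\Cstar],\Cstar]$ are computed pointwise whenever they exist pointwise, so it is enough to fix $(A;\varphi)$ and prove that $M^I(A;\varphi)$ is the limit in $\Cstar$ of the diagram $D^I(A;\varphi)\colon R_{\{0,1\}^I}\to\Cstar$. The naturality of the projections in $(A;\varphi)$ is immediate, since each projection is built from $\xi_{\mu*}$, pre-composition with $\omega$, and the structure maps $\varphi$. Likewise, the uniqueness of the induced morphisms as natural transformations follows from uniqueness at each component.

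First check that the projections form a cone. For $(\lambda,\mu)\in R_{\{0,1\}^I}$, the two expressions $\varphi_\lambda^\mu\circ f_\lambda$ and $f_\mu\circ\omega_\lambda^\mu$ agree precisely because of the defining condition of $M^I$. Given a morphism $((\lambda,\mu),(\lambda',\mu'))$, so that $\lambda'\le\lambda\le\mu\le\mu'$, compose the transition map with the projection:
\[
\varphi_\mu^{\mu'}\circ\varphi_\lambda^\mu\circ f_\lambda\circ\omega_{\lambda'}^{\lambda}=\varphi_{\lambda}^{\mu'}\circ f_\lambda\circ\omega_{\lambda'}^{\lambda}=\varphi_{\lambda}^{\mu'}\circ\varphi_{\lambda'}^{\lambda}\circ f_{\lambda'}=\varphi_{\lambda'}^{\mu'}\circ f_{\lambda'}.
\]
Here the second equality uses the defining condition for the pair $(\lambda',\lambda)$. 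This is exactly the projection at $(\lambda',\mu')$, so the cone condition holds. The identities $\omega_\lambda^\mu\circ\omega_{\lambda'}^\lambda=\omega_{\lambda'}^\mu$ and $\omega_\lambda^\lambda=\id$, which make $D^I$ a functor, are immediate coordinatewise.

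Next prove universality. Let $E$ be a C*-algebra with a compatible family $g_{\lambda,\mu}\colon E\to\conti_0(\Omega_\lambda,A_\mu)$. Define $E\to M^I(A;\varphi)$ by $e\mapsto(g_{\lambda,\lambda}(e))_\lambda$. To see that this lands in $M^I(A;\varphi)$, use the two cone morphisms $(\lambda,\lambda)\to(\lambda,\mu)$ and $(\mu,\mu)\to(\lambda,\mu)$. Compatibility along them gives
\[
\varphi_\lambda^\mu\circ g_{\lambda,\lambda}(e)=g_{\lambda,\mu}(e)=g_{\mu,\mu}(e)\circ\omega_\lambda^\mu,
\]
which is the defining relation of $M^I$. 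This also shows that the composite with the $(\lambda,\mu)$-projection recovers $g_{\lambda,\mu}$. Uniqueness holds because an element of $M^I(A;\varphi)$ is determined by its diagonal components $f_\lambda$, which are exactly the projections at $(\lambda,\lambda)$. Finally, the map is a $\ast$-homomorphism because each $g_{\lambda,\lambda}$ is one, and $M^I(A;\varphi)$ is a C*-subalgebra of the direct sum.

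I expect no real obstacle. The only points needing care are bookkeeping: checking the variance conventions for $R_{R_{\{0,1\}^I}}$, namely that $(\lambda,\lambda)$ and $(\mu,\mu)$ both map to $(\lambda,\mu)$ (which follows since $\lambda\le\lambda$, $\lambda\le\mu$ and $\lambda\le\mu$, $\mu\le\mu$), and confirming that pointwise limits assemble into a limit in the functor category.
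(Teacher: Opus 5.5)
Your proposal is correct and is exactly the direct componentwise verification the paper intends; the paper gives no written proof and simply calls the proposition straightforward to verify. The cone condition follows from the defining relation of $M^I$ for the pair $(\lambda',\lambda)$, and universality comes from taking the diagonal components $g_{\lambda,\lambda}$ and comparing along the morphisms $(\lambda,\lambda)\to(\lambda,\mu)$ and $(\mu,\mu)\to(\lambda,\mu)$. The only unstated point is that $\{0,1\}^I$ is finite, so the direct sum is a product in $\Cstar$ and your coordinatewise map into it is well defined.
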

		
		We shall prove \cref{lem:mapping_cone_of_mapping_cone}. 
		
		\begin{lem}\label{lem:diagram_finite_limit}
			For a finite set $I$ and $(\lambda, \mu) \in R_{\{0, 1\}^I}$, the functor
			\[
			D^I_{\lambda, \mu}
			\colon
			[\{0, 1\}^I, \Cstar]
			\to
			\Cstar
			\]
			commutes with finite limits.\footnote{The category $\Cstar$ is complete, that is, any small diagram in $\Cstar$ has a limit; see \cite[Proposition~19]{Meyer_2008}. The functor category $[\{0, 1\}^I, \Cstar]$ is also complete, with limits computed pointwise.}
		\end{lem}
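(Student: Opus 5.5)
The plan is to factor $D^I_{\lambda, \mu}$ as a composite of two functors, each of which visibly commutes with finite limits. Namely,
\[
D^I_{\lambda, \mu}=\conti_0(\Omega_{\lambda}, \blank) \circ \ev_{\mu},
\]
where $\ev_{\mu} \colon [\{0, 1\}^I, \Cstar] \to \Cstar$ is evaluation at the object $\mu$, and $\conti_0(\Omega_{\lambda}, \blank) \colon \Cstar \to \Cstar$ is the functor $A \mapsto \conti_0(\Omega_{\lambda}) \otimes A$. Since limits in $[\{0, 1\}^I, \Cstar]$ are computed pointwise (see the footnote), $\ev_{\mu}$ preserves all limits. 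It therefore suffices to show that $\conti_0(\Omega, \blank)$ preserves finite limits for a locally compact Hausdorff space $\Omega$; in our case $\Omega$ is a point or a finite product of copies of $[0, 1)$.

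For the second factor, recall that a category has finite limits as soon as it has a terminal object, binary products and equalizers, and that a functor preserves finite limits if it preserves these three. In $\Cstar$ the terminal object is $0$, the binary product is the direct sum $A \oplus B$ with the sup norm, and the equalizer of $f, g \colon A \to B$ is the C*-subalgebra $\{a \in A \mid f(a)=g(a)\}$.

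We check the three cases concretely, identifying $\conti_0(\Omega, A)$ with continuous functions $\Omega \to A$ vanishing at infinity, with pointwise operations and sup norm.
\begin{itemize}
\item Terminal object: $\conti_0(\Omega, 0)=0$.
\item Binary products: $\conti_0(\Omega, A \oplus B)=\conti_0(\Omega, A) \oplus \conti_0(\Omega, B)$. A function into $A \oplus B$ is continuous and vanishes at infinity if and only if both of its components do.
\item Equalizers: if $E \subset A$ is the equalizer of $f$ and $g$, then $\conti_0(\Omega, E)$ consists exactly of those $h \in \conti_0(\Omega, A)$ with $f \circ h=g \circ h$. This holds because $h$ takes values in $E$ if and only if $f(h(t))=g(h(t))$ for all $t$, and $E$ carries the subspace norm, so continuity and vanishing at infinity are unaffected.
\end{itemize}

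The identification $\conti_0(\Omega) \otimes A=\conti_0(\Omega, A)$, which the paper records, makes these pointwise descriptions legitimate; here $\conti_0(\Omega)$ is commutative, so the maximal and minimal tensor products agree. One also checks that the canonical comparison maps are exactly these identifications, which follows from naturality.

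I expect the only real obstacle to be this identification of $\conti_0(\Omega) \otimes A$ with $A$-valued functions, together with compatibility with the canonical maps. Everything else is formal: composites of finite-limit-preserving functors preserve finite limits.
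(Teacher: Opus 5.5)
Your proposal is correct and follows essentially the same route as the paper. The paper also reduces to showing that $\conti_0(\Omega_{\lambda}) \otimes \blank$ preserves finite limits, with evaluation at $\mu$ left implicit because limits in $[\{0, 1\}^I, \Cstar]$ are computed pointwise; it checks the zero object and pullbacks where you check the terminal object, binary products and equalizers, and your concrete pointwise verification supplies the details the paper leaves unstated.
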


		\begin{proof}
			Since the functor $\conti_0(\Omega_{\lambda}) \otimes \blank \colon \Cstar \to \Cstar$ preserves zero objects and commutes with pullbacks, it commutes with finite limits. This shows the assertion. 
		\end{proof}

		\begin{lem}\label{lem:product_relations}
			Let $I_1$ and $I_2$ be finite sets and $I:=I_1 \amalg I_2$. 
			Then
			\[
			R_{\{0, 1\}^{I_1}} \times R_{\{0, 1\}^{I_2}}=R_{\{0, 1\}^I}
			\]
			as categories. 
		\end{lem}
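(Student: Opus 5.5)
We need to prove that $R_{\{0,1\}^{I_1}} \times R_{\{0,1\}^{I_2}} = R_{\{0,1\}^I}$ as categories, i.e.\ as partially ordered sets, where the product carries the componentwise order.

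The plan is to reduce everything to \cref{lem:product_index_sets}, which already identifies $\{0,1\}^{I_1}\times\{0,1\}^{I_2}$ with $\{0,1\}^I$ via $(\lambda,\lambda')\mapsto$ the function equal to $\lambda$ on $I_1$ and $\lambda'$ on $I_2$. Under that identification it also identifies the orders: $R_{\{0,1\}^{I_1}}\times R_{\{0,1\}^{I_2}}=R_{\{0,1\}^I}$. I will verify the two parts of the claim in turn.

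First I check equality of the underlying sets. An element of $R_{\{0,1\}^{I_1}}\times R_{\{0,1\}^{I_2}}$ is a pair $((\lambda,\mu),(\lambda',\mu'))$ with $\lambda\le\mu$ and $\lambda'\le\mu'$. Rearranging it to $((\lambda,\lambda'),(\mu,\mu'))$ gives a pair in $\{0,1\}^I\times\{0,1\}^I$. This pair lies in $R_{\{0,1\}^I}$ precisely because the order on $\{0,1\}^I$ is coordinatewise, so it splits over $I_1\amalg I_2$. The rearrangement is a bijection, and this is exactly the set-level statement of \cref{lem:product_index_sets}.

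Second I check that the orders $R_{R_{\cdot}}$ agree. In $R_{\{0,1\}^I}$ we have $((\lambda,\lambda'),(\mu,\mu'))\le((\kappa,\kappa'),(\nu,\nu'))$ if and only if $(\kappa,\kappa')\le(\lambda,\lambda')$ and $(\mu,\mu')\le(\nu,\nu')$ in $\{0,1\}^I$. By the coordinatewise splitting again, this holds if and only if
\[
\kappa\le\lambda,\quad \mu\le\nu,\quad \kappa'\le\lambda',\quad \mu'\le\nu'.
\]
These four conditions say exactly that $(\lambda,\mu)\le(\kappa,\nu)$ in $R_{\{0,1\}^{I_1}}$ and $(\lambda',\mu')\le(\kappa',\nu')$ in $R_{\{0,1\}^{I_2}}$, which is the product order.

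Since both categories are posets, equality of objects and of the order relation gives equality as categories. The only real obstacle is bookkeeping: the identification must be made via the rearrangement $((\lambda,\mu),(\lambda',\mu'))\leftrightarrow((\lambda,\lambda'),(\mu,\mu'))$, and the contravariance in the first coordinate of $R_{R}$ must be respected. That contravariance is compatible with products because it acts coordinatewise.
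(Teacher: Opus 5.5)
Your proposal is correct and matches the paper's proof: the paper also takes the set-level identification $R_{\{0,1\}^{I_1}}\times R_{\{0,1\}^{I_2}}=R_{\{0,1\}^I}$ from \cref{lem:product_index_sets}. It then notes that $R_{R_{\{0,1\}^{I_1}}}\times R_{R_{\{0,1\}^{I_2}}}=R_{R_{\{0,1\}^I}}$, calling this routine. You carry out that routine check explicitly and handle the reversal in the first coordinate correctly.
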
	
		
		\begin{proof}
			Recall from \cref{lem:product_index_sets} that $R_{\{0, 1\}^{I_1}} \times R_{\{0, 1\}^{I_2}}=R_{\{0, 1\}^I}$ as sets. 
			It is routine to verify that $R_{R_{\{0, 1\}^{I_1}}} \times R_{R_{\{0, 1\}^{I_2}}}=R_{R_{\{0, 1\}^I}}$ as sets. This shows the assertion. 
		\end{proof}

		\begin{lem}\label{lem:composition_diagrams}
			Let $I_1$ and $I_2$ be finite sets and $I:=I_1 \amalg I_2$. For $(\lambda, \mu) \in R_{\{0, 1\}^{I_1}}$ and $(\lambda', \mu') \in R_{\{0, 1\}^{I_2}}$, the composite
			\[
			[\{0, 1\}^{I_2}, [\{0, 1\}^{I_1}, \Cstar]]
			\xrightarrow{(D^{I_1}_{\lambda, \mu})_*}
			[\{0, 1\}^{I_2}, \Cstar]
			\xrightarrow{D^{I_2}_{\lambda', \mu'}}
			\Cstar
			\]
			is equal to the functor
			\[
			D^I_{(\lambda, \lambda'), (\mu, \mu')}
			\colon
			[\{0, 1\}^I, \Cstar] \to \Cstar.
			\]
		\end{lem}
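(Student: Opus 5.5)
The plan is to unwind all three functors objectwise and pointwise, identify both sides with the same C*-algebra of functions, and then check that the structure maps agree. Fix $(A;\varphi)\in[\{0,1\}^I,\Cstar]$. Under the identification $[\{0,1\}^{I_2},[\{0,1\}^{I_1},\Cstar]]=[\{0,1\}^I,\Cstar]$, this object corresponds to the family assigning to each $\lambda''\in\{0,1\}^{I_2}$ the $I_1$-cube $\nu\mapsto A_{(\nu,\lambda'')}$. Applying $(D^{I_1}_{\lambda,\mu})_*$ yields the $I_2$-cube
\[
\lambda''\mapsto \conti_0(\Omega_\lambda, A_{(\mu,\lambda'')}),
\]
whose structure maps are $\varphi_{(\mu,\lambda'')}^{(\mu,\mu'')}\circ(\blank)$.

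Next I apply $D^{I_2}_{\lambda',\mu'}$, which evaluates at $\mu'$ and tensors with $\conti_0(\Omega_{\lambda'})$. This gives
\[
\conti_0\bigl(\Omega_{\lambda'},\conti_0(\Omega_\lambda,A_{(\mu,\mu')})\bigr)=\conti_0(\Omega_{\lambda'}\times\Omega_\lambda, A_{(\mu,\mu')}).
\]
On the other side, $\Omega_{(\lambda,\lambda')}=\prod_{i\in I;\,(\lambda,\lambda')(i)=1}[0,1)$ splits as $\Omega_\lambda\times\Omega_{\lambda'}$ because $I=I_1\amalg I_2$. So both sides are canonically $\conti_0(\Omega_{(\lambda,\lambda')},A_{(\mu,\mu')})$, with the coordinate swap absorbed into the canonical identifications that the paper writes as equalities.

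On morphisms $\xi$, both sides act by $\xi_{(\mu,\mu')*}$, so they agree.

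It remains to check that the identification is compatible with the functor structure in the cube variable. Since the claim is an equality of functors $[\{0,1\}^I,\Cstar]\to\Cstar$, only objects and morphisms need to match, and naturality of the canonical iso $\conti_0(X,\conti_0(Y,B))\cong\conti_0(X\times Y,B)$ in $B$ handles this. Anticipating use in \cref{lem:mapping_cone_of_mapping_cone}, I would also verify that the transition maps of $D^{I_2}\circ(D^{I_1})_*$ correspond to those of $D^I$ under \cref{lem:product_relations}. On the left, a transition map is $f\mapsto\varphi^{\mu'}_{\mu''}$-postcomposition (in the $I_2$ direction) composed with precomposition by $\omega^{\lambda'}_{\lambda'''}$, and likewise in the $I_1$ direction. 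On the right, it is $f\mapsto\varphi_{(\mu,\mu')}^{(\nu,\nu')}\circ f\circ\omega_{(\kappa,\kappa')}^{(\lambda,\lambda')}$. These agree because $\omega^{(\lambda,\lambda')}_{(\kappa,\kappa')}=\omega^\lambda_\kappa\times\omega^{\lambda'}_{\kappa'}$ coordinatewise, and $\varphi$ factors through the intermediate vertices by functoriality of the cube.

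The main obstacle is purely bookkeeping: keeping the product decomposition $\Omega_{(\lambda,\lambda')}=\Omega_\lambda\times\Omega_{\lambda'}$ and the order of tensor factors consistent, so that the stated equality holds on the nose up to the paper's convention of writing canonical isomorphisms as equalities.
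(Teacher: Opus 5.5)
Your proof is correct and takes the same route as the paper. The paper's argument consists of two observations. First, $\Omega_\lambda\times\Omega_{\lambda'}=\Omega_{(\lambda,\lambda')}$, so that $\conti_0(\Omega_\lambda)\otimes\conti_0(\Omega_{\lambda'})=\conti_0(\Omega_{(\lambda,\lambda')})$ up to the canonical swap. Second, $\omega_\lambda^\mu\times\omega_{\lambda'}^{\mu'}=\omega_{(\lambda,\lambda')}^{(\mu,\mu')}$, which handles the transition maps; your unwinding on objects, morphisms and transition maps spells out exactly these two points in more detail (your indices such as $\varphi^{\mu'}_{\mu''}$ and $\omega^{\lambda'}_{\lambda'''}$ are loose, but the formula you compare against, $f\mapsto\varphi_{(\mu,\mu')}^{(\nu,\nu')}\circ f\circ\omega_{(\kappa,\kappa')}^{(\lambda,\lambda')}$, is the right one).
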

		
		\begin{proof}
			For $\lambda \in \{0, 1\}^{I_1}$ and $\lambda' \in \{0, 1\}^{I_2}$, we have $\Omega_{\lambda} \times \Omega_{\lambda'}=\Omega_{(\lambda, \lambda')}$ and hence
			\[
			\conti_0(\Omega_{\lambda})\otimes\conti_0(\Omega_{\lambda'})
			=\conti_0(\Omega_{\lambda} \times \Omega_{\lambda'})
			=\conti_0(\Omega_{(\lambda, \lambda')}).
			\]
			Moreover, for $(\lambda, \mu) \in R_{\{0, 1\}^{I_1}}$ and $(\lambda', \mu') \in R_{\{0, 1\}^{I_2}}$, the continuous map
			\[\omega_{\lambda}^{\mu} \times \omega_{\lambda'}^{\mu'} \colon \Omega_{\lambda} \times \Omega_{\lambda'}
			\to
			\Omega_{\mu} \times \Omega_{\mu'}
			\]
			is equal to the continuous map
			\[\omega_{(\lambda, \lambda')}^{(\mu, \mu')} \colon \Omega_{(\lambda, \lambda')} \to \Omega_{(\mu, \mu')}. \]
			This shows the assertion. 
		\end{proof}

		\begin{proof}[Proof of \textup{\cref{lem:mapping_cone_of_mapping_cone}}]
			By \cref{prop:mapping_cone_limit}, the mapping cones involved can be identified with the corresponding limits. Thus, 
			\begin{align*}
				M^{I_2}M^{I_1}_*
				&=
				\biggl(\lim_{(\lambda', \mu') \in R_{\{0, 1\}^{I_2}}}
				D^{I_2}_{\lambda', \mu'}\biggr)
				\biggl(\lim_{(\lambda, \mu) \in R_{\{0, 1\}^{I_1}}}
				(D^{I_1}_{\lambda, \mu})_*\biggr) \\
				&=
				\lim_{(\lambda', \mu') \in R_{\{0, 1\}^{I_2}}}
				\lim_{(\lambda, \mu) \in R_{\{0, 1\}^{I_1}}}
				D^{I_2}_{\lambda', \mu'}(D^{I_1}_{\lambda, \mu})_*\\
				&=
				\lim_{(\lambda', \mu') \in R_{\{0, 1\}^{I_2}}}
				\lim_{(\lambda, \mu) \in R_{\{0, 1\}^{I_1}}}
				D^I_{(\lambda, \lambda'), (\mu, \mu')}\\
				&=
				\lim_{((\lambda, \mu), (\lambda', \mu')) \in R_{\{0, 1\}^{I_1}} \times R_{\{0, 1\}^{I_2}}}
				D^I_{(\lambda, \lambda'), (\mu, \mu')}\\
				&=
				\lim_{(\lambda'', \mu'') \in R_{\{0, 1\}^I}}
				D^I_{\lambda'', \mu''}
				=
				M^I.
			\end{align*}
			The second equality follows from \cref{lem:diagram_finite_limit} because $R_{\{0, 1\}^{I_1}}$ is finite. The third equality follows from \cref{lem:composition_diagrams}. The fourth equality follows from the canonical identification of the iterated limit with the limit over the product category.
			The fifth equality follows from \cref{lem:product_relations}. This completes the proof. 
		\end{proof}

		\section{A dual version of Lemma~\texorpdfstring{\ref{lem:homotopy_2-dimensional_mapping_cone}}{\ref*{lem:homotopy_2-dimensional_mapping_cone}}}\label{appendix:a_dual_version}
		
		The following lemma is a dual version of \cref{lem:homotopy_2-dimensional_mapping_cone}, which is also based on an idea of Katsura. 
		
		\begin{lem}\label{lem:antihomotopy_2-dimensional_mapping_cone}
			Let $\frakA$ be a category. Suppose we are given a commutative diagram
			\[
			\begin{tikzcd}
				B_1 \ar[Rightarrow]{r}{\psi_1^2} \ar[Rightarrow]{d} & B_2 \ar[Rightarrow]{r}{\psi_2^3} \ar[Rightarrow, swap]{d}{\psi_2^4} & B_3 \ar[Rightarrow]{d}{\psi_3^5} \\
				0 \ar[Rightarrow]{r} & B_4 \ar[Rightarrow, swap]{r}{\psi_4^5} & B_5
			\end{tikzcd}
			\]
			in $[\frakA, \Cstar]$. Put $\psi_2^5:=\psi_3^5 \circ \psi_2^3=\psi_4^5 \circ \psi_2^4 \colon B_2 \Rightarrow B_5$. 
			Consider the diagram
			\[
			\begin{tikzcd}[
				ampersand replacement=\&,
				column sep=huge,
				row sep=small,
				every label/.append style={font=\small},
				cells={nodes={font=\small}}
				]
				M^2\mathopen{} \left(\Msquare{B_1}{B_2}{0}{B_5}{\psi_1^2}{}{\psi_2^5}{}\right)
				\ar[Rightarrow]{r}{
					M^2\!\begin{pmatrix}
						0 & \psi_2^4 \\
						0 & \id
					\end{pmatrix}
				}
				\ar[Rightarrow, swap]{dd}{
					M^2\!\begin{pmatrix}
						\psi_1^2 & \psi_2^3 \\
						0 & \id
					\end{pmatrix}
				}
				\&
				M^2\mathopen{} \left(\Msquare{0}{B_4}{0}{B_5}{}{}{\psi_4^5}{}\right)
				\ar[equal]{d}
				\\
				\& SC_{\psi_4^5} \ar[equal]{d} \\
				M^2\mathopen{} \left(\Msquare{B_2}{B_3}{B_4}{B_5}{\psi_2^3}{\psi_2^4}{\psi_3^5}{\psi_4^5}\right)
				\&
				M^2\mathopen{} \left(\Msquare{0}{0}{B_4}{B_5}{}{}{}{\psi_4^5}\right)
				\ar[Rightarrow]{l}{
					M^2\!\begin{pmatrix}
						0 & 0 \\
						\id & \id
					\end{pmatrix}
				}
			\end{tikzcd}
			\]
			in $[\frakA, \Cstar]$. Then the two morphisms from the upper-left object to the lower-left object are homotopic to mutually orthogonal morphisms whose sum is homotopic to $0$. 
		\end{lem}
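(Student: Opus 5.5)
The plan is to follow the proof of \cref{lem:homotopy_2-dimensional_mapping_cone} closely: fix $Y \in \frakA$, write everything in explicit coordinates, and build the homotopies by reparametrizing the cube coordinates. All formulas will be natural in $Y$ because they only post-compose with the given natural transformations. Let $A$ and $E$ denote the upper-left and lower-left mapping cones. An element of $A(Y)$ is a triple $(b_1, b_2, b_5)$ with $b_2 \in \conti_0([0,1), B_2(Y))$ and $b_5 \in \conti_0([0,1)^2, B_5(Y))$, subject to $\psi_1^2(b_1)=b_2(0)$, $\psi_2^5\circ b_2(t)=b_5(0,t)$ and $b_5(t,0)=0$. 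An element of $E(Y)$ is a quadruple $(e_2,e_3,e_4,e_5)$ with the usual compatibilities. Unwinding the definitions, and in particular the identification $M^2(0\to B_4; 0\to B_5)=SC_{\psi_4^5}=M^2(0,0;B_4\to B_5)$, which transposes the two cube coordinates, I expect:
\begin{itemize}
\item the left vertical morphism is $\varphi(b)=(b_2(0),\ \psi_2^3\circ b_2,\ 0,\ b_5)$;
\item the other composite is $\psi(b)=(0,\ 0,\ \psi_2^4\circ b_2,\ b_5^{\mathrm{T}})$ with $b_5^{\mathrm{T}}(s,t):=b_5(t,s)$.
\end{itemize}
The first step is to check these formulas, and in particular the transposition coming from the middle identification.

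The second step is to make $\varphi$ and $\psi$ orthogonal. Choose a homeomorphism that compresses $[0,1)$ onto $[0,\tfrac12)$ and is extended by zero (as in the proof of \cref{lem:homotopy_2-dimensional_mapping_cone}). Using it, homotope $\varphi$ to a morphism $\varphi'$ whose $B_5$-component is supported in the region $\{s \le t\}$, by reparametrizing $(s,t)\mapsto$ a map of the square fixing the edge $s=0$. Homotope $\psi$ to a morphism $\psi'$ supported in $\{t \le s\}$ in the same way. Orthogonality also needs the lower-dimensional coordinates to be arranged compatibly. The $e_3$- and $e_4$-components already live in different summands. For the $e_2$-component, note that $\varphi$ has $e_2=b_2(0)=\psi_1^2(b_1)$, so I will first homotope $\varphi$ by sliding $b_2$ along its parameter. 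In this way $e_2$ becomes $0$, at the cost of moving $\psi_2^3\circ b_2$ onto a path starting at $0$. This is allowed because $\psi_2^5\circ b_2(t)=b_5(0,t)$ lets the corresponding boundary data be absorbed into $b_5$.

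The third step is to show that $\varphi'+\psi'$ is homotopic to $0$. After the previous step, $\varphi'+\psi'$ is described by a single function on the square: on one triangle it is $b_5$, and on the other it is its reflection across the diagonal. Its edge values are $\psi_2^3\circ b_2$ along one axis and $\psi_2^4\circ b_2$ along the other, and these agree with $\psi_2^5\circ b_2$ after composing into $B_5$. Such a function is pulled back from the cone on $b_2$ along a map of the square that factors through folding across the diagonal. I will then write an explicit homotopy $u\mapsto$ (shrink the folded parameter toward the corner where all data vanish), in the style of $f_4(u,t)=a_4(t,ut)$ in \cref{lem:homotopy_2-dimensional_mapping_cone}. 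For $u=1$ this recovers $\varphi'+\psi'$, and for $u=0$ it gives $0$, using $b_5(t,0)=0$ and that the edge values vanish at the appropriate ends.

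The main obstacle is this last homotopy. It has to respect simultaneously the conditions $e_2\mapsto e_3(0)$, $e_2\mapsto e_4(0)$, and the two edge conditions $\psi_3^5\circ e_3=e_5(\cdot,0)$ and $\psi_4^5\circ e_4=e_5(0,\cdot)$ at every stage. The folding symmetry is what makes this consistent, and I would first try the explicit formula $g_u(s,t)=b_5(\min(s,t),\,u\max(s,t))$ adjusted on the edges, checking the boundary equations exactly as in \cref{lem:homotopy_2-dimensional_mapping_cone}.
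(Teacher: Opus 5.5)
Your first two steps agree with the paper. The formulas for $\varphi$ and $\psi$ (including the transposition) are correct. Compressing the two $B_5$-components into complementary triangles, by a reparametrization that fixes the nonzero edge, is exactly how the paper gets orthogonal endpoints. Write $t$ for the first coordinate and $s$ for the second, so that $b_5(0,s)=\psi_2^5(b_2(s))$, $b_5(t,0)=0$, $e_5(0,s)=\psi_3^5(e_3(s))$ and $e_5(t,0)=\psi_4^5(e_4(t))$. Then one may take $\varphi'_5(t,s)=b_5\bigl(t,\tfrac{s-t}{1-t}\bigr)$ for $t\le s$ and $0$ otherwise, and $\psi'_5(t,s)=\varphi'_5(s,t)$, leaving all other components unchanged. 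The sub-step in which you kill the $B_2$-component of $\varphi$ is unnecessary. Since $\psi$ already has zero $B_2$- and $B_3$-components, orthogonality in those coordinates is automatic. It is also incompatible with keeping $e_4=0$: the constraint $e_4(0)=\psi_2^4(e_2)$ becomes nonzero as soon as $e_2=b_2(u)$ moves. You should drop it.

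The genuine gap is the last step, which you flag as the main obstacle but leave open, and your candidate $g_u(s,t)=b_5(\min(s,t),u\max(s,t))$ does not work.
\begin{itemize}
\item Contracting the second coordinate toward $0$ destroys vanishing at infinity: for $0<u<1$ and fixed first coordinate, $g_u\to b_5(\cdot,u)$ (in general nonzero) as the other coordinate tends to $1$.
\item Likewise, the edge components $\psi_2^3\circ b_2(u\,\cdot)$ and $\psi_2^4\circ b_2(u\,\cdot)$ do not vanish at $1$.
\item At $u=0$ the $B_2$- and $B_3$-components would be $\psi_1^2(b_1)$ and the constant $\psi_2^3\psi_1^2(b_1)$, which need not vanish.
\item Moreover, $g_1$ is the fold of the \emph{restriction} of $b_5$ to a triangle, which is not $\varphi'_5+\psi'_5$ for any continuous orthogonal splitting.
\end{itemize}
The missing idea is to push everything toward the end at infinity and let the $B_2$-coordinate slide along $b_2$. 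The paper's nullhomotopy of $\varphi'+\psi'$ is, with $b_2$ and $b_5$ extended by zero,
\[
\begin{aligned}
&e_2^{(u)}=b_2(u),\quad e_3^{(u)}(s)=\psi_2^3\circ b_2(s+u),\quad e_4^{(u)}(t)=\psi_2^4\circ b_2(t+u),\\
&e_5^{(u)}(t,s)=b_5\Bigl(\min(t,s),\frac{|t-s|+u}{1-\min(t,s)}\Bigr).
\end{aligned}
\]
Both edges of this folded function equal $\psi_2^5\circ b_2(\cdot+u)$, so all four boundary conditions hold for every $u$. It is continuous across the diagonal and vanishes at infinity. At $u=0$ it equals $\varphi'+\psi'$, and at $u=1$ it is $0$ because $b_2(1)=0$ and $b_5(\cdot,r)=0$ for $r\ge 1$. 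This is where the $B_2$-component legitimately disappears: it can slide only for the sum, in which both edge components shift together.
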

		
		\begin{proof}
			Let $A$ and $B$ denote the $2$-dimensional mapping cones appearing in the upper-left and lower-left corners of the above diagram, respectively.
			Let $\varphi \colon A \Rightarrow B$ be the left vertical morphism, and let $\psi \colon A \Rightarrow B$ be the morphism obtained by composing the two horizontal morphisms with the two identity morphisms along the right column of the above diagram. We shall homotope $\varphi$ and $\psi$ to mutually orthogonal morphisms and then homotope their sum to $0$ in $[\frakA, \Cstar]$. 
			
			Throughout the proof, for any C*-algebra $E$, we extend functions in $\conti_0([0, 1), E)$ and $\conti_0([0, 1) \times (0, 1), E)$ by zero to $[0, \infty)$ and $[0, 1) \times \R$, respectively, and use the same symbols for the resulting functions.
			
			Fix $Y \in \frakA$. To simplify notation, for any morphism in $[\frakA, \Cstar]$, we denote its component at $Y$ by the same symbol.
			The C*-algebra $A(Y)$ consists of all elements
			\[(a_1, a_2, a_5) \in B_1(Y) \oplus \conti_0([0, 1), B_2(Y)) \oplus \conti_0([0, 1) \times (0, 1), B_5(Y)) \]
			satisfying $\psi_1^2(a_1)=a_2(0)$ and $\psi_2^5 \circ a_2(t)=a_5(0, t)$ for all $t \in [0, 1)$.
			The C*-algebra $B(Y)$ consists of all elements
			\[
			\begin{aligned}
				(b_2, b_3, b_4, b_5) \in {}&B_2(Y) \oplus \conti_0([0, 1), B_3(Y))
				\oplus \conti_0([0, 1), B_4(Y))\\
				&\oplus \conti_0([0, 1) \times [0, 1), B_5(Y))
			\end{aligned}
			\]
			satisfying $\psi_2^3(b_2)=b_3(0)$, $\psi_2^4(b_2)=b_4(0)$, $\psi_3^5 \circ b_3(t)=b_5(0, t)$, and $\psi_4^5 \circ b_4(t)=b_5(t, 0)$ for all $t \in [0, 1)$.
			The $\ast$-homomorphisms $\varphi, \psi \colon A(Y) \to B(Y)$ are given by
			\[\varphi(a):=(\psi_1^2(a_1), \psi_2^3 \circ a_2, 0, a_5), \quad \psi(a):=(0, 0, \psi_2^4 \circ a_2, a'_5)\]
			for $a=(a_1, a_2, a_5) \in A(Y)$, where $a'_5$ is given by
			\[a'_5(t, s):=a_5(s, t)\]
			for $t, s \in [0, 1)$. 
			
			We shall define $\ast$-homomorphisms 
			\[\Phi, \Psi, \Theta \colon A(Y) \to \conti([0, 1], B(Y))\] 
			as follows. 
			The C*-algebra $\conti([0, 1], B(Y))$ consists of all elements
			\begin{align*}
				(f_2, f_3, f_4, f_5) \in & \ \conti([0, 1], B_2(Y)) \oplus \conti_0([0, 1] \times [0, 1), B_3(Y)) \\
				&\oplus \conti_0([0, 1] \times [0, 1), B_4(Y)) \oplus \conti_0([0, 1] \times [0, 1) \times [0, 1), B_5(Y)) 
			\end{align*}
			satisfying $\psi_2^3 \circ f_2(u)=f_3(u, 0)$, $\psi_2^4 \circ f_2(u)=f_4(u, 0)$, $\psi_3^5 \circ f_3(u, t)=f_5(u, 0, t)$, and $\psi_4^5 \circ f_4(u, t)=f_5(u, t, 0)$ for all $u \in [0, 1]$ and $t \in [0, 1)$.
			We first define $\ast$-homomorphisms $\Phi$, $\Psi$, and $\Theta$ from $A(Y)$ to 
			\begin{align*}
				&\conti([0, 1], B_2(Y)) \oplus \conti_0([0, 1] \times [0, 1), B_3(Y)) \\
				&\oplus \conti_0([0, 1] \times [0, 1), B_4(Y)) \oplus \conti_0([0, 1] \times [0, 1) \times [0, 1), B_5(Y)) 
			\end{align*}
			by 
			\[
			\Phi(a):=(f^a_2, f^a_3, 0, f^a_5), \quad \Psi(a):=(0, 0, g^a_4, g^a_5), \quad \Theta(a):=(h^a_2, h^a_3, h^a_4, h^a_5)
			\]
			for $a=(a_1, a_2, a_5) \in A(Y)$, where $f^a_2$, $f^a_3$, $f^a_5$, $g^a_4$, $g^a_5$, $h^a_2$, $h^a_3$, $h^a_4$, and $h^a_5$ are given by
			\begin{gather*}
				\begin{alignedat}{3}
					&f^a_2(u):=\psi_1^2(a_1), &\quad &f^a_3(u, s):=\psi_2^3 \circ a_2(s), & \quad &f^a_5(u, t, s):=a_5 \Bigl(t, \frac{s-ut}{1-ut} \Bigr), \\
					& & &g^a_4(u, t):=\psi_2^4 \circ a_2(t), & \quad &g^a_5(u, t, s):=a_5 \Bigl(s,  \frac{t-us}{1-us}\Bigr), \\
					&h^a_2(u):=a_2(u), &\quad &h^a_3(u, s):=\psi_2^3 \circ a_2(s+u), & \quad &h^a_4(u, t):=\psi_2^4 \circ a_2(t+u),
				\end{alignedat} \\
				h^a_5(u,t,s):=
				\begin{dcases}
					a_5\Bigl(t,\frac{s-t+u}{1-t}\Bigr) & \text{if $t \leq s$},\\
					a_5\Bigl(s,\frac{t-s+u}{1-s}\Bigr) & \text{if $t \geq s$}
				\end{dcases}
			\end{gather*}
			for $u \in [0, 1]$ and $t, s \in [0, 1)$. 
			
			We now show that $\Phi$, $\Psi$, and $\Theta$ take values in $\conti([0, 1], B(Y))$. Let $a=(a_1, a_2, a_5) \in A(Y)$, $u \in [0, 1]$, and $t \in [0, 1)$. 
			From
			\begin{gather*}
				\psi_2^3 \circ f^a_2(u)=\psi_2^3 \circ \psi_1^2(a_1)=\psi_2^3 \circ a_2(0)=f^a_3(u, 0), \\
				\psi_2^4 \circ f^a_2(u)=\psi_2^4 \circ \psi_1^2(a_1)=0, \\
				\psi_3^5 \circ f^a_3(u, t)=\psi_3^5 \circ \psi_2^3 \circ a_2(t)=\psi_2^5 \circ a_2(t)=a_5(0, t)=f^a_5(u, 0, t), \\
				f^a_5(u, t, 0)=a_5\Bigl(t, \frac{-ut}{1-ut} \Bigr)=0,
			\end{gather*}
			we see that $\Phi(a) \in \conti([0, 1], B(Y))$. 
			From
			\begin{gather*}
				g^a_4(u, 0)=\psi_2^4 \circ a_2(0)=\psi_2^4 \circ \psi_1^2(a_1)=0, \\
				g^a_5(u, 0, t)=a_5 \Bigl(t, \frac{-ut}{1-ut}\Bigr)=0, \\
				\psi_4^5 \circ g^a_4(u, t)=\psi_4^5 \circ \psi_2^4 \circ a_2(t)=\psi_2^5 \circ a_2(t)=a_5(0, t)=g^a_5(u, t, 0), 
			\end{gather*}
			we see that $\Psi(a) \in \conti([0, 1], B(Y))$. 
			From
			\begin{gather*}
				\psi_2^3 \circ h^a_2(u)=\psi_2^3 \circ a_2(u)=h^a_3(u, 0), \\
				\psi_2^4 \circ h^a_2(u)=\psi_2^4 \circ a_2(u)=h^a_4(u, 0), \\
				\psi_3^5 \circ h^a_3(u, t)=\psi_3^5 \circ \psi_2^3 \circ a_2(t+u)=\psi_2^5 \circ a_2(t+u)=a_5(0, t+u)=h^a_5(u, 0, t), \\
				\psi_4^5 \circ h^a_4(u, t)=\psi_4^5 \circ \psi_2^4 \circ a_2(t+u)=\psi_2^5 \circ a_2(t+u)=a_5(0, t+u)=h^a_5(u, t, 0),
			\end{gather*}
			we see that $\Theta(a) \in \conti([0, 1], B(Y))$. 
			Thus, we obtain $\ast$-homomorphisms
			\[\Phi, \Psi, \Theta \colon A(Y) \to \conti([0, 1], B(Y)).\] 
			
			We claim that $\ev_1 \circ \Phi$ and $\ev_1 \circ \Psi$ are mutually orthogonal, and that
			\[\ev_0 \circ \Phi=\varphi, \quad \ev_0 \circ \Psi=\psi, \quad \ev_1 \circ \Phi+\ev_1 \circ \Psi=\ev_0 \circ \Theta, \quad \ev_1 \circ \Theta=0. \]
			To prove the claim, let $a=(a_1, a_2, a_5) \in A(Y)$ and $t, s \in [0, 1)$. From
			\[f^a_2(0)=\psi_1^2(a_1), \quad f^a_3(0, s)=\psi_2^3 \circ a_2(s), \quad f^a_5(0, t, s)=a_5(t, s), \]
			we get $\ev_0 \circ \Phi=\varphi$. 
			From
			\[g^a_4(0, t)=\psi_2^4 \circ a_2(t), \quad g^a_5(0, t, s)=a_5(s, t)=a'_5(t, s), \]
			we get $\ev_0 \circ \Psi=\psi$. 
			We have
			\[f^a_5(1,t,s)
			=
			\begin{dcases}
				a_5\Bigl(t,\frac{s-t}{1-t}\Bigr) & \text{if $t \leq s$},\\
				0 & \text{if $t \geq s$},
			\end{dcases} \quad
			g^a_5(1,t,s)
			=
			\begin{dcases}
				0 & \text{if $t \leq s$},\\
				a_5\Bigl(s,\frac{t-s}{1-s}\Bigr) & \text{if $t \geq s$}. 
			\end{dcases}
			\]
			These formulas show that, for any $b, c \in A(Y)$,
			\[
			f_5^b(1,t,s) g_5^c(1,t,s)=0. 
			\]
			Thus, $\ev_1 \circ \Phi$ and $\ev_1 \circ \Psi$ are mutually orthogonal. 
			From 
			\begin{gather*}
				f^a_2(1)=\psi_1^2(a_1)=a_2(0)=h^a_2(0), \\
				f^a_3(1, s)=\psi_2^3 \circ a_2(s)=h^a_3(0, s), \quad g^a_4(1, t)=\psi_2^4 \circ a_2(t)=h^a_4(0, t), \\
				f^a_5(1, t, s)+g^a_5(1, t, s)=h^a_5(0, t, s),
			\end{gather*}
			we get $\ev_1 \circ \Phi+\ev_1 \circ \Psi=\ev_0 \circ \Theta$. 
			Finally, from
			\begin{gather*}
				h^a_2(1)=a_2(1)=0, \quad h^a_3(1, s)=\psi_2^3 \circ a_2(s+1)=0, \quad h^a_4(1, t)=\psi_2^4 \circ a_2(t+1)=0, \\
				h^a_5(1,t,s)=
				\begin{dcases}
					a_5\Bigl(t,1+\frac{s}{1-t}\Bigr)=0 & \text{if $t \leq s$},\\
					a_5\Bigl(s,1+\frac{t}{1-s}\Bigr)=0 & \text{if $t \geq s$}, 
				\end{dcases}
			\end{gather*}
			we obtain $\ev_1 \circ \Theta=0$. 
			This proves the claim. 
			Since $\psi_1^2$, $\psi_2^3$, and $\psi_2^4$ are natural in $Y \in \frakA$, so are $\Phi$, $\Psi$, and $\Theta$. 		
			Consequently, $\varphi$ and $\psi$ are homotopic to mutually orthogonal morphisms whose sum is homotopic to $0$ in $[\frakA, \Cstar]$.
		\end{proof}

		\bibliographystyle{amsalpha}
		\bibliography{main}	
		
		%%%%%%%%%%%%%%%%%%%%%%%%%%%%%%%%%%%%%%%%%%%%%%%%%%%%%%%%%%%%%%%%%%%%%%%%%%%%%%%%%%%%%%%%%%%%%%%%%%%%%%%%%%%%%%%%%%%
	\end{document}